\documentclass{book}%
\usepackage{amsfonts}
\usepackage{amsmath}
\usepackage{amssymb}
\usepackage{graphicx}%
\setcounter{MaxMatrixCols}{30}
%TCIDATA{OutputFilter=latex2.dll}
%TCIDATA{Version=5.00.0.2606}
%TCIDATA{CSTFile=40 LaTeX Book.cst}
%TCIDATA{Created=Tuesday, July 14, 2015 23:48:03}
%TCIDATA{LastRevised=Wednesday, July 11, 2018 02:01:05}
%TCIDATA{<META NAME="GraphicsSave" CONTENT="32">}
%TCIDATA{<META NAME="SaveForMode" CONTENT="1">}
%TCIDATA{BibliographyScheme=Manual}
%TCIDATA{<META NAME="DocumentShell" CONTENT="Standard LaTeX\Standard LaTeX Book">}
%TCIDATA{Language=American English}
\newtheorem{theorem}{Theorem}

\newtheorem{claim}[theorem]{Claim}

\newtheorem{conjecture}[theorem]{Conjecture}
\newtheorem{corollary}[theorem]{Corollary}

\newtheorem{definition}[theorem]{Definition}

\newtheorem{lemma}[theorem]{Lemma}

\newtheorem{problem}[theorem]{Problem}
\newtheorem{proposition}[theorem]{Proposition}

\newenvironment{proof}[1][Proof]{\noindent\textbf{#1.} }{\ \rule{0.5em}{0.5em}}
\begin{document}

\frontmatter
\title{P-points, MAD families and cardinal invariants }
\author{Osvaldo Guzm\'{a}n Gonz\'{a}lez}
\maketitle
\tableofcontents

\chapter{Introduction}

\qquad\ \ \ 

\textit{\textquotedblleft Set theory was born on that December 1873 day when
Cantor established that the real numbers are uncountable.\textquotedblright}

\hspace{0in}\hfill\textit{Akihiro Kanamori}

\qquad\ \ \ \ \ \ \ \ \ \ \ \ \ \ \ \ \ \ \ \qquad\ \ \ \ \ \ \qquad
\ \ \ \ \ \ \qquad\ \ \ \ 

Georg Cantor is considered to be the father of set theory, he dared to do
something that seemed impossible: comparing the size of two different
infinities. Currently, we may use the infinite cardinal numbers $\{\aleph
_{\alpha}\mid\alpha\in$ \textsf{OR}$\}$ to compare the size of infinite sets
(where \textsf{OR }denotes the class of ordinals). For every infinite $X$
there is an ordinal number $\alpha$ such that $X$ and $\aleph_{\alpha}$ are
equipotent. Moreover, if $\alpha<\beta$ then the size of $\aleph_{\alpha}$ is
strictly smaller than the one of $\aleph_{\beta}.$ In this way, $\aleph_{0}$
is the size of the smallest infinite set, this is the cardinality of the
natural numbers, $\omega$. Cantor showed that the set $\mathbb{R}$ of real
numbers is uncountable, so there is $\alpha>0$ such that $\aleph_{\alpha}$ and
$\mathbb{R}$ are equipotent. The assertion that $\mathbb{R}$ has size
$\aleph_{1}$ is known as the Continuum Hypothesis ($\mathsf{CH}$). Using the
constructible universe, G\"{o}del showed that the Continuum Hypothesis is
consistent with the axioms of $\mathsf{ZFC.}$ Several years later, Paul Cohen
developed the technique of forcing and showed that the negation of the
Continuum Hypothesis is also consistent. In fact, the size of the real numbers
(denoted by $\mathfrak{c}$) can be as big as we want it to be.

\qquad\ \ \ \ 

The main topics of this thesis are cardinal invariants, $P$-points and
\textsf{MAD} families. \emph{Cardinal invariants of the continuum }are
cardinal numbers that are bigger than $\aleph_{0}$ and smaller or equal than
$\mathfrak{c}.$ Of course, they are only interesting when they have some
combinatorial or topological definition. Currently, there is a long list of
cardinal invariants that are being studied and compared by set theorists. An
\emph{almost disjoint family }is a family of infinite subsets of
$\omega\emph{\ }$such that the intersection of any two of its elements is
finite. A \textsf{MAD} family is a maximal almost disjoint family. The study
of these families has become very important in set theory and topology. It is
easy to construct \textsf{MAD} families; however, it is very hard to construct
\textsf{MAD} families with interesting combinatorial or topological
properties. Perhaps paradoxically, it is also very hard to construct models of
$\mathsf{ZFC}$ where certain types of \textsf{MAD} families do not exist. An
ultrafilter $\mathcal{U}$ on $\omega$ is called a \emph{P-point }if every
countable $\mathcal{B\subseteq U}$ there is $X\in$ $\mathcal{U}$ such that
\thinspace$X\setminus B$ is finite for every $B\in\mathcal{B}.$ This kind of
ultrafilters has been extensively studied, however there is still a large
number of open questions about them.

\qquad\ \ \qquad\ \ 

In the preliminaries we recall the principal properties of filters,
ultrafilters, ideals, \textsf{MAD} families and cardinal invariants of the
continuum. We present the construction of Shelah of a completely separable
\textsf{MAD} family under $\mathfrak{s\leq a}.$ None of the results in this
chapter are due to the author.\qquad\ \ \ 

\qquad\ \ 

The second chapter is dedicated to a principle of Sierpi\'{n}ski. The
\emph{principle }$\left(  \ast\right)  $\emph{ of Sierpi\'{n}ski }is the
following statement: There is a family of functions $\left\{  \varphi
_{n}:\omega_{1}\longrightarrow\omega_{1}\mid n\in\omega\right\}  $ such that
for every $I\in\left[  \omega_{1}\right]  ^{\omega_{1}}$ there is $n\in\omega$
for which $\varphi_{n}\left[  I\right]  =\omega_{1}.$ This principle was
recently studied by Arnie Miller. He showed that this principle is equivalent
to the following statement: There is a set $X=\left\{  f_{\alpha}\mid
\alpha<\omega_{1}\right\}  \subseteq\omega^{\omega}$ such that for every
$g:\omega\longrightarrow\omega$ there is $\alpha$ such that if $\beta>\alpha$
then $f_{\beta}\cap g$ is infinite (sets with that property are referred to
as$\ \mathcal{IE}$\emph{-Luzin sets}). Miller showed that the principle of
Sierpi\'{n}ski implies that \textsf{non}$\left(  \mathcal{M}\right)
=\omega_{1}.$ He asked if the converse was true, i.e. does \textsf{non}%
$\left(  \mathcal{M}\right)  =\omega_{1}$ imply the principle\emph{ }$\left(
\ast\right)  $\emph{ }of Sierpi\'{n}ski? We answer his question affirmatively.
In other words, we show that \textsf{non}$\left(  \mathcal{M}\right)
=\omega_{1}$ is enough to construct an $\mathcal{IE}$-Luzin set. It is not
hard to see that the $\mathcal{IE}$-Luzin set we constructed is meager. This
is no coincidence, because with the aid of an inaccessible cardinal, we
construct a model where \textsf{non}$\left(  \mathcal{M}\right)  =\omega_{1}$
and every $\mathcal{IE}$-Luzin set is meager.

\qquad\ \ \ 

The third chapter is dedicated to a conjecture of Hru\v{s}\'{a}k. In
\cite{KatetovOrderonBorelIdeals} Michael Hru\v{s}\'{a}k conjectured the
following: Every Borel cardinal invariant is either at most \textsf{non}%
$\left(  \mathcal{M}\right)  $ or at least \textsf{cov}$\left(  \mathcal{M}%
\right)  $ (it is known that the definability is an important requirement,
otherwise $\mathfrak{a}$ would be a counterexample). Although the veracity of
this conjecture is still an open problem, we were able to obtain some partial
results: The conjecture is false for \textquotedblleft Borel invariants of
$\omega_{1}^{\omega}$\textquotedblright\ nevertheless, it is true for a large
class of definable invariants. This is part of a joint work with Michael
Hru\v{s}\'{a}k and Jind\v{r}ich Zapletal.

\qquad\ \ \qquad\ \ 

In the fourth chapter we present a survey on destructibility of ideals and
\textsf{MAD} families. If $\mathbb{P}$ is a forcing notion and $\mathcal{A}$
is a \textsf{MAD }family, we say that $\mathbb{P}$ \emph{destroys
}$\mathcal{A}$ if $\mathcal{A}$ is not maximal after forcing with $\mathbb{P}%
$. It is well known that there is a \textsf{MAD }family that is destroyed by
every forcing adding a new real, but construting indestructible \textsf{MAD
}families is much more difficult and there are still many fundamental open
questions in this topic. We prove several classic theorems, but we also prove
some new results. For example, we show that every almost disjoint family of
size less than $\mathfrak{c}$ can be extended to a Cohen indestructible
\textsf{MAD} family is equivalent to $\mathfrak{b=c}$ (this is part of a joint
work with Michael Hru\v{s}\'{a}k, Ariet Ramos and Carlos Mart\'{\i}nez). A
\textsf{MAD} family $\mathcal{A}$ is \emph{Shelah-Stepr\={a}ns} if for every
$X\subseteq\left[  \omega\right]  ^{<\omega}\setminus\left\{  \emptyset
\right\}  $ either there is $A\in\mathcal{I}\left(  \mathcal{A}\right)  $ such
that $s\cap A\neq\emptyset$ for every $s\in X$ or there is $B\in
\mathcal{I}\left(  \mathcal{A}\right)  $ that contains infinitely many
elements of $X$ (where $\mathcal{I}\left(  \mathcal{A}\right)  $ denotes the
ideal generated by $\mathcal{A}$). This concept was introduced by Raghavan in
\cite{AModelwithnoStronglySeparableAlmostDisjointFamilies}, which is connected
to the notion of \textquotedblleft strongly separable\textquotedblright%
\ introduced by Shelah and Stepr\={a}ns in \cite{MasasintheCalkinAlgebra}. We
prove that Shelah-Stepr\={a}ns \textsf{MAD} families have very strong
indestructibility properties: Shelah-Stepr\={a}ns \textsf{MAD} families are
indestructible for \textquotedblleft many\textquotedblright\ definable
forcings that does not add dominating reals (this statement will be formalized
in the fourth chapter). According to the author's best knowledge, this is the
strongest notion (in terms of indestructibility) that has been considered in
the literature so far. In spite of their strong indestructibility,
Shelah-Stepr\={a}ns \textsf{MAD} families can be destroyed by a ccc forcing
that does not add unsplit or dominating reals. We also consider some strong
combinatorial properties of \textsf{MAD} families and show the relationships
between them (This is part of a joint work with Michael Hru\v{s}\'{a}k, Dilip
Raghavan and Joerg Brendle).

\qquad\ \ \ 

The fifth chapter is one of the most important chapters in the thesis. A
\textsf{MAD} family $\mathcal{A}$ is called $+$\emph{-Ramsey }if every tree
that branches into $\mathcal{I}\left(  \mathcal{A}\right)  $-positive sets has
an $\mathcal{I}\left(  \mathcal{A}\right)  $-positive branch. Michael
Hru\v{s}\'{a}k's first published question is the following: Is there a
$+$-Ramsey \textsf{MAD} family? It was previously known that such families can
consistently exist. However, there was no construction of such families using
only the axioms of \textsf{ZFC. }We solve this problem by constructing such a
family without any extra assumptions. Our proof is divided by cases: in case
$\mathfrak{a<s}$ we show that there is a Miller-indestructible \textsf{MAD}
family and that every Miller indestructible \textsf{MAD} family is $+$-Ramsey.
In case $\mathfrak{s\leq a}$ we construct a $+$-Ramsey \textsf{MAD} family
using Shelah's technique for constructing a completely separable \textsf{MAD}
family. The existence of $+$-Ramsey \textsf{MAD} families has interesting
applications in topological games on Fr\'{e}chet spaces, the reader may
consult \cite{SelectivityofAlmostDisjointFamilies} for more details.

\qquad\ \ \qquad\ \ \ 

In the fourth and fifth chapters, we introduce several notions of \textsf{MAD}
families, in the sixth chapter we prove several implications and non
implications between them. We construct (under $\mathsf{CH}$) several
\textsf{MAD} families with different properties.

\qquad\ \ \ 

In the seventh chapter we build models without $P$-points. We show that there
are no $P$-points after adding Silver reals either iteratively or by the side
by side product. These results have some important consequences: The first one
is that is its possible to get rid of $P$-points using only definable
forcings. This answers a question of Michael Hru\v{s}\'{a}k. We can also use
our results to build models with no $P$-points and with arbitrarily large
continuum, which was also an open question. These results were obtained with
David Chodounsk\'{y}.

\qquad\ \ 

In the last chapter we collect some important open problems concerning
\textsf{MAD} families.

\qquad\ \ \ \ 

The main contributions of this thesis are the following:

\begin{enumerate}
\item There is a $+$-Ramsey \textsf{MAD} family. This answers an old question
of Michael Hru\v{s}\'{a}k (see \cite{ThereisaRamseyMADFamily}).

\item There are no $P$-points in the Silver model, answering a question of
Michael Hru\v{s}\'{a}k (this is joint work with David Chodounsk\'{y}
\cite{NoPpointsintheSilverModel}).

\item The statement \textquotedblleft There are no $P$%
-points\textquotedblright\ is consistent with the continuum being arbitrarily
large, this answers an open question regarding $P$-points (see
\cite{WohofskyThesis}, this is joint work with David Chodounsk\'{y}
\cite{NoPpointsintheSilverModel}).

\item Every Miller indestructible \textsf{MAD} family is $+$-Ramsey. This
improves a result of Hru\v{s}\'{a}k and Garc\'{\i}a Ferreira (see
\cite{ThereisaRamseyMADFamily}).

\item A Borel ideal is Shelah-Stepr\={a}ns if and only if it is Kat\v{e}tov
above \textsf{FIN}$\times$\textsf{FIN}$.$ This entails that
Shelah-Stepr\={a}ns \textsf{MAD} families have very strong indestructibility
properties (This is part of a joint work with Michael Hru\v{s}\'{a}k, Dilip
Raghavan and Joerg Brendle \cite{CombinatoricsofMADFamilies}).

\item Shelah-Stepr\={a}ns \textsf{MAD} families exist under $\Diamond\left(
\mathfrak{b}\right)  .$ In particular, $\Diamond\left(  \mathfrak{b}\right)  $
is strong enough to produce Cohen or random indestructible \textsf{MAD}
families (This answers a question of Hru\v{s}\'{a}k and Garc\'{\i}a Ferreira,
see \cite{CombinatoricsofMADFamilies}).

\item Cohen indestructible \textsf{MAD} families exist generically if and only
if $\mathfrak{b=c}$ (This is part of a joint work with Michael Hru\v{s}\'{a}k,
Ariet Ramos and Carlos Mart\'{\i}nez \cite{GenericExistenceofMADfamilies}).

\item \textsf{non}$\left(  \mathcal{M}\right)  =\omega_{1}$ implies the
$\left(  \ast\right)  $ principle of Sierpi\'{n}ski. This answers a question
of Arnie Miller (see \cite{SierpinskiOsvaldo}).

\item The statement \textquotedblleft\textsf{non}$\left(  \mathcal{M}\right)
=\omega_{1}$\textquotedblright\ and every $\mathcal{IE}$-Luzin set is meager
is consistent (see \cite{SierpinskiOsvaldo}).

\item Partial solutions to the conjecture of Hru\v{s}\'{a}k; mainly there is a
\textquotedblleft Borel cardinal invariant of $\omega_{1}^{\omega}%
$\textquotedblright\ that is not below \textsf{non}$\left(  \mathcal{M}%
\right)  $ nor is above \textsf{cov}$\left(  \mathcal{M}\right)  .$ However,
the conjecture is true for a large class of Borel cardinal invariants (This is
part of a joint work with Michael Hru\v{s}\'{a}k and Jind\v{r}ich Zapletal).
\end{enumerate}

\mainmatter

\chapter{Preliminaries}

\section{Notation}

Our notation is mostly standard. We say that $T\subseteq\kappa^{<\kappa}$ is a
\emph{tree }if it is closed under taking initial segments. If $s\in T$ we
define $suc_{T}\left(  s\right)  =\left\{  \alpha\mid s^{\frown}\alpha\in
T\right\}  $ (where $s^{\frown}\alpha$ is the sequence that has $s$ as an
initial segment and $\alpha$ in the last entry). If $T\subseteq\omega
^{<\omega}$ we say that $f\in\omega^{\omega}$ is a \emph{branch of }$T$ if
$f\upharpoonright n\in T$ for every $n\in\omega.$ The set of all branches of
$T$ is denoted by $\left[  T\right]  $. For every $n\in\omega$ we define
$T_{n}=\left\{  s\in T\mid\left\vert s\right\vert =n\right\}  .$ If
$s\in\omega^{<\omega}$ then the \emph{cone of }$s$ is defined as $\left\langle
s\right\rangle =\left\{  f\in\omega^{\omega}\mid s\subseteq f\right\}  .$ If
$A\subseteq\omega$ we define $A^{0}=\omega\setminus A$ and $A^{1}=A.$ In this
thesis, the expression \textquotedblleft for almost all\textquotedblright%
\ means \textquotedblleft for all but finitely many\textquotedblright. We say
$A\subseteq^{\ast}B$ ($A$ is an almost subset of $B$) if $A\setminus B$ is finite.

\qquad\ \ \ \ 

With respect to forcing, in this thesis, if $p\leq q$ then $p$ is
\textquotedblleft stronger\textquotedblright\ than $q,$ or $p$ carries more
information than $q.$ We denote by $V$ the collection of all sets.

\bigskip

\newpage

\section{Filters and ideals}

\qquad\ Filters and ideals play a major role in set theory. Let $X$ be a non
empty set. Informally, we can think of filters on $X$ as being a collection of
\textquotedblleft big\textquotedblright\ subsets of $X$ while ideals are
collections of \textquotedblleft small\textquotedblright\ subsets of $X.$ The
formal definitions are the following: (for us, all ideals contain all finite sets).

\begin{definition}
Let $X$ be a set.

\begin{enumerate}
\item We say that $\mathcal{F\subseteq}$ $\wp\left(  X\right)  $ is a
\emph{filter on }$X$ if the following conditions hold:

\begin{enumerate}
\item $X\in\mathcal{F}$ and $\emptyset\notin\mathcal{F}.$

\item If $A\in\mathcal{F}$ and $A\subseteq^{\ast}B$ then $B\in\mathcal{F}.$

\item If $A,B\in\mathcal{F}$ then $A\cap B\in\mathcal{F}.$
\end{enumerate}

\item We say that $\mathcal{I\subseteq}$ $\wp\left(  X\right)  $ is an
\emph{ideal on }$X$ if the following conditions hold:

\begin{enumerate}
\item $X\notin\mathcal{I}$ and $\emptyset\in\mathcal{I}.$

\item If $A\in\mathcal{I}$ and $B\subseteq^{\ast}A$ then $B\in\mathcal{I}.$

\item If $A,B\in\mathcal{I}$ then $A\cup B\in\mathcal{I}.$
\end{enumerate}

\item An ideal $\mathcal{I}$ is a $\sigma$-ideal if it is closed under
countable unions.
\end{enumerate}
\end{definition}

\qquad\ \ \ \ \ \ 

In this thesis we will be mainly interested in the cases when $X$ is a
countable set or a Polish space. Given a family $\mathcal{B}$ of subsets of
$X,$ we define $\mathcal{B}^{\ast}=\left\{  X\smallsetminus B\mid
B\in\mathcal{B}\right\}  .$ It is easy to see that if $\mathcal{F}$ is a
filter then $\mathcal{F}^{\ast}$ is an ideal (called the \emph{dual ideal of
}$\mathcal{F}$) and if $\mathcal{I}$ an ideal then $\mathcal{I}^{\ast}$ is a
filter (called the dual filter of $\mathcal{I}$). If $\mathcal{I}$ is an ideal
on $X$, we let $\mathcal{I}^{+}=\wp\left(  X\right)  \smallsetminus
\mathcal{I}$ be the family of $\mathcal{I}$\emph{-positive sets. }If
$\mathcal{F}$ is a filter, we define $\mathcal{F}^{+}$ $\mathcal{=}$ $\left(
\mathcal{F}^{\ast}\right)  ^{+};$ it is easy to see that $\mathcal{F}^{+}$ is
the family of all sets that have non-empty (infinite) intersection with every
element of $\mathcal{F}.$ If $A\in\mathcal{I}^{+}$ then \emph{the restriction
of }$\mathcal{I}$ to $A$, defined as $\mathcal{I\upharpoonright}A=\wp\left(
A\right)  \cap\mathcal{I}$, is an ideal on $A.$ The \emph{ortoghonal of
}$\mathcal{I}$ (denoted by\emph{\ }$\mathcal{I}^{\perp}$) is the set of all
$X\subseteq\omega$ such that $X\cap A$ is finite for every $A\in\mathcal{I}$
(this definition also applies for arbitrary subfamilies of $\wp\left(
\omega\right)  ,$ not just ideals).

\begin{definition}
Let $\mathcal{I}$ be an ideal on $\omega$ (or any countable set).

\begin{enumerate}
\item $\mathcal{I}$ is \emph{tall }if for every $X\in\left[  \omega\right]
^{\omega}$ there is $Y\in\mathcal{I}$ such that $Y\cap X$ is infinite (this
definition also applies to arbitrary subfamilies of $\wp\left(  \omega\right)
,$ not just ideals).

\item $\mathcal{I}$ is $\omega$\emph{-hitting }if for every $\left\{
X_{n}\mid n\in\omega\right\}  \subseteq\left[  \omega\right]  ^{\omega}$ there
is $Y\in\mathcal{I}$ such that $Y\cap X_{n}$ is infinite for every $n\in
\omega.$

\item $\mathcal{I}$ is a \emph{P}$^{+}$\emph{-ideal }if for every $\subseteq
$-decreasing family $\left\{  X_{n}\mid n\in\omega\right\}  \subseteq
\mathcal{I}^{+}$ there is $Y\in\mathcal{I}^{+}$ such that $Y\subseteq^{\ast
}X_{n}$ for every $n\in\omega.$

\item $\mathcal{I}$ is a \emph{P-ideal }if for every family $\left\{
X_{n}\mid n\in\omega\right\}  \subseteq\mathcal{I}$ there is $Y\in\mathcal{I}$
such that $X_{n}\subseteq^{\ast}Y$ for every $n\in\omega.$

\item $\mathcal{I}$ is a \emph{Q}$^{+}$\emph{-ideal }if for every
$X\in\mathcal{I}^{+}$ and every partition $\left\{  P_{n}\mid n\in
\omega\right\}  $ of $X$ into finite sets, there is $A\in\mathcal{I}^{+}%
\cap\wp\left(  X\right)  $ such that $\left\vert A\cap P_{n}\right\vert \leq1$
for every $n\in\omega.$

\item $\mathcal{I}$ is \emph{selective }if for every $\subseteq$-decreasing
family $\left\{  Y_{n}\mid n\in\omega\right\}  \subseteq\mathcal{I}^{+}$ there
is $X=\left\{  x_{n}\mid n\in\omega\right\}  $ such that the following holds:

\begin{enumerate}
\item $X\in\mathcal{I}^{+}.$

\item $X\subseteq Y_{0}.$

\item $X\setminus\left(  x_{n}+1\right)  \subseteq Y_{x_{n}}.$
\end{enumerate}
\end{enumerate}
\end{definition}

\qquad\ \ \ 

The previous definitions extend to filters as well. We will say that a filter
$\mathcal{F}$ is $P^{+}$ if $\mathcal{F}^{\ast}$ is $P^{+}$ and similarly for
the other definitions.\qquad\qquad\ \ \ \ 

\begin{lemma}
If $\mathcal{I}$ is selective then $\mathcal{I}$ is both $P^{+}$ and $Q^{+}.$
\end{lemma}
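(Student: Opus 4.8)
The plan is to prove the two halves separately, each time reducing to the defining property of selectivity by choosing the decreasing family $\{Y_n\}$ appropriately.

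First I would show that a selective ideal $\mathcal{I}$ is $P^{+}$. Start with a $\subseteq$-decreasing family $\{X_n\mid n\in\omega\}\subseteq\mathcal{I}^{+}$; I want $Y\in\mathcal{I}^{+}$ with $Y\subseteq^{\ast}X_n$ for all $n$. Apply selectivity to $\{X_n\}$ itself (it is already $\subseteq$-decreasing and $\mathcal{I}^{+}$-valued), obtaining $X=\{x_n\mid n\in\omega\}\in\mathcal{I}^{+}$ with $X\subseteq X_0$ and $X\setminus(x_n+1)\subseteq X_{x_n}$. Here I should arrange (as one always may, by passing to the natural enumeration) that $x_0<x_1<\cdots$, so that $x_n\geq n$ for every $n$. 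Then for a fixed $k$, every element $x_m$ of $X$ with $m$ large enough satisfies $x_m\geq x_n\geq k$ for the appropriate $n$, and $X\setminus(x_n+1)\subseteq X_{x_n}\subseteq X_k$ since $x_n\geq k$ and the $X_i$ are decreasing; hence $X\setminus(x_n+1)\subseteq X_k$, which gives $X\subseteq^{\ast}X_k$. Taking $Y=X$ finishes this half.

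Next I would show $\mathcal{I}$ is $Q^{+}$. Fix $X\in\mathcal{I}^{+}$ and a partition $\{P_n\mid n\in\omega\}$ of $X$ into finite sets; I want $A\in\mathcal{I}^{+}\cap\wp(X)$ with $|A\cap P_n|\leq 1$ for all $n$. For each $n$ let $Y_n = X\setminus\bigcup_{i<k(n)}P_i$ where $k(n)$ is chosen large enough that $\min(Y_n)$ lies in some block with index $\geq n$ — more simply, set $Y_n = \{m\in X\mid m\text{ lies in }P_j\text{ for some }j\geq n\}$, i.e. delete the first several complete blocks; this is a $\subseteq$-decreasing sequence in $\mathcal{I}^{+}$ (each $Y_n$ differs from $X$ by the finite set $\bigcup_{i<n}P_i$, and $\mathcal{I}$ is closed under finite modification on a positive set). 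Apply selectivity to get $A=\{x_n\mid n\in\omega\}\in\mathcal{I}^{+}$, $A\subseteq Y_0\subseteq X$, with $A\setminus(x_n+1)\subseteq Y_{x_n}$. The point is that $x_n\in Y_{x_{n-1}}$-type containments force consecutive elements of $A$ into distinct blocks: since $A\setminus(x_n+1)\subseteq Y_{x_n}$ and (with the enumeration increasing) $Y_{x_n}$ omits the first $x_n$ blocks while $x_n$ itself lies in a block of index $\leq x_n$, every later element of $A$ sits in a strictly later block than $x_n$. Hence $A$ meets each $P_n$ in at most one point, as required.

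The main obstacle is bookkeeping rather than conceptual: I must be careful that the abstract output $X=\{x_n\mid n\in\omega\}$ of selectivity is interpreted with its \emph{increasing} enumeration so that clause (c), $X\setminus(x_n+1)\subseteq Y_{x_n}$, can be leveraged — the index $x_n$ appearing as a subscript is what makes the "diagonal" work, and one must check that the enumeration can indeed be taken increasing without loss of generality (if $X\in\mathcal{I}^{+}$ then $X$ is infinite since $\mathcal{I}$ contains all finite sets, so it has an increasing enumeration, and re-indexing only strengthens (c) because the $Y_i$ are decreasing). Once that is pinned down, both implications are short diagonalization arguments.
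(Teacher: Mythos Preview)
Your $P^{+}$ argument is fine and matches the paper (which simply says ``clearly''). The $Q^{+}$ argument follows the same diagonal strategy as the paper, but the key step fails: you assert that ``$x_n$ itself lies in a block of index $\leq x_n$'', and this is what would let $Y_{x_n}$ discard the block containing $x_n$. Nothing guarantees this --- the block labels are arbitrary and bear no relation to the sizes of the elements. Concretely, take $X=\omega$, $P_0=\{0\}$, $P_1=\{2\}$, $P_2=\{1,3\}$, and $P_i=\{i+1\}$ for $i\geq 3$. Then your $Y_0=\omega$ and $Y_1=\omega\setminus\{0\}$, so $x_0=1$, $x_1=3$ satisfies every selectivity constraint (in particular $x_1\in Y_{x_0}=Y_1$), yet both $x_0,x_1$ lie in $P_2$.

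The fix is to index the removed blocks by \emph{elements} rather than by \emph{labels}: set $Y_n = X\setminus\bigcup\{P_i : P_i\cap(n+1)\neq\emptyset\}$, i.e.\ delete every block meeting $\{0,\dots,n\}$. This is still a $\subseteq$-decreasing sequence of cofinite-in-$X$ (hence $\mathcal{I}^{+}$) sets, and now the diagonal goes through cleanly: if $x_m<x_{m'}$ are both in $P_k$, then $P_k$ meets $\{0,\dots,x_m\}$, so $P_k$ is removed in $Y_{x_m}$, contradicting $x_{m'}\in A\setminus(x_m+1)\subseteq Y_{x_m}$. The paper's choice $Y_n = X\setminus\max\bigl(\bigcup_{i\leq n}P_i\bigr)$ is in the same spirit (remove an initial segment rather than a labelled initial batch), but as written it suffers from the same problem as yours; to make that version work one should first relabel so that, say, $\min P_0<\min P_1<\cdots$, and use the strict cutoff.
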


\begin{proof}
If $\mathcal{I}$ is selective it is clearly $P^{+}$ so we only need to prove
that it is also $Q^{+}.$ Let $X\in\mathcal{I}^{+}$ and $\left\{  P_{n}\mid
n\in\omega\right\}  $ a partition of $X$ into finite sets. For every
$n\in\omega$ define $Y_{n}=X\setminus max(%
%TCIMACRO{\tbigcup \limits_{i\leq n}}%
%BeginExpansion
{\textstyle\bigcup\limits_{i\leq n}}
%EndExpansion
P_{i})$. Then $\left\{  Y_{n}\mid n\in\omega\right\}  \subseteq\mathcal{I}%
^{+}$ is a decreasing sequence and if $Z$ witnesses the selectiveness of
$\mathcal{I}$ then $Z$ is the set we were looking for.
\end{proof}

\qquad\qquad\qquad\ \qquad\ \ 

A filter $\mathcal{U}$ is an \emph{ultrafilter }if it is a maximal filter.
Ultrafilters are of fundamental importance in practically every branch of set
theory. It is easy to construct an ultrafilter using the Axiom of Choice.

\begin{definition}
Let $\mathcal{U}$ be an ultrafilter in $\omega.$

\begin{enumerate}
\item $\mathcal{U}$ is a $P$\emph{-point }if for every decreasing $\left\{
X_{n}\mid n\in\omega\right\}  \subseteq\mathcal{U}$ there is $X\in\mathcal{U}$
such that $X\subseteq^{\ast}X_{n}$ for every $n\in\omega.$

\item $\mathcal{U}$ is a $Q$\emph{-point }if for every partition $\left\{
P_{n}\mid n\in\omega\right\}  $ of $\omega$ into finite sets, there is
$X\in\mathcal{U}$ such that $\left\vert X\cap P_{n}\right\vert \leq1$ for
every $n\in\omega.$

\item $\mathcal{U}$ is a \emph{Ramsey ultrafilter }if for every partition
$\left\{  P_{n}\mid n\in\omega\right\}  $ of $\omega,$ either there is
$n\in\omega$ such that $P_{n}\in\mathcal{U}$ or there is $X\in\mathcal{U}$
such that $\left\vert X\cap P_{n}\right\vert \leq1$ for every $n\in\omega.$
\end{enumerate}
\end{definition}

\qquad\qquad\ \ 

We now have the following:

\begin{proposition}
[Mathias \cite{HappyFamilies}]Let $\mathcal{U}$ be an ultrafilter. The
following are equivalent:

\begin{enumerate}
\item $\mathcal{U}$ is Ramsey.

\item $\mathcal{U}$ is a $P$-point and a $Q$-point.

\item $\mathcal{U}$ is selective.

\item For every coloring $c:\left[  \omega\right]  ^{2}\longrightarrow2$ there
is $X\in\mathcal{U}$ that is $c$-monochromatic.
\end{enumerate}
\end{proposition}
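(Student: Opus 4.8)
The plan is to prove the four statements equivalent by the cycle $(1)\Rightarrow(2)\Rightarrow(3)\Rightarrow(4)\Rightarrow(1)$. Two of these links are soft. For $(4)\Rightarrow(1)$, given a partition $\{P_n\mid n\in\omega\}$ with no block in $\mathcal{U}$, I would color a pair $\{i,j\}$ by $0$ if $i,j$ lie in the same block and by $1$ otherwise; a monochromatic $X\in\mathcal{U}$ cannot be $0$-monochromatic (else $X$ would be contained in a single $P_n$, forcing $P_n\in\mathcal{U}$ by upward closure), so it is $1$-monochromatic, which says exactly that $\left\vert X\cap P_n\right\vert\le1$ for every $n$. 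For $(1)\Rightarrow(2)$, the $Q$-point clause is immediate: a partition into finite sets has no block in the (free) ultrafilter $\mathcal{U}$, so the Ramsey property directly supplies the selector. For the $P$-point clause, given decreasing $\{X_n\}\subseteq\mathcal{U}$ I would apply the Ramsey property to the partition whose blocks are the sets $X_n\setminus X_{n+1}$ together with $\bigcap_n X_n$; no finite-index block lies in $\mathcal{U}$, so either $\bigcap_n X_n\in\mathcal{U}$ and we are done, or a selector $S\in\mathcal{U}$ appears, and the bound of at most one point per block forces $S\subseteq^{\ast}X_m$ for every $m$.

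For $(3)\Rightarrow(4)$ I would run the classical derivation of the pair-partition property from selectivity. Given $c:\left[\omega\right]^{2}\rightarrow 2$ and $a\in\omega$, the sets $A_a^{i}=\{b\mid b>a,\ c(\{a,b\})=i\}$ partition $\{b\mid b>a\}$, so exactly one lies in $\mathcal{U}$; call its index $i(a)$. Setting $Y_n=\bigcap_{a\le n}A_a^{i(a)}$ produces a decreasing sequence in $\mathcal{U}$, and feeding it to selectivity yields $X=\{x_k\mid k\in\omega\}\in\mathcal{U}$ with $X\subseteq Y_0$ and $X\setminus(x_k+1)\subseteq Y_{x_k}$. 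The diagonal clause forces $c(\{x_k,x_l\})=i(x_k)$ whenever $k<l$, so $X$ is end-homogeneous; splitting $X$ according to the value of $i(x_k)$ puts one half in $\mathcal{U}$, and that half is genuinely $c$-monochromatic.

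The implication $(2)\Rightarrow(3)$ is the main obstacle, and the only place where both hypotheses are used. Given decreasing $\{Y_n\}\subseteq\mathcal{U}$, the $P$-point property supplies a pseudo-intersection $Z\in\mathcal{U}$ with $Z\subseteq Y_0$ and an increasing $g$ with $g(n)>n$ and $Z\setminus g(n)\subseteq Y_n$. To manufacture the exact diagonal condition I would build intervals $I_k=[a_k,a_{k+1})$ by $a_0=0$ and $a_{k+1}=g(a_k)+1$, and use the $Q$-point property to obtain $W\in\mathcal{U}$ with $\left\vert W\cap I_k\right\vert\le1$, putting $X_0=W\cap Z$. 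The subtlety is that one point per interval is \emph{not} enough: a point near the top of its interval can have $g$-value reaching past the following interval, so the next selected point need not clear $g(x_m)$. I would repair this by splitting $X_0=\{x_0<x_1<\cdots\}$ into its even- and odd-indexed halves and keeping whichever half $X$ lies in $\mathcal{U}$; skipping every other point guarantees that consecutive survivors occupy intervals whose indices differ by at least two, whence $x_{m+1}\ge g(x_m)$ follows from the recursion defining the $a_k$. Then $X\subseteq Z\subseteq Y_0$ and $X\setminus(x_m+1)\subseteq Y_{x_m}$, which is precisely selectivity. The entire difficulty of the proposition concentrates in this interval bookkeeping and the every-other-point splitting; the soft endpoints and the pair-Ramsey step are routine once the diagonal condition is in hand.
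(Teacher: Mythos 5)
Your proof is correct and takes essentially the same route as the paper: the same cycle $1\Rightarrow2\Rightarrow3\Rightarrow4\Rightarrow1$, with the crucial step $2\Rightarrow3$ handled exactly as in the paper by combining a $P$-point pseudointersection with a dominating function $g$, an interval partition built from $g$, a $Q$-point selector, and a final parity split into two halves of which one lies in $\mathcal{U}$ (you split by parity of the enumeration index where the paper splits by parity of the interval index, which is the same idea). The implications $3\Rightarrow4$ and $4\Rightarrow1$ coincide with the paper's arguments up to relabeling of the colors, and your $1\Rightarrow2$ merely fills in details the paper dismisses as clear.
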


\begin{proof}
Fix $\mathcal{U}$ an ultrafilter. Clearly any Ramsey ultrafilter is both a
$P$-point and a $Q$-point. We will first prove that 2 implies 3. Let $\left\{
Y_{n}\mid n\in\omega\right\}  \subseteq\mathcal{U}$ be a decreasing sequence.
We may assume $n\notin Y_{n}$ so $%
%TCIMACRO{\tbigcap }%
%BeginExpansion
{\textstyle\bigcap}
%EndExpansion
Y_{n}=\emptyset.$ We now define $P_{0}=\omega\setminus Y_{0}$ and
$P_{n+1}=Y_{n}\setminus Y_{n+1}.$ Clearly $\mathcal{P=}\left\{  P_{n}\mid
n\in\omega\right\}  $ is a partition of $\omega$ and $\mathcal{P}%
\cap\mathcal{U}=\emptyset.$ Since $\mathcal{U}$ is a $P$-point, there is
$X\in\mathcal{U}$ such that $X\cap P_{n}$ is finite for every $n\in\omega.$ We
can then find an increasing function $g:\omega\longrightarrow\omega$ such that
$X\setminus g\left(  n\right)  \subseteq Y_{n}$ for every $n\in\omega.$ Now,
we define an interval partition $\mathcal{R}=\left\{  R_{n}\mid n\in
\omega\right\}  $ such that if $i\in R_{n}$ then $g\left(  i\right)
<max\left(  R_{n+1}\right)  .$ Since $\mathcal{U}$ is a $Q$-point, there is
$Z\in\mathcal{U}$ such that $\left\vert Z\cap R_{n}\right\vert \leq1$ for each
$n\in\omega,$ we may even assume $Z\subseteq X\cap Y_{0}.$ Let $Z_{0}=%
%TCIMACRO{\tbigcup \limits_{n\in\omega}}%
%BeginExpansion
{\textstyle\bigcup\limits_{n\in\omega}}
%EndExpansion
\left(  Z\cap R_{2n}\right)  $ and $Z_{1}=%
%TCIMACRO{\tbigcup \limits_{n\in\omega}}%
%BeginExpansion
{\textstyle\bigcup\limits_{n\in\omega}}
%EndExpansion
\left(  Z\cap R_{2n+1}\right)  $, since $\mathcal{U}$ is an ultrafilter, then
there is $i<2$ such that $Z_{i}\in\mathcal{U},$ this is the set we were
looking for.

\qquad\ \ 

We will now show that 3 implies 4. Let $\mathcal{U}$ be a selective
ultrafilter and $c:\left[  \omega\right]  ^{2}\longrightarrow2.$ For every
$n\in\omega$ and $i<2$ let $H_{i}\left(  n\right)  =\left\{  m>n\mid c\left(
\left\{  n,m\right\}  \right)  =i\right\}  .$ Since $\mathcal{U}$ is an
ultrafilter, for every $n\in\omega$ there is $i_{n}$ such that $H_{i_{n}%
}\left(  n\right)  \in\mathcal{U}.$ Let $X_{n}=%
%TCIMACRO{\tbigcap \limits_{m\leq n}}%
%BeginExpansion
{\textstyle\bigcap\limits_{m\leq n}}
%EndExpansion
H_{i_{m}}\left(  m\right)  .$ Since $\mathcal{U}$ is selective, there is
$Y=\left\{  y_{n}\mid n\in\omega\right\}  \in\mathcal{U}$ such that
$Y\subseteq X_{0}$ and $Y\setminus\left(  y_{n}+1\right)  \subseteq X_{y_{n}}$
for every $n\in\omega.$ Note that if $n<m$ then $c\left(  y_{n},y_{m}\right)
=i_{y_{n}}.$ Since $\mathcal{U}$ is an ultrafilter, we can find $Y_{1}%
\subseteq Y$ such that $Y_{1}\in\mathcal{U}$ and $i_{n}=i_{m}$ for every
$n,m\in Y_{1}.$ Clearly $Y_{1}$ is monochromatic.

\qquad\ \ \ \ \ 

We will now prove that 4 implies 1. Let $\mathcal{P=}\left\{  P_{n}\mid
n\in\omega\right\}  $ be a partition of $\omega.$ We now define the coloring
$c:\left[  \omega\right]  ^{2}\longrightarrow2$ where $c\left(  \left\{
n,m\right\}  \right)  =1$ if and only if $n$ and $m$ belong to the same
element of the partition. Clearly any $0$-monochromatic set is a partial
selector and any $1$-monochromatic set is contained in a single element of
$\mathcal{P}.$
\end{proof}

\qquad\ \ \ \ 

If $\mathcal{I}$ is $\sigma$-ideal on a Polish space, we denote by
$\mathbb{P}_{\mathcal{I}}$ the forcing of Borel sets modulo $\mathcal{I}$
(i.e. if $B$ and $C$ are Borel sets then $B\leq C$ if and only if $B\setminus
C\in\mathcal{I}$). The book \cite{ForcingIdealized} is a very interesting
reference about this type of forcings. By \textsf{ctble }we denote the
$\sigma$-ideal of all countable subsets of $2^{\omega}$, by $\mathcal{M}$ we
denote the $\sigma$-ideal of meager sets, $\mathcal{N}$ denotes the $\sigma
$-ideal of Lebesgue measure zero sets and $\mathcal{K}_{\sigma}$ denotes the
$\sigma$-ideal on $\omega^{\omega}$ generated by compact sets. Given
$F:\omega^{<\omega}\longrightarrow\omega$ we define $C_{\exists}\left(
F\right)  =\left\{  g\in\omega^{\omega}\mid\exists^{\infty}n\left(  g\left(
n\right)  \leq F\left(  g\upharpoonright n\right)  \right)  \right\}  .$ The
Laver ideal $\mathcal{L}$ is the $\sigma$-ideal generated by $\left\{
C_{\exists}\left(  F\right)  \mid F:\omega^{<\omega}\longrightarrow
\omega\right\}  .$ It is well known (see \cite{ForcingIdealized}) that
$\mathbb{P}_{\text{\textsf{ctble}}}$ is equivalent to the Sacks forcing,
$\mathbb{P}_{\mathcal{M}}$ is equivalent to the Cohen forcing, $\mathbb{P}%
_{\mathcal{N}}$ is equivalent to random forcing, $\mathbb{P}_{\mathcal{K}%
_{\sigma}}$ is the Miller forcing and $\mathbb{P}_{\mathcal{L}}$ is equivalent
to the Laver forcing.

\qquad\ \ \ 

Let $\mathcal{I}$ be an ideal on a Polish space $X$ and assume $W$ is a model
of $\mathsf{ZFC}$ extending the $\mathsf{ZFC}$ model $V.$ Given $r$ such that
$W\models r\in X$. We say that $r$ is $\mathcal{I}$\emph{-quasigeneric over
}$V$\emph{ }if $W\models r\notin B^{W}$ for every Borel set $B\in
V\cap\mathcal{I}$ (by $B^{W}$ we denote the interpretation of $B$ in $W$). In
this way, the \textsf{ctble}-quasigeneric reals are the new reals, the
$\mathcal{M}$-quasigeneric reals are the Cohen reals, the $\mathcal{N}%
$-quasigeneric reals are the random reals and the $\mathcal{K}_{\sigma}%
$-quasigeneric reals are the unbounded reals.

\qquad\ \ \ 

If $\mathcal{I}$ is an ideal on $\omega$ (or on any countable set) we define
the \emph{Mathias forcing }$\mathbb{M}\left(  \mathcal{I}\right)  $ \emph{with
respect to }$\mathcal{I}$ as the set of all pairs $\left(  s,A\right)  $ where
$s\in\left[  \omega\right]  ^{<\omega}$ and $A\in\mathcal{I}$. If $\left(
s,A\right)  ,\left(  t,B\right)  \in\mathbb{M}\left(  \mathcal{I}\right)  $
then $\left(  s,A\right)  \leq\left(  t,B\right)  $ if the following
conditions hold:

\begin{enumerate}
\item $t$ is an initial segment of $s.$

\item $B\subseteq A.$

\item $\left(  s\setminus t\right)  \cap B=\emptyset.$
\end{enumerate}

\qquad\ \ \ \ 

If $\mathcal{F}$ is a filter, then by $\mathbb{M}\left(  \mathcal{F}\right)  $
we denote $\mathbb{M}\left(  \mathcal{F}^{\ast}\right)  .$

\newpage

\section{Borel filters and ideals}

Given a filter (ideal) on $\omega,$ we may view it as a subspace of the Cantor
space and then study its topological properties. In this way, we say a filter
(ideal) is Borel ($F_{\sigma},G_{\sigma}...$) if it is is Borel ($F_{\sigma
},G_{\sigma}...$) as a subspace of $\wp\left(  \omega\right)  .$ Note that
$\mathcal{F}$ and $\mathcal{F}^{\ast}$ are homeomorphic since taking
complement is a homeomorphism of the Cantor space.\qquad\ \ \ \ \ \ \ 

\begin{lemma}
\qquad\ \ \ \ \qquad\ 

\begin{enumerate}
\item There are no closed ideals.

\item There are no $G_{\delta}$ ideals.

\item If an ideal has the Baire property then it is meager.

\item There are no meager ultrafilters (i.e. no ultrafilter has the Baire property).
\end{enumerate}
\end{lemma}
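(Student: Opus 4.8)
The plan is to isolate statement (3) as the engine and obtain the other three from it together with the density of the finite sets and the Baire category theorem. Two structural facts about any ideal $\mathcal{I}$ (which contains the finite sets and is closed under $\subseteq^{\ast}$) will be used repeatedly. First, $\mathcal{I}$ is invariant under finite changes: for every finite $F$ the homeomorphism $\phi_{F}\colon X\mapsto X\,\triangle\,F$ of $\wp(\omega)$ satisfies $\phi_{F}[\mathcal{I}]=\mathcal{I}$ (if $A\in\mathcal{I}$ then $A\,\triangle\,F\subseteq A\cup F\in\mathcal{I}$, so $A\,\triangle\,F\in\mathcal{I}$, and $\phi_{F}$ is its own inverse). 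Second, since the finite sets are dense in the Cantor space and all lie in $\mathcal{I}$, the ideal $\mathcal{I}$ is itself dense. Statement (1) is then immediate: a closed dense subset of $\wp(\omega)$ is all of $\wp(\omega)$, yet $\omega\notin\mathcal{I}$, a contradiction.

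The heart of the lemma is (3). I would argue by contradiction: assume $\mathcal{I}$ has the Baire property but is non-meager, and write $\mathcal{I}=U\,\triangle\,M$ with $U$ open and nonempty and $M$ meager. Choosing a basic clopen set $[s]\subseteq U$ gives $[s]\setminus\mathcal{I}\subseteq M$, so $\mathcal{I}$ is comeager in $[s]$. The crucial move is to promote this to global comeagerness: for any clopen set $[t]$ with $|t|=|s|$, the flip $\phi_{F}$ with $F=\{i:s(i)\neq t(i)\}$ is a homeomorphism carrying $[s]$ onto $[t]$ and fixing $\mathcal{I}$, so $\mathcal{I}$ is comeager in $[t]$ as well; since the cylinders of length $|s|$ cover $\wp(\omega)$, $\mathcal{I}$ is comeager. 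Finally, complementation $c\colon X\mapsto\omega\setminus X$ is a homeomorphism sending $\mathcal{I}$ onto its dual filter $\mathcal{I}^{\ast}$, and $\mathcal{I}\cap\mathcal{I}^{\ast}=\emptyset$ because a set and its complement cannot both belong to a proper ideal; thus $\mathcal{I}$ and $\mathcal{I}^{\ast}$ would be two disjoint comeager sets, which is impossible in a Baire space. Hence $\mathcal{I}$ is meager. I expect this promotion step to be the main obstacle: passing from comeagerness in a single cylinder to comeagerness everywhere is exactly what forces us to exploit the group action of the finite sets under $\triangle$, rather than downward closure alone.

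The remaining statements are short corollaries. For (2), a $G_{\delta}$ set has the Baire property, so a $G_{\delta}$ ideal would be meager by (3); but a dense $G_{\delta}$ set is comeager, hence non-meager, a contradiction. For (4), if an ultrafilter $\mathcal{U}$ had the Baire property then its dual ideal $\mathcal{U}^{\ast}=c[\mathcal{U}]$ would too, and (3) would make $\mathcal{U}^{\ast}$ meager, whence $\mathcal{U}=c[\mathcal{U}^{\ast}]$ is meager as well; but maximality gives $\wp(\omega)=\mathcal{U}\cup\mathcal{U}^{\ast}$, writing the Cantor space as a union of two meager sets and contradicting Baire category. Since meagerness trivially implies the Baire property, this also yields the asserted equivalence between there being no meager ultrafilters and no ultrafilter having the Baire property.
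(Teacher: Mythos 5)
Your proof is correct and follows essentially the same route as the paper: density of the finite sets for (1), and the disjointness of $\mathcal{I}$ from its comeager image under the complementation homeomorphism for (2)--(4). You in fact supply two steps the paper leaves implicit --- the promotion from comeagerness on one cylinder to global comeagerness via the finite-flip homeomorphisms $\phi_{F}$ (the paper's ``contradiction as before'' for (3) needs exactly this invariance, since complementation carries a cylinder to a different cylinder), and the Sierpi\'{n}ski argument writing $\wp\left(  \omega\right)  =\mathcal{U}\cup\mathcal{U}^{\ast}$ for (4), which the paper's proof omits entirely.
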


\begin{proof}
The first point follows since $\left[  \omega\right]  ^{<\omega}$ is a dense
subset of $\wp\left(  \omega\right)  .$ If $\mathcal{I}$ was a $G_{\delta}$
set, then by the Baire category theorem, it would be comeager, but then
$\mathcal{I}^{\ast}$ would be a comeager set disjoint with $\mathcal{I},$
which is a contradiction. If there was a non-meager ideal with the Baire
property then it would be comeager in an open set, and we would get a
contradiction as before.
\end{proof}

\ \ \ \ \ \ \ \ \ \ \ \ \ \ \ \ \ \ \ \ \ \ \ \ \ \ \ \ \ \ \ \ \ \ \ \ \ \ \ \ \ \ \ \ \qquad
\ \qquad\ \ 

In contrast to 1 and 2 of the previous lemma, there are $F_{\sigma}$ ideals
($\left[  \omega\right]  ^{<\omega}$ being the easiest example). This kind of
ideals has many interesting combinatorial and forcing properties.

\qquad\ \ \ \ \ \ 

If $X\in\left[  \omega\right]  ^{\omega}$ we define $e_{X}:\omega
\longrightarrow\omega$ as the unique increasing function whose image is $X.$
If $\mathcal{F}$ is a filter we denote by $\widetilde{\mathcal{F}}=\left\{
e_{X}\mid X\in\mathcal{F}\right\}  .$ We now define two orders in
$\omega^{\omega}.$ Let $f,g\in\omega^{\omega}$ then $f\leq g$ if and only if
$f\left(  n\right)  \leq g\left(  n\right)  $ for every $n\in\omega$ and
$f\leq^{\ast}g$ if and only if $f\left(  n\right)  \leq g\left(  n\right)  $
for almost all $n\in\omega.$ Recall that in this thesis, \textquotedblleft for
almost all\textquotedblright\ means for all except finitely many. In the same
way, we say that $f=^{\ast}g$ if $f\left(  n\right)  =g\left(  n\right)  $
holds for almost all $n\in\omega.$ We say a family $\mathcal{B}\subseteq
\omega^{\omega}$ is \emph{unbounded }if $\mathcal{B}$ is unbounded with
respect to $\leq^{\ast}$.We say that $\mathcal{P}=\left\{  P_{n}\mid
n\in\omega\right\}  $ is an \emph{interval partition }if it is a partition of
$\omega$ into consecutive intervals, by $PART$ we denote the set of all
interval partitions. The following notions are very useful for studying meager
sets in $2^{\omega}$ (or in $\wp\left(  \omega\right)  $).

\begin{definition}
\qquad\ \ \ 

\begin{enumerate}
\item A \emph{chopped real }is a pair $\left(  x,\mathcal{P}\right)  $ where
$x\in2^{\omega}$ and $\mathcal{P}$ is an interval partition. We denote by
$\mathbb{CR}$ the set of all chopped reals.

\item If $\left(  x,\mathcal{P}\right)  $ is a chopped real and $y\in
2^{\omega}$ then we say that $y$ \emph{matches }$\left(  x,\mathcal{P}\right)
$ if there are infinitely many $P\in\mathcal{P}$ such that $y\upharpoonright
P=x\upharpoonright P.$

\item We define $Match\left(  x,\mathcal{P}\right)  $ as the set of all $y$
that matches $\left(  x,\mathcal{P}\right)  $ and the set $\lnot Match\left(
x,\mathcal{P}\right)  $ is defined as the collection of all $y$ that does not
match $\left(  x,\mathcal{P}\right)  $.
\end{enumerate}
\end{definition}

\qquad\ \ \ \ \qquad\ \ 

We now have the following:

\begin{proposition}
[\cite{HandbookBlass}]The set $\left\{  \lnot Match\left(  x,\mathcal{P}%
\right)  \mid\left(  x,\mathcal{P}\right)  \in\mathbb{CR}\right\}  $ is a
cofinal set for the meager sets in $2^{\omega}.$
\end{proposition}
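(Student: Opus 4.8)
The plan is to prove the two halves of "cofinal set for the meager sets" separately: that each $\lnot Match\left(x,\mathcal{P}\right)$ is itself meager, and that for every meager $M$ there is a chopped real $\left(x,\mathcal{P}\right)$ with $M\subseteq\lnot Match\left(x,\mathcal{P}\right)$. For a finite string $t\in2^{<\omega}$ write $\left\langle t\right\rangle =\left\{z\in2^{\omega}\mid t\subseteq z\right\}$ for the basic clopen set it determines, in analogy with the cones defined earlier. For the first half I would show that $Match\left(x,\mathcal{P}\right)$ is a dense $G_{\delta}$. Writing $\mathcal{P}=\left\{P_{n}\mid n\in\omega\right\}$, for each $N$ the set $U_{N}=\left\{y\mid\exists n\geq N\ \left(y\upharpoonright P_{n}=x\upharpoonright P_{n}\right)\right\}$ is open, and it is dense since any finite condition $\left\langle t\right\rangle$ can be extended by copying $x$ on one block $P_{n}$ with $\min\left(P_{n}\right)\geq\left\vert t\right\vert$. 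As $Match\left(x,\mathcal{P}\right)=\bigcap_{N}U_{N}$ is comeager, its complement $\lnot Match\left(x,\mathcal{P}\right)$ is meager.

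For cofinality, fix a meager $M$ and write $M\subseteq\bigcup_{k\in\omega}F_{k}$ with each $F_{k}$ closed nowhere dense and $F_{0}\subseteq F_{1}\subseteq\cdots$. I would build the interval partition $\mathcal{P}$ by recursively choosing endpoints $0=a_{0}<a_{1}<\cdots$ together with the values $x\upharpoonright\left[a_{k},a_{k+1}\right)$, arranging that any $y\in F_{k}$ disagrees with $x$ on every block $P_{k'}=\left[a_{k'},a_{k'+1}\right)$ with $k'\geq k$. Granting this, since the $F_{k}$ increase and cover $M$, each $y\in M$ lies in some $F_{k}$, hence agrees with $x$ on at most the finitely many blocks $P_{0},\ldots,P_{k-1}$; thus $y\in\lnot Match\left(x,\mathcal{P}\right)$, giving $M\subseteq\lnot Match\left(x,\mathcal{P}\right)$.

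The crux is the recursive step, and here is the one genuine subtlety I expect to be the main obstacle: the matching relation compares $y$ and $x$ only on a single block and completely ignores the prefix $y\upharpoonright a_{k}$, so the usual "escape into a cone disjoint from a nowhere dense set" trick cannot be run on the prefix directly. Instead, at stage $k$ (with $a_{k}$ fixed) I must produce a single finite string $u$ such that for \emph{every} $\sigma\in2^{a_{k}}$ the cone $\left\langle \sigma^{\frown}u\right\rangle$ misses $F_{k}$. This is possible precisely because there are only finitely many such $\sigma$: enumerating them as $\sigma_{0},\ldots,\sigma_{m}$, I use nowhere density of $F_{k}$ to choose successively $u_{0},u_{1},\ldots,u_{m}$ with $\left\langle \sigma_{i}^{\frown}u_{0}^{\frown}\cdots^{\frown}u_{i}\right\rangle \cap F_{k}=\emptyset$; since extending a string only shrinks its cone, every earlier disjointness survives, so $u=u_{0}^{\frown}\cdots^{\frown}u_{m}$ serves all $\sigma_{i}$ simultaneously. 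Setting $a_{k+1}=a_{k}+\left\vert u\right\vert$ and $x\upharpoonright\left[a_{k},a_{k+1}\right)=u$, any $y\in F_{k}$ with $y\upharpoonright a_{k}=\sigma$ that agreed with $x$ on $P_{k}$ would satisfy $y\supseteq\sigma^{\frown}u$, contradicting $\left\langle \sigma^{\frown}u\right\rangle \cap F_{k}=\emptyset$. Running the same argument at each $k'\geq k$, and using $F_{k}\subseteq F_{k'}$, yields disagreement on all blocks from $k$ onward, which completes the construction and the proof.
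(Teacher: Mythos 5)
Your proof is correct and follows essentially the same route as the paper's: both build the chopped real block by block against an increasing sequence of closed nowhere dense sets, using the finiteness of $2^{a_{k}}$ to choose one string per block that "escapes" the $k$-th set below every possible prefix simultaneously. The only difference is cosmetic — you phrase the escape step via cones $\left\langle \sigma^{\frown}u\right\rangle$ while the paper phrases it via trees ($t\cup s_{n+1}\notin T_{n+1}$), and you additionally verify that each $\lnot Match\left(  x,\mathcal{P}\right)$ is itself meager, a point the paper leaves implicit.
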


\begin{proof}
Let $\left\langle T_{n}\mid n\in\omega\right\rangle $ be an increasing
sequence of well pruned subtrees of $2^{<\omega}$ such that each $\left[
T_{n}\right]  $ is a nowhere dense set. We recursively define $\mathcal{P}%
=\left\{  P_{n}\mid n\in\omega\right\}  $ and $\left\{  s_{n}\mid n\in
\omega\right\}  $ such that for all $n\in\omega$ the following holds:

\begin{enumerate}
\item $\mathcal{P}$ is an interval partition.

\item $s_{n}:P_{n}\longrightarrow2.$

\item $s_{0}\notin T_{0}.$

\item If $t\in2^{max\left(  P_{n}\right)  }$ then $t\cup s_{n+1}\notin
T_{n+1}.$
\end{enumerate}

\qquad\ \ 

This is easy to do since each $\left[  T_{n}\right]  $ is nowhere dense and
$2^{k}$ is finite for every $k\in\omega.$ Let $x=%
%TCIMACRO{\tbigcup }%
%BeginExpansion
{\textstyle\bigcup}
%EndExpansion
s_{n}.$ It is easy to see that $%
%TCIMACRO{\tbigcup \limits_{n\in\omega}}%
%BeginExpansion
{\textstyle\bigcup\limits_{n\in\omega}}
%EndExpansion
\left[  T_{n}\right]  \subseteq\lnot Match\left(  x,\mathcal{P}\right)  $
(since the sequence $\left\langle T_{n}\mid n\in\omega\right\rangle $ is
increasing, any $y\in2^{\omega}$ matching $\left(  x,\mathcal{P}\right)  $
will not be in $%
%TCIMACRO{\tbigcup \limits_{n\in\omega}}%
%BeginExpansion
{\textstyle\bigcup\limits_{n\in\omega}}
%EndExpansion
\left[  T_{n}\right]  $).
\end{proof}

\qquad\ \ 

We can now prove the following important result:

\begin{proposition}
[Talagrand, Jalali-Naini see \cite{Barty}]Let $\mathcal{F}$ be a filter on
$\omega.$ The following are equivalent:

\begin{enumerate}
\item $\mathcal{F}$ is a non-meager filter.

\item $\widetilde{\mathcal{F}}$ is an unbounded family.

\item For every increasing function $f:\omega\longrightarrow\omega$ there is
$X\in\mathcal{F}$ such that $X\cap\left[  n,f\left(  n\right)  \right]
=\emptyset$ for infinitely many $n\in\omega.$

\item If $\mathcal{P}=\left\{  P_{n}\mid n\in\omega\right\}  $ is an interval
partition then there is $X\in\mathcal{F}$ such that $X\cap P_{n}=\emptyset$
for infinitely many $n\in\omega.$
\end{enumerate}
\end{proposition}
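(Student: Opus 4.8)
The plan is to prove the four equivalences by establishing a cycle of implications, taking advantage of the fact that the conditions naturally split into two groups: (1) and (4) are "external" characterizations (topological meagerness and the interval-partition condition), while (2) and (3) are "internal" combinatorial characterizations phrased in terms of the enumerating functions. I would prove the cycle $(1)\Rightarrow(4)\Rightarrow(3)\Rightarrow(2)\Rightarrow(1)$, though several of these arrows are genuinely two-sided and one could equally well prove $(3)\Leftrightarrow(4)$ directly since both are manifestly finitary restatements of the same "avoiding a partition" phenomenon.

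\medskip

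First I would tackle $(1)\Rightarrow(4)$, which is where the preceding Proposition on chopped reals does the heavy lifting. Suppose $\mathcal{F}$ is non-meager. Since $\mathcal{F}$ is non-meager, it cannot be contained in any of the cofinal meager sets $\lnot Match(x,\mathcal{P})$; hence for \emph{every} chopped real $(x,\mathcal{P})$ there is some $X\in\mathcal{F}$ with $X\in Match(x,\mathcal{P})$, i.e. $X\upharpoonright P = x\upharpoonright P$ for infinitely many blocks $P$. Given an interval partition $\mathcal{P}=\{P_n\}$, I would choose $x$ to be the characteristic function that is $0$ on the relevant blocks (take $x$ identically $0$), so that a block $P$ with $X\upharpoonright P = x\upharpoonright P$ is exactly a block on which $X\cap P=\emptyset$. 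Matching infinitely often then yields infinitely many $n$ with $X\cap P_n=\emptyset$, which is precisely (4). The arrow $(4)\Rightarrow(3)$ is routine refinement: given an increasing $f$, build an interval partition $\mathcal{P}$ coarse enough that each interval $[n,f(n)]$ is swallowed by (at most two consecutive) blocks of $\mathcal{P}$; an $X$ avoiding infinitely many blocks then avoids infinitely many of the intervals $[n,f(n)]$ as well.

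\medskip

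For $(3)\Rightarrow(2)$ I would translate the statement about $X\in\mathcal{F}$ avoiding intervals into unboundedness of $\widetilde{\mathcal{F}}=\{e_X\mid X\in\mathcal{F}\}$. The key observation is that if $X$ has a large gap, namely $X\cap[n,f(n)]=\emptyset$, then the enumerating function $e_X$ jumps past $f(n)$, so $e_X(k)>f(n)$ for the appropriate $k$; having infinitely many such gaps forces $e_X$ to exceed any prescribed $f$ infinitely often, so no single $f$ can $\leq^\ast$-dominate all of $\widetilde{\mathcal{F}}$. Conversely, to close the loop with $(2)\Rightarrow(1)$, I would argue the contrapositive: if $\mathcal{F}$ is meager then it is covered by some $\lnot Match(x,\mathcal{P})$ by the chopped-reals Proposition, and I would extract from the fixed $(x,\mathcal{P})$ a single increasing function that dominates $e_X$ for every $X\in\mathcal{F}$, showing $\widetilde{\mathcal{F}}$ is bounded. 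Concretely, failing to match $(x,\mathcal{P})$ means that from some block onward $X$ disagrees with $x$ on every block, which caps how sparse $X$ can be and hence bounds the growth of $e_X$ by a function read off from the block-lengths of $\mathcal{P}$.

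\medskip

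The main obstacle I anticipate is the bookkeeping in $(2)\Rightarrow(1)$ (equivalently the meagerness-to-boundedness direction): turning non-matching of a chopped real into an honest pointwise/eventual domination of the enumerating functions requires care, because the enumerating function $e_X$ and the characteristic-function description of $X$ live on different index scales, and one must convert "agrees/disagrees on blocks" into a uniform bound on $e_X(k)$ in terms of $k$. The cleanest route is probably to work entirely with the interval-partition condition (4) as the hub—proving $(4)\Leftrightarrow(3)$ and $(4)\Leftrightarrow(1)$ via the chopped-reals Proposition, and $(3)\Leftrightarrow(2)$ by the gap-to-enumerator dictionary—so that each individual conversion is a short finitary estimate rather than one long chain. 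I expect no difficulty in $(1)\Rightarrow(4)$ or $(4)\Rightarrow(3)$; the delicate step is always the return to boundedness, where one must be sure the bound obtained is a single function independent of $X\in\mathcal{F}$.
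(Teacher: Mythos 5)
Your proof runs the cycle in the opposite direction from the paper's (you do $1\Rightarrow 4\Rightarrow 3\Rightarrow 2\Rightarrow 1$, the paper does $1\Rightarrow 2\Rightarrow 3\Rightarrow 4\Rightarrow 1$), and two of your arrows are sound as sketched: $(1)\Rightarrow(4)$ by matching the chopped real $\left(0,\mathcal{P}\right)$ is correct, and $(3)\Rightarrow(2)$ via the gap-to-enumerator dictionary is correct (if $X\cap\left[n,g\left(n\right)\right]=\emptyset$ then $e_{X}\left(n\right)>g\left(n\right)$, so applying (3) to an increasing majorant of a given $g$ defeats $g$). In $(4)\Rightarrow(3)$ there is a small slip in the direction of containment: if each $\left[n,f\left(n\right)\right]$ is merely covered by two consecutive blocks, then a single avoided block need not contain any interval $\left[n,f\left(n\right)\right]$, so avoiding infinitely many blocks proves nothing. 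The standard fix is to build the partition so that each block itself has the form $\left[n,f\left(n\right)\right]$ (take $\min P_{k+1}=\max P_{k}+1$ and $\max P_{k+1}\geq f\left(\min P_{k+1}\right)$), or to apply (4) to the coarsening $\left\{P_{2j}\cup P_{2j+1}\right\}$; this is cosmetic.

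The genuine gap is in $(2)\Rightarrow(1)$, i.e. meager implies bounded. You claim that failing to match $\left(x,\mathcal{P}\right)$ "caps how sparse $X$ can be." That is false: disagreeing with $x$ on a block does not force $X$ to meet that block. Whenever $x\upharpoonright P_{n}$ is not identically zero, \emph{every} set disjoint from $P_{n}$ disagrees with $x$ on $P_{n}$; hence $\lnot Match\left(x,\mathcal{P}\right)$ contains arbitrarily sparse infinite sets, and membership in it puts no bound whatsoever on $e_{X}$. What your sketch is missing is the one point in the whole theorem where the filter structure (upward closure) must be used. The correct argument, which is exactly the paper's $4\Rightarrow 1$ step read contrapositively, is: suppose $\mathcal{F}\subseteq\lnot Match\left(x,\mathcal{P}\right)$ and some $X\in\mathcal{F}$ avoided infinitely many blocks; define $A$ by $A\cap P_{n}=X\cap P_{n}$ when $X\cap P_{n}\neq\emptyset$ and $A\upharpoonright P_{n}=x\upharpoonright P_{n}$ when $X\cap P_{n}=\emptyset$; then $A\supseteq X$, so $A\in\mathcal{F}$, yet $A$ matches $\left(x,\mathcal{P}\right)$, a contradiction. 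Only after this step do you know that every $X\in\mathcal{F}$ meets all but finitely many blocks, and only then does a bound "read off from the block-lengths" exist: $e_{X}\left(j\right)\leq\max\left(P_{j+k_{X}}\right)$ for all $j$, so the single function $h\left(j\right)=\max\left(P_{2j}\right)$ satisfies $e_{X}\leq^{\ast}h$ for every $X\in\mathcal{F}$. Without invoking upward closure somewhere, no argument of this shape can succeed, precisely because $\lnot Match\left(x,\mathcal{P}\right)$ is far too large a set to control enumeration functions by itself.
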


\begin{proof}
We first prove that 1 implies 2 by contrapositive. Assume that there is
$g:\omega\longrightarrow\omega$ that is an\ upper bound for $\widetilde
{\mathcal{F}}.$ For every $n\in\omega$ define $A_{n}=\left\{  X\mid e_{X}%
<_{n}g\right\}  ,$ which is a closed nowhere dense set. Then $\mathcal{F}%
\subseteq%
%TCIMACRO{\tbigcup \limits_{n\in\omega}}%
%BeginExpansion
{\textstyle\bigcup\limits_{n\in\omega}}
%EndExpansion
A_{n}$ so $\mathcal{F}$ is a meager set.

\qquad\ \ 

We will now show that 2 implies 3. Let $f:\omega\longrightarrow\omega$ be an
increasing function, we may assume $f\left(  n\right)  >n$ for every
$n\in\omega.$ We now define the function $h:\omega\longrightarrow\omega$ given
by $h\left(  n\right)  =f^{n}\left(  n\right)  $ (where $f^{n}$ is the
$n$-iteration of $f$). Note that $n<f\left(  n\right)  <ff\left(  n\right)
<...<f^{n-1}\left(  n\right)  <f^{n}\left(  n\right)  =h\left(  n\right)  .$
Since $\widetilde{\mathcal{F}}$ is unbounded, there is $X\in\mathcal{F}$ such
that $e_{X}$ is not dominated by $h.$ Let $n$ such that $e_{X}\left(
n\right)  >h\left(  n\right)  .$ This means that the $n$-th element of $X$ is
bigger than $h\left(  n\right)  ,$ so $X$ must have empty intersection with
one of the following intervals: $\left[  n,f\left(  n\right)  \right]
,\left[  f\left(  n\right)  ,ff\left(  n\right)  \right]  ,...,\left[
f^{n-1}\left(  n\right)  ,f^{n}\left(  n\right)  \right]  .$

\qquad\ \ \ \ 

We will prove that 3 implies 4. Let $\mathcal{P}=\left\{  P_{n}\mid n\in
\omega\right\}  $ be an interval partition. We now define the function
$f:\omega\longrightarrow\omega$ given by $f\left(  n\right)  $ as the smallest
$m$ such that there is $k$ such that $P_{k}\subseteq\left[  n,m\right]  .$ By
3 there is $X\in\mathcal{F}$ such that $X\cap\left[  n,f\left(  n\right)
\right]  =\emptyset$ for infinitely many $n\in\omega.$ It is then easy to see
that $X$ has empty intersection with infinitely many of the intervals.

\qquad\ \ 

Finally, we will prove that 4 implies 1. Let $\left(  y,\mathcal{P}\right)  $
be a chopped real, we will show that $\mathcal{F}$ is not contained in $\lnot
Match\left(  y,\mathcal{P}\right)  .$ Using 4, we find $X\in\mathcal{F}$ such
that $X$ has empty intersection with infinitely many intervals. We now define
$A$ as follows: If $X\cap P_{n}\neq\emptyset$ then $A\cap P_{n}=X\cap P_{n}$
and if $X\cap P_{n}=\emptyset$ then $A\cap P_{n}=y\cap P_{n}.$ Since
$X\subseteq A$, it follows that $A\in\mathcal{F}$ and clearly $A\in
Match\left(  y,\mathcal{P}\right)  .$
\end{proof}

\newpage

\qquad\ \ \ 

For the convenience of the reader, we include the 4 most useful versions of
Talagrand's theorem.

\begin{proposition}
[Talagrand, Jalali-Naini theorem for filters]Let $\mathcal{F}$ be a filter on
$\omega.$

\begin{enumerate}
\item The following are equivalent:

\begin{enumerate}
\item $\mathcal{F}$ is a non-meager filter.

\item $\widetilde{\mathcal{F}}$ is an unbounded family.

\item If $\mathcal{P}=\left\{  P_{n}\mid n\in\omega\right\}  $ is an interval
partition then there is $X\in\mathcal{F}$ such that $X\cap P_{n}=\emptyset$
for infinitely many $n\in\omega.$
\end{enumerate}

\item The following are equivalent:

\begin{enumerate}
\item $\mathcal{F}$ is a meager filter.

\item $\widetilde{\mathcal{F}}$ is a bounded family.

\item There is an interval partition $\mathcal{P}=\left\{  P_{n}\mid
n\in\omega\right\}  $ such that if $X\in\mathcal{F}$ then $X\cap P_{n}%
\neq\emptyset$ for for almost all $n\in\omega.$
\end{enumerate}
\end{enumerate}
\end{proposition}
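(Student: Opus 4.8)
The plan is essentially to repackage the previous Proposition into the clean two-part form requested, since every equivalence here is either a restatement or the logical negation of an equivalence already established. Specifically, part 1 of the present statement is a trimmed version of the earlier Proposition (Talagrand, Jalali-Naini): condition (1)(a) is identical, (1)(b) is identical, and (1)(c) is exactly condition 4 of the previous Proposition. I would first observe that the previous Proposition proved the cycle $1 \Rightarrow 2 \Rightarrow 3 \Rightarrow 4 \Rightarrow 1$, and that condition 3 there (the statement about intervals $[n,f(n)]$) is an intermediate equivalent that I am simply dropping here. So for part 1, I would cite the previous Proposition directly: the equivalence of (1)(a), (1)(b), (1)(c) is precisely the equivalence of conditions $1$, $2$, $4$ of that result, which have already been shown equivalent.

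For part 2, the plan is to obtain each equivalence by negating the corresponding clause of part 1, using the fact that a filter is meager if and only if it is not non-meager, that $\widetilde{\mathcal{F}}$ is bounded if and only if it is not unbounded, and that the failure of the matching-type condition yields the uniform statement. The first two negations, $\text{(2)(a)} \Leftrightarrow \text{(2)(b)}$, are immediate: $\mathcal{F}$ meager is the negation of $\mathcal{F}$ non-meager, $\widetilde{\mathcal{F}}$ bounded is the negation of $\widetilde{\mathcal{F}}$ unbounded, and these two negations are equivalent because their positive forms are (by part 1). I would spell this out in one sentence.

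The only clause requiring a genuine (though short) argument is $\text{(2)(c)}$, which is \emph{not} the literal negation of $\text{(1)(c)}$: the negation of (1)(c) says merely that for \emph{some} interval partition, \emph{no} $X \in \mathcal{F}$ misses infinitely many blocks, i.e. every $X \in \mathcal{F}$ meets all but finitely many blocks. That is exactly (2)(c), so in fact (2)(c) is the negation of (1)(c) once one unwinds the quantifiers correctly. Thus I would argue: (1)(c) asserts ``for every interval partition $\mathcal{P}$ there is $X \in \mathcal{F}$ meeting $P_n = \emptyset$ infinitely often''; its negation is ``there is an interval partition $\mathcal{P}$ such that for every $X \in \mathcal{F}$, $X \cap P_n \neq \emptyset$ for almost all $n$'', which is verbatim (2)(c). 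Since (2)(a) is the negation of (1)(a) and (1)(a)$\Leftrightarrow$(1)(c), taking negations gives (2)(a)$\Leftrightarrow$(2)(c). The main (very minor) obstacle is purely bookkeeping: making sure the quantifier negation is displayed correctly, in particular that ``not (infinitely many $n$ with $X \cap P_n = \emptyset$)'' becomes ``$X \cap P_n \neq \emptyset$ for almost all $n$'', and that this holds for the \emph{single} witnessing partition produced by the negated existential. No new combinatorics is needed beyond the previous Proposition.
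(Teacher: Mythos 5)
Your proposal is correct and matches the paper's treatment: the paper states this proposition without proof, presenting it as an immediate repackaging of the preceding Talagrand--Jalali-Naini theorem, with part 1 being conditions 1, 2, 4 of that result and part 2 obtained by negation. Your careful unwinding of the quantifiers for (2)(c) is exactly the (routine) step the paper leaves to the reader, and it is handled correctly.
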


\qquad\ \ \ \ 

\bigskip

\begin{proposition}
[Talagrand, Jalali-Naini theorem for ideals]Let $\mathcal{I}$ be an ideal on
$\omega.$

\begin{enumerate}
\item The following are equivalent:

\begin{enumerate}
\item $\mathcal{I}$ is a non-meager ideal.

\item $\widetilde{\mathcal{I}^{\ast}}$ is an unbounded family.

\item If $\mathcal{P}=\left\{  P_{n}\mid n\in\omega\right\}  $ is an interval
partition then there is $X\in\mathcal{I}$ such that $P_{n}$ $\subseteq X$ for
infinitely many $n\in\omega.$
\end{enumerate}

\item The following are equivalent:

\begin{enumerate}
\item $\mathcal{I}$ is a meager ideal.

\item $\widetilde{\mathcal{I}^{\ast}}$ is a bounded family.

\item There is an interval partition $\mathcal{P}=\left\{  P_{n}\mid
n\in\omega\right\}  $ such that if $X\in\mathcal{I}$ then $X$ contains only
finitely many intervals of $\mathcal{P}.$
\end{enumerate}
\end{enumerate}
\end{proposition}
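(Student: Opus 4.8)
The plan is to reduce this entirely to the preceding \emph{Talagrand, Jalali--Naini theorem for filters} by passing to the dual filter $\mathcal{F}=\mathcal{I}^{\ast}$. The key observation, already recorded at the start of the section on Borel filters and ideals, is that complementation is a homeomorphism of $\wp(\omega)$ carrying $\mathcal{I}$ onto $\mathcal{I}^{\ast}$; hence $\mathcal{I}$ is meager (resp.\ non-meager) if and only if $\mathcal{F}$ is meager (resp.\ non-meager). This immediately lines up condition (a) of each part of the ideal statement with condition (a) of the corresponding part of the filter statement. Moreover $\widetilde{\mathcal{I}^{\ast}}$ is literally $\widetilde{\mathcal{F}}$, so condition (b) in each part is verbatim condition (b) of the filter version and requires nothing further.

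It then remains to translate condition (c) through the complementation bijection. First I would note that $X\in\mathcal{I}^{\ast}$ exactly when $A:=\omega\setminus X\in\mathcal{I}$, and that for an interval $P_{n}$ one has $X\cap P_{n}=\emptyset$ if and only if $P_{n}\subseteq A$. Reading the filter version's clause ``there is $X\in\mathcal{F}$ with $X\cap P_{n}=\emptyset$ for infinitely many $n$'' under this dictionary yields precisely ``there is $A\in\mathcal{I}$ with $P_{n}\subseteq A$ for infinitely many $n$'', which is clause (c) of part 1. Dually, ``$X\cap P_{n}\neq\emptyset$ for almost all $n$'' becomes ``$P_{n}\not\subseteq A$ for almost all $n$'', i.e.\ $A$ contains only finitely many of the intervals $P_{n}$, which is clause (c) of part 2. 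Thus each equivalence in the ideal proposition is obtained by applying the filter proposition to $\mathcal{F}=\mathcal{I}^{\ast}$ and rewriting.

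Finally, I would remark that parts 1 and 2 are logically dual: since every ideal is either meager or non-meager, the three statements of part 2 are just the negations of the three statements of part 1 (``bounded'' negates ``unbounded'', and ``there is a partition such that every $X\in\mathcal{I}$ contains only finitely many intervals'' negates ``for every partition some $X\in\mathcal{I}$ contains infinitely many intervals''), so once part 1 is established part 2 is free. The argument involves no real obstacle beyond careful bookkeeping with complements; the only point deserving a moment's attention is confirming that the homeomorphism given by complementation genuinely transports the Baire-category status of $\mathcal{I}$ to that of $\mathcal{I}^{\ast}$, which is exactly the remark made when Borel filters and ideals were introduced.
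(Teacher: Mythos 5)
Your proposal is correct and is exactly the route the paper intends: the proposition is stated without proof as an immediate reformulation of the filter version already proved, obtained by passing to the dual filter $\mathcal{F}=\mathcal{I}^{\ast}$ and using that complementation is a homeomorphism of $\wp(\omega)$, together with the translation $X\cap P_{n}=\emptyset\iff P_{n}\subseteq\omega\setminus X$. Your additional observation that part 2 is the clause-by-clause negation of part 1 is also accurate and completes the bookkeeping.
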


\qquad\qquad\ \ \qquad\ \ \ 

The $F_{\sigma}$ ideals have many interesting combinatorial properties. A very
useful way to construct such ideals is with the aid of lower semicontinuous submeasures:

\begin{definition}
We say $\varphi:\wp\left(  \omega\right)  \longrightarrow\omega\cup\left\{
\omega\right\}  $ is a \emph{lower semicontinuous submeasure} if the following hold:

\begin{enumerate}
\item $\varphi\left(  \omega\right)  =\omega.$

\item $\varphi\left(  A\right)  =0$ if and only if $A=\emptyset.$

\item $\varphi\left(  A\right)  \leq\varphi\left(  B\right)  $ whenever
$A\subseteq B.$

\item $\varphi\left(  A\cup B\right)  \leq\varphi\left(  A\right)
+\varphi\left(  B\right)  $ for every $A,B\subseteq X.$

\item (lower semicontinuity) if $A\subseteq\omega$ then $\varphi\left(
A\right)  =sup\left\{  \varphi\left(  A\cap n\right)  \mid n\in\omega\right\}
.$
\end{enumerate}
\end{definition}

\qquad\ \ \qquad\ \ 

Given a lower semicontinuous submeasure $\varphi$ we define \textsf{Fin}%
$\left(  \varphi\right)  $ as the family of those subsets of $\omega$ with
finite submeasure. We then have the following:

\begin{lemma}
If $\varphi:\wp\left(  \omega\right)  \longrightarrow\omega\cup\left\{
\omega\right\}  $ is a lower semicontinuous submeasure then \textsf{Fin}%
$\left(  \varphi\right)  $ is an $F_{\sigma}$-ideal.
\end{lemma}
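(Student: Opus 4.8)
The plan is to show that $\mathsf{Fin}(\varphi)$ is an ideal and then that it is $F_\sigma$. For the ideal axioms, I would verify the three conditions of the definition directly from the properties of $\varphi$. That $\emptyset\in\mathsf{Fin}(\varphi)$ is immediate from $\varphi(\emptyset)=0$, and $\omega\notin\mathsf{Fin}(\varphi)$ follows from $\varphi(\omega)=\omega$. For downward closure under $\subseteq^\ast$, suppose $A\in\mathsf{Fin}(\varphi)$ and $B\subseteq^\ast A$; then $B=(B\cap A)\cup(B\setminus A)$ where $B\setminus A$ is finite. By monotonicity $\varphi(B\cap A)\le\varphi(A)<\omega$, and any finite set has finite submeasure (a one-point argument using subadditivity plus the fact that singletons have finite submeasure, since $\varphi(\{n\})\le\varphi(\omega)$ need not be finite, so I would instead use lower semicontinuity: $\varphi$ of a finite set $F$ equals $\varphi(F\cap n)$ for $n>\max F$, which is a fixed natural number because the range on finite sets lands in $\omega$—more carefully, lower semicontinuity gives $\varphi(F)=\sup_k\varphi(F\cap k)$, and this sup stabilizes once $k>\max F$, forcing $\varphi(F)\in\omega$). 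Then subadditivity gives $\varphi(B)\le\varphi(B\cap A)+\varphi(B\setminus A)<\omega$. Closure under finite unions is direct subadditivity.

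For the $F_\sigma$ part, the key observation is the reformulation
\[
\mathsf{Fin}(\varphi)=\bigcup_{k\in\omega}\{A\subseteq\omega\mid\varphi(A)\le k\}.
\]
So it suffices to show each set $C_k=\{A\mid\varphi(A)\le k\}$ is closed in $\wp(\omega)$ with the Cantor topology. Here I would invoke lower semicontinuity decisively: the condition $\varphi(A)\le k$ is equivalent, by property (5), to $\varphi(A\cap n)\le k$ for every $n\in\omega$, i.e.
\[
C_k=\bigcap_{n\in\omega}\{A\subseteq\omega\mid\varphi(A\cap n)\le k\}.
\]
Each set in this intersection depends only on $A\cap n$, a finite amount of information, hence is clopen (it is a finite union of basic clopen cylinders). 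An intersection of clopen sets is closed, so $C_k$ is closed and $\mathsf{Fin}(\varphi)$ is a countable union of closed sets.

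The main obstacle, and the step deserving the most care, is the translation $\varphi(A)\le k\iff\forall n\,(\varphi(A\cap n)\le k)$; this is exactly where lower semicontinuity earns its name, since without (5) the sublevel sets need not be closed (the submeasure could jump up in the limit). The forward direction is monotonicity, and the reverse direction is precisely the sup formula in (5). I would also double-check the subtle point that finite sets genuinely have finite submeasure, since the whole argument that $\mathsf{Fin}(\varphi)$ contains all finite sets (required by the paper's convention that ideals contain $[\omega]^{<\omega}$) rests on it; this again follows from lower semicontinuity applied to a finite set, whose sublevels stabilize. Everything else is a routine unwinding of the submeasure axioms.
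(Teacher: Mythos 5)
Your proof follows exactly the same route as the paper: write $\mathsf{Fin}(\varphi)=\bigcup_{k}C_{k}$ with $C_{k}=\{A\mid\varphi(A)\leq k\}$ and observe that lower semicontinuity makes each $C_{k}$ closed. The paper asserts the closedness in one line; your unpacking of $C_{k}$ as $\bigcap_{n}\{A\mid\varphi(A\cap n)\leq k\}$, an intersection of sets each depending only on $A\cap n$ and hence clopen, is the right justification, and this is the substance of the lemma.

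One sub-argument, however, is circular: you claim that lower semicontinuity forces $\varphi(F)\in\omega$ for finite $F$ because $\sup_{k}\varphi(F\cap k)$ ``stabilizes'' once $k>\max F$. It stabilizes at the value $\varphi(F)$ itself, so this says nothing about finiteness. Indeed, the function with $\varphi(\emptyset)=0$ and $\varphi(A)=\omega$ for every nonempty $A$ satisfies all five clauses of the paper's definition of a lower semicontinuous submeasure, yet gives every singleton infinite submeasure, and then $\mathsf{Fin}(\varphi)=\{\emptyset\}$ is not an ideal in the paper's sense (it does not contain the finite sets). What is actually needed is the standard --- and here tacit --- extra requirement that $\varphi(\{n\})<\omega$ for each $n$, from which finiteness on all finite sets follows by subadditivity; the paper's proof silently assumes this by not discussing the ideal axioms at all. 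So you were right to flag this as the delicate point, but the check you propose does not close it. The rest of your argument (the ideal axioms given finiteness on finite sets, and the $F_{\sigma}$ decomposition) is correct.
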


\begin{proof}
Given $n\in\omega$ define $\mathcal{C}_{n}=\left\{  A\mid\varphi\left(
A\right)  \leq n\right\}  $ which is a closed set by lower semicontinuity.
Clearly \textsf{Fin}$\left(  \varphi\right)  =%
%TCIMACRO{\tbigcup \limits_{n\in\omega}}%
%BeginExpansion
{\textstyle\bigcup\limits_{n\in\omega}}
%EndExpansion
\mathcal{C}_{n}.$
\end{proof}

\qquad\ \ 

It is a very interesting result of Mazur that the converse of the previous
lemma is also true: if $\mathcal{I}$ is an $F_{\sigma}$-ideal then there is a
lower semicontinuous submeasure $\varphi$ such that $\mathcal{I}=$
\textsf{Fin}$\left(  \varphi\right)  .$ Such $\varphi$ is closely related with
the representation of $\mathcal{I}$ as an increasing union of compact sets.

\begin{proposition}
[Mazur \cite{Mazur}]$\mathcal{I}$ is an $F_{\sigma}$-ideal if and only if
there is a lower semicontinuous submeasure such that $\mathcal{I}=$
\textsf{Fin}$\left(  \varphi\right)  .$
\end{proposition}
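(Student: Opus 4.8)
The backward direction is exactly the preceding lemma (if $\varphi$ is a lower semicontinuous submeasure then $\mathsf{Fin}(\varphi)$ is an $F_{\sigma}$-ideal), so the plan is to prove only the forward direction: given an $F_{\sigma}$-ideal $\mathcal{I}$, I would produce a lower semicontinuous submeasure $\varphi$ with $\mathcal{I}=\mathsf{Fin}(\varphi)$.

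First I would fix a convenient representation. Writing $\mathcal{I}=\bigcup_n C_n$ with each $C_n$ closed (hence compact in $\wp(\omega)$) and increasing, I would replace the $C_n$ by an increasing sequence $\{K_n\mid n\in\omega\}$ of compact subsets of $\wp(\omega)$ that are \emph{hereditary} (closed under $\subseteq$), contain $\emptyset$, still satisfy $\bigcup_n K_n=\mathcal{I}$, and obey the \emph{doubling} condition $A,B\in K_n\Rightarrow A\cup B\in K_{n+1}$. To build them I take downward closures and pairwise unions: let $K_0$ be the downward closure of $C_0$ and $K_{n+1}$ the downward closure of $\{A\cup B\mid A,B\in K_n\cup C_{n+1}\}$. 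The downward closure of a compact set is compact (it is the image of a compact set under the continuous union map, resp. a projection of $\{(A,B)\mid B\subseteq A\}\cap(\wp(\omega)\times K)$), and since $\mathcal{I}$ is an ideal each $K_n\subseteq\mathcal{I}$; as $C_n\subseteq K_n$ this gives $\bigcup_n K_n=\mathcal{I}$.

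Next I would define the submeasure on finite sets and extend it by lower semicontinuity. For finite $s$ put $\mu(s)=\min\{n\mid s\in K_n\}+1$ when $s\neq\emptyset$ and $\mu(\emptyset)=0$ (the minimum exists because $s\in\mathcal{I}=\bigcup_n K_n$). Heredity makes $\mu$ monotone, and the doubling condition yields subadditivity: if $s\in K_m$, $t\in K_n$ with $m\le n$ then $s\cup t\in K_{n+1}$, so $\mu(s\cup t)\le n+2\le\mu(s)+\mu(t)$. I then set $\varphi(A)=\sup_n\mu(A\cap n)\in\omega\cup\{\omega\}$, the supremum being a limit by monotonicity. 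Verifying the clauses of the definition is routine: monotonicity, subadditivity and lower semicontinuity all pass to the supremum coordinatewise because $(A\cup B)\cap n=(A\cap n)\cup(B\cap n)$ and $A\cap n$ is finite, while $\varphi(A)=0$ forces $A=\emptyset$, since any $i\in A$ contributes $\mu(\{i\})\ge 1$ to $\varphi(A)$.

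The two places where the hypotheses really bite — and where I expect the only genuine work — are the equality $\varphi(\omega)=\omega$ and the identity $\mathsf{Fin}(\varphi)=\mathcal{I}$, both resting on compactness. If $\varphi(\omega)$ were finite, say $\le M$, then every initial segment $\{0,\dots,n-1\}$ would lie in the fixed compact set $K_{M}$; since these segments converge to $\omega$ in $\wp(\omega)$ and $K_M$ is closed, we would get $\omega\in K_M\subseteq\mathcal{I}$, contradicting $\omega\notin\mathcal{I}$. The same argument gives $\mathsf{Fin}(\varphi)\subseteq\mathcal{I}$: if $\varphi(A)\le M$ then $A\cap n\in K_M$ for all $n$, and $A\cap n\to A$ forces $A\in K_M\subseteq\mathcal{I}$. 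The reverse inclusion is immediate, for $A\in K_N$ together with heredity gives $\mu(A\cap n)\le N+1$ for all $n$, so $\varphi(A)\le N+1<\omega$. The main obstacle is therefore not any single estimate but the bookkeeping in the representation step — securing hereditary compact layers closed under doubling while preserving the union — after which the submeasure and its verification follow mechanically.
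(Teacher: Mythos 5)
Your proof is correct and takes essentially the same route as the paper's: represent $\mathcal{I}$ as an increasing union of hereditary compact sets satisfying the doubling condition, define the submeasure on a finite set as (one plus) the index of the first level containing it, and extend by lower semicontinuity. The only difference is one of detail, not of method: the paper merely asserts that the representation exists and that the verification is easy, whereas you actually construct the layers $K_n$ and carry out the compactness arguments showing $\varphi(\omega)=\omega$ and $\mathsf{Fin}(\varphi)\subseteq\mathcal{I}$.
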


\begin{proof}
Let $\mathcal{I}=%
%TCIMACRO{\tbigcup \limits_{n\in\omega}}%
%BeginExpansion
{\textstyle\bigcup\limits_{n\in\omega}}
%EndExpansion
\mathcal{C}_{n}$ where $\left\langle \mathcal{C}_{n}\right\rangle _{n\in
\omega}$ is an increasing union of compact sets such that each $\mathcal{C}%
_{n}$ is closed under subsets and if $X_{1},X_{2}\in\mathcal{C}_{n}$ then
$X_{1}\cup X_{2}\in\mathcal{C}_{n+1}$ (note every $F_{\sigma}$ ideal can be
represented in this way). \ Define $\varphi:\wp\left(  \omega\right)
\longrightarrow\omega\cup\left\{  \infty\right\}  $ as $\varphi\left(
s\right)  =min\left\{  n+1\mid s\in\mathcal{C}_{n}\right\}  $ if $s$ is a
finite set and $\varphi\left(  A\right)  =sup\left\{  \varphi\left(  A\cap
n\right)  \mid n\in\omega\right\}  $ in case $A$ is an infinite set. It is
easy to see that $\varphi$ is a lower semicontinuous submeasure and
$\mathcal{I}=$ \textsf{Fin}$\left(  \varphi\right)  .$
\end{proof}

\qquad\ \ \ 

We will now define some Borel ideals that will be used in this thesis. For
every $n\in\omega$ we define $C_{n}=\left\{  \left(  n,m\right)  \mid
m\in\omega\right\}  $ and if $f:\omega\longrightarrow\omega$ let $D\left(
f\right)  =\left\{  \left(  n,m\right)  \mid m\leq n\right\}  .$

\begin{definition}
We define the following ideals:

\begin{enumerate}
\item \textsf{FIN }is the ideal of all finite subsets of $\omega.$

\item $\mathcal{ED}$ is the ideal on $\omega\times\omega$ generated by
$\left\{  C_{n}\mid n\in\omega\right\}  $ and (the graphs of) functions from
$\omega$ to $\omega.$

\item \textsf{FIN}$\times$\textsf{FIN} is the ideal on $\omega\times\omega$
generated by $\left\{  C_{n}\mid n\in\omega\right\}  \cup\left\{  D\left(
f\right)  \mid f\in\omega^{\omega}\right\}  .$

\item $\emptyset\times$\textsf{FIN} is the ideal on $\omega\times\omega$
generated by $\left\{  D\left(  f\right)  \mid f\in\omega^{\omega}\right\}  .$

\item \textsf{conv} is the ideal on $\left[  0,1\right]  \cap\mathbb{Q}$
generated by all sequences converging to a real number.

\item \textsf{nwd} is the ideal on $\mathbb{Q}$ generated by all nowhere dense sets.

\item The \emph{summable ideal} is defined as $\mathcal{J}_{1/n}%
=\{A\subseteq\omega\mid%
%TCIMACRO{\tsum \limits_{n\in A}}%
%BeginExpansion
{\textstyle\sum\limits_{n\in A}}
%EndExpansion
\frac{1}{n+1}<\omega\}.$
\end{enumerate}
\end{definition}

\qquad\ \ 

With the exception of \textsf{FIN }and $\emptyset\times$\textsf{FIN}$,$ all of
the previous ideals are tall. The ideals $\mathcal{ED}$ and $\mathcal{J}%
_{1/n}$ are $F_{\sigma}$ while the others are not.

\begin{definition}
If $a\subseteq\omega^{<\omega}$ we define $\pi\left(  a\right)  =\left\{
f\in\omega^{\omega}\mid\exists^{\infty}n\left(  f\upharpoonright n\in
a\right)  \right\}  .$ Let $\mathcal{I}$ be a $\sigma$-ideal on $\omega
^{\omega}$ (or $2^{\omega}$). We define $tr\left(  \mathcal{I}\right)  $
\emph{the trace ideal of }$\mathcal{I}$ (which will be an ideal on
$\omega^{<\omega}$ or $2^{<\omega}$) where $a\in tr\left(  \mathcal{I}\right)
$ if and only if $\pi\left(  a\right)  \in\mathcal{I}.$
\end{definition}

\qquad\ \ \ 

Note that if $a\subseteq\omega^{<\omega}$ then $\pi\left(  a\right)  $ is a
$G_{\delta}$ set (furthermore, every $G_{\delta}$ set is of this form). While
both $tr\left(  \mathcal{M}\right)  $ and $tr\left(  \mathcal{N}\right)  $ are
Borel, in general, the trace ideals are not Borel (see
\cite{ForcingwithQuotients} for more information).

\qquad\ \ 

\newpage

\section{\textsf{MAD} Families}

A family $\mathcal{A}\subseteq\left[  \omega\right]  ^{\omega}$ is
\emph{almost disjoint (AD) }if the intersection of any two different elements
of $\mathcal{A}$ is finite, a \textsf{MAD }\emph{family}\textsf{\emph{ }}is
a\textsf{ }maximal almost disjoint family. Almost disjoint families and
\textsf{MAD} families have become very important in set theory, topology and
functional analysis (see \cite{AlmostDisjointFamiliesandTopology}). It is very
easy to prove that the Axiom of Choice implies the existence of \textsf{MAD}
families. However, constructing \textsf{MAD} families with special
combinatorial or topological properties is a very difficult task without the
an additional hypothesis beyond \textsf{ZFC}. Constructing models of set
theory where there are no certain kinds of \textsf{MAD} families is also very
difficult. We would like to mention some important examples regarding the
existence or non-existence of special \textsf{MAD} families:

\begin{enumerate}
\item (Simon \cite{ACompactFrechetSpacewhoseSquareisnotFrechet}) There is a
\textsf{MAD} family which can be partitioned into two nowhere \textsf{MAD}
families. \footnote{$\mathcal{A}$ is \emph{nowhere \textsf{MAD} }if for every
$X\in\mathcal{I}\left(  \mathcal{A}\right)  ^{+}$ there is $Y\in\left[
X\right]  ^{\omega}$ such that $Y$ is almost disjoint with every element of
$\mathcal{A}.$}

\item (Mr\'{o}wka \cite{Mrowka}) There is a \textsf{MAD} family for which its
$\Psi$-space has a unique compactification.

\item (Raghavan \cite{ThereisavanDouwenMADfamily}) There is a van Douwen
\textsf{MAD} family.\footnote{A family of functions $\mathcal{A}%
\subseteq\omega^{\omega}$ is a \emph{Van Douwen }\textsf{MAD }family if for
every infinite partial function $f$ from $\omega$ to $\omega$ there is
$h\in\mathcal{A}$ such that $\left\vert f\cap h\right\vert =\omega.$}

\item (Raghavan \cite{AModelwithnoStronglySeparableAlmostDisjointFamilies})
There is a model with no Shelah-Stepr\={a}ns \textsf{MAD} families (this
notion will be defined in the fourth chapter).
\end{enumerate}

\qquad\ \ 

In this thesis, we will add another result to the list, we will show that
there is a $+$-Ramsey \textsf{MAD} family. In this chapter we will recall the
basic properties of \textsf{AD} families. Note that an \textsf{AD} family
$\mathcal{A}$ is \textsf{MAD} if and only if for every $X\in\left[
\omega\right]  ^{\omega}$ there is $A\in\mathcal{A}$ such that $A\cap X$ is
infinite.\qquad\ \ 

\begin{definition}
If $\mathcal{A}$ is an \textsf{AD} family we define:

\begin{enumerate}
\item $\mathcal{I}\left(  \mathcal{A}\right)  $ is the ideal generated by
$\mathcal{A}.$ In other words, $X\in\mathcal{I}\left(  \mathcal{A}\right)  $
if and only if there are $A_{0},...,A_{n}\in\mathcal{A}$ such that
$X\subseteq^{\ast}A_{0}\cup...\cup A_{n}.$

\item $\mathcal{I}\left(  \mathcal{A}\right)  ^{++}$ is the set of all
$X\subseteq\omega$ for which there is $\mathcal{B\in}$ $\left[  \mathcal{A}%
\right]  ^{\omega}$ such that if $A\in\mathcal{B}$ then $X\cap A$ is infinite.

\item $\mathcal{A}^{\perp}$ is the set of all $X\subseteq\omega$ such that
$\left\vert X\cap A\right\vert <\omega$ for every $A\in\mathcal{A}.$
\end{enumerate}
\end{definition}

\qquad\qquad\ \ \ 

Recall that $\mathcal{I}\left(  \mathcal{A}\right)  ^{+}$ is the collection of
all subsets of $\omega$ that are not in $\mathcal{I}\left(  \mathcal{A}%
\right)  .$ Then we have the following:

\begin{lemma}
If $\mathcal{A}$ is an AD family then the following holds:

\begin{enumerate}
\item $\mathcal{A}$ is a \textsf{MAD} family if and only if $\mathcal{I}%
\left(  \mathcal{A}\right)  $ is a tall ideal.

\item $\mathcal{I}\left(  \mathcal{A}\right)  ^{++}\subseteq\mathcal{I}\left(
\mathcal{A}\right)  ^{+}.$

\item $\mathcal{A}$ is a \textsf{MAD} family if and only if $\mathcal{I}%
\left(  \mathcal{A}\right)  ^{++}=\mathcal{I}\left(  \mathcal{A}\right)
^{+}.$
\end{enumerate}
\end{lemma}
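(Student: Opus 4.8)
The plan is to prove the three parts in order, using throughout the characterization recalled just before the definition — that an AD family $\mathcal{A}$ is \textsf{MAD} if and only if every $X\in\left[\omega\right]^{\omega}$ meets some $A\in\mathcal{A}$ in an infinite set — together with two elementary observations that I would isolate at the start. First, a finite union of sets each meeting $X$ finitely still meets $X$ finitely. Second, whenever $X\subseteq^{\ast}A_{0}\cup\cdots\cup A_{n}$ one has $X\cap A\subseteq^{\ast}\bigcup_{i\leq n}(A_{i}\cap A)$, and the right-hand side is finite for any $A\in\mathcal{A}\setminus\left\{A_{0},\ldots,A_{n}\right\}$ by almost disjointness. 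Both parts 2 and 3 reduce to these two facts.

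For part 1, I would argue each implication by contraposition against the \textsf{MAD} characterization. If $\mathcal{A}$ is \textsf{MAD} and $X$ is infinite, the witnessing $A\in\mathcal{A}$ already lies in $\mathcal{I}\left(\mathcal{A}\right)$ and meets $X$ infinitely, so $\mathcal{I}\left(\mathcal{A}\right)$ is tall. Conversely, if $\mathcal{A}$ is not \textsf{MAD}, I pick an infinite $X$ with $X\cap A$ finite for every $A\in\mathcal{A}$; by the first observation every element of $\mathcal{I}\left(\mathcal{A}\right)$ meets $X$ finitely, so $X$ witnesses that $\mathcal{I}\left(\mathcal{A}\right)$ is not tall. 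For part 2, I take $X\in\mathcal{I}\left(\mathcal{A}\right)^{++}$ with witnessing infinite $\mathcal{B}\subseteq\mathcal{A}$ and assume toward a contradiction that $X\in\mathcal{I}\left(\mathcal{A}\right)$, say $X\subseteq^{\ast}A_{0}\cup\cdots\cup A_{n}$. Since $\mathcal{B}$ is infinite I may choose $A\in\mathcal{B}\setminus\left\{A_{0},\ldots,A_{n}\right\}$, and the second observation then forces $X\cap A$ to be finite, contradicting the defining property of $\mathcal{B}$.

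Part 3 carries the real content, and the forward implication is the step I expect to be the main obstacle. By part 2 it suffices to show that $\mathcal{I}\left(\mathcal{A}\right)^{+}\subseteq\mathcal{I}\left(\mathcal{A}\right)^{++}$ holds exactly when $\mathcal{A}$ is \textsf{MAD}. For the forward direction, given $X\in\mathcal{I}\left(\mathcal{A}\right)^{+}$ I must upgrade ``$X$ meets some $A\in\mathcal{A}$ infinitely'' (tallness, from part 1) to ``$X$ meets infinitely many''. The key move is to suppose only $A_{0},\ldots,A_{n}$ meet $X$ infinitely and consider $X\setminus(A_{0}\cup\cdots\cup A_{n})$: by maximality applied once more, this set must meet some $A\in\mathcal{A}$ infinitely unless it is finite. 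The former is impossible since $X\setminus(A_{0}\cup\cdots\cup A_{n})$ is disjoint from each $A_{i}$ and any $A$ meeting it infinitely would be a new set meeting $X$ infinitely, so the set is finite, giving $X\subseteq^{\ast}A_{0}\cup\cdots\cup A_{n}$ and contradicting $X\in\mathcal{I}\left(\mathcal{A}\right)^{+}$. For the converse, if $\mathcal{A}$ is not \textsf{MAD} I produce an infinite $X$ almost disjoint from every member of $\mathcal{A}$; such $X$ lies in $\mathcal{I}\left(\mathcal{A}\right)^{+}$ — it cannot be almost covered by finitely many members, else the first observation would make it finite — but it plainly fails to lie in $\mathcal{I}\left(\mathcal{A}\right)^{++}$, so the two families differ.
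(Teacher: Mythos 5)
Your proof is correct and follows essentially the same route as the paper's: part 2 by exhibiting a member of the witnessing infinite subfamily outside any given finite list, and the forward direction of part 3 by showing that if only finitely many $A_{0},\ldots,A_{n}$ meet $X$ infinitely then $X\setminus(A_{0}\cup\cdots\cup A_{n})$ must be finite by maximality, forcing $X\in\mathcal{I}\left(\mathcal{A}\right)$. The only (immaterial) difference is in the converse of part 3, where the paper routes through tallness and part 1 while you directly exhibit a set in $\mathcal{I}\left(\mathcal{A}\right)^{+}\setminus\mathcal{I}\left(\mathcal{A}\right)^{++}$ when $\mathcal{A}$ is not maximal.
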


\begin{proof}
The first part follows directly by the definitions. Let $X\in\mathcal{I}%
\left(  \mathcal{A}\right)  ^{++}$ and $A_{0},...,A_{n}\in\mathcal{A}$. We
need to see that $X$ is not almost contained in $A_{0}\cup...\cup A_{n}.$
Since $X\in\mathcal{I}\left(  \mathcal{A}\right)  ^{++}$ then there is
$B\in\mathcal{A}\setminus\left\{  A_{0},...,A_{n}\right\}  $ such that $X\cap
B$ is infinite, so $X$ can not be almost contained in $A_{0}\cup...\cup
A_{n}.$

\qquad\ \ 

Now assume $\mathcal{A}$ is a \textsf{MAD} family and let $X\notin
\mathcal{I}\left(  \mathcal{A}\right)  ^{++}$ we must show $X\notin
\mathcal{I}\left(  \mathcal{A}\right)  ^{+}.$ Let $\mathcal{B\subseteq A}$ be
the collection of all elements of $\mathcal{A}$ that $X$ intersects
infinitely. We then know $\mathcal{B}$ is finite, lets say $\mathcal{B}%
=\left\{  A_{i}\mid i<n\right\}  $ and define $Y=X\setminus%
%TCIMACRO{\tbigcup \limits_{i<n}}%
%BeginExpansion
{\textstyle\bigcup\limits_{i<n}}
%EndExpansion
A_{i}.$ Note that no element of $\mathcal{A}$ intersects $Y$ infinitely and
since $\mathcal{A}$ is \textsf{MAD}, $Y$ must be finite so \thinspace
$X\subseteq^{\ast}%
%TCIMACRO{\tbigcup \limits_{i<n}}%
%BeginExpansion
{\textstyle\bigcup\limits_{i<n}}
%EndExpansion
A_{i}$ and then $X\notin\mathcal{I}\left(  \mathcal{A}\right)  ^{+}.$ For the
other implication, assume $\mathcal{I}\left(  \mathcal{A}\right)
^{++}=\mathcal{I}\left(  \mathcal{A}\right)  ^{+}$ we want to prove
$\mathcal{I}\left(  \mathcal{A}\right)  $ is tall but this immediate if
$\mathcal{I}\left(  \mathcal{A}\right)  ^{++}=\mathcal{I}\left(
\mathcal{A}\right)  ^{+}.$
\end{proof}

\qquad\ \ 

The following lemma establishes the basic properties of the ideals generated
by AD families.

\begin{lemma}
[Mathias \cite{HappyFamilies}]Let $\mathcal{A}$ be an AD family then:

\begin{enumerate}
\item $\mathcal{I}\left(  \mathcal{A}\right)  $ is meager.

\item $\mathcal{I}\left(  \mathcal{A}\right)  $ is selective (hence $P^{+}$
and $Q^{+}$).

\item $\mathcal{I}\left(  \mathcal{A}\right)  $ is not a $P$-ideal.

\item $\mathcal{I}\left(  \mathcal{A}\right)  $ is not $\omega$-hitting.
\end{enumerate}
\end{lemma}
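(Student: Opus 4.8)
The plan is to isolate one combinatorial fact about $\mathcal{I}(\mathcal{A})$, read off points (1), (3) and (4) from it essentially for free, and then treat selectivity (2) on its own, since it is the only part needing a real construction. Fix a countably infinite set of \emph{distinct} members $\{a_n : n\in\omega\}\subseteq\mathcal{A}$ (available since the AD families we consider are infinite). The \textbf{core observation} is: if $Y\in\mathcal{I}(\mathcal{A})$ then $Y$ has infinite intersection with only finitely many of the $a_n$. Indeed, $Y\subseteq^{\ast}A_0\cup\cdots\cup A_k$ for some $A_0,\dots,A_k\in\mathcal{A}$, and whenever $a_n\notin\{A_0,\dots,A_k\}$ we have $Y\cap a_n\subseteq^{\ast}\bigcup_{i\le k}(A_i\cap a_n)$, which is finite by almost disjointness; since the $a_n$ are distinct, $a_n\in\{A_0,\dots,A_k\}$ for at most $k+1$ values of $n$.

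From the core observation, (4) is immediate: the family $\{a_n:n\in\omega\}$ witnesses that $\mathcal{I}(\mathcal{A})$ is not $\omega$-hitting, as no $Y\in\mathcal{I}(\mathcal{A})$ meets all of the $a_n$ infinitely. Point (3) is equally quick: if some $Y\in\mathcal{I}(\mathcal{A})$ were a pseudo-union of $\{a_n\}$, i.e. $a_n\subseteq^{\ast}Y$ for all $n$, then $Y\cap a_n$ would be infinite for every $n$, again contradicting the core observation, so $\mathcal{I}(\mathcal{A})$ is not a $P$-ideal. For (1), let $G$ be the set of all $X$ such that $X\cap a_n$ is infinite for infinitely many $n$. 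Each $\{X : |X\cap a_n|=\omega\}$ is a dense $G_{\delta}$ (one may always enlarge a finite condition to capture one more point of $a_n$), hence each $\bigcup_{n\ge m}\{X : |X\cap a_n|=\omega\}$ is comeager, and therefore $G=\bigcap_{m}\bigcup_{n\ge m}\{X : |X\cap a_n|=\omega\}$ is comeager. By the core observation $G\cap\mathcal{I}(\mathcal{A})=\emptyset$, so $\mathcal{I}(\mathcal{A})$ sits inside the meager set $\wp(\omega)\setminus G$ and is meager.

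The substantial part is (2). By the lemma already proved (selective implies $P^{+}$ and $Q^{+}$) it suffices to verify selectivity. Given a $\subseteq$-decreasing sequence $\{Y_n:n\in\omega\}\subseteq\mathcal{I}(\mathcal{A})^{+}$, set $\mathcal{B}_n=\{A\in\mathcal{A} : |Y_n\cap A|=\omega\}$; these decrease with $n$, and I split on $\mathcal{B}_{\infty}=\bigcap_n\mathcal{B}_n$. In both cases I build $x_0<x_1<\cdots$ with $x_0\in Y_0$ and $x_{n+1}\in Y_{x_n}$; since $\{Y_n\}$ is decreasing, $X=\{x_n\}\subseteq Y_0$ and $X\setminus(x_n+1)\subseteq Y_{x_n}$ hold automatically, so the only issue is to force $X\in\mathcal{I}(\mathcal{A})^{+}$. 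If $\mathcal{B}_{\infty}$ is infinite, fix distinct $D_k\in\mathcal{B}_{\infty}$; then $Y_n\cap D_k$ is infinite for all $n,k$, so using a bookkeeping function that meets every $k$ infinitely often I always choose $x_{n+1}\in Y_{x_n}\cap D_{k(n)}$ above $x_n$, making $X\cap D_k$ infinite for every $k$ and hence $X\in\mathcal{I}(\mathcal{A})^{++}\subseteq\mathcal{I}(\mathcal{A})^{+}$ by the earlier lemma. If $\mathcal{B}_{\infty}$ is finite, I first replace each $Y_n$ by $Y_n\setminus\bigcup\mathcal{B}_{\infty}$ (still positive, since a finite union from $\mathcal{A}$ cannot almost-cover a positive set), reducing to $\mathcal{B}_{\infty}=\emptyset$; now every $A\in\mathcal{A}$ drops out of some $\mathcal{B}_m$, so any diagonal selector as above has only finitely many $x_m$ in a fixed $A$. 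Thus $X\in\mathcal{A}^{\perp}$, and an infinite set meeting every member of $\mathcal{A}$ finitely cannot be almost covered by finitely many of them, so $X\in\mathcal{I}(\mathcal{A})^{+}$.

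I expect the main obstacle to be exactly the positivity clause $X\in\mathcal{I}(\mathcal{A})^{+}$ in the selectivity argument: because $\mathcal{I}(\mathcal{A})^{+}$ is not closed under intersections, a naive fusion that merely shrinks $Y_{x_n}$ need not keep the reservoir positive, and this is precisely what the dichotomy on $\mathcal{B}_{\infty}$ is designed to avoid — in one branch positivity comes from hitting infinitely many persistent members, in the other from the selector being almost disjoint from all of $\mathcal{A}$. The remaining bookkeeping (strict monotonicity of the $x_n$ and visiting each $D_k$ infinitely often) is routine once the dichotomy is set up.
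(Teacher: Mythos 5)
Your proof is correct, and points (3) and (4) are obtained exactly as in the paper (no member of $\mathcal{I}(\mathcal{A})$ meets infinitely many distinct elements of $\mathcal{A}$ infinitely often). For (1) and (2) you take genuinely different routes. For meagerness the paper builds an interval partition $\{P_n\}$ with $P_n\cap A_i\neq\emptyset$ for $i\leq n$ and quotes the Talagrand--Jalali-Naini theorem, whereas you exhibit the comeager set $G$ of all $X$ meeting infinitely many $a_n$ infinitely and observe $\mathcal{I}(\mathcal{A})\cap G=\emptyset$; your version is self-contained (no appeal to the characterization of meager filters), the paper's is shorter given that Talagrand is already available. For selectivity the paper's dichotomy is on whether the decreasing sequence $\mathcal{Y}$ has a pseudointersection in $\mathcal{A}^{\perp}$: if yes, it diagonalizes inside that pseudointersection; if no, it recursively extracts pseudointersections $B_n$ of $\mathcal{Y}$ lying inside pairwise distinct members $A_n\in\mathcal{A}$ and diagonalizes through them with a bookkeeping function. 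Your dichotomy on $\mathcal{B}_{\infty}=\bigcap_n\{A\in\mathcal{A}:|Y_n\cap A|=\omega\}$ reorganizes this: when $\mathcal{B}_{\infty}$ is infinite you diagonalize directly through its members (needing only that $Y_n\cap D_k$ is infinite for all $n,k$, not that the $D_k$ contain pseudointersections of the whole sequence), and when $\mathcal{B}_{\infty}$ is finite you discard it and show the selector automatically lands in $\mathcal{A}^{\perp}$, rather than having to produce a pseudointersection in $\mathcal{A}^{\perp}$ in advance. This sidesteps the one genuinely terse step in the paper's argument (the recursive existence of the family $\{B_n\}$ in its second case), at the cost of the small verification that passing to $Y_n\setminus\bigcup\mathcal{B}_{\infty}$ preserves positivity and kills $\mathcal{B}_{\infty}$ --- which you carry out correctly. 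Both arguments are sound; yours is arguably the cleaner write-up of part (2).
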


\begin{proof}
Let $\left\{  A_{n}\mid n\in\omega\right\}  \subseteq\mathcal{A},$ note that
no element of $\mathcal{I}\left(  \mathcal{A}\right)  $ has infinite
intersection with every $A_{n}$ so $\mathcal{I}\left(  \mathcal{A}\right)  $
is not a $P$-ideal nor $\omega$-hitting. Define an interval partition
$\mathcal{P}=\left\{  P_{n}\mid n\in\omega\right\}  $ such that $P_{n}\cap
A_{i}\neq\emptyset$ for every $i\leq n,$ then $\mathcal{I}\left(
\mathcal{A}\right)  $ is meager by Talagrand's theorem.

\qquad\ \ 

We will now show that $\mathcal{I}\left(  \mathcal{A}\right)  $ is selective.
Let $\mathcal{Y}=\left\{  Y_{n}\mid n\in\omega\right\}  \subseteq
\mathcal{I}\left(  \mathcal{A}\right)  ^{+}$ be a decreasing family. First
assume there is $Z\in\mathcal{A}^{\perp}$ such that $Z$ is a
pseudointersection of $\mathcal{Y}.$ Then we recursively construct $X=\left\{
x_{n}\mid n\in\omega\right\}  $ such that $x_{0}\in Z\cap Y_{0}$ and
$x_{n+1}\in Z\cap Y_{x_{n}}$ (with $x_{n}<x_{n+1}$). Then $X$ is the set we
were looking for. Now assume $\mathcal{Y}$ does not have a pseudointersection
in $\mathcal{A}^{\perp}.$ Recursively we can find a family $\mathcal{B}%
=\left\{  B_{n}\mid n\in\omega\right\}  \subseteq\mathcal{I}\left(
\mathcal{A}\right)  $ such that the following holds:

\begin{enumerate}
\item Each $B_{n}\subseteq Y_{0}$ is a pseudointersection of $\mathcal{Y}.$

\item There is $A_{n}\in\mathcal{A}$ such that $B_{n}\subseteq A_{n}.$

\item If $n\neq m$ then $A_{n}\neq A_{m}.$
\end{enumerate}

\qquad\ \ \ 

Let $f:\omega\longrightarrow\mathcal{B}$ such that if $B\in\mathcal{B}$ then
$f^{-1}\left(  B\right)  $ is infinite. Then we recursively construct
$X=\left\{  x_{n}\mid n\in\omega\right\}  $ such that $x_{0}\in Y_{0}$ and
$x_{n+1}\in f\left(  n\right)  \cap Y_{x_{n}}$ (with $x_{n}<x_{n+1}$). Then
$X$ is the set we were looking for.
\end{proof}

\qquad\ \ \ 

The following is a useful lemma that will be used implicitly in several occasions:

\begin{proposition}
Let $\mathcal{A}$ be a \textsf{MAD} family and let $X\in\mathcal{I}\left(
\mathcal{A}\right)  ^{+}.$ Then there is an almost disjoint family
$\mathcal{C\subseteq I}\left(  \mathcal{A}\right)  ^{+}$ of subsets of $X$ of
size $\mathfrak{c}.$\qquad\ \ \ 
\end{proposition}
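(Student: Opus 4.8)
The plan is to reduce to a concrete partition and then build the family by a two-layer almost-disjointness argument. Since $\mathcal{A}$ is \textsf{MAD} and $X\in\mathcal{I}\left(\mathcal{A}\right)^{+}$, the previous lemma gives $\mathcal{I}\left(\mathcal{A}\right)^{++}=\mathcal{I}\left(\mathcal{A}\right)^{+}$, so $X\in\mathcal{I}\left(\mathcal{A}\right)^{++}$; hence there are pairwise distinct $A_{n}\in\mathcal{A}$ (for $n\in\omega$) with $X\cap A_{n}$ infinite. Setting $D_{n}=\left(X\cap A_{n}\right)\setminus\bigcup_{i<n}A_{i}$, almost disjointness of $\mathcal{A}$ gives $D_{n}=^{\ast}X\cap A_{n}$, so each $D_{n}$ is an infinite subset of $X$ and the $D_{n}$ are pairwise disjoint. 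I would work inside $Y=\bigcup_{n}D_{n}$ and arrange each member of the family to meet infinitely many of the $D_{n}$ in an infinite set: since $D_{n}\subseteq A_{n}$, such a set lies in $\mathcal{I}\left(\mathcal{A}\right)^{++}=\mathcal{I}\left(\mathcal{A}\right)^{+}$ and is therefore positive, as required.

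The naive attempts (unions of whole columns along the branches of a tree, or the same almost disjoint piece in every column) all fail for the same reason: any two branches share an initial segment, and a finite union of infinite columns is still infinite, so the candidate sets are not almost disjoint. The fix is to separate the two sources of overlap. First fix an almost disjoint family $\left\{S_{\alpha}\mid\alpha<\mathfrak{c}\right\}$ of infinite subsets of $\omega$, of size $\mathfrak{c}$ and with pairwise finite intersections, which chooses \emph{which} columns each member uses. Second, for each $n$ put $J_{n}=\left\{\alpha\mid n\in S_{\alpha}\right\}$; since $\left|J_{n}\right|\leq\mathfrak{c}$ and $D_{n}$ is countable, fix an almost disjoint family $\left\{P_{n}^{\alpha}\mid\alpha\in J_{n}\right\}$ of infinite subsets of $D_{n}$ with pairwise finite intersections, which chooses \emph{how} member $\alpha$ sits inside each of its columns. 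Finally set $C_{\alpha}=\bigcup_{n\in S_{\alpha}}P_{n}^{\alpha}$ and $\mathcal{C}=\left\{C_{\alpha}\mid\alpha<\mathfrak{c}\right\}$.

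Verifying the three requirements is then routine. Each $C_{\alpha}$ is a subset of $Y\subseteq X$, and $C_{\alpha}\cap D_{n}=P_{n}^{\alpha}$ is infinite for every $n\in S_{\alpha}$; as $S_{\alpha}$ is infinite, $C_{\alpha}$ meets infinitely many $A_{n}$ in an infinite set, so $C_{\alpha}\in\mathcal{I}\left(\mathcal{A}\right)^{++}=\mathcal{I}\left(\mathcal{A}\right)^{+}$. For almost disjointness, note that $C_{\alpha}\cap C_{\beta}\cap D_{n}$ is empty unless $n\in S_{\alpha}\cap S_{\beta}$, in which case it equals $P_{n}^{\alpha}\cap P_{n}^{\beta}$, a finite set; since $S_{\alpha}\cap S_{\beta}$ is finite, $C_{\alpha}\cap C_{\beta}$ is a finite union of finite sets, hence finite. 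Finally $\alpha\mapsto C_{\alpha}$ is injective because $S_{\alpha}=\left\{n\mid C_{\alpha}\cap D_{n}\text{ is infinite}\right\}$ can be recovered from $C_{\alpha}$, so $\left|\mathcal{C}\right|=\mathfrak{c}$.

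The only real idea is the two-layer bookkeeping, and I expect the main (essentially the only) obstacle to be recognizing that one cannot force the pieces to be \emph{disjoint} within columns: no almost disjoint family of size $\mathfrak{c}$ on $\omega$ can have every integer in only countably many of its members (otherwise each $\left\{\alpha\mid S_{\alpha}\cap m\neq\emptyset\right\}$ is countable, and their union over $m$ would exhaust all $\mathfrak{c}$ of the $\alpha$, a countable union of countable sets). Hence some $J_{n}$ is uncountable and $D_{n}$ cannot be split into $\left|J_{n}\right|$ disjoint infinite pieces. Allowing finite overlaps inside columns, rather than insisting on disjointness, is exactly what reconciles positivity with almost disjointness, and everything else is bookkeeping.
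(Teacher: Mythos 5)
Your proof is correct, but it takes a different route from the paper's. The paper partitions $X$ into \emph{finite} blocks $P_{n}$, where $P_{n}$ is chosen to meet each of $B_{0},\dots,B_{n}$ (the $B_{i}\in\mathcal{A}$ witnessing positivity of $X$); then for a single almost disjoint family $\mathcal{D}$ of size $\mathfrak{c}$ on the index set $\omega$, the sets $Y_{D}=\bigcup_{n\in D}P_{n}$ are automatically almost disjoint (disjoint finite blocks, finitely many shared indices) and each $Y_{D}$ meets every $B_{i}$ infinitely, so positivity is immediate. You instead decompose into \emph{infinite} columns $D_{n}\subseteq A_{n}$, which is the more obvious first move but, as you correctly diagnose, forces a second layer: a separate almost disjoint family inside each column to kill the overlap coming from shared indices. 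Your two-layer bookkeeping is sound --- the verification that $C_{\alpha}\cap C_{\beta}=\bigcup_{n\in S_{\alpha}\cap S_{\beta}}\left(P_{n}^{\alpha}\cap P_{n}^{\beta}\right)$ is a finite union of finite sets is right, positivity via $C_{\alpha}\cap A_{n}\supseteq P_{n}^{\alpha}$ for $n\in S_{\alpha}$ is right, and injectivity is right --- so the argument goes through. What the paper's finite-transversal trick buys is precisely the elimination of your second layer (and of the need to worry about how many $\alpha$ share a column): one almost disjoint family of indices does all the work. The same device recurs elsewhere in the paper (e.g.\ in the proof that $\mathcal{I}(\mathcal{A})$ is meager via Talagrand's theorem), so it is worth internalizing.
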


\begin{proof}
Since $X\in\mathcal{I}\left(  \mathcal{A}\right)  ^{+}$, there is a countable
family $\mathcal{B=}\left\{  B_{n}\mid n\in\omega\right\}  \subseteq
\mathcal{A}$ such that $X$ has infinite intersection with every element of
$\mathcal{B}.$ Let $\mathcal{P}=\left\{  P_{n}\mid n\in\omega\right\}  $ be a
partition of $X$ into finite sets such that $P_{n}\cap B_{i}\neq\emptyset$ for
every $i\leq n.$ Let $\mathcal{D}$ be an almost disjoint family of size
$\mathfrak{c,}$ for every $D\in\mathcal{D}$ we define $Y_{D}=%
%TCIMACRO{\tbigcup \limits_{n\in D}}%
%BeginExpansion
{\textstyle\bigcup\limits_{n\in D}}
%EndExpansion
P_{n}.$ The family $\left\{  Y_{D}\mid D\in\mathcal{D}\right\}  $ has the
desired properties.
\end{proof}

\qquad\ \ \ \qquad\ \ 

The following types of \textsf{MAD} families will play a very important role
in this thesis:

\begin{definition}
Let $\mathcal{A}$ be an \textsf{AD} family.\ \ \ \qquad\ \ 

\begin{enumerate}
\item $\mathcal{A}$ is \emph{weakly tight }if for every $\left\{  X_{n}\mid
n\in\omega\right\}  \subseteq\mathcal{I}\left(  \mathcal{A}\right)  ^{+}$
there is $B\in\mathcal{I}\left(  \mathcal{A}\right)  $ such that $\left\vert
B\cap X_{n}\right\vert =\omega$ for infinitely many $n\in\omega.$\qquad

\item $\mathcal{A}$ is\ \emph{tight }if for every $\left\{  X_{n}\mid
n\in\omega\right\}  \subseteq\mathcal{I}\left(  \mathcal{A}\right)  ^{+}$
there is $B\in\mathcal{I}\left(  \mathcal{A}\right)  $ such that $B\cap X_{n}$
is infinite for every $n\in\omega.$
\end{enumerate}
\end{definition}

\qquad\ \ \ \ \qquad\ \ \ \qquad\ 

Clearly every weakly tight AD family is \textsf{MAD} and tightness imply weak
tightness. By the previous result, $\mathcal{A}$ is tight if and only if for
every $\left\{  X_{n}\mid n\in\omega\right\}  \subseteq\mathcal{I}\left(
\mathcal{A}\right)  ^{+}$ there is $B\in\mathcal{I}\left(  \mathcal{A}\right)
$ such that $B\cap X_{n}\neq\emptyset$ for every $n\in\omega.$ The following
is a simple equivalence of weak tightness:

\begin{lemma}
Let $\mathcal{A}$ be a \textsf{MAD} family. The following are equivalent:

\begin{enumerate}
\item $\mathcal{A}$ is weakly tight.

\item If $X=\left\{  X_{n}\mid n\in\omega\right\}  \subseteq\mathcal{I}\left(
\mathcal{A}\right)  ^{+}$ is a partition, then there is $A\in\mathcal{A}$ such
that $A\cap X_{n}$ is infinite for infinitely many $X_{n}.$

\item If $X=\left\{  X_{n}\mid n\in\omega\right\}  \subseteq\mathcal{I}\left(
\mathcal{A}\right)  ^{+}$ are pairwise disjoint, then there is $A\in
\mathcal{A}$ such that $A\cap X_{n}$ is infinite for infinitely many $X_{n}.$
\end{enumerate}
\end{lemma}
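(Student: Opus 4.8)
The plan is to prove the cycle $(1)\Rightarrow(2)\Rightarrow(3)\Rightarrow(1)$, where the first two implications are routine and all the content sits in $(3)\Rightarrow(1)$, i.e. in upgrading the single element $A\in\mathcal{A}$ and the pairwise disjoint families of condition $(3)$ to the finite union $B\in\mathcal{I}(\mathcal{A})$ and the arbitrary families of weak tightness. Throughout I treat the families $\{X_n\mid n\in\omega\}$ as genuinely infinite (otherwise ``for infinitely many $X_n$'' is vacuous), and I will use two elementary facts repeatedly: a finite union of sets in $\mathcal{I}(\mathcal{A})$ is again in $\mathcal{I}(\mathcal{A})$, so if a finite union meets some set infinitely then one of its pieces does (pigeonhole); and if $X\in\mathcal{I}(\mathcal{A})^{+}$ and $F\in\mathcal{I}(\mathcal{A})$ then $X\setminus F\in\mathcal{I}(\mathcal{A})^{+}$, since otherwise $X=(X\setminus F)\cup F$ would lie in the ideal.

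For $(1)\Rightarrow(2)$ I apply weak tightness to the partition $\{X_n\}$ to obtain $B\in\mathcal{I}(\mathcal{A})$ with $\left|B\cap X_n\right|=\omega$ for infinitely many $n$; writing $B=A_0\cup\cdots\cup A_k$ with $A_i\in\mathcal{A}$ and using the pigeonhole fact, a single $A_{i^{*}}$ already meets infinitely many $X_n$ infinitely. For $(2)\Rightarrow(3)$, given a pairwise disjoint $\{X_n\}\subseteq\mathcal{I}(\mathcal{A})^{+}$, I absorb the remainder by setting $X_0'=X_0\cup(\omega\setminus\bigcup_n X_n)$ and $X_n'=X_n$ for $n\geq 1$; this is a partition of $\omega$ into positive sets, so $(2)$ yields $A\in\mathcal{A}$ meeting infinitely many $X_n'$ infinitely, and since only the index $0$ was altered, $A$ meets infinitely many original $X_n$ infinitely.

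The substantive step is $(3)\Rightarrow(1)$, and I approach it by disjointification. Given an arbitrary $\{X_n\}\subseteq\mathcal{I}(\mathcal{A})^{+}$, put $X_n'=X_n\setminus\bigcup_{m<n}X_m$; these are pairwise disjoint and $X_n'\subseteq X_n$. If infinitely many $X_n'$ are positive, then they form an infinite pairwise disjoint positive family with pairwise distinct sources $X_n$, and $(3)$ supplies $A\in\mathcal{A}$ meeting infinitely many $X_n'$, hence infinitely many $X_n$, infinitely. Otherwise all but finitely many $X_n'$ lie in $\mathcal{I}(\mathcal{A})$; an easy induction then gives $X_n\setminus W\in\mathcal{I}(\mathcal{A})$ for all $n\geq N$, where $W=X_0\cup\cdots\cup X_{N-1}$, so each $X_n\cap W$ is positive and, $W$ being a finite union, two applications of pigeonhole produce a fixed $X_{m^{*}}$ with $X_n\cap X_{m^{*}}$ positive for infinitely many $n$. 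I then repeat the entire analysis inside the positive set $X_{m^{*}}$ with the family $\{X_n\cap X_{m^{*}}\}$.

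The main obstacle is controlling this iteration. It either terminates — at some stage infinitely many disjointified pieces are positive, and $(3)$ finishes as above — or it runs through all of $\omega$ and yields an infinite subfamily $\{X_{m_i}\}$ all of whose finite intersections are positive; then $Z_i=X_{m_0}\cap\cdots\cap X_{m_i}$ is a decreasing sequence of positive sets with $Z_i\subseteq X_{m_i}$, and it remains to treat a decreasing sequence. Here I invoke the Proposition proved above: if the differences $Z_i\setminus Z_{i+1}$ are positive infinitely often they form a pairwise disjoint positive family inside distinct $X_{m_i}$ and $(3)$ applies; if instead $Z_M\setminus Z_i\in\mathcal{I}(\mathcal{A})$ for all $i\geq M$, I choose by the Proposition pairwise disjoint positive $C_k\subseteq Z_M$ and set $Y_k=C_k\cap Z_{M+k}$, which is still positive because $Z_M\setminus Z_{M+k}\in\mathcal{I}(\mathcal{A})$, is contained in $X_{m_{M+k}}$, and is pairwise disjoint, so $(3)$ produces the desired $A$. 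The delicate points, where I expect to spend the most care, are verifying that the iteration is genuinely $\omega$-indexed (so that the ``positive finite intersection'' alternative really does arise), and ensuring that at every application of $(3)$ the disjoint pieces lie in pairwise distinct $X_n$, so that hitting infinitely many pieces translates into hitting infinitely many of the original sets.
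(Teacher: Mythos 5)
Your proof is correct, and for the only substantive implication, $(3)\Rightarrow(1)$, it takes a genuinely different route from the paper. The paper handles an arbitrary family $\left\{ X_{n}\mid n\in\omega\right\} \subseteq\mathcal{I}\left( \mathcal{A}\right) ^{+}$ in one stroke: it sets up a finite-approximation partial order and, by meeting countably many dense sets (using that each $X_{n}$, being positive for a \textsf{MAD} family, meets infinitely many members of $\mathcal{A}$ infinitely, so positivity of the refinement can be secured by countably many requirements), produces an almost disjoint family $Y_{n}\subseteq X_{n}$ with every $Y_{n}\in\mathcal{I}\left( \mathcal{A}\right) ^{+}$; after finite adjustments these are pairwise disjoint and $(3)$ applies directly. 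Your argument replaces this single genericity construction with an explicit disjointification followed by an iteration: either some stage yields infinitely many positive disjoint pieces (and $(3)$ finishes), or the iteration produces a decreasing sequence of positive finite intersections, which you then split using the earlier Proposition on almost disjoint positive refinements of a single positive set. The two delicate points you flag -- keeping the chosen indices $m_{i}$ pairwise distinct (which you can force by excising previously used indices before re-disjointifying) and making sure each application of $(3)$ uses pieces sitting inside distinct original $X_{n}$ -- do both go through, and your inductive claim that $X_{n}\setminus\left( X_{0}\cup\dots\cup X_{N-1}\right) \in\mathcal{I}\left( \mathcal{A}\right) $ in the degenerate case is sound. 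What the paper's approach buys is brevity and uniformity (no case analysis, no iteration to control); what yours buys is a purely combinatorial, self-contained argument that never leaves the realm of finite unions and pigeonhole, at the cost of the bookkeeping you already identified as the hard part.
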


\begin{proof}
Obviously 1 implies 2. Moreover, it is easy to see that 2 and 3 are equivalent
since from an infinite family of pairwise disjoint sets we can get a partition
by performing only finite changes. We will now prove the that 3 implies 1. Let
$\mathcal{A}$ be a \textsf{MAD} family that satisfies 3, we will show that
$\mathcal{A}$ is weakly tight. Let $X=\left\{  X_{n}\mid n\in\omega\right\}
\subseteq\mathcal{I}\left(  \mathcal{A}\right)  ^{+},$ we now define the
forcing $\mathbb{P}$ whose elements are functions $p$ with the following properties:

\begin{enumerate}
\item $p:n_{p}\times m_{p}\longrightarrow2.$

\item If $\left(  i,j\right)  \in dom\left(  p\right)  $ and $j\notin X_{i}$
then $p\left(  i,j\right)  =0.$
\end{enumerate}

\qquad\ \ 

If $p,q\in\mathbb{P}$ then $p\leq q$ if $q\subseteq p$ and the following
holds: if $m_{q}\leq j<m_{p}$ and $i_{1},i_{2}$ are two distinct elements of
$n_{q}$ either $p\left(  i_{1},j\right)  =0$ or $p\left(  i_{2},j\right)  =0.$
It is easy to see that $\mathbb{P}$ adds an almost disjoint family $\left\{
Y_{n}\mid n\in\omega\right\}  $ such that $Y_{n}\subseteq X_{n}.$ Moreover,
each $Y_{n}$ is forced to be in $\mathcal{I}\left(  \mathcal{A}\right)  ^{+}$
and we only need to meet countably many dense set to achieve this. The result
clearly follows.
\end{proof}

\qquad\ \ \ \qquad\ \ \ 

If $\mathcal{P}$ is a property of almost disjoint families, we will say that
\emph{\textsf{MAD} families with property }$\mathcal{P}$ \emph{exist
generically }if every AD family of size less than $\mathfrak{c}$ can be
extended to a \textsf{MAD} family with property $\mathcal{P}.$

\newpage

\section{The Kat\v{e}tov order}

In \cite{ProductsofFilters} Kat\v{e}tov introduced a partial order on ideals.
The Kat\v{e}tov order is a very powerful tool for studying ideals over
countable sets. It plays a very important role in understanding
destructibility of ideals. Another important feature of the Kat\v{e}tov order
is its usefulness for classifying non-definable objects like ultrafilters. It
can be proved that an ultrafilter $\mathcal{U}$ is a Ramsey ultrafilter if and
only if its dual $\mathcal{U}^{\ast}$ is not Kat\v{e}tov above the ideal
$\mathcal{ED},$ $\mathcal{U}$ is a $P$-point if and only if $\mathcal{U}%
^{\ast}$ is not Kat\v{e}tov above \textsf{FIN}$\times$\textsf{FIN }and
$\mathcal{U}$ is a nowhere dense ultrafilter if and only if $\mathcal{U}%
^{\ast}$ is not Kat\v{e}tov above the ideal \textsf{nwd} (see
\cite{KatetovOrderonBorelIdeals}).

\begin{definition}
Let $A\ $and $B$ be two countable sets,$\ \mathcal{I},\mathcal{J}$ ideals on
$X$ and $Y$ respectively and $f:Y\longrightarrow X$.

\begin{enumerate}
\item We say $f$ is a \emph{Kat\v{e}tov morphism from }$\left(  Y,\mathcal{J}%
\right)  $ \emph{to} $\left(  X,\mathcal{I}\right)  $ if $f^{-1}\left(
A\right)  \in\mathcal{J}$ for every $A\in\mathcal{I}.$

\item We define $\mathcal{I}\leq_{\mathsf{K}}$ $\mathcal{J}$ ($\mathcal{I}$
i\emph{s Kat\v{e}tov smaller than} $\mathcal{J}$ or $\mathcal{J}$ \emph{is
Kat\v{e}tov above} $\mathcal{I}$) if there is a Kat\v{e}tov morphism from
$\left(  Y,\mathcal{J}\right)  $ to $\left(  X,\mathcal{I}\right)  .$

\item We define $\mathcal{I\simeq}_{K}$ $\mathcal{J}$ ($\mathcal{I}$ \emph{is
Kat\v{e}tov equivalent to} $\mathcal{J}$) if $\mathcal{I}\leq_{\mathsf{K}}$
$\mathcal{J}$ and $\mathcal{J}\leq_{\mathsf{K}}$ $\mathcal{I}.$

\item We say $f$ is a \emph{Kat\v{e}tov-Blass morphism from }$\left(
Y,\mathcal{J}\right)  $ \emph{to} $\left(  X,\mathcal{I}\right)  $ if $f$ is a
finite to one Kat\v{e}tov morphism from\emph{ }$\left(  Y,\mathcal{J}\right)
$ to $\left(  X,\mathcal{I}\right)  .$

\item We define $\mathcal{I}\leq_{\mathsf{KB}}$ $\mathcal{J}$ \ if there is a
Kat\v{e}tov-Blass morphism from $\left(  Y,\mathcal{J}\right)  $ to $\left(
X,\mathcal{I}\right)  .$

\item $\mathcal{I}$ \emph{is Kat\v{e}tov-Blass equivalent to} $\mathcal{J}$ if
$\mathcal{I}\leq_{\mathsf{KB}}$ $\mathcal{J}$ and $\mathcal{J}\leq
_{\mathsf{KB}}$ $\mathcal{I}.$
\end{enumerate}
\end{definition}

\qquad\ \ \ \qquad\qquad

The following are some simple observations regarding the Kat\v{e}tov order:

\begin{lemma}
Let $\mathcal{I},\mathcal{J},\mathcal{L}$ be ideals.

\begin{enumerate}
\item $\mathcal{I\simeq}_{K}$ $\mathcal{I}.$

\item If $\mathcal{I}\leq_{\mathsf{K}}\mathcal{J}$ and $\mathcal{J}%
\leq_{\mathsf{K}}$ $\mathcal{L}$ then $\mathcal{I}\leq_{\mathsf{K}}%
\mathcal{L}.$

\item \textsf{FIN }is the smallest element in the Kat\v{e}tov order.

\item $\mathcal{I}$ is Kat\v{e}tov equivalent to \textsf{FIN} if and only if
$\mathcal{I}$ is not tall..

\item If $X\in\mathcal{I}^{+}$ then $\mathcal{I}\leq_{\mathsf{K}}$
$\mathcal{I}\upharpoonright X.$
\end{enumerate}
\end{lemma}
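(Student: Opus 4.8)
The plan is to verify each of the five clauses directly from the definition of a Kat\v{e}tov morphism, since every part reduces to exhibiting one explicit function and checking that preimages of ideal sets land in the appropriate ideal. Throughout I will use the standing convention that all ideals contain the finite sets (in particular all singletons), which is what makes the fiber arguments below go through.

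The four routine clauses I would dispatch as follows. For clause 1 (reflexivity) the identity map $\mathrm{id}\colon X\to X$ satisfies $\mathrm{id}^{-1}(A)=A\in\mathcal{I}$ for every $A\in\mathcal{I}$, giving $\mathcal{I}\leq_{\mathsf{K}}\mathcal{I}$ in both directions, hence $\mathcal{I}\simeq_{K}\mathcal{I}$. For clause 2 (transitivity), given Kat\v{e}tov morphisms $f\colon Y\to X$ witnessing $\mathcal{I}\leq_{\mathsf{K}}\mathcal{J}$ and $g\colon Z\to Y$ witnessing $\mathcal{J}\leq_{\mathsf{K}}\mathcal{L}$, I would compose them: for $A\in\mathcal{I}$ one has $(f\circ g)^{-1}(A)=g^{-1}(f^{-1}(A))$, and since $f^{-1}(A)\in\mathcal{J}$ it follows that $g^{-1}(f^{-1}(A))\in\mathcal{L}$, so $f\circ g$ witnesses $\mathcal{I}\leq_{\mathsf{K}}\mathcal{L}$. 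For clause 3, to see that \textsf{FIN} is smallest I would enumerate the underlying countable set $Y$ of $\mathcal{J}$ as $\{y_{n}\mid n\in\omega\}$ and let $f\colon Y\to\omega$ send $y_{n}$ to $n$; then the preimage of any finite subset of $\omega$ is finite, hence in $\mathcal{J}$, so $\textsf{FIN}\leq_{\mathsf{K}}\mathcal{J}$. For clause 5 I would use the inclusion map $\iota\colon X\hookrightarrow\omega$, noting that $\iota^{-1}(A)=A\cap X$ lies in $\wp(X)\cap\mathcal{I}=\mathcal{I}\upharpoonright X$ for every $A\in\mathcal{I}$, which is exactly $\mathcal{I}\leq_{\mathsf{K}}\mathcal{I}\upharpoonright X$.

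The clause requiring the most care, and the one I would treat as the main point, is clause 4. Since $\textsf{FIN}\leq_{\mathsf{K}}\mathcal{I}$ always holds by clause 3, the equivalence $\mathcal{I}\simeq_{K}\textsf{FIN}$ reduces to showing that $\mathcal{I}\leq_{\mathsf{K}}\textsf{FIN}$ if and only if $\mathcal{I}$ is not tall. For the direction ``not tall $\Rightarrow\mathcal{I}\leq_{\mathsf{K}}\textsf{FIN}$'', I would take a witness $Z\in[\omega]^{\omega}$ with $Z\cap A$ finite for all $A\in\mathcal{I}$, enumerate $Z=\{z_{n}\mid n\in\omega\}$, and let $f\colon\omega\to X$ send $n$ to $z_{n}$; then $f^{-1}(A)$ has the same cardinality as $Z\cap A$, hence is finite, so $f$ is a Kat\v{e}tov morphism from $(\omega,\textsf{FIN})$ to $(X,\mathcal{I})$. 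Conversely, given a Kat\v{e}tov morphism $f\colon\omega\to X$ witnessing $\mathcal{I}\leq_{\mathsf{K}}\textsf{FIN}$, the key observation is that each singleton $\{x\}$ lies in $\mathcal{I}$, so every fiber $f^{-1}(\{x\})$ is finite; as $\omega$ is infinite this forces the range $Z=f[\omega]$ to be infinite, and $Z\cap A=f[f^{-1}(A)]$ is finite for every $A\in\mathcal{I}$, exhibiting the non-tallness of $\mathcal{I}$. I expect the only real subtlety to be keeping the direction of the Kat\v{e}tov order straight, namely that the morphism runs from the space of the larger ideal to the space of the smaller one, together with the repeated use of the convention that every ideal contains the finite sets.
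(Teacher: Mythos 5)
Your proposal is correct in all five clauses, and the directional bookkeeping (morphisms running from the space of the Kat\v{e}tov-larger ideal to that of the smaller one) is handled properly throughout, including the fiber-finiteness argument in clause 4 that uses the convention that ideals contain all finite sets. The paper states this lemma without proof, treating it as a collection of simple observations, and your verifications (identity map, composition, a bijection for \textsf{FIN}, the enumeration of a non-tallness witness and the range argument, and the inclusion map for restrictions) are exactly the intended routine arguments.
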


\qquad\ \ \ 

An ideal $\mathcal{I}$ is \emph{Kat\v{e}tov uniform }if $\mathcal{I}$ is
Kat\v{e}tov equivalent to all its restrictions (equivalently, if
$X\in\mathcal{I}^{+}$ then $\mathcal{I}\upharpoonright X\leq_{\mathsf{K}%
}\mathcal{I}$). Since every tall ideal contains a \textsf{MAD} family then the
ideals generated by \textsf{MAD} families are coinitial in the Kat\v{e}tov
order. On the other hand, the dual ideals of ultrafilters form a cofinal
family. If $\mathcal{A}$ and $\mathcal{B}$ are \textsf{AD} families then we
define $\mathcal{A}\leq_{\mathsf{K}}$ $\mathcal{B}$ if $\mathcal{I}\left(
\mathcal{A}\right)  $ $\leq_{\mathsf{K}}$ $\mathcal{I}\left(  \mathcal{B}%
\right)  .$ We then have the following:

\begin{lemma}
Let $\mathcal{A},\mathcal{B}$ be \textsf{AD} families.

\begin{enumerate}
\item $\mathcal{A}$ is \textsf{MAD} if and only if $\mathcal{A\nleq}_{K}$
\textsf{FIN}.

\item If $X\in\mathcal{I}\left(  \mathcal{A}\right)  ^{+}$ then $\mathcal{A}%
\leq_{\mathsf{K}}$ $\mathcal{A}\upharpoonright X.$

\item If $\mathcal{A}\leq_{\mathsf{K}}$ $\mathcal{B}$ then $\left\vert
\mathcal{B}\right\vert \leq\left\vert \mathcal{A}\right\vert .$\qquad\ \ 
\end{enumerate}
\end{lemma}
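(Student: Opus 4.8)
The plan is to read off the first two items from the general facts about the Katětov order already recorded, and to reserve the real work for the third.

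For item~1, I would combine the fact that $\mathsf{FIN}$ is the least element of the Katětov order with the characterization that an ideal is Katětov equivalent to $\mathsf{FIN}$ exactly when it is not tall. Since $\mathsf{FIN}\leq_{\mathsf{K}}\mathcal{I}(\mathcal{A})$ holds automatically, the relation $\mathcal{A}\leq_{\mathsf{K}}\mathsf{FIN}$, i.e. $\mathcal{I}(\mathcal{A})\leq_{\mathsf{K}}\mathsf{FIN}$, is equivalent to $\mathcal{I}(\mathcal{A})\simeq_{K}\mathsf{FIN}$, hence to $\mathcal{I}(\mathcal{A})$ not being tall. Because $\mathcal{I}(\mathcal{A})$ is tall precisely when $\mathcal{A}$ is \textsf{MAD}, negating gives that $\mathcal{A}$ is \textsf{MAD} if and only if $\mathcal{A}\nleq_{\mathsf{K}}\mathsf{FIN}$.

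For item~2, I would exhibit the witnessing morphism directly. Identifying $\mathcal{I}(\mathcal{A}\upharpoonright X)$ with $\mathcal{I}(\mathcal{A})\upharpoonright X=\wp(X)\cap\mathcal{I}(\mathcal{A})$ (if $C\subseteq^{\ast}A_{0}\cup\dots\cup A_{n}$ then $C\cap X\subseteq^{\ast}(A_{0}\cap X)\cup\dots\cup(A_{n}\cap X)$, and conversely), the inclusion map $\iota\colon X\hookrightarrow\omega$ sends each $C\in\mathcal{I}(\mathcal{A})$ to $\iota^{-1}(C)=C\cap X\in\mathcal{I}(\mathcal{A}\upharpoonright X)$, so $\iota$ is a Katětov morphism and $\mathcal{A}\leq_{\mathsf{K}}\mathcal{A}\upharpoonright X$. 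This is exactly the specialization to $\mathcal{I}=\mathcal{I}(\mathcal{A})$ of the general observation that $\mathcal{I}\leq_{\mathsf{K}}\mathcal{I}\upharpoonright X$ whenever $X\in\mathcal{I}^{+}$.

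The substance is item~3, where I would fix a Katětov morphism $f\colon Y\to X$ witnessing $\mathcal{I}(\mathcal{A})\leq_{\mathsf{K}}\mathcal{I}(\mathcal{B})$, so that $\mathcal{B}$ lives on $Y$, $\mathcal{A}$ on $X$, and $f^{-1}(C)\in\mathcal{I}(\mathcal{B})$ whenever $C\in\mathcal{I}(\mathcal{A})$. For each $A\in\mathcal{A}$ the set $f^{-1}(A)$ lies in $\mathcal{I}(\mathcal{B})$, so I may fix a finite $\mathcal{B}_{A}\subseteq\mathcal{B}$ with $f^{-1}(A)\subseteq^{\ast}\bigcup\mathcal{B}_{A}$; using that $\mathcal{A}$ is \textsf{MAD}, the remainder $F=X\setminus\bigcup\mathcal{A}$ is finite, hence $f^{-1}(F)\in\mathcal{I}(\mathcal{B})$ is almost covered by a finite $\mathcal{B}_{F}\subseteq\mathcal{B}$. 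Setting $\mathcal{B}^{\ast}=\mathcal{B}_{F}\cup\bigcup_{A\in\mathcal{A}}\mathcal{B}_{A}$, a subfamily of size at most $|\mathcal{A}|\cdot\aleph_{0}=|\mathcal{A}|$, the goal is to show $\mathcal{B}=\mathcal{B}^{\ast}$. Suppose $B\in\mathcal{B}\setminus\mathcal{B}^{\ast}$; since $B$ is almost disjoint from every member of each $\mathcal{B}_{A}$ and of $\mathcal{B}_{F}$, the sets $B\cap f^{-1}(A)$ (for $A\in\mathcal{A}$) and $B\cap f^{-1}(F)$ are all finite. As $Y=f^{-1}(F)\cup\bigcup_{A\in\mathcal{A}}f^{-1}(A)$ and $B$ is infinite, infinitely many of these pieces are nonempty, and this same finiteness prevents $f[B]$ from being finite; thus $f[B]$ is an infinite subset of $X$ with $f[B]\cap A=f[B\cap f^{-1}(A)]$ finite for every $A$, i.e. $f[B]\in\mathcal{A}^{\perp}$, contradicting the maximality of $\mathcal{A}$. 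Hence $\mathcal{B}=\mathcal{B}^{\ast}$ and $|\mathcal{B}|\leq|\mathcal{A}|$. I expect the main obstacle to be precisely that $f$ need not be finite-to-one: the naive hope that each $B$ sits inside a single $f^{-1}(A)$ fails, and one must rule out an escaping $B$ by producing the orthogonal infinite set $f[B]$ and invoking that $\mathcal{A}$ is \textsf{MAD}. This is also the step where maximality is genuinely used, so I would flag that item~3 really requires $\mathcal{A}$ to be \textsf{MAD}: if $\mathcal{A}$ is merely \textsf{AD} and not tall, then by item~1 we have $\mathcal{A}\leq_{\mathsf{K}}\mathsf{FIN}\leq_{\mathsf{K}}\mathcal{B}$ for every $\mathcal{B}$, and the size bound can fail.
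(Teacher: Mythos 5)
Your proposal is correct. Note first that the paper states this lemma without proof (it appears among the ``simple observations'' following the general lemma on the Kat\v{e}tov order), so there is no official argument to measure yours against; your items 1 and 2 are exactly the intended specializations of the recorded facts that $\mathcal{I}\simeq_{K}\mathsf{FIN}$ iff $\mathcal{I}$ is not tall and that $\mathcal{I}\leq_{\mathsf{K}}\mathcal{I}\upharpoonright X$ for $X\in\mathcal{I}^{+}$, combined with ``$\mathcal{A}$ is \textsf{MAD} iff $\mathcal{I}(\mathcal{A})$ is tall.'' For item 3 your direct argument is sound: the key computation $f[B]\cap A=f[B\cap f^{-1}(A)]$ is right, and the compressed step ``this finiteness prevents $f[B]$ from being finite'' does go through, since every $x\in X$ lies in $F$ or in some $A\in\mathcal{A}$, so $B\cap f^{-1}(\{x\})$ is finite for each $x$, and a finite $f[B]$ would force $B$ to be a finite union of finite sets. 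It is worth contrasting this with the route the paper's later machinery affords: once one knows that $\mathcal{I}\leq_{\mathsf{K}}\mathcal{J}$ implies $\mathsf{cov}^{\ast}(\mathcal{J})\leq\mathsf{cov}^{\ast}(\mathcal{I})$ and that $\mathsf{cov}^{\ast}(\mathcal{I}(\mathcal{A}))=|\mathcal{A}|$ for \textsf{MAD} $\mathcal{A}$, item 3 is a one-line corollary; but those facts appear only in the cardinal-invariants section, so your self-contained covering argument is the appropriate proof at this point in the text. Finally, your observation that item 3 genuinely needs $\mathcal{A}$ to be \textsf{MAD} is a correct and worthwhile catch: a non-tall $\mathcal{I}(\mathcal{A})$ is Kat\v{e}tov below every ideal, so a singleton \textsf{AD} family sits Kat\v{e}tov below any \textsf{MAD} family of size $\mathfrak{c}$, refuting the literal statement for arbitrary \textsf{AD} families.
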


\qquad\ \ \ \qquad\ \ 

Every AD family is Kat\v{e}tov below \textsf{FIN}$\times$\textsf{FIN} as we
will prove now.

\begin{proposition}
If $\mathcal{A}$ is an \textsf{AD} family then $\mathcal{I}\left(
\mathcal{A}\right)  \leq_{K}$\textsf{FIN}$\times$\textsf{FIN}$.$
\end{proposition}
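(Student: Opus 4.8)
The plan is to exhibit an explicit Kat\v{e}tov morphism $f:\omega\times\omega\to\omega$ witnessing $\mathcal{I}\left(\mathcal{A}\right)\leq_{K}$\textsf{FIN}$\times$\textsf{FIN}; that is, a total function with $f^{-1}\left(Y\right)\in$\textsf{FIN}$\times$\textsf{FIN} for every $Y\in\mathcal{I}\left(\mathcal{A}\right)$. Throughout I would use the (easily verified) characterization that $Z\in$\textsf{FIN}$\times$\textsf{FIN} if and only if $\left\{n\mid Z\cap C_{n}\text{ is infinite}\right\}$ is finite, i.e.\ all but finitely many columns of $Z$ are finite; this follows directly from the generating description of the ideal.

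First I would dispose of the degenerate case. If $\mathcal{A}$ is finite then (for $\mathcal{I}\left(\mathcal{A}\right)$ to be a proper ideal) $\omega\setminus\bigcup\mathcal{A}$ is infinite and witnesses that $\mathcal{I}\left(\mathcal{A}\right)$ is not tall; hence by the earlier lemma it is Kat\v{e}tov equivalent to \textsf{FIN}, which is the smallest element of the order, so there is nothing to prove. Thus I may assume $\mathcal{A}$ is infinite and fix a sequence $\left\{A_{n}\mid n\in\omega\right\}$ of pairwise distinct members of $\mathcal{A}$. Using the increasing enumeration $e_{A_{n}}$ of each $A_{n}$, I define $f\left(n,m\right)=e_{A_{n}}\left(m\right)$, so that $f$ maps the $n$-th column $C_{n}$ bijectively onto $A_{n}$.

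The main point is the verification, and here almost disjointness does all the work. Fix \emph{any} $A\in\mathcal{A}$ (not only a chosen one). The $n$-th column of $f^{-1}\left(A\right)$ is $\left\{m\mid e_{A_{n}}\left(m\right)\in A\right\}$, which is infinite precisely when $A_{n}\cap A$ is infinite; since $\mathcal{A}$ is almost disjoint and the $A_{n}$ are distinct, this happens for at most one value of $n$ (the unique $n$, if any, with $A=A_{n}$). Hence $f^{-1}\left(A\right)$ has at most one infinite column and so lies in \textsf{FIN}$\times$\textsf{FIN}. Moreover, because $f$ is injective on each column, $f^{-1}\left(k\right)$ meets every column in at most one point, so $f^{-1}\left(F\right)\in$\textsf{FIN}$\times$\textsf{FIN} for every finite $F\subseteq\omega$.

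It remains only to pass from generators to the whole ideal, which is routine ideal bookkeeping. Given $Y\in\mathcal{I}\left(\mathcal{A}\right)$, choose $B_{0},\dots,B_{r}\in\mathcal{A}$ and a finite $F$ with $Y\subseteq B_{0}\cup\dots\cup B_{r}\cup F$; then $f^{-1}\left(Y\right)\subseteq f^{-1}\left(B_{0}\right)\cup\dots\cup f^{-1}\left(B_{r}\right)\cup f^{-1}\left(F\right)$, a finite union of members of \textsf{FIN}$\times$\textsf{FIN}, hence again a member, using that \textsf{FIN}$\times$\textsf{FIN} is an ideal closed under subsets. This shows $f$ is a Kat\v{e}tov morphism and finishes the argument. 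I do not anticipate a genuine obstacle: the only delicate point is to route \emph{distinct} elements of $\mathcal{A}$ along the columns, so that almost disjointness caps the number of infinite columns of each $f^{-1}\left(A\right)$ at one.
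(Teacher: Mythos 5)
Your proof is correct, and its core idea --- routing the columns $C_{n}$ of \textsf{FIN}$\times$\textsf{FIN} onto an almost disjoint sequence so that almost disjointness caps the number of infinite columns of each preimage --- is the same as the paper's. But your implementation differs in ways worth recording. The paper takes an \emph{arbitrary} partition $\left\{  A_{n}\mid n\in\omega\right\}  \subseteq\mathcal{I}\left(  \mathcal{A}\right)  $ of $\omega$ into infinite sets, fixes a bijection $f:\omega\times\omega\longrightarrow\omega$ with $f\left[  C_{n}\right]  =A_{n},$ and asserts the verification is easy. As literally stated this is too lax: if one splits a single $B\in\mathcal{A}$ into infinitely many infinite pieces and distributes them among the $A_{n}$ (each $A_{n}$ stays in the ideal, being almost covered by $B$ together with one further element of $\mathcal{A}$), then $f^{-1}\left(  B\right)  $ meets every column infinitely and the morphism property fails; the partition must be chosen along $\mathcal{A},$ e.g.\ as the disjointification $B_{n}\setminus\left(  B_{0}\cup\dots\cup B_{n-1}\right)  $ of distinct members of $\mathcal{A}$ with leftover points distributed finitely. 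Your choice --- sending $C_{n}$ onto a distinct $A_{n}\in\mathcal{A}$ itself via its increasing enumeration --- builds in exactly the needed property, at the cost of $f$ being neither injective nor surjective, which the Kat\v{e}tov order does not require. You also cover the finite case, which the paper ignores and which its method cannot handle at all: if $\mathcal{A}$ is finite, every infinite $A_{n}$ of any such partition meets some member of $\mathcal{A}$ infinitely, so by pigeonhole some member meets infinitely many $A_{n}$ infinitely and no bijection of this kind is a morphism; your appeal to non-tallness and Kat\v{e}tov-equivalence with \textsf{FIN} is the right fix there. One thing the paper's version buys when the partition is chosen correctly: the bijection is finite-to-one, hence witnesses the stronger relation $\mathcal{I}\left(  \mathcal{A}\right)  \leq_{\mathsf{KB}}$ \textsf{FIN}$\times$\textsf{FIN}, whereas your $f$ need not be finite-to-one (a single integer can lie in infinitely many $A_{n}$), so it witnesses only $\leq_{\mathsf{K}}$ --- which is all the proposition asks for.
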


\begin{proof}
Let $\left\{  A_{n}\mid n\in\omega\right\}  \subseteq\mathcal{I}\left(
\mathcal{A}\right)  $ be a partition of $\omega$ into infinite sets. We then
find a bijection $f:\omega\times\omega\longrightarrow\omega$ such that
$f\left[  C_{n}\right]  =A_{n}$ where $C_{n}$ is the set $\left\{  \left(
n,m\right)  \mid m\in\omega\right\}  .$ It is easy to see that $f$ is a
Kat\v{e}tov morphism from $(\omega\times\omega,$\textsf{FIN}$\times
$\textsf{FIN}$)$ to $\left(  \omega,\mathcal{I}\left(  \mathcal{A}\right)
\right)  .$
\end{proof}

\newpage

\section{Cardinal invariants of the continuum}

\qquad\ \ \ Let $\left(  \mathbb{P},\leq\right)  $ be a partial order, we say
$D\subseteq\mathbb{P}$ is $\leq$\emph{-dominating }(or just dominating if
$\leq$ is clear from the context) if for every $p\in\mathbb{P}$ there is $q\in
D$ such that $p\leq q.$ Meanwhile, a set $B\subseteq\mathbb{P}$ is called
$\leq$\emph{-unbounded }(or just unbounded) if there is no $p\in\mathbb{P}$
such that $q\leq p$ for every $q\in B$. If $\mathbb{P}$ does not have a
maximum we can then define the following invariants:

\qquad\ \ 

\begin{enumerate}
\item $\mathfrak{d}\left(  \mathbb{P}\right)  $ is the smallest size of a
dominating family of $\mathbb{P}.$

\item $\mathfrak{b}\left(  \mathbb{P}\right)  $ is the smallest size of an
unbounded family of $\mathbb{P}.$
\end{enumerate}

\qquad\ \ \ \qquad\ \ 

Since every dominating family is unbounded (in case there is no maximum) then
$\mathfrak{b}\left(  \mathbb{P}\right)  \leq\mathfrak{d}\left(  \mathbb{P}%
\right)  .$ We can then define two of the most important cardinal invariants:

\begin{definition}
\qquad\ \ \qquad\ 

\begin{enumerate}
\item The \emph{unboundedness number }$\mathfrak{b}$ is $\mathfrak{b}\left(
\omega^{\omega},\leq^{\ast}\right)  $ i.e. the smallest size of an $\leq
^{\ast}$-unbounded family of functions.

\item The \emph{dominating number }$\mathfrak{d}$ is $\mathfrak{d}\left(
\omega^{\omega},\leq^{\ast}\right)  $ i.e. the smallest size of a $\leq^{\ast
}$-dominating family of functions.
\end{enumerate}
\end{definition}

\qquad\ \ \ 

Note that if $f$ is a function then $f\leq f+1$ so $\left(  \omega^{\omega
},\leq^{\ast}\right)  $ has no maximum and then $\mathfrak{b\leq d.}$ From now
on, when talking about elements of $\omega^{\omega},$ unbounded will mean
$\leq^{\ast}$-unbounded and dominating will mean $\leq^{\ast}$-dominating. The
basic properties of $\mathfrak{b}$ and $\mathfrak{d}$ are the following:

\begin{lemma}
\qquad\ \ 

\begin{enumerate}
\item $\omega<\mathfrak{b\leq}$ \textsf{cof}$\left(  \mathfrak{d}\right)
\leq\mathfrak{d\leq c}.$

\item $\mathfrak{b}\left(  \omega^{\omega},\leq\right)  =\omega$ and
$\mathfrak{d}\left(  \omega^{\omega},\leq\right)  =\mathfrak{d}.$

\item There is an unbounded family $B=\left\{  f_{\alpha}\mid\alpha
<\mathfrak{b}\right\}  \subseteq\omega^{\omega}$ such that if $\alpha<\beta$
then $f_{\alpha}<^{\ast}f_{\beta}.$

\item $\mathfrak{b}$ is a regular cardinal.
\end{enumerate}
\end{lemma}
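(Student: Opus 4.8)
The plan is to establish the chain of inequalities in (1) separately, then derive (2), (3) and (4) using the techniques assembled along the way. Three of the inequalities in (1) are immediate: $\textsf{cof}\left(\mathfrak{d}\right)\leq\mathfrak{d}$ because the cofinality of any cardinal is at most the cardinal; $\mathfrak{d}\leq\mathfrak{c}$ because $\omega^{\omega}$ is itself a dominating family and $\left\vert\omega^{\omega}\right\vert=\mathfrak{c}$; and $\omega<\mathfrak{b}$ by the standard diagonal argument, namely given countably many functions $\left\{f_{n}\mid n\in\omega\right\}$ the function $g\left(k\right)=\max\left\{f_{n}\left(k\right)\mid n\leq k\right\}+1$ satisfies $f_{n}\leq^{\ast}g$ for every $n$, so no countable family is unbounded. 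The inequality $\mathfrak{b}\leq\mathfrak{d}$ was already noted before the definition, since every dominating family is unbounded.

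The heart of (1) is $\mathfrak{b}\leq\textsf{cof}\left(\mathfrak{d}\right)$. First I would fix a dominating family $D=\left\{g_{\beta}\mid\beta<\mathfrak{d}\right\}$ and write $\mathfrak{d}=\sup_{i<\kappa}\lambda_{i}$ as an increasing supremum of $\kappa=\textsf{cof}\left(\mathfrak{d}\right)$ many ordinals $\lambda_{i}<\mathfrak{d}$. For each $i<\kappa$ the set $\left\{g_{\beta}\mid\beta<\lambda_{i}\right\}$ has size strictly less than $\mathfrak{d}$, hence is not dominating, so I may choose $f_{i}\in\omega^{\omega}$ with $f_{i}\nleq^{\ast}g_{\beta}$ for all $\beta<\lambda_{i}$. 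I claim $\left\{f_{i}\mid i<\kappa\right\}$ is unbounded, which forces $\mathfrak{b}\leq\kappa$. Indeed, if some $h$ bounded it, then since $D$ is dominating we would have $h\leq^{\ast}g_{\beta}$ for some $\beta$, and choosing $i$ with $\beta<\lambda_{i}$ gives $f_{i}\leq^{\ast}h\leq^{\ast}g_{\beta}$, contradicting the choice of $f_{i}$.

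For (2), the equality $\mathfrak{b}\left(\omega^{\omega},\leq\right)=\omega$ follows because the constant functions $\left\{c_{n}\mid n\in\omega\right\}$ form a $\leq$-unbounded family (no single $g$ satisfies $c_{n}\leq g$ for all $n$), while any finite family is bounded by its pointwise maximum. For $\mathfrak{d}\left(\omega^{\omega},\leq\right)=\mathfrak{d}$, one inequality is clear since $f\leq g$ implies $f\leq^{\ast}g$; for the reverse I would take a $\leq^{\ast}$-dominating family $D$ of size $\mathfrak{d}$ and close it under finite modifications, setting $g_{n,m}\left(k\right)=m$ for $k<n$ and $g_{n,m}\left(k\right)=g\left(k\right)$ for $k\geq n$, obtaining a $\leq$-dominating family of size $\mathfrak{d}\cdot\omega=\mathfrak{d}$. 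For (3), I would build the $<^{\ast}$-increasing unbounded family by transfinite recursion of length $\mathfrak{b}$: starting from any unbounded family $\left\{g_{\alpha}\mid\alpha<\mathfrak{b}\right\}$, at stage $\alpha$ the set $\left\{f_{\xi}\mid\xi<\alpha\right\}\cup\left\{g_{\alpha}\right\}$ has size $\left\vert\alpha\right\vert<\mathfrak{b}$ and is therefore $\leq^{\ast}$-bounded by some $h$; I set $f_{\alpha}=h+1$, which strictly $<^{\ast}$-dominates every earlier $f_{\xi}$ and also $g_{\alpha}$. The resulting chain is unbounded because it dominates the $g_{\alpha}$.

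Finally, (4) follows from (3): assuming $\mathfrak{b}$ were singular with $\textsf{cof}\left(\mathfrak{b}\right)=\kappa<\mathfrak{b}$, I would write $\mathfrak{b}=\sup_{i<\kappa}\lambda_{i}$, bound each initial segment $\left\{f_{\alpha}\mid\alpha<\lambda_{i}\right\}$ of the scale by some $h_{i}$, and then bound the $\kappa<\mathfrak{b}$ many functions $\left\{h_{i}\mid i<\kappa\right\}$ by a single $h$; this $h$ would bound the whole scale, contradicting its unboundedness. The main obstacle is the argument for $\mathfrak{b}\leq\textsf{cof}\left(\mathfrak{d}\right)$, since it is the only inequality requiring a genuine construction rather than a one-line observation, and getting the quantifiers right in the choice of the witnesses $f_{i}$ (non-domination of a small subfamily) is where care is needed.
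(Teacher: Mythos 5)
Your proposal is correct and follows essentially the same route as the paper: a diagonal bound for $\omega<\mathfrak{b}$, decomposing a dominating family into $\mathsf{cof}(\mathfrak{d})$ many pieces of size $<\mathfrak{d}$ and choosing witnesses escaping each piece for $\mathfrak{b}\leq\mathsf{cof}(\mathfrak{d})$ (you argue directly that the witnesses form an unbounded family, the paper argues by contradiction, but these are contrapositives of the same argument), closure under finite modifications for $\mathfrak{d}(\omega^{\omega},\leq)=\mathfrak{d}$, the recursive construction dominating a given unbounded family for (3), and the standard regularity argument for (4).
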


\begin{proof}
We first prove that $\omega<\mathfrak{b}$ or in other words, that every
countable family of $\omega^{\omega}$ is $\leq^{\ast}$-bounded. Given
$B=\left\{  f_{n}\mid n\in\omega\right\}  \subseteq\omega^{\omega}$ define $g$
$\in\omega^{\omega}$ such that $g\left(  n\right)  =f_{0}\left(  n\right)
+...+f_{n}\left(  n\right)  .$ It is easy to see that $g$ is an upper bound
for $B.$

\qquad\ \ 

We will now prove that $\mathfrak{b\leq}$ \textsf{cof}$\left(  \mathfrak{d}%
\right)  .$ We will proceed by contradiction, so assume that \textsf{cof}%
$\left(  \mathfrak{d}\right)  <\mathfrak{b}.$ Let $D\subseteq\omega^{\omega}$
be a dominating family of size $\mathfrak{d}.$ Let $D=\bigcup\left\{
D_{\alpha}\mid\alpha\in\mathsf{cof}\left(  \mathfrak{d}\right)  \right\}  $
where each $D_{\alpha}$ has size less than $\mathfrak{d}.$ Since each
$D_{\alpha}$ is not dominating, there is $g_{\alpha}\in\omega^{\omega}$ that
is not dominated by any element of $D_{\alpha}$ i.e. $g_{\alpha}\nleq^{\ast}f$
for every $f\in D_{\alpha}.$ Since \textsf{cof}$\left(  \mathfrak{d}\right)
<\mathfrak{b}$ we can then find $h\in\omega^{\omega}$ such that $g_{\alpha
}\leq^{\ast}h$ for every $\alpha\in$ \textsf{cof}$\left(  \mathfrak{d}\right)
.$ But then $h$ is not dominated by any element of $D,$ which is a contradiction.

\qquad\ \ \ 

If $c_{n}\in\omega^{\omega}$ is the function with constant value $n$, then
$\left\{  c_{n}\mid n\in\omega\right\}  $ is $\leq$-unbounded, so
$\mathfrak{b}\left(  \omega^{\omega},\leq\right)  =\omega.$ Obviously
$\mathfrak{d\leq d}\left(  \omega^{\omega},\leq\right)  $ so we only need to
prove the other inequality. Given $g:\omega\longrightarrow\omega$ and
$n\in\omega$ we define the function $g^{n}:\omega\longrightarrow\omega$ where
$g^{n}\left(  m\right)  =g\left(  m\right)  +n.$ If $D=\left\{  g_{\alpha}%
\mid\alpha\in\mathfrak{d}\right\}  $ is a dominating family, then
$D_{1}=\left\{  g_{\alpha}^{n}\mid n\in\omega,\alpha\in\mathfrak{d}\right\}  $
is $\leq$-dominating, so $\mathfrak{d}\left(  \omega^{\omega},\leq\right)  $
is at most $\mathfrak{d.}$

\qquad\ \ \ 

We now prove 3. Let $A=\left\{  g_{\alpha}\mid\alpha\in\mathfrak{b}\right\}  $
be an unbounded family. We then recursively construct $B=\left\{  f_{\alpha
}\mid\alpha<\mathfrak{b}\right\}  $ such that $g_{\alpha}\leq^{\ast}f_{\alpha
}$ and if $\alpha<\beta$ then $f_{\alpha}<^{\ast}f_{\beta}.$ This can be done
since at each step we have less than $\mathfrak{b}$ functions. It is clear
that $B$ has the desired properties.

\qquad

We will now prove that $\mathfrak{b}$ is a regular cardinal. Let $B=\left\{
f_{\alpha}\mid\alpha<\mathfrak{b}\right\}  $ be as above and let
$S\subseteq\mathfrak{b}$ be a cofinal set of size \textsf{cof}$\left(
\mathfrak{b}\right)  $. Since $S$ is cofinal in $\mathfrak{b}$, then
$B^{\prime}=\left\{  f_{\alpha}\mid\alpha\in S\right\}  $ is unbounded so
$\left\vert S\right\vert =\mathfrak{b}.$
\end{proof}

\qquad\ \ \ 

In this way, changing $\leq^{\ast}$ to $\leq$ makes a difference for
$\mathfrak{b}$ but not for $\mathfrak{d}.$ It is important to remark that
while $\mathfrak{b}$ is regular, $\mathfrak{d}$ can be singular. Given
$f,g\in\omega^{\omega}$ and $n\in\omega.$ we define $f\leq_{n}g$ if $f\left(
m\right)  \leq g\left(  m\right)  $ for every $m\geq n.$ In this way,
$f\leq^{\ast}g$ if and only if there is $n\in\omega$ such that $f\leq_{n}g.$

\begin{definition}
We say $S=\left\{  f_{\alpha}\mid\alpha\in\kappa\right\}  \subseteq
\omega^{\omega}$ is a \emph{scale }if $\kappa$ is regular, $S$ is dominating
and $f_{\alpha}\leq^{\ast}f_{\beta}$ whenever $\alpha<\beta.$
\end{definition}

\qquad\ \ \ 

Note that the requirement of the regularity is harmless, if there was a scale
of singular size, then there would be a scale of regular size.\qquad\ \ 

\begin{lemma}
There is a scale if and only if $\mathfrak{b=d.}$ Moreover, the size of any
scale is $\mathfrak{b}.$
\end{lemma}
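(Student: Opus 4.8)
The plan is to establish both implications and to read off the cardinality assertion directly from the forward direction, so that the bound $\left\vert S\right\vert =\mathfrak{b}$ for a scale $S$ comes for free.

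For the direction that a scale exists implies $\mathfrak{b}=\mathfrak{d}$, suppose $S=\{f_{\alpha}\mid\alpha<\kappa\}$ is a scale, so $\kappa$ is regular, $S$ is dominating, and $f_{\alpha}\leq^{\ast}f_{\beta}$ whenever $\alpha<\beta$. Since $S$ is dominating we immediately get $\mathfrak{d}\leq\kappa$. The crucial step is to show $\kappa\leq\mathfrak{b}$, which I would prove by contradiction. If $\mathfrak{b}<\kappa$, fix an unbounded family $\{g_{\xi}\mid\xi<\mathfrak{b}\}$; using that $S$ is dominating, choose for each $\xi$ an ordinal $\alpha_{\xi}<\kappa$ with $g_{\xi}\leq^{\ast}f_{\alpha_{\xi}}$. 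Because $\kappa$ is regular and $\mathfrak{b}<\kappa$, the ordinal $\alpha^{\ast}=\sup\{\alpha_{\xi}\mid\xi<\mathfrak{b}\}$ is strictly below $\kappa$, and since $S$ is $\leq^{\ast}$-increasing, $f_{\alpha^{\ast}}$ would be a $\leq^{\ast}$-upper bound for every $g_{\xi}$, contradicting unboundedness. Hence $\kappa\leq\mathfrak{b}$, and combining with $\mathfrak{b}\leq\mathfrak{d}\leq\kappa$ yields $\mathfrak{b}=\mathfrak{d}=\kappa$. This simultaneously proves the \textquotedblleft moreover\textquotedblright\ clause, namely that any scale has size $\mathfrak{b}$.

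For the converse, assume $\mathfrak{b}=\mathfrak{d}$ and let $\kappa=\mathfrak{b}$, which is regular by the previous lemma. Fix a dominating family $\{h_{\alpha}\mid\alpha<\kappa\}$ of size $\mathfrak{d}=\kappa$, and recursively build a $\leq^{\ast}$-increasing sequence $\left\langle f_{\alpha}\mid\alpha<\kappa\right\rangle $ with $h_{\alpha}\leq^{\ast}f_{\alpha}$. At stage $\alpha$ the collection $\{f_{\beta}\mid\beta<\alpha\}\cup\{h_{\alpha}\}$ has size strictly less than $\kappa=\mathfrak{b}$, hence is $\leq^{\ast}$-bounded, so I can pick $f_{\alpha}$ dominating all of its members and strictly above each earlier $f_{\beta}$. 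The resulting family has regular length $\kappa$, is $\leq^{\ast}$-increasing, and is dominating, since any $g$ lies below some $h_{\alpha}\leq^{\ast}f_{\alpha}$; thus it is a scale.

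The main obstacle is justifying the supremum argument in the forward direction, which is precisely where the regularity of $\kappa$ is indispensable: without it $\alpha^{\ast}$ could equal $\kappa$ and no bounding $f_{\alpha^{\ast}}$ would be available. The converse relies symmetrically on the regularity of $\mathfrak{b}$, guaranteeing that at each of the $\kappa$ recursion stages fewer than $\mathfrak{b}$ functions have been produced and therefore remain $\leq^{\ast}$-bounded.
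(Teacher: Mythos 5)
Your proof is correct and follows essentially the same route as the paper: the converse direction is the recursive dominating-and-increasing construction (the paper cites item 3 of its lemma on $\mathfrak{b}$ and $\mathfrak{d}$ for this), and the forward direction is the same regularity-plus-supremum contradiction showing $\kappa\leq\mathfrak{b}$, which also yields the "moreover" clause exactly as the paper does.
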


\begin{proof}
First assume $\mathfrak{b}=\mathfrak{d}$ and let $D=\left\{  f_{\alpha}%
\mid\alpha\in\mathfrak{d}\right\}  $ be a dominating family. If we do the
construction used in 3 of the previous lemma we get a scale. Now assume that
$S=\left\{  f_{\alpha}\mid\alpha\in\kappa\right\}  $ is a scale and note that
$\mathfrak{d}\leq\kappa.$ Now assume that $\mathfrak{b}<\kappa$ and let
$B=\left\{  g_{\beta}\mid\beta\in\mathfrak{b}\right\}  $ be an unbounded
family. For every $\beta\in\mathfrak{b}$ find $\alpha_{\beta}\in\kappa$ such
that $g_{\beta}\leq^{\ast}f_{\alpha_{\beta}}.$ Since $\mathfrak{b}<\kappa$ and
$\kappa$ is regular, there is $\gamma$ such that $\alpha_{\beta}<\gamma$ for
every $\beta\in\mathfrak{b}$ and then $f_{\gamma}$ will bound $B,$ which is a contradiction.
\end{proof}

\qquad\ \ \ \ \ \ \ \ \ \ \ \ \ \ \qquad\ \ \ \ \ \ \ \ \ \ 

Recall that $\mathcal{P}=\left\{  P_{n}\mid n\in\omega\right\}  $ is an
\emph{interval partition }if it is a partition of $\omega$ into consecutive
intervals and by $PART$ we denoted the set of all interval partitions. Given
interval partitions $\mathcal{P}$ and $\mathcal{Q}$ define $\mathcal{Q\leq P}$
if for every $P_{n}\in\mathcal{P}$ there is $Q_{m}\in\mathcal{Q}$ such that
$Q_{m}\subseteq P_{n}$ (in other words, every interval in $\mathcal{P}$
contains at least one interval of $\mathcal{Q}$) and $\mathcal{Q\leq}^{\ast
}\mathcal{P}$ if for almost all $P_{n}\in\mathcal{P}$ there is $Q_{m}%
\in\mathcal{Q}$ such that $Q_{m}\subseteq P_{n}$ (i.e. almost every interval
in $\mathcal{P}$ contains at least one interval of $\mathcal{Q}$). The proof
of the following useful lemma can be found in \cite{HandbookBlass}:

\begin{lemma}
[\cite{HandbookBlass}]\qquad\ \ \ \qquad\ \ \ \ \ \ \ \ \ \ \ \ \qquad\ \ \ 

\begin{enumerate}
\item $\mathfrak{d}=\mathfrak{d}\left(  PART,\leq^{\ast}\right)  .$

\item $\mathfrak{b}=\mathfrak{b}\left(  PART,\leq^{\ast}\right)  .$
\end{enumerate}
\end{lemma}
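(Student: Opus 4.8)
The plan is to establish the two equalities $\mathfrak{d}=\mathfrak{d}\left(PART,\leq^{\ast}\right)$ and $\mathfrak{b}=\mathfrak{b}\left(PART,\leq^{\ast}\right)$ by constructing order-preserving correspondences between functions in $\omega^{\omega}$ and interval partitions. The key observation is that an interval partition $\mathcal{P}=\left\{P_{n}\mid n\in\omega\right\}$ is essentially coded by the increasing sequence of its endpoints, and the relation $\mathcal{Q}\leq^{\ast}\mathcal{P}$ (almost every interval of $\mathcal{P}$ contains an interval of $\mathcal{Q}$) translates into a domination statement about these endpoint functions. So I would first set up a translation in each direction: given $f\in\omega^{\omega}$ (which I may assume increasing and satisfying $f(n)>n$), define an interval partition $\mathcal{P}_{f}$ whose $n$-th endpoint is roughly $f^{n}(0)$ (or some fast-growing iterate), so that a partition $\mathcal{P}_{f}$ refines $\mathcal{P}_{g}$ precisely when $f$ grows fast enough relative to $g$; conversely, given an interval partition $\mathcal{P}$, extract the endpoint function $e_{\mathcal{P}}$.

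Once the dictionary is in place, I would prove $\mathfrak{d}\left(PART,\leq^{\ast}\right)\leq\mathfrak{d}$ by taking a dominating family $D\subseteq\omega^{\omega}$ of size $\mathfrak{d}$ and showing $\left\{\mathcal{P}_{f}\mid f\in D\right\}$ is $\leq^{\ast}$-dominating in $PART$: given any partition $\mathcal{Q}$, its endpoint function is dominated by some $f\in D$, and then I check that $\mathcal{Q}\leq^{\ast}\mathcal{P}_{f}$. For the reverse inequality $\mathfrak{d}\leq\mathfrak{d}\left(PART,\leq^{\ast}\right)$, I would take a $\leq^{\ast}$-dominating family of partitions of size $\mathfrak{d}\left(PART,\leq^{\ast}\right)$ and read off their endpoint functions, arguing that these (or their pointwise maxima over finitely many shifts) dominate $\omega^{\omega}$. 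The argument for $\mathfrak{b}$ is entirely dual: an unbounded family of functions yields an unbounded family of partitions and vice versa, using that the same translation reverses or preserves the relevant order in the way required for bounding rather than dominating.

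The main technical care — and the step I expect to be the genuine obstacle — is pinning down exactly how the refinement relation $\leq^{\ast}$ on $PART$ corresponds to $\leq^{\ast}$ on endpoint functions, because the correspondence is not literally an isomorphism of orders: the condition ``every interval of $\mathcal{P}$ contains an interval of $\mathcal{Q}$'' involves both partitions' endpoints in an interleaved way, and one must verify that fast growth of one endpoint function forces containment of intervals in the right direction. The natural move is to note that $\mathcal{Q}\leq\mathcal{P}$ holds whenever the gaps of $\mathcal{Q}$ are small compared to the gaps of $\mathcal{P}$, which is controlled by comparing consecutive endpoints; making this precise requires choosing the iterate $f^{n}$ (rather than $f$ itself) so that a single domination $e_{\mathcal{Q}}\leq^{\ast} f$ guarantees that each $\mathcal{P}$-interval is wide enough to swallow a $\mathcal{Q}$-interval, exactly in the spirit of the $h(n)=f^{n}(n)$ trick used in the proof of the Talagrand--Jalali-Naini theorem above.

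Since the lemma is cited to \cite{HandbookBlass}, I would keep the write-up at the level of sketching these two translations and verifying the four inequalities, relegating the routine endpoint bookkeeping to ``it is easy to check,'' and emphasizing only the iterate construction that makes the order correspondence work. I expect no deep difficulty beyond getting the quantifiers in the refinement relation to line up correctly with the almost-everywhere domination.
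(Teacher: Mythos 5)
The paper gives no proof of this lemma at all --- it simply defers to \cite{HandbookBlass} --- and your sketch is precisely the standard argument from that source: code a partition by its endpoint sequence, use iterates of a function to build a partition coarse enough that domination of the ``second endpoint after $n$'' function forces the refinement relation, and in the reverse direction read off the maximum of the \emph{next} interval (your ``finitely many shifts'' hedge is exactly the needed fix, since the current interval's endpoint alone does not dominate). The four inequalities then go through as you describe, with the $\mathfrak{b}$ case obtained by the dual/contrapositive reading of the same two translations, so the proposal is correct and essentially the intended proof.
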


The \emph{almost disjointness number }$\mathfrak{a}$ is the smallest size of a
\textsf{MAD} family. Since every \textsf{MAD} family is Kat\v{e}tov below
\textsf{FIN}$\times$\textsf{FIN} we conclude that $\mathfrak{b\leq a}.$
\qquad\qquad\ \ \ 

\begin{definition}
\qquad\ \ \qquad\ \ 

\begin{enumerate}
\item We say that $S$ \emph{splits }$X$ if $S\cap X$ and $X\setminus S$ are
both infinite.

\item $\mathcal{S\subseteq}$ $\left[  \omega\right]  ^{\omega}$ is a
\emph{splitting family }if for every $X\in\left[  \omega\right]  ^{\omega}$
there is $S\in\mathcal{S}$ such that $S$ splits $X.$

\item The \emph{splitting number }$\mathfrak{s}$ is the smallest size of a
splitting family.

\item $\mathcal{R\subseteq}$ $\left[  \omega\right]  ^{\omega}$ is a
\emph{reaping family }if for every $X\in\left[  \omega\right]  ^{\omega}$
there is $A\in\mathcal{R}$ such that either $A\subseteq^{\ast}X$ or
$A\subseteq^{\ast}\omega\setminus X.$\qquad\ \ \ \ \ 

\item The \emph{reaping number }$\mathfrak{r}$ is the smallest size of a
reaping family.
\end{enumerate}
\end{definition}

\qquad\ \ \ \ 

It is easy to see that $\left[  \omega\right]  ^{\omega}$ is a splitting
family, so the invariant $\mathfrak{s}$ is well defined. Note that a filter
$\mathcal{F}$ is an ultrafilter if and only if $\mathcal{F}$ is a reaping
family, so the invariant $\mathfrak{r}$ is also well defined.

\begin{lemma}
$\mathfrak{s\leq d}$ and $\mathfrak{b\leq r}.$
\end{lemma}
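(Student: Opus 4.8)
**The plan is to prove the two inequalities $\mathfrak{s}\leq\mathfrak{d}$ and $\mathfrak{b}\leq\mathfrak{r}$ separately, each by directly exhibiting a splitting (respectively reaping) family built from a witness for the bounding cardinal.**

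\textbf{For $\mathfrak{s}\leq\mathfrak{d}$.} I would start from a dominating family $D=\{f_\alpha\mid\alpha<\mathfrak{d}\}\subseteq\omega^\omega$ and show that a splitting family of the same size can be extracted from it. The natural device is to associate to each $f\in\omega^\omega$ an interval partition and then a splitting set obtained by taking, say, the union of every other interval. Concretely, given $f$ (which I may assume to be increasing and to satisfy $f(n)>n$) define the interval partition whose $n$-th block runs from $f^{n}(0)$ to $f^{n+1}(0)$, and let $S_f$ be the union of the even-indexed blocks. The claim is that $\{S_f\mid f\in D\}$ is splitting. To see this, fix $X\in[\omega]^\omega$ and let $e_X$ be its increasing enumeration; choose $f\in D$ dominating $e_X\circ e_X$ (or a similarly fast function), so that $f$ grows quickly enough that infinitely many consecutive blocks of the partition each meet $X$. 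Since $X$ meets infinitely many blocks and the blocks alternate between being inside $S_f$ and inside its complement, both $X\cap S_f$ and $X\setminus S_f$ are infinite, so $S_f$ splits $X$.

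\textbf{For $\mathfrak{b}\leq\mathfrak{r}$.} I would argue contrapositively by showing that any reaping family of size less than $\mathfrak{b}$ can be defeated. Let $\mathcal{R}=\{R_\alpha\mid\alpha<\kappa\}$ be a family with $\kappa<\mathfrak{b}$; I want to produce $X$ that is reaped by no $R_\alpha$, i.e.\ an $X$ that \emph{splits} every $R_\alpha$. For each $\alpha$ let $g_\alpha$ be the increasing enumeration of $R_\alpha$, so I have fewer than $\mathfrak{b}$ functions; since $\omega<\mathfrak{b}$ and $\mathfrak{b}$ measures unboundedness, these functions are $\leq^\ast$-bounded by some single $h$. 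Using $h$ I can build an interval partition $\mathcal{P}$ coarse enough that every $R_\alpha$ eventually meets each block of $\mathcal{P}$; then letting $X$ be the union of the even blocks of $\mathcal{P}$ guarantees that $R_\alpha\cap X$ and $R_\alpha\setminus X$ are both infinite for every $\alpha$, so no $R_\alpha$ is almost contained in $X$ or in its complement. Hence $\mathcal{R}$ is not reaping, giving $\mathfrak{b}\leq\mathfrak{r}$.

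\textbf{The main obstacle} in both parts is the bookkeeping that forces the relevant set to meet infinitely many alternating intervals, so that \emph{both} a set and its complement capture infinitely much of the target. For $\mathfrak{s}\leq\mathfrak{d}$ the delicate point is choosing the iteration rate of $f$ so that domination of $e_X$ genuinely translates into "$X$ meets infinitely many blocks''; one must be slightly careful that domination of the enumeration function, not of $X$ as a set, is what controls block membership. For $\mathfrak{b}\leq\mathfrak{r}$ the analogous care is needed when passing from the single bound $h$ to an interval partition whose blocks each eventually contain a point of every $R_\alpha$; here I would lean on the characterization of $\mathfrak{b}$ via $PART$ recorded earlier in the excerpt to make the construction of $\mathcal{P}$ clean. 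Everything else is routine.
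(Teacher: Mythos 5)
Your proposal is correct and takes essentially the same route as the paper: both inequalities are proved by converting a dominating (resp.\ bounding) family into interval partitions and then splitting with the union of every other block, so that domination forces almost every block to meet the target set. The only difference is cosmetic: you build the partitions by hand by iterating a fast function, whereas the paper invokes the lemma $\mathfrak{d}=\mathfrak{d}\left(PART,\leq^{\ast}\right)$ and $\mathfrak{b}=\mathfrak{b}\left(PART,\leq^{\ast}\right)$ to get them directly.
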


\begin{proof}
We will first prove the inequality $\mathfrak{s\leq d.}$ Let $\mathcal{D=}$
$\left\{  \mathcal{P}_{\alpha}\mid\alpha<\mathfrak{d}\right\}  $ be a
dominating family of interval partitions where $\mathcal{P}_{\alpha}=\left\{
P_{\alpha}\left(  n\right)  \mid n\in\omega\right\}  .$ For every
$\alpha<\mathfrak{d}$ we define $S_{\alpha}=%
%TCIMACRO{\tbigcup \limits_{n\in\omega}}%
%BeginExpansion
{\textstyle\bigcup\limits_{n\in\omega}}
%EndExpansion
P_{\alpha}\left(  2n\right)  $ and we will show that $\mathcal{S=}$ $\left\{
S_{\alpha}\mid\alpha<\mathfrak{d}\right\}  $ is a splitting family. Let
$X\in\left[  \omega\right]  ^{\omega}.$ We now find an interval partition
$\mathcal{R}$ such that every interval of $\mathcal{R}$ contains at least one
point of $X.$ Since $\mathcal{D}$ is a dominating family of interval
partitions, there is $\alpha<\mathfrak{d}$ such that $\mathcal{P}_{\alpha}$
dominates $\mathcal{R}.$ It is easy to see that $S_{\alpha}$ splits $X.$

\qquad\ \ \ \ 

The inequality $\mathfrak{b\leq r}$ is similar: Let $\kappa<\mathfrak{b}$ and
$\mathcal{R=}$ $\left\{  A_{\alpha}\mid\alpha<\kappa\right\}  \subseteq\left[
\omega\right]  ^{\omega}.$ We will show that $\mathcal{R}$ is not a reaping
family. For every $\alpha<\kappa$ we find an interval partition $\mathcal{P}%
_{\alpha}$ such that every interval of $\mathcal{P}_{\alpha}$ contains at
least one point of $A_{\alpha}.$ Since $\kappa<\mathfrak{b}$ then there is an
interval partition $\mathcal{R}=\left\{  R\left(  n\right)  \mid n\in
\omega\right\}  $ dominating every $\mathcal{P}_{\alpha}.$ It is easy to see
that $X=%
%TCIMACRO{\tbigcup \limits_{n\in\omega}}%
%BeginExpansion
{\textstyle\bigcup\limits_{n\in\omega}}
%EndExpansion
R\left(  2n\right)  $ witness that $\mathcal{R}$ is not a reaping family.
\end{proof}

\qquad\ \ \ \ \qquad\ \ \ \ \qquad\ \ 

The following is a stronger notion than of a splitting family:

\begin{definition}
\qquad\ \ \ \qquad\ \ \ \ \ \ \qquad\ \ \ \ 

\begin{enumerate}
\item Let $S\in\left[  \omega\right]  ^{\omega}$ and $\mathcal{P}=\left\{
P_{n}\mid n\in\omega\right\}  $ be an interval partition. We say $S$
\emph{block-splits }$\mathcal{P}$ if both of the sets $\left\{  n\mid
P_{n}\subseteq S\right\}  $ and $\left\{  n\mid P_{n}\cap S=\emptyset\right\}
$ are infinite.

\item A family $\mathcal{S}\subseteq\left[  \omega\right]  ^{\omega}$ is
called a \emph{block-splitting family }if every interval partition is
block-split by some element of $\mathcal{S}.$
\end{enumerate}
\end{definition}

\qquad\ \ \ 

It is easy to see that every block-splitting family is splitting. The
following is a result of Kamburelis and Weglorz:

\begin{proposition}
[\cite{Splittings}]The smallest size of a block-splitting family is
$max\left\{  \mathfrak{b},\mathfrak{s}\right\}  .$
\end{proposition}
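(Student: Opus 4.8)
Write $\mathfrak{bs}$ for the least size of a block-splitting family. The plan is to prove the two inequalities $\max\{\mathfrak{b},\mathfrak{s}\}\leq\mathfrak{bs}$ and $\mathfrak{bs}\leq\max\{\mathfrak{b},\mathfrak{s}\}$ separately. The bound $\mathfrak{s}\leq\mathfrak{bs}$ is immediate, since it was already observed that every block-splitting family is splitting. So the lower bound reduces to showing $\mathfrak{b}\leq\mathfrak{bs}$, and the upper bound is a construction of a block-splitting family of size $\max\{\mathfrak{b},\mathfrak{s}\}$.

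\textbf{The lower bound $\mathfrak{b}\leq\mathfrak{bs}$.} First I would show that any family $\mathcal{S}\subseteq\left[\omega\right]^{\omega}$ with $\left\vert\mathcal{S}\right\vert<\mathfrak{b}$ fails to block-split some single interval partition. For each $S\in\mathcal{S}$ define $f_{S}(k)$ to be the least $l>k$ such that the interval $\left[k,l\right)$ meets both $S$ and $\omega\setminus S$; this is well defined because each $S$ is infinite and co-infinite. Since $\left\vert\mathcal{S}\right\vert<\mathfrak{b}$, the family $\left\{f_{S}\mid S\in\mathcal{S}\right\}$ is $\leq^{\ast}$-bounded by a single increasing $g$. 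Then I would build an interval partition $\mathcal{P}$ with endpoints $k_{0}=0$ and $k_{n+1}\geq g(k_{n})$. For each fixed $S$, once $k_{n}$ is large enough that $g(k_{n})\geq f_{S}(k_{n})$, the block $\left[k_{n},k_{n+1}\right)$ contains $\left[k_{n},f_{S}(k_{n})\right)$ and hence meets both $S$ and its complement; thus $P_{n}\not\subseteq S$ for almost all $n$, so $S$ does not block-split $\mathcal{P}$. As this holds for every $S\in\mathcal{S}$, the family $\mathcal{S}$ is not block-splitting.

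\textbf{The upper bound $\mathfrak{bs}\leq\max\{\mathfrak{b},\mathfrak{s}\}$.} I would fix an unbounded family $\left\{\mathcal{P}_{\xi}\mid\xi<\mathfrak{b}\right\}$ in $\left(PART,\leq^{\ast}\right)$ (which exists of size $\mathfrak{b}$ by the Blass lemma that $\mathfrak{b}=\mathfrak{b}\left(PART,\leq^{\ast}\right)$) and a splitting family $\left\{A_{\gamma}\mid\gamma<\mathfrak{s}\right\}\subseteq\left[\omega\right]^{\omega}$, and define, writing $\mathcal{P}_{\xi}=\left\{P_{n}^{\xi}\mid n\in\omega\right\}$, the sets $S_{\xi,\gamma}=\bigcup\left\{P_{n}^{\xi}\mid n\in A_{\gamma}\right\}$. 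There are only $\mathfrak{b}\cdot\mathfrak{s}=\max\{\mathfrak{b},\mathfrak{s}\}$ of them. The point of the construction is that $P_{n}^{\xi}\subseteq S_{\xi,\gamma}$ exactly when $n\in A_{\gamma}$ and $P_{n}^{\xi}\cap S_{\xi,\gamma}=\emptyset$ exactly when $n\notin A_{\gamma}$. Given an arbitrary interval partition $\mathcal{Q}$, I would first produce $\xi$ such that the index set $I=\left\{n\mid P_{n}^{\xi}\text{ contains a full block of }\mathcal{Q}\right\}$ is infinite, and then choose $\gamma$ with $A_{\gamma}$ splitting $I$. For $n\in I\cap A_{\gamma}$ the $\mathcal{Q}$-block inside $P_{n}^{\xi}$ lies in $S_{\xi,\gamma}$, and for $n\in I\setminus A_{\gamma}$ it is disjoint from $S_{\xi,\gamma}$; since both index sets are infinite, $S_{\xi,\gamma}$ block-splits $\mathcal{Q}$.

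\textbf{Main obstacle.} The delicate step is obtaining the required $\xi$ from \emph{unboundedness} rather than from a dominating family (which would only give the weaker bound $\mathfrak{bs}\leq\mathfrak{d}$). The hard part will be the contrapositive argument: assuming that for some $\mathcal{Q}$ and every $\xi$ only finitely many $\mathcal{P}_{\xi}$-blocks contain a $\mathcal{Q}$-block, I must show that the single partition $\mathcal{Q}^{\prime}$ obtained from $\mathcal{Q}$ by grouping its blocks into consecutive triples is a $\leq^{\ast}$-upper bound for every $\mathcal{P}_{\xi}$. The idea is that if almost every $P_{n}^{\xi}$ contains no $\mathcal{Q}$-block, then almost every $P_{n}^{\xi}$ is confined to (at most) two consecutive $\mathcal{Q}$-blocks, so each $\mathcal{Q}^{\prime}$-block, spanning three consecutive $\mathcal{Q}$-blocks, must fully contain some $\mathcal{P}_{\xi}$-block; that is $\mathcal{P}_{\xi}\leq^{\ast}\mathcal{Q}^{\prime}$ for all $\xi$, contradicting unboundedness. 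Carefully handling the boundary blocks in this counting is the one place where routine estimates must be checked, and it is exactly what makes the bound $\mathfrak{b}$ rather than $\mathfrak{d}$ work.
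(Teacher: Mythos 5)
Your proof is correct and follows essentially the same route as the paper: the lower bound $\mathfrak{b}\leq\mathfrak{bs}$ by bounding the functions witnessing non-block-splitting and coarsening, and the upper bound via the sets $S_{\xi,\gamma}=\bigcup\{P_{n}^{\xi}\mid n\in A_{\gamma}\}$ built from an unbounded family of interval partitions and a splitting family. The one place you go beyond the paper is the "main obstacle" paragraph: the paper simply asserts that unboundedness yields infinitely many $\mathcal{P}_{\xi}$-blocks containing a $\mathcal{Q}$-block, whereas your tripling argument for $\mathcal{Q}^{\prime}$ supplies the missing combinatorial justification, and it checks out.
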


\begin{proof}
Let $\kappa$ be the smallest size of a block-splitting family. Obviously
$\mathfrak{s}\leq\kappa$ and now we will prove that $\mathfrak{b}\leq\kappa.$
To prove this it is enough to show that no family of size less than
$\mathfrak{b}$ is a block-splitting family. Let $\mu<\mathfrak{b}$ and
$\mathcal{S}=\left\{  S_{\alpha}\mid\alpha<\mu\right\}  $ be a family of
infinite subsets of $\omega.$ For every $\alpha<\mu$ define an interval
partition $P_{\alpha}=\left\{  P_{n}\left(  \alpha\right)  \mid n\in
\omega\right\}  $ such that each $P_{n}\left(  \alpha\right)  $ has non empty
intersection with both $S_{\alpha}$ and $\omega\setminus S_{\alpha}.$ Since
$\mu<\mathfrak{b}$ then there is an interval partition $\mathcal{R}=\left\{
R_{n}\mid n\in\omega\right\}  $ dominating each $P_{\alpha}$ i.e. almost all
intervals of $\mathcal{R}$ contains one of $P_{\alpha}.$ It is easy to see
that no element of $\mathcal{S}$ can block-split $\mathcal{R}.$

\qquad\ \ \ 

Now we will construct a block-splitting family of size $max\left\{
\mathfrak{b},\mathfrak{s}\right\}  .$ First find an unbounded family of
interval partitions $\mathcal{B}=\left\{  P_{\alpha}\mid\alpha<\mathfrak{b}%
\right\}  $ (where $P_{\alpha}=\left\{  P_{\alpha}\left(  n\right)  \mid
n\in\omega\right\}  $) and a splitting family $\mathcal{S}=\left\{  S_{\beta
}\mid\beta<\mathfrak{s}\right\}  $. Given $\alpha<\mathfrak{b}$ and
$\beta<\mathfrak{s}$ define $D_{\alpha,\beta}=\bigcup\limits_{n\in S_{\beta}%
}P_{\alpha}\left(  n\right)  $ we will prove that $\left\{  D_{\alpha,\beta
}\mid\alpha<\mathfrak{b},\beta<\mathfrak{s}\right\}  $ is a block-splitting
family. Let $\mathcal{R}=\left\{  R_{n}\mid n\in\omega\right\}  $ be an
interval partition. Since $\mathcal{B}$ is unbounded, there is $\alpha
<\mathfrak{b}$ such that $P_{\alpha}$ is not dominated by $\mathcal{R}%
\mathbf{.}$ We can then find an infinite set $W=\left\{  w_{n}\mid n\in
\omega\right\}  $ such that for every $n<\omega$ there is $k<\omega$ for which
$R_{k}\subseteq P_{\alpha}\left(  w_{n}\right)  $ (this is possible since
$P_{\alpha}$ is not dominated by $\mathcal{R}$). Since $\mathcal{S}$ is a
splitting family, there is $\beta<\mathfrak{s}$ such that\ both $S_{\beta}\cap
W$ and $\left(  \omega\setminus S_{\beta}\right)  \cap W$ are infinite. It is
easy to see that $D_{\alpha,\beta}$ block-splits $\mathcal{R}.$
\end{proof}

\qquad\ \ \qquad\ \ \ 

We will need the following notions:

\begin{definition}
Let $S\in\left[  \omega\right]  ^{\omega}$ and $\overline{X}=\left\{
X_{n}\mid n\in\omega\right\}  \subseteq\left[  \omega\right]  ^{\omega}$.

\begin{enumerate}
\item We say that $S$ $\omega$\emph{-splits} $\overline{X}$ if $S$ splits
every $X_{n}.$

\item We say that $S$ $\left(  \omega,\omega\right)  $\emph{-splits}
$\overline{X}$ if both the sets $\left\{  n\mid\left\vert X_{n}\cap
S\right\vert =\omega\right\}  $ and $\left\{  n\mid\left\vert X_{n}\cap\left(
\omega\backslash S\right)  \right\vert =\omega\right\}  $ are infinite.

\item We say that $\mathcal{S}\subseteq\left[  \omega\right]  ^{\omega}$ is an
$\omega$\emph{-splitting family }if every countable collection of infinite
subsets of $\omega$ is $\omega$-split by some element of $S.$

\item We say that $\mathcal{S}\subseteq\left[  \omega\right]  ^{\omega}$ is an
$\left(  \omega,\omega\right)  $\emph{-splitting family }if every countable
collection of infinite subsets of $\omega$ is $\left(  \omega,\omega\right)
$-split by some element of $S.$
\end{enumerate}
\end{definition}

\qquad\qquad\ \ \ \ \ \qquad\ \ 

We now have the following:

\begin{lemma}
Every block-splitting family is an $\omega$-splitting family.
\end{lemma}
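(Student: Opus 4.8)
The plan is to reduce the $\omega$-splitting of a countable family $\overline{X}=\{X_{n}\mid n\in\omega\}$ to block-splitting a single, carefully chosen interval partition. Given a block-splitting family $\mathcal{S}$ and an arbitrary countable collection $\overline{X}\subseteq\left[\omega\right]^{\omega}$, I would first construct an interval partition $\mathcal{P}=\{P_{k}\mid k\in\omega\}$ with the property that each block meets every one of $X_{0},\dots,X_{k}$; that is, $P_{k}\cap X_{n}\neq\emptyset$ whenever $n\leq k$. This is straightforward to arrange recursively: having fixed the right endpoint of $P_{k-1}$, I only need finitely many witnesses --- one point of each $X_{n}$ with $n\leq k$ lying beyond that endpoint --- and since each $X_{n}$ is infinite such points exist, so $P_{k}$ can be taken long enough to capture all of them.

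Next, I would apply the block-splitting hypothesis to $\mathcal{P}$ to obtain some $S\in\mathcal{S}$ for which both $A=\{k\mid P_{k}\subseteq S\}$ and $B=\{k\mid P_{k}\cap S=\emptyset\}$ are infinite, and then check that this \emph{single} $S$ splits every $X_{n}$ simultaneously. Fix $n$. For each $k\in A$ with $k\geq n$, the block $P_{k}$ lies inside $S$ and contains a point of $X_{n}$, so that point belongs to $S\cap X_{n}$; since the blocks are pairwise disjoint and $A$ is infinite, these points are distinct and $S\cap X_{n}$ is infinite. Symmetrically, for $k\in B$ with $k\geq n$, the block $P_{k}$ is disjoint from $S$ and contains a point of $X_{n}$, yielding infinitely many distinct points of $X_{n}\setminus S$. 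Hence $S$ splits $X_{n}$, and as $n$ was arbitrary, $S$ $\omega$-splits $\overline{X}$, which shows $\mathcal{S}$ is an $\omega$-splitting family.

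The argument is essentially self-contained and I do not expect a serious obstacle; the only point needing slight care is the recursive construction of $\mathcal{P}$. One must guarantee that each $P_{k}$ is a genuine consecutive interval that nonetheless catches a witness from each $X_{n}$ with $n\leq k$, which is precisely why one selects the (finitely many) witnesses first and only afterwards closes off the interval just past the largest of them. With that in hand, the verification that the two infinite index sets $A$ and $B$ feed infinitely many points into $S\cap X_{n}$ and $X_{n}\setminus S$ respectively is immediate from the disjointness of the blocks.
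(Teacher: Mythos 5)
Your argument is correct and is essentially the paper's own proof: construct an interval partition whose $k$-th block meets each $X_{n}$ for $n\leq k$, block-split it, and observe that the two infinite sets of blocks feed infinitely many points into $S\cap X_{n}$ and $X_{n}\setminus S$. The paper leaves the final verification as ``easy to see''; you have simply written it out.
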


\begin{proof}
Let $\mathcal{S}$ be a block-splitting family and $\overline{X}=\left\{
X_{n}\mid n\in\omega\right\}  \subseteq\left[  \omega\right]  ^{\omega}.$
Define an interval partition $\mathcal{P}=\left\{  P_{n}\mid n\in
\omega\right\}  $ such that if $i\leq n$ then $P_{n}\cap X_{i}\neq\emptyset.$
Since $\mathcal{S}$ is a block-splitting family, there is $S\in\mathcal{S}$
that block-splits $\mathcal{P}.$ It is then easy to see that $S$ $\omega
$-splits $\overline{X}.$
\end{proof}

\qquad\ \ \ \qquad\ \ \ 

We will need the following lemma.

\begin{lemma}
[\cite{SplittingFamiliesandCompleteSeparability}]Every splitting family of
size less than $\mathfrak{b}$ is an $\left(  \omega,\omega\right)  $-splitting family.
\end{lemma}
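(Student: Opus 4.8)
The plan is to argue by contradiction and to reduce the $(\omega,\omega)$-splitting of the countable family to the ordinary splitting of a single, carefully chosen set; the hypothesis $\kappa<\mathfrak{b}$ is used only once, to produce one dominating function. So suppose $\mathcal{S}=\{S_\alpha\mid\alpha<\kappa\}$ is a splitting family with $\kappa<\mathfrak{b}$, and suppose toward a contradiction that there is a countable $\overline{X}=\{X_n\mid n\in\omega\}\subseteq[\omega]^\omega$ that is \emph{not} $(\omega,\omega)$-split by any $S_\alpha$. The goal is to manufacture from $\overline{X}$ a single infinite set that no $S_\alpha$ splits, contradicting the splitting property of $\mathcal{S}$.

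First I would classify each $S_\alpha$. Since each $X_n$ is infinite, for every $n$ at least one of $X_n\cap S_\alpha$, $X_n\setminus S_\alpha$ is infinite, so the two index sets $\{n\mid |X_n\cap S_\alpha|=\omega\}$ and $\{n\mid |X_n\setminus S_\alpha|=\omega\}$ cannot both be finite; and because $S_\alpha$ fails to $(\omega,\omega)$-split $\overline{X}$, at least one of them \emph{is} finite. Hence to each $\alpha$ I can attach a type $t_\alpha\in\{0,1\}$ and a threshold $N_\alpha\in\omega$ so that: if $t_\alpha=0$ then $X_n\cap S_\alpha$ is finite for all $n\geq N_\alpha$, while if $t_\alpha=1$ then $X_n\setminus S_\alpha$ is finite for all $n\geq N_\alpha$.

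The next step records \emph{how} finite these traces are. For $n\geq N_\alpha$ set $g_\alpha(n)$ to be one more than $\max(X_n\cap S_\alpha)$ if $t_\alpha=0$, and one more than $\max(X_n\setminus S_\alpha)$ if $t_\alpha=1$ (with $g_\alpha(n)=0$ for $n<N_\alpha$, and the convention $\max\emptyset=0$); each value is finite precisely by the classification. This produces $\kappa$ many functions $g_\alpha\in\omega^\omega$, and here is the only place the hypothesis enters: since $\kappa<\mathfrak{b}$, there is a single $h\in\omega^\omega$ with $g_\alpha\leq^\ast h$ for every $\alpha<\kappa$. Now I build the witness set by choosing $y_0<y_1<\cdots$ with $y_n\in X_n$ and $y_n>h(n)$ (possible since each $X_n$ is infinite), and put $X=\{y_n\mid n\in\omega\}\in[\omega]^\omega$.

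Finally I would verify that no $S_\alpha$ splits $X$. Fix $\alpha$ and take $n$ large enough that $n\geq N_\alpha$ and $g_\alpha(n)\leq h(n)$. If $t_\alpha=0$, then every element of $X_n\cap S_\alpha$ lies below $g_\alpha(n)\leq h(n)<y_n$, so $y_n\in X_n$ sits above all of them and thus $y_n\notin S_\alpha$; hence $X\cap S_\alpha$ is finite. Symmetrically, if $t_\alpha=1$ then $y_n\in S_\alpha$ for all large $n$, so $X\setminus S_\alpha$ is finite. Either way $S_\alpha$ does not split the infinite set $X$, contradicting that $\mathcal{S}$ is splitting. The step I expect to be the crux is the passage in the second and third paragraphs: the realization that the failure of $(\omega,\omega)$-splitting makes the obstruction \emph{eventually finite} on each $X_n$, so that it is faithfully encoded by a modulus function $g_\alpha\in\omega^\omega$, and that a single $\leq^\ast$-dominating $h$ therefore defeats all $\kappa$ sets at once — in particular that $\kappa<\mathfrak{b}$ suffices here, rather than the stronger pseudo-intersection bound one might first reach for. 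Once $h$ is fixed, the closing verification is routine.
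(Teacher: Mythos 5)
Your proof is correct and follows essentially the same route as the paper: classify each $S_\alpha$ by which side eventually absorbs the $X_n$'s, encode the finite traces by a modulus function, dominate all $\kappa$ of them at once using $\kappa<\mathfrak{b}$, and extract a diagonal set that no $S_\alpha$ splits. The only differences are cosmetic (your $t_\alpha,N_\alpha,g_\alpha,h$ are the paper's $i_\alpha,f_\alpha,g$).
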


\begin{proof}
Let $\mathcal{S}=\left\{  S_{\alpha}\mid\alpha<\kappa\right\}  $ be a
splitting family of size less than $\mathfrak{b}.$ Assume $\mathcal{S}$ is not
an $\left(  \omega,\omega\right)  $-splitting family so there is $\overline
{X}=\left\{  X_{n}\mid n\in\omega\right\}  \subseteq\left[  \omega\right]
^{\omega}$ that is not $\left(  \omega,\omega\right)  $-split by any element
of $\mathcal{S}$. This means that for every $\alpha<\kappa$ there $i_{\alpha
}<2$ such that $X_{n}\subseteq^{\ast}S_{\alpha}^{i_{\alpha}}$ for almost all
$n\in\omega.$ We can then define a function $f_{\alpha}:\omega\longrightarrow
\omega$ such that if $n<\omega$ then $X_{n}\setminus f_{\alpha}\left(
n\right)  $ $\subseteq S_{\alpha}^{i_{\alpha}}$ if $X_{n}\subseteq^{\ast
}S_{\alpha}^{i_{\alpha}}$ and $f_{\alpha}\left(  n\right)  =0$ in the other
case. Since $\kappa<\mathfrak{b}$ there is $g:\omega\longrightarrow\omega$
dominating each $f_{\alpha}.$ Recursively define $A=\left\{  a_{n}\mid
n\in\omega\right\}  $ such that $a_{n}\in X_{n}\setminus g\left(  n\right)  $
and $a_{n}\neq a_{m}$ whenever $n\neq m.$ Since $\mathcal{S}$ is a splitting
family, there is $\alpha<\kappa$ such that $A\cap S_{\alpha}$ and
$A\cap\left(  \omega\setminus S_{\alpha}\right)  $ are infinite. However,
since $g$ dominates $f_{\alpha}$ we conclude that $A\subseteq^{\ast}S_{\alpha
}^{i_{\alpha}}$ which is a contradiction.
\end{proof}

\qquad\ \ \ \qquad\ \ 

We can then conclude the following important result of Mildenberger, Raghavan
and Stepr\={a}ns:

\begin{corollary}
[\cite{SplittingFamiliesandCompleteSeparability}]There is an $\left(
\omega,\omega\right)  $-splitting family of size $\mathfrak{s}.$
\end{corollary}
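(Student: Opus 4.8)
There is an $(\omega,\omega)$-splitting family of size $\mathfrak{s}$.

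The plan is to split into the two possible cases according to how $\mathfrak{s}$ compares to $\mathfrak{b}$, and in each case exhibit a concrete witness of size $\mathfrak{s}$.

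First I would dispose of the case $\mathfrak{s}<\mathfrak{b}$ immediately: by the previous lemma, every splitting family of size less than $\mathfrak{b}$ is already an $(\omega,\omega)$-splitting family. Since a splitting family of size $\mathfrak{s}$ always exists by definition, and under $\mathfrak{s}<\mathfrak{b}$ that family has size below $\mathfrak{b}$, it is automatically $(\omega,\omega)$-splitting. So in this case the statement is free.

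The interesting case is $\mathfrak{b}\leq\mathfrak{s}$, where a plain splitting family need not be $(\omega,\omega)$-splitting. Here I would invoke the result of Kamburelis and Weglorz proved above: the smallest block-splitting family has size $\max\{\mathfrak{b},\mathfrak{s}\}$. Under $\mathfrak{b}\leq\mathfrak{s}$ this maximum equals $\mathfrak{s}$, so there is a block-splitting family of size exactly $\mathfrak{s}$. By the lemma proved just before this corollary, \emph{every block-splitting family is an $\omega$-splitting family}, and an $\omega$-splitting family is in particular $(\omega,\omega)$-splitting (if $S$ splits every $X_n$, then $\{n\mid |X_n\cap S|=\omega\}$ and $\{n\mid |X_n\cap(\omega\setminus S)|=\omega\}$ are both all of $\omega$, hence infinite). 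Thus this block-splitting family of size $\mathfrak{s}$ is the desired $(\omega,\omega)$-splitting family.

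The main obstacle, such as it is, is purely bookkeeping: one must verify that the two cases genuinely cover all possibilities and that the size bound in each case collapses to $\mathfrak{s}$ rather than to something larger. The case $\mathfrak{s}<\mathfrak{b}$ leans on the lemma about splitting families of size $<\mathfrak{b}$, and the case $\mathfrak{b}\leq\mathfrak{s}$ leans on the block-splitting characterization together with the implication block-splitting $\Rightarrow$ $\omega$-splitting $\Rightarrow$ $(\omega,\omega)$-splitting; there is no delicate construction to run, since all the combinatorial work has already been done in the preceding three results. I expect the entire argument to be a short assembly of these ingredients.
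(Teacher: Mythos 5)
Your proposal is correct and follows exactly the paper's own two-case argument: for $\mathfrak{s}<\mathfrak{b}$ apply the preceding lemma to a splitting family of minimal size, and for $\mathfrak{b}\leq\mathfrak{s}$ take a block-splitting family of size $\max\{\mathfrak{b},\mathfrak{s}\}=\mathfrak{s}$ and note that block-splitting implies $\omega$-splitting implies $(\omega,\omega)$-splitting. The paper leaves the last chain of implications implicit; you spell it out, which is fine.
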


\begin{proof}
If $\mathfrak{b\leq s}$ then there is a block-splitting family of size
$\mathfrak{s}$ and if $\mathfrak{s<b}$ then every splitting family of minimal
size is $\left(  \omega,\omega\right)  $-splitting.
\end{proof}

\qquad\ \ \ \qquad\ \ \ 

Many important cardinal invariants come from ideals as we will now see.

\begin{definition}
Let $\mathcal{I\subseteq}\wp\left(  X\right)  $ be an ideal in $X.$ Then we
define the following invariants:

\begin{enumerate}
\item \textsf{add}$\left(  \mathcal{I}\right)  =min\left\{  \left\vert
\mathcal{A}\right\vert \mid\mathcal{A}\subseteq\mathcal{I\wedge}%
\bigcup\mathcal{A\notin I}\right\}  .$

\item \textsf{cov}$\left(  \mathcal{I}\right)  =min\left\{  \left\vert
\mathcal{A}\right\vert \mid\mathcal{A}\subseteq\mathcal{I\wedge}%
\bigcup\mathcal{A}=X\right\}  .$

\item \textsf{non}$\left(  \mathcal{I}\right)  =min\left\{  \left\vert
B\right\vert \mid B\subseteq X\wedge B\notin\mathcal{I}\right\}  .$

\item \textsf{cof}$\left(  \mathcal{I}\right)  =min\left\{  \left\vert
\mathcal{A}\right\vert \mid\mathcal{A}\subseteq\mathcal{I\wedge}\left(
\forall B\in\mathcal{I}\left(  \exists A\in\mathcal{A}\left(  B\subseteq
A\right)  \right)  \right)  \right\}  .$
\end{enumerate}
\end{definition}

\qquad\ \ 

Note that $\mathcal{I}$ is a $\sigma$-ideal if and only if $\omega<$
\textsf{add}$\left(  \mathcal{I}\right)  .n$For ideals on countable sets, we
have the following definitions:

\begin{definition}
Let $\mathcal{I}$ be a tall ideal in $\omega$ (or any countable set). Then we
define the following invariants:

\begin{enumerate}
\item \textsf{add*}$\left(  \mathcal{I}\right)  $ is the smallest size of a
family $\mathcal{A}\subseteq\mathcal{I}$ such that $\mathcal{A}$ does not have
a pseudounion in $\mathcal{A}.$

\item \textsf{cov*}$\left(  \mathcal{I}\right)  $ is the smallest size of a
family $\mathcal{A}\subseteq\mathcal{I}$ such that $\mathcal{A}$ is tall.

\item \textsf{non*}$\left(  \mathcal{I}\right)  $ is the smallest size of a
family $\mathcal{B}\subseteq\left[  \omega\right]  ^{\omega}$ such that there
is no $A\in\mathcal{I}$ that has infinite intersection with every element of
$\mathcal{B}.$
\end{enumerate}
\end{definition}

\qquad\ \ \ \qquad\ \ 

It is easy to see that \textsf{add*}$\left(  \mathcal{I}\right)  \leq$
\textsf{cov*}$\left(  \mathcal{I}\right)  ,$ \textsf{non*}$\left(
\mathcal{I}\right)  \leq$\textsf{cof}$\left(  \mathcal{I}\right)  .$ Note that
if $\mathcal{A}$ is a \textsf{MAD} family then \textsf{cov*}$\left(
\mathcal{I}\left(  \mathcal{A}\right)  \right)  =\left\vert \mathcal{A}%
\right\vert .$

\begin{lemma}
If $\mathcal{I}\leq_{\mathsf{K}}$ $\mathcal{J}$ then \textsf{cov}$^{\ast
}\left(  \mathcal{J}\right)  \leq$\textsf{cov}$^{\ast}\left(  \mathcal{I}%
\right)  .$
\end{lemma}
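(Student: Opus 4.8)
The plan is to transport a witnessing tall family for $\mathcal{I}$ across the Kat\v{e}tov morphism by taking preimages. Fix a Kat\v{e}tov morphism $f:Y\longrightarrow X$ from $\left(Y,\mathcal{J}\right)$ to $\left(X,\mathcal{I}\right)$ witnessing $\mathcal{I}\leq_{\mathsf{K}}\mathcal{J}$, and let $\mathcal{A}\subseteq\mathcal{I}$ be a tall family with $\left\vert\mathcal{A}\right\vert=$\textsf{cov}$^{\ast}\left(\mathcal{I}\right)$. Set $\mathcal{B}=\left\{f^{-1}\left(A\right)\mid A\in\mathcal{A}\right\}$. Since $f$ is a Kat\v{e}tov morphism, $f^{-1}\left(A\right)\in\mathcal{J}$ for every $A\in\mathcal{A}$, so $\mathcal{B}\subseteq\mathcal{J}$ and $\left\vert\mathcal{B}\right\vert\leq\left\vert\mathcal{A}\right\vert$. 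It therefore suffices to prove that $\mathcal{B}$ is tall: once this is done, $\mathcal{B}$ is a tall subfamily of $\mathcal{J}$ of size at most \textsf{cov}$^{\ast}\left(\mathcal{I}\right)$, which yields \textsf{cov}$^{\ast}\left(\mathcal{J}\right)\leq\left\vert\mathcal{B}\right\vert\leq$\textsf{cov}$^{\ast}\left(\mathcal{I}\right)$.

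To verify tallness, I would fix $W\in\left[Y\right]^{\omega}$ and split into two cases according to the image $f\left[W\right]$. If $f\left[W\right]$ is infinite, then by tallness of $\mathcal{A}$ there is $A\in\mathcal{A}$ with $A\cap f\left[W\right]$ infinite; choosing for each $x\in A\cap f\left[W\right]$ a point $w_{x}\in W$ with $f\left(w_{x}\right)=x$ produces infinitely many distinct elements of $f^{-1}\left(A\right)\cap W$, so $f^{-1}\left(A\right)$ meets $W$ infinitely. If $f\left[W\right]$ is finite, then by the pigeonhole principle some fiber $f^{-1}\left(x_{0}\right)$ meets $W$ in an infinite set; provided $x_{0}\in A$ for some $A\in\mathcal{A}$, the set $f^{-1}\left(A\right)\supseteq f^{-1}\left(x_{0}\right)$ again meets $W$ infinitely.

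The main obstacle is exactly this second case: because $\leq_{\mathsf{K}}$ (as opposed to $\leq_{\mathsf{KB}}$) imposes no finite-to-one requirement on $f$, an infinite $W$ may have finite image, and then one must know that the relevant point $x_{0}$ is covered by some member of $\mathcal{A}$. I would remove this obstacle by arranging in advance that $\bigcup\mathcal{A}=X$. This costs nothing: a short diagonalization shows that a tall ideal has no countable tall subfamily (given countably many sets of $\mathcal{I}$, their finite unions remain in $\mathcal{I}$ and are hence coinfinite, so one can recursively pick an infinite set meeting each of them finitely), whence \textsf{cov}$^{\ast}\left(\mathcal{I}\right)>\omega$. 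Since $X$ is countable, enumerating $X$ and fixing some $A^{\ast}\in\mathcal{A}$ I can replace $\mathcal{A}$ by $\mathcal{A}\cup\left\{A^{\ast}\cup\left\{x\right\}\mid x\in X\right\}$; every added set still belongs to $\mathcal{I}$, the family remains tall, its union is now all of $X$, and its cardinality is unchanged because only countably many sets were adjoined to an uncountable family. With this normalization the point $x_{0}$ of the second case is automatically covered, so both cases succeed, $\mathcal{B}$ is tall, and the desired inequality \textsf{cov}$^{\ast}\left(\mathcal{J}\right)\leq$\textsf{cov}$^{\ast}\left(\mathcal{I}\right)$ follows.
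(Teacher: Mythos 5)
Your proof is correct and follows essentially the same route as the paper: pull back a tall witnessing family for $\mathcal{I}$ along the Kat\v{e}tov morphism. The paper compresses this to one line by stipulating that the tall family have union $\omega$ and declaring the rest "easy to see"; your normalization step (using \textsf{cov}$^{\ast}\left(\mathcal{I}\right)>\omega$ to adjoin the sets $A^{\ast}\cup\left\{x\right\}$ without changing cardinality) and the case split on whether $f\left[W\right]$ is infinite are exactly the details being elided there, correctly supplied.
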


\begin{proof}
Let $f:\left(  \omega,\mathcal{J}\right)  \longrightarrow\left(
\omega,\mathcal{I}\right)  $ be a Kat\v{e}tov morphism. It is easy to see that
if $\left\{  B_{\alpha}\mid\alpha\in\kappa\right\}  \subseteq\mathcal{I}$ is a
tall family such that $%
%TCIMACRO{\tbigcup }%
%BeginExpansion
{\textstyle\bigcup}
%EndExpansion
B_{\alpha}=\omega$ then $\left\{  f^{-1}\left(  B_{\alpha}\right)  \mid
\alpha\in\kappa\right\}  $ is also tall.
\end{proof}

\qquad\ \ \qquad\ \ \ \ \qquad\ \ 

We will need the following definition:

\begin{definition}
We say $\left(  A,B,\longrightarrow\right)  $ is an \emph{invariant }if,

\begin{enumerate}
\item $\longrightarrow$ $\subseteq A\times B.$

\item For every $a\in A$ there is a $b\in B$ such that $a\longrightarrow b$
(which means $\left(  a,b\right)  \in$ $\longrightarrow$).

\item There is no $b\in B$ such that $a\longrightarrow b$ for all $a\in A.$
\end{enumerate}
\end{definition}

\qquad\ \ \ \ \ \qquad\ \ \ \ 

The \emph{evaluation of }$\left(  A,B,\longrightarrow\right)  $ (denoted by
$\left\Vert A,B,\longrightarrow\right\Vert $)\emph{ }is defined as the minimum
size a family $D\subseteq B$ such that for every $a\in A$ there is a $d\in D$
such that $a\longrightarrow d$. The invariant $\left(  A,B,\longrightarrow
\right)  $ is called a \emph{Borel invariant }if $A,B$ and $\longrightarrow$
are Borel subsets of some Polish space. Most (but not all) of the usual
invariants are in fact Borel invariants.

\qquad\ \ 

\begin{center}
The statement $\Diamond\left(  A,B,\longrightarrow\right)  $ means the
following: For every \textbf{Borel}$\ C:2^{<\omega_{1}}\longrightarrow A$
there is a $g:\omega_{1}\longrightarrow B$ such that for every $R\in$
$^{\omega_{1}}2$ the set $\left\{  \alpha\mid C\left(  R\upharpoonright
\alpha\right)  \longrightarrow g\left(  \alpha\right)  \right\}  $ is stationary.

\qquad\ \ \ \ 
\end{center}

Here a function $C:2^{<\omega_{1}}\longrightarrow A$ is Borel if
$C\upharpoonright\alpha$ is Borel for every $\alpha<\omega_{1}.$ We will write
$\Diamond\left(  \mathfrak{d}\right)  $ instead of $\Diamond\left(
\omega^{\omega},\omega^{\omega},\leq^{\ast}\right)  $ and $\Diamond\left(
\mathfrak{b}\right)  $ instead of $\Diamond\left(  \omega^{\omega}%
,\omega^{\omega},^{\ast}\ngeq\right)  .$

\newpage

\section{Completely separable \textsf{MAD} families}

A \textsf{MAD} family $\mathcal{A}$ is \emph{completely separable }if for
every $X\in\mathcal{I}\left(  \mathcal{A}\right)  ^{+}$ there is
$A\in\mathcal{A}$ such that $A\subseteq X.$ This type of \textsf{MAD} families
was introduced by Hechler in \cite{Hechler}. A year later, Shelah and
Erd\"{o}s asked the following question:

\begin{problem}
[Erd\"{o}s-Shelah]Is there a completely separable \textsf{MAD} family?
\end{problem}

\qquad\ \ \ \qquad\ \ \ 

It is easy to construct models where the previous question has a positive
answer. It was shown by Balcar and Simon (see \cite{DisjointRefinement}) that
such families exist assuming one of the following axioms: $\mathfrak{a=c},$
$\mathfrak{b=d},$ $\mathfrak{d\leq a}$ and $\mathfrak{s}=\omega_{1}.$ In
\cite{SANEplayer} (see also \cite{AlmostDisjointFamiliesandTopology} and
\cite{SplittingFamiliesandCompleteSeparability}) Shelah developed a novel and
powerful method to construct completely separable \textsf{MAD} families. He
used it to prove that there are such families if either $\mathfrak{s\leq a}$
or $\mathfrak{a<s}$ and a certain (so called) \textsf{PCF }hypothesis holds
(which holds for example, if the continuum is less than $\aleph_{\omega}$).
Since Shelah's construction of a completely separable \textsf{MAD} family
under $\mathfrak{s\leq a}$ is the key for our construction of a $+$-Ramsey
\textsf{MAD} family, we will present his construction in the
following\ section. It is worth mentioning that the method of Shelah has been
further developed in \cite{OnWeaklytightFamilies} and
\cite{SplittingFamiliesandCompleteSeparability} where it is proved that weakly
tight \textsf{MAD} families exist under $\mathfrak{s\leq b.}$ Dilip Raghavan
has recently found even more applications of this method, unfortunately, his
results are still unpublished.

\qquad\ \ \ \qquad\ 

In this section, we expose the construction of Shelah of a completely
separable \textsf{MAD} family under $\mathfrak{s\leq a}$. This exposition is
based on \cite{SplittingFamiliesandCompleteSeparability} and
\cite{AlmostDisjointFamiliesandTopology}, none of the results in this section
are due to the author.

\begin{lemma}
$\mathfrak{s}$ has uncountable cofinality.
\end{lemma}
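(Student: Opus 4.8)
The plan is to prove that $\mathfrak{s}$ has uncountable cofinality by contradiction: I assume $\mathrm{cof}(\mathfrak{s})=\omega$ and construct from a splitting family of size $\mathfrak{s}$, decomposed as a countable increasing union of smaller pieces, a single set that no element of the family can split. This would contradict the assumption that the family is splitting, forcing $\mathfrak{s}$ to have uncountable cofinality.

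First I would fix a splitting family $\mathcal{S}=\bigcup_{n\in\omega}\mathcal{S}_n$ of size $\mathfrak{s}$, where each $\mathcal{S}_n$ is increasing with $|\mathcal{S}_n|<\mathfrak{s}$; the assumption $\mathrm{cof}(\mathfrak{s})=\omega$ is exactly what permits such a decomposition. Since each $\mathcal{S}_n$ has size strictly below $\mathfrak{s}$, by minimality of $\mathfrak{s}$ none of them is a splitting family, so for each $n$ there is an infinite set $X_n$ that is \emph{not} split by any member of $\mathcal{S}_n$; that is, for every $S\in\mathcal{S}_n$, either $X_n\subseteq^{\ast}S$ or $X_n\subseteq^{\ast}\omega\setminus S$. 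The goal is to amalgamate the sequence $\{X_n\mid n\in\omega\}$ into one infinite set $X$ that defeats the entire family $\mathcal{S}$.

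The natural move is to build $X$ as an almost-subset of the $X_n$ simultaneously, i.e. to arrange $X\subseteq^{\ast}X_n$ for every $n$ by a diagonal construction: pick $X=\{x_k\mid k\in\omega\}$ with $x_k\in X_k$ chosen increasing, or more carefully take a pseudointersection-like diagonal so that $X$ meets each $X_n$ in an infinite set while $X\setminus X_n$ stays finite for each fixed $n$. Then for any $S\in\mathcal{S}$, we have $S\in\mathcal{S}_m$ for some $m$, and since $X\subseteq^{\ast}X_m$ while $X_m$ is not split by $S$, the set $X$ is almost contained in $S$ or in its complement, so $S$ does not split $X$ either. This contradicts $\mathcal{S}$ being splitting.

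The hard part will be making the diagonalization genuinely compatible with \emph{all} the $X_n$ at once, since a priori the $X_n$ need not have a common pseudointersection — they are only required to be unsplit by the respective $\mathcal{S}_n$, not to be nested or almost-decreasing. The key technical step is therefore to first refine the sequence, replacing $X_n$ by a $\subseteq^{\ast}$-decreasing sequence $X'_n$ with $X'_n\subseteq^{\ast}X_i$ for all $i\le n$; this requires checking that a finite intersection of unsplit sets remains unsplit by $\mathcal{S}_n$ (using that $\mathcal{S}_n$ is increasing, so $X'_n$ being unsplit by $\mathcal{S}_n$ suffices), and that the resulting decreasing sequence admits an infinite pseudointersection $X$. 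Once the refined decreasing sequence is in place, the pseudointersection exists by a standard diagonal argument, and the contradiction follows cleanly as above.
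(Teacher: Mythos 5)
Your overall strategy --- arguing by contradiction, writing $\mathcal{S}=\bigcup_{n\in\omega}\mathcal{S}_n$ with each $|\mathcal{S}_n|<\mathfrak{s}$, producing sets unsplit by each piece, and taking a pseudointersection --- is exactly the paper's. But your amalgamation step has a genuine gap. You choose each $X_n$ independently (unsplit by $\mathcal{S}_n$) and then propose to refine to $X'_n$, essentially $X_0\cap\dots\cap X_n$, claiming one only needs to check that a finite intersection of unsplit sets remains unsplit. The problem is not unsplitness (which does pass to any infinite subset) but infinitude: independently chosen unsplit sets can have finite intersection. Concretely, fix $S\in\mathcal{S}_0\subseteq\mathcal{S}_1$; nothing in your construction prevents $X_0\subseteq^{\ast}S$ while $X_1\subseteq^{\ast}\omega\setminus S$, and then $X_0\cap X_1$ is finite and the refinement dies at the second step. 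So the worry you correctly raised --- that the $X_n$ need not have a common pseudointersection --- is not resolved by intersecting; being unsplit by the respective $\mathcal{S}_n$ is simply too weak to amalgamate after the fact.

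The repair, which is what the paper does, is to build the sequence nested from the start: choose $A_0$ unsplit by $\mathcal{S}_0$, and recursively choose $A_{n+1}$ an \emph{infinite subset of} $A_n$ that is unsplit by $\mathcal{S}_{n+1}$. Such a subset exists because the defining property of $\mathfrak{s}$ relativizes to any infinite set: given infinite $A$ and a family $\mathcal{F}$ of size $<\mathfrak{s}$, transfer $\left\{  S\cap A\mid S\in\mathcal{F}\right\}  $ to $\omega$ via a bijection with $A$; the transferred family has size $<\mathfrak{s}$, hence is not splitting, and pulling back gives an infinite $X\subseteq A$ with $X\subseteq^{\ast}S$ or $X\subseteq^{\ast}\omega\setminus S$ for every $S\in\mathcal{F}$. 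With the $A_n$ decreasing, a pseudointersection $B$ exists by the standard diagonal argument, and your final paragraph then goes through verbatim: any $S\in\mathcal{S}$ lies in some $\mathcal{S}_n$, $B\subseteq^{\ast}A_n$, and $A_n$ is unsplit by $S$, hence so is $B$, contradicting that $\mathcal{S}$ is splitting.
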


\begin{proof}
We argue by contradiction. Let $\mathcal{S}$ be a splitting family of size
$\mathfrak{s}.$ We can then find $\left\{  \mathcal{S}_{n}\mid n\in
\omega\right\}  $ such that $\mathcal{S=}%
%TCIMACRO{\tbigcup }%
%BeginExpansion
{\textstyle\bigcup}
%EndExpansion
\mathcal{S}_{n}$ and each $\mathcal{S}_{n}$ has size less than $\mathfrak{s}$
(so they are \textquotedblleft nowhere splitting\textquotedblright). We can
then recursively find a decreasing sequence $\mathcal{P}=\left\{  A_{n}\mid
n\in\omega\right\}  $ such that no element of $\mathcal{S}_{n}$ splits
$A_{n}.$ Let $B$ be a pseudointersection of $\mathcal{P}.$ It is easy to see
than no element of $\mathcal{S}$ splits $B,$ which is a contradiction.
\end{proof}

\qquad\ 

We will need the following proposition:

\begin{proposition}
[\cite{OnWeaklytightFamilies}]If $\mathcal{S}$ is an $\left(  \omega
,\omega\right)  $-splitting family, $\mathcal{A}$ an \textsf{AD} family and
$X\in\mathcal{I}\left(  \mathcal{A}\right)  ^{+}$ then there is $S\in
\mathcal{S}$ such that $X\cap S,$ $X\cap\left(  \omega\setminus S\right)
\in\mathcal{I}\left(  \mathcal{A}\right)  ^{+}.$
\end{proposition}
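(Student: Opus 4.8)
The plan is to prove the contrapositive combinatorial reformulation: I want an $S\in\mathcal{S}$ so that neither $X\cap S$ nor $X\cap(\omega\setminus S)$ lands in $\mathcal{I}(\mathcal{A})$. Since $X\in\mathcal{I}(\mathcal{A})^{+}$, by the argument in the earlier proposition (the one constructing an AD family of size $\mathfrak{c}$ inside $X$) there is a countable $\mathcal{B}=\{B_n\mid n\in\omega\}\subseteq\mathcal{A}$ such that $X\cap B_n$ is infinite for every $n$. The idea is to feed the countable family $\overline{Y}=\{X\cap B_n\mid n\in\omega\}\subseteq[\omega]^{\omega}$ into the $(\omega,\omega)$-splitting property of $\mathcal{S}$. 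That yields an $S\in\mathcal{S}$ such that both index sets $N_0=\{n\mid |(X\cap B_n)\cap S|=\omega\}$ and $N_1=\{n\mid |(X\cap B_n)\cap(\omega\setminus S)|=\omega\}$ are infinite.

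The key step is then to check that this $S$ works. First I would argue that $X\cap S\in\mathcal{I}(\mathcal{A})^{+}$: for each $n\in N_0$ the set $X\cap S$ meets $B_n\in\mathcal{A}$ in an infinite set, and $N_0$ is infinite, so $X\cap S$ has infinite intersection with infinitely many distinct members of $\mathcal{A}$. By the characterization $\mathcal{I}(\mathcal{A})^{++}\subseteq\mathcal{I}(\mathcal{A})^{+}$ (from the lemma on the basic properties of $\mathcal{I}(\mathcal{A})^{++}$), this places $X\cap S$ in $\mathcal{I}(\mathcal{A})^{++}\subseteq\mathcal{I}(\mathcal{A})^{+}$. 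Symmetrically, using $N_1$, the set $X\cap(\omega\setminus S)$ has infinite intersection with infinitely many members of $\mathcal{A}$, hence also lies in $\mathcal{I}(\mathcal{A})^{++}\subseteq\mathcal{I}(\mathcal{A})^{+}$. This gives exactly the conclusion.

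The one point requiring care is that the $B_n$ witnessing membership in $\mathcal{I}(\mathcal{A})^{++}$ be \emph{distinct} elements of $\mathcal{A}$, since $\mathcal{I}(\mathcal{A})^{++}$ is defined via a countably \emph{infinite} subfamily $\mathcal{B}\in[\mathcal{A}]^{\omega}$ hitting $X$ infinitely. When I extract $\mathcal{B}$ from $X\in\mathcal{I}(\mathcal{A})^{+}$ I should take the $B_n$ pairwise distinct (possible precisely because $X\notin\mathcal{I}(\mathcal{A})$, so no finite union of elements of $\mathcal{A}$ almost covers $X$). Then the infinite index sets $N_0,N_1$ automatically pick out infinitely many \emph{distinct} $B_n$, so the witnessing subfamilies genuinely lie in $[\mathcal{A}]^{\omega}$. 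I expect this bookkeeping—ensuring distinctness throughout—to be the only genuine obstacle; the rest is a direct application of the $(\omega,\omega)$-splitting hypothesis followed by the $\mathcal{I}(\mathcal{A})^{++}\subseteq\mathcal{I}(\mathcal{A})^{+}$ inclusion.
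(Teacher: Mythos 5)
Your core idea is exactly the one the paper uses: feed the countable family $\{X\cap B_n\mid n\in\omega\}$ to the $(\omega,\omega)$-splitting property and read off positivity of both $X\cap S$ and $X\setminus S$ from the two infinite index sets via $\mathcal{I}(\mathcal{A})^{++}\subseteq\mathcal{I}(\mathcal{A})^{+}$. However, there is a genuine gap at the very first step. The proposition is stated for an arbitrary \textsf{AD} family $\mathcal{A}$, and for such a family the claim ``since $X\in\mathcal{I}(\mathcal{A})^{+}$ there is a countable $\mathcal{B}\subseteq\mathcal{A}$ all of whose members meet $X$ infinitely'' is simply false: take $\mathcal{A}=\{A\}$ and $X=\omega\setminus A$, or more relevantly note that in Shelah's construction this proposition is applied to the families $\mathcal{A}_\delta$, which are deliberately nowhere \textsf{MAD}, so $X$ may well lie in $\mathcal{A}^{\perp}$ while being positive. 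The equivalence $\mathcal{I}(\mathcal{A})^{+}=\mathcal{I}(\mathcal{A})^{++}$, which is what you are invoking (your ``distinctness'' remark is really this point in disguise --- the issue is not picking the $B_n$ distinct but the existence of infinitely many members of $\mathcal{A}$ meeting $X$ infinitely at all), holds precisely when $\mathcal{A}$ is \textsf{MAD}, by the lemma in the preliminaries.

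The paper closes this gap with a one-line reduction that you are missing: first extend $\mathcal{A}$ to a \textsf{MAD} family $\mathcal{A}'$ in such a way that $X\in\mathcal{I}(\mathcal{A}')^{+}$. (If only finitely many members of $\mathcal{A}$ meet $X$ infinitely, the part of $X$ they miss is an infinite set in $\mathcal{A}^{\perp}$; partition it into infinitely many infinite pieces and add them before completing to a \textsf{MAD} family, so that $X\in\mathcal{I}(\mathcal{A}')^{++}$.) Your argument then goes through verbatim for $\mathcal{A}'$, and since $\mathcal{I}(\mathcal{A})\subseteq\mathcal{I}(\mathcal{A}')$, hence $\mathcal{I}(\mathcal{A}')^{+}\subseteq\mathcal{I}(\mathcal{A})^{+}$, the $S$ you obtain also witnesses the conclusion for $\mathcal{A}$. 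With that reduction inserted, your proof coincides with the paper's.
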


\begin{proof}
We may assume $\mathcal{A}$ is a \textsf{MAD} family (in other case we extend
it to a \textsf{MAD} family keeping $X$ as a positive set). Since
$X\in\mathcal{I}\left(  \mathcal{A}\right)  ^{+}$, there is $\left\{
A_{n}\mid n\in\omega\right\}  \subseteq\mathcal{A}$ such that $X\cap A_{n}$ is
infinite for every $n\in\omega.$ Since $\mathcal{S}$ is an $\left(
\omega,\omega\right)  $-splitting family there is $S\in\mathcal{S}$ that
$\left(  \omega,\omega\right)  $-splits $\left\{  X\cap A_{n}\mid n\in
\omega\right\}  $ and then $X\cap S,$ $X\cap\left(  \omega\setminus S\right)
\in\mathcal{I}\left(  \mathcal{A}\right)  ^{+}.$
\end{proof}

\qquad\ \ \ \ \ 

By the previous result, if $\mathcal{A}$ is an \textsf{AD} family,
$X\in\mathcal{I}\left(  \mathcal{A}\right)  ^{+}$ and $\mathcal{S}=\left\{
S_{\alpha}\mid\alpha<\mathfrak{s}\right\}  $ is an $\left(  \omega
,\omega\right)  $-splitting family then there are $\alpha<\mathfrak{s}$ and
$\tau_{X}^{\mathcal{A}}\in2^{\alpha}$ such that:

\begin{enumerate}
\item If $\beta<\alpha$ then $X\cap S_{\beta}^{1-\tau_{X}^{\mathcal{A}}\left(
\beta\right)  }\in\mathcal{I}\left(  \mathcal{A}\right)  .$

\item $X\cap S_{\alpha},$ $X\setminus S_{\alpha}\in\mathcal{I}\left(
\mathcal{A}\right)  ^{+}.$
\end{enumerate}

\qquad\ \ \ \ 

Clearly $\tau_{X}^{\mathcal{A}}$ $\in2^{<\mathfrak{s}}$ is unique and if
$Y\in\left[  X\right]  ^{\omega}\cap\mathcal{I}\left(  \mathcal{A}\right)
^{+}$ then $\tau_{Y}^{\mathcal{A}}$ extends $\tau_{X}^{\mathcal{A}}.$ We can
now prove the main result of this section:\qquad\ \ \ 

\begin{theorem}
[Shelah \cite{SANEplayer}]If $\mathfrak{s\leq a}$ then there is a completely
separable \textsf{MAD} family.
\end{theorem}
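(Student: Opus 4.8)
The plan is to construct $\mathcal{A}$ by a transfinite recursion of length $\mathfrak{c}$ in which every infinite subset of $\omega$ is eventually examined and, if still positive, is forced to contain a member of the family. Fix once and for all an $(\omega,\omega)$-splitting family $\mathcal{S}=\{S_{\alpha}\mid\alpha<\mathfrak{s}\}$ of size $\mathfrak{s}$ (this exists by the Corollary), together with the associated trace $\tau_{X}^{\mathcal{B}}\in2^{<\mathfrak{s}}$ attached to each $\mathcal{I}(\mathcal{B})$-positive set $X$, as described just before the statement. Recall its two crucial features: the \emph{splitting feature}, that at $\alpha=|\tau_{X}^{\mathcal{B}}|$ both $X\cap S_{\alpha}$ and $X\setminus S_{\alpha}$ are positive, and the \emph{monotonicity feature}, that $Y\subseteq X$ positive implies $\tau_{Y}^{\mathcal{B}}\supseteq\tau_{X}^{\mathcal{B}}$. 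Enumerate $[\omega]^{\omega}=\{X_{\xi}\mid\xi<\mathfrak{c}\}$ and build an increasing chain of AD families $\mathcal{A}_{\xi}$; at stage $\xi$, if $X_{\xi}$ is $\mathcal{I}(\mathcal{A}_{\xi})$-positive and no member chosen so far lies below it, I would adjoin a single infinite $A_{\xi}\subseteq X_{\xi}$ almost disjoint from $\mathcal{A}_{\xi}$. Setting $\mathcal{A}=\bigcup_{\xi<\mathfrak{c}}\mathcal{A}_{\xi}$, maximality follows because every infinite set is examined and either already lies in the ideal (so meets a member infinitely) or receives a subset; and complete separability follows because any $\mathcal{I}(\mathcal{A})$-positive set is a fortiori positive at its own stage, hence received a subset contained in it. Thus the whole theorem reduces to one recursive step: given an AD family $\mathcal{B}$ of size $<\mathfrak{c}$ and a positive $X$, produce an infinite $A\subseteq X$ almost disjoint from $\mathcal{B}$.

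To carry out this step I would descend along the splitting tree below $X$. Starting from $X^{(0)}=X$, use the splitting feature to pass to a positive half $X^{(1)}=X^{(0)}\cap S_{|\tau_{X^{(0)}}^{\mathcal{B}}|}$; by monotonicity the traces $\tau_{X^{(\gamma)}}^{\mathcal{B}}$ are then strictly increasing, so the construction genuinely moves down the tree $2^{<\mathfrak{s}}$. At countable limit stages the sequence of trace-lengths stays below $\mathfrak{s}$ because $\mathfrak{s}$ has uncountable cofinality (the Lemma above), and a \emph{positive} pseudo-intersection of the countable $\subseteq$-decreasing chain of positive sets exists because $\mathcal{I}(\mathcal{B})$ is selective, hence a $P^{+}$-ideal (Mathias's lemma). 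Interleaving with this descent a bookkeeping that, at successor steps, also thins the current positive set so as to have finite intersection with more and more members of $\mathcal{B}$, one extracts from the branch a single infinite $A\subseteq X$; the point is that the trace localizes $A$ so tightly inside the splitting tree that it is forced to be almost disjoint from every element of $\mathcal{B}$.

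The hard part is exactly this last diagonalization: there may be up to $\mathfrak{a}$ (indeed nearly $\mathfrak{c}$) members of $\mathcal{B}$ to avoid, and the real danger is that $\mathcal{B}\upharpoonright X$ is \emph{already} MAD on $X$, leaving no room for a new almost disjoint subset. This is where $\mathfrak{s}\leq\mathfrak{a}$ is indispensable: the splitting tree has height $<\mathfrak{s}\leq\mathfrak{a}=|\mathcal{A}|$, so the number of splitting decisions needed to pin down a branch is strictly smaller than the size of any MAD family, hence strictly smaller than the number of members one must evade. Consequently one classifies the relevant part of $\mathcal{B}$ according to which side of each $S_{\alpha}$ its elements accumulate on, and along the chosen branch all but few members are pushed onto the discarded side; the remaining, branch-compatible members are then few enough (fewer than $\mathfrak{a}$) to be diagonalized away in the ordinary way while preserving positivity at each step. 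Making this ``few enough along a branch'' estimate precise, and reconciling it with the preservation of positivity through the uncountably many limit levels of the descent, is the genuine technical core of Shelah's argument and the step I would expect to spend the most effort on.
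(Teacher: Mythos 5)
Your overall architecture matches Shelah's (enumerate $[\omega]^{\omega}$, fix an $(\omega,\omega)$-splitting family and the traces $\tau_{X}$, use the uncountable cofinality of $\mathfrak{s}$ and $P^{+}$-ness of the ideal, classify old members by which side of each $S_{\xi}$ they sit on, and invoke $\mathfrak{s}\leq\mathfrak{a}$ to leave room for a final diagonalization), but two essential pieces are missing. First, your claimed reduction --- ``given any AD family $\mathcal{B}$ of size $<\mathfrak{c}$ and a positive $X$, find an infinite $A\subseteq X$ almost disjoint from $\mathcal{B}$'' --- is false as a standalone lemma: if $\mathfrak{a}<\mathfrak{c}$, the family built so far could already restrict to a MAD family on $X$, and no such $A$ exists. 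The construction must carry an extra inductive invariant: each $A_{\alpha}$ is assigned a code $\sigma_{\alpha}\in2^{<\mathfrak{s}}$, the codes are pairwise distinct, and $A_{\alpha}\subseteq^{\ast}S_{\xi}^{\sigma_{\alpha}\left(  \xi\right)  }$ for all $\xi<dom\left(  \sigma_{\alpha}\right)  $. It is this invariant, not mere almost disjointness, that keeps the recursion alive: at a later stage, every old $A_{\alpha}$ whose code diverges from the newly chosen code at coordinate $\xi$ is automatically almost contained in the discarded side $S_{\xi}^{1-\eta\left(  \xi\right)  }$, where the new candidate set is small (covered mod finite by a finite subfamily $F_{\xi}\subseteq\mathcal{A}$). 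Your phrase about classifying members of $\mathcal{B}$ ``according to which side of each $S_{\alpha}$ its elements accumulate on'' presupposes exactly this invariant, which your recursion never imposes.

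Second, your descent is a transfinite linear one with ``uncountably many limit levels,'' and positivity cannot be pushed through uncountable limits: $P^{+}$-ness only yields positive pseudointersections of \emph{countable} decreasing chains, so the step you defer as the technical core would in fact fail there. Shelah avoids long descents entirely. At stage $\delta$ one builds a binary tree $\left\{  X_{s}\mid s\in2^{<\omega}\right\}  $ of positive subsets of $X$ of height $\omega$, splitting each $X_{s}$ at the level $dom(\tau_{X_{s}})$; for each $f\in2^{\omega}$ the code $\eta_{f}=\bigcup_{n}\eta_{f\upharpoonright n}$ lies in $2^{<\mathfrak{s}}$ (this is the one use of the uncountable cofinality of $\mathfrak{s}$), the $\mathfrak{c}$ many codes $\eta_{f}$ are pairwise incompatible, and since only $\delta<\mathfrak{c}$ codes have been used one can choose $f$ with $\eta_{f}$ extended by no earlier $\sigma_{\alpha}$. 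The single countable chain $\left\{  X_{f\upharpoonright n}\mid n\in\omega\right\}  $ then has a positive pseudointersection $Y$, and the only elements of $\mathcal{A}_{\delta}$ requiring explicit diagonalization are $W\cup\bigcup_{\xi}F_{\xi}$ with $W=\left\{  A_{\alpha}\mid\sigma_{\alpha}\subseteq\eta_{f}\right\}  $, a set of size $<\mathfrak{s}\leq\mathfrak{a}$, hence not MAD on $Y$. This tree-plus-counting argument is the heart of the proof and cannot be recovered from your linear descent.
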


\begin{proof}
Let $\left[  \omega\right]  ^{\omega}=\left\{  X_{\alpha}\mid\alpha
<\mathfrak{c}\right\}  .$ We will recursively construct $\mathcal{A}=\left\{
A_{\alpha}\mid\alpha<\mathfrak{c}\right\}  $ and $\left\{  \sigma_{\alpha}%
\mid\alpha<\mathfrak{c}\right\}  \subseteq2^{<\mathfrak{s}}$ such that for
every $\alpha<\mathfrak{c}$ the following holds: (where $\mathcal{A}_{\alpha
}=\left\{  A_{\xi}\mid\xi<\alpha\right\}  $)

\begin{enumerate}
\item $\mathcal{A}_{\alpha}$ is an AD family.

\item If $X_{\alpha}\in\mathcal{I}\left(  \mathcal{A}_{\alpha}\right)  ^{+}$
then $A_{\alpha}\subseteq X_{\alpha}.$

\item If $\alpha\neq\beta$ then $\sigma_{\alpha}\neq\sigma_{\beta}.$

\item If $\xi<dom\left(  \sigma_{\alpha}\right)  $ then $A_{\alpha}%
\subseteq^{\ast}S_{\xi}^{\sigma_{\alpha}\left(  \xi\right)  }.$
\end{enumerate}

\qquad\ \ 

It is clear that if we manage to do this then we will have achieved to
construct a completely separable \textsf{MAD} family. Assume $\mathcal{A}%
_{\delta}=\left\{  A_{\xi}\mid\xi<\delta\right\}  $ has already been
constructed. Let $X=$ $X_{\delta}$ if $X_{\delta}\in\mathcal{I}\left(
\mathcal{A}_{\delta}\right)  ^{+}$ and if $X_{\delta}\in\mathcal{I}\left(
\mathcal{A}_{\delta}\right)  $ let $X$ be any other element of $\mathcal{I}%
\left(  \mathcal{A}_{\delta}\right)  ^{+}.$ We recursively find $\left\{
X_{s}\mid s\in2^{<\omega}\right\}  \subseteq\mathcal{I}\left(  \mathcal{A}%
_{\delta}\right)  ^{+}$ ,$\left\{  \eta_{s}\mid s\in2^{<\omega}\right\}
\subseteq2^{<\mathfrak{s}}$ and $\left\{  \alpha_{s}\mid s\in2^{<\omega
}\right\}  $ as follows:

\begin{enumerate}
\item $X_{\emptyset}=X.$

\item $\eta_{s}=\tau_{X_{s}}^{\mathcal{A}_{\delta}}$ and $\alpha
_{s}=dom\left(  \eta_{s}\right)  .$

\item $X_{s^{\frown}0}=X_{s}\cap S_{\alpha_{s}}$ and $X_{s^{\frown}1}%
=X_{s}\cap\left(  \omega\setminus S_{\alpha_{s}}\right)  .$
\end{enumerate}

\qquad\ \ 

Note that if $t\subseteq s$ then $X_{s}\subseteq X_{t}$ and $\eta_{t}%
\subseteq\eta_{s}.$ On the other hand, if $s$ is incompatible with $t$ then
$\eta_{s}$ and $\eta_{t}$ are incompatible. For every $f\in2^{\omega}$ let
$\eta_{f}=\bigcup\limits_{n\in\omega}\eta_{f\upharpoonright n}$. Since
$\mathfrak{s}$ has uncountable cofinality, each $\eta_{f}$ is an element of
$2^{<\mathfrak{s}}$ and if $f\neq g$ then $\eta_{f}$ and $\eta_{g}$ are
incompatible nodes of $2^{<\mathfrak{s}}.$ Since $\delta$ is smaller than
$\mathfrak{c}$ there is $f\in2^{\omega}$ such that there is no $\alpha<\delta$
such that $\sigma_{\alpha}$ extends $\eta_{f}.$ Since $\left\{
X_{f\upharpoonright n}\mid n\in\omega\right\}  $ is a decreasing sequence of
elements in $\mathcal{I}\left(  \mathcal{A}_{\delta}\right)  ^{+}$ so there is
$Y\in\mathcal{I}\left(  \mathcal{A}_{\delta}\right)  ^{+}$ such that
$Y\subseteq^{\ast}X_{f\upharpoonright n}$ for every $n\in\omega.$

\qquad\ \ \ \qquad\ \ \ \ \ 

Letting $\beta=dom\left(  \eta_{f}\right)  ,$ we claim that if $\xi<\beta$
then $Y\cap S_{\xi}^{1-\eta_{f}\left(  \xi\right)  }\in\mathcal{I}\left(
\mathcal{A}\right)  .$ To prove this, let $n$ be the first natural number such
that $\xi<dom\left(  \eta_{f\upharpoonright n}\right)  .$ By our construction,
we know that $X_{f\upharpoonright n}\cap S_{\xi}^{1-\eta_{f}\left(
\xi\right)  }\in\mathcal{I}\left(  \mathcal{A}\right)  $ and since
$Y\subseteq^{\ast}X_{f\upharpoonright n}$ the result follows.

\qquad\ \ \ 

For every $\xi<\beta$ let $F_{\xi}\in\left[  \mathcal{A}\right]  ^{<\omega}$
such that $Y\cap S_{\xi}^{1-\eta_{f}\left(  \xi\right)  }\subseteq^{\ast
}\bigcup F_{\xi}$ and let $W=\left\{  A_{\alpha}\mid\sigma_{\alpha}%
\subseteq\eta_{f}\right\}  .$ Let $\mathcal{D}=W\cup\bigcup\limits_{\xi<\beta
}F_{\xi}$ and note that $\mathcal{D}$ has size less than $\mathfrak{s},$ hence
it has size less than $\mathfrak{a}.$ In this way we conclude that
$Y\upharpoonright\mathcal{D}$ is not a \textsf{MAD} family, so there is
$A_{\delta}\in\left[  Y\right]  ^{\omega}$ that is almost disjoint with every
element of $\mathcal{D}$ and define $\sigma_{\delta}=\eta_{f}.$ We claim that
$A_{\delta}$ is almost disjoint with $\mathcal{A}_{\delta}.$ To prove this,
let $\alpha<\delta,$ in case $A_{\alpha}\in W$ we already know $A_{\alpha}\cap
A_{\delta}$ is finite so assume $A_{\alpha}\notin W.$ Letting $\xi
=\Delta\left(  \sigma_{\delta},\sigma_{\alpha}\right)  $ we know that
$A_{\alpha}\subseteq^{\ast}S_{\xi}^{1-\sigma_{\delta}\left(  \xi\right)  }$ so
$A_{\alpha}\cap A_{\delta}\subseteq^{\ast}\bigcup F_{\xi}$ but since $F_{\xi
}\subseteq\mathcal{D}$ we conclude that $A_{\delta}$ is almost disjoint with
$\bigcup F_{\xi}$ and then $A_{\alpha}\cap A_{\delta}$ must be finite.
\end{proof}

\qquad\ \ 

A key feature in the previous proof is that each $\mathcal{A}_{\delta
}=\left\{  A_{\xi}\mid\xi<\delta\right\}  $ is nowhere \textsf{MAD}.

\qquad\ \ \ 

\chapter{The principle $\left(  \ast\right)  $ of Sierpi\'{n}ski}

The \emph{principle }$\left(  \ast\right)  $\emph{ of Sierpi\'{n}ski }is the
following statement:

\qquad

\begin{center}
\textit{There is a family of functions }$\left\{  \varphi_{n}:\omega
_{1}\longrightarrow\omega_{1}\mid n\in\omega\right\}  $\textit{ such that for
every }$I\in\left[  \omega_{1}\right]  ^{\omega_{1}}$\textit{ there is }%
$n\in\omega$\textit{ for which }$\varphi_{n}\left[  I\right]  =\omega_{1}.$
\end{center}

\qquad\ \ \ 

It was introduced by Sierpi\'{n}ski and he proved that it is a consequence of
the Continuum Hypothesis. It was recently studied by Arnold W. Miller in
\cite{TheOntoMappingofSierpinski} and this was the motivation for this work.
This principle is related to the following type of sets:\qquad\ \ 

\begin{definition}
Let $\mathcal{I}$ be a $\sigma$-ideal on $\omega^{\omega}.$ We say $X=\left\{
f_{\alpha}\mid\alpha<\omega_{1}\right\}  \subseteq\omega^{\omega}$ is an
$\mathcal{I}$\emph{-Luzin set }if $X\cap A$ is at most countable for every
$A\in\mathcal{I}.$
\end{definition}

\qquad\ \ 

In this terminology, the Luzin sets are the $\mathcal{M}$-Luzin sets and the
Sierpi\'{n}ski sets are the $\mathcal{N}$-Luzin sets. Clearly the existence of
an $\mathcal{I}$-Luzin set implies \textsf{non}$\left(  \mathcal{I}\right)
=\omega_{1},$ but the converse is usually not true. For example, it was shown
by Shelah and Judah in \cite{KillingLuzinandSierpinskiSets} that there are no
Luzin or Sierpi\'{n}ski sets in the Miller model while \textsf{non}$\left(
\mathcal{M}\right)  =$ \textsf{non}$\left(  \mathcal{N}\right)  =\omega_{1}$
holds. \qquad\ \ \ \ \ \ \qquad\ \ \ 

\begin{definition}
\qquad\ \ \ \ 

\begin{enumerate}
\item Given $f\in\omega^{\omega}$ we define $ED\left(  f\right)  =\left\{
g\in\omega^{\omega}\mid\left\vert f\cap g\right\vert <\omega\right\}  .$

\item $\mathcal{IE}$ is the $\sigma$-ideal generated by $\left\{  ED\left(
f\right)  \mid f\in\omega^{\omega}\right\}  .$
\end{enumerate}
\end{definition}

\qquad\ \ \ 

It is easy to see that each $ED\left(  f\right)  $ is a meager set so
$\mathcal{IE}\subseteq\mathcal{M}.$ It is well known that \textsf{non}$\left(
\mathcal{IE}\right)  =$ \textsf{non}$\left(  \mathcal{M}\right)  $ (see
\cite{HandbookBlass}). The following result was proved by Miller, although the
implication from 3 to 1 is not explicit in \cite{TheOntoMappingofSierpinski}
(it is implicitly proved in lemma 6). The referee of \cite{SierpinskiOsvaldo}
found a very elegant and short proof of this result, which we reproduce here.

\begin{proposition}
[Miller \cite{TheOntoMappingofSierpinski}]The following are equivalent:

\qquad\ \ \ 

\begin{enumerate}
\item The principle $\left(  \ast\right)  $ of Sierpi\'{n}ski.

\item There is a family $\left\{  g_{\alpha}:\omega\longrightarrow\omega
_{1}\mid\alpha<\omega_{1}\right\}  $ with the property that for every
$g:\omega\longrightarrow\omega_{1}$ there is $\alpha<\omega_{1}$ such that if
$\beta>\alpha$ then $g_{\beta}\cap g$ is infinite.

\item There is an $\mathcal{IE}$-Luzin set.
\end{enumerate}
\end{proposition}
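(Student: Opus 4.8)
The plan is to establish the cycle via $2\Rightarrow 1$, $1\Rightarrow 2$, $2\Rightarrow 3$ and $3\Rightarrow 2$. The pair $1\Leftrightarrow 2$ is pure rearrangement of a single $\omega_1\times\omega$ matrix, while $2\Leftrightarrow 3$ is the passage between $\omega_1$-valued and $\omega$-valued functions. The unifying observation is that the relation ``$g_\beta\cap g$ is infinite'' means the same thing whether the values lie in $\omega$ or in $\omega_1$, so the only genuine work is controlling the \emph{range} of the functions involved.

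For $1\Leftrightarrow 2$ I would view the family as a matrix $M(\alpha,n)$ read the two ways $M(\alpha,n)=g_\alpha(n)=\varphi_n(\alpha)$. For $2\Rightarrow 1$, given $I\in[\omega_1]^{\omega_1}$, if no $\varphi_n$ were onto $\omega_1$ on $I$ I would pick $\gamma_n\notin\varphi_n[I]$, set $g(n)=\gamma_n$, and use $2$ to find $\beta\in I$ above the bound for $g$ with $g_\beta\cap g$ infinite; any coordinate $n$ of agreement puts $\gamma_n=g_\beta(n)\in\varphi_n[I]$, a contradiction. For $1\Rightarrow 2$ I would argue contrapositively: if some $g$ is matched only finitely by an uncountable set $I$ of indices $\beta$, then since there are only countably many possible finite agreement sets $\{n:\varphi_n(\beta)=g(n)\}$, a fixed value $F_0$ is attained on an uncountable $I'\subseteq I$; applying $1$ to $I'$ gives $n_0$ with $\varphi_{n_0}[I']=\omega_1$, yet on $I'$ the value $\varphi_{n_0}(\beta)$ is the constant $g(n_0)$ if $n_0\in F_0$ and always $\neq g(n_0)$ if $n_0\notin F_0$, so the image is a singleton or omits $g(n_0)$ — never all of $\omega_1$.

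For $2\Rightarrow 3$ I would discard the infinite ordinal values: set $f_\alpha(n)=g_\alpha(n)$ when $g_\alpha(n)\in\omega$ and $f_\alpha(n)=0$ otherwise. Given $h\in\omega^{\omega}$ viewed as $h:\omega\to\omega\subseteq\omega_1$, statement $2$ yields a bound beyond which every $g_\beta$ meets $h$ infinitely; at those coordinates $g_\beta(n)=h(n)<\omega$, so $f_\beta(n)=h(n)$ as well, whence $f_\beta\cap h$ is infinite. Thus $\{f_\alpha\}$ meets each $ED(h)$ in a bounded, hence countable, set, and since $\mathcal{IE}$ is the $\sigma$-ideal generated by the $ED(h)$ this makes the set $\mathcal{IE}$-Luzin (extra and junk matches only help). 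Uncountability is automatic: a countable $\{y_k\}$ admits an $h$ with $h(n)\notin\{y_0(n),\dots,y_n(n)\}$, met infinitely by no $y_k$, contradicting the matching property.

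The hard direction is $3\Rightarrow 2$, where I must manufacture $\omega_1$-valued functions out of $\omega$-valued ones; the obstacle is that pulling an arbitrary target $g:\omega\to\omega_1$ back to $\omega$ appears to require an injection of $\omega_1$ into $\omega$. I would break this by fixing, by recursion, a \emph{coherent} sequence of bijections $e_\alpha:\alpha\to\omega$ (so $e_\alpha=^{\ast}e_\beta\upharpoonright\alpha$ for $\alpha<\beta$) and setting $g_\alpha=e_\alpha^{-1}\circ f_\alpha$. For a target $g$ with range bounded by $\gamma_0$, the restrictions $e_\alpha\upharpoonright\gamma_0$ ($\alpha>\gamma_0$) are pairwise $=^{\ast}$, hence all $=^{\ast}$ to a single $e^{\ast}$, so each pullback $h_\alpha=e_\alpha\circ g$ differs from the fixed $h=e^{\ast}\circ g$ only on $g^{-1}$ of a finite set. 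The real difficulty is the \emph{non-injective} targets: when $g$ takes a value infinitely often, this finite discrepancy spreads over an infinite fiber, so $h_\alpha$ is not $=^{\ast}h$ and statement $3$ cannot be applied to one target. I expect to resolve this by decomposing the bad set $\{\alpha>\gamma_0:|f_\alpha\cap h_\alpha|<\omega\}$ according to (i) which of the countably many ``heavy'' fibers are affected (a finite set $G$) and (ii) the finitely many values $(e_\alpha(\xi))_{\xi\in G}$ taken there; for each of these countably many choices the bad $\alpha$ are precisely those on which $f_\alpha$ is eventually different from one fixed, $\alpha$-independent target, so statement $3$ makes each piece countable and the whole bad set countable — which by regularity of $\omega_1$ gives the required ``eventually''. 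This coherent-sequence bookkeeping that tames the non-injective targets is the crux, and the step I expect to demand the most care.
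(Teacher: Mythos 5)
Your cycle is correct in outline and, for the one hard direction, takes a genuinely different route from the paper. The equivalence $1\Leftrightarrow 2$ is the same matrix transposition the paper uses; your contrapositive for $1\Rightarrow 2$ (stabilizing the whole finite agreement set $F_0$ on an uncountable $I'$ and observing that $\varphi_{n_0}[I']$ is then either a singleton or omits $g(n_0)$) is if anything cleaner than the paper's two-step argument via ``infinitely many $n$ work''. Your $2\Rightarrow 3$ is fine, and is in fact a direction the paper's printed proof omits entirely (it only shows $2\Rightarrow 1$, $1\Rightarrow 2$ and $3\Rightarrow 2$). For $3\Rightarrow 2$ the paper proceeds quite differently: it fixes an almost disjoint family $\left\{  A_{\alpha}\mid\omega\leq\alpha<\omega_{1}\right\}  $, transfers the Luzin set to families $\mathcal{F}_{\alpha}=\left\{  f_{\alpha\beta}:A_{\alpha}\longrightarrow\alpha\right\}  $ by identifying $A_{\alpha}$ with $\omega$ and $\alpha$ with $\omega$, and glues: for each $\beta$ the family $\left\{  A_{\alpha}\mid\alpha<\beta\right\}  $ is a countable AD family, so a single $g_{\beta}$ can realize $f_{\alpha\beta}$ mod finite on every $A_{\alpha}$ simultaneously; a target $g$ with range in $\delta$ is then handled on the single block $A_{\delta}$, invoking the Luzin property exactly once. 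Your coherent-sequence approach keeps the domain $\omega$ whole and instead pays with the countable decomposition of the bad set by the finite discrepancy data $\left(  G,v\right)  $ on the heavy fibers; that decomposition is sound (each bad $\alpha$ lands in $ED(h_{G,v})$ for its own pair, there are countably many pairs, and each contributes countably many indices once the Luzin set is enumerated injectively, which one may assume since it must be uncountable). The paper's trick localizes the unbounded range into separate infinite domains; yours localizes it into finitely many exceptional values per index.

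One step fails as literally written: a coherent sequence of \emph{bijections} $e_{\alpha}:\alpha\longrightarrow\omega$ with $e_{\alpha}=^{\ast}e_{\beta}\upharpoonright\alpha$ does not exist. At a successor step $e_{\alpha+1}\upharpoonright\alpha$ misses exactly one value of $\omega$, so it cannot agree off a finite set with a surjection $e_{\alpha}$: off the finite disagreement set $F$ the two maps have the same image, and $e_{\alpha+1}\left[  F\right]  $ would have to be a proper subset of $e_{\alpha}\left[  F\right]  $ of the same cardinality. The standard object is a coherent sequence of \emph{injections} with coinfinite range, and that suffices here: set $g_{\alpha}\left(  n\right)  =e_{\alpha}^{-1}\left(  f_{\alpha}\left(  n\right)  \right)  $ where defined and $0$ otherwise. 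You only need the inclusion $\left\{  n\mid f_{\alpha}\left(  n\right)  =e_{\alpha}\left(  g\left(  n\right)  \right)  \right\}  \subseteq\left\{  n\mid g_{\alpha}\left(  n\right)  =g\left(  n\right)  \right\}  $, which holds by injectivity of $e_{\alpha}$, so ``$g_{\alpha}\cap g$ finite'' still forces ``$f_{\alpha}\cap h_{\alpha}$ finite'' and the rest of your argument goes through unchanged.
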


\begin{proof}
We will first show that 2 implies 1. Let $\left\{  g_{\alpha}:\omega
\longrightarrow\omega_{1}\mid\alpha<\omega_{1}\right\}  $ with the property
that for every $g:\omega\longrightarrow\omega_{1}$ there is $\alpha<\omega
_{1}$ such that if $\beta>\alpha$ then $g_{\beta}\cap g$ is infinite. For each
$n\in\omega,$ we define $\varphi_{n}:\omega_{1}\longrightarrow\omega_{1}$
where $\varphi_{n}\left(  \alpha\right)  =g_{\alpha}\left(  n\right)  .$ We
will show that this works. We argue by contradiction, assume there is
$I\in\left[  \omega_{1}\right]  ^{\omega_{1}}$ such that no $\varphi_{n}$ maps
$I$ onto $\omega_{1}.$ Define $g:\omega\longrightarrow\omega_{1}$ such that
$g\left(  n\right)  \notin\varphi_{n}\left[  I\right]  $ for every $n\in
\omega.$ Let $\alpha<\omega_{1}$ such that if $\alpha<\beta<\omega_{1}$ then
$g_{\beta}\cap g$ is infinite. Since $I$ is uncountable, we can find $\beta\in
I\setminus\alpha.$ Let $n\in\omega$ such that $g\left(  n\right)  =g_{\beta
}\left(  n\right)  =\varphi_{n}\left(  \beta\right)  .$ SInce $\beta\in I$
then $g\left(  n\right)  \in\varphi_{n}\left[  I\right]  $ which is a contradiction.

\qquad\ \ 

We will now prove that i implies 2. Let $\left\{  \varphi_{n}:\omega
_{1}\longrightarrow\omega_{1}\mid n\in\omega\right\}  $ such that for every
$I\in\left[  \omega_{1}\right]  ^{\omega_{1}}$ there is $n\in\omega$ for which
$\varphi_{n}\left[  I\right]  =\omega_{1}.$ We first claim that in fact, there
are infinitely many $n\in\omega$ for which $\varphi_{n}\left[  I\right]
=\omega_{1}.$ Assume this is not the case, so there is $I\in\left[  \omega
_{1}\right]  ^{\omega_{1}}$ for which the set $A=\left\{  n\mid\varphi
_{n}\left[  I\right]  =\omega_{1}\right\}  $ is finite. For each $n\in A,$ we
can find $\alpha_{n}<\omega_{1}$ such that $J=I\setminus%
%TCIMACRO{\tbigcup \limits_{n\in\omega}}%
%BeginExpansion
{\textstyle\bigcup\limits_{n\in\omega}}
%EndExpansion
\varphi_{n}^{-1}\left(  \left\{  \alpha_{n}\right\}  \right)  $ is
uncountable. Clearly, $J$ can not be mapped onto $\omega_{1}$ by any
$\varphi_{n},$ which is a contradiction. We now define $g_{\alpha}%
:\omega\longrightarrow\omega_{1}$ where $g_{\alpha}\left(  n\right)
=\varphi_{n}\left(  \alpha\right)  ,$ we will prove that the family $\left\{
g_{\alpha}\mid\alpha<\omega_{1}\right\}  $ has the desired properties. Assume
this is not the case, so there is $g:\omega\longrightarrow\omega_{1}$ and
$I\in\left[  \omega_{1}\right]  ^{\omega_{1}}$ such that $g\cap g_{\alpha}$ is
finite for every $\alpha\in I.$ We can even assume there is $m\in\omega$ such
that if $n>m$ and $\alpha\in I$ then $g_{\alpha}\left(  n\right)  \neq
g\left(  n\right)  .$ Therefore, if $n>m$ then $g\left(  n\right)
\notin\varphi_{n}\left[  I\right]  ,$ which is a contradiction.

We will now prove that 3 implies 2. Let $\mathcal{A}=\left\{  A_{\alpha}%
\mid\omega\leq\alpha<\omega_{1}\right\}  $ be an almost disjoint family. Since
there is an $\mathcal{IE}$-Luzin set, for each $\alpha$ we can find a family
$\mathcal{F}_{\alpha}=\left\{  f_{\alpha\beta}:A_{\alpha}\longrightarrow
\alpha\mid\beta<\omega_{1}\right\}  $ such that for every $g:A_{\alpha
}\longrightarrow\alpha$ there is $\delta$ such that if $\beta>\delta$ then
$f_{\alpha\beta}\cap g$ is infinite. Since $\mathcal{A}$ is an almost disjoint
family, we can then construct a family $\mathcal{G}=\left\{  g_{\beta}%
:\omega\longrightarrow\omega_{1}\mid\omega\leq\beta<\omega_{1}\right\}  $ such
that $f_{\alpha\beta}=^{\ast}g_{\beta}\upharpoonright A_{\alpha}$ for every
$\alpha<\beta<\omega_{1}.$ We need to prove that for every $g:\omega
\longrightarrow\omega_{1}$ there is $\alpha<\omega_{1}$ such that if
$\beta>\alpha$ then $g_{\beta}\cap g$ is infinite. First we find $\delta$ such
that $g:\omega\longrightarrow\delta$ and then we know there is $\gamma$ such
that if $\beta>\gamma$ then $f_{\delta\beta}\cap\left(  g\upharpoonright
A_{\delta}\right)  $ is infinite. It then follows that if $\beta>max\left\{
\delta,\gamma\right\}  $ then $g_{\beta}\upharpoonright A_{\delta}=^{\ast
}f_{\delta\beta},$ so $\left\vert g_{\beta}\cap g\right\vert =\omega.$
\end{proof}

In this way, the existence of a Luzin set implies the principle $\left(
\ast\right)  $ of Sierpi\'{n}ski while it implies \textsf{non}$\left(
\mathcal{M}\right)  =\omega_{1}.$ Miller then asked in
\cite{TheOntoMappingofSierpinski} if the principle $\left(  \ast\right)  $ of
Sierpi\'{n}ski is a consequence of \textsf{non}$\left(  \mathcal{M}\right)
=\omega_{1}$ and we will show that this is indeed the case. We will then prove
(with the aid of an inaccessible cardinal) that while \textsf{non}$\left(
\mathcal{M}\right)  =\omega_{1}$ implies the existence of a $\mathcal{IE}%
$-Luzin set, it does not imply the existence of a non-meager $\mathcal{IE}%
$-Luzin set.

\qquad\ \ \qquad\ \ 

We will now show that the principle $\left(  \ast\right)  $ of Sierpi\'{n}ski
follows by \textsf{non}$\left(  \mathcal{M}\right)  =\omega_{1},$ answering
the question of Miller. By $Partial\left(  \omega^{\omega}\right)  $ we shall
denote the set of all infinite partial functions from $\omega$ to $\omega.$ We
start with the \ following lemma:

\begin{lemma}
If \textsf{non}$\left(  \mathcal{M}\right)  =\omega_{1}$ then there is a
family $X=\left\{  f_{\alpha}\mid\alpha<\omega_{1}\right\}  $ with the
following properties:

\qquad\ \ 

\begin{enumerate}
\item Each $f_{\alpha}$ is an infinite partial function from $\omega$ to
$\omega.$

\item The set $\left\{  dom\left(  f_{\alpha}\right)  \mid\alpha<\omega
_{1}\right\}  $ is an almost disjoint family.

\item For every $g:\omega\longrightarrow\omega$ there is $\alpha<\omega_{1}$
such that $f_{\alpha}\cap g$ is infinite.
\end{enumerate}
\end{lemma}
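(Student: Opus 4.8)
The plan is to realize the almost disjoint family as the family of initial-segment traces of a non-meager set of branches, so that property (3) becomes the statement that some branch lies in a certain comeager set. First I would use $\mathsf{non}(\mathcal{M})=\omega_{1}$ to fix a non-meager set $S=\{b_{\alpha}\mid\alpha<\omega_{1}\}\subseteq\omega^{\omega}$ of size $\omega_{1}$ consisting of pairwise distinct functions; this is possible because $\mathsf{non}(\mathcal{M})$ is computed the same in $\omega^{\omega}$ as in $2^{\omega}$ and countable sets are meager, so a non-meager witness of size exactly $\omega_{1}$ exists. I would also fix a bijection $c:\omega^{<\omega}\longrightarrow\omega$ and henceforth identify $\omega$ with $\omega^{<\omega}$ via $c$.

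For each $\alpha$ I would put $\mathrm{dom}(f_{\alpha})=D_{\alpha}:=\{\,b_{\alpha}\upharpoonright n\mid n\in\omega\,\}$ (viewed inside $\omega$ through $c$) and define the partial function $f_{\alpha}$ by $f_{\alpha}(b_{\alpha}\upharpoonright n)=b_{\alpha}(n)$; that is, $f_{\alpha}$ records at each initial segment the value by which $b_{\alpha}$ extends it. Each $D_{\alpha}$ is infinite (one node per length), so each $f_{\alpha}$ is an infinite partial function, giving (1). Since two distinct branches share only finitely many initial segments, $D_{\alpha}\cap D_{\beta}$ is finite for $\alpha\neq\beta$, so $\{D_{\alpha}\mid\alpha<\omega_{1}\}$ is almost disjoint, giving (2).

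For (3), given $g:\omega\longrightarrow\omega$ I would transport it to $G:=g\circ c:\omega^{<\omega}\longrightarrow\omega$ and consider $M_{G}=\{b\in\omega^{\omega}\mid\exists^{\infty}n\ b(n)=G(b\upharpoonright n)\}$. Unwinding the identification, $b_{\alpha}\in M_{G}$ says precisely that $f_{\alpha}(s)=g(s)$ for infinitely many $s\in D_{\alpha}$, i.e. $|f_{\alpha}\cap g|=\omega$. So it suffices to show $S\cap M_{G}\neq\emptyset$, and for this I would show $M_{G}$ is comeager: its complement is $\bigcup_{N}C_{N}$ with $C_{N}=\{b\mid\forall n\geq N\ b(n)\neq G(b\upharpoonright n)\}$, and each $C_{N}$ is closed (a limit of branches avoiding $G$ past level $N$ still avoids it) and nowhere dense (any node $s$ extends to some $s'$ of length $m\geq N$, and then $s'{}^{\frown}G(s')$ violates the defining condition at coordinate $m$, so no cone is contained in $C_{N}$). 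As $S$ is non-meager it cannot be contained in the meager set $M_{G}^{c}$, so some $b_{\alpha}\in M_{G}$, as required.

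The conceptual step, and the only place non-meagerness is used, is this last one. The naive attempt of pushing a $\mathsf{non}(\mathcal{M})$-witness through a \emph{fixed} almost disjoint family fails, because catching $g$ on the domain $D_{\alpha}$ requires matching the reindexed function $g\circ e_{\alpha}$, which depends on $\alpha$ and so is not handled uniformly by a single matching family. Choosing the almost disjoint family to be the branch-traces of $S$ itself dissolves this entanglement: the requirement for each $g$ collapses to membership of some $b_{\alpha}$ in the single comeager set $M_{G}$. The remaining verifications, that $\{D_{\alpha}\}$ is almost disjoint and that $M_{G}$ is comeager, are routine.
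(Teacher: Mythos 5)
Your proposal is correct and is essentially the paper's own argument: the paper defines the same map (its $H(f)$ has domain $\{n\mid s_n\sqsubseteq f\}$ with $H(f)(n)=f(\left\vert s_n\right\vert)$, which is your $f_\alpha$ up to the coding bijection), observes that distinct branches give almost disjoint domains, and shows that for each $g$ the set of branches whose image catches $g$ only finitely often is meager, then applies a non-meager set of size $\omega_1$. Your verification that the complement of $M_G$ is a countable union of closed nowhere dense sets matches the paper's decomposition into the sets $N_k(g)$.
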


\begin{proof}
Let $\omega^{<\omega}=\left\{  s_{n}\mid n\in\omega\right\}  $ and we define
$H:\omega^{\omega}\longrightarrow Partial\left(  \omega^{\omega}\right)  $
where the domain of $H\left(  f\right)  $ is $\left\{  n\mid s_{n}\sqsubseteq
f\right\}  $ and if $n\in dom\left(  H\left(  f\right)  \right)  $ then
$H\left(  f\right)  \left(  n\right)  =f\left(  \left\vert s_{n}\right\vert
\right)  .$ It is easy to see that if $f\neq g$ then $dom\left(  H\left(
f\right)  \right)  $ and $dom\left(  H\left(  g\right)  \right)  $ are almost disjoint.

\qquad\ \ 

Given $g:\omega\longrightarrow\omega$ we define $N\left(  g\right)  =\left\{
f\in\omega^{\omega}\mid\left\vert H\left(  f\right)  \cap g\right\vert
<\omega\right\}  .$ It then follows that $N\left(  g\right)  $ is a meager set
since $N\left(  g\right)  =\bigcup\limits_{k\in\omega}N_{k}\left(  g\right)  $
where $N_{k}\left(  g\right)  =\left\{  f\in\omega^{\omega}\mid\left\vert
H\left(  f\right)  \cap g\right\vert <k\right\}  $ and it is easy to see that
each $N_{k}\left(  g\right)  $ is a nowhere dense set. Finally, if $X=\left\{
h_{\alpha}\mid\alpha<\omega_{1}\right\}  $ is a non-meager set then $H\left[
X\right]  $ is the family we were looking for.
\end{proof}

\qquad\ \ \ 

\qquad\ \ \ With the previous lemma we can prove the following:

\begin{proposition}
If \textsf{non}$\left(  \mathcal{M}\right)  =\omega_{1}$ then the principle
$\left(  \ast\right)  $ of Sierpi\'{n}ski is true.
\end{proposition}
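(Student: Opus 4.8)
The plan is to build an $\mathcal{IE}$-Luzin set directly from the family produced by the previous lemma, and then to invoke the implications $3\Rightarrow 2\Rightarrow 1$ from Miller's proposition above in order to conclude the principle $\left(\ast\right)$ of Sierpi\'{n}ski. So I would fix the family $X=\left\{f_{\alpha}\mid\alpha<\omega_{1}\right\}$ of infinite partial functions given by the previous lemma, writing $D_{\alpha}=dom\left(f_{\alpha}\right)$; recall that $\left\{D_{\alpha}\mid\alpha<\omega_{1}\right\}$ is almost disjoint and that for every $g:\omega\longrightarrow\omega$ there is $\alpha$ with $\left\vert f_{\alpha}\cap g\right\vert=\omega$.

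First I would glue the partial functions below each level into a single total function. Fix $\alpha<\omega_{1}$ and enumerate $\left\{f_{\xi}\mid\xi<\alpha\right\}$ as $\left\{f_{\xi_{n}}\mid n\in\omega\right\}$ (with repetitions if $\alpha$ is finite; this is possible since $\alpha$ is countable). Define $h_{\alpha}:\omega\longrightarrow\omega$ by setting $h_{\alpha}\left(m\right)=f_{\xi_{n}}\left(m\right)$ where $n$ is least with $m\in D_{\xi_{n}}$, and $h_{\alpha}\left(m\right)=0$ when $m$ lies in no $D_{\xi_{n}}$. Because the domains are almost disjoint, for each fixed $n$ the set $D_{\xi_{n}}\cap\bigcup_{k<n}D_{\xi_{k}}$ is a finite union of finite sets, hence finite, so $h_{\alpha}\upharpoonright D_{\xi_{n}}=^{\ast}f_{\xi_{n}}$; in particular $\left\vert h_{\alpha}\cap f_{\xi}\right\vert=\omega$ for every $\xi<\alpha$. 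This is the only step that uses the almost disjointness of the domains (clause 2 of the previous lemma).

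Next I would verify that $Y=\left\{h_{\alpha}\mid\alpha<\omega_{1}\right\}$ is an $\mathcal{IE}$-Luzin set. Since $\mathcal{IE}$ is the $\sigma$-ideal generated by the sets $ED\left(f\right)$, it suffices to check that $\left\{\alpha\mid\left\vert h_{\alpha}\cap f\right\vert<\omega\right\}$ is countable for every $f\in\omega^{\omega}$. Given $f$, clause 3 of the previous lemma yields $\xi_{0}$ with $\left\vert f_{\xi_{0}}\cap f\right\vert=\omega$. For every $\alpha>\xi_{0}$ we have $h_{\alpha}\upharpoonright D_{\xi_{0}}=^{\ast}f_{\xi_{0}}$, so $h_{\alpha}$ agrees with $f$ on all but finitely many of the infinitely many points of $D_{\xi_{0}}$ where $f_{\xi_{0}}$ agrees with $f$; hence $\left\vert h_{\alpha}\cap f\right\vert=\omega$. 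Thus the exceptional set is contained in $\xi_{0}+1$ and is countable. A short extra argument shows $Y$ is genuinely uncountable: if $Y$ took only countably many values $\left\{v_{n}\mid n\in\omega\right\}$, then for $u_{n}\left(m\right)=v_{n}\left(m\right)+1$ we would have $h_{\alpha}\cap u_{n}=\emptyset$ whenever $h_{\alpha}=v_{n}$, so each $\left\{\alpha\mid h_{\alpha}=v_{n}\right\}$ would sit inside the countable set $\left\{\alpha\mid\left\vert h_{\alpha}\cap u_{n}\right\vert<\omega\right\}$, making $\omega_{1}$ a countable union of countable sets.

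Having produced an $\mathcal{IE}$-Luzin set, that is, statement 3 of Miller's proposition, the implications $3\Rightarrow 2$ and $2\Rightarrow 1$ established there immediately give the principle $\left(\ast\right)$ of Sierpi\'{n}ski. I expect no serious obstacle here: the one subtlety is arranging a single total function to almost-extend \emph{all} earlier partial functions simultaneously, which is precisely what the almost disjointness of the domains delivers; the rest is the meeting property of clause 3 together with the cited equivalence.
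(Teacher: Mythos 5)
Your proposal is correct and follows essentially the same route as the paper: both glue the partial functions $\{f_{\xi}\mid\xi<\alpha\}$ from the preceding lemma into a single total function $h_{\alpha}$ using the almost disjointness of their domains, observe that the resulting family is an $\mathcal{IE}$-Luzin set (with the tail property), and then invoke Miller's equivalence to obtain the principle $\left(\ast\right)$. Your version is slightly more careful than the paper's in handling domains that do not partition $\omega$ and in checking uncountability, but the underlying construction is identical.
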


\begin{proof}
Let $X=\left\{  f_{\alpha}\mid\alpha<\omega_{1}\right\}  $ be a family as in
the previous lemma. We will build a $\mathcal{IE}$-Luzin set $Y=\left\{
h_{\alpha}\mid\alpha<\omega_{1}\right\}  .$ For simplicity, we may assume
$\left\{  dom\left(  f_{n}\right)  \mid n\in\omega\right\}  $ is a partition
of $\omega.$

\qquad\ \ \ 

For each $n\in\omega,$ let $h_{n}$ be any constant function. Given $\alpha
\geq\omega,$ enumerate it as $\alpha=\left\{  \alpha_{n}\mid n\in
\omega\right\}  $ and then we recursively define $B_{0}=dom\left(
f_{\alpha_{0}}\right)  $ and $B_{n+1}=dom\left(  f_{\alpha_{n}}\right)
\,\backslash\left(  B_{0}\cup...\cup B_{n}\right)  .$ Clearly $\left\{
B_{n}\mid n\in\omega\right\}  $ is a partition of $\omega.$ Let $h_{\alpha
}=\bigcup\limits_{n\in\omega}f_{\alpha_{n}}\upharpoonright B_{n},$ it then
follows that $Y=\left\{  h_{\alpha}\mid\alpha<\omega_{1}\right\}  $ is an
$\mathcal{IE}$-Luzin set.
\end{proof}

\qquad\ \ \ \ \ 

It is not hard to see that the $\mathcal{IE}$-Luzin set constructed in the
previous proof is meager. One may then wonder if it is possible to construct a
non-meager $\mathcal{IE}$-Luzin set from \textsf{non}$\left(  \mathcal{M}%
\right)  =\omega_{1}.$ We will prove that this is not the case. This will be
achieved by using Todorcevic's method of forcing with models as side
conditions (see \cite{PartitionProblems} for more on this very useful
technique). It is currently unknown if there is a non-meager $\mathcal{IE}%
$-Luzin set in the Miller model.

\begin{definition}
We define the forcing $\mathbb{P}_{cat}$ as the set of all $p=\left(
s_{p},\overline{M}_{p},F_{p}\right)  \ $with the following properties:

\qquad\ \ \ 

\begin{enumerate}
\item $s_{p}\in\omega^{<\omega}$ (this is usually referred as \emph{the stem
}of $p$).

\item $\overline{M}_{p}=\left\{  M_{0},...,M_{n}\right\}  $ is an $\in$-chain
of countable elementary submodels of $\mathsf{H(}\left(  2^{\mathfrak{c}%
}\right)  ^{++}).$

\item $F_{p}:\overline{M}_{p}\longrightarrow\omega^{\omega}.$

\item $s_{p}\cap F_{p}\left(  M_{i}\right)  =\emptyset$ for every $i\leq n.$

\item $F_{p}\left(  M_{i}\right)  \notin M_{i}$ and if $i<n$ then
$F_{p}\left(  M_{i}\right)  \in M_{i+1}.$

\item $F_{p}\left(  M_{i}\right)  $ is a Cohen real over $M_{i}$ (i.e. if
$Y\in M_{i}$ is a meager set then $F_{p}\left(  M_{i}\right)  \notin Y$).
\end{enumerate}

\qquad\ \ 

Finally, if $p,q\in\mathbb{P}_{cat}$ then $p\leq q$ if $s_{q}\subseteq s_{p},$
$\overline{M}_{q}\subseteq\overline{M}_{p}$ and $F_{q}\subseteq F_{p}.$
\end{definition}

\qquad\ \ \qquad\ \ \qquad\ 

The following lemma is easy and it is left to the reader:

\begin{lemma}
\qquad\ \ \ 

\begin{enumerate}
\item If $M\preceq\mathsf{H(}\left(  2^{\mathfrak{c}}\right)  ^{+++})$ is
countable and $p\in M\cap\mathbb{P}_{cat}$ then there is $f\in\omega^{\omega}$
such that if $N=M\cap\mathsf{H(}\left(  2^{\mathfrak{c}}\right)  ^{++})$ then
$\overline{p}=\left(  s_{p},\overline{M}_{p}\cup\left\{  N\right\}  ,F_{p}%
\cup\left\{  \left(  N,f\right)  \right\}  \right)  $ is a condition of
$\mathbb{P}_{cat}$ and it extends $p.$

\item If $n\in\omega$ then $D_{n}=\left\{  p\in\mathbb{P}_{cat}\mid n\subseteq
dom\left(  s_{p}\right)  \right\}  $ is an open dense subset of $\mathbb{P}%
_{cat}.$
\end{enumerate}
\end{lemma}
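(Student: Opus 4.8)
The plan is to verify, clause by clause, that the proposed tuples satisfy the six defining conditions of $\mathbb{P}_{cat}$, the only substantive point being the existence of the function $f$ in part 1. For part 1, since $p\in M$ and $M\preceq\mathsf{H}((2^{\mathfrak{c}})^{+++})$, every component of $p$ lies in $M$; in particular $\overline{M}_p=\{M_0,\dots,M_n\}\in M$ and, being finite, each $M_i\in M$, while $s_p,F_p\in M$ as well. First I would record that $N=M\cap\mathsf{H}((2^{\mathfrak{c}})^{++})$ is a countable elementary submodel of $\mathsf{H}((2^{\mathfrak{c}})^{++})$: this is the standard fact that the intersection of an elementary submodel with $\mathsf{H}(\theta)$ is elementary in $\mathsf{H}(\theta)$ whenever $\mathsf{H}(\theta)$ belongs to the submodel, which applies here because $\mathsf{H}((2^{\mathfrak{c}})^{++})\in M$ (this is exactly why the ambient model is taken one level higher, in $\mathsf{H}((2^{\mathfrak{c}})^{+++})$). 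Each $M_i$ is a countable subset of $\mathsf{H}((2^{\mathfrak{c}})^{++})$, hence an element of it, and $M_i\in M$, so $M_i\in N$; thus $M_0\in\cdots\in M_n\in N$ is an $\in$-chain, which gives clause 2 for $\overline{p}$, and $N\neq M_i$ by foundation, so clause 3 holds and $F_p\cup\{(N,f)\}$ is a genuine function as soon as $f\in\omega^{\omega}$.

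The heart of the matter is choosing $f$. I would let $t$ be the sequence of length $|s_p|$ with $t(k)=s_p(k)+1$, so that $[t]\subseteq\{g\mid g(k)\neq s_p(k)\text{ for all }k<|s_p|\}$, and then pick $f\in[t]$ that is Cohen over $N$. Such $f$ exists because $N$ is countable: the union of the (countably many) meager Borel sets coded in $N$ is meager and hence cannot cover the nonempty basic clopen set $[t]$. This $f$ satisfies clause 6 for $N$ by construction and clause 4 for $N$ because $f\in[t]$. For clause 5, note first that $f\notin N$ automatically, since $\{f\}$ is nowhere dense and would otherwise be a meager set coded in $N$ containing $f$, contradicting Cohenness; and the new successor requirement $F_p(M_n)\in N$ holds because $F_p(M_n)\in M$ by elementarity (as $F_p,M_n\in M$) and $F_p(M_n)\in\omega^{\omega}\subseteq\mathsf{H}((2^{\mathfrak{c}})^{++})$, whence $F_p(M_n)\in M\cap\mathsf{H}((2^{\mathfrak{c}})^{++})=N$. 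Clauses 4, 5, 6 for the old models $M_i$ are inherited verbatim from $p$, clause 1 is trivial since $s_{\overline{p}}=s_p$, and $\overline{p}\leq p$ because $s_{\overline{p}}=s_p$, $\overline{M}_p\subseteq\overline{M}_{\overline{p}}$ and $F_p\subseteq F_{\overline{p}}$.

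For part 2, openness is immediate: if $q\leq p\in D_n$ then $s_p\subseteq s_q$, so $n\subseteq dom(s_p)\subseteq dom(s_q)$ and $q\in D_n$. For density, given $p$ with $|s_p|=m<n$ I would extend the stem to length $n$ while keeping $\overline{M}_p$ and $F_p$ fixed: at each new coordinate $k$ with $m\leq k<n$ put $s_q(k)=1+\max_{i\leq n}F_p(M_i)(k)$, which differs from $F_p(M_i)(k)$ for every $i$. Since $\overline{M}_p$ is finite this is a finite coordinatewise choice; the resulting $q=(s_q,\overline{M}_p,F_p)$ still satisfies clause 4 (the only clause mentioning the stem), all other clauses are untouched, and $q\leq p$ with $q\in D_n$.

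The only step demanding genuine care is part 1 — specifically recognizing that $N$ is really elementary in $\mathsf{H}((2^{\mathfrak{c}})^{++})$ and that a Cohen real over $N$ can be located inside the clopen set dictated by the stem; once these are in place, everything reduces to routine bookkeeping against the six defining clauses.
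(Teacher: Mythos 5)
Your proof is correct; the paper leaves this lemma to the reader, and your argument is exactly the intended one (the condition $\overline{p}=\left(s_{p},\overline{M}_{p}\cup\{N\},F_{p}\cup\{(N,f)\}\right)$ is used verbatim in the properness proof that follows). The only remarks are cosmetic: the index in $\max_{i\leq n}F_{p}(M_{i})(k)$ clashes with the $n$ of $D_{n}$, and for clause 6 one can argue even more directly that $N$, being countable, contains only countably many meager sets, so their union is meager and therefore misses some point of the nonempty clopen set $\left\langle t\right\rangle$.
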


\qquad\ \ \ \qquad\ \ \ 

We will now prove that $\mathbb{P}_{cat}$ is a proper forcing by applying the
usual \textquotedblleft side conditions trick\textquotedblright.

\begin{lemma}
$\mathbb{P}_{cat}$ is a proper forcing.
\end{lemma}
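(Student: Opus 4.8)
The plan is to prove properness of $\mathbb{P}_{cat}$ via the standard side-conditions argument: I need to show that for any sufficiently large regular cardinal and any countable elementary submodel $M$ containing $\mathbb{P}_{cat}$, every condition $p\in M\cap\mathbb{P}_{cat}$ has an $(M,\mathbb{P}_{cat})$-generic extension. The natural candidate for the generic extension is obtained by \emph{sealing} $M$ into the side condition, using part (1) of the preceding lemma. So first I would fix $M\preceq\mathsf{H}((2^{\mathfrak c})^{+++})$ countable with $\mathbb{P}_{cat}\in M$ (note the submodels appearing inside conditions live in $\mathsf{H}((2^{\mathfrak c})^{++})$, so one works one level up), and fix $p\in M\cap\mathbb{P}_{cat}$. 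Setting $N=M\cap\mathsf{H}((2^{\mathfrak c})^{++})$, the lemma provides an $f\in\omega^{\omega}$ — which must be Cohen over $N$ and avoid $s_p$ — so that $\overline{p}=\bigl(s_p,\overline{M}_p\cup\{N\},F_p\cup\{(N,f)\}\bigr)$ is a condition extending $p$. The claim is that $\overline{p}$ is $(M,\mathbb{P}_{cat})$-generic.

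To verify genericity, let $D\in M$ be an open dense subset of $\mathbb{P}_{cat}$ and let $q\leq\overline{p}$ be arbitrary; I must find $r\in D\cap M$ compatible with $q$. The key point is the elementarity of $N$ together with the fact that $N$ appears as a side condition in $\overline{p}$, hence in $q$. First I would strengthen $q$ into $D$, obtaining $q'\leq q$ with $q'\in D$. Now I would split the finite $\in$-chain $\overline{M}_{q'}$ at $N$: write the part strictly above $N$ as a tail, and observe that all data of $q'$ lying below (and including) $N$ — namely the stem $s_{q'}$, the initial segment $\overline{M}_{q'}\cap(N+1)$ of the chain, and the restriction of $F_{q'}$ to that segment — form a finite object that is an element of $N$, by elementarity of $N$ and the fact that $N$ is one of the models. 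The \emph{higher} models in $q'$, however, are not in $N$; the whole point of the side-condition method is that one \emph{reflects} the existence of a suitable extension into $N$ and then into $M$.

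Concretely, the main technical step is this reflection argument. Having frozen the lower part $q_0$ of $q'$ (the restriction to models $\subseteq N$, an element of $N$), I would argue inside $M$ (using $N\in M$ as a parameter, which is legitimate since $N=M\cap\mathsf{H}((2^{\mathfrak c})^{++})\in M$) that there exists a condition in $D$ extending $q_0$ whose side-condition models above $N$ are \emph{free} to be chosen; by elementarity such a witness can be found inside $M$. Pulling this witness $r\in D\cap M$ back, I would check it is compatible with $q'$ (and hence with $q$): the stems agree on their common part, the two $\in$-chains amalgamate because one sits inside $N\in M$ and the other has its new models chosen generically above, and the two functions $F$ have compatible domains. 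The requirement (6) that each $F(M_i)$ is Cohen over $M_i$ is preserved because the relevant reals were chosen Cohen over the appropriate models, which is exactly where the hypothesis $F_p(N)=f$ Cohen over $N$ is used. The \textbf{main obstacle} I anticipate is precisely the amalgamation of the two finite $\in$-chains of models across the cut at $N$: one must ensure that the models from $r$ and the models from $q'$ together still form an $\in$-chain and that conditions (4)–(6) survive the union. This is the delicate bookkeeping at the heart of Todorcevic's method, and handling the interleaving of models above $N$ correctly — verifying that no Cohen-genericity requirement is violated when the two sets of side conditions are merged — is where the real work lies.
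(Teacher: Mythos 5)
Your overall architecture is exactly the paper's: seal $M$ into the side condition via $N=M\cap\mathsf{H}((2^{\mathfrak c})^{++})$ using part (1) of the preceding lemma, claim $\overline{p}$ is $(M,\mathbb{P}_{cat})$-generic, strengthen $q\leq\overline{p}$ into $D$, reflect the ``lower part'' of $q$ into $M\cap D$, and amalgamate. However, there is a genuine error in how you set up the reflection. You assert that $N\in M$ (``using $N\in M$ as a parameter, which is legitimate since $N=M\cap\mathsf{H}((2^{\mathfrak c})^{++})\in M$''). This is false: if $N\in M$ then, $M$ seeing that $N$ is countable, $N\subseteq M$ and $M$ could compute $\sup(N\cap\omega_1)=M\cap\omega_1$, an ordinal that would then lie in $M\cap\omega_1$ --- a contradiction. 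For the same reason your ``lower part including $N$'' is not an element of $N$ (nor of $M$), since $N\notin N$. The correct move, and the one the paper makes, is to take $q_M=(s_q,\overline{M}_q\cap M,F_q\cap M)$, i.e.\ the part of $q$ \emph{strictly below} $N$; every component of this finite object lies in $M$ (the models below $N$ are elements of $N\subseteq M$, as are their assigned reals by clause (5)), so $q_M\in M$, and one reflects the statement ``there is $r\in D$ with $r\leq q_M$ and $s_r=s_q$'' using $q_M$ and $s_q$ as parameters --- no mention of $N$ is needed or possible.

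The second problem is where you locate the difficulty. You flag ``interleaving of models above $N$'' as the main obstacle, but this never arises: the witness $r$ lives in $M$, so all of its models are elements of $M\cap\mathsf{H}((2^{\mathfrak c})^{++})=N$ and its chain sits entirely below $N$; the merge is a clean concatenation of $r$'s chain with the part of $q$'s chain from $N$ upward. What actually makes the amalgam a condition is the requirement $s_r=s_q$, which you must impose when invoking elementarity and which your phrase ``the stems agree on their common part'' does not capture. If $s_r$ properly extended $s_q$, clause (4) would require $s_r\cap F_q(M_i)=\emptyset$ for the models $M_i$ of $q$ at and above $N$, and those reals are not visible from inside $M$, so nothing guarantees this. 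With $s_r=s_q$ fixed, clauses (4) and (6) for the merged condition are inherited separately from $r$ and from $q$, and clause (5) across the cut reduces to $F_r(R_k)\in N$ for the top model $R_k$ of $r$, which holds because $F_r(R_k)\in M\cap\omega^{\omega}\subseteq N$. With these two repairs your proof becomes the paper's proof.
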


\begin{proof}
Let $p\in\mathbb{P}_{cat}$ and $M$ a countable elementary submodel of
$\mathsf{H(}\left(  2^{\mathfrak{c}}\right)  ^{+++})$ such that $p\in M$. By
the previous lemma, we know there is $f\in\omega^{\omega}$ such that
$\overline{p}=\left(  s_{p},\overline{M}_{p}\cup\left\{  N\right\}  ,F_{p}%
\cup\left\{  \left(  N,f\right)  \right\}  \right)  \in\mathbb{P}_{cat}$
(where $N=M\cap\mathsf{H(}\left(  2^{\mathfrak{c}}\right)  ^{++})$). We will
now prove $\overline{p}$ is an $\left(  M,\mathbb{P}_{cat}\right)  $-generic condition.

\qquad\ \ 

Let $D\in M$ be an open dense subset of $\mathbb{P}_{cat}$ and $q\leq
\overline{p}$ (we may even assume $q\in D$). We must prove that $q$ is
compatible with an element of $M\cap D.$ In order to achieve this, let
$q_{M}=\left(  s_{q},\overline{M}_{q}\cap M,F_{q}\cap M\right)  .$ It is easy
to see $q_{M}$ is a condition as well as an element of $M$. By elementarity,
we can find $r\in M\cap D$ such that $r\leq q_{M}$ and $s_{r}=s_{q}.$ It is
then easy to see that $r$ and $q$ are compatible (this is easy since $r$ and
$q$ share the same stem).
\end{proof}

\qquad\ 

The next lemma shows that $\mathbb{P}_{cat}$ destroys all the ground model
non-meager $\mathcal{IE}$-Luzin families.

\begin{lemma}
If $X=\left\{  f_{\alpha}\mid\alpha<\omega_{1}\right\}  \subseteq
\omega^{\omega}$ is a non-meager set then $\mathbb{P}_{cat}$ adds a function
that is almost disjoint with uncountably many elements of $X.$
\end{lemma}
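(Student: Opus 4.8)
The plan is to let $g=\bigcup\{s_{p}\mid p\in G\}$ be the generic function, which is total because the sets $D_{n}$ of the extension lemma are dense, and to show that for \emph{cofinally many} $\alpha<\omega_{1}$ the function $g$ is in fact literally disjoint from $f_{\alpha}$, i.e. $g(k)\neq f_{\alpha}(k)$ for all $k$; this is much stronger than $\left\vert g\cap f_{\alpha}\right\vert <\omega$. The key observation is that the functions $F_{p}(M)$ attached to the side-condition models can be chosen to lie in $X$. Indeed, once some $p_{0}\in G$ carries a model $M\in\overline{M}_{p_{0}}$ with $F_{p_{0}}(M)=h$, every $q\leq p_{0}$ in $G$ satisfies $F_{q}(M)=h$ (since $F_{q}\supseteq F_{p_{0}}$) and $s_{q}\cap h=\emptyset$ (clause 4 of the definition of $\mathbb{P}_{cat}$); as $\bigcup_{q\in G}dom(s_{q})=\omega$, we conclude $g(k)\neq h(k)$ for every $k$, so $g$ is almost disjoint with $h$.

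It therefore suffices to force that uncountably many distinct elements of $X$ occur as values of $F$. Assuming, as we may, that the enumeration of $X$ is injective, I would fix for each $\beta<\omega_{1}$ the set
\[
D_{\beta}=\{p\in\mathbb{P}_{cat}\mid\exists M\in\overline{M}_{p}\ \exists\alpha>\beta\ (F_{p}(M)=f_{\alpha})\},
\]
and prove each $D_{\beta}$ is dense. Given $p$, choose a countable $M\preceq\mathsf{H}((2^{\mathfrak{c}})^{+++})$ with $p\in M$ and set $N=M\cap\mathsf{H}((2^{\mathfrak{c}})^{++})$. By the first part of the extension lemma above, $(s_{p},\overline{M}_{p}\cup\{N\},F_{p}\cup\{(N,f)\})$ is a condition extending $p$ for \emph{any} $f\in\omega^{\omega}$ which is Cohen over $N$ and satisfies $s_{p}\cap f=\emptyset$ (the Cohen-ness secures clause 6 as well as $f\notin N$ in clause 5, and disjointness from the stem secures clause 4). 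The set of such $f$ is comeager: being Cohen over the countable model $N$ is comeager, and $\{f\mid s_{p}\cap f=\emptyset\}$ is open dense because $dom(s_{p})$ is finite. Now $X$ minus the countably many functions $f_{\alpha'}$ with $\alpha'\leq\beta$ is still non-meager, and a non-meager set meets every comeager set; hence I can choose $f=f_{\alpha}$ with $\alpha>\beta$ inside this comeager set. The resulting condition lies in $D_{\beta}$ and extends $p$.

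Since $G$ meets every ground-model dense set, it meets each $D_{\beta}$, yielding for every $\beta<\omega_{1}$ some $\alpha>\beta$ with $f_{\alpha}=F(M)$ for a model $M$ appearing in the generic; by the first paragraph each such $f_{\alpha}$ is almost disjoint with $g$. Thus $A=\{\alpha<\omega_{1}\mid\left\vert g\cap f_{\alpha}\right\vert <\omega\}$ is cofinal in $\omega_{1}$. Because $\mathbb{P}_{cat}$ is proper, $\omega_{1}$ is preserved, so $A$ remains uncountable in the extension, and by injectivity of the enumeration it witnesses uncountably many elements of $X$ that are almost disjoint with $g$.

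The genuinely new point, and the place I expect the only real subtlety, is the interplay in the density step among the three demands on the attached function $f$: that it be Cohen over $N$, that it avoid the finite stem $s_{p}$, and that it belong to $X$ with index above $\beta$. The first two demands cut out a comeager set, and the third is exactly where the hypothesis is spent — non-meagerness of $X$ is the sole reason such an $f$ can be located inside $X$, and hence the only reason the generic function can be threaded through uncountably many distinct members of $X$.
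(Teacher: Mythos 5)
Your first paragraph is correct and matches the paper's mechanism: once a condition in the generic filter attaches $h$ as a side-condition function, clause 4 forces the generic $g$ to be literally disjoint from $h$. The gap is in the density step. You claim that $\{f\mid s_{p}\cap f=\emptyset\}$ is open dense because $dom(s_{p})$ is finite. It is open but \emph{not} dense: the cone $\left\langle s_{p}\right\rangle$ is disjoint from it, since every $f\supseteq s_{p}$ agrees with $s_{p}$ everywhere on $dom(s_{p})$. Consequently the set of admissible $f$ (Cohen over $N$ \emph{and} disjoint from $s_{p}$) is only comeager inside an open set, not comeager in $\omega^{\omega}$, and a non-meager set need not meet it. This is not a repairable slip in your framework: the sets $D_{\beta}$ are genuinely not dense. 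For a concrete counterexample, write $X=\bigcup_{n}\{f\in X\mid f(0)=n\}$; since meager sets form a $\sigma$-ideal, some $X_{n}=\{f\in X\mid f(0)=n\}$ is non-meager (and uncountable), so the lemma must be proved for $X_{n}$. But for any condition $p$ whose stem satisfies $s_{p}(0)=n$, every extension $q\leq p$ has $s_{q}(0)=n$, hence $s_{q}\cap f\neq\emptyset$ for every $f\in X_{n}$, and clause 4 then forbids \emph{any} extension of $p$ from ever attaching \emph{any} member of $X_{n}$. So no member of $X_{n}$ can occur as a value of $F$ below $p$, and your $D_{\beta}$ (for $X_{n}$) has no elements below $p$ at all.

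The paper's proof avoids exactly this obstruction by not insisting that the attached function lie in $X$: given $p$ and $\alpha$, it picks $t\in\omega^{<\omega}$ of the same length as $s_{p}$ but disjoint from it, and attaches a finitely modified copy $g_{\beta}=t\cup\left(f_{\beta}\upharpoonright[\left\vert t\right\vert,\omega)\right)$ of some $f_{\beta}$ with $\beta>\alpha$. The family $Y=\{g_{\beta}\mid\alpha<\beta<\omega_{1}\}$ is still non-meager (finite modification on a fixed initial segment is a homeomorphism onto the cone $\left\langle t\right\rangle$), its members are disjoint from $s_{p}$ by construction, so non-meagerness alone suffices to find $g_{\beta}\in Y$ Cohen over $N$, and forcing $f_{gen}\cap g_{\beta}=\emptyset$ still gives $\left\vert f_{gen}\cap f_{\beta}\right\vert<\omega$ because $f_{\beta}$ and $g_{\beta}$ differ only on a finite set. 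If you replace your requirement $F_{q}(N)=f_{\alpha}\in X$ by $F_{q}(N)=$ a finite modification of some $f_{\alpha}$ with $\alpha>\beta$ that avoids $s_{p}$, your argument goes through and becomes the paper's proof.
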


\begin{proof}
Given a generic filter $G\subseteq\mathbb{P}_{cat}$, we denote the
\emph{generic real }by $f_{gen}$ i.e. $f_{gen}$ is the union of all the stems
of the elements in $G.$ We will show $f_{gen}$ is forced to be almost disjoint
with uncountably many elements of $X.$ Let $p\in\mathbb{P}_{cat}$ with stem
$s_{p}$ and $\alpha<\omega_{1}.$ Choose $t\in\omega^{<\omega}$ with the same
length as $s_{p}$ but disjoint with it. Let $Y=\left\{  g_{\beta}\mid
\alpha<\beta<\omega_{1}\right\}  $ where $g_{\beta}=t\cup\left(  f_{\beta
}\upharpoonright\lbrack\left\vert t\right\vert ,\omega)\right)  .$ It is easy
to see that $Y$ is a non-meager set and then we can find $\beta>\alpha$ and
$q\leq p$ such that $g_{\beta}$ is in the image of $F_{q}.$ In this way,
$f_{gen}$ is forced by $q$ to be disjoint from $g_{\beta},$ so it will be
almost disjoint with $f_{\beta}.$
\end{proof}

\qquad\ \ 

We say a forcing notion $\mathbb{P}$ \emph{destroys category }if there is
$p\in\mathbb{P}$ such that $p\Vdash``\omega^{\omega}\cap V\in\mathcal{M}%
\textquotedblright.$ The following is a well known result, but I was unable to
find a reference for it:

\begin{proposition}
Let $\mathbb{P}$ be a partial order. Then $\mathbb{P}$ destroys category if
and only if $\mathbb{P}$ adds an eventually different real.
\end{proposition}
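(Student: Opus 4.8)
The plan is to prove both implications, using throughout that ``$\mathbb{P}$ destroys category'' means $\exists p\in\mathbb{P}\,(p\Vdash\omega^{\omega}\cap V\in\mathcal{M})$, and that ``$\mathbb{P}$ adds an eventually different real'' means there are $p\in\mathbb{P}$ and a name $\dot{f}$ with $p\Vdash\dot{f}\in\omega^{\omega}$ and $p\Vdash\omega^{\omega}\cap V\subseteq ED(\dot{f})$, i.e. $|\dot f\cap g|<\omega$ for every $g\in\omega^\omega\cap V$.

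The easy direction is $(\Leftarrow)$. First I would observe that for a single $f\in\omega^{\omega}$ the set $ED(f)=\{g:|f\cap g|<\omega\}=\bigcup_{N\in\omega}\{g:\forall k\geq N\,(g(k)\neq f(k))\}$ is a meager $F_{\sigma}$ set: each set in the union is closed, and it is nowhere dense because any basic cone $\langle s\rangle$ can be shrunk to $\langle t\rangle$ with $t(k)=f(k)$ for some $k\geq\max(N,|s|)$. Hence if $p\Vdash\omega^{\omega}\cap V\subseteq ED(\dot f)$, then $p$ forces the ground model reals to be contained in a meager set, so $p\Vdash\omega^{\omega}\cap V\in\mathcal{M}$ and $\mathbb{P}$ destroys category. (Phrased ideal-theoretically: $ED(f)$ is exactly a generator of $\mathcal{IE}$, and $\mathcal{IE}\subseteq\mathcal{M}$.)

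The hard direction is $(\Rightarrow)$, and this is where essentially all the work lies. Suppose $p\Vdash\omega^{\omega}\cap V\in\mathcal{M}$ and fix a generic $G\ni p$; I work in $V[G]$. My first step would be to extract a ``matching'' witness for meagerness. Fixing in $V$ a homeomorphism of $\omega^{\omega}$ onto the (comeager) irrationals of $2^{\omega}$, meagerness of $\omega^{\omega}\cap V$ transfers to meagerness of the corresponding subset of $2^{\omega}$, so by the Proposition on chopped reals (the family $\{\lnot Match(x,\mathcal{P})\}$ is cofinal among meager sets in $2^{\omega}$) there is a chopped real $(x,\mathcal{P})\in V[G]$ that every ground model real fails to match: each $g\in\omega^{\omega}\cap V$ agrees with $x$ on only finitely many blocks of $\mathcal{P}$. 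The second step is the decoding: I want to convert this \emph{block}-eventual-difference into a genuine \emph{pointwise} eventually different real. The mechanism is a ground model block-to-point coding: given any interval partition $\{J_{i}\}\in V$ together with ground model bijections $c_{i}\colon\omega^{J_{i}}\to\omega$, the map $\beta(h)(i)=c_{i}(h\upharpoonright J_{i})$ is a homeomorphism of $\omega^{\omega}$ lying in $V$, so $\beta$ carries $\omega^{\omega}\cap V$ onto itself and turns block-eventual-difference over $\{J_{i}\}$ into pointwise eventual difference. Thus a witness $y$ that is block-eventually-different from all ground model reals over a \emph{ground model} partition yields $\beta(y)$ eventually different (pointwise) from all of $\omega^{\omega}\cap V$, which is exactly the desired real.

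The hard part will be precisely the mismatch of models in this decoding: the chopped real produced in the first step uses a partition $\mathcal{P}\in V[G]$, whereas the coding $\beta$ needs the partition to lie in $V$. I expect this to be the main obstacle, and it cannot be bypassed by naive cofinality: the sets $ED(f)$ are \emph{not} cofinal in $\mathcal{M}$ (for instance the meager set of functions that are constant on each block of a fixed partition is contained in no $ED(f)$), so the conclusion genuinely uses that $\omega^{\omega}\cap V$ is a rich, ground-model-closed set and not an arbitrary meager set. The way I would resolve it is to invoke Bartoszy\'{n}ski's combinatorial characterization of meagerness in $\omega^{\omega}$ (the $\omega^{\omega}$-refinement of the chopped real Proposition, as in \cite{HandbookBlass}): using that the ground model reals are closed under ground model recursive operations, one arranges the witnessing partition to be taken in $V$ after an amalgamation across blocks, at which point the coding $\beta$ applies and produces the eventually different real. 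Finally I would check that this whole construction is forced by a single condition below $p$, giving a name $\dot f$ with $p\Vdash\omega^{\omega}\cap V\subseteq ED(\dot f)$, which is what ``$\mathbb{P}$ adds an eventually different real'' requires.
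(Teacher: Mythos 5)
There is a genuine gap in the hard direction, and it sits exactly where you predicted it would: the decoding step. Your plan is to find a ground model partition over which the ground model reals are block-eventually-different and then apply the coding $\beta$; but the appeal to ``Bartoszy\'{n}ski's combinatorial characterization'' plus ``an amalgamation across blocks'' is not a proof, and the statement you would need to cite (every meager set of the form $\omega^{\omega}\cap V$ admits a single eventually different real) is precisely the proposition being proved --- the cardinal characterizations of \textsf{non}$\left(  \mathcal{M}\right)$ and \textsf{cov}$\left(  \mathcal{M}\right)$ do not give it for a fixed pair of models. Two concrete ideas are missing. First, the paper splits into cases on whether $\mathbb{P}$ adds a dominating real: if it does, a dominating real immediately yields an eventually different real, and if it does not, that is exactly the hypothesis needed to find a ground model interval partition $\mathcal{R}$ with infinitely many intervals each containing an interval of the generic partition $\mathcal{P}$. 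Your proposal never makes this split, and without it no ground model partition need bear any useful relation to $\mathcal{P}$ at all.

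Second, even granting the non-dominating case, only \emph{infinitely many} (not almost all) intervals of $\mathcal{R}$ contain an interval of $\mathcal{P}$, and the set $Z$ of such intervals lives in $V\left[  G\right]$, not in $V$. So the block-to-point coding $\beta$ over $\mathcal{R}$ only guarantees that $\beta\left(  x\right)$ \emph{differs} from $\beta\left(  y\right)$ at the infinitely many coordinates indexed by $Z$; on the complementary coordinates a ground model $y$ may agree with $x$ infinitely often, so $\beta\left(  x\right)$ need not be eventually different from anything. The paper's fix is to define the new real with values in the countable ground model set $W=2^{<\omega}\times\left[  \mathcal{R}\right]  ^{<\omega}$, sending $n$ to the pair consisting of a long initial segment of $x$ together with the finite set $\left\{  R_{0},...,R_{2^{n}}\right\}$ of intervals of $Z$ used so far; the exponential growth is what lets one take any ground model $g$ agreeing with this function infinitely often and recursively build a ground model $y$ that matches $\left(  x,\mathcal{P}\right)$, contradicting the choice of the chopped real. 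Without some version of this pair-coding (or an equivalent device), the passage from ``not matching a chopped real in $V\left[  G\right]$'' to ``pointwise eventually different'' does not go through.
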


\begin{proof}
If $\mathbb{P}$ adds an eventually different real then clearly $\mathbb{P}$
destroys category, so we only need to prove the other implication. Let
$\mathbb{P}$ be a partial order that destroys category. If $\mathbb{P}$ adds a
dominating real the result is obvious, so let us assume $\mathbb{P}$ does not
add dominating reals.

\qquad\ \ 

Let $G\subseteq\mathbb{P}$ be a generic filter. Then there is a chopped real
$\left(  x,\mathcal{P}\right)  \in V\left[  G\right]  $ such that $2^{\omega
}\cap V\subseteq\lnot Match\left(  x,\mathcal{P}\right)  .$ Since $\mathbb{P}$
does not add dominating reals, then there is a ground model interval partition
$\mathcal{R}$ such that there are infinitely many intervals of $\mathcal{R}$
that contain an interval of $\mathcal{P}.$ Let $W=2^{<\omega}\times\left[
\mathcal{R}\right]  ^{<\omega}$ which clearly is a ground model countable set.
We work in $V\left[  G\right]  ,$ let $Z=\left\{  R_{i}\mid i\in
\omega\right\}  \subseteq\mathcal{R}$ be such that every $R_{i}$ contains an
interval from $\mathcal{P}.$ We now define the function $f:\omega
\longrightarrow W$ where $f\left(  n\right)  =\left(  x\upharpoonright\left(
max\left(  R_{2^{n}}\right)  +1\right)  ,\left\{  R_{0},...,R_{2^{n}}\right\}
\right)  .$ We claim that $f$ is an eventually different real. Assume this is
not the case, so there is $g\in V$ such that $g\cap f$ is infinite. We may
assume each $g\left(  n\right)  $ is of the form $\left(  s_{n},F_{n}\right)
$ where $F_{n}\in\left[  \mathcal{R}\right]  ^{2^{n}+1}$ and $dom\left(
s_{n}\right)  $ is a superset of all intervals of $F_{n}.$ We can then
recursively define $y\in2^{\omega}\cap V$ such that if $g\left(  n\right)
=\left(  s_{n},F_{n}\right)  $ then there is $R\in F_{n}$ for which
$y\upharpoonright F_{n}=s_{n}\upharpoonright F_{n}.$ It is then easy to see
that $y$ matches $\left(  x,\mathcal{P}\right)  ,$ which is a contradiction.
\end{proof}

\qquad\ \ \ \qquad\ \ \ 

Given a Polish space $X,$ we denote by \textsf{nwd}$\left(  X\right)  $ the
ideal of all nowhere dense subsets of $X.$ We will need the following result
of Kuratowski and Ulam (see \cite{Kechris}):

\begin{proposition}
[Kuratowski-Ulam]Let $X$ and $Y$ two Polish spaces. If $N\subseteq X\times Y$
is a nowhere dense set, then $\{x\in X\mid N_{x}\in$ \textsf{nwd}$\left(
Y\right)  \}$ is comeager (where $N_{x}=\left\{  y\mid\left(  x,y\right)  \in
N\right\}  $).
\end{proposition}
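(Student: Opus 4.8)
The plan is to reduce to closed nowhere dense sets and then apply a projection-density argument using a countable base of $Y$. First I would note that it suffices to treat the case where $N$ is closed. Indeed, $N\subseteq\overline{N}$ and $\overline{N}$ is again nowhere dense, so $N_{x}\subseteq(\overline{N})_{x}$ for every $x\in X$; since every subset of a nowhere dense set is nowhere dense, whenever $(\overline{N})_{x}$ is nowhere dense so is $N_{x}$. Thus the set of $x$ for which $N_{x}$ is nowhere dense contains the corresponding set for $\overline{N}$, and comeagerness of the latter yields comeagerness of the former. From now on assume $N$ is closed and set $U=(X\times Y)\setminus N$, which is open and dense in $X\times Y$.

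Because $N$ is closed, each slice $N_{x}$ is closed, and a closed set is nowhere dense exactly when its complement is dense. Hence $N_{x}$ is nowhere dense if and only if $U_{x}=Y\setminus N_{x}$ is dense in $Y$. I would fix a countable base $\{V_{n}\mid n\in\omega\}$ of nonempty open subsets of $Y$, so that $U_{x}$ is dense precisely when $U_{x}\cap V_{n}\neq\emptyset$ for every $n\in\omega$. Setting $A_{n}=\{x\in X\mid U_{x}\cap V_{n}\neq\emptyset\}$, one sees that $A_{n}$ is the projection to $X$ of the open set $U\cap(X\times V_{n})$, hence $A_{n}$ is open. Consequently the set of those $x$ with $N_{x}$ nowhere dense equals $\bigcap_{n\in\omega}A_{n}$, and it remains only to show that each $A_{n}$ is dense.

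For the density of $A_{n}$ I would argue by contradiction. If some nonempty open $W\subseteq X$ were disjoint from $A_{n}$, then for every $x\in W$ and every $y\in V_{n}$ we would have $(x,y)\notin U$, that is, $W\times V_{n}\subseteq N$. But $W\times V_{n}$ is a nonempty open subset of $X\times Y$, contradicting that $N$ is nowhere dense. Therefore each $A_{n}$ is open and dense, so $\bigcap_{n\in\omega}A_{n}$ is comeager by the Baire category theorem, finishing the argument.

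The only step that genuinely uses the hypothesis is the density of the $A_{n}$, where the nowhere-density of $N$ in the product is invoked; everything else is bookkeeping with slices and a countable base. I would stress that the reduction to closed $N$ is not merely cosmetic: for a general nowhere dense $N$ the slice $N_{x}$ need not be closed, so the equivalence between \emph{nowhere dense} and \emph{complement dense} can fail, and one must pass to $\overline{N}$ before running the slice argument.
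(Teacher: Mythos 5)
Your proof is correct and complete. The paper does not actually prove this proposition --- it only states it with a citation to Kechris --- and your argument (reduce to closed $N$, write the set of good sections as the countable intersection of the open sets $A_{n}$ obtained by projecting $U\cap(X\times V_{n})$, and get density of each $A_{n}$ from the fact that a closed nowhere dense set contains no box $W\times V_{n}$) is exactly the standard proof that the citation points to, with the reduction to the closed case handled carefully.
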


\qquad\ \ 

As a consequence of the Kuratowski-Ulam theorem we get the following result:

\begin{lemma}
Let $p\in\mathbb{P}_{cat},$ $\overline{M}_{p}=\left\{  M_{0},...,M_{n}%
\right\}  $ and $i\leq n.$ Let $g_{j}=F_{p}\left(  M_{i+j}\right)  $ and
$m=n-i.$ If $D\in M_{i}$ and $D\subseteq\left(  \omega^{\omega}\right)
^{m+1}$ is a nowhere dense set, then $\left(  g_{0},...,g_{m}\right)  \notin
D.$
\end{lemma}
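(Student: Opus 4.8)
The plan is to argue by induction on $m$, peeling off the first coordinate $g_0 = F_p(M_i)$ and exploiting that it is a Cohen real over $M_i$, combined with the Kuratowski--Ulam theorem to lower the dimension by one. I would first dispose of the base case $m = 0$: here $i = n$ and $D \subseteq \omega^\omega$ is nowhere dense, hence meager, with $D \in M_i$. Since $g_0 = F_p(M_i)$ is Cohen over $M_i$, by the definition of the forcing it avoids every meager set belonging to $M_i$, so $g_0 \notin D$ immediately.

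For the inductive step, assuming $m \geq 1$ and the statement for $m-1$, I would write $(\omega^\omega)^{m+1} = \omega^\omega \times (\omega^\omega)^m$, thinking of the first factor as the coordinate of $g_0$ and the second as the coordinate of $(g_1,\dots,g_m)$. Applying the Kuratowski--Ulam theorem with $X = \omega^\omega$ and $Y = (\omega^\omega)^m$ to the nowhere dense set $D$, the set $C = \{x \in \omega^\omega \mid D_x \in \textsf{nwd}((\omega^\omega)^{m})\}$ is comeager, where $D_x = \{\vec{y} \mid (x,\vec{y}) \in D\}$. Because $D \in M_i$, the set $C$ and its meager complement are definable from $D$ and hence lie in $M_i$. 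As $g_0$ is Cohen over $M_i$, it avoids the meager set $\omega^\omega \setminus C$, so $g_0 \in C$; that is, the fiber $D_{g_0}$ is nowhere dense.

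It then remains to place $D_{g_0}$ inside the next model and invoke the induction hypothesis. Since $m \geq 1$ we have $i < n$, so condition~5 yields $F_p(M_i) = g_0 \in M_{i+1}$; moreover $M_i \in M_{i+1}$ with $M_i$ countable forces $M_i \subseteq M_{i+1}$, whence $D \in M_{i+1}$. As $D_{g_0}$ is definable from the parameters $D, g_0 \in M_{i+1}$, we get $D_{g_0} \in M_{i+1}$, a nowhere dense subset of $(\omega^\omega)^m$. Applying the statement for $m-1$ to the sub-chain $\{M_{i+1},\dots,M_n\}$ (which, with the restriction of $F_p$ and the same stem, is again a condition of $\mathbb{P}_{cat}$) and to $D_{g_0}$, I would conclude $(g_1,\dots,g_m) = (F_p(M_{i+1}),\dots,F_p(M_n)) \notin D_{g_0}$, i.e. $(g_0,g_1,\dots,g_m) \notin D$.

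The main obstacle I anticipate is a Borel-coding subtlety in the inductive step: the genericity clause literally guarantees only that $g_0$ avoids meager sets that are \emph{elements} of $M_i$, whereas $\omega^\omega \setminus C$ need not be Borel. I would handle this by noting that, by elementarity, $M_i$ contains a meager $F_\sigma$ Borel set $\widehat{C} \supseteq \omega^\omega \setminus C$ (take the union of the closures of nowhere dense sets witnessing meagerness), and a Cohen real over $M_i$ avoids $\widehat{C}$, hence $g_0 \in C$. Everything else is a routine check that dropping $M_i$ from $p$ produces a legitimate condition, so that the induction hypothesis genuinely applies to the shorter chain.
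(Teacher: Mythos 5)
Your proof is correct and follows essentially the same route as the paper: induction on $m$, with the base case given directly by the Cohen-genericity clause in the definition of $\mathbb{P}_{cat}$, and the inductive step obtained by applying Kuratowski--Ulam to see that the set of $h$ with nowhere dense fiber $D_h$ is comeager and lies in $M_i$, so that $g_0$ lands in it and $D_{g_0}\in M_{i+1}$ feeds the induction hypothesis. The coding subtlety you raise at the end is handled correctly, though it is not strictly needed here since the paper's genericity clause is stated for arbitrary meager sets in $M_i$, not only Borel ones.
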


\begin{proof}
We prove it by induction on $m.$ If $m=0$ this is true just by the definition
of $\mathbb{P}_{cat}$. Assume this is true for $m$ and we will show it is also
true for $m+1.$ Since $D\subseteq\left(  \omega^{\omega}\right)  ^{m+2}$ is a
nowhere dense set, then by the Kuratowski-Ulam theorem we conclude that
$A=\{h\in\omega^{\omega}\mid D_{h}\in$ \textsf{nwd}$(\left(  \omega^{\omega
}\right)  ^{m+1})\}$ is comeager and note it is an element of $M_{i}.$ In this
way, $g_{0}\in A$ so $D_{g_{0}}\in$ \textsf{nwd}$(\left(  \omega^{\omega
}\right)  ^{m+1})$ and it is an element of $M_{i+1}.$ By the inductive
hypothesis we know $\left(  g_{1},...,g_{m+1}\right)  \notin D_{g_{0}}$ which
implies $\left(  g_{0},...,g_{m+1}\right)  \notin D.$
\end{proof}

\qquad\ \ \ 

We will prove that $\mathbb{P}_{cat}$ does not destroy category and this is a
consequence of the following result:

\begin{lemma}
Let $p\in\mathbb{P}_{cat}$ and $\dot{g}$ a $\mathbb{P}_{cat}$-name for an
element of $\omega^{\omega}.$ Letting $\overline{M}=\left\langle M_{n}\mid
n\in\omega\right\rangle $ be an $\in$-chain of elementary submodels of
$\mathsf{H(}\left(  2^{\mathfrak{c}}\right)  ^{+++}),$ $h:\omega
\longrightarrow\omega$ and $\left\{  A_{n}\mid n\in\omega\right\}
\subseteq\left[  \omega\right]  ^{\omega}$ a family of infinite pairwise
disjoint sets with the following properties:

\qquad\ \ 

\begin{enumerate}
\item $p,\dot{g}\in M_{0}.$

\item $h\upharpoonright A_{n}\in M_{n+1}.$

\item If $f\in M_{n}\cap\omega^{\omega}$ then $f\cap\left(  h\upharpoonright
A_{n}\right)  $ is infinite.
\end{enumerate}

\qquad\ \ 

Then there is a condition $q\leq p$ such that $q\Vdash``\left\vert h\cap
\dot{g}\right\vert =\omega\textquotedblright.$
\end{lemma}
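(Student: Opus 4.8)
The plan is to first replace $p$ by a condition $q\leq p$ that is $(M_1,\mathbb{P}_{cat})$-generic, and then to extract the infinitely many agreements between $h$ and $\dot{g}$ from this genericity together with hypotheses 2 and 3. Concretely, I would invoke the properness lemma (the one that throws $N=M\cap\mathsf{H}((2^{\mathfrak{c}})^{++})$ into the chain together with a suitable Cohen value) to build $q\leq p$ whose chain contains a copy of $M_1$ and which is $(M_1,\mathbb{P}_{cat})$-generic; this existence and genericity are exactly what the properness proof delivers, so this step is routine. Recall that $p,\dot{g}\in M_0\subseteq M_1$, while $A_0$ and $h\upharpoonright A_0$ already live in $M_1$ by hypothesis 2.

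The reduction I would then carry out is as follows. It suffices to force that $\{m\in A_0\mid \dot{g}(m)=h(m)\}$ is infinite, since this already gives $|h\cap\dot{g}|=\omega$. For each $N\in\omega$ put
\[
D_N=\{\,t\leq p\mid \exists m\in A_0\ (\,m>N\ \wedge\ t\Vdash \dot{g}(m)=h(m)\,)\,\}.
\]
Because $\dot{g},p\in M_1$ and $A_0,h\upharpoonright A_0\in M_1$, each $D_N$ is an element of $M_1$. If I can show that every $D_N$ is dense below $p$, then, $q$ being $(M_1)$-generic and $q\leq p$, the set $D_N\cap M_1$ is predense below $q$. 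Hence for every $r\leq q$ and every $N$ there is a further extension forcing an agreement $\dot{g}(m)=h(m)$ with $m\in A_0$, $m>N$; that is precisely the density below $q$ of the sets forcing an agreement past $N$, which yields $q\Vdash``|h\cap\dot{g}|=\omega\textquotedblright$.

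The whole weight of the argument therefore rests on the \textbf{density of $D_N$ below $p$}, and this is the step I expect to be the main obstacle. Given $s\leq p$, I must extend the stem of $s$ so as to force $\dot{g}(m)=h(m)$ for some large $m\in A_0$, i.e.\ steer the value to which $\dot{g}(m)$ gets decided onto the prescribed value $h(m)$. Exactly as in the proof of properness, the $M_0$-part $s_{M_0}=(s_s,\overline{M}_s\cap M_0,F_s\cap M_0)$ is a condition lying in $M_0$, so inside $M_0$ one can define, for each $m$, a canonical extension of $s_{M_0}$ deciding $\dot{g}(m)$ and record its value as $f(m)$, producing a single function $f\in M_0\cap\omega^{\omega}$. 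Hypothesis 3 then supplies infinitely many $m\in A_0$ with $f(m)=h(m)$, and I would choose such an $m>N$.

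The delicate point---and the reason the hypotheses carry all the Cohen structure---is that the canonical $M_0$-extension witnessing $\dot{g}(m)=f(m)$ must be made \emph{compatible with} $s$: its stem properly extends the common stem and so must remain disjoint from the side-reals $F_s(M_i)$ of $s$, none of which lie in $M_0$. Here I would use that each $F_s(M_i)$ is Cohen over $M_i$, together with the consequence of the Kuratowski--Ulam theorem proved above (that the tuple of side-reals avoids every nowhere dense set coded in the bottom model), to select a stem extension that simultaneously decides $\dot{g}(m)=h(m)$ and avoids all the finitely many $F_s(M_i)$; this exhibits $t\leq s$ with $t\in D_N$. Coordinating the value-matching coming from hypothesis 3 with the ``disjoint from finitely many Cohen side-reals'' requirement is the crux of the proof, and it is precisely where the $\in$-chain setup of the hypotheses---$h\upharpoonright A_n\in M_{n+1}$ and $h\upharpoonright A_n$ infinitely often equal to every real of $M_n$---earns its keep.
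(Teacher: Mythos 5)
Your outline has the right ingredients (insert a model of the chain as a side condition, read off a canonical decision function in a small model, use hypothesis 3 to find agreements, use Cohen-genericity of the side reals for compatibility), and you correctly locate the crux, but the step you flag as delicate genuinely fails as proposed. First, the reduction to ``$D_N$ is dense below $p$ plus $(M_1,\mathbb{P}_{cat})$-genericity of $q$'' is not available: $D_N$ need not be dense below $p$ at all, and genericity only converts \emph{dense} sets of $M_1$ into predense-below-$q$ sets. The density proof you sketch breaks at the compatibility step: to apply the Kuratowski--Ulam consequence you need the nowhere dense set of ``bad'' tuples of side reals to be an \emph{element} of the first side model of the given $s\leq p$ lying outside $M_0$, and that set is defined from $h\upharpoonright A_0$, $A_0$ and $s_{M_0}$. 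For an arbitrary $s\leq p$ there is no reason whatsoever that any side model of $s$ contains $h\upharpoonright A_0$ (or $A_0$, or $M_0$) as an element -- $s$ may carry side models completely unrelated to the chain $\overline{M}$ -- so the lemma on side reals avoiding nowhere dense sets simply does not apply, and the Cohen reals of those models can conspire against you (e.g.\ when $\dot g$ is the generic real itself, a real Cohen over a model not containing $h\upharpoonright A_0$ may agree with $h$ on a tail of $A_0$, blocking every agreement).

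The second, structural, problem is that you pin the whole argument to level $0$ of the chain, which discards exactly the feature of the hypotheses that makes the proof work. The paper's proof inserts $N=M\cap\mathsf{H}((2^{\mathfrak{c}})^{++})$ for $M=\bigcup_{n}M_{n}$ (not $M_1\cap\mathsf{H}((2^{\mathfrak{c}})^{++})$) as the new side model. This has two effects: every side model of every $q\leq\overline{p}$ lying below $N$ is an element of $N$, hence of $M$, hence of some $M_n$, so the restriction $q'=(s_q,\overline{M}_q\cap M,F_q\cap M)$ absorbs \emph{all} of them and lands in some $M_n$ (with $n$ depending on $q$); and every side model at or above $N$ contains $N$, hence contains $h\upharpoonright A_k$ and $A_k$ for \emph{every} $k$, so Kuratowski--Ulam applies to the tail of side reals with the nowhere dense set $N(D)\in M_{n+1}\subseteq N$. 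One then applies hypothesis 3 at that adaptively chosen level $n$, to the canonical decision function $u\in M_n$ built below $q'$, matching it against $h\upharpoonright A_n\in M_{n+1}$. Your version, which only ever uses $A_0$, $h\upharpoonright A_0$ and $M_0,M_1$, cannot handle extensions whose $M$-part does not live in $M_0$, nor side models sitting between $M_0$ and $M_1\cap\mathsf{H}((2^{\mathfrak{c}})^{++})$ that omit $h\upharpoonright A_0$; this is precisely why the lemma is stated with an $\omega$-chain and infinitely many pairwise disjoint $A_n$'s rather than a single pair $(M_0,M_1)$ and a single $A_0$.
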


\begin{proof}
Let $M=\bigcup\limits_{n\in\omega}M_{n}$ and define $h_{n}=h\upharpoonright
A_{n}\in M_{n+1}.$ We know there is some $f\in\omega^{\omega}$ such that
$\overline{p}=\left(  s_{p},\overline{M}_{p}\cup\left\{  N\right\}  ,F_{p}%
\cup\left\{  \left(  N,f\right)  \right\}  \right)  \in\mathbb{P}_{cat}$
(where $N=M\cap\mathsf{H(}\left(  2^{\mathfrak{c}}\right)  ^{++})$). We will
now prove that $\overline{p}$ forces that $\dot{g}$ and $h$ will have infinite
intersection. We may assume $A_{n}\cap n=\emptyset$ for every $n\in\omega.$

\qquad\ \ 

Pick any $q\leq\overline{p}$ and $k_{0}\in\omega.$ We must find an extension
of $q$ that forces that $\dot{g}$ and $h$ share a common value bigger than
$k_{0}.$ We first find $n>k_{0}$ such that $q^{\prime}=\left(  s_{q}%
,\overline{M}_{q}\cap M,F_{q}\cap M\right)  \in M_{n}.$ Let $m=\left\vert
\overline{M}_{q}\backslash\overline{M}_{q^{\prime}}\right\vert $ and now we
define $D$ as the set of all $t\in\omega^{<\omega}$ such that there are $l\in
A_{n}$ and $r\in\mathbb{P}_{cat}$ with the following properties:\qquad\ \ 

\begin{enumerate}
\item $r\leq q^{\prime}$.

\item $r\in M_{n}.$

\item $s_{q}\subseteq t$ and the stem of $r$ is $t.$

\item $r\Vdash``\dot{g}\left(  l\right)  =h_{n}\left(  l\right)
\textquotedblright.$
\end{enumerate}

\qquad\ \ 

It is easy to see that $D$ is an element of $M_{n+1}.$ We now define $N\left(
D\right)  \subseteq\left(  \omega^{\omega}\right)  ^{m}$ as the set of all
$\left(  f_{1},...,f_{m}\right)  \in\left(  \omega^{\omega}\right)  ^{m}$ such
that $\left(  f_{1}\cup...\cup f_{m}\right)  \cap t\nsubseteq s_{q}$ for every
$t\in D.$ We claim that $N\left(  D\right)  $ is a nowhere dense set.

\qquad\ \ 

Let $z_{1},...,z_{m}\in\omega^{<\omega}$ and we may assume all of them have
the same length and it is bigger than the length of $s_{q}.$ We know
$q^{\prime}=\left(  s_{q},\overline{M}_{q^{\prime}},F_{q^{\prime}}\right)  $
and let $im\left(  F_{q^{\prime}}\right)  =\left\{  f_{_{1}},...,f_{_{k}%
}\right\}  $ (where $im$ denotes the image of the function). Let $t_{0}$ be
any extension of $s_{q}$ such that $t_{0}\cap\left(  f_{_{1}}\cup...\cup
f_{_{k}}\cup z_{1}\cup...z_{m}\right)  \subseteq s_{q}$ and $\left\vert
t_{0}\right\vert =\left\vert z_{1}\right\vert .$ In this way, $q_{0}=\left(
t_{0},\overline{M}_{q^{\prime}},F_{q^{\prime}}\right)  $ is a condition and is
an element of $M_{n}.$ Inside $M_{n},$ we build a decreasing sequence
$\left\langle q_{i}\right\rangle _{i\in\omega}$ (starting from the $q_{0}$ we
just constructed) in such a way that $q_{i}$ determines $\dot{g}%
\upharpoonright i$ andits stem is $t_{i}.$ In this way, there is a function
$u:\omega\longrightarrow\omega\in M_{n}$ such that $q_{i}\Vdash``\dot
{g}\upharpoonright i=u\upharpoonright i\textquotedblright.$ Since $u\in M_{n}$
we may then find $l\in A_{n}$ such that $u\left(  l\right)  =h_{n}\left(
l\right)  .$ Let $t=t_{l+1}$ and $r=q_{l+1},$ we may then find $z_{i}^{\prime
}\supseteq z_{i}$ such that $t\cap\left(  z_{1}^{\prime}\cup...\cup
z_{m}^{\prime}\right)  \subseteq s_{q}$ and $\left\vert z_{i}^{\prime
}\right\vert =\left\vert t\right\vert .$ In this way, we conclude that
$\left\langle z_{1}^{\prime},...,z_{m}^{\prime}\right\rangle \cap N\left(
D\right)  =\emptyset$ (where $\left\langle z_{1}^{\prime},...,z_{m}^{\prime
}\right\rangle =\left\{  \left(  g_{1},...,g_{m}\right)  \mid\forall i\leq
m\left(  z_{i}^{\prime}\subseteq g_{i}\right)  \right\}  $) so we conclude
$N\left(  D\right)  $ is a nowhere dense set.

\qquad\ \ 

Let $g_{1},...,g_{m}$ be the elements of $im\left(  F_{q}\right)  $ that are
not in $M.$ Since $D\in N$ then by the previous lemma, we know that $\left(
g_{1},...,g_{m}\right)  \notin N\left(  D\right)  .$ This means there are
$l\in A_{n},$ $t\in\omega^{<\omega}$ and $r\in M_{n}$ such that $r\leq
q^{\prime},$ whose stem is $t$ and $r\Vdash``\dot{g}\left(  l\right)
=h_{n}\left(  l\right)  \textquotedblright$ with the property that
$t\cap\left(  g_{1}\cup...\cup g_{m}\right)  \subseteq s_{q},$ but since $q$
is a condition, it follows that $t\cap\left(  g_{1}\cup...\cup g_{m}\right)
=\emptyset.$ In this way, $r$ and $q$ are compatible, which finishes the proof.
\end{proof}

\qquad\ \ \ \ \ \ \ \ \ \ \ \ \ \qquad\qquad\qquad\ \ \ \ \ \ \ \ \ \ 

As a corollary we get the following:

\begin{corollary}
$\mathbb{P}_{cat}$ does not destroy category.
\end{corollary}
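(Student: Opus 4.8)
The plan is to deduce the corollary directly from the Proposition characterizing the destruction of category (a forcing destroys category if and only if it adds an eventually different real) together with the preceding lemma. By that Proposition, it suffices to show that $\mathbb{P}_{cat}$ adds no eventually different real; that is, for every $\mathbb{P}_{cat}$-name $\dot{g}$ for an element of $\omega^{\omega}$ and every condition $p\in\mathbb{P}_{cat}$, I will produce a ground model real $h$ and a condition $q\leq p$ with $q\Vdash|h\cap\dot{g}|=\omega$. This precludes $p$ from forcing $\dot{g}$ to be eventually different, and since $p$ and $\dot{g}$ are arbitrary, no eventually different real is added.

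The core of the argument is already contained in the previous lemma, so the work here is to manufacture the data that feeds into it: an $\in$-chain $\overline{M}=\langle M_{n}\mid n\in\omega\rangle$ of countable elementary submodels of $H((2^{\mathfrak{c}})^{+++})$, a function $h:\omega\longrightarrow\omega$ in $V$, and a family $\{A_{n}\mid n\in\omega\}$ of infinite pairwise disjoint sets satisfying the three hypotheses of that lemma. First I would fix once and for all a partition $\{A_{n}\mid n\in\omega\}$ of $\omega$ into infinite sets with $A_{n}\cap n=\emptyset$, and then build the chain recursively, requiring $p,\dot{g},\{A_{n}\}\in M_{0}$ and $M_{n}\in M_{n+1}$.

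The only substantive point is the construction of $h$ realizing hypothesis 3, namely that $f\cap(h\upharpoonright A_{n})$ is infinite for every $f\in M_{n}\cap\omega^{\omega}$. Since $M_{n}$ is countable, $M_{n}\cap\omega^{\omega}$ is a countable set $\{f^{n}_{k}\mid k\in\omega\}$; I would split $A_{n}$ into infinitely many infinite pieces $\{A_{n}^{k}\mid k\in\omega\}$ and set $h(a)=f^{n}_{k}(a)$ for $a\in A_{n}^{k}$. Then $h$ agrees with $f^{n}_{k}$ on the whole infinite set $A_{n}^{k}$, giving hypothesis 3, and since this construction can be carried out inside $M_{n+1}$ from the parameter $M_{n}\in M_{n+1}$, we also obtain $h\upharpoonright A_{n}\in M_{n+1}$, which is hypothesis 2; hypothesis 1 holds by the choice of $M_{0}$. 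Letting $h=\bigcup_{n}(h\upharpoonright A_{n})$, a total ground model real since the $A_{n}$ partition $\omega$, the lemma yields $q\leq p$ with $q\Vdash|h\cap\dot{g}|=\omega$, completing the argument.

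I expect no genuine obstacle here: the delicate side-conditions argument was dispatched in the previous lemma, and the remaining diagonalization producing $h$ is a routine construction over the countable sets $M_{n}$. The one thing to be careful about is keeping the bookkeeping of the chain consistent with the lemma's conventions, in particular the harmless assumption $A_{n}\cap n=\emptyset$ and the requirement that the matching function for $M_{n}$ land in $M_{n+1}$ rather than merely in $M_{n}$.
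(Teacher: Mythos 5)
Your proposal is correct and is essentially the argument the paper intends: the corollary is stated as an immediate consequence of the preceding lemma together with the proposition that destroying category is equivalent to adding an eventually different real, and your construction of the chain $\overline{M}$, the partition $\{A_{n}\}$, and the function $h$ (agreeing with each $f^{n}_{k}\in M_{n}\cap\omega^{\omega}$ on an infinite piece $A_{n}^{k}$ of $A_{n}$, carried out inside $M_{n+1}$) supplies exactly the routine data the lemma requires. Nothing further is needed.
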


\qquad\ \ 

Unfortunately, the iteration of forcings that do not destroy category may
destroy category (this may even occur at a two step iteration, see
\cite{Barty}). Luckily for us, the iteration of the $\mathbb{P}_{cat}$ forcing
does not destroy category as we will prove soon. First we need a couple of lemmas.

\qquad\ \ \ \qquad\ \ \ \ \ 

\begin{lemma}
Let $\mathbb{P}$ be a proper forcing that does not destroy category and
$p\in\mathbb{P}.$ If $\dot{S}$ is a $\mathbb{P}$-name for a countable set of
reals, then there is $q\leq p$ and $h:\omega\longrightarrow\omega$ such that
$q\Vdash``\forall f\in\dot{S}\left(  \left\vert f\cap h\right\vert
=\omega\right)  \textquotedblright.$
\end{lemma}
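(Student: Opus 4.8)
The plan is first to replace the hypothesis by a workable combinatorial statement and then to upgrade it from one new real to countably many by a properness amalgamation over a countable elementary submodel. By the earlier proposition, ``$\mathbb{P}$ does not destroy category'' is the same as ``$\mathbb{P}$ does not add an eventually different real,'' and the useful form of this is its contrapositive: for every $\mathbb{P}$-name $\dot{f}$ for an element of $\omega^{\omega}$ and every condition $r$, there are $r'\leq r$ and $y\in V\cap\omega^{\omega}$ with $r'\Vdash\left\vert \dot{f}\cap y\right\vert =\omega$. Equivalently, and this is the viewpoint I would actually exploit, the ground-model reals $V\cap\omega^{\omega}$ remain non-meager in every extension. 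This is the only place the category hypothesis is used; everything afterward is about turning ``each new real is infinitely matched by some old real'' into ``countably many new reals are simultaneously infinitely matched by a single old real.''

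The conceptual heart is a non-meagerness argument. Writing $\dot{S}=\{\dot{f}_{n}\mid n\in\omega\}$, I observe that in the extension, for each $n$ the set $\{h\mid\left\vert h\cap\dot{f}_{n}\right\vert =\omega\}$ is comeager, since its complement is exactly the meager set of reals eventually different from $\dot{f}_{n}$. Hence $\bigcap_{n}\{h\mid\left\vert h\cap\dot{f}_{n}\right\vert =\omega\}$ is comeager, and because $V\cap\omega^{\omega}$ is non-meager it must meet this comeager set. Thus some extension of $p$ forces the existence of a \emph{ground-model} real $h$ that is infinitely equal to every $\dot{f}_{n}$. This already yields $p\Vdash\exists h\in\check{V}\,\forall n\,(\left\vert \dot{f}_{n}\cap h\right\vert =\omega)$; the work that remains is to make the witness an honest element of $V$ pinned down by a single condition.

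For the extraction I would use properness. Fix a countable $M\preceq\mathsf{H}(\theta)$ with $\mathbb{P},p,\dot{S},\langle\dot{f}_{n}\rangle\in M$, let $q\leq p$ be an $(M,\mathbb{P})$-generic condition, and aim to build a real $h\in M$ together with $q$ so that, for each $n$ and $\ell$, the generic filter meets a dense set of the form $D_{n,\ell}=\{r\mid\exists m\geq\ell\,(r\Vdash\dot{f}_{n}(m)=h(m))\}$. Since $h\in M$, each such $D_{n,\ell}$ lies in $M$, and if I can arrange that these are the dense sets captured by the $(M,\mathbb{P})$-genericity of $q$, then $q$ forces, for every $n$ and every $\ell$, some $m\geq\ell$ with $\dot{f}_{n}(m)=h(m)$, that is $q\Vdash\forall f\in\dot{S}\,(\left\vert f\cap h\right\vert =\omega)$. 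The values of $h$ would be chosen along a bookkeeping of the $\dot{f}_{n}$ and of the dense subsets of $M$, using the reflected single-real property of the first paragraph (valid in $M$ by elementarity) to commit each successive value of $h$ to a forced value of the current $\dot{f}_{n}$ at a large coordinate.

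The hard part is exactly the uniformity, and it is genuine: the non-meagerness argument supplies a matching real that \emph{a priori} depends on the generic, one cannot take $q=p$ (no single fixed $h$ is forced by $p$ to match all the $\dot{f}_{n}$ at once, for otherwise elementarity would collapse the whole problem), and one cannot take a lower bound of a decreasing $\omega$-chain since a general proper forcing has none. The delicate point is therefore to thread the commitment of the values of $h$ through conditions that $M$ can recognize, so that the matching requirements become dense sets belonging to $M$ which are then all activated simultaneously by the one $(M,\mathbb{P})$-generic condition $q$; coordinating this coherence of $h$ across all $\dot{f}_{n}$ while at the same time driving the approximating conditions into every dense set of $M$ is where I expect the real difficulty to lie.
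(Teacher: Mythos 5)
Your first paragraph and the non-meagerness observation are correct and already contain essentially all the content: ``does not destroy category'' means $(\omega^{\omega})^{V}$ stays non-meager in the extension, each set $\{h\mid\left\vert h\cap\dot{f}_{n}\right\vert=\omega\}$ is comeager there, so the countable intersection is comeager and meets $(\omega^{\omega})^{V}$, giving $p\Vdash\exists h\in\check{V}\,\forall f\in\dot{S}\,(\left\vert f\cap h\right\vert=\omega)$. The gap is in what you do next: you declare the extraction of a single ground-model $h$ and a single condition to be the ``real difficulty'' and leave it unresolved, and the scheme you sketch for it is circular. For a \emph{fixed} $h\in V$, the sets $D_{n,\ell}=\{r\mid\exists m\geq\ell\,(r\Vdash\dot{f}_{n}(m)=h(m))\}$ being dense below a condition $p'$ for all $n,\ell$ is literally equivalent to $p'\Vdash\forall n\,(\left\vert h\cap\dot{f}_{n}\right\vert=\omega)$, i.e.\ to the conclusion of the lemma with $q=p'$; so setting up your $(M,\mathbb{P})$-genericity argument presupposes what you are trying to prove. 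But no such machinery is needed. Your existential statement has witnesses ranging over the ground-model \emph{set} $A=(\omega^{\omega})^{V}$, so take any generic $G$ with $p\in G$, pick in $V[G]$ a witness $h\in A$, and apply the forcing theorem: some $q\in G$ forces $\forall f\in\dot{S}\,(\left\vert f\cap\check{h}\right\vert=\omega)$, and since $p,q\in G$ you may take $q\leq p$. That one line closes your proof; properness plays no role in your route, and your worry that ``no single $h$ is forced by $p$ to work'' is beside the point, since you are allowed to pass to $q\leq p$ and the witness only needs to be decided by $q$, not by $p$.

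For comparison, the paper takes a genuinely different path: it first handles finitely many names by splitting $\omega$ into finitely many infinite pieces, then builds a name $\dot{F}$ assigning to each block $A_{n}$ of an infinite interval-free partition a ground-model partial function matching $\dot{g}_{0},\dots,\dot{g}_{n}$ on $A_{n}$, uses properness to confine the range of $\dot{F}$ to a countable ground-model set $M$, and applies the no-eventually-different-real hypothesis a second time to $\dot{F}\in M^{\omega}$ to obtain a ground-model $H$ infinitely often equal to $\dot{F}$, finally gluing $h=\bigcup_{n}H(n)$. Your comeagerness argument, once finished by the standard extraction above, is shorter and does not use properness at all; what it loses is the explicit combinatorial structure (the block decomposition) that the paper reuses in the subsequent lemma about iterations.
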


\begin{proof}
First note that if $\dot{f}_{0},...,\dot{f}_{n}$ are $\mathbb{P}$-names for
reals, then there is $q\leq p$ and $h:\omega\longrightarrow\omega$ such that
$q$ forces $\dot{f}_{i}$ and $h$ have infinite intersection for every $i\leq
n.$ To prove this, we choose a partition $\left\{  A_{0,},...,A_{n}\right\}  $
of $\omega$ into infinite sets and let $\dot{g}_{i}$ be the $\mathbb{P}$-name
of $\dot{f}_{i}\upharpoonright A_{i}.$ Since $\mathbb{P}$ does not destroy
category, there are $q\leq p$ and $h_{i}:A_{i}\longrightarrow\omega$ such that
$q$ forces that $h_{i}$ and $\dot{f}_{i}$ have infinite intersection. Clearly
$q$ and $h=\bigcup h_{i}$ have the desired properties.

\qquad\ \ 

To prove the lemma, let $\dot{S}=\left\{  \dot{g}_{n}\mid n\in\omega\right\}
$ and fix a partition $\left\{  A_{n}\mid n\in\omega\right\}  $ of $\omega$ in
infinite sets. By the previous remark, we know there is a $\mathbb{P}$-name
$\dot{F}$ such that $p\Vdash``\dot{F}:\omega\longrightarrow Partial\left(
\omega^{\omega}\right)  \cap V\textquotedblright$ such that every $\dot
{F}\left(  n\right)  $ is forced to be a function with domain $A_{n}$ and
intersects infinitely $\dot{g}_{0}\upharpoonright A_{n},...,\dot{g}%
_{n}\upharpoonright A_{n}.$ Since $\mathbb{P}$ is a proper forcing, we can
find $q\leq p$ and $M\in V$ a countable subset of $Partial\left(
\omega^{\omega}\right)  $ such that $q\Vdash``\dot{F}:\omega\longrightarrow
M\textquotedblright.$ We know $\mathbb{P}$ does not destroy category and $M$
is countable, so there must be $r\leq q$ and $H:\omega\longrightarrow M$ such
that $r\Vdash``\exists^{\infty}n(\dot{F}\left(  n\right)  =H\left(  n\right)
)\textquotedblright.$ We may assume that the domain of $H\left(  n\right)  $
is $A_{n}$ for every $n\in\omega.$ Finally, we define $h=\bigcup
\limits_{n\in\omega}H\left(  n\right)  $ and it is easy to see that $r$ forces
that $h$ has infinite intersection with every element of $\dot{S}.$
\end{proof}

\qquad\ \ 

We will also need the following lemma.

\begin{lemma}
Let $\mathbb{P}$ be a proper forcing that does not destroy category,
$G\subseteq\mathbb{P}$ a generic filter and $X$ any set. Then there are
$\overline{M}=\left\{  M_{n}\mid n\in\omega\right\}  \subseteq V,$ $P=\left\{
A_{n}\mid n\in\omega\right\}  \subseteq V$ and $h:\omega\longrightarrow\omega$
in $V$ with the following properties:

\qquad\ \ 

\begin{enumerate}
\item Each $M_{n}$ is a countable elementary submodel of $H\left(
\kappa\right)  $ for some big enough $\kappa$ (in $V$).

\item $X\in M_{0}$ and $M_{n}\in M_{n+1}$ for every $n\in\omega.$

\item $P$ is a family of infinite pairwise disjoint subsets of $\omega$.

\item $P,\overline{M}\in V\left[  G\right]  $ (while $\overline{M}$ is a
subset of $V,$ in general it will not be a ground model set, the same is true
for $P$).

\item $G\cap M_{n}$ is a $\left(  M_{n},\mathbb{P}\right)  $-generic filter
for every $n\in\omega.$

\item $h\upharpoonright A_{n}\in M_{n+1}$ and if $f\in M_{n}\left[  G\right]
$ then $h\upharpoonright A_{n}\cap f$ is infinite.
\end{enumerate}
\end{lemma}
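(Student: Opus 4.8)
The plan is to build the three objects by a recursion of length $\omega$ carried out inside $V[G]$, threading an $\in$-chain of countable elementary submodels whose intersections with $G$ are generic, and invoking the previous lemma once at each level to manufacture the ground model pieces of $h$. First I would fix a large regular $\kappa$ and fix, in $V$, a partition $\langle B_n\mid n\in\omega\rangle$ of $\omega$ into infinite sets; the sole purpose of the $B_n$ is to keep the domains $A_n$ pairwise disjoint, so throughout the construction I demand $A_n\subseteq B_n$. All of $X,\mathbb{P},\langle B_n\rangle$ and the data produced so far will be placed into $M_0$, and more generally into each successive model.

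The recursion step is where the real work lies. Suppose $M_0\in\cdots\in M_n$ have been chosen, each a countable $M_i\prec H(\kappa)$ with $M_i\in V$, with $G\cap M_i$ an $(M_i,\mathbb{P})$-generic filter, and with every previously produced $A_i,h_i$ lying in $M_{i+1}$. Since $\mathbb{P}$ is proper, there are enough countable $N\prec H(\kappa)$ in $V$ with $G\cap N$ generic to let me choose $M_{n+1}\prec H(\kappa)$ with $M_n\in M_{n+1}$, with $G\cap M_{n+1}$ generic, and with $M_{n+1}$ containing $M_n$ together with a name $\dot S_n$ (with parameter $\check M_n$) for $M_n[G]\cap\omega^\omega$; this name lies in $M_{n+1}$ precisely because $M_n\in M_{n+1}$. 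Now I apply the previous lemma, relativised to $B_n$ (i.e.\ using a partition of $B_n$ in its proof): the set $D_n$ of conditions $q$ for which there is some $u\in V$ with $\mathrm{dom}(u)\subseteq B_n$ and $q\Vdash$ ``$|f\cap u|=\omega$ for every $f\in\dot S_n$'' is dense, and it is definable from $\dot S_n,B_n,\mathbb{P}$, hence $D_n\in M_{n+1}$. Since $G\cap M_{n+1}$ is generic it meets $D_n$; by elementarity the witnessing $u$, which I call $h_n$, may be taken in $M_{n+1}$, and I set $A_n=\mathrm{dom}(h_n)\subseteq B_n$. Because the condition witnessing $h_n\in D_n$ lies in $G$, I genuinely get, in $V[G]$, that $h_n$ meets every element of $\dot S_n^G=M_n[G]\cap\omega^\omega$ infinitely. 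Thus $h_n\in M_{n+1}$, $A_n\subseteq B_n$ is infinite, and $h_n$ performs the required intersecting at level $n$.

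Finally I set $h=\bigcup_n h_n$ (and $h=0$ off $\bigcup_n A_n$), $P=\{A_n\mid n\in\omega\}$ and $\overline M=\{M_n\mid n\in\omega\}$. Then (1),(2),(5) hold by construction, (3) holds since the $A_n\subseteq B_n$ are pairwise disjoint and infinite, and (4) holds because $P$ and $\overline M$ were assembled using $G$. For (6), by construction $h\upharpoonright A_n=h_n\in M_{n+1}$, and the preceding paragraph gives that $h_n=h\upharpoonright A_n$ meets every $f\in M_n[G]$ infinitely; note that each piece $h\upharpoonright A_n$ is a ground model function, indeed an element of $M_{n+1}$, which is exactly what the intended (big-lemma style) application uses. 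The hard part will be the coordination inside the recursion step: one needs a \emph{single} ground model function meeting the whole countable set $M_n[G]\cap\omega^\omega$ at once — this is precisely where the previous lemma, which packages ``does not destroy category'' for an entire countable name, is indispensable — and one must then capture that function inside the next model $M_{n+1}$, which I arrange by reflecting a name $\dot S_n$ for $M_n[G]\cap\omega^\omega$ into $M_{n+1}$ and exploiting the genericity of $G\cap M_{n+1}$ to extract it.
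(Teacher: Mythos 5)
There is a genuine gap at the very last step: the lemma requires a single function $h:\omega\longrightarrow\omega$ lying in $V$, but the $h=\bigcup_{n}h_{n}$ you produce is not a ground model function. Each piece $h_{n}$ is indeed in $V$ (even in $M_{n+1}$), but the sequence $\left\langle h_{n}\mid n\in\omega\right\rangle$ is chosen by a recursion carried out in $V\left[G\right]$ --- which $h_{n}$ you obtain depends on which condition of $D_{n}$ happens to lie in $G$ and on the generically chosen model $M_{n+1}$ --- so its union is in general only an element of $V\left[G\right]$. This is not cosmetic: in the intended application (that $\mathbb{P}\ast\mathbb{P}_{cat}$ does not destroy category) one must exhibit a single real of $V$ that the new real meets infinitely often, and a real of $V\left[G\right]$ is useless for that. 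Your parenthetical remark that each piece $h\upharpoonright A_{n}$ being a ground model function ``is exactly what the intended application uses'' is where the error hides: the application also needs the pieces to assemble into one function of $V$.

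The paper's proof contains essentially your first half (packaged as a claim about a sequence $\left\langle\left(N_{i},p_{i},h_{i}\right)\mid i\in\omega\right\rangle$, established by a maximality argument rather than your direct recursion --- either works), and then a second half that you are missing: it treats the generically chosen pieces as a name $\dot{F}$ with $\dot{F}\left(n\right)=\dot{h}_{n}$ for a function from $\omega$ into the ground model partial functions, uses properness to confine the range of $\dot{F}$ to a countable ground model set, and then uses once more that $\mathbb{P}$ does not destroy category to find a ground model $H$ with $\dot{F}\left(n\right)=H\left(n\right)$ for infinitely many $n$. Setting $h=\bigcup_{n}H\left(n\right)\in V$ and re-indexing along the infinite set $\left\{z_{n}\mid n\in\omega\right\}$ where $\dot{F}$ and $H$ agree (taking $M_{n}=N_{z_{n}}$ and $A_{n}=B_{z_{n}}$) yields the statement; this re-indexing is precisely why the lemma only asserts $\overline{M},P\in V\left[G\right]$ while insisting that $h\in V$. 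Without this extra step, or some substitute for it, your argument does not prove the stated lemma.
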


\begin{proof}
Let $r$ be any condition of $\mathbb{P}.$ We will prove that there is an
extension of $r$ that forces the existence of the desired objects. Let
$\left\{  B_{n}\mid n\in\omega\right\}  $ be any definable partition of
$\omega$ into infinite sets.

\qquad\ \ 

\begin{claim}
If $G\subseteq\mathbb{P}$ is a generic filter with $r\in G$ then (in $V\left[
G\right]  $) there is a sequence $\left\langle \left(  N_{i},p_{i}%
,h_{i}\right)  \mid i\in\omega\right\rangle $ such that for every $i\in\omega$
the following holds:

\qquad\ 

\begin{enumerate}
\item $N_{i}\in V$ is a countable elementary submodel of $H\left(
\kappa\right)  $ (the $H\left(  \kappa\right)  $ of the ground model).

\item $r,X\in N_{0}$ and $N_{i}\in N_{i+1}$.

\item $p_{0}\leq r$ and \thinspace$\left\langle p_{k}\right\rangle
_{k\in\omega}$ is a decreasing sequence contained in $G.$

\item $p_{i}$ is $\left(  N_{i},\mathbb{P}\right)  $-generic.

\item $h_{i}:B_{i}\longrightarrow\omega$ is in $N_{i+1}$.

\item $p_{i}\Vdash``\forall f\in N_{i}[\dot{G}]\cap\omega^{\omega}\left(
\left\vert f\cap h_{i}\right\vert =\omega\right)  \textquotedblright.$
\end{enumerate}
\end{claim}

\qquad\ \ 

Assume the claim is false, so we can find $n\in\omega$ and a sequence
$R=\left\langle \left(  N_{i},p_{i},h_{i}\right)  \mid i\leq n\right\rangle $
that is maximal with the previous properties (the point $5$ is only demanded
for $i<n$). Let $p\in G$ be a condition forcing $R$ has all these features
(including the maximality). Back in $V,$ let $M$ be a countable elementary
submodel such that $\mathbb{P},p,R\in M.$ By the previous lemma, there is an
$\left(  M,\mathbb{P}\right)  $-generic condition $q\leq p$ and $g:B_{n+1}%
\longrightarrow\omega$ such that $g$ is forced by $q$ to intersect infinitely
every real of $M\left[  G\right]  .$ In this way, $q$ forces that $R$ could be
extended by adding $\left(  M,q,g\right)  $ but this is a contradiction since
$q\leq p$ so it forces $R$ was maximal. This finishes the proof of the claim.

\qquad\ \ 

Let $\langle(\dot{N}_{i},\dot{p}_{i},\dot{h}_{i})\mid i\in\omega\rangle$ be
the name of a sequence as in the claim. We can now define a name for a
function $\dot{F}$ from $\omega$ to $Partial\left(  \omega^{\omega}\right)
\cap V$ such that $r\Vdash``\forall n(\dot{F}\left(  n\right)  =\dot{h}%
_{n})\textquotedblright.$ As in the previous lemma, we can find a condition
$p\leq r$ and $H:\omega\longrightarrow Partial\left(  \omega^{\omega}\right)
$ such that $p\Vdash``\exists^{\infty}n(\dot{F}\left(  n\right)  =H\left(
n\right)  )\textquotedblright.$ We may assume the domain of $H\left(
n\right)  $ is $B_{n}$ and let $h=\bigcup\limits_{n\in\omega}H\left(
n\right)  .$ Let $\dot{Z}=\left\{  \dot{z}_{n}\mid n\in\omega\right\}  $ be a
name for a subset of $\omega$ such that $p\Vdash``\forall n\left(  F\left(
\dot{z}_{n}\right)  =H\left(  \dot{z}_{n}\right)  \right)  \textquotedblright%
.$ If $G\subseteq\mathbb{P}$ is a generic filter such that $p\in G$ then we
define $M_{n}=N_{\dot{z}_{n}\left[  G\right]  }$ and $A_{n}=B_{\dot{z}%
_{n}\left[  G\right]  },$ it is clear that these sets have the desired properties.
\end{proof}

\qquad\ \ 

From this we can conclude the following,

\qquad\ \ 

\begin{corollary}
If $\mathbb{P}$ is a proper forcing that does not destroy category then
$\mathbb{P\ast P}_{cat}$ does not destroy category.
\end{corollary}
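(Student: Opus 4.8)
The plan is to invoke the Proposition characterising destruction of category (a forcing destroys category if and only if it adds an eventually different real) and thereby reduce the statement to showing that $\mathbb{P}\ast\mathbb{P}_{cat}$ adds no real eventually different from all of $V$. Concretely, I would fix an arbitrary $\mathbb{P}\ast\mathbb{P}_{cat}$-name $\dot g$ for an element of $\omega^{\omega}$ together with a condition $(p,\dot q_{0})$, and produce an extension together with a single $h\in V\cap\omega^{\omega}$ forcing $\left\vert \dot g\cap h\right\vert =\omega$. Since $\dot g$ and $(p,\dot q_{0})$ are arbitrary, this density statement shows that no condition can force $\dot g$ to be eventually different from every ground model real, which is exactly the failure of ``adds an eventually different real,'' hence the failure of destroying category.

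First I would pass to a $\mathbb{P}$-extension $V[G]$ with $p\in G$, where $\dot g$ becomes a $\mathbb{P}_{cat}$-name $\dot g_{G}$ and $\dot q_{0}[G]\in\mathbb{P}_{cat}$. Then I apply the last lemma to $\mathbb{P}$ (which is proper and does not destroy category), taking the parameter set $X$ to contain all the relevant data, in particular $\mathbb{P}_{cat}$, $\dot g$ and $\dot q_{0}$. This yields, in $V[G]$, an $\in$-chain $\overline{M}=\left\{  M_{n}\mid n\in\omega\right\}  \subseteq V$, a pairwise disjoint family $P=\left\{  A_{n}\mid n\in\omega\right\}  \subseteq V$, and a function $h\in V\cap\omega^{\omega}$ such that each $G\cap M_{n}$ is $(M_{n},\mathbb{P})$-generic, $h\upharpoonright A_{n}\in M_{n+1}$, and $f\cap\left(  h\upharpoonright A_{n}\right)  $ is infinite for every $f\in M_{n}[G]\cap\omega^{\omega}$.

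The key step is to transport this configuration into $V[G]$ so that the big technical lemma applies there. I would set $M_{n}^{\ast}=M_{n}[G]$; since $G\cap M_{n}$ is $(M_{n},\mathbb{P})$-generic and $M_{n}\in M_{n+1}$, the sequence $\langle M_{n}^{\ast}\mid n\in\omega\rangle$ is an $\in$-chain of countable elementary submodels of $H\left(  (2^{\mathfrak{c}})^{+++}\right)  ^{V[G]}$ (choosing $\kappa$ appropriately in the last lemma). The three hypotheses of the technical lemma then hold verbatim in $V[G]$ with this chain, the same $h$, and the same $\left\{  A_{n}\right\}  $: we have $\dot q_{0}[G],\dot g_{G}\in M_{0}^{\ast}$ because the witnessing names were placed in $X\subseteq M_{0}$, while $h\upharpoonright A_{n}\in M_{n+1}\subseteq M_{n+1}^{\ast}$, and the crucial clause ``if $f\in M_{n}^{\ast}\cap\omega^{\omega}$ then $f\cap\left(  h\upharpoonright A_{n}\right)  $ is infinite'' is precisely property $(6)$ of the last lemma. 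Applying the technical lemma in $V[G]$ to the name $\dot g_{G}$ therefore yields a condition $q\leq\dot q_{0}[G]$ in $\mathbb{P}_{cat}$ with $q\Vdash_{\mathbb{P}_{cat}}\left\vert h\cap\dot g_{G}\right\vert =\omega$.

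Finally I would pull this back through the two-step iteration: choosing a $\mathbb{P}$-name $\dot q$ for $q$ and a condition $p^{\prime}\leq p$ in $G$ forcing the relevant facts, I obtain $(p^{\prime},\dot q)\leq(p,\dot q_{0})$ with $(p^{\prime},\dot q)\Vdash\left\vert \dot g\cap h\right\vert =\omega$ where $h\in V$. This is the required density statement. I expect the main obstacle to be the bookkeeping of this transport step: verifying that the interpreted models $M_{n}[G]$ are genuinely elementary in the $V[G]$-computation of $H\left(  (2^{\mathfrak{c}})^{+++}\right)  $, and that the real $h$ supplied by the last lemma (which lives in $V$, not merely in $V[G]$) correctly plays the role of the ``ground model real'' demanded by the technical lemma, so that the witnessed intersection is with an honest element of $V$ rather than of $V[G]$.
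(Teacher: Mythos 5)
Your proposal is correct and follows essentially the same route as the paper: pass to the $\mathbb{P}$-extension, invoke the preceding lemma to obtain the chain $\overline{M}$, the family $\left\{ A_{n}\mid n\in\omega\right\}$ and the ground-model real $h$, and then apply the technical lemma for $\mathbb{P}_{cat}$ inside $V\left[ G\right]$ to extend $\dot{p}\left[ G\right]$ to a condition forcing $\left\vert h\cap\dot{g}\right\vert =\omega$. The paper's proof is just a terser version of this; your extra care about interpreting the models as $M_{n}\left[ G\right]$ and about $h$ genuinely lying in $V$ is exactly the bookkeeping the paper leaves implicit.
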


\begin{proof}
Let $\dot{p}$ be a $\mathbb{P}$-name for a condition of $\mathbb{P}_{cat}$ and
$\dot{f}$ a $\mathbb{P}$-name for a $\mathbb{P}_{cat}$-name for a real. Let
$G\subseteq\mathbb{P}$ be a generic filter. By the previous lemma, there are
$h:\omega\longrightarrow\omega$ in $V,$ an $\in$-chain of elementary submodels
$\left\{  M_{n}\left[  G\right]  \mid n\in\omega\right\}  $ and a pairwise
disjoint family $\left\{  A_{n}\mid n\in\omega\right\}  $ of infinite subsets
of $\omega$ such that $\dot{p}\left[  G\right]  ,\dot{f}\left[  G\right]  \in
M_{0}\left[  G\right]  $ and $h\upharpoonright A_{n}\in M_{n+1}\left[
G\right]  $ has infinite intersection with every real in $M_{n}\left[
G\right]  .$ Finally, we can extend $\dot{p}\left[  G\right]  $ to a condition
forcing that $\dot{f}\left[  G\right]  $ and $h$ will have infinite intersection.
\end{proof}

\qquad\ \ \ 

As commented before, the iteration of forcings that does not destroy category
may destroy category, but the following preservation result of Dilip Raghavan
shows this can only happen at the successor steps of the iteration:

\qquad\ \ \qquad\ \ 

\begin{proposition}
[Raghavan \cite{MADnessinSetTheory}]Let $\delta$ be a limit ordinal and
$\langle\mathbb{P}_{\alpha},\mathbb{\dot{Q}}_{\alpha}\mid\alpha<\delta\rangle$
a countable support iteration of proper forcings. If $\mathbb{P}_{\alpha}$
does not destroy category for every $\alpha<\delta$ then $\mathbb{P}_{\delta}$
does not destroy category.
\end{proposition}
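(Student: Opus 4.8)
The plan is to reduce the statement to a matching property and then run a countable-support fusion. By the earlier Proposition identifying destruction of category with the addition of an eventually different real (together with the Talagrand--Jalali-Naini description of meager sets), it suffices to prove that $\mathbb{P}_\delta$ does not add an eventually different real; concretely, that for every $p\in\mathbb{P}_\delta$ and every name $\dot g$ for an element of $\omega^\omega$ there are $q\le p$ and $h\in V\cap\omega^\omega$ with $q\Vdash``|\dot g\cap h|=\omega\textquotedblright$. I would split the argument according to the cofinality of $\delta$.

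If $\mathrm{cf}(\delta)>\omega$, then $\mathbb{P}_\delta$ is the direct limit of the $\mathbb{P}_\alpha$ ($\alpha<\delta$), because every countable support is bounded below $\delta$. Fixing a countable $N\prec H(\chi)$ with $p,\dot g\in N$ and letting $\alpha=\sup(N\cap\delta)<\delta$, an $(N,\mathbb{P}_\delta)$-generic condition $q\le p$ forces $\dot g\in N[\dot G]\cap\omega^\omega\subseteq V[\dot G_\alpha]$, so below $q$ the name $\dot g$ is (equivalent to) a $\mathbb{P}_\alpha$-name. Since $\mathbb{P}_\alpha$ does not destroy category by hypothesis, I would extract the required $h$ together with a matching condition at level $\alpha$ and then pad it with $q\upharpoonright[\alpha,\delta)$.

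The case $\mathrm{cf}(\delta)=\omega$ is the heart of the matter. Here I would fix an increasing cofinal sequence $\langle\delta_n\mid n\in\omega\rangle$ with $\delta_0=0$ and build simultaneously an $\in$-chain of countable elementary submodels $M_0\in M_1\in\cdots$ containing $p,\dot g$ and the iteration, a countable-support fusion sequence $p=q_0\ge q_1\ge\cdots$ with $q_{n+1}\upharpoonright\delta_n=q_n\upharpoonright\delta_n$ and each $q_n\upharpoonright\delta_n$ an $(M_n,\mathbb{P}_{\delta_n})$-generic condition, and a pairwise disjoint family $\{A_n\mid n\in\omega\}$ with ground-model pieces $h_n\colon A_n\to\omega$. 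The pieces would be produced exactly as in the Lemma that, for a proper forcing not destroying category, yields a chain $\overline M$, a pairwise disjoint family $P$ and a real $h$ meeting every real of $M_n[G]$ infinitely; applying that Lemma to $\mathbb{P}_{\delta_n}$ (which does not destroy category by hypothesis) supplies, at stage $n$, a piece $h_n$ together with the genericity needed so that the single-step matching Lemma forces $\dot g$ to agree with $h_n$ on $A_n$ beyond any prescribed bound. Setting $h=\bigcup_n h_n$ and letting $q$ be the fusion limit (determined by $q\upharpoonright\delta_n=q_n\upharpoonright\delta_n$), I would conclude $q\Vdash``|\dot g\cap h|=\omega\textquotedblright$.

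The main obstacle is the bookkeeping of this fusion inside the countable-support iteration. One must guarantee that the fusion limit $q$ is genuinely a condition of $\mathbb{P}_\delta$, which is precisely the properness-preservation machinery for countable-support iterations and requires the genericity of each $q_n\upharpoonright\delta_n$ over $M_n$ to be transmitted through the tail quotients $\mathbb{P}_{[\delta_n,\delta_{n+1})}$; and one must ensure that the local matches on the disjoint $A_n$ really accumulate to an infinite match, i.e. that $\dot g$ is caught by the model chain so that at cofinally many $n$ the generic condition can force an agreement with $h_n$ on $A_n$ above the previous matches. Verifying that the relevant quotient forcings continue not to destroy category, so that the single-step Lemma remains applicable at each stage, is the technical crux, and this is exactly where the hypothesis on \emph{all} initial segments $\mathbb{P}_\alpha$ (rather than on the iterands $\dot{\mathbb{Q}}_\alpha$ alone) is used.
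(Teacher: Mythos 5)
The paper does not actually prove this proposition; it is imported from Raghavan, so there is no in-paper argument to measure you against. Your reduction to ``$\mathbb{P}_{\delta}$ adds no eventually different real'' and your treatment of the case $\mathrm{cf}\left(  \delta\right)  >\omega$ are fine. Note also that the worry you single out at the end --- whether the tail quotients keep failing to destroy category --- is in fact the easy part: if some $\mathbb{P}_{[\alpha,\beta)}$ made $\omega^{\omega}\cap V\left[  G_{\alpha}\right]  $ meager it would a fortiori make the smaller set $\omega^{\omega}\cap V$ meager, contradicting the hypothesis on $\mathbb{P}_{\beta}.$

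The genuine gap is inside the $\mathrm{cf}\left(  \delta\right)  =\omega$ fusion. At stage $n$ you must produce $q_{n+1}\leq q_{n}$ with $q_{n+1}\upharpoonright\delta_{n}=q_{n}\upharpoonright\delta_{n}$ forcing $\dot{g}\left(  l\right)  =h_{n}\left(  l\right)  $ for some $l\in A_{n}.$ But $\dot{g}$ is a $\mathbb{P}_{\delta}$-name: deciding $\dot{g}\left(  l\right)  $ in general requires strengthening the condition on coordinates below $\delta_{n},$ which the fusion has frozen. What you actually need is that $q_{n}\upharpoonright\delta_{n}$ forces, over $V\left[  G_{\delta_{n}}\right]  ,$ that the quotient of $\dot{g}$ is not eventually different from $h_{n}$ --- and that quotient is the entire remainder $\mathbb{P}_{[\delta_{n},\delta)}$ of the iteration, i.e.\ precisely the object whose behaviour the theorem asserts; as written the step is circular. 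The known proofs (Shelah's preservation theorem for non-meagerness of the ground-model reals, as in Bartoszy\'{n}ski--Judah, which is what Raghavan adapts) escape this by proving a \emph{stronger} statement by induction on $\alpha\leq\delta$: for a fixed countable $N\preceq H\left(  \chi\right)  $ and a single $h\in V$ chosen in advance to match every real of $N$ infinitely often, every $p\in N\cap\mathbb{P}_{\alpha}$ has an $\left(  N,\mathbb{P}_{\alpha}\right)  $-generic extension forcing $h$ to match every real of $N\left[  \dot{G}_{\alpha}\right]  .$ The limit step then fuses while carrying finite ground-model \emph{interpretations} $g_{i}^{n}$ of the finitely many names $\dot{g}_{i}$ handled so far: $h$ is required to match the interpretations (which live in $V,$ so no quotient is needed), and one only forces $\dot{g}_{i}$ to agree with its current interpretation long enough to inherit one more match before updating the interpretation. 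This interpretation device, together with the strengthened ``model version'' of the induction hypothesis that makes the successor and limit steps compose, is the actual content of the theorem and is absent from your sketch; without it the local matches on the $A_{n}$ cannot be forced while respecting the fusion, so the accumulation you describe does not go through.
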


\qquad\ \ \ 

With the aid of the previous preservation theorem we conclude the following:

\qquad\ \ \ 

\begin{corollary}
The countable support iteration of $\mathbb{P}_{cat}$ does not destroy category.
\end{corollary}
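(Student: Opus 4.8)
The plan is to prove this by transfinite induction on the length of the iteration, using the two preservation results already in hand. Fix a countable support iteration $\langle\mathbb{P}_{\alpha},\dot{\mathbb{Q}}_{\alpha}\mid\alpha<\gamma\rangle$ in which each $\dot{\mathbb{Q}}_{\alpha}$ is (a name for) $\mathbb{P}_{cat}$, and prove by induction on $\gamma$ the conjunction of two statements: that $\mathbb{P}_{\gamma}$ is proper and that $\mathbb{P}_{\gamma}$ does not destroy category. Carrying properness along in the induction is essential, since both of the preservation lemmas to be invoked take properness as a hypothesis; it is guaranteed throughout because $\mathbb{P}_{cat}$ is proper (as shown above) and countable support iterations of proper forcings are proper, by Shelah's preservation theorem.

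The base case $\gamma=0$ is trivial. For the successor step, I would suppose that $\mathbb{P}_{\alpha}$ is proper and does not destroy category. Since $\mathbb{P}_{\alpha+1}=\mathbb{P}_{\alpha}\ast\mathbb{P}_{cat}$, I would apply the corollary on two-step iterations (with $\mathbb{P}=\mathbb{P}_{\alpha}$) to conclude at once that $\mathbb{P}_{\alpha+1}$ does not destroy category, while properness of $\mathbb{P}_{\alpha+1}$ is automatic from the properness of each factor.

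For the limit step, let $\gamma$ be a limit ordinal and assume, by the induction hypothesis, that $\mathbb{P}_{\alpha}$ does not destroy category for every $\alpha<\gamma$ and that each such $\mathbb{P}_{\alpha}$ is proper. Then the hypotheses of Raghavan's preservation proposition are met verbatim, and it yields that $\mathbb{P}_{\gamma}$ does not destroy category. This completes the induction, and in particular the full countable support iteration of $\mathbb{P}_{cat}$ does not destroy category.

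I do not expect a genuine obstacle here: the substantive work has been front-loaded into the two-step corollary and Raghavan's limit theorem, so the corollary is essentially a matter of assembling them. The only point that needs care is the bookkeeping of the properness hypothesis at every stage, so that both preservation results remain applicable; the limit case in particular relies on ``does not destroy category'' being available at all earlier stages simultaneously, which is exactly what the inductive formulation (proving properness and non-destruction of category together for every initial segment) supplies.
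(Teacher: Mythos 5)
Your argument is correct and is exactly the one the paper intends: the corollary is stated immediately after Raghavan's limit-preservation proposition precisely so that it follows by induction, with successor steps handled by the two-step corollary for $\mathbb{P}\ast\mathbb{P}_{cat}$ and limit steps by Raghavan's theorem, properness being maintained by Shelah's preservation theorem for countable support iterations. Your explicit bookkeeping of the properness hypothesis at every stage is the only detail the paper leaves unsaid, and you have it right.
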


\qquad\ \ 

Putting all the pieces together, we can finally prove our theorem:

\qquad\ \ \qquad\ \ 

\begin{proposition}
If the existence of an inaccessible cardinal is consistent, then so is the
following statement: \textsf{non}$\left(  \mathcal{M}\right)  =\omega_{1}$ and
every $\mathcal{IE}$-Luzin set is meager.
\end{proposition}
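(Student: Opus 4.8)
The plan is to force with a countable support iteration of $\mathbb{P}_{cat}$, of length an inaccessible cardinal, over a model of \textsf{GCH}. Work in a ground model $V\models\mathsf{GCH}$ containing an inaccessible cardinal $\kappa$, and let $\langle\mathbb{P}_{\alpha},\dot{\mathbb{Q}}_{\alpha}\mid\alpha<\kappa\rangle$ be the countable support iteration in which each $\dot{\mathbb{Q}}_{\alpha}$ is a name for the forcing $\mathbb{P}_{cat}$ as computed in $V^{\mathbb{P}_{\alpha}}$; write $G_{\alpha}=G\cap\mathbb{P}_{\alpha}$ for a $\mathbb{P}_{\kappa}$-generic $G$. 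First I would record the basic preservation facts. Each iterand is proper, so $\mathbb{P}_{\kappa}$ is proper and preserves $\omega_{1}$. Since $\kappa$ is inaccessible, at every stage $\alpha<\kappa$ we have $\mathfrak{c}<\kappa$ and hence $|\mathbb{P}_{cat}^{V[G_{\alpha}]}|<\kappa$ (recall $\mathbb{P}_{cat}$ has size roughly $(2^{\mathfrak{c}})^{++}$); the standard argument for countable support iterations of inaccessible length then yields that $\mathbb{P}_{\kappa}$ has the $\kappa$-chain condition, so $\kappa$ is preserved and $\mathfrak{c}=\kappa$ in $V[G]$.

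Next I would verify \textsf{non}$\left(\mathcal{M}\right)=\omega_{1}$. By the Corollary that the countable support iteration of $\mathbb{P}_{cat}$ does not destroy category (which rests on the two-step Corollary and Raghavan's limit-stage preservation theorem), the set $\omega^{\omega}\cap V$ is not meager in $V[G]$. As $V\models\mathsf{CH}$, this set has size $\omega_{1}$, and $\omega_{1}$ is preserved, so it remains a non-meager set of size $\omega_{1}$ in $V[G]$. Since $\textsf{non}\left(\mathcal{M}\right)\geq\omega_{1}$ always, we conclude $\textsf{non}\left(\mathcal{M}\right)=\omega_{1}$.

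The heart of the argument is that every $\mathcal{IE}$-Luzin set of $V[G]$ is meager. Suppose towards a contradiction that $X=\left\{f_{\alpha}\mid\alpha<\omega_{1}\right\}$ is a non-meager $\mathcal{IE}$-Luzin set in $V[G]$. Using the $\kappa$-chain condition together with the regularity of $\kappa>\omega_{1}$, a name for the sequence $\langle f_{\alpha}\mid\alpha<\omega_{1}\rangle$ can be supported below some stage $\beta<\kappa$, so $X\in V[G_{\beta}]$; moreover $X$ is already non-meager there, for if it were covered by a meager Borel set $B\in V[G_{\beta}]$ then $B$ would remain meager in $V[G]$ (meagerness of a coded Borel set is absolute), contradicting the non-meagerness of $X$. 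Now I would apply the lemma that $\mathbb{P}_{cat}$ adds a function almost disjoint with uncountably many elements of any non-meager set: at stage $\beta$ the generic real $f_{gen}$ of $\dot{\mathbb{Q}}_{\beta}=\mathbb{P}_{cat}$ satisfies $\left\vert f_{gen}\cap f_{\alpha}\right\vert<\omega$ for uncountably many $\alpha$. Hence uncountably many $f_{\alpha}$ lie in $ED\left(f_{gen}\right)$, so $X\cap ED\left(f_{gen}\right)$ is uncountable; since $f_{gen}\in V[G_{\beta+1}]\subseteq V[G]$ we have $ED\left(f_{gen}\right)\in\mathcal{IE}$, and this contradicts $X$ being $\mathcal{IE}$-Luzin.

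The main obstacle is the reflection step: showing that an arbitrary $\omega_{1}$-sized set of reals of the final model is captured by an intermediate model $V[G_{\beta}]$ while remaining non-meager there. This is precisely where inaccessibility of $\kappa$ is used, since it is what secures the $\kappa$-chain condition for a countable support iteration whose iterands $\mathbb{P}_{cat}$ are large, and thereby bounds the support of a name for $X$ below $\kappa$. Once $X$ has been located in some $V[G_{\beta}]$ as a non-meager set, the destruction of its $\mathcal{IE}$-Luzin property is immediate from the \emph{single} generic real $f_{gen}$, so no separate bookkeeping enumerating the $\mathcal{IE}$-Luzin sets is needed; the preservation of category for the whole iteration then guarantees that the small witness to $\textsf{non}\left(\mathcal{M}\right)=\omega_{1}$ survives.
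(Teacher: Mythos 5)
Your proposal is correct and follows essentially the same route as the paper: a countable support iteration of $\mathbb{P}_{cat}$ of inaccessible length, the $\kappa$-chain condition from inaccessibility, preservation of category for \textsf{non}$\left(\mathcal{M}\right)=\omega_{1}$, and the lemma that $\mathbb{P}_{cat}$ adds a real almost disjoint from uncountably many members of any non-meager set. The paper compresses the reflection step into ``the result then follows by the previous results''; your write-up supplies exactly the intended details (capturing the $\omega_{1}$-sized set at a stage $\beta<\kappa$ and noting its non-meagerness reflects downward).
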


\begin{proof}
Let $\mu$ be an inaccessible cardinal. We perform a countable support
iteration $\{\mathbb{P}_{\alpha},\mathbb{\dot{Q}}_{\alpha}\mid\alpha<\mu\}$ in
which $\mathbb{\dot{Q}}_{\alpha}$ is forced by $\mathbb{P}_{\alpha}$ to be the
$\mathbb{P}_{cat}$ forcing. It is easy to see that if $\alpha<\mu$ then
$\mathbb{P}_{\alpha}$ has size less than $\mu$ so it has the $\mu$-chain
condition and then $\mathbb{P}_{\mu}$ has the $\mu$-chain condition (see
\cite{IteratedForcing}). The result then follows by the previous results.
\end{proof}

\chapter{Remarks on a conjecture of Hru\v{s}\'{a}k}

Based on his Category Dichotomy for Borel ideals
(\cite{CombinatoricsofFiltersandIdeals}), Hru\v{s}\'{a}k conjectured the following:

\begin{conjecture}
[Hru\v{s}\'{a}k]If $\left(  A,B,\longrightarrow\right)  $ is a Borel invariant
then either

$\left\Vert A,B,\longrightarrow\right\Vert \leq$\ \textsf{non}$\left(
\mathcal{M}\right)  $ or \textsf{cov}$\left(  \mathcal{M}\right)  \leq$
$\left\Vert A,B,\longrightarrow\right\Vert .$\ \qquad
\end{conjecture}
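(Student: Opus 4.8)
The plan is first to acknowledge that this is a \emph{conjecture} rather than a theorem: as the introduction explains, the full statement is still open, and it is even \emph{false} once one allows ``Borel invariants of $\omega_{1}^{\omega}$''. So what I would realistically aim for is a proof of the dichotomy for a large, well-behaved class of Borel invariants, together with a clear understanding of where the argument must break down in general.

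The first step is to reduce the two alternatives to Galois--Tukey morphisms into and out of two canonical invariants. By Bartoszy\'{n}ski's characterizations, \textsf{non}$\left(  \mathcal{M}\right)  $ is the evaluation of the ``infinitely equal'' relation $\left(  \omega^{\omega},\omega^{\omega},=^{\infty}\right)  $ (where $f=^{\infty}g$ means $f\left(  n\right)  =g\left(  n\right)  $ for infinitely many $n$) and \textsf{cov}$\left(  \mathcal{M}\right)  $ is the evaluation of the ``eventually different'' relation $\left(  \omega^{\omega},\omega^{\omega},\neq^{\ast}\right)  $; these are precisely the relations attached to the ideal $\mathcal{IE}$ studied in the second chapter. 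With this in hand, $\left\Vert A,B,\longrightarrow\right\Vert \leq$ \textsf{non}$\left(  \mathcal{M}\right)  $ would follow from a morphism from $\left(  A,B,\longrightarrow\right)  $ to $=^{\infty}$, and \textsf{cov}$\left(  \mathcal{M}\right)  \leq \left\Vert A,B,\longrightarrow\right\Vert $ would follow from a morphism from $\neq^{\ast}$ into $\left(  A,B,\longrightarrow\right)  $. Thus the conjecture becomes the clean statement that every Borel invariant either maps Tukey-below the infinitely-equal relation or Tukey-above the eventually-different relation.

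The second, and main, step is to obtain this dichotomy by an unfolding/determinacy argument. Given a Borel relation $\longrightarrow\ \subseteq A\times B$, I would set up an infinite two-player game in which one player builds an element $a\in A$ and the other builds a candidate $b\in B$, the payoff being whether $a\longrightarrow b$. Borel determinacy yields a winning strategy for one of the players: a winning strategy for the ``$b$-builder'' should unfold into a Borel morphism from $\neq^{\ast}$ into $\longrightarrow$, placing the evaluation above \textsf{cov}$\left(  \mathcal{M}\right)  $, while a winning strategy for the ``$a$-builder'' should unfold into a morphism to $=^{\infty}$, placing it below \textsf{non}$\left(  \mathcal{M}\right)  $. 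The matching-versus-evading combinatorics of chopped reals from the second chapter is what lets such a strategy be read off as the required countable, continuous data. For invariants that are already presented by a Borel $\sigma$-ideal and its forcing $\mathbb{P}_{\mathcal{I}}$, I would instead route the argument through the Category Dichotomy for Borel ideals together with the known relationships between \textsf{cov}$^{\ast}$, \textsf{non}$^{\ast}$ and \textsf{cov}$\left(  \mathcal{M}\right)  $, \textsf{non}$\left(  \mathcal{M}\right)  $, using the Kat\v{e}tov monotonicity of \textsf{cov}$^{\ast}$ established earlier in this chapter.

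The hardest part---and the reason the conjecture remains open---is that there is no single dichotomy covering \emph{all} Borel relations, only ideals and a few structured classes. The determinacy game controls one real played against one real, but a general invariant can encode a relation whose right-hand objects carry far more information than a single branch; this is exactly what the $\omega_{1}^{\omega}$ counterexample exploits, building a relation that is Borel in each coordinate yet evades both morphisms at once. I therefore expect the honest outcome to be a theorem of the form ``the dichotomy holds for every Borel invariant reducible to one presented by a Borel ideal,'' with the fully general case left open, as the introduction states.
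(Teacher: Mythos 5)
You are addressing a conjecture, and the paper only establishes partial results, so the right benchmark is whether your plan actually yields those partial results; it does not. The central step --- that Borel determinacy of a game in which one player builds $a\in A$ and the other builds $b\in B$ produces, from a winning strategy for the $b$-builder, a Borel morphism from the eventually-different relation into $\longrightarrow$, and from a winning strategy for the $a$-builder, a morphism into the infinitely-equal relation --- is asserted without justification, and it cannot be right in the generality you state it. The game you describe is definable for \emph{every} Borel relation, so if the unfolding worked it would prove the full conjecture; but the conjecture is open, and the paper's own counterexample (the uniformity of the $\omega_{1}$-Namba ideal $\mathcal{L}_{\omega_{1}}$, which equals $\omega_{1}$ under a club-guessing sequence, hence can sit below \textsf{cov}$\left(  \mathcal{M}\right)  $ under Martin's Axiom, and which exceeds $\omega_{1}=$ \textsf{non}$\left(  \mathcal{M}\right)  $ after iterating Baumgartner's club-adding forcing) shows that for definable relations on a slightly larger space both alternatives can fail. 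Nothing in your game set-up uses the Polishness of $A$ and $B$ in a way that would block an analogous construction, so the point where a strategy ``should unfold'' into the required morphism is exactly where the argument breaks, and you have not identified what hypothesis would repair it.

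The paper's actual positive partial result is reached by a forcing-theoretic route that you only gesture at. Call \textsf{non}$\left(  \mathcal{M}\right)  <\left\Vert A,B,\longrightarrow\right\Vert $ \emph{nicely consistent} if it can be forced by a proper, category-preserving quotient $\mathbb{P}_{\mathcal{I}}$ of a universally Baire $\sigma$-ideal. If $\mathbb{P}_{\mathcal{I}}$ destroys $\left(  A,B,\longrightarrow\right)  $, the Borel reading of names together with a properness argument (collecting the $M$-generic points of a condition) gives \textsf{cov}$^{-}\left(  \mathcal{I}\right)  \leq\left\Vert A,B,\longrightarrow\right\Vert $; and since $\mathbb{P}_{\mathcal{I}}$ preserves category, Cohen forcing adds $\mathcal{I}$-quasigeneric reals, which yields \textsf{cov}$\left(  \mathcal{M}\right)  \leq$ \textsf{cov}$^{-}\left(  \mathcal{I}\right)  $ and hence the second alternative of the dichotomy. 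This produces a concrete theorem (under a large-cardinal hypothesis): whenever the first alternative fails nicely consistently, the second alternative holds outright. Your plan, as written, proves no theorem; if you want to salvage it, you should either restrict to invariants presented by suitable $\sigma$-ideals and run the quasigeneric-real argument, or isolate precisely which Borel relations admit the strategy-to-morphism unfolding you postulate.
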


\qquad\ \ \ \ \ 

We will provide both a partial negative and a partial affirmative answer to
this conjecture: We show that the conjecture of Hru\v{s}\'{a}k is false if we
allowed $A$ and $B$ to be Borel subsets of $\omega_{1}^{\omega}.$
Nevertheless, we show that the conjecture is true for a large class of Borel
invariants. Note that the definability of $\left(  A,B,\longrightarrow\right)
$ is important: otherwise the almost disjointness number $\mathfrak{a}$ will
be a counterexample.

\qquad\ \ 

For every function $F:\omega_{1}^{<\omega}\longrightarrow\omega_{1}$ we define
the set $C\left(  F\right)  $ as the set of all $f\in\omega_{1}^{\omega}$ such
that there are infinitely many $n\in\omega$ for which $f\left(  n\right)  \in
F\left(  f\upharpoonright n\right)  $. The $\omega_{1}$\emph{-Namba ideal}
$\mathcal{L}_{\omega_{1}}$ is the ideal on $\omega_{1}^{\omega}$ generated by
$\left\{  C\left(  F\right)  \mid F:\omega_{1}^{<\omega}\longrightarrow
\omega_{1}\right\}  .$ We will be interested in the invariant \textsf{non}%
$\left(  \mathcal{L}_{\omega_{1}}\right)  .$ It is easy to see that this
invariant is uncountable. By $E_{\omega}^{\omega_{1}}$ we denote the set of
all ordinals smaller than $\omega_{1}$ with cofinality $\omega.$%
\ $\mathsf{CG}_{\omega}\left(  \omega_{1}\right)  $ is the statement that
there is a sequence $\overline{C}=\left\langle C_{\alpha}\mid\alpha\in
E_{\omega}^{\omega_{1}}\right\rangle $ where $C_{\alpha}\subseteq\alpha$ is a
cofinal set of order type $\omega$ such that for every club $D\subseteq
\omega_{1}$ there is $\alpha$ for which $C_{\alpha}\subseteq D.$ We call such
$\overline{C}$ a \emph{club guessing sequence}.

\qquad\ \ \ \ \ \ 

We will show that the existence of a Club Guessing sequence implies that the
uniformity of $\mathcal{L}_{\omega_{1}}$ is precisely $\omega_{1}.$

\begin{proposition}
The principle $\mathsf{CG}_{\omega}\left(  \omega_{1}\right)  $ implies
\textsf{non}$\left(  \mathcal{L}_{\omega_{1}}\right)  =\omega_{1}.$
\end{proposition}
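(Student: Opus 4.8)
The plan is to use the club guessing sequence to build an $\mathcal{L}_{\omega_{1}}$-positive subset of $\omega_{1}^{\omega}$ of size $\omega_{1}$; since the invariant is already known to be uncountable, this forces $\textsf{non}\left(\mathcal{L}_{\omega_{1}}\right)=\omega_{1}$. Fix a club guessing sequence $\overline{C}=\left\langle C_{\alpha}\mid\alpha\in E_{\omega}^{\omega_{1}}\right\rangle$ witnessing $\mathsf{CG}_{\omega}\left(\omega_{1}\right)$. For each $\alpha\in E_{\omega}^{\omega_{1}}$ let $f_{\alpha}\in\omega_{1}^{\omega}$ be the increasing enumeration of $C_{\alpha}$ (which has order type $\omega$), and put $X=\left\{f_{\alpha}\mid\alpha\in E_{\omega}^{\omega_{1}}\right\}$. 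Since $\sup C_{\alpha}=\alpha$, distinct $\alpha$ give distinct $f_{\alpha}$, so $\left\vert X\right\vert=\omega_{1}$. It remains to show $X\notin\mathcal{L}_{\omega_{1}}$.

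The key is a closure argument. Given $F:\omega_{1}^{<\omega}\longrightarrow\omega_{1}$, call an ordinal $\gamma<\omega_{1}$ \emph{$F$-closed} if $F\left(s\right)<\gamma$ for every $s\in\gamma^{<\omega}$. Because $\gamma^{<\omega}$ is countable for countable $\gamma$ and $\omega_{1}$ is regular, the set $D_{F}$ of $F$-closed ordinals is a club, by the usual closure under $\gamma\mapsto\sup\left\{F\left(s\right)+1\mid s\in\gamma^{<\omega}\right\}$. Now suppose, toward a contradiction, that $X\subseteq\bigcup_{i}C\left(F_{i}\right)$ for countably (in particular, finitely) many functions $F_{i}$. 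The intersection $D=\bigcap_{i}D_{F_{i}}$ of the corresponding clubs is again a club, so by club guessing there is $\alpha\in E_{\omega}^{\omega_{1}}$ with $C_{\alpha}\subseteq D$. Writing $f_{\alpha}\left(n\right)$ for the $n$-th element of $C_{\alpha}$, the initial segment $f_{\alpha}\upharpoonright n$ lies in $f_{\alpha}\left(n\right)^{<\omega}$, since its entries $f_{\alpha}\left(0\right)<\dots<f_{\alpha}\left(n-1\right)$ are all strictly below $f_{\alpha}\left(n\right)$; and $f_{\alpha}\left(n\right)\in C_{\alpha}\subseteq D$ is $F_{i}$-closed for every $i$. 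Hence $F_{i}\left(f_{\alpha}\upharpoonright n\right)<f_{\alpha}\left(n\right)$ for all $n$ and all $i$, so $f_{\alpha}\notin C\left(F_{i}\right)$ for every $i$ (there is not even one $n$ with $f_{\alpha}\left(n\right)\in F_{i}\left(f_{\alpha}\upharpoonright n\right)$).

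This produces $f_{\alpha}\in X\setminus\bigcup_{i}C\left(F_{i}\right)$, contradicting $X\subseteq\bigcup_{i}C\left(F_{i}\right)$. Therefore $X\notin\mathcal{L}_{\omega_{1}}$, giving $\textsf{non}\left(\mathcal{L}_{\omega_{1}}\right)\leq\left\vert X\right\vert=\omega_{1}$, and equality follows since the invariant is uncountable. The argument is essentially self-contained, so I do not expect a serious obstacle; the only delicate point is the bookkeeping in the closure step — checking that $D_{F}$ is genuinely a club and that the increasing enumeration of a club-guessed $C_{\alpha}$ automatically has each initial segment bounded by the next value, which is exactly what yields $F\left(f_{\alpha}\upharpoonright n\right)<f_{\alpha}\left(n\right)$. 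The hypothesis $\mathsf{CG}_{\omega}\left(\omega_{1}\right)$ enters precisely here: it locates, inside the closure club, a set of order type $\omega$ witnessing positivity, which is the step that would fail for an arbitrary stationary set.
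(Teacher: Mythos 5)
Your proof is correct and follows essentially the same route as the paper: enumerate each $C_{\alpha}$ increasingly to get $f_{\alpha}$, take the club of $F$-closed ordinals, and use club guessing to find $C_{\alpha}$ inside it, whence $F\left(  f_{\alpha}\upharpoonright n\right)  <f_{\alpha}\left(  n\right)  $ for all $n$. The only difference is that you spell out the closure argument (and handle countably many $F_{i}$ at once) where the paper leaves it as an exercise.
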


\begin{proof}
Let $\overline{C}=\left\{  C_{\alpha}\mid\alpha\in E_{\omega}^{\omega_{1}%
}\right\}  $ be a club guessing sequence. Enumerate each $C_{\alpha}=\left\{
\alpha_{n}\mid n\in\omega\right\}  $ in an increasing way, we may further
assume $0\notin C_{\alpha}$ for every $\alpha\in LIM\left(  \omega_{1}\right)
.$ We now define $f_{\alpha}:\omega\longrightarrow\omega_{1}$ where
$f_{\alpha}\left(  n\right)  =\alpha_{n}.$ We will show that $X=\left\{
f_{\alpha}\mid\alpha\in E_{\omega}^{\omega_{1}}\right\}  \notin\mathcal{L}%
_{\omega_{1}}.$

\qquad\ \ \ 

Let $F:\omega_{1}^{<\omega}\longrightarrow\omega_{1},$ we must show that $X$
is not contained in $C\left(  F\right)  .$ Let $D\subseteq\omega_{1}$ be a
club such that if $\alpha\in D$ and $s\in\alpha^{<\omega}$ then $F\left(
s\right)  <\alpha.$ Since $\overline{C}$ is a club guessing sequence, there is
$\alpha\in D$ such that $C_{\alpha}\subseteq D.$ It is then easy to see that
$f_{\alpha}\notin C\left(  F\right)  .$
\end{proof}

\qquad\ \ \ 

We will now prove that the inequality \textsf{non}$\left(  \mathcal{L}%
_{\omega_{1}}\right)  >\omega_{1}$ is consistent and we will use Baumgartner's
forcing for adding a club with finite conditions. Let $\mathbb{BA}$ be the set
of all finite functions $p\subseteq\omega_{1}\times\omega_{1}$ with the
property that there is a function\ enumerating a club $g:\omega_{1}%
\longrightarrow\omega_{1}$ such that $p\subseteq g$ and $im\left(  g\right)  $
consists only of indecomposable ordinals. We order $\mathbb{BA}$ by inclusion.
It is well known that $\mathbb{BA}$ is a proper forcing and adds a club, whose
name we will denote by $\dot{D}_{gen}.$ Given a club $D\subseteq\omega_{1}$,
define a function $F_{D}:\omega_{1}^{<\omega}\longrightarrow\omega_{1}$ given
by $F_{D}\left(  s\right)  =min\left\{  \gamma\in D\mid im\left(  s\right)
\subseteq\gamma\right\}  .$ Recall that if $F:\omega_{1}^{<\omega
}\longrightarrow\omega_{1}$ we defined $C\left(  F\right)  =\left\{
f\in\omega_{1}^{\omega}\mid\exists^{\infty}n\left(  f\left(  n\right)  \in
F\left(  f\upharpoonright n\right)  \right)  \right\}  .$ Note that if
$f\in\omega_{1}^{\omega}$ then the following holds:

\begin{enumerate}
\item If $f\left[  \omega\right]  $ has a maximum then $f\in C\left(
F_{D}\right)  .$

\item If $\bigcup f\left[  \omega\right]  $ is not a limit point of $D$ then
$f\in C\left(  F_{D}\right)  .$
\end{enumerate}

\begin{lemma}
If $f\in\omega_{1}^{\omega}$ then $E_{f}=\{p\in\mathbb{BA}\mid p\Vdash``f\in
C(F_{\dot{D}_{gen}})\textquotedblright\}$ is a dense set.
\end{lemma}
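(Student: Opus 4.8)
The plan is to prove the stronger statement that \emph{every} condition forces $f\in C(F_{\dot D_{gen}})$; since a statement forced by a dense set of conditions is forced by the weakest condition $\mathbf 1$, this is equivalent to the density of $E_f$ and in fact yields $E_f=\mathbb{BA}$. Because $f\in C(F_{\dot D_{gen}})$ is the infinitary assertion $\exists^{\infty}n\,(f(n)\in F_{\dot D_{gen}}(f\upharpoonright n))$, I would reduce it to a family of finitary density statements: for each $N\in\omega$ let $D_N=\{q\mid q\Vdash\exists n>N\,(f(n)\in F_{\dot D_{gen}}(f\upharpoonright n))\}$, and show that every $D_N$ is dense. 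Once this is done a generic filter meets every $D_N$, so in the extension there are arbitrarily large good $n$, i.e. $f\in C(F_{\dot D_{gen}})$; as this holds in every extension we get $\mathbf 1\Vdash f\in C(F_{\dot D_{gen}})$ and the lemma follows.

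Writing $M_n=\max\{f(0),\dots,f(n-1)\}$ and unwinding the definition of $F_D$, the clause $f(n)\in F_D(f\upharpoonright n)$ says exactly that $D$ has no element in the interval $(M_n,f(n)]$; this makes the two observations preceding the lemma visible at the finitary level. First, if $f(n)\le M_n$ for infinitely many $n$, then those $n$ are good for \emph{every} club $D$ (the interval is empty), so $\mathbf 1$ already forces infinitely many good $n$; this subsumes the case that $f[\omega]$ has a maximum. Otherwise $f$ is eventually strictly increasing with $M_n=f(n-1)$, converging to a countable limit ordinal $\lambda=\sup f[\omega]$. If the indecomposable ordinals are bounded below $\lambda$, then, since every admissible club consists of indecomposables, every generic $D$ satisfies $\sup(D\cap\lambda)<\lambda$, so $\lambda$ is never a limit point and the second observation gives $f\in C(F_D)$ for all $D$; again $\mathbf 1$ forces the conclusion.

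The real work is the remaining case, where the indecomposables are cofinal in $\lambda$, and here I would establish density of each $D_N$ directly. Given $p$ and $N$, put $c=\max(\operatorname{im}(p)\cap\lambda)$ and choose $n>N$ large enough that $f(n-1)>c$; then the interval $(f(n-1),f(n)]$ lies in $(c,\lambda)$ and is disjoint from $\operatorname{im}(p)$. Using cofinality of the indecomposables in $\lambda$, pick indecomposable ordinals $a,b$ with $c<a\le f(n-1)<f(n)<b<\lambda$. Now either the two points of $\operatorname{im}(p)$ flanking this interval already occupy consecutive positions of $p$, in which case $p$ itself forces $(a,b)$, and hence $(f(n-1),f(n)]$, to meet no club point, so $p\in D_N$; or there is room to insert the pairs $(\alpha,a),(\alpha+1,b)$ at two consecutive fresh positions, giving $q=p\cup\{(\alpha,a),(\alpha+1,b)\}\le p$. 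Since $\alpha,\alpha+1$ are consecutive ordinals, $q$ forces $a$ and $b$ to be consecutive elements of $\dot D_{gen}$, so $q\Vdash(a,b)\cap\dot D_{gen}=\emptyset$, whence $q\Vdash f(n)\in F_{\dot D_{gen}}(f\upharpoonright n)$ and $q\in D_N$.

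The step I expect to be the main obstacle is verifying that the two-point extension $q$ is a genuine condition of $\mathbb{BA}$: one must check that $p\cup\{(\alpha,a),(\alpha+1,b)\}$ is order preserving and, crucially, still extends to a \emph{continuous} $\omega_1$-enumeration of a club whose range consists only of indecomposable ordinals, with $a,b$ occupying adjacent slots. This is where the finiteness of $p$ and the fact that the indecomposables form a club must be used carefully, and it is also where the ``either already forced or insertable'' dichotomy on positions has to be justified, to guarantee that the adjacent slots $\alpha,\alpha+1$ are available below the next commitment of $p$.
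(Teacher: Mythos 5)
Your overall strategy is the same as the paper's: reduce the lemma to the density, for each $N$, of the set of conditions forcing some good $n>N$, unwind ``$f(n)\in F_{D}(f\upharpoonright n)$'' as ``$D\cap(M_n,f(n)]=\emptyset$'', dispose of the degenerate cases ($f(n)\le M_n$ infinitely often; indecomposables bounded below $\lambda$) exactly as the paper's two preliminary observations do, and in the main case extend a given condition so as to commit two consecutive club points straddling an interval $(f(n-1),f(n)]$ that misses $\operatorname{im}(p)$. All of that is correct, and your observation that the lemma actually yields $E_f=\mathbb{BA}$ is also correct.

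However, there is a genuine gap, and it sits exactly where the nontrivial content of the lemma lives: you never verify that the modified finite function is a condition of $\mathbb{BA}$, and you say so yourself. This is not a routine check one can defer. First, your dichotomy (``flanking points already at consecutive positions'' versus ``room to insert two pairs at consecutive fresh positions'') is not exhaustive: there may be exactly one free position between the flanking commitments; there may be no commitment above $f(n)$ at all; and, most importantly, if $\lambda$ (or some value above it) is committed at a \emph{limit} position $\beta$, then continuity of the enumeration constrains which positions below $\beta$ can receive which values --- in particular you cannot place a freely chosen value at a limit position $\alpha$, since there $g(\alpha)$ must equal $\sup_{\xi<\alpha}g(\xi)$, so $\alpha$ must be taken to be a successor. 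Second, after inserting $(\alpha+1,b)$ you must still be able to fill all free positions between $\alpha+1$ and the next committed position with indecomposables above $b$, continuously and (at limit commitments) cofinally; this is an order-type constraint that can genuinely fail for a careless choice of $b$ and $\alpha$, so it has to be discharged using the club enumeration $g$ witnessing that the original condition is a condition. The paper's proof avoids half of these issues by first reducing to the case where $\sup f[\omega]$ is pinned at a limit position $\beta$, letting $\beta_0=\max(\beta\cap\operatorname{dom}(q))$, and inserting a \emph{single} new pair at position $\beta_0+1$ whose value is taken from $\operatorname{im}(g)$: the already-committed value $q(\beta_0)\le M_m$ then serves as the lower end of the forced gap, so only one fresh slot is needed, and the witnessing enumeration is obtained by modifying $g$ above $\beta_0$. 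You should adopt this one-point insertion (it also subsumes your missing ``one free slot'' case) and then actually construct the new witnessing club enumeration; as it stands, the proposal asserts the conclusion of the only step that requires an argument.
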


\begin{proof}
Let $p\in\mathbb{BA}.$ We may assume $f\left[  \omega\right]  $ has no maximum
and $p$ forces that $\gamma=\bigcup f\left[  \omega\right]  $ is a limit point
of $\dot{D}_{gen}$ (in particular $\gamma$ must be an indecomposable ordinal)
so there must be a limit ordinal $\beta<\omega_{1}$ such that $p\left(
\beta\right)  =\gamma.$ Let $q\leq p$ and $n\in\omega.$ We must prove that
there is $q_{1}\leq q$ and $m>n$ such that $q_{1}\Vdash``f\left(  m\right)
<F_{\dot{D}_{gen}}\left(  f\upharpoonright m\right)  \textquotedblright.$ Let
$g:\omega_{1}\longrightarrow\omega_{1}$ be a function enumerating a club such
that $q\subseteq g$ and $im\left(  g\right)  $ consists only of indecomposable
ordinals. Let $\beta_{0}=max\left(  \beta\cap dom\left(  q\right)  \right)  $
and note we may assume that $f\left(  0\right)  ,...f\left(  n\right)
<q\left(  \beta_{0}\right)  $ (if this is not the case we just extend $q$ in
order to obtain this condition). Let $m$ be the smallest natural number for
which $q\left(  \beta_{0}\right)  <f\left(  m\right)  .$ Since $q$ forces that
$\gamma$ is a limit point of $\dot{D}_{gen},$ there must be $\beta_{0}%
<\beta_{1}<\beta$ such that $f\left(  m\right)  ,f\left(  m+1\right)
<g\left(  \beta_{1}\right)  .$ We then define $q_{1}$ and $g_{1}$ as follows:

\begin{enumerate}
\item $q_{1}=q\cup\left\{  \left(  \beta_{0}+1,g\left(  \beta_{1}\right)
\right)  \right\}  .$

\item $g\upharpoonright\left(  \beta_{0}+1\right)  ,$ $g\upharpoonright
\lbrack\beta,\omega_{1})\subseteq g_{1}.$

\item $g_{1}\left(  \beta_{0}+1\right)  =g\left(  \beta_{1}\right)  .$

\item If $\xi\in\left(  \beta_{0}+1,\beta\right)  $ then $g_{1}\left(
\xi\right)  =g\left(  \beta_{1}+\xi\right)  .$
\end{enumerate}

\qquad\ \ 

Note that $q_{1}$ is a condition of $\mathbb{BA}$ (as witnessed by $g_{1}$)
extending $q$ and $q_{1}\Vdash``f\left(  m+1\right)  <F_{\dot{D}_{gen}}\left(
f\upharpoonright m+1\right)  \textquotedblright.$
\end{proof}

\qquad\ \ \ \ \ \ \ 

Since $\mathbb{BA}$ is a proper forcing, we conclude the following:

\begin{proposition}
The Proper Forcing Axiom implies \textsf{non}$\left(  \mathcal{L}_{\omega_{1}%
}\right)  >\omega_{1}.$\qquad\ \ \ \ 
\end{proposition}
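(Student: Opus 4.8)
The plan is to unwind what $\mathsf{non}\left(\mathcal{L}_{\omega_{1}}\right)>\omega_{1}$ actually asks for and then feed the preceding density lemma into $\mathsf{PFA}$ through the proper forcing $\mathbb{BA}$. First I would observe that $\mathcal{L}_{\omega_{1}}$ absorbs finite unions in a strong way: given $F_{0},F_{1}:\omega_{1}^{<\omega}\longrightarrow\omega_{1}$, the function $F\left(s\right)=\max\left\{F_{0}\left(s\right),F_{1}\left(s\right)\right\}$ satisfies $C\left(F_{0}\right)\cup C\left(F_{1}\right)\subseteq C\left(F\right)$, so the members of $\mathcal{L}_{\omega_{1}}$ are precisely the subsets of a single $C\left(F\right)$. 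Hence $\mathsf{non}\left(\mathcal{L}_{\omega_{1}}\right)>\omega_{1}$ is equivalent to the assertion that every $X=\left\{f_{\alpha}\mid\alpha<\omega_{1}\right\}\subseteq\omega_{1}^{\omega}$ is contained in some $C\left(F\right)$. So I fix such an $X$ and aim to produce a club $D\subseteq\omega_{1}$ with $X\subseteq C\left(F_{D}\right)$; then $F=F_{D}$ witnesses $X\in\mathcal{L}_{\omega_{1}}$.

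Second, I would set up the dense sets for the $\mathsf{PFA}$ application to $\mathbb{BA}$. Since $f_{\alpha}\in C\left(F_{D}\right)$ is the infinitary statement $\exists^{\infty}m\,\left(f_{\alpha}\left(m\right)\in F_{D}\left(f_{\alpha}\upharpoonright m\right)\right)$, I cannot feed the set $E_{f_{\alpha}}$ directly into $\mathsf{PFA}$; instead I break it into single-witness pieces. For $\alpha<\omega_{1}$ and $n\in\omega$ put
\[D_{\alpha,n}=\left\{p\in\mathbb{BA}\mid\exists m>n\ \bigl(p\Vdash f_{\alpha}\left(m\right)\in F_{\dot{D}_{gen}}\left(f_{\alpha}\upharpoonright m\right)\bigr)\right\}.\]
The density of each $D_{\alpha,n}$ is exactly the one-step extension performed in the proof of the preceding lemma (adjoining a domain point $\beta_{0}+1$ carrying a large value, thereby freezing a gap of $\dot{D}_{gen}$ straddling $f_{\alpha}\left(m\right)$). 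Together with the standard dense sets $T_{\xi}=\left\{p\mid\xi\in\mathrm{dom}\left(p\right)\right\}$ for $\xi<\omega_{1}$, these constitute a family of $\omega_{1}$ dense subsets of the proper forcing $\mathbb{BA}$, so $\mathsf{PFA}$ delivers a filter $G$ meeting all of them.

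Third, I would reconstruct the club and verify the covering. The function $g=\bigcup G$ is a total, order preserving map $\omega_{1}\longrightarrow\omega_{1}$ (totality from the $T_{\xi}$), and by the defining properties of $\mathbb{BA}$ its image $D=\mathrm{im}\left(g\right)$ is a club of indecomposable ordinals; in particular $F_{D}$ is well defined. Fixing $\alpha$ and $n$, pick $p\in G\cap D_{\alpha,n}$ together with its witness $m>n$. The crucial point is that $p\Vdash f_{\alpha}\left(m\right)\in F_{\dot{D}_{gen}}\left(f_{\alpha}\upharpoonright m\right)$ for a \emph{fixed} $m$ is decided by the finite data of $p$ alone: it holds because $p$ already places two consecutive domain points bracketing the relevant interval, forcing a permanent gap of the generic club. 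Since $\mathrm{im}\left(p\right)\subseteq D$, the inequality $f_{\alpha}\left(m\right)\in F_{D}\left(f_{\alpha}\upharpoonright m\right)$ then holds of the reconstructed $D$ itself. Letting $n$ range over $\omega$ yields infinitely many such $m$, so $f_{\alpha}\in C\left(F_{D}\right)$; as $\alpha$ was arbitrary, $X\subseteq C\left(F_{D}\right)$ and we are done.

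The step I expect to be the main obstacle is precisely this transfer from the forcing relation to the $G$-reconstructed club. Because $\mathsf{PFA}$ supplies only a filter, never a $V$-generic object, a condition forcing an infinitary statement conveys nothing on its own; the argument hinges on replacing $E_{f_{\alpha}}$ by the finite approximations $D_{\alpha,n}$ and on checking that each approximate fact $f_{\alpha}\left(m\right)\in F_{D}\left(f_{\alpha}\upharpoonright m\right)$ is absolute between the generic extension and $V$ once $\mathrm{im}\left(p\right)\subseteq D$ is known. A secondary point to pin down is that the reconstructed $D$ is genuinely closed and unbounded, rather than merely unbounded, since closure is what guarantees both that $F_{D}$ is total and that the limit-point hypotheses invoked in the density of the $D_{\alpha,n}$ are actually realized.
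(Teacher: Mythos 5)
Your proposal is correct and follows exactly the route the paper intends: the paper's own "proof" is just the remark that $\mathbb{BA}$ is proper, leaving the $\mathsf{PFA}$ bookkeeping implicit, and you supply precisely the missing details (reduction of \textsf{non}$\left(\mathcal{L}_{\omega_{1}}\right)>\omega_{1}$ to covering any $\omega_{1}$-sized set by a single $C\left(F\right)$, the single-witness dense sets $D_{\alpha,n}$ extracted from the density lemma for $E_{f}$, and the transfer to the filter-reconstructed club). The two points you flag at the end are indeed the ones to check, and both go through: the statement $D\cap\left(\mu,\nu\right]=\emptyset$ forced by $p$ holds of the reconstructed $D$ because any $g\left(\beta\right)$ in that interval would make $p\cup\left\{\left(\beta,g\left(\beta\right)\right)\right\}$ a condition forcing the contrary, and unboundedness (from the sets $T_{\xi}$) already suffices for $F_{D}$ to be total.
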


\qquad\ \ \ \ \ 

We will now show that the cardinal invariant \textsf{non}$\left(
\mathcal{L}_{\omega_{1}}\right)  $ does not satisfy the conjecture of
Hru\v{s}\'{a}k. Although \textsf{non}$\left(  \mathcal{L}_{\omega_{1}}\right)
$ is not a Borel invariant, it is still (in some sense) definable (it would be
a Borel invariant if we were allowed to use the space $\omega_{1}^{\omega}$
instead of a Polish space).

\begin{proposition}
Both the inequalities \textsf{non}$\left(  \mathcal{L}_{\omega_{1}}\right)  <$
\textsf{cov}$\left(  \mathcal{M}\right)  $\ and \textsf{non}$\left(
\mathcal{M}\right)  <$ \textsf{non}$\left(  \mathcal{L}_{\omega_{1}}\right)  $
are consistent with the axioms of $\mathsf{ZFC.}$
\end{proposition}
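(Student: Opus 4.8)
The two consistency results are witnessed by two different models, and together they establish that $\mathsf{non}(\mathcal{L}_{\omega_1})$ is neither provably $\leq\mathsf{non}(\mathcal{M})$ nor provably $\geq\mathsf{cov}(\mathcal{M})$, which is the promised counterexample to the conjecture for invariants of $\omega_1^\omega$. For the inequality $\mathsf{non}(\mathcal{L}_{\omega_1})<\mathsf{cov}(\mathcal{M})$ the plan is to start with a ground model $V$ of $\diamondsuit$ (for instance $V=L$), so that $\mathsf{CG}_{\omega}(\omega_1)$ holds and hence $\mathsf{non}(\mathcal{L}_{\omega_1})=\omega_1$ by the proposition above, and then to add $\omega_2$ Cohen reals with finite support. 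In the extension $\mathsf{cov}(\mathcal{M})=\mathfrak{c}=\omega_2$ as in the usual Cohen model, so the only point to check is that $\mathsf{non}(\mathcal{L}_{\omega_1})$ stays $\omega_1$. For this I would show that $\mathsf{CG}_{\omega}(\omega_1)$ is preserved by any ccc forcing, via the standard fact that if $\mathbb{P}$ is ccc then every club of $\omega_1$ in the extension contains a ground model club: given a name $\dot D$ for a club, for each $\alpha$ the set of possible values of $\min(\dot D\setminus(\alpha+1))$ is countable by ccc, so its supremum defines a ground model function whose closure points form a club $D'\subseteq\omega_1$ of $V$ forced into $\dot D$. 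A ground model club guessing sequence then guesses $D'$, hence guesses $\dot D$, so it remains club guessing; thus the witness $X\notin\mathcal{L}_{\omega_1}$ of size $\omega_1$ survives and $\mathsf{non}(\mathcal{L}_{\omega_1})=\omega_1<\omega_2=\mathsf{cov}(\mathcal{M})$.

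For $\mathsf{non}(\mathcal{M})<\mathsf{non}(\mathcal{L}_{\omega_1})$ I would work over a ground model of $\mathsf{GCH}$ and form a countable support iteration $\langle\mathbb{P}_\alpha,\dot{\mathbb{Q}}_\alpha\mid\alpha<\omega_2\rangle$ of proper forcings in which cofinally many $\dot{\mathbb{Q}}_\alpha$ are (names for) the forcing $\mathbb{BA}$ of the intermediate model. Each iterand being proper, $\mathbb{P}_{\omega_2}$ is proper, preserves $\omega_1$, and under $\mathsf{GCH}$ is $\omega_2$-cc of size $\omega_2$. To guarantee $\mathfrak{c}=\omega_2$ I would also place, at the remaining stages, a real-adding forcing that does not destroy category; the forcing $\mathbb{P}_{cat}$ of the previous chapter is convenient, as it adds a real and was already shown not to destroy category. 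Given an arbitrary family $X=\{f_\beta\mid\beta<\omega_1\}\subseteq\omega_1^\omega$ of the final model, by $\omega_2$-cc and $\mathrm{cf}(\omega_2)>\omega_1$ all of its elements lie in some $V^{\mathbb{P}_\alpha}$; choosing a later stage $\gamma\geq\alpha$ with $\mathbb{Q}_\gamma=\mathbb{BA}$, the density lemma for $\mathbb{BA}$ gives $\Vdash f\in C(F_{\dot D_{gen}})$ for every $f\in(\omega_1^\omega)^{V^{\mathbb{P}_\gamma}}$, so the generic club $D_{gen}$ of stage $\gamma$ satisfies $X\subseteq C(F_{D_{gen}})$, a generator of $\mathcal{L}_{\omega_1}$ (membership in $C(F_{D_{gen}})$ being absolute to the final model). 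Hence every $\omega_1$-sized family belongs to $\mathcal{L}_{\omega_1}$ and $\mathsf{non}(\mathcal{L}_{\omega_1})=\omega_2$.

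It remains to keep $\mathsf{non}(\mathcal{M})=\omega_1$, and this is the heart of the matter. Since $|\omega^\omega\cap V|=\omega_1$, it is enough that $\mathbb{P}_{\omega_2}$ not destroy category, so that $\omega^\omega\cap V$ stays non-meager and $\mathsf{non}(\mathcal{M})\leq\omega_1$. Limit stages are handled directly by Raghavan's preservation theorem (the proposition above). For the successor steps, the $\mathbb{P}_{cat}$ case is exactly the corollary already proved, that $\mathbb{P}\ast\mathbb{P}_{cat}$ does not destroy category, while the $\mathbb{BA}$ case requires its analogue: $\mathbb{P}\ast\dot{\mathbb{BA}}$ does not destroy category whenever $\mathbb{P}$ is proper and does not destroy category. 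As in the $\mathbb{P}_{cat}$ development this reduces to a combinatorial lemma, namely that given an $\in$-chain of countable elementary submodels together with a function $h$ that is infinitely equal to every real of each model, any condition can be extended so as to force a prescribed $\mathbb{BA}$-name $\dot g$ for a real to be infinitely equal to $h$. Proving this lemma for $\mathbb{BA}$ is the main obstacle: unlike $\mathbb{P}_{cat}$, the forcing $\mathbb{BA}$ was not tailored to match chopped reals, so I expect to have to exploit its finite-condition structure together with an amalgamation argument over the submodels, in the spirit of the properness proof for $\mathbb{BA}$, to show that no name for a real can eventually avoid $h$. Granting this lemma, an induction on $\alpha$, with successors handled by the two-step results and limits by Raghavan's theorem, shows that $\mathbb{P}_{\omega_2}$ does not destroy category, and therefore $\mathsf{non}(\mathcal{M})=\omega_1<\omega_2=\mathsf{non}(\mathcal{L}_{\omega_1})$.
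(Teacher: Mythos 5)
Your first model is sound and is a legitimate alternative to the paper's: where the paper simply cites the consistency of Martin's Axiom with $\mathsf{CG}_{\omega}\left(\omega_{1}\right)$, you start from $\Diamond$ and add $\omega_{2}$ Cohen reals, preserving club guessing via the correct standard fact that every club of $\omega_{1}$ in a ccc extension contains a ground-model club. That half needs no repair.

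The second model is where there is a genuine gap, and you have named it yourself without closing it. Your architecture (countable support iteration with $\mathbb{BA}$ appearing cofinally, reflection of any $\omega_{1}$-sized $X\subseteq\omega_{1}^{\omega}$ into an intermediate model by the $\omega_{2}$-cc, and the density lemma placing $X$ inside $C(F_{\dot{D}_{gen}})$) agrees with the paper, but the whole proof rests on keeping $\textsf{non}\left(\mathcal{M}\right)=\omega_{1}$, and the two-step lemma you need --- that $\mathbb{P}\ast\dot{\mathbb{BA}}$ does not destroy category when $\mathbb{P}$ is proper and does not destroy category --- is exactly the step you declare to be ``the main obstacle'' and do not prove. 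A proof cannot defer its central lemma to future work. The paper takes a different and shorter route: it establishes the standardly iterable property that $\mathbb{BA}$ \emph{preserves Cohen reals}, namely that for every countable $M\preceq\mathsf{H}\left(\theta\right)$, every $p\in M\cap\mathbb{BA}$ and every $c$ Cohen over $M$, the condition $q=p\cup\left\{\left(\delta,\delta\right)\right\}$ with $\delta=M\cap\omega_{1}$ is $\left(M,\mathbb{BA}\right)$-generic and forces $c$ to stay Cohen over $M[\dot{G}]$. The proof uses precisely the finite-condition structure you gesture at: for any $r\leq q$ one has $r\upharpoonright\delta\in M$ (since $r\left(\delta\right)=\delta$ forces the part of $r$ below $\delta$ to take values below $\delta$), so the open dense set of cones decided by conditions below $r\upharpoonright\delta$ belongs to $M$, the Cohen real $c$ meets it, and elementarity yields a condition in $M$ compatible with $r$ forcing $c$ into the given dense set. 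This single-step property is closed under countable support iteration by the cited preservation theorem, implies that category is not destroyed, and yields $\textsf{non}\left(\mathcal{M}\right)=\omega_{1}$ in the final model. To repair your argument, prove this lemma for $\mathbb{BA}$ and invoke the iteration theorem; note also that your insertion of $\mathbb{P}_{cat}$ to inflate the continuum is unnecessary, since the conclusion only requires $\textsf{non}\left(\mathcal{L}_{\omega_{1}}\right)\geq\omega_{2}>\omega_{1}=\textsf{non}\left(\mathcal{M}\right)$, which the reflection argument already provides.
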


\begin{proof}
It is well known that Martin's Axiom is consistent with $\mathsf{CG}_{\omega
}\left(  \omega_{1}\right)  $ (see \cite{Kunen} chapter V.7.3) so the
inequality \textsf{non}$\left(  \mathcal{L}_{\omega_{1}}\right)  <$
\textsf{cov}$\left(  \mathcal{M}\right)  $ is consistent. In order to build a
model for the second inequality, we will perform a countable support iteration
$\langle\mathbb{P}_{\alpha},\mathbb{\dot{Q}}_{\alpha}\mid\alpha\leq\omega
_{2}\rangle$ where $\mathbb{P}_{\alpha}\Vdash``\mathbb{\dot{Q}}_{\alpha
}=\mathbb{BA}\textquotedblright$ for every $\alpha<\omega_{2}.$ It is enough
to show that \textsf{non}$\left(  \mathcal{M}\right)  =\omega_{1}$ holds after
forcing with $\mathbb{P}_{\omega_{2}}.$ To achieve this, it is enough to prove
that $\mathbb{BA}$ \emph{preserves Cohen reals: }(see \cite{Barty})

\qquad

\begin{itemize}
\item For any countable elementary submodel $M$ of $\mathsf{H}\left(
\theta\right)  $ (for some big enough $\theta$), $p\in M\cap\mathbb{BA}$ and
$c$ a Cohen real over $M,$ there is $q$ an $\left(  M,\mathbb{BA}\right)
$-generic condition extending $p$ such that $q$ forces that $c$ is a Cohen
real over $M[\dot{G}].$
\end{itemize}

\qquad\ \ 

Let $M,p$ and $c$ as above and let $\delta=M\cap\omega_{1}.$ Define
$q=p\cup\left\{  \left(  \delta,\delta\right)  \right\}  .$ It is easy to see
that $q\in\mathbb{BA}$ and $q$ is an $\left(  M,\mathbb{BA}\right)  $-generic
condition. We will now prove that it forces that $c$ remains a Cohen real over
$M[\dot{G}].$ Let $\dot{D}\in M$ be a name for an open dense set of
$\omega^{\omega}$ and $r\leq q.$ We must show we can find $r^{\prime}\leq r$
such that $r^{\prime}\Vdash``c\in\dot{D}\textquotedblright.$ Let
$r_{1}=r\upharpoonright\delta$ and note that $r_{1}\in M$ since $r\left(
\delta\right)  =\delta.$ Now we define $E=\bigcup\{\left\langle s\right\rangle
\mid\exists b\leq r_{1}(b\Vdash``\left\langle s\right\rangle \subseteq\dot
{D}\textquotedblright)\}$ which clearly is an open dense set and belonging to
$M.$ Since $c$ is a Cohen real over $M,$ we know that $c\in E$ so there is
$s\in\omega^{<\omega}$ and $b\leq r_{1}$ such that $b\Vdash``\left\langle
s\right\rangle \subseteq\dot{D}\textquotedblright$ and $c\in\left\langle
s\right\rangle .$ By elementarity, we may assume $b\in M.$ It is then easy to
see that $b$ and $r$ are compatible.
\end{proof}

\qquad\ \qquad\qquad\ \ \ \qquad\ \ 

Nevertheless, we will show that the conjecture of Hru\v{s}\'{a}k holds for a
large class of Borel cardinal invariants. A forcing notion $\mathbb{P}$
\emph{preserves category }if $\mathbb{P\Vdash}``A\notin\mathcal{M}%
\textquotedblright$ for every nonmeager set $A.$ Given a $\sigma$-ideal
$\mathcal{I}$ on a Polish space $X,$ we define $\mathbb{P}_{\mathcal{I}%
}=Borel\left(  X\right)  $ $/\mathcal{I}.$ We say $\mathcal{I}$ is a
universally Baire ideal if the set of all (codes for) analytic sets is
universally Baire.

\begin{definition}
Let $\left(  A,B,\longrightarrow\right)  $ be a Borel invariant, we say that
\textsf{non}$\left(  \mathcal{M}\right)  <\left\Vert A,B,\longrightarrow
\right\Vert $ is \emph{nicely consistent }if there is a universally Baire
$\sigma$-ideal $\mathcal{I}$ such that $\mathbb{P}_{\mathcal{I}}$ is proper,
preserves category and destroys $\left(  A,B,\longrightarrow\right)  .$
\end{definition}

\qquad\ \ 

By the methods of \cite{ForcingIdealized}, \textsf{non}$\left(  \mathcal{M}%
\right)  <\left\Vert A,B,\longrightarrow\right\Vert $ is nicely consistent if
there is an iterable $\sigma$-ideal $\mathcal{I}$ such that $\left(
\mathbb{P}_{\mathcal{I}}\right)  _{\omega_{2}}\Vdash``$\textsf{non}$\left(
\mathcal{M}\right)  <\left\Vert A,B,\longrightarrow\right\Vert
\textquotedblright$ where $\left(  \mathbb{P}_{\mathcal{I}}\right)
_{\omega_{2}}$ denotes the countable support iteration of the forcing
$\mathbb{P}_{\mathcal{I}}$ (see \cite{ForcingIdealized} for the definition of
an iterable ideal). Intuitively, \textsf{non}$\left(  \mathcal{M}\right)
<\left\Vert A,B,\longrightarrow\right\Vert $ is nicely consistent just means
that the inequality \textsf{non}$\left(  \mathcal{M}\right)  <\left\Vert
A,B,\longrightarrow\right\Vert $ can be forced with the a nice enough forcing.
Given a $\sigma$-ideal $\mathcal{I},$ define \textsf{cov}$^{-}\left(
\mathcal{I}\right)  $ as the smallest size of a family $\mathcal{A\subseteq
I}$ such that there is a Borel set $B\notin\mathcal{I}$ for which
$B\subseteq\bigcup\mathcal{A}$ (in \cite{ForcingIdealized} \textsf{cov}%
$^{-}\left(  \mathcal{I}\right)  $ is denoted as \textsf{cov}$^{\ast}\left(
\mathcal{I}\right)  $).

\qquad\ \ \ 

\begin{lemma}
Let $\left(  A,B,\longrightarrow\right)  $ be a Borel invariant and
$\mathcal{I}$ a $\sigma$-ideal such that $\mathbb{P}_{\mathcal{I}}$ is proper.
If $\mathbb{P}_{\mathcal{I}}$ destroys $\left(  A,B,\longrightarrow\right)  $
then \textsf{cov}$^{-}\left(  \mathcal{I}\right)  \leq\left\Vert
A,B,\longrightarrow\right\Vert .$
\end{lemma}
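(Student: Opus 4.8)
The plan is to read the name for the unbounded real as a Borel function of the generic point, and then convert a $\longrightarrow$-dominating family of size $\left\Vert A,B,\longrightarrow\right\Vert$ into a cover of an $\mathcal{I}$-positive Borel set by members of $\mathcal{I}$. First I would unpack the hypothesis that $\mathbb{P}_{\mathcal{I}}$ destroys $\left(A,B,\longrightarrow\right)$: there is a condition $W$ (an $\mathcal{I}$-positive Borel set) and a $\mathbb{P}_{\mathcal{I}}$-name $\dot{a}$ for a point of $A$ such that $W\Vdash``\dot{a}\text{ is }\longrightarrow\text{-unbounded over }V\textquotedblright$, i.e. $W\Vdash\neg(\dot{a}\longrightarrow b)$ for every $b\in B\cap V$. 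Since $\mathbb{P}_{\mathcal{I}}$ is proper, I would invoke the standard representation theorem for idealized forcing (the methods of \cite{ForcingIdealized}): after shrinking $W$ to a smaller $\mathcal{I}$-positive Borel set if necessary, there is a Borel function $g:W\longrightarrow A$ with $W\Vdash\dot{a}=g(\dot{x}_{gen})$, where $\dot{x}_{gen}$ names the generic point.

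Second, for each $b\in B$ I would set $N_{b}=\{x\in W\mid g(x)\longrightarrow b\}=g^{-1}[\{a\in A\mid a\longrightarrow b\}]$. Because $g$ and the relation $\longrightarrow$ are Borel, each $N_{b}$ is a Borel subset of $W$. The crucial point is that $N_{b}\in\mathcal{I}$ for every $b\in V$: if some $N_{b}$ were $\mathcal{I}$-positive, it would be a condition below $W$ forcing $g(\dot{x}_{gen})\longrightarrow b$, that is $\dot{a}\longrightarrow b$ with $b$ a ground-model element, contradicting the choice of $W$ and $\dot{a}$.

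Third, I would take a family $D\subseteq B$ witnessing $\left\Vert A,B,\longrightarrow\right\Vert$, so that $\left\vert D\right\vert =\left\Vert A,B,\longrightarrow\right\Vert$ and every $a\in A$ admits some $d\in D$ with $a\longrightarrow d$. Then for each $x\in W$ the point $g(x)\in A$ satisfies $g(x)\longrightarrow d$ for some $d\in D$, so $x\in N_{d}$; hence $\bigcup_{d\in D}N_{d}=W$. Thus $\{N_{d}\mid d\in D\}$ is a family of at most $\left\Vert A,B,\longrightarrow\right\Vert$ sets, all lying in $\mathcal{I}$, whose union is the $\mathcal{I}$-positive Borel set $W$. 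By the definition of \textsf{cov}$^{-}$ this yields \textsf{cov}$^{-}(\mathcal{I})\leq\left\Vert A,B,\longrightarrow\right\Vert$, as required.

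The only delicate step is the first one, where properness of $\mathbb{P}_{\mathcal{I}}$ is genuinely used: it is what guarantees that the name $\dot{a}$ for a point of the standard Borel space $A$ can be read off Borel-measurably from the generic point on a positive set. Once $g$ is in hand, the argument is purely a translation of \textquotedblleft$D$ is $\longrightarrow$-dominating\textquotedblright\ into \textquotedblleft the sets $N_{d}$ cover $W$\textquotedblright, with the $\sigma$-ideal hypothesis on $\mathcal{I}$ entering only through the availability of the representation theorem for $\mathbb{P}_{\mathcal{I}}$.
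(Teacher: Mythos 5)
Your proof is correct and follows essentially the same route as the paper: Borel reading of the name on an $\mathcal{I}$-positive condition, observing that each preimage set $N_{b}$ must lie in $\mathcal{I}$ by absoluteness, and covering the condition by the $N_{d}$ for $d$ in a dominating family. The only cosmetic difference is that the paper ensures $F(x)\in A$ pointwise by restricting to the ($\mathcal{I}$-positive, by properness) set of $M$-generic points of the condition for a countable elementary submodel $M$, whereas you achieve the same by shrinking $W$ when applying the representation theorem.
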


\begin{proof}
Let $\dot{r}$ be a $\mathbb{P}_{\mathcal{I}}$-name such that $\mathbb{P}%
_{\mathcal{I}}\Vdash``\dot{r}\in A\textquotedblright$ and if $b\in B$ then
$\mathbb{P}_{\mathcal{I}}\Vdash``\dot{r}\nrightarrow b\textquotedblright.$ By
the Borel reading of names (see \cite{ForcingIdealized}) there is a Borel set
$C\notin\mathcal{I}$ and a Borel function $F:C\longrightarrow A$ such that
$C\Vdash``F\left(  \dot{r}_{gen}\right)  =\dot{r}\textquotedblright.$ For
every $b\in B$ let $E_{b}=\left\{  x\in C\mid F\left(  x\right)
\longrightarrow b\right\}  .$ Note that each $E_{b}$ is a Borel set and
$E_{b}\in\mathcal{I}.$ Let $D\subseteq B$ be a dominating family for $\left(
A,B,\longrightarrow\right)  $ of minimum size and let $M$ be a countable
elementary submodel such that $\left(  A,B,\longrightarrow\right)  ,C,F\in M.$
Define $C_{1}$ as the set of all $M$-generic points of $C,$ since
$\mathbb{P}_{\mathcal{I}}$ is proper then $C_{1}$ is a Borel set extending
$C.$ It is easy to see that $C_{1}\subseteq\bigcup\limits_{b\in D}E_{b}$ and
then we conclude that \textsf{cov}$^{-}\left(  \mathcal{I}\right)
\leq\left\Vert A,B,\longrightarrow\right\Vert .$
\end{proof}

\qquad\ \ \ \ 

With this we can prove the following (where by \textsf{LC }we denote a large
cardinal hypothesis):

\begin{theorem}
[\emph{LC}]Let $\left(  A,B,\longrightarrow\right)  $ be a Borel invariant
such that \textsf{non}$\left(  \mathcal{M}\right)  <\left\Vert
A,B,\longrightarrow\right\Vert $ is nicely consistent, then \textsf{cov}%
$\left(  \mathcal{M}\right)  \leq\left\Vert A,B,\longrightarrow\right\Vert .$
\end{theorem}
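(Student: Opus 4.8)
The plan is to insert the covering number \textsf{cov}$^{-}(\mathcal{I})$ of the witnessing ideal between \textsf{cov}$(\mathcal{M})$ and the evaluation $\left\Vert A,B,\longrightarrow\right\Vert$, obtaining the chain $\textsf{cov}(\mathcal{M})\leq\textsf{cov}^{-}(\mathcal{I})\leq\left\Vert A,B,\longrightarrow\right\Vert$. First I would unpack the hypothesis: since $\textsf{non}(\mathcal{M})<\left\Vert A,B,\longrightarrow\right\Vert$ is nicely consistent, I fix a universally Baire $\sigma$-ideal $\mathcal{I}$ such that $\mathbb{P}_{\mathcal{I}}$ is proper, preserves category, and destroys $(A,B,\longrightarrow)$. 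The right-hand inequality $\textsf{cov}^{-}(\mathcal{I})\leq\left\Vert A,B,\longrightarrow\right\Vert$ is then immediate from the previous lemma, since $\mathbb{P}_{\mathcal{I}}$ is proper and destroys $(A,B,\longrightarrow)$.

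It remains to establish $\textsf{cov}(\mathcal{M})\leq\textsf{cov}^{-}(\mathcal{I})$, and this is where category preservation (together with \textsf{LC} and universal Baireness) enters. I would reduce it to the following key step: for every Borel set $B\notin\mathcal{I}$ there is a Borel function $f:2^{\omega}\longrightarrow B$ such that $f^{-1}(D)\in\mathcal{M}$ for every Borel $D\in\mathcal{I}$ with $D\subseteq B$; equivalently, $f$ sends a comeager set into the points of $B$ that avoid every ground model Borel set belonging to $\mathcal{I}$ (the $\mathcal{I}$-quasigeneric points of $B$). Granting this, the conclusion follows at once: let $\kappa=\textsf{cov}^{-}(\mathcal{I})$ be witnessed by a Borel set $B\notin\mathcal{I}$ and a family $\{A_{\alpha}\mid\alpha<\kappa\}\subseteq\mathcal{I}$ with $B\subseteq\bigcup_{\alpha<\kappa}A_{\alpha}$, where we may assume each $A_{\alpha}\subseteq B$. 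Taking $f$ as above, every $f^{-1}(A_{\alpha})$ is meager and $\bigcup_{\alpha<\kappa}f^{-1}(A_{\alpha})=f^{-1}(\bigcup_{\alpha<\kappa}A_{\alpha})=f^{-1}(B)=2^{\omega}$, so $\{f^{-1}(A_{\alpha})\mid\alpha<\kappa\}$ covers $2^{\omega}$ by $\kappa$ meager sets, giving $\textsf{cov}(\mathcal{M})\leq\kappa$. Concatenating with the first inequality yields $\textsf{cov}(\mathcal{M})\leq\left\Vert A,B,\longrightarrow\right\Vert$.

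To produce the function $f$ I would work below $B$, noting that $\mathbb{P}_{\mathcal{I}}\upharpoonright B=\mathbb{P}_{\mathcal{I}\upharpoonright B}$ is again proper and category preserving, so it suffices to handle the case $B=2^{\omega}$. The construction of $f$ is precisely the translation of category preservation into a Borel morphism from the meager ideal into $\mathcal{I}$: using the continuous reading of names—available for universally Baire $\mathcal{I}$ under \textsf{LC}—I would assign, by a fusion argument, to each Cohen condition a condition of $\mathbb{P}_{\mathcal{I}}$, so that the resulting continuous map reads a Cohen real as an $\mathcal{I}$-quasigeneric point; category preservation is exactly what guarantees that a Cohen real cannot be driven into any ground model Borel member of $\mathcal{I}$, which is what forces $f^{-1}(D)$ to be meager for Borel $D\in\mathcal{I}$. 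I expect this to be the main obstacle, and the only place genuinely using the large cardinal hypothesis and the machinery of \cite{ForcingIdealized}; I would isolate it as the statement that category preservation of $\mathbb{P}_{\mathcal{I}}$ implies $\textsf{cov}(\mathcal{M})\leq\textsf{cov}^{-}(\mathcal{I})$, this being the content drawn from the theory of idealized forcing. Everything else is the bookkeeping of the two displayed inequalities.
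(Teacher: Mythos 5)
Your proposal is correct and follows essentially the same route as the paper: it fixes the witnessing ideal $\mathcal{I}$, applies the preceding lemma to get $\mathsf{cov}^{-}\left(  \mathcal{I}\right)  \leq\left\Vert A,B,\longrightarrow\right\Vert ,$ and then uses the fact that category preservation makes Cohen forcing add an $\mathcal{I}$-quasigeneric point of any positive Borel set (Zapletal's Corollary 3.5.4), read off by a Borel function whose preimages of Borel members of $\mathcal{I}$ are meager, to conclude $\mathsf{cov}\left(  \mathcal{M}\right)  \leq$ $\mathsf{cov}^{-}\left(  \mathcal{I}\right)  .$ Your explicit unpacking of that last step as a pullback of the covering family along such a Borel function is exactly the ``same method as in the previous lemma'' that the paper invokes.
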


\begin{proof}
Let $\mathcal{I}$ be a $\sigma$-ideal such that $\mathbb{P}_{\mathcal{I}}$ is
proper, preserves category and destroys $\left(  A,B,\longrightarrow\right)
.$ By the previous result, we know that \textsf{cov}$^{-}\left(
\mathcal{I}\right)  \leq\left\Vert A,B,\longrightarrow\right\Vert .$ By
\cite{ForcingIdealized} Corollary 3.5.4, Cohen forcing $\mathbb{C=P}%
_{\mathcal{M}}$ adds an $\mathcal{I}$-quasigeneric real, using the same method
as in the previous lemma, we can then conclude that \textsf{cov}$\left(
\mathcal{M}\right)  \leq$ \textsf{cov}$^{-}\left(  \mathcal{I}\right)  $ and
then \textsf{cov}$\left(  \mathcal{M}\right)  \leq\left\Vert
A,B,\longrightarrow\right\Vert .$
\end{proof}

\qquad\ \qquad\ \ 

We do not know if there is a Borel invariant $\left(  A,B,\longrightarrow
\right)  $ such that \textsf{non}$\left(  \mathcal{M}\right)  <\left\Vert
A,B,\longrightarrow\right\Vert $ is consistent but not nicely consistent.

\chapter{Indestructibility\qquad\ \ \ }

\section{Indestructibility of ideals}

If $\mathcal{I}$ is an ideal in $\omega$ and $\mathbb{P}$ is a partial order,
we say that $\mathbb{P}$ \emph{destroys }$\mathcal{I}$ if $\mathbb{P}$ forces
that $\mathcal{I}$ is no longer tall i.e. if $\mathbb{P}$ adds a new subset of
$\omega$ that is almost disjoint with every element of $\mathcal{I}.$ The
theory of destructibility of ideals is very important in forcing theory, since
many important forcing properties may be stated in these terms. The following
proposition is an example of this fact.

\begin{proposition}
Let $\mathbb{P}$ be a partial order.

\begin{enumerate}
\item $\mathbb{P}$ adds new reals if and only if $\mathbb{P}$ destroys
$tr\left(  \mathsf{ctble}\right)  .$

\item $\mathbb{P}$ adds unbounded reals if and only if $\mathbb{P}$ destroys
$tr\left(  \mathcal{K}_{\sigma}\right)  .$

\item $\mathbb{P}$ adds dominating reals if and only if $\mathbb{P}$ destroys
$tr\left(  \mathcal{L}\right)  $ if and only if $\mathbb{P}$ destroys
\textsf{FIN}$\times$\textsf{FIN}$.$

\item $\mathbb{P}$ adds eventually different reals if and only if $\mathbb{P}$
destroys $\mathcal{ED}.$
\end{enumerate}
\end{proposition}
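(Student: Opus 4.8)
The plan is to treat the three trace ideals $tr(\mathsf{ctble})$, $tr(\mathcal{K}_{\sigma})$, $tr(\mathcal{L})$ uniformly through the notion of quasigeneric real, and to deal with $\mathsf{FIN}\times\mathsf{FIN}$ and $\mathcal{ED}$ by direct combinatorics. The unifying statement I would isolate is: for a $\sigma$-ideal $\mathcal{I}$ on $\omega^{\omega}$ (or $2^{\omega}$), a forcing $\mathbb{P}$ adds an $\mathcal{I}$-quasigeneric real if and only if $\mathbb{P}$ destroys $tr\left(\mathcal{I}\right)$. Granting this, parts 1 and 2 and the $tr(\mathcal{L})$ clause of part 3 follow from the identifications recorded in the preliminaries, namely that the $\mathsf{ctble}$-, $\mathcal{K}_{\sigma}$-, and $\mathcal{L}$-quasigeneric reals are exactly the new, unbounded, and dominating reals respectively.

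For the direction ``adds a quasigeneric real implies destroys the trace'', given an $\mathcal{I}$-quasigeneric $r\in\omega^{\omega}$ in $V[G]$ I would form the branch $b_{r}=\{r\upharpoonright n\mid n\in\omega\}\subseteq\omega^{<\omega}$. For any $a\in tr\left(\mathcal{I}\right)\cap V$ the set $\pi\left(a\right)$ is a Borel member of $\mathcal{I}\cap V$, so quasigenericity yields $r\notin\pi\left(a\right)^{V[G]}$, that is, $r\upharpoonright n\in a$ for only finitely many $n$, whence $b_{r}\cap a$ is finite. Since $b_{r}$ is an infinite \emph{new} subset of $\omega^{<\omega}$, it witnesses that $tr\left(\mathcal{I}\right)$ is no longer tall. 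Part 1 actually needs no quasigeneric machinery for its converse: destroying \emph{any} ideal requires a new subset of a countable set, hence a new real; and conversely a new real is $\mathsf{ctble}$-quasigeneric, so the branch construction applies.

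For $\mathsf{FIN}\times\mathsf{FIN}$ and dominating reals I would argue directly. A dominating $g$ gives the graph $X=\{(n,g(n))\mid n\in\omega\}$, which meets each $D(f)$ in a finite set since $f\leq^{\ast}g$, and meets only finitely many columns, so $X$ is almost disjoint from $\mathsf{FIN}\times\mathsf{FIN}\cap V$. Conversely, from a witness $X$ (finite columns, a.d. from every $D(f)$) I would enumerate its support $D=\{d_{0}<d_{1}<\cdots\}$ and set $h(i)=\max\left(X\cap C_{d_{i}}\right)$; a.d.-ness from $D(f)$ forces $\max\left(X\cap C_{n}\right)>f(n)$ for all but finitely many $n\in D$, and $d_{i}\geq i$, so $h$ dominates every increasing $f\in V$ and is therefore dominating. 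The same template handles $\mathcal{ED}$: an eventually different $g$ contributes its graph, which is a.d. from the columns and from every ground-model graph, while conversely a selector through a destroying set is a \emph{partial} eventually different real.

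The hard part will be the converse ``destroys $tr\left(\mathcal{I}\right)$ implies adds a quasigeneric real'', i.e. extracting a genuine quasigeneric real from a witness $b$. Almost disjointness forces $b\notin tr\left(\mathcal{I}\right)$ (else $b$ would fail to be a.d. from a ground-model set containing it), so $\pi\left(b\right)$ is $\mathcal{I}$-positive and branches through $b$ exist; but a branch of the downward closure of $b$ need not meet $b$ infinitely often, so one must thin $b$ to an actual chain and \emph{simultaneously} dodge the ground-model sets $\pi\left(a\right)\in\mathcal{I}$, which I expect to require a fusion or rank argument against the $\mathcal{I}$-positive tree. A parallel subtlety occurs for $\mathcal{ED}$, where the selector only produces a partial eventually different real and promoting it to a total one (equivalently, that partial and total eventually different reals are added by the same forcings) is delicate; the monotonicity trick above rescues the dominating case but has no analogue for eventual difference, so this is where the real work lies. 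Finally, the $\mathcal{L}$-quasigeneric $=$ dominating identification, not recorded in the preliminaries, needs the self-referential estimate that a dominating real outgrows every function built by ground-model recursion from it, which I would verify separately.
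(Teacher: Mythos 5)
Your forward direction (an $\mathcal{I}$-quasigeneric real $r$ yields the branch $\{r\upharpoonright n\mid n\in\omega\}$, which is almost disjoint from every ground-model element of $tr\left(\mathcal{I}\right)$) is correct, and your direct combinatorial treatment of \textsf{FIN}$\times$\textsf{FIN} is essentially the paper's (modulo the slip that the graph of a dominating $g$ meets \emph{every} column, each in a finite set, which is what is actually needed). The genuine gap is the ``unifying statement'' you build the rest on: the equivalence ``$\mathbb{P}$ destroys $tr\left(\mathcal{I}\right)$ iff $\mathbb{P}$ adds an $\mathcal{I}$-quasigeneric real'' is \textbf{false} in general, and the paper says so immediately after this very proposition: $tr\left(\mathcal{N}\right)$ can be destroyed by a $\sigma$-centered forcing (e.g.\ $\mathbb{M}\left(tr\left(\mathcal{N}\right)\right)$), while no $\sigma$-centered forcing adds a random real. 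So the ``fusion or rank argument against the $\mathcal{I}$-positive tree'' that you defer to cannot exist at that level of generality; each converse needs an ideal-specific argument. For $tr\left(\mathsf{ctble}\right)$ it is trivial (destroying any ideal on a countable set requires a new subset of that set, hence a new real). For $tr\left(\mathcal{K}_{\sigma}\right)$ the paper extracts an unbounded real from the explicit tall family of size $\mathfrak{d}$ built in the proof of \textsf{cov}$^{\ast}\left(tr\left(\mathcal{K}_{\sigma}\right)\right)=\mathfrak{d}$ (the sets $a_{\alpha,\beta}$ and $b_{\alpha,t}$ indexed by a dominating family and a covering family of compact sets). For $tr\left(\mathcal{L}\right)$ the paper never touches $\mathcal{L}$-quasigenericity at all: since Laver forcing adds a dominating real, the Hru\v{s}\'{a}k--Zapletal characterization gives \textsf{FIN}$\times$\textsf{FIN}$\leq_{K}tr\left(\mathcal{L}\right)$, and destruction is monotone in the Kat\v{e}tov order, so destroying $tr\left(\mathcal{L}\right)$ destroys \textsf{FIN}$\times$\textsf{FIN} and hence adds a dominating real. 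This also disposes of your last worry: the ``self-referential estimate'' needed to show a dominating real is $\mathcal{L}$-quasigeneric is false for an arbitrary dominating real and is simply not needed on this route.

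The second hole you correctly flag but leave open --- promoting a partial eventually different real (a selector through the destroying set) to a total one --- is closed in the paper by a different detour: an infinite $X\subseteq\omega\times\omega$ with finite columns that is almost disjoint from every ground-model graph forces $V\cap\omega^{\omega}$ to be contained in the meager set $\left\{g\mid\left\vert g\cap X\right\vert<\omega\right\}$, and the earlier proposition (proved via chopped reals) that $\mathbb{P}$ destroys category if and only if $\mathbb{P}$ adds an eventually different real then produces a total eventually different real. Without invoking that proposition, or some equivalent of the Bartoszy\'{n}ski characterization, your part 4 remains incomplete.
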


\begin{proof}
The first and second points will be proved later in this chapter. We will now
prove that $\mathbb{P}$ adds dominating reals if and only if $\mathbb{P}$
destroys \textsf{FIN}$\times$\textsf{FIN}$.$ First assume $\mathbb{P}$
destroys \textsf{FIN}$\times$\textsf{FIN}$,$ so then $\mathbb{P}$ adds an
infinite partial function that is almost disjoint with $($\textsf{FIN}$\times
$\textsf{FIN}$)\cap V.$ We may assume $f$ is increasing. We now define
$g:\omega\longrightarrow\omega$ such that $g\left(  n\right)  =f\left(
m_{n}\right)  $ where $m_{n}=min\left\{  k\geq n\mid k\in dom\left(  f\right)
\right\}  .$ It is easy to see that $g$ is a dominating real. Clearly if
$\mathbb{P}$ adds a dominating real then $\mathbb{P}$ destroys \textsf{FIN}%
$\times$\textsf{FIN}$.$ By a theorem below, since Laver forcing adds
dominating reals then \textsf{FIN}$\times$\textsf{FIN}$\leq_{K}tr\left(
\mathcal{L}\right)  $ so by destroying $tr\left(  \mathcal{L}\right)  $ we
will add a dominating real. It is easy to see that adding a dominating real
will destroy $tr\left(  \mathcal{L}\right)  .$ Finally, if $\mathbb{P}$ adds
an eventually different real then $\mathbb{P}$ will destroy $\mathcal{ED}$.
Conversely, if $\mathbb{P}$ destroys $\mathcal{ED}$ then $\mathbb{P}$ will
make $V\cap\omega^{\omega}$ a meager set, hence it will add an eventually
different real.
\end{proof}

\qquad\ \ \ 

The previous proposition suggests the following conjecture: if $\mathcal{I}$
is a $\sigma$-ideal on $\omega^{\omega}$ and $\mathbb{P}$ is a partial order,
then $\mathbb{P}$ adds $\mathcal{I}$-quasigeneric reals if and only if
$\mathbb{P}$ destroys $tr\left(  \mathcal{I}\right)  .$ However, this
conjecture is false even for nice ideals and nice partial orders, as the
following example from \cite{ForcingwithQuotients} shows: the trace of the
null ideal can be destroyed by a $\sigma$-centered forcing (for example by
$\mathbb{M}\left(  tr\left(  \mathcal{N}\right)  \right)  $) however, it is
known that no $\sigma$-centered forcing can add random reals.

\qquad\ \ \ \qquad\ \ \ 

The Kat\v{e}tov order is a key tool for understanding the destructibility of ideals:

\begin{lemma}
Let $\mathcal{I},\mathcal{J}$ be two ideals such that $\mathcal{I\leq}_{K}$
$\mathcal{J}.$ If $\mathbb{P}$ destroys $\mathcal{J}$ then $\mathbb{P}$
destroys $\mathcal{I}.$
\end{lemma}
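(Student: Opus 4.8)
The plan is to push the destroying set for $\mathcal{J}$ forward along the Kat\v{e}tov morphism. First I would fix a Kat\v{e}tov morphism $f:\left(\omega,\mathcal{J}\right)\longrightarrow\left(\omega,\mathcal{I}\right)$ witnessing $\mathcal{I}\leq_{K}\mathcal{J}$, so that $f^{-1}\left(A\right)\in\mathcal{J}$ for every $A\in\mathcal{I}$. Since $\mathbb{P}$ destroys $\mathcal{J}$, in the generic extension there is an infinite $B\subseteq\omega$ such that $B\cap J$ is finite for every $J\in\mathcal{J}$ (computed in the ground model). My candidate witness that $\mathcal{I}$ is no longer tall is the image $C=f\left[B\right]$.

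The key computation I would carry out is the identity $C\cap I=f\left[B\cap f^{-1}\left(I\right)\right]$, valid for any $I\in\mathcal{I}$. Since $f$ is a ground model function and $I\in\mathcal{I}$, the set $f^{-1}\left(I\right)$ belongs to $\mathcal{J}$, so $B\cap f^{-1}\left(I\right)$ is finite by the choice of $B$; hence $C\cap I$ is finite. This already shows that $C$ is almost disjoint from every element of $\mathcal{I}$.

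The one step that is not purely formal, and which I expect to be the only real subtlety, is checking that $C$ is infinite (so that it genuinely witnesses non-tallness rather than being a degenerate finite set). If $f\left[B\right]$ were finite, then, using the standing convention that every ideal contains all finite sets, we would have $f\left[B\right]\in\mathcal{I}$ and therefore $f^{-1}\left(f\left[B\right]\right)\in\mathcal{J}$. But $B\subseteq f^{-1}\left(f\left[B\right]\right)$, so $B$ would be an infinite subset of a member of $\mathcal{J}$, contradicting that $B$ is almost disjoint from every element of $\mathcal{J}$. Thus $C$ must be infinite.

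Putting these observations together, $C=f\left[B\right]$ is an infinite subset of $\omega$ in the extension that is almost disjoint from every ground model element of $\mathcal{I}$, which is exactly what it means for $\mathbb{P}$ to destroy $\mathcal{I}$. I do not anticipate needing any hypothesis on $f$ beyond being a Kat\v{e}tov morphism (in particular, finite-to-one-ness is not required), since the finite-set convention already rules out the only bad case.
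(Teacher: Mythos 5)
Your proposal is correct and matches the paper's proof, which likewise fixes a Kat\v{e}tov morphism $f$ and pushes the $\mathcal{J}$-destroying set forward to $f[\dot{X}]$ (the paper dismisses the verification as "easy to see"). Your extra care in checking both that $f[B]\cap I=f[B\cap f^{-1}(I)]$ is finite and that $f[B]$ is infinite via the finite-set convention fills in exactly the details the paper omits.
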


\begin{proof}
Let $f:\left(  \omega,\mathcal{J}\right)  \longrightarrow\left(
\omega,\mathcal{I}\right)  $ be a Kat\v{e}tov-morphism. Let $\dot{X}$ be a
$\mathbb{P}$-name for an infinite subset of $\omega$ that is forced to be
almost disjoint with every element of $\mathcal{J}.$ It is easy to see that
$f[\dot{X}]$ is forced to ba almost disjoint with $\mathcal{I}.$
\end{proof}

\qquad\ \ \ \ \ \ \qquad\ \ 

The following is an useful lemma:

\begin{lemma}
[\cite{ForcingIdealized}]Let $\mathcal{I}$ be $\sigma$-ideal in $\omega
^{\omega}$ such that $\mathbb{P}_{\mathcal{I}}$ is proper and has the
continuous reading of names. If $B\in\mathbb{P}_{\mathcal{I}}$ then there is
$D\leq B$ such that $D$ is $G_{\delta}.$
\end{lemma}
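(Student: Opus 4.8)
The plan is to exploit the continuous reading of names to transfer a finer Polish topology onto a positive subset of $B$, and then to manufacture a $G_{\delta}$ set by a Lavrentiev-type extension argument. First I would fix a finer zero-dimensional Polish topology $\tau_{1}$ on $\omega^{\omega}$, with the same Borel sets as the original topology $\tau$, in which $B$ together with a countable clopen base $\left\{ U_{n}\mid n\in\omega\right\} $ (arranged so that $B=U_{n_{0}}$ for some $n_{0}$) are all clopen; such a topology exists by the standard refinement theorem of descriptive set theory (see \cite{Kechris}). Encoding this base by $f\colon\omega^{\omega}\longrightarrow2^{\omega}$, $f\left(  x\right)  \left(  n\right)  =\chi_{U_{n}}\left(  x\right)  $, makes $f$ a homeomorphism of $\left(  \omega^{\omega},\tau_{1}\right)  $ onto a Polish, hence $G_{\delta}$, subset $Z\subseteq2^{\omega}$, while $f$ is Borel with respect to $\tau$. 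Applying the continuous reading of names to the positive set $B$ and the Borel function $f\upharpoonright B$ yields a positive Borel $C\subseteq B$ such that $f\upharpoonright C$ is $\tau$-continuous.

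The first payoff is that $\tau$ and $\tau_{1}$ agree on $C$: continuity of $f\upharpoonright C$ forces each $U_{n}\cap C$ to be relatively $\tau$-clopen in $C$, so $\tau_{1}\upharpoonright C=\tau\upharpoonright C$. The point I expect to be the main obstacle is that this does \emph{not} already make $C$ a $G_{\delta}$ set: through $f$ the subspace $\left(  C,\tau\right)  $ is only homeomorphic to the Borel set $f\left[  C\right]  \subseteq Z$, and a Borel (non-Polish) subspace need not be $G_{\delta}$. In other words, continuous reading of names trivializes the topology on a positive set but by itself does not improve its descriptive complexity, so the $G_{\delta}$ must be produced by a separate move.

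To overcome this I would extend and then cut down. By Lavrentiev's theorem the $\tau$-continuous map $f\upharpoonright C$ extends to a $\tau$-continuous $\widetilde{f}\colon\widetilde{C}\longrightarrow2^{\omega}$ with $\widetilde{C}$ a $\tau$-$G_{\delta}$ set and $C\subseteq\widetilde{C}\subseteq\overline{C}^{\tau}$. Since $Z$ is $G_{\delta}$, the set $E=\widetilde{C}\cap\widetilde{f}^{-1}\left(  Z\right)  $ is again $\tau$-$G_{\delta}$, and on $E$ the composition $\phi=f^{-1}\circ\widetilde{f}$ is a $\tau$-continuous map into $\left(  \omega^{\omega},\tau\right)  $ (using that $f^{-1}\colon Z\longrightarrow\left(  \omega^{\omega},\tau_{1}\right)  $, hence into $\tau$, is continuous). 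The crucial observation is that $D=\left\{  x\in E\mid\phi\left(  x\right)  =x\right\} $, being the agreement set of the two $\tau$-continuous maps $\phi$ and the identity into the Hausdorff space $\left(  \omega^{\omega},\tau\right)  $, is $\tau$-closed in $E$ and therefore $\tau$-$G_{\delta}$. One checks $C\subseteq D$ (on $C$ we have $\widetilde{f}=f$ and $\phi=\mathrm{id}$), so $D$ is positive, and on $D$ we recover $f\left(  x\right)  =\widetilde{f}\left(  x\right)  $.

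Finally I would trim $D$ to land inside $B$ using the coordinate $n_{0}$. Put $D_{0}=\left\{  x\in D\mid\widetilde{f}\left(  x\right)  \left(  n_{0}\right)  =1\right\} $; this is relatively clopen in $D$, hence still $\tau$-$G_{\delta}$, and still contains $C$, hence is positive. For $x\in D_{0}$ we have $\chi_{B}\left(  x\right)  =f\left(  x\right)  \left(  n_{0}\right)  =\widetilde{f}\left(  x\right)  \left(  n_{0}\right)  =1$, so $D_{0}\subseteq B$. Thus $D_{0}$ is a $\tau$-$G_{\delta}$ condition with $D_{0}\leq B$, as required. Properness of $\mathbb{P}_{\mathcal{I}}$ enters to guarantee that the quotient is the well-behaved forcing in which continuous reading of names is available (if one prefers, one may first replace $B$ by its positive set of $M$-generic reals for a countable elementary $M\prec\mathsf{H}\left(  \theta\right)  $), but the $G_{\delta}$ itself is produced entirely by the extension argument above, and positivity survives each restriction simply because every superset of the positive set $C$ is positive.
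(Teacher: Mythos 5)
Your proof is correct, but it reaches the $G_{\delta}$ condition by a genuinely different route than the paper. The paper represents the Borel (hence analytic) set $B$ as the projection of a closed set $\left[  T\right]  \subseteq\omega^{\omega}\times\omega^{\omega}$, uses the continuous reading of names to read a witness $y$ with $\left(  x,y\right)  \in\left[  T\right]  $ continuously on a positive $C\leq B$, and then takes $D$ to be the set of reals along which the finite approximations $\overline{F}\left(  s\right)  $ to that witness converge; this $D$ is visibly $G_{\delta}$, contains $C$, and lies inside $B$ because $\left[  T\right]  $ is closed, so the limiting witness still certifies membership. In effect the paper performs the Lavrentiev extension of $F$ by hand and uses the closedness of the witness set to keep the extension inside $B$. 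You instead make $B$ clopen in a finer Polish topology, apply the continuous reading of names to the coding map $f$, and then invoke the Kuratowski--Lavrentiev extension theorem together with the equalizer trick $D=\left\{  x\mid f^{-1}(\widetilde{f}\left(  x\right)  )=x\right\}  $ to guarantee that the extended code still computes the characteristic function of $B$ correctly, after which the restriction by the coordinate $n_{0}$ lands inside $B$. You correctly isolate the real issue --- continuity of $f$ on $C$ alone does not make $C$ a $G_{\delta}$, and a naive extension of $\widetilde{f}$ would not stay inside $B$, which is exactly what the equalizer step repairs. Your version is more modular, quoting two standard descriptive-set-theoretic theorems instead of unwinding a tree representation; the paper's version is more self-contained and makes the $G_{\delta}$ set completely explicit. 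Both arguments consume the same input from the hypothesis, namely a single application of the continuous reading of names on a positive subset of $B$.
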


\begin{proof}
Let $B\in\mathbb{P}_{\mathcal{I}}$ and since $B$ is analytic, we can find $T$
a tree on $\omega\times\omega$ such that $B=\left\{  x\mid\exists y\left(
\left(  x,y\right)  \in\left[  T\right]  \right)  \right\}  $ (where $\left[
T\right]  =\left\{  \left(  x,y\right)  \mid\forall n\left(  \left(
x\upharpoonright n,y\upharpoonright n\right)  \in T\right)  \right\}  $). Let
$\dot{y}$ be a $\mathbb{P}_{\mathcal{I}}$-name such that $B\Vdash``\,(\dot
{r}_{gen},\dot{y})\in\left[  T\right]  \textquotedblright$ (where $\dot
{r}_{gen}$ is the name of the generic real). Since $\mathbb{P}_{\mathcal{I}}$
is proper and has the continuous reading of names we can find $C\leq B$ and a
continuous function $F:C\longrightarrow\omega^{\omega}$ such that
$C\Vdash``F\left(  \dot{r}_{gen}\right)  =\dot{y}\textquotedblright.$ Since
$C\Vdash``\left(  \dot{r}_{gen},F\left(  \dot{r}_{gen}\right)  \right)
\in\left[  T\right]  \textquotedblright$ we may assume (by possibly shrinking
the condition) that $\left(  x,F\left(  x\right)  \right)  \in\left[
T\right]  $ for every $x\in C.$

\qquad\ \ 

Let $L=\left\{  s\in\omega^{<\omega}\mid\left\langle s\right\rangle \cap
C\neq\emptyset\right\}  .$ We now define a function $\overline{F}%
:L\longrightarrow\omega^{<\omega}$ as follows: if $\overline{F}$ is not
constant on $\left\langle s\right\rangle \cap C$ we define $F\left(  s\right)
=t$ where $t$ is the largest such that $F\left[  \left\langle s\right\rangle
\cap C\right]  \subseteq\left\langle t\right\rangle .$ In case $F$ is constant
on $\left\langle s\right\rangle \cap C$ we define $\overline{F}\left(
s\right)  =r\upharpoonright\left\vert s\right\vert $ where $r$ is the constant
value of $F\left[  \left\langle s\right\rangle \cap C\right]  .$ We now define
$D$ as the set of all $r\in\omega^{\omega}$ such that for every $n\in\omega$
there is $s\subseteq r$ such that $\overline{F}\left(  s\right)  $ has length
at least $n.$ It is easy to see that $C\subseteq D\subseteq B$ so
$D\in\mathbb{P}_{\mathcal{I}}$ and it is a $G_{\delta}$ set.
\end{proof}

\qquad\ \ 

The relevance of the trace ideals in the study of destructibility is the
following important result of Hru\v{s}\'{a}k and Zapletal:

\begin{proposition}
[\cite{ForcingwithQuotients}]Let $\mathcal{I}$ be a $\sigma$-ideal in
$\omega^{\omega}$ such that $\mathbb{P}_{\mathcal{I}}$ is proper and has the
continuous reading of names. If $\mathcal{J}$ is an ideal on $\omega,$ then
the following are equivalent:\qquad\ \ \ \ \ 

\begin{enumerate}
\item There is a condition $B\in\mathbb{P}_{\mathcal{I}}$ such that $B$ forces
that $\mathcal{J}$ is not tall.

\item There is $a\in tr\left(  \mathcal{I}\right)  ^{+}$ such that
$\mathcal{J\leq}_{K}$ $tr\left(  \mathcal{I}\right)  \upharpoonright a.$
\end{enumerate}
\end{proposition}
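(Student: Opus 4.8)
The plan is to prove the two implications separately, letting the $\sigma$-ideal hypothesis on $\mathcal{I}$ carry the backward direction and the continuous reading of names carry the forward one. Throughout I would exploit the defining feature of $\mathbb{P}_{\mathcal{I}}$ that $\leq$ is inclusion modulo $\mathcal{I}$: for a Borel set $E$ and a condition $C\in\mathbb{P}_{\mathcal{I}}$ one has $C\Vdash\text{``}\dot{r}_{gen}\in E\text{''}$ if and only if $C\setminus E\in\mathcal{I}$. I would also use the remark from the previous section that $\pi(a)$ is $G_{\delta}$ for every $a\subseteq\omega^{<\omega}$, and the obvious monotonicity $b\subseteq b'\Rightarrow\pi(b)\subseteq\pi(b')$.

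For $2\Rightarrow 1$, suppose $a\in tr(\mathcal{I})^{+}$ and $g\colon a\longrightarrow\omega$ witnesses $\mathcal{J}\leq_{K}tr(\mathcal{I})\upharpoonright a$, so $\pi(g^{-1}(J))\in\mathcal{I}$ for every $J\in\mathcal{J}$. I would take the condition $B=\pi(a)\notin\mathcal{I}$ and the name $\dot{Y}=g\left[\left\{\dot{r}_{gen}\upharpoonright n\mid\dot{r}_{gen}\upharpoonright n\in a\right\}\right]$. First I would check that $B$ forces $\dot{Y}$ infinite: since every singleton $\{k\}$ lies in $\mathcal{J}$ (all ideals here contain the finite sets), $\pi(g^{-1}(\{k\}))\in\mathcal{I}$, so $B$ forces that $g$ takes each value $k$ only finitely often along the generic branch, and as $\mathcal{I}$ is a $\sigma$-ideal this holds for all $k$ at once, forcing infinitely many distinct values into $\dot{Y}$. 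The same bookkeeping gives almost disjointness: for $J\in\mathcal{J}$ the branches $r$ with $\dot{Y}[r]\cap J$ infinite agree, modulo the $\mathcal{I}$-small set where a single value recurs, with $\pi(g^{-1}(J))\in\mathcal{I}$, so $B\Vdash\text{``}\dot{Y}\cap J\text{ is finite''}$ for each $J\in\mathcal{J}$. Hence $B$ forces $\dot{Y}$ to be a witness that $\mathcal{J}$ is not tall.

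For the harder implication $1\Rightarrow 2$, fix $B$ forcing $\mathcal{J}$ not tall and a name $\dot{Y}$ for an infinite subset of $\omega$ forced by $B$ to be almost disjoint from every element of $\mathcal{J}\cap V$; I would view $\dot{Y}$ through its increasing enumeration, a name for a point of $\omega^{\omega}$. Using properness, the continuous reading of names, and the earlier $G_{\delta}$ lemma, I would pass to a $G_{\delta}$ condition $C\leq B$ carrying a continuous $F\colon C\longrightarrow\omega^{\omega}$ with $C\Vdash\text{``}F(\dot{r}_{gen})=\dot{Y}\text{''}$, set up so that $C=\pi(a)$, where $a\subseteq\omega^{<\omega}$ collects the nodes of the tree of $C$ at which the reading first commits a new value of the enumeration of $\dot{Y}$, and I would put $g(s)$ equal to that newly decided element (thinning so at most one new value is decided per node). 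Since every branch of $C$ eventually decides all of $\dot{Y}$, it meets $a$ infinitely often, so $C\subseteq\pi(a)$ and $a\in tr(\mathcal{I})^{+}$.

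It then remains to see that $g$ is a Kat\v{e}tov morphism, i.e.\ $\pi(g^{-1}(J))\in\mathcal{I}$ for each $J\in\mathcal{J}$. By construction a branch $r$ lies in $\pi(g^{-1}(J))$ exactly when $\dot{Y}[r]$ meets $J$ infinitely often, so $\pi(g^{-1}(J))=\{r\mid\dot{Y}[r]\cap J\text{ is infinite}\}$ as Borel sets; since $C$ forces $\dot{Y}\cap J$ finite, the basic feature of $\mathbb{P}_{\mathcal{I}}$ gives $C\cap\pi(g^{-1}(J))\in\mathcal{I}$, and because $g^{-1}(J)\subseteq a$ forces $\pi(g^{-1}(J))\subseteq\pi(a)=C$ by monotonicity, we conclude $\pi(g^{-1}(J))\in\mathcal{I}$, that is $g^{-1}(J)\in tr(\mathcal{I})\upharpoonright a$. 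The hard part, and the step I expect to require real care, is the synchronization in the previous paragraph: arranging the condition $C$, the node set $a$, and the map $g$ so that simultaneously $\pi(a)$ equals $C$ (not merely contains it) while $C\leq B$ and the continuous reading remains valid on $C$. Everything downstream is forced by that identity, since $\pi(a)=C$ is exactly what converts the $\mathcal{I}$-smallness of the trace $C\cap\pi(g^{-1}(J))$ into genuine membership $\pi(g^{-1}(J))\in\mathcal{I}$; realizing it is where the canonical $G_{\delta}$ form $\pi(a)$ of the condition produced by the $G_{\delta}$ lemma must be used, so that a branch realizing infinitely many new decisions is automatically a branch of $C$.
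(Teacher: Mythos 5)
Your proposal is correct and follows essentially the same route as the paper: for $2\Rightarrow 1$ you push the set of initial segments of the generic lying in $a$ forward through the Kat\v{e}tov morphism (the paper factors this through its earlier lemma that destruction is upward-absolute along $\leq_{K}$), and for $1\Rightarrow 2$ you build the same system of decision nodes $a$ with $g(s)$ the value of the witness decided at $s$, arranging $\pi(a)$ to be a condition below $B$. The synchronization you flag as the delicate step is exactly what the paper's construction of the refining antichains $a_{n}$ together with the $G_{\delta}$ lemma accomplishes, so there is no gap.
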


\begin{proof}
We first show that 2 implies 1. Let $B=\pi\left(  a\right)  $ which is an
element of $\mathbb{P}_{\mathcal{I}}.$ It is enough to show that $B$ forces
that $tr\left(  \mathcal{I}\right)  \upharpoonright a$ is no longer tall. Let
$\dot{r}_{gen}$ be the name for the generic real. Note that since
$B\Vdash``\dot{r}_{gen}\in B\textquotedblright$, it follows that $B$ forces
that $\dot{x}=\left\{  \dot{r}_{gen}\upharpoonright n\mid\dot{r}%
_{gen}\upharpoonright n\in a\right\}  $ is infinite. We will prove that $B$
forces $\dot{x}$ to be AD with $tr\left(  \mathcal{I}\right)  \upharpoonright
a.$ Let $C\leq B$ and $d\in$ $tr\left(  \mathcal{I}\right)  \upharpoonright a$
then $C_{1}=C\setminus\pi\left(  d\right)  \in\mathbb{P}_{\mathcal{I}}$ and
$C_{1}$ forces $\dot{x}$ is almost disjoint with $d.$

\qquad\ \ \ 

Now assume that there is $B\in\mathbb{P}_{\mathcal{I}}$ and a $\mathbb{P}%
_{\mathcal{I}}$-name $\dot{X}$ such that $\dot{X}=\left\{  \dot{x}_{n}\mid
n\in\omega\right\}  $ is forced by $B$ to be almost disjoint with
$\mathcal{J}.$ We recursively define $\left\{  a_{n}\mid n\in\omega\right\}  $
and a function $g$ as follows:

\begin{enumerate}
\item Each $a_{n}\subseteq\omega^{<\omega}$ is an antichain.

\item If $s\in a_{n+1}$ then there is $t\in a_{n}$ such that $t\subseteq s.$

\item If $a=%
%TCIMACRO{\tbigcup }%
%BeginExpansion
{\textstyle\bigcup}
%EndExpansion
a_{n}$ then $\pi\left(  a\right)  \subseteq B$ and $\pi\left(  a\right)
\in\mathbb{P}_{\mathcal{I}}.$

\item $g$ is a function from $a$ to $\omega.$

\item If $s\in a_{n}$ then $\left\langle s\right\rangle \cap\pi\left(
a\right)  \Vdash``\dot{x}_{n}=g\left(  s\right)  \textquotedblright.$
\end{enumerate}

\qquad\qquad\ 

This can be done since $\mathbb{P}_{\mathcal{I}}$ has the continuous reading
of names (and the previous lemma). It is then easy to see that $g:\left(
\omega^{<\omega},tr\left(  \mathcal{I}\right)  \upharpoonright a\right)
\longrightarrow\left(  \omega,\mathcal{J}\right)  $ is a Kat\v{e}tov-morphism.
\end{proof}

\qquad\ \ 

If $t\in2^{<\omega}$ we define $\left\langle t\right\rangle _{<\omega
}=\left\{  s\in2^{<\omega}\mid t\sqsubseteq s\right\}  .$ We now have the following:

\begin{lemma}
$tr\left(  \mathcal{M}\right)  $ is Kat\v{e}tov-Blass equivalent to
\textsf{nwd}$.$\qquad\ \ \ \ 
\end{lemma}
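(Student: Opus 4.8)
The plan is to prove the stronger statement that $tr\left(\mathcal{M}\right)$ and \textsf{nwd} are \emph{isomorphic} as ideals, from which Kat\v{e}tov--Blass equivalence is immediate: a bijection of the underlying sets carrying one ideal exactly onto the other is, read in either direction, a finite-to-one Kat\v{e}tov morphism. Since every countable metric space without isolated points is homeomorphic to $\mathbb{Q}$, I would first replace the underlying set of \textsf{nwd} by a convenient copy of $\mathbb{Q}$ sitting inside $2^{\omega}$. Define $\phi:2^{<\omega}\longrightarrow 2^{\omega}$ by $\phi\left(s\right)=s^{\frown}1^{\frown}0^{\omega}$ and set $D=\phi\left[2^{<\omega}\right]$. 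One checks that $\phi$ is injective, that $D$ is dense in $2^{\omega}$, and that $D$ has no isolated points (each $\phi\left(s\right)$ is the limit of the $\phi\left(s^{\frown}1^{\frown}0^{k}\right)$), so $D\cong\mathbb{Q}$ and, via the bijection $\phi$, \textsf{nwd} becomes an ideal on $2^{<\omega}$. Because $D$ is dense in $2^{\omega}$, a set $N\subseteq D$ is nowhere dense in $D$ iff its closure in $2^{\omega}$ is nowhere dense, so under $\phi$ this ideal is exactly $\left\{a\subseteq2^{<\omega}\mid\phi\left[a\right]\text{ is nowhere dense in }2^{\omega}\right\}$.

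The core of the argument is a single combinatorial pivot. Say that $a\subseteq2^{<\omega}$ is \emph{nowhere dense as a tree} if for every $t\in2^{<\omega}$ there is $t'\sqsupseteq t$ with $\langle t'\rangle_{<\omega}\cap a=\emptyset$, i.e. no node of $a$ extends $t'$. I would establish the equivalence of: (a) $a\in tr\left(\mathcal{M}\right)$; (b) $a$ is nowhere dense as a tree; (c) $\phi\left[a\right]$ is nowhere dense in $2^{\omega}$. For the meaning of (a), note that $\pi\left(a\right)$ is $G_{\delta}$ and that a meager $G_{\delta}$ set is nowhere dense (a dense $G_{\delta}$ inside any nonempty open set is comeager there, contradicting meagerness), so $a\in tr\left(\mathcal{M}\right)$ exactly when $\pi\left(a\right)$ is nowhere dense.

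The equivalence of (a) and (b) is the easy part. For (b)$\Rightarrow$(a), a witnessing $t'$ gives $\langle t'\rangle\cap\pi\left(a\right)=\emptyset$ at once, since any $f\sqsupseteq t'$ has $f\upharpoonright n\notin a$ for $n\geq\left\vert t'\right\vert$. For (a)$\Rightarrow$(b) I argue contrapositively: if above some $t$ every cone meets $a$, then above an arbitrary $t''\sqsupseteq t$ one threads an increasing sequence $s_{0}\subsetneq s_{1}\subsetneq\cdots$ of nodes of $a$, whose union lies in $\pi\left(a\right)$; this shows $\pi\left(a\right)$ is dense below $t$, hence not nowhere dense. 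The link with $\phi$ is where the one real subtlety lies. The implication (c)$\Rightarrow$(b) is trivial, because $s\sqsupseteq t'$ forces $\phi\left(s\right)\in\langle t'\rangle$. The hard part is (b)$\Rightarrow$(c): the closure $\overline{\phi\left[a\right]}$ can acquire ``boundary'' points arising from nodes $s\in a$ with $\left\vert s\right\vert$ small relative to the node under inspection, so a naive choice of $t'$ need not expose a genuine gap.

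The fix, and the main technical point I expect to carry the proof, is to choose $t_{0}\sqsupseteq t$ with no node of $a$ extending $t_{0}$ and then take $t'=t_{0}^{\frown}1$. A short index check rules out $\phi\left(s\right)=s^{\frown}1^{\frown}0^{\omega}\in\langle t_{0}^{\frown}1\rangle$ for $s\in a$: if $\left\vert s\right\vert\geq\left\vert t_{0}\right\vert$ then $s\sqsupseteq t_{0}$, which is excluded; while if $\left\vert s\right\vert<\left\vert t_{0}\right\vert$ then position $\left\vert t_{0}\right\vert$ of $\phi\left(s\right)$ lies in its $0^{\omega}$ tail and cannot carry the appended $1$ demanded by $t_{0}^{\frown}1$. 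Hence $\langle t'\rangle\cap\phi\left[a\right]=\emptyset$, giving (c). With the three conditions equivalent, $\phi$ carries $tr\left(\mathcal{M}\right)$ exactly onto the $\phi$-image of \textsf{nwd}; therefore $\phi$ and $\phi^{-1}$ are mutually inverse finite-to-one Kat\v{e}tov morphisms, witnessing both $tr\left(\mathcal{M}\right)\leq_{\mathsf{KB}}\textsf{nwd}$ and $\textsf{nwd}\leq_{\mathsf{KB}}tr\left(\mathcal{M}\right)$, so the two ideals are Kat\v{e}tov--Blass equivalent.
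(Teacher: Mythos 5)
Your proposal is correct, and it reaches the same (stronger) conclusion as the paper --- that $tr\left(\mathcal{M}\right)$ and \textsf{nwd} are actually isomorphic as ideals, not merely $\leq_{\mathsf{KB}}$-equivalent --- but by a genuinely different construction. The paper builds the bijection in the opposite direction: it decomposes $\mathbb{Q}$ into a Cantor scheme of clopen sets $\left\{ U_{s}\mid s\in2^{<\omega}\right\}$ forming a $\pi$-base, peels off one rational $q_{s}$ at each node, and sends $s\mapsto q_{s}$; the verification then runs through the $\pi$-base directly (for one direction) and through the observation that $\pi\left(a\right)\cap\left\langle s\right\rangle$ is a meager $G_{\delta}$, hence non-dense, for the other. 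You instead embed $2^{<\omega}$ densely into $2^{\omega}$ via $s\mapsto s^{\frown}1^{\frown}0^{\omega}$, identify the image with $\mathbb{Q}$, and route everything through the clean three-way equivalence: $a\in tr\left(\mathcal{M}\right)$ iff a dense set of nodes $t'$ satisfies $\left\langle t'\right\rangle_{<\omega}\cap a=\emptyset$ iff $\phi\left[a\right]$ is nowhere dense. This pivot is implicitly present in the paper's argument as well (its ``off-branch'' step), but you isolate it explicitly, and your $t'=t_{0}{}^{\frown}1$ device correctly handles the only delicate point, namely the closure points of $\phi\left[a\right]$ contributed by nodes $s\in a$ shorter than $t_{0}$. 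What your version buys is that both ideals live on subsets of the same space $2^{\omega}$ (so ``nowhere dense'' can be compared in $2^{\omega}$ throughout, using density of $D$); what the paper's version buys is that it works with $\mathbb{Q}$ and its standard topology directly, with no appeal to the characterization of $\mathbb{Q}$ as the unique countable metric space without isolated points. Both arguments are complete and elementary.
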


\begin{proof}
Let $\preceq$ be a well order for the rational numbers. We recursively define
$\left\{  U_{s}\mid s\in2^{<\omega}\right\}  $ and $\left\{  q_{s}\mid
s\in2^{<\omega}\right\}  \subseteq\mathbb{Q}$ as follows:

\begin{enumerate}
\item Each $U_{s}$ is a clopen set of the rational numbers and $q_{s}%
=min_{\preceq}\left(  U_{s}\right)  .$

\item $U_{\emptyset}=\mathbb{Q}.$

\item $\left\{  U_{s^{\frown}0},U_{s^{\frown}i}\right\}  $ is a partition
(into clopen sets) of $U_{s}-\left\{  q_{s}\right\}  .$

\item $\mathbb{Q}=\left\{  q_{s}\mid s\in2^{<\omega}\right\}  $ and $\left\{
U_{s}\mid s\in2^{<\omega}\right\}  $ is a $\pi$-base of open sets
\end{enumerate}

\qquad\ \ \ 

We then define $f:2^{<\omega}\longrightarrow\mathbb{Q}$ given by $f\left(
s\right)  =q_{s}.$ We will prove that $f$ is a Kat\v{e}tov-morphism from
$\left(  2^{<\omega},tr\left(  \mathcal{M}\right)  \right)  $ to
$(\mathbb{Q},$\textsf{nwd}$).$ Let $N\subseteq\mathbb{Q}$ be a nowhere dense
set, we will prove that $\pi\left(  f^{-1}\left(  N\right)  \right)  $ is a
nowhere dense set of $2^{\omega}.$ Let $s\in2^{<\omega}$ and since $N$ is
nowhere dense and $\left\{  U_{s}\mid s\in2^{<\omega}\right\}  $ is a $\pi
$-base of open sets we can find $t\in2^{<\omega}$ extending $s$ such that
$U_{t}\cap N=\emptyset.$ It then follows that $\left\langle t\right\rangle
\cap\pi\left(  f^{-1}\left(  N\right)  \right)  =\emptyset.$ Now we will prove
that $f^{-1}$ is a Kat\v{e}tov-morphism from $(\mathbb{Q},$\textsf{nwd}$)$ to
$\left(  2^{<\omega},tr\left(  \mathcal{M}\right)  \right)  .$ It is enough to
prove that if $a\in tr\left(  \mathcal{M}\right)  $ then $f\left[  a\right]
\in$\textsf{nwd}$.$ Let $s\in2^{<\omega}$ and since $\pi\left(  a\right)
\cap\left\langle s\right\rangle $ is a $G_{\delta}$ meager set in
$\left\langle s\right\rangle $ it can not be dense, so there is $t\in
2^{<\omega}$ extending $s$ such that $\left\langle t\right\rangle \cap
\pi\left(  a\right)  =\emptyset$ which implies that $a\cap\left\langle
t\right\rangle _{<\omega}$ is an off-branch set, so we can then find an
extension $r$ of $t$ such that $a\cap\left\langle r\right\rangle _{<\omega
}=\emptyset$ which then implies $f\left[  a\right]  \cap U_{r}=\emptyset.$
\end{proof}

\qquad\ \ \ \ \ \qquad\ \ 

The following result is useful for computing the covering numbers of the trace ideals:

\begin{proposition}
[\cite{ForcingwithQuotients}]Let $\mathcal{I}$ be a $\sigma$-ideal in
$\omega^{\omega}$ generated by analytic sets such that $\mathbb{P}%
_{\mathcal{I}}$ is proper and has the continuous reading of names. Then:

\qquad\ \ \ \ \ \qquad\ \ 

\textsf{cov}$\left(  \mathcal{I}\right)  \leq$ \textsf{cov}$^{\ast}\left(
tr\left(  \mathcal{I}\right)  \right)  \leq max\{$\textsf{cov}$\left(
\mathcal{I}\right)  ,\mathfrak{d}\}.$
\end{proposition}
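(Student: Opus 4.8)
The plan is to prove the two inequalities separately; the left-hand one is soft and the right-hand one carries the real work.

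For \textsf{cov}$(\mathcal{I})\leq$\textsf{cov}$^{\ast}(tr(\mathcal{I}))$, I would start with a tall family $\mathcal{A}=\{a_{\xi}\mid\xi<\kappa\}\subseteq tr(\mathcal{I})$ of least possible size $\kappa=$\textsf{cov}$^{\ast}(tr(\mathcal{I}))$ and argue that $\{\pi(a_{\xi})\mid\xi<\kappa\}$ already covers $\omega^{\omega}$. By the definition of the trace ideal each $\pi(a_{\xi})\in\mathcal{I}$, so this is a covering of $\omega^{\omega}$ by $\kappa$ members of $\mathcal{I}$. The point is that for an arbitrary $f\in\omega^{\omega}$ the branch $\{f\upharpoonright n\mid n\in\omega\}$ is an infinite subset of $\omega^{<\omega}$; tallness hands us some $\xi$ with $a_{\xi}\cap\{f\upharpoonright n\mid n\in\omega\}$ infinite, i.e. $f\upharpoonright n\in a_{\xi}$ for infinitely many $n$, which says precisely $f\in\pi(a_{\xi})$. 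This settles the lower bound, and it is the only direction that does not use the hypotheses on $\mathbb{P}_{\mathcal{I}}$.

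For the upper bound I would build a tall family in $tr(\mathcal{I})$ of size $\max\{$\textsf{cov}$(\mathcal{I}),\mathfrak{d}\}$ out of two ingredients. First, using that $\mathcal{I}$ is generated by analytic sets together with properness and the continuous reading of names (and the $G_{\delta}$-lemma proved above), I would fix a covering $\{B_{\eta}\mid\eta<$\textsf{cov}$(\mathcal{I})\}$ of $\omega^{\omega}$ by \emph{closed} members of $\mathcal{I}$; the role of closedness is that for a closed $B_{\eta}$ its tree $T_{\eta}=\{s\mid\langle s\rangle\cap B_{\eta}\neq\emptyset\}$ sees every branch of $B_{\eta}$ at \emph{every} level, and $\pi(T_{\eta}\cap\{s\mid|s|\in W\})=B_{\eta}\in\mathcal{I}$ for any infinite $W$. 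Second, from a dominating family of size $\mathfrak{d}$ I would extract a family $\{W_{\zeta}\mid\zeta<\mathfrak{d}\}\subseteq[\omega]^{\omega}$ that is tall on $\omega$ (columns together with the regions below the graphs of a dominating family suffice). I then set $a_{\eta,\zeta}=T_{\eta}\cap\{s\mid|s|\in W_{\zeta}\}$; the whole collection has size $\max\{$\textsf{cov}$(\mathcal{I}),\mathfrak{d}\}$ and lies in $tr(\mathcal{I})$ by the previous remark. To verify tallness on an infinite $X\subseteq\omega^{<\omega}$ with $\pi(X)\neq\emptyset$, I would fix a branch $f$ with $D=\{n\mid f\upharpoonright n\in X\}$ infinite, choose $\eta$ with $f\in B_{\eta}$ (so $f\upharpoonright n\in T_{\eta}$ for all $n$), and use tallness of $\{W_{\zeta}\}$ to get $\zeta$ with $W_{\zeta}\cap D$ infinite; then $a_{\eta,\zeta}\cap X\supseteq\{f\upharpoonright n\mid n\in W_{\zeta}\cap D\}$ is infinite. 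Here \textsf{cov}$(\mathcal{I})$ locates the branch and $\mathfrak{d}$ matches the levels at which $X$ actually reads that branch.

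The hard part will be the remaining, genuinely combinatorial case of an infinite $X$ with $\pi(X)=\emptyset$ (a branch-free set, containing an infinite antichain), which is not caught by the branch-locating family above; for this I would add a supplementary subfamily of $tr(\mathcal{I})$ built from closed, $\mathcal{I}$-small selections (for instance closed countable sets of eventually-constant or eventually-fixed branches, whose traces are again of the form $T\cap\{s\mid|s|\in W_{\zeta}\}$), exploiting crucially that membership in $tr(\mathcal{I})$ only requires $\pi\in\mathcal{I}$ rather than $\pi=\emptyset$. The twin obstacles are exactly (i) keeping every constructed set inside $tr(\mathcal{I})$, i.e. preventing the ``closure blow-up'' $\pi(T_{\eta}\cap\dots)=\overline{B_{\eta}}\notin\mathcal{I}$ that occurs as soon as the generator is only $G_{\delta}$ rather than closed --- this is what forces the reduction to closed generators and is where the continuous reading of names is spent --- and (ii) showing that $\max\{$\textsf{cov}$(\mathcal{I}),\mathfrak{d}\}$ many closed $\mathcal{I}$-small catchers suffice to meet every infinite antichain, which is the delicate counting at the heart of the estimate.
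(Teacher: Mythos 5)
Your first inequality is fine and matches the paper's argument (you phrase it via a minimal tall family, the paper via the contrapositive; same content). The problem is the upper bound: the case $\pi\left(  X\right)  =\emptyset$ is not a loose end to be patched later --- it is the entire content of the estimate, and you have not proved it. Your family $a_{\eta,\zeta}=T_{\eta}\cap\left\{  s\mid\left\vert s\right\vert \in W_{\zeta}\right\}  $ can only catch an infinite $X\subseteq\omega^{<\omega}$ that already contains infinitely many initial segments of a single branch; an infinite antichain, or more generally any branch-free positive set, is missed by every one of your catchers, and these are exactly the sets for which tallness of a family inside $tr\left(  \mathcal{I}\right)  $ is delicate. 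The ``supplementary subfamily of closed countable sets of eventually-constant branches'' is not an argument, and it is not clear it could be made into one: what is needed is not more closed sets of reals but a $\mathfrak{d}$-indexed family of \emph{finite selections of extensions of nodes}, which is a different kind of object.

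What the paper actually does: fix a family $S$ of functions $f:\omega^{<\omega}\longrightarrow\left[  \omega^{<\omega}\right]  ^{<\omega}$ with $s\in f\left(  s\right)  \subseteq\left\{  t\mid s\sqsubseteq t\right\}  $, and a subfamily $\left\{  f_{\alpha}\mid\alpha<\mathfrak{d}\right\}  $ dominating $S$ pointwise under inclusion. The catchers are of two kinds: $b_{\alpha,t}=\bigcup_{n}f_{\alpha}\left(  t^{\frown}n\right)  $ (which satisfy $\pi\left(  b_{\alpha,t}\right)  =\emptyset$), and $a_{\alpha,\beta}$ obtained by applying $f_{\alpha}$ to the nodes of $T_{\beta}$ at a sparse set of levels (so that $\pi\left(  a_{\alpha,\beta}\right)  \subseteq\left[  T_{\beta}\right]  $). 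Given $X$, one looks at the derived tree $L=\left\{  t\mid t\text{ has infinitely many extensions in }X\right\}  $: if $L$ has a maximal node $t$, some $b_{\alpha,t}$ meets $X$ infinitely; if not, $L$ has an infinite branch $r$, one picks $\beta$ with $r\in\left[  T_{\beta}\right]  $, and $a_{\alpha,\beta}$ catches the elements of $X$ \emph{selected by} $f_{\alpha}$ above the nodes $r\upharpoonright m_{n}$ --- note $r$ need not lie in $\pi\left(  X\right)  $, so this covers your hard case. Two smaller corrections: you cannot in general cover $\omega^{\omega}$ by only \textsf{cov}$\left(  \mathcal{I}\right)  $ closed members of $\mathcal{I}$; the reduction to finitely branching trees uses that every analytic set is a union of at most $\mathfrak{d}$ compact sets and therefore costs $max\{$\textsf{cov}$\left(  \mathcal{I}\right)  ,\mathfrak{d}\}$, which is harmless here but should be stated correctly. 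And the continuous reading of names plays no role in this particular proof --- the argument is purely combinatorial once the compact decomposition is in hand --- so it is not ``where the continuous reading of names is spent.''
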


\begin{proof}
Let $\kappa<$ \textsf{cov}$\left(  \mathcal{I}\right)  $ and $\left\{
a_{\alpha}\mid\alpha<\kappa\right\}  \subseteq tr\left(  \mathcal{I}\right)
.$ Since $\kappa<$\textsf{cov}$\left(  \mathcal{I}\right)  ,$ there is
$f\in\omega^{\omega}$ such that $f\notin\bigcup\limits_{\alpha<\kappa}%
\pi\left(  a_{\alpha}\right)  $ and then $\left\{  f\upharpoonright n\mid
n\in\omega\right\}  $ is almost disjoint with each $a_{\alpha}.$ Now we will
prove that \textsf{cov}$^{\ast}\left(  tr\left(  \mathcal{I}\right)  \right)
\leq max\{$\textsf{cov}$\left(  \mathcal{I}\right)  ,\mathfrak{d}\}.$ Let $S$
be the set of all $f:\omega^{<\omega}\longrightarrow\left[  \omega^{<\omega
}\right]  ^{<\omega}$ such that $s\in f\left(  s\right)  $ and $f\left(
s\right)  $ is a finite set of $\left\{  t\in\omega^{<\omega}\mid s\sqsubseteq
t\right\}  .$ Let $\mathcal{F}=\left\{  f_{\alpha}\mid\alpha<\mathfrak{d}%
\right\}  \subseteq S$ such that for every $f\in S$ there is $\alpha
<\mathfrak{d}$ such that $f\left(  s\right)  \subseteq f_{\alpha}\left(
s\right)  $ for every $s\in\omega^{<\omega}.$ Let $\kappa$ be the maximum of
\textsf{cov}$\left(  \mathcal{I}\right)  $ and $\mathfrak{d}$. Since every
analytic set is the union of at most $\mathfrak{d}$ compact sets, we can find
a family $\left\{  T_{\beta}\mid\beta<\kappa\right\}  $ of finitely branching
subtrees of $\omega^{<\omega}$ such that each $\left[  T_{\beta}\right]  $
belongs to $\mathcal{I}$ and $\omega^{\omega}=\bigcup\limits_{\beta<\kappa
}\left[  T_{\beta}\right]  .$ For every $\alpha<\mathfrak{d}$ and
$\beta<\kappa$ we define $\left\langle a_{\alpha,\beta}\left(  n\right)  \mid
n\in\omega\right\rangle $ and $\left\langle m_{n}\mid n\in\omega\right\rangle
$ with the following properties:

\begin{enumerate}
\item $m_{0}=\emptyset.$

\item If $l<k$ then $m_{l}<m_{k}.$

\item $a_{\alpha,\beta}\left(  n\right)  $ is a finite subset of
$\omega^{<\omega}.$

\item $a_{\alpha,\beta}\left(  0\right)  =f_{\alpha}\left(  \emptyset\right)
.$

\item $m_{n+1}$ is bigger than the length of all the elements of
$a_{\alpha,\beta}\left(  n\right)  .$

\item $a_{\alpha,\beta}\left(  n+1\right)  =%
%TCIMACRO{\tbigcup }%
%BeginExpansion
{\textstyle\bigcup}
%EndExpansion
\left\{  f_{\alpha}\left(  t\right)  \mid t\in T_{\beta}\cap\omega^{m_{n+1}%
}\right\}  .$
\end{enumerate}

\qquad\ \ \ 

Let $a_{\alpha,\beta}=\bigcup a_{\alpha,\beta}\left(  n\right)  $ and note
that if $g\in\pi\left(  a_{\alpha,\beta}\right)  $ then there are infinitely
many $t\in T_{\beta}$ such that $t\sqsubseteq g$ so $g\in\left[  T_{\beta
}\right]  $ and then $a_{\alpha,\beta}\in tr\left(  \mathcal{I}\right)  .$ For
every $t\in\omega^{<\omega}$ and $\alpha<\mathfrak{d}$ define $b_{\alpha
,t}=\bigcup\left\{  f_{\alpha}\left(  t^{\frown}n\right)  \mid n\in
\omega\right\}  .$ Clearly $\pi\left(  b_{\alpha,t}\right)  =\emptyset.$ Hence
$W=\left\{  a_{\alpha,\beta}\mid\alpha<\mathfrak{d},\beta<\kappa\right\}
\cup\left\{  b_{\alpha,t}\mid\alpha<\mathfrak{d\wedge}t\in\omega^{<\omega
}\right\}  $ is a subset of $tr\left(  \mathcal{I}\right)  .$ We will now show
$W$ is a tall family. Let $X\subseteq\omega^{<\omega}$ be an infinite set.

\qquad\ \ 

Define $L$ as the tree of all $t\in\omega^{<\omega}$ such that $\left\{  s\mid
t\subseteq s\wedge s\in X\right\}  $ is infinite. We proceed by cases: first
assume there is $t\in L$ that is a maximal node. Since $t\in L$ and it is
maximal, it follows that $\left\{  n\mid\exists s\in X\left(  t^{\frown
}n\subseteq s\right)  \right\}  $ is infinite and then there is $\alpha
<\mathfrak{d}$ such that $b_{\alpha,t}\cap X$ is infinite. We now assume that
$L$ does not have a maximal node, so there is $r\in\left[  L\right]  .$ Since
$\omega^{\omega}=\bigcup\limits_{\beta<\kappa}\left[  T_{\beta}\right]  $
there is $\beta<\kappa$ such that $r\in\left[  T_{\beta}\right]  .$ We now
define a function $f:\omega^{<\omega}\longrightarrow\left[  \omega^{<\omega
}\right]  ^{<\omega}$ as follows: if $t\notin L$ then $f\left(  t\right)
=\left\{  t\right\}  $ and if $t\in L$ we choose $s\in X$ such that
$t\subseteq s$ and define $f\left(  t\right)  =\left\{  t,s\right\}  .$ Let
$\alpha<\mathfrak{d}$ such that $f_{\alpha}$ dominates $f,$ it is easy to see
that $a_{\alpha,\beta}\cap X$ is infinite.
\end{proof}

\qquad\ \ 

In particular, we may conclude that \textsf{cov}$\left(  \mathcal{M}\right)
\leq$ \textsf{cov}$^{\ast}($\textsf{nwd}$)\leq\mathfrak{d}.$ The following
result of Keremedis shows that \textsf{cov}$^{\ast}($\textsf{nwd}$)$ is
actually the covering number of the meager ideal:

\begin{proposition}
[Keremedis, see \cite{CombinatoricsofDenseSubsetsoftheRationals}]%
\textsf{cov}$\left(  \mathcal{M}\right)  =$ \textsf{cov}$^{\ast}\left(
tr\left(  \mathcal{M}\right)  \right)  .$
\end{proposition}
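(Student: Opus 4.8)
The plan is to prove the two inequalities separately; the whole content lies in one of them. The inequality \textsf{cov}$\left(\mathcal{M}\right)\leq$\textsf{cov}$^{\ast}\left(tr\left(\mathcal{M}\right)\right)$ requires no new work: it is the left half of the previous proposition specialized to the $\sigma$-ideal $\mathcal{I}=\mathcal{M}$, which applies since $\mathcal{M}$ is generated by (closed) analytic sets and Cohen forcing $\mathbb{P}_{\mathcal{M}}$ is proper with the continuous reading of names. That same proposition only yields \textsf{cov}$^{\ast}\left(tr\left(\mathcal{M}\right)\right)\leq max\left\{\textsf{cov}\left(\mathcal{M}\right),\mathfrak{d}\right\}$ in the other direction, so the real task is to sharpen this by removing $\mathfrak{d}$, i.e. to prove \textsf{cov}$^{\ast}\left(tr\left(\mathcal{M}\right)\right)\leq$\textsf{cov}$\left(\mathcal{M}\right)$.

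First I would transport the problem to the rationals. Since $tr\left(\mathcal{M}\right)$ is Kat\v{e}tov--Blass equivalent to \textsf{nwd}, in particular $tr\left(\mathcal{M}\right)\simeq_{K}$\textsf{nwd}, applying the monotonicity lemma (if $\mathcal{I}\leq_{K}\mathcal{J}$ then \textsf{cov}$^{\ast}\left(\mathcal{J}\right)\leq$\textsf{cov}$^{\ast}\left(\mathcal{I}\right)$) in both directions gives \textsf{cov}$^{\ast}\left(tr\left(\mathcal{M}\right)\right)=$\textsf{cov}$^{\ast}\left(\textsf{nwd}\right)$. Hence it suffices to build a tall subfamily of \textsf{nwd} of size $\kappa=$\textsf{cov}$\left(\mathcal{M}\right)$, where \textsf{nwd} is viewed as the ideal on a fixed countable dense set $D\subseteq2^{\omega}$. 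A convenient reduction of tallness is available here: because $2^{\omega}$ is compact, every infinite $X\subseteq D$ has a subsequence converging to some $z\in2^{\omega}$, and catching such a subsequence infinitely often inside a member of the family is enough. So the goal becomes a family of $\kappa$ nowhere dense subsets of $D$ that catches, for every $z$ and every way of approaching $z$ from within $D$, at least one convergent approach infinitely often.

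The raw material is a covering of $2^{\omega}$ by $\kappa$ closed nowhere dense sets $\left[T_{\beta}\right]$ ($\beta<\kappa$), which exists because $\mathcal{M}$ is a $\sigma$-ideal and so \textsf{cov}$\left(\mathcal{M}\right)$ equals the least number of closed nowhere dense sets covering $2^{\omega}$; this governs the \emph{limits} $z$. The \emph{approaches} I would govern using the matching picture: invoking that the sets $\lnot Match\left(x,\mathcal{P}\right)$ are cofinal among meager sets, fix chopped reals $\left\{\left(x_{\beta},\mathcal{P}_{\beta}\right)\mid\beta<\kappa\right\}$ with the property that every $z$ fails to match some $\left(x_{\beta},\mathcal{P}_{\beta}\right)$, and slice each $\lnot Match\left(x_{\beta},\mathcal{P}_{\beta}\right)$ into its defining closed nowhere dense pieces; intersecting the resulting $\kappa\cdot\aleph_{0}=\kappa$ sets with $D$ (and with the $\left[T_{\beta}\right]$-data) produces the candidate family. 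The idea for catching $X$ is then: pick $\beta$ with $z\in\left[T_{\beta}\right]$ and $z$ failing to match $\left(x_{\beta},\mathcal{P}_{\beta}\right)$; on the blocks where the tail of $X$ still agrees with $z$, that agreement, combined with $z$'s disagreement from $x_{\beta}$, should force infinitely many points of $X$ into a single one of the closed nowhere dense slices.

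The main obstacle is exactly this last step, and it is where \textsf{cov}$\left(\mathcal{M}\right)$ rather than $max\left\{\textsf{cov}\left(\mathcal{M}\right),\mathfrak{d}\right\}$ is genuinely used. The difficulty is that a nowhere dense set having $z$ in its closure need \emph{not} contain infinitely many members of a sequence tending to $z$: the approaching points can dodge it by accidentally re-matching on late blocks. Overcoming this requires controlling the limit behaviour (through the closed nowhere dense cover) and the tail behaviour (through the chopped reals) \emph{simultaneously}, so that the constructed sets are honestly nowhere dense and yet unavoidable for every convergent approach. This simultaneous control is what replaces the $\mathfrak{d}$-sized dominating family of slaloms used in the general proposition by a $\kappa$-sized family, and it is the only place where the combinatorial characterization of \textsf{cov}$\left(\mathcal{M}\right)$ via matching, together with the compactness of $2^{\omega}$, is essential. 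Once tallness is established, combining it with the first inequality yields \textsf{cov}$\left(\mathcal{M}\right)=$\textsf{cov}$^{\ast}\left(tr\left(\mathcal{M}\right)\right)$.
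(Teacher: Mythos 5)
Your treatment of the easy half is correct and matches the paper: $\mathsf{cov}\left(\mathcal{M}\right)\leq\mathsf{cov}^{\ast}\left(tr\left(\mathcal{M}\right)\right)$ does follow from the preceding proposition, and the content is the reverse inequality. You have also correctly located where the difficulty lies. But your route for the hard direction differs from the paper's in a way that leaves the essential step unproved. The paper never constructs a tall family at all: it argues contrapositively, taking $\kappa<\mathsf{cov}^{\ast}\left(tr\left(\mathcal{M}\right)\right)$ and trees $T_{\alpha}$ with $\left[T_{\alpha}\right]$ nowhere dense, noting each $T_{\alpha}\in tr\left(\mathcal{M}\right)$, so non-tallness yields an infinite $Y\subseteq2^{<\omega}$ almost disjoint from every $T_{\alpha}$; the work then goes into upgrading $Y$ to one with $\pi\left(Y\right)\neq\emptyset$ (via a recursively built $\omega$-branching system of antichains $A_{n}$ and a function $g$ not dominated by the associated $f_{\alpha}$'s, available because $\mathsf{cov}^{\ast}\left(tr\left(\mathcal{M}\right)\right)\leq\mathfrak{d}$), and any $r\in\pi\left(Y\right)$ then lies outside every $\left[T_{\alpha}\right]$ since the $T_{\alpha}$ are trees. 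You instead propose to directly exhibit a tall subfamily of $\mathsf{nwd}$ of size $\mathsf{cov}\left(\mathcal{M}\right)$.

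The gap is at the catching step, which you name as ``the main obstacle'' and then do not overcome. Your proposed mechanism is: choose $\beta$ with $z\in\left[T_{\beta}\right]$ and $z\notin Match\left(x_{\beta},\mathcal{P}_{\beta}\right)$, and argue that points of $X$ near $z$ are forced into a single closed slice of $\lnot Match\left(x_{\beta},\mathcal{P}_{\beta}\right)$. This does not work as stated: a point $y\in X$ agreeing with $z$ on a long initial segment inherits $z$'s disagreement with $x_{\beta}$ only on the blocks covered by that initial segment, and beyond that $y$ is unconstrained, so it may match $x_{\beta}$ on arbitrarily late blocks and therefore belong to no closed slice $F_{k}=\{y\mid y\upharpoonright P\neq x_{\beta}\upharpoonright P\text{ for all }P\in\mathcal{P}_{\beta}\text{ beyond }k\}$. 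Since the chopped real was chosen against $z$ and not against the (continuum many possible) tails of the points of $X$, the ``simultaneous control'' you invoke is precisely the missing construction, not a consequence of the ingredients you have assembled. To close this you would need a genuinely new combinatorial input (e.g.\ the infinitely-equal characterization of $\mathsf{cov}\left(\mathcal{M}\right)$ applied to the tails, with a case split into chains and antichains on the $2^{<\omega}$ side), or you should switch to the paper's contrapositive argument, which sidesteps the need to build a tall family entirely.
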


\begin{proof}
Let $\kappa<$ \textsf{cov}$^{\ast}\left(  tr\left(  \mathcal{M}\right)
\right)  $ and $\left\{  T_{\alpha}\mid\alpha<\kappa\right\}  $ be a family of
subtrees of $2^{<\omega}$ such that each $\left[  T_{\alpha}\right]  $ is
nowhere dense. We must prove that $2^{\omega}\neq%
%TCIMACRO{\tbigcup \limits_{\alpha<\kappa}}%
%BeginExpansion
{\textstyle\bigcup\limits_{\alpha<\kappa}}
%EndExpansion
\left[  T_{\alpha}\right]  .$ Since $\pi\left(  T_{\alpha}\right)  =\left[
T_{\alpha}\right]  ,$ therefore $T_{\alpha}\in tr\left(  \mathcal{M}\right)  $
for every $\alpha<\kappa.$ In this way, there is an infinite $Y\subseteq
2^{<\omega}$ that has finite intersection with each $T_{\alpha}.$ Furthermore,
we claim there is such $Y$ for which $\pi\left(  Y\right)  \neq\emptyset.$

\qquad\ \ \ 

Assume this is not the case. We then recursively build $\left\{  A_{n}\mid
n\in\omega\right\}  $ such that for every $n\in\omega$ the following holds:

\begin{enumerate}
\item $A_{n}\subseteq2^{<\omega}$ is an infinite antichain.

\item $A_{n}\cap T_{\alpha}$ is finite for every $\alpha<\kappa.$

\item Every element of $A_{n+1}$ extends an element of $A_{n};$ moreover,
every $t\in A_{n}$ has infinitely many extensions in $A_{n+1}.$
\end{enumerate}

\qquad\ \ 

Let $A_{0}$ be any infinite antichain almost disjoint with every $T_{\alpha}.$
Assuming we have constructed $A_{n}$ we will see how to construct $A_{n+1}.$
Given $s\in A_{n}$ let $B\left(  s\right)  =\left\{  t\in2^{<\omega}\mid
s\subseteq t\right\}  $ and since $\kappa<$ \textsf{cov}$^{\ast}\left(
tr\left(  \mathcal{M}\right)  \right)  $ we conclude that $\left\{  T_{\alpha
}\cap B\left(  s\right)  \mid\alpha<\kappa\right\}  $ is not tall. Let
$D_{s}\subseteq B\left(  s\right)  $ be an infinite antichain almost disjoint
with every $T_{\alpha}\cap B\left(  s\right)  .$ We now define $A_{n+1}=%
%TCIMACRO{\tbigcup \limits_{s\in A_{n}}}%
%BeginExpansion
{\textstyle\bigcup\limits_{s\in A_{n}}}
%EndExpansion
D_{s}.$ Since each $T_{\alpha}$ is upward closed we know that $T_{\alpha}\cap
A_{n+1}$ is finite. Let $A_{n}=\left\{  s_{n}\left(  i\right)  \mid i\in
\omega\right\}  $ and for every $\alpha<\kappa$ we define $f_{\alpha}\in
\omega^{\omega}$ given by $f_{\alpha}\left(  n\right)  =min\left\{
m\mid\forall i\geq m\left(  s_{n}\left(  i\right)  \notin T_{\alpha}\right)
\right\}  .$ Since $\kappa<$\textsf{cov}$^{\ast}\left(  tr\left(
\mathcal{M}\right)  \right)  \leq\mathfrak{d}$ there is a function $g\in
\omega^{\omega}$ not dominated by any $f_{\alpha}.$ We then recursively build
$Y=\left\{  t_{n}\mid n\in\omega\right\}  $ such that for every $n\in\omega$
the following holds:

\begin{enumerate}
\item $t_{n}\in A_{n}.$

\item $t_{n}\subseteq t_{n+1}.$

\item There is $i\geq g\left(  n\right)  $ such that $t_{n}=s_{n}\left(
i\right)  .$
\end{enumerate}

\qquad\ \ 

It is easy to see that $Y\cap T_{\alpha}$ is finite for every $\alpha<\kappa,$
which is a contradiction.

\qquad\ \ \ 

Let $Y$ such that $\pi\left(  Y\right)  \neq\emptyset$ and $Y$ is almost
disjoint with each $T_{\alpha}.$ Clearly if $r\in\pi\left(  Y\right)  $ then
$r\notin%
%TCIMACRO{\tbigcup \limits_{\alpha<\kappa}}%
%BeginExpansion
{\textstyle\bigcup\limits_{\alpha<\kappa}}
%EndExpansion
\left[  T_{\alpha}\right]  .$
\end{proof}

\qquad\ \ \ \ \ 

As a consequence of the previous results we can conclude the following:\qquad\ \ \ \ \ \ \ \ \ \ \ 

\begin{proposition}
\qquad\ \ \ \qquad\ \ 

\begin{enumerate}
\item \textsf{cov}$^{\ast}\left(  tr\left(  \mathsf{ctble}\right)  \right)
=\mathfrak{c}.$

\item \textsf{cov}$^{\ast}\left(  tr\left(  K_{\sigma}\right)  \right)
=\mathfrak{d}.$

\item \textsf{cov}$^{\ast}\left(  tr\left(  \mathcal{M}\right)  \right)  =$
\textsf{cov}$\left(  \mathcal{M}\right)  .$

\item \textsf{cov}$^{\ast}\left(  tr\left(  \mathcal{L}\right)  \right)
=\mathfrak{b}.$
\end{enumerate}
\end{proposition}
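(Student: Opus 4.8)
The plan is to obtain all four equalities from results already available in this chapter: the Hru\v{s}\'{a}k--Zapletal bound $\textsf{cov}(\mathcal{I})\le\textsf{cov}^{\ast}(tr(\mathcal{I}))\le\max\{\textsf{cov}(\mathcal{I}),\mathfrak{d}\}$, the Keremedis theorem $\textsf{cov}(\mathcal{M})=\textsf{cov}^{\ast}(tr(\mathcal{M}))$, the Kat\v{e}tov monotonicity lemma (if $\mathcal{I}\le_{K}\mathcal{J}$ then $\textsf{cov}^{\ast}(\mathcal{J})\le\textsf{cov}^{\ast}(\mathcal{I})$), and the identifications of $\mathbb{P}_{\mathsf{ctble}}$, $\mathbb{P}_{\mathcal{K}_{\sigma}}$, $\mathbb{P}_{\mathcal{M}}$, $\mathbb{P}_{\mathcal{L}}$ with Sacks, Miller, Cohen and Laver forcing. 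First I would observe that each of these four $\sigma$-ideals is generated by closed (hence analytic) sets and that the associated forcing is proper with the continuous reading of names, so the quoted propositions all apply (for $\mathsf{ctble}$ one simply works on $2^{\omega}$, which is harmless).

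For (1) and (2) the master inequality does everything once the ordinary covering numbers are computed. A union of fewer than $\mathfrak{c}$ countable sets has size $<\mathfrak{c}$, while $\mathfrak{c}$ singletons cover $2^{\omega}$, so $\textsf{cov}(\mathsf{ctble})=\mathfrak{c}$; since $\mathfrak{d}\le\mathfrak{c}$ we get $\max\{\mathfrak{c},\mathfrak{d}\}=\mathfrak{c}$ and the two-sided bound collapses to $\textsf{cov}^{\ast}(tr(\mathsf{ctble}))=\mathfrak{c}$. The compact subsets of $\omega^{\omega}$ are exactly the pointwise bounded closed sets, so covering $\omega^{\omega}$ by them is the same as producing a $\le$-dominating family; by the basic lemma $\mathfrak{d}(\omega^{\omega},\le)=\mathfrak{d}$, hence $\textsf{cov}(\mathcal{K}_{\sigma})=\mathfrak{d}$ and $\mathfrak{d}\le\textsf{cov}^{\ast}(tr(\mathcal{K}_{\sigma}))\le\max\{\mathfrak{d},\mathfrak{d}\}=\mathfrak{d}$ yields (2). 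Item (3) is nothing more than the Keremedis proposition proved immediately above; I would stress that the master inequality alone is insufficient here, as it only locates $\textsf{cov}^{\ast}(tr(\mathcal{M}))$ in the interval $[\textsf{cov}(\mathcal{M}),\mathfrak{d}]$ (recall $\textsf{cov}(\mathcal{M})\le\mathfrak{d}$), and Keremedis' sharpening is exactly what pins it down.

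Item (4) is the substantive case, and I would prove $\textsf{cov}^{\ast}(tr(\mathcal{L}))=\mathfrak{b}$ by two matching inequalities. For $\textsf{cov}^{\ast}(tr(\mathcal{L}))\le\mathfrak{b}$ I would use the relation $\textsf{FIN}\times\textsf{FIN}\le_{K}tr(\mathcal{L})$ recorded earlier (valid because Laver forcing adds dominating reals): Kat\v{e}tov monotonicity gives $\textsf{cov}^{\ast}(tr(\mathcal{L}))\le\textsf{cov}^{\ast}(\textsf{FIN}\times\textsf{FIN})$, so it suffices to establish $\textsf{cov}^{\ast}(\textsf{FIN}\times\textsf{FIN})\le\mathfrak{b}$. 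For the reverse inequality I would feed into the master bound the estimate $\textsf{cov}(\mathcal{L})\ge\mathfrak{b}$; the easy half $\textsf{cov}(\mathcal{L})\le\mathfrak{b}$ comes from restricting to generators $C_{\exists}(F)$ with $F$ depending only on the length of its argument, which are exactly the sets $C_{f}=\{g\mid\exists^{\infty}n\,(g(n)\le f(n))\}$, and an unbounded family of $f$'s already covers $\omega^{\omega}$. Together these give $\mathfrak{b}\le\textsf{cov}(\mathcal{L})\le\textsf{cov}^{\ast}(tr(\mathcal{L}))\le\mathfrak{b}$.

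The real work sits in the last two ingredients, and that is where I expect the main obstacle. The bound $\textsf{cov}^{\ast}(\textsf{FIN}\times\textsf{FIN})\ge\mathfrak{b}$ is easy, since testing tallness against graphs of functions forces the witnessing family to be unbounded; but the matching $\le\mathfrak{b}$ amounts to producing a family of size $\mathfrak{b}$ whose restrictions stay unbounded on \emph{every} infinite subset of $\omega$ (equivalently, after every increasing reindexing), and the naive candidate, a $\le^{\ast}$-increasing unbounded family, fails this. I would instead build the required family from a $\le^{\ast}$-unbounded family of interval partitions, exploiting $\mathfrak{b}=\mathfrak{b}(\textsf{PART},\le^{\ast})$. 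Dually, $\textsf{cov}(\mathcal{L})\ge\mathfrak{b}$ demands, from $\kappa<\mathfrak{b}$ functions $F_{\alpha}:\omega^{<\omega}\longrightarrow\omega$, a single branch $g$ with $g(n)>F_{\alpha}(g\upharpoonright n)$ for almost all $n$ and every $\alpha$; the obstruction is that $F_{\alpha}(g\upharpoonright n)$ depends on the values of $g$ already chosen, so crude $\le^{\ast}$-domination does not suffice and $g$ must be constructed by a recursion that simultaneously bounds each $F_{\alpha}$ along self-controlled branches while diagonalizing against all $\kappa$ of them. This tree-diagonalization, rather than the routine bookkeeping of (1)--(3), is the crux of the proof.
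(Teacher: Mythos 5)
Your proposal is correct and follows the route the paper intends: the paper states this proposition as an immediate corollary of the Hru\v{s}\'{a}k--Zapletal bound $\textsf{cov}(\mathcal{I})\leq\textsf{cov}^{\ast}(tr(\mathcal{I}))\leq\max\{\textsf{cov}(\mathcal{I}),\mathfrak{d}\}$, the Keremedis theorem, and the single remark that $tr(\mathcal{L})$ is Kat\v{e}tov above $\textsf{FIN}\times\textsf{FIN}$, and you supply exactly the computations it leaves implicit ($\textsf{cov}(\mathsf{ctble})=\mathfrak{c}$, $\textsf{cov}(\mathcal{K}_{\sigma})=\mathfrak{d}$, $\textsf{cov}(\mathcal{L})=\mathfrak{b}$, $\textsf{cov}^{\ast}(\textsf{FIN}\times\textsf{FIN})=\mathfrak{b}$). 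One inessential claim in your last paragraph is off: a $\leq^{\ast}$-increasing unbounded family does witness $\textsf{cov}^{\ast}(\textsf{FIN}\times\textsf{FIN})\leq\mathfrak{b}$ once its members are replaced by their running maxima (monotone functions), since a partial function escaping all of them on an infinite set $A$ would yield, via $h(k)=g(\min(A\setminus k))$, a $\leq^{\ast}$-bound for the whole family; so the interval-partition detour, while valid, is not forced on you.
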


\qquad\ \ \ \qquad\ \ \ 

The last equality follows since $tr\left(  \mathcal{L}\right)  $ is
Kat\v{e}tov above \textsf{FIN}$\times$\textsf{FIN}$.$

\begin{definition}
Let $\mathcal{I}$ and $\mathcal{J}$ be two $\sigma$-ideals in $\omega^{\omega
}.$ We say $\mathcal{I}$ is \emph{continuously Kat\v{e}tov smaller than
}$\mathcal{J}$ (denoted by $\mathcal{I\leq}_{CK}\mathcal{J}$ ) if there is a
continuous function $F:\omega^{\omega}\longrightarrow\omega^{\omega}$ such
that $F^{-1}\left(  A\right)  \in\mathcal{J}$ whenever $A\in\mathcal{I}.$
\end{definition}

\qquad\ \ \ \ \qquad\ \ \ \ \ \ 

Then we have the following result:

\begin{proposition}
[\cite{TesisDavid}]Let $\mathcal{I}$ and $\mathcal{J}$ be $\sigma$-ideals in
$\omega^{\omega}.$ If $\mathcal{I\leq}_{CK}\mathcal{J}$ \ then $tr\left(
\mathcal{I}\right)  \leq_{K}tr\left(  \mathcal{J}\right)  .$
\end{proposition}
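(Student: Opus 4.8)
The plan is to produce a Kat\v{e}tov morphism $g:\left(\omega^{<\omega},tr\left(\mathcal{J}\right)\right)\longrightarrow\left(\omega^{<\omega},tr\left(\mathcal{I}\right)\right)$, that is, a map $g:\omega^{<\omega}\longrightarrow\omega^{<\omega}$ with $g^{-1}\left(a\right)\in tr\left(\mathcal{J}\right)$ whenever $a\in tr\left(\mathcal{I}\right)$. Let $F:\omega^{\omega}\longrightarrow\omega^{\omega}$ be a continuous function witnessing $\mathcal{I}\leq_{CK}\mathcal{J}$, so that $F^{-1}\left(A\right)\in\mathcal{J}$ for every $A\in\mathcal{I}$. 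First I would invoke the standard monotone representation of continuous functions on the Baire space: there is a monotone map $\varphi:\omega^{<\omega}\longrightarrow\omega^{<\omega}$ (monotone meaning $\varphi\left(s\right)\subseteq\varphi\left(t\right)$ whenever $s\subseteq t$) such that $F\left(x\right)=\bigcup_{n\in\omega}\varphi\left(x\upharpoonright n\right)$ for every $x\in\omega^{\omega}$, and $\left\vert\varphi\left(x\upharpoonright n\right)\right\vert\longrightarrow\infty$ along every branch $x$. This is exactly the kind of representation already exploited in the lemma on the continuous reading of names above. I then simply set $g=\varphi$.

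To verify that $g$ works, fix $a\in tr\left(\mathcal{I}\right)$, so that $\pi\left(a\right)\in\mathcal{I}$, and I must show $\pi\left(g^{-1}\left(a\right)\right)\in\mathcal{J}$. The key claim is the inclusion $\pi\left(g^{-1}\left(a\right)\right)\subseteq F^{-1}\left(\pi\left(a\right)\right)$. Indeed, suppose $x\in\pi\left(g^{-1}\left(a\right)\right)$; then $\varphi\left(x\upharpoonright n\right)\in a$ for infinitely many $n$. Since $\varphi$ is monotone, each $\varphi\left(x\upharpoonright n\right)$ is an initial segment of $F\left(x\right)$, and since $\left\vert\varphi\left(x\upharpoonright n\right)\right\vert\longrightarrow\infty$ these infinitely many values are in fact infinitely many distinct initial segments of $F\left(x\right)$ lying in $a$. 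Hence $F\left(x\right)\in\pi\left(a\right)$, i.e. $x\in F^{-1}\left(\pi\left(a\right)\right)$, which proves the inclusion. Now $F^{-1}\left(\pi\left(a\right)\right)\in\mathcal{J}$ because $\pi\left(a\right)\in\mathcal{I}$ and $F$ witnesses $\mathcal{I}\leq_{CK}\mathcal{J}$; since $\mathcal{J}$ is an ideal and hence closed under subsets, it follows that $\pi\left(g^{-1}\left(a\right)\right)\in\mathcal{J}$, that is $g^{-1}\left(a\right)\in tr\left(\mathcal{J}\right)$, as required.

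The only point I expect to require care is the passage from ``$\varphi\left(x\upharpoonright n\right)\in a$ for infinitely many $n$'' to ``$F\left(x\right)$ has infinitely many \emph{distinct} initial segments in $a$''. This is where the length condition is essential: because $\varphi$ is monotone, the lengths $\left\vert\varphi\left(x\upharpoonright n\right)\right\vert$ are non-decreasing in $n$ and tend to infinity, so each value of the length is attained for only finitely many $n$, and infinitely many indices $n$ therefore yield infinitely many distinct sequences. Thus the main thing the proof rests on is establishing (or citing) the monotone representation of $F$ together with this length property; the combinatorial argument itself is then routine.
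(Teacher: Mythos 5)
Your proof is correct and follows essentially the same route as the paper: the paper's map $f\left(s\right)=\max\left\{t\mid F\left[\left\langle s\right\rangle\right]\subseteq\left\langle t\right\rangle\right\}$ (truncated when $F$ is constant on the cone) is exactly the canonical monotone representation you invoke, and both arguments reduce to the inclusion $\pi\left(f^{-1}\left(a\right)\right)\subseteq F^{-1}\left(\pi\left(a\right)\right)$. The only cosmetic difference is that the paper phrases the final step via quasigenericity of $F\left(\dot{r}_{gen}\right)$ rather than, as you do, via downward closure of $\mathcal{J}$.
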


\begin{proof}
Let $F:\omega^{\omega}\longrightarrow\omega^{\omega}$ be a continuous function
such that $F^{-1}\left(  A\right)  \in\mathcal{J}$ whenever $A\in\mathcal{I}.$
We now define $f:\omega^{<\omega}\longrightarrow\omega^{<\omega}$ as follows:
let $s\in\omega^{<\omega},$ if $F$ is not constant on $\left\langle
s\right\rangle $ we define $f\left(  s\right)  =max\left\{  t\mid F\left(
\left\langle s\right\rangle \right)  \subseteq\left\langle t\right\rangle
\right\}  $ and if $F$ is constant on $\left\langle s\right\rangle $ then
$f\left(  s\right)  =r\upharpoonright\left\vert s\right\vert $ where $r$ is
the constant value of $F\upharpoonright\left\langle s\right\rangle .$ We claim
that $f:\left(  \omega^{<\omega},tr\left(  \mathcal{J}\right)  \right)
\longrightarrow\left(  \omega^{<\omega},tr\left(  \mathcal{I}\right)  \right)
$ is a Kat\v{e}tov morphism. Let $a\in tr\left(  \mathcal{I}\right)  $ we will
show that $\pi\left(  f^{-1}\left(  a\right)  \right)  \in\mathcal{J}.$ Note
that if $x\in\pi\left(  f^{-1}\left(  a\right)  \right)  $ then $F\left(
x\right)  \in\pi\left(  a\right)  \in\mathcal{I}.$ However, $\mathbb{P}%
_{\mathcal{J}}$ forces that $F\left(  \dot{r}_{gen}\right)  $ is $\mathcal{I}%
$-quasigeneric (where $\dot{r}_{gen}$ denotes the name of the generic real)
so\ $\pi\left(  f^{-1}\left(  a\right)  \right)  $ can not be a condition of
$\mathbb{P}_{\mathcal{J}}.$
\end{proof}

\qquad\ \ \ \ 

Let $\mathcal{I}$ be a $\sigma$-ideal, we say that $\mathbb{P}_{\mathcal{I}}$
is \emph{continuously homogenous }if for every $B\in\mathbb{P}_{\mathcal{I}}$
it is the case that $\mathcal{I}\upharpoonright B$ $\mathcal{\leq}_{CK}$
$\mathcal{I}.$ Note that if $\mathbb{P}_{\mathcal{I}}$ is continuously
homogenous then $tr\left(  \mathcal{I}\right)  $ is Kat\v{e}tov uniform.

\begin{lemma}
The ideals $tr($\textsf{ctble}$),$ \textsf{nwd}$,$ $tr\left(  K_{\sigma
}\right)  ,$ $tr\left(  \mathcal{L}\right)  $ are all Kat\v{e}tov uniform.
\end{lemma}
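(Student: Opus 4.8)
The plan is to deduce all four statements from the criterion recorded just above the lemma: if $\mathbb{P}_{\mathcal{I}}$ is continuously homogeneous then $tr(\mathcal{I})$ is Kat\v{e}tov uniform. Thus for the three genuine trace ideals it suffices to check that the Sacks forcing $\mathbb{P}_{\text{\textsf{ctble}}}$, the Miller forcing $\mathbb{P}_{\mathcal{K}_{\sigma}}$ and the Laver forcing $\mathbb{P}_{\mathcal{L}}$ are continuously homogeneous, i.e. that for every condition $B$ one has $\mathcal{I}\upharpoonright B\leq_{CK}\mathcal{I}$. The uniform mechanism I would use is this: each of these forcings is represented by trees, and below any condition $B$ there is a subcondition $[S]\subseteq B$ whose branch space is a faithful copy of the whole space. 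Concretely I would build a continuous $\Phi:\omega^{\omega}\to[S]\subseteq B$, a homeomorphism onto $[S]$, for which $\Phi^{-1}$ sends members of $\mathcal{I}\upharpoonright B$ back into $\mathcal{I}$. Since $\Phi$ lands inside $[S]$, for $A\in\mathcal{I}\upharpoonright B$ we have $\Phi^{-1}(A)=\Phi^{-1}(A\cap[S])$, so it is enough to control $\Phi$ on the canonical subcondition $[S]$.

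The two easy cases I would dispatch first. For $\mathbb{P}_{\text{\textsf{ctble}}}$ a condition is a perfect set $B\subseteq2^{\omega}$, which contains a homeomorphic copy of the Cantor set; choosing any continuous injection $\Phi:2^{\omega}\to B$ and using that \textsf{ctble} only measures cardinality, injectivity gives that $\Phi^{-1}$ of a countable set is countable, so $\text{\textsf{ctble}}\upharpoonright B\leq_{CK}\text{\textsf{ctble}}$. For Cohen forcing $\mathbb{P}_{\mathcal{M}}$ a non-meager Borel condition $B$ is, modulo $\mathcal{M}$, comeager in some basic cone $\left\langle s\right\rangle $; the canonical homeomorphism $x\mapsto s^{\frown}x$ from $2^{\omega}$ onto $\left\langle s\right\rangle $ preserves category in both directions, so $\mathcal{M}\upharpoonright B\leq_{CK}\mathcal{M}$ and hence $tr(\mathcal{M})$ is Kat\v{e}tov uniform.

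For Miller and Laver I would argue with trees. If $B$ is Miller-positive it contains the body $[S]$ of a superperfect tree $S$; $[S]$ is a nonempty, zero-dimensional Polish space that is nowhere locally compact (every node of $S$ extends to an $\omega$-splitting node), so by the Alexandrov--Urysohn characterization of Baire space $[S]$ is homeomorphic to $\omega^{\omega}$. Since a homeomorphism preserves compactness in both directions it preserves $\sigma$-compactness, and as $\mathcal{K}_{\sigma}\upharpoonright[S]$ consists exactly of the $\sigma$-compact subsets of $[S]$, the induced $\Phi^{-1}$ carries $\mathcal{K}_{\sigma}\upharpoonright B$ into $\mathcal{K}_{\sigma}$, giving continuous homogeneity of $\mathbb{P}_{\mathcal{K}_{\sigma}}$. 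For Laver forcing I would be more careful, since $\mathcal{L}$ is not a purely topological invariant. If $B$ is Laver-positive, fix a Laver subtree $S\subseteq B$ with stem $t$; above $t$ every node of $S$ has infinitely many immediate successors, so $\left\{  u\in S\mid t\subseteq u\right\}$ is order-isomorphic to $\omega^{<\omega}$. I would fix such a tree isomorphism $\psi:\omega^{<\omega}\to\left\{  u\in S\mid t\subseteq u\right\}$ and set $\Phi(x)=\bigcup_{n}\psi(x\upharpoonright n)$, a homeomorphism $\omega^{\omega}\to[S]$. The key point is that $\psi$ is an isomorphism of the branching structure, so $\Phi^{-1}$ pulls each generator $C_{\exists}(F)\cap[S]$ back to a set of the same form $C_{\exists}(F')$, where $F'$ is obtained by transporting $F$ through $\psi$; hence $\Phi^{-1}$ maps $\mathcal{L}\upharpoonright B$ into $\mathcal{L}$. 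This verification that a tree isomorphism preserves the Laver ideal is the main obstacle, and the only place where a genuine computation with the bounding functions is needed.

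Finally I would treat \textsf{nwd} by transport along the previous lemma, where the explicit bijection $f:2^{<\omega}\to\mathbb{Q}$, $f(s)=q_{s}$, was shown to be a Kat\v{e}tov-Blass morphism together with its inverse; thus $f$ is a bijective Kat\v{e}tov isomorphism between $tr(\mathcal{M})$ and \textsf{nwd}. I would then record the routine observation that Kat\v{e}tov uniformity is preserved by any bijective Kat\v{e}tov isomorphism (one simply conjugates the witnessing morphism by $f$), so from the Kat\v{e}tov uniformity of $tr(\mathcal{M})$ just obtained from Cohen forcing we conclude that \textsf{nwd} is Kat\v{e}tov uniform as well. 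Alternatively, \textsf{nwd} can be handled directly: a somewhere-dense $X$ is dense in some open $U\cong\mathbb{Q}$, the set $X\cap U$ then has no isolated points and so is homeomorphic to $\mathbb{Q}$ by Sierpi\'{n}ski's theorem, and this homeomorphism witnesses $\text{\textsf{nwd}}\upharpoonright X\leq_{\mathsf{K}}\text{\textsf{nwd}}$. Either route completes all four cases.
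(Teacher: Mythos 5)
Your proposal is correct and is essentially the paper's own argument, which is only a three-line sketch (``every uncountable Borel set contains a Cantor set, a similar argument works for Miller and Laver, and an \textsf{nwd}-positive set contains a copy of $\mathbb{Q}$''); you have simply supplied the details, including the Alexandrov--Urysohn identification of the superperfect body with $\omega^{\omega}$ and the observation that the map must land inside the subcondition. The one step you leave as a sketch, transporting $C_{\exists}(F)$ along the canonical tree isomorphism $\psi$, does go through: since $\psi$ is level-preserving above the stem and enumerates successors increasingly, $F'(s)=F(\psi(s))$ already witnesses $\Phi^{-1}(C_{\exists}(F)\cap[S])\subseteq C_{\exists}(F')$.
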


\begin{proof}
It is well known that every uncountable Borel set of $2^{\omega}$ contains a
Cantor set, it then follows that \textsf{ctble} is continuously homogenous. A
similar argument works for Miller and Laver forcings. Finally, if $A\notin$
\textsf{nwd} then it contains a copy of the rational numbers. \qquad\ \ \ 
\end{proof}

\qquad\qquad\qquad\qquad\ 

We can then conclude the following:

\begin{proposition}
Let $\mathbb{P}$ be a partial order.

\begin{enumerate}
\item $\mathbb{P}$ destroys $tr($\textsf{ctble}$)$ if and only if $\mathbb{P}$
adds a new real.

\item $\mathbb{P}$ destroys $tr\left(  K_{\sigma}\right)  $ if and only if
$\mathbb{P}$ adds an unbounded real.
\end{enumerate}
\end{proposition}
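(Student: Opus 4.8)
The plan is to read both equivalences as instances of the slogan that \emph{destroying the trace ideal $tr(\mathcal{I})$ is the same as adding an $\mathcal{I}$-quasigeneric real}, specialized to $\mathcal{I}=\textsf{ctble}$ and $\mathcal{I}=\mathcal{K}_{\sigma}$. As recorded earlier in the chapter, the \textsf{ctble}-quasigeneric reals over $V$ are exactly the new reals and the $\mathcal{K}_{\sigma}$-quasigeneric reals are exactly the unbounded reals, so the two statements to be proved are literally ``$\mathbb{P}$ destroys $tr(\mathcal{I})$ iff $\mathbb{P}$ adds an $\mathcal{I}$-quasigeneric real'' for these two ideals. I must keep in mind that this is special to them: the author has already observed that the analogous equivalence fails for $\mathcal{N}$, so the argument has to exploit features peculiar to \textsf{ctble} and $\mathcal{K}_{\sigma}$.

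First I would dispatch the direction uniform in $\mathcal{I}$: adding an $\mathcal{I}$-quasigeneric real destroys $tr(\mathcal{I})$. Given such an $r$, take its branch $\dot{x}=\{r\upharpoonright n\mid n\in\omega\}\subseteq\omega^{<\omega}$. For any $a\in tr(\mathcal{I})\cap V$ the set $\pi(a)$ is a $G_{\delta}$, hence Borel, member of $\mathcal{I}$ coded in $V$; since $r$ is $\mathcal{I}$-quasigeneric, $r\notin\pi(a)^{W}$, which unwinds to $\neg\exists^{\infty}n\,(r\upharpoonright n\in a)$, i.e. $\dot{x}\cap a$ is finite. Thus $\dot{x}$ is forced to be almost disjoint from every element of $tr(\mathcal{I})^{V}$, settling the ``adds $\Rightarrow$ destroys'' half of both (1) and (2). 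The converse of (1) then costs nothing: $tr(\textsf{ctble})$ lives on the \emph{countable} set $\omega^{<\omega}$, so any set witnessing its destruction is a new subset of a countable set, i.e.\ (via a bijection $\omega^{<\omega}\cong\omega$) a new real, and a new real is precisely a \textsf{ctble}-quasigeneric one.

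The substance is the converse of (2): if $\mathbb{P}$ destroys $tr(\mathcal{K}_{\sigma})$ then it adds an unbounded real. Here I would use the description (available from the excerpt) that $A\in\mathcal{K}_{\sigma}$ iff $A$ is $\le^{\ast}$-bounded, so that for $g\in\omega^{\omega}$ and $N\in\omega$ the set $T^{N}_{g}=\{s\mid\forall i\in[N,|s|)\,(s(i)\le g(i))\}$ has $\pi(T^{N}_{g})=\{f\mid\forall i\ge N\,(f(i)\le g(i))\}\in\mathcal{K}_{\sigma}$, i.e.\ $T^{N}_{g}\in tr(\mathcal{K}_{\sigma})^{V}$. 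Let $\dot{x}$ be forced by $p$ to be an infinite subset of $\omega^{<\omega}$ almost disjoint from every element of $tr(\mathcal{K}_{\sigma})^{V}$. Since $\omega^{\le L}\in tr(\mathcal{K}_{\sigma})^{V}$ (its $\pi$ is empty) for each $L$, almost disjointness forces $\dot{x}$ to contain only finitely many strings of each bounded length, so I may assume the lengths of its elements tend to infinity. Now consider the name $\dot{F}$ for the coordinatewise profile $\dot{F}(n)=\sup\{s(n)\mid s\in\dot{x},\,|s|>n\}$. If every $\dot{F}(n)$ is finite, then $\dot{x}\subseteq T^{0}_{\dot{F}}$; and were $\dot{F}$ dominated by some ground-model $\bar{F}$, then $\dot{x}\subseteq T^{N}_{\bar{F}}\in tr(\mathcal{K}_{\sigma})^{V}$ and would meet it infinitely, contradicting almost disjointness. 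Hence $\dot{F}$ is not dominated by any $\bar F\in V$, i.e.\ it is an unbounded real.

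The main obstacle, and the step needing the most care, is the case where $\dot{F}(n)=\infty$ for some coordinates. A single ``free'' column is harmless: replacing $T^{N}_{\bar F}$ by the set allowing $s(n_{0})$ arbitrary still yields a $\le^{\ast}$-bounded, hence $tr(\mathcal{K}_{\sigma})^{V}$, set capturing the offending elements, contradicting almost disjointness; and the same works for finitely many infinite columns. But \emph{infinitely many} infinite columns cannot be absorbed by any $\le^{\ast}$-bounded set, so I expect to argue by a dichotomy: either only finitely many columns are infinite, and one reduces to the finite-profile argument on the tail past the last infinite column, or infinitely many columns carry unbounded entries, in which case a fusion/diagonal selection of elements of $\dot{x}$, one per infinite column, should produce a real escaping every ground-model function infinitely often. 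Verifying that such a selection defeats every $g\in V$ uniformly, using only almost disjointness with the sets $T^{N}_{g}$ and without enumerating $V$, is the delicate point I would need to nail down.
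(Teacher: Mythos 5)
Your treatment of part (1) and of the ``adds $\Rightarrow$ destroys'' halves is correct and matches the paper: the branch $\{r\upharpoonright n\mid n\in\omega\}$ of an $\mathcal{I}$-quasigeneric real is almost disjoint from every ground-model element of $tr(\mathcal{I})$, and destroying an ideal on the countable set $\omega^{<\omega}$ trivially produces a new real. The gap is in the hard direction of (2), precisely where you flag it, and it is genuine: your entire stock of ground-model trace-null sets ($T^{N}_{g}$, the bounded-length sets $\omega^{\le L}$, and the finitely-many-free-columns variants) consists of sets $a$ with $\pi(a)$ $\sigma$-compact. Such sets only constrain the ``bounded part'' of $\dot{x}$ and give no purchase once a single column is unbounded; in particular, a selection of ``one element per infinite column'' cannot be verified against all ground-model $g$ simultaneously with these sets alone, since to defeat a given $g$ the selected element must exceed $g$ at that column and nothing in your setup calibrates the selection uniformly. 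The missing ingredient is the second half of the family used in the paper's proof that $\textsf{cov}^{\ast}(tr(\mathcal{K}_{\sigma}))\le\max\{\textsf{cov}(\mathcal{K}_{\sigma}),\mathfrak{d}\}$ (which is exactly what the paper's one-line proof of (2) invokes): for a fixed node $t\in\omega^{<\omega}$ (a ground-model object!) and a ground-model $h$ assigning to each $t^{\frown}n$ a finite set of its extensions, the set $b_{h,t}=\bigcup_{n}h(t^{\frown}n)$ has $\pi(b_{h,t})=\emptyset$ (a branch passes through only one successor of $t$), hence lies in $tr(\mathcal{K}_{\sigma})^{V}$ with no boundedness hypothesis whatsoever.

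With these sets the right dichotomy is on branching, not on columns. Let $L'$ be the tree of initial segments of elements of $\dot{x}$. Some column of $\dot{x}$ is infinite if and only if some level of $L'$ is infinite if and only if some node $t$ of $L'$ has infinitely many immediate successors in $L'$ --- so your intermediate case ``finitely many infinite columns'' never actually separates from ``some infinite column.'' If $L'$ is finitely branching, all columns are finite and your $T^{0}_{\dot{F}}$ argument closes the proof (undominated in $\le$ by ground-model functions upgrades to undominated in $\le^{\ast}$ by boosting finitely many values by a single natural number). If some $t$ has infinitely many successors $t^{\frown}n$ with extensions $s_{n}\in\dot{x}$, then almost disjointness of $\dot{x}$ from every $b_{h,t}$ (applied to $h(t^{\frown}n)=$ all extensions of $t^{\frown}n$ of length and entries at most $g(n)$) forces $n\mapsto\max(|s_{n}|,\max_{i}s_{n}(i))$ to exceed $g(n)$ for almost all $n$ in its infinite domain, for every ground-model $g$; extended by $0$ this is an unbounded real. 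Either add this second case explicitly, or do as the paper does and rerun the tallness argument of the $\textsf{cov}^{\ast}$ computation inside the extension.
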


\begin{proof}
Clearly if $\mathbb{P}$ destroys $tr($\textsf{ctble}$)$ then it must add a new
real. Conversely, if $r$ is a new real added by $\mathbb{P}$ then $\left\{
r\upharpoonright n\mid n\in\omega\right\}  $ destroys $tr($\textsf{ctble}$).$
The second point follows by the proof of \textsf{cov}$^{\ast}\left(  tr\left(
K_{\sigma}\right)  \right)  =\mathfrak{d}.$\qquad\ \ \ 
\end{proof}

\qquad\ \ \ 

We have the following characterizations:

\begin{proposition}
Let $\mathcal{J}$ be an ideal on $\omega.$

\begin{enumerate}
\item The following are equivalent:

\begin{enumerate}
\item $\mathcal{J}$ is destructible by Sacks forcing.

\item $\mathcal{J}$ is destructible by any forcing adding a new real.

\item $\mathcal{J}\leq_{K}tr($\textsf{ctble}$)$
\end{enumerate}

\item The following are equivalent:

\begin{enumerate}
\item $\mathcal{J}$ is destructible by Miller forcing.

\item $\mathcal{J}$ is destructible by any forcing adding an unbounded real.

\item $\mathcal{J}\leq_{K}tr\left(  K_{\sigma}\right)  .$
\end{enumerate}

\item The following are equivalent:

\begin{enumerate}
\item $\mathcal{J}$ is destructible by Cohen forcing.

\item $\mathcal{J}\leq_{K}$\textsf{nwd}$.$
\end{enumerate}
\end{enumerate}
\end{proposition}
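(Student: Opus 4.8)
The plan is to establish all three items by a single pattern: the Hru\v{s}\'{a}k--Zapletal characterization converts ``some condition of $\mathbb{P}_{\mathcal{I}}$ destroys $\mathcal{J}$'' into a Kat\v{e}tov reduction of $\mathcal{J}$ to a \emph{restriction} of $tr(\mathcal{I})$, and the Kat\v{e}tov uniformity of the trace ideals removes that restriction; the Kat\v{e}tov lemma then runs the implications backwards. All the forcings involved --- Sacks $=\mathbb{P}_{\mathsf{ctble}}$, Miller $=\mathbb{P}_{K_{\sigma}}$ and Cohen $=\mathbb{P}_{\mathcal{M}}$ --- are proper with continuous reading of names, so the hypotheses of Hru\v{s}\'{a}k--Zapletal are met.

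For item 1 I would prove the cycle $(b)\Rightarrow(a)\Rightarrow(c)\Rightarrow(b)$. Here $(b)\Rightarrow(a)$ is immediate since Sacks adds a new real. For $(a)\Rightarrow(c)$, if $\mathbb{P}_{\mathsf{ctble}}$ destroys $\mathcal{J}$ then in particular the top condition forces $\mathcal{J}$ not tall, so Hru\v{s}\'{a}k--Zapletal gives $a\in tr(\mathsf{ctble})^{+}$ with $\mathcal{J}\leq_{K}tr(\mathsf{ctble})\upharpoonright a$; Kat\v{e}tov uniformity of $tr(\mathsf{ctble})$ yields $tr(\mathsf{ctble})\upharpoonright a\leq_{K}tr(\mathsf{ctble})$, hence $\mathcal{J}\leq_{K}tr(\mathsf{ctble})$ by transitivity. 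For $(c)\Rightarrow(b)$, any $\mathbb{P}$ adding a new real destroys $tr(\mathsf{ctble})$ by the earlier proposition, so the Kat\v{e}tov lemma applied to $\mathcal{J}\leq_{K}tr(\mathsf{ctble})$ shows $\mathbb{P}$ destroys $\mathcal{J}$. Item 2 is the verbatim argument with $\mathsf{ctble}$ replaced by $K_{\sigma}$, ``new real'' by ``unbounded real,'' and Sacks by Miller, using that $tr(K_{\sigma})$ is Kat\v{e}tov uniform and that $\mathbb{P}$ destroys $tr(K_{\sigma})$ exactly when it adds an unbounded real.

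For item 3 the cycle collapses to the two-step equivalence $(a)\Leftrightarrow(b)$, and I would route through $\mathsf{nwd}$ using the Kat\v{e}tov--Blass equivalence $tr(\mathcal{M})\simeq_{K}\mathsf{nwd}$ proved above. For $(a)\Rightarrow(b)$: if Cohen $=\mathbb{P}_{\mathcal{M}}$ destroys $\mathcal{J}$, Hru\v{s}\'{a}k--Zapletal gives $a\in tr(\mathcal{M})^{+}$ with $\mathcal{J}\leq_{K}tr(\mathcal{M})\upharpoonright a$. The bijection $f$ witnessing $tr(\mathcal{M})\simeq_{K}\mathsf{nwd}$ transports $tr(\mathcal{M})$ onto $\mathsf{nwd}$ exactly (both $f$ and $f^{-1}$ are Kat\v{e}tov morphisms, so $b\in tr(\mathcal{M})\Leftrightarrow f[b]\in\mathsf{nwd}$), hence it carries $tr(\mathcal{M})\upharpoonright a$ to $\mathsf{nwd}\upharpoonright f[a]$; since $\mathsf{nwd}$ is Kat\v{e}tov uniform, $\mathsf{nwd}\upharpoonright f[a]\leq_{K}\mathsf{nwd}$, and therefore $\mathcal{J}\leq_{K}\mathsf{nwd}$. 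For $(b)\Rightarrow(a)$: from $\mathcal{J}\leq_{K}\mathsf{nwd}\simeq_{K}tr(\mathcal{M})$ we get $\mathcal{J}\leq_{K}tr(\mathcal{M})$; the ``$2\Rightarrow1$'' half of Hru\v{s}\'{a}k--Zapletal applied with $a$ the full tree (so that $\pi(a)$ is the top condition) shows $\mathbb{P}_{\mathcal{M}}$ destroys $tr(\mathcal{M})$, and the Kat\v{e}tov lemma then gives that Cohen destroys $\mathcal{J}$.

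The main obstacle is the restriction $\upharpoonright a$ that Hru\v{s}\'{a}k--Zapletal unavoidably introduces: a destroying condition only produces a Kat\v{e}tov reduction of $\mathcal{J}$ to a \emph{localized} copy of the trace ideal, so the whole argument hinges on being able to delocalize, which is precisely the content of Kat\v{e}tov uniformity. Thus the one substantive point to verify in each case is the uniformity of the trace ideal in play --- $tr(\mathsf{ctble})$, $tr(K_{\sigma})$, and (through the exact transport by $f$) $tr(\mathcal{M})$ --- all of which are supplied by the uniformity lemma above. A minor bookkeeping point, matching ``some condition destroys'' with ``the forcing destroys,'' causes no trouble: the forward directions only need one destroying condition, while the reverse directions produce destruction by the whole forcing directly from the Kat\v{e}tov lemma.
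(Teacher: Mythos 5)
Your proposal is correct and assembles exactly the ingredients the paper lines up immediately before this proposition (the Hru\v{s}\'{a}k--Zapletal characterization, the Kat\v{e}tov uniformity of the trace ideals, the Kat\v{e}tov--Blass equivalence of $tr(\mathcal{M})$ with \textsf{nwd}, and the characterizations of destroying $tr(\mathsf{ctble})$ and $tr(K_{\sigma})$); the paper in fact leaves this proof implicit, and your delocalization-via-uniformity argument is precisely the intended one. The one point worth stating explicitly, which you only gesture at in the final bookkeeping remark, is that ``some condition destroys'' and ``the whole forcing destroys'' coincide here because the statement being forced has only ground-model parameters and each of Sacks, Miller and Cohen is isomorphic to its restriction below any condition.
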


\qquad\ \ \ \ \qquad\ \ \qquad\ \ \ 

The following definition is essentially the same as one considered by Brendle
and Yatabe in \cite{ForcingIndestructibilityofMADFamilies}:

\begin{definition}
Let $\mathcal{I}$ be a $\sigma$-ideal on $\omega^{\omega}$ (or $2^{\omega}$)
such that $\mathbb{P}_{\mathcal{I}}$ is proper and has the continuous reading
of names. We say $\mathcal{I}$ has \emph{very weak fusion }if for every ideal
$\mathcal{J}$ on $\omega,$ the following conditions are equivalent:

\begin{enumerate}
\item There is a condition $B\in\mathbb{P}_{\mathcal{I}}$ such that
$B\Vdash``\mathcal{J}$ is not tall$\textquotedblright.$

\item There is $a\in tr\left(  \mathcal{I}\right)  ^{+}$ such that
$\mathcal{J}\leq_{KB}tr\left(  \mathcal{I}\right)  \upharpoonright a.$
\end{enumerate}
\end{definition}

\qquad\ \ 

We then have the following:

\begin{proposition}
[\cite{ForcingIndestructibilityofMADFamilies}]\textsf{ctble}$,\mathcal{M}%
,\mathcal{N}$ and $\mathcal{K}_{\sigma}$ have very weak fusion.
\end{proposition}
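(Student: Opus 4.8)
The plan is to prove the stated equivalence for each $\mathcal{I}\in\{\mathsf{ctble},\mathcal{M},\mathcal{N},\mathcal{K}_{\sigma}\}$ by reducing its nontrivial content to a single construction. Since every Kat\v{e}tov--Blass morphism is a Kat\v{e}tov morphism, condition $2$ already gives $a\in tr(\mathcal{I})^{+}$ with $\mathcal{J}\leq_{K}tr(\mathcal{I})\upharpoonright a$, so the implication $2\Rightarrow 1$ is exactly the corresponding implication of the Hru\v{s}\'{a}k--Zapletal proposition proved above. Hence the whole task is to \emph{upgrade}, for these four ideals, the Kat\v{e}tov morphism produced there to a finite-to-one one. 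Throughout I use that each of the four ideals satisfies the standing hypotheses: $\mathbb{P}_{\mathcal{I}}$ is proper and has the continuous reading of names.

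First I would fix the witness. Suppose $B\in\mathbb{P}_{\mathcal{I}}$ forces $\mathcal{J}$ to be non-tall, and let $\dot{X}$ name an infinite subset of $\omega$ that $B$ forces to be almost disjoint with every member of $\mathcal{J}$. Passing to the increasing enumeration, I may assume $\dot{X}=\{\dot{x}_{n}\mid n\in\omega\}$ with $\dot{x}_{0}<\dot{x}_{1}<\cdots$, so $B\Vdash \dot{x}_{m}\geq m$ for all $m$; this inequality is the crucial extra leverage. The idea is that, rather than reading $\dot{x}_{n}$ at ``level $n$'' (where a single level may hold infinitely many nodes, so the morphism need not be finite-to-one), I read \emph{ever larger} members of $\dot{X}$, making the morphism injective. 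Concretely I would build, by the fusion appropriate to $\mathbb{P}_{\mathcal{I}}$, a condition whose body is $\pi(a)$ for some $a\in tr(\mathcal{I})^{+}$, together with $g\colon a\to\omega$ such that, enumerating $a=\{s^{(i)}\mid i\in\omega\}$, each $s^{(i)}$ carries an index $m_{i}$ and value $v_{i}$ with $\langle s^{(i)}\rangle\cap\pi(a)\Vdash \dot{x}_{m_{i}}=v_{i}=g(s^{(i)})$ and with $v_{0}<v_{1}<\cdots$. The growth is forceable because $v_{i}\geq m_{i}$: at step $i$ I choose $m_{i}>\max\{v_{0},\dots,v_{i-1}\}$ and then shrink the cone below $s^{(i)}$ to decide $\dot{x}_{m_{i}}$, whose value is automatically $\geq m_{i}$. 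Being injective, $g$ is in particular finite-to-one.

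The verification that $g$ is a Kat\v{e}tov morphism from $(a,tr(\mathcal{I})\upharpoonright a)$ to $(\omega,\mathcal{J})$ is then uniform. Let $A\in\mathcal{J}$; I must show $\pi(g^{-1}(A))\in\mathcal{I}$. If not, $C=\pi(g^{-1}(A))$ is a condition below $B$, and by the meaning of $\pi$ the condition $C$ forces the generic real $\dot{r}_{gen}$ to pass through infinitely many nodes $s^{(i)}\in g^{-1}(A)$; each such node forces $\dot{x}_{m_{i}}=v_{i}\in A$, and the $v_{i}$ are pairwise distinct by injectivity. Hence $C$ forces $\dot{X}\cap A$ to be infinite, contradicting that $\dot{X}$ is almost disjoint with $A\in\mathcal{J}$. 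Thus $g^{-1}(A)\in tr(\mathcal{I})$, and $\mathcal{J}\leq_{KB}tr(\mathcal{I})\upharpoonright a$, as required.

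The main obstacle is the fusion that actually produces such an $a$ with $\pi(a)\in tr(\mathcal{I})^{+}$ while deciding only countably many values and respecting the growth requirement. For $\mathsf{ctble}$ (Sacks) this is standard perfect-tree fusion, where finite splitting levels already suffice. For $\mathcal{M}$ (Cohen) and $\mathcal{N}$ (random) I would instead use the continuous/Borel reading of names together with the $G_{\delta}$ refinement lemma proved above, building $\pi(a)$ as a non-meager, respectively positive-measure, $G_{\delta}$ set; the case $\mathcal{N}$ requires the usual Borel--Cantelli bookkeeping so the countably many shrinkings do not exhaust the measure. The genuinely delicate case is $\mathcal{K}_{\sigma}$ (Miller), since a superperfect tree has \emph{infinite} splitting levels, so the level-indexed morphism is badly non-injective. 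There I would enumerate the pairs (infinite splitting node, immediate successor) in a single fair $\omega$-sequence via a bijection $\omega\cong\omega\times\omega$, and at each step shrink the subtree below the chosen successor to decide some $\dot{x}_{m_{i}}$ with $m_{i}$ above the current maximum, \emph{keeping} every successor (only shrinking its subtree) so that superperfectness survives in the limit tree $T$. Then each branch of $T$ meets $a$ infinitely often, whence $[T]\subseteq\pi(a)$ and $\pi(a)\notin\mathcal{K}_{\sigma}$, while the interleaving forces $v_{0}<v_{1}<\cdots$. This growth-via-domination feature is precisely what separates $\mathcal{K}_{\sigma}$ from the Laver ideal $\mathcal{L}$, which is why the latter does not appear in the statement.
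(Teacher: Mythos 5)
Your proposal is correct and follows essentially the same route as the paper: for each of the four forcings one fuses to a positive condition whose branches pass through an infinite set $a$ of nodes, each deciding a distinct member of $\dot{X}$, and reads the morphism off those decided values; the verification that a $tr(\mathcal{I})$-positive preimage of some $A\in\mathcal{J}$ would force $\dot{X}\cap A$ infinite is the same. The only differences are organizational: your uniform trick of exploiting $\dot{x}_{m}\geq m$ to force strictly increasing values replaces the paper's case-by-case bookkeeping (in particular, for random the paper settles for a finite-to-one map read off finite maximal antichains rather than an injective one), and you supply the Miller fusion details that the paper dismisses with ``a very similar proof works.''
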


\begin{proof}
Let $\mathcal{J}$ be an ideal on $\omega,$ we need to prove that if
$\mathbb{P}_{\mathcal{I}}$ destroys $\mathcal{J}$ (for $\mathcal{I}$ one of
the ideals mentioned in the proposition) then there is $a\in tr\left(
\mathcal{I}\right)  ^{+}$ such that $\mathcal{J}\leq_{KB}tr\left(
\mathcal{I}\right)  \upharpoonright a.$

\qquad\ \ 

We first prove it for Cohen forcing. Let $s\in\omega^{<\omega}$ and $\dot{X}$
be a name for an infinite set that is forced by $s$ to be almost disjoint with
$\mathcal{J}.$ Let $C=\left\{  t_{n}\mid n\in\omega\right\}  \subseteq
\omega^{<\omega}$ be the set of all extensions of $s.$ We now recursively find
$a=\{\overline{t}_{n}\mid n\in\omega\}\subseteq\omega^{<\omega}$ and $\left\{
m_{n}\mid n\in\omega\right\}  \subseteq\omega$ such that for every $n\in
\omega$ the following holds:

\begin{enumerate}
\item $\overline{t}_{n}$ extends $t_{n}.$

\item $m_{n}<m_{n+1}.$

\item $\overline{t}_{n}\Vdash``m_{n}\in\dot{X}\textquotedblright.$
\end{enumerate}

\qquad\ \ 

This is very easy to do and clearly $a\in tr\left(  \mathcal{M}\right)  ^{+}.$
We now define $f:a\longrightarrow\omega$ where $f\left(  \overline{t}%
_{n}\right)  =m_{n}.$ It is easy to see that $f$ is injective and is a
Kat\v{e}tov morphism from $\left(  a,tr\left(  \mathcal{M}\right)
\upharpoonright a\right)  $ to $\left(  \omega,\mathcal{J}\right)  .$

\qquad\ \ 

We now prove it for Sacks forcing. Let $p$ be a Sacks tree and $\dot{X}$ be a
name for an infinite set that is forced by $\mathbb{S}$ to be almost disjoint
with $\mathcal{J}.$ We recursively construct $\left\{  p_{s}\mid
s\in2^{<\omega}\right\}  \subseteq\mathbb{S},$ $a=\left\{  t_{s}\mid
s\in2^{<\omega}\right\}  \subseteq2^{<\omega}$ and $\left\{  m_{s}\mid
s\in2^{<\omega}\right\}  \subseteq\omega$ such that for every $s\in2^{<\omega
}$ the following holds:

\begin{enumerate}
\item $p_{\emptyset}\leq p.$

\item $p_{s^{\frown}0}$ and $p_{s^{\frown}1}$ are two incompatible extensions
of $p_{s}.$

\item $t_{s}$ is the stem of $p_{s}.$

\item $t_{s^{\frown}0}$ and $t_{s^{\frown}1}$ are incomparable nodes of
$2^{<\omega}$ and $t_{s^{\frown}0}\cap t_{s^{\frown}1}=t_{s}.$

\item $p_{s}\Vdash``m_{s}\in\dot{X}\textquotedblright.$

\item If $s\neq t$ then $m_{s}\neq m_{t}.$
\end{enumerate}

\qquad\ \ 

Once again, this is easy to do and $a\in tr($\textsf{ctble}$)^{+}.$ We now
define $f:a\longrightarrow\omega$ where $f\left(  t_{s}\right)  =m_{s}.$ It is
easy to see that $f$ is injective and is a Kat\v{e}tov morphism from
$(a,tr($\textsf{ctble}$)^{+}\upharpoonright a)$ to $\left(  \omega
,\mathcal{J}\right)  .$ A very similar proof works for Miller forcing.

\qquad\ \ 

Finally, we prove it for random forcing. Let $T\subseteq2^{<\omega}$ be a tree
such that $\left[  T\right]  $ has positive Lebesgue measure and $\dot
{X}=\{\dot{x}_{n}\mid n\in\omega\}$ be a name for an infinite set that is
forced by $\mathbb{B}$ to be almost disjoint with $\mathcal{J}.$ By the usual
proof that random forcing is $\omega^{\omega}$-bounding, we may assume there
are $\left\{  F_{n}\mid n\in\omega\right\}  $ and $\left\{  h_{n}\mid
n\in\omega\right\}  $ such that for every $n\in\omega$ the following holds:

\begin{enumerate}
\item $F_{n}$ is a finite maximal antichain of $T.$

\item $F_{n+1}$ refines $F_{n}.$

\item $h_{n}:F_{n}\longrightarrow\omega.$

\item If $s\in F_{n}$ then $T_{s}\Vdash``\dot{x}_{n}=h_{n}\left(  s\right)
\textquotedblright.$
\end{enumerate}

\qquad\ \ 

Let $W\in\left[  \omega\right]  ^{\omega}$ such that if $n,m\in W$ and $n<m$
then $h_{n}\left(  s\right)  <h_{m}\left(  t\right)  $ for every $s\in F_{n}$
and $t\in F_{m}.$ Let $a=%
%TCIMACRO{\tbigcup \limits_{n\in W}}%
%BeginExpansion
{\textstyle\bigcup\limits_{n\in W}}
%EndExpansion
F_{n}$ then $a\in tr\left(  N\right)  ^{+}.$ We now define $f:a\longrightarrow
\omega$ where $f=%
%TCIMACRO{\tbigcup \limits_{n\in W}}%
%BeginExpansion
{\textstyle\bigcup\limits_{n\in W}}
%EndExpansion
h_{n}.$ It is easy to see that $f$ is finite to one and is a Kat\v{e}tov
morphism from $\left(  a,tr\left(  \mathcal{N}\right)  \upharpoonright
a\right)  $ to $\left(  \omega,\mathcal{J}\right)  .$
\end{proof}

\qquad\ \ \ 

We will not need the following results, but we would like to mention them
since they are important in the theory of destructibility of ideals:

\begin{proposition}
\qquad\ \ 

\begin{enumerate}
\item (Laflamme \cite{ZappingSmallFilters}) Every $F_{\sigma}$-ideal can be
destroyed without adding unbounded reals.

\item (Zapletal \cite{ForcingIdealized}) Every $F_{\sigma}$-ideal can be
destroyed without adding unbounded reals or splitting reals.

\item (Raghavan, Shelah \cite{TwoinequalitiesbetweenCardinalInvariants}) The
density zero ideal can not be destroyed without adding unbounded reals.
\end{enumerate}
\end{proposition}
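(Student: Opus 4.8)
The statement gathers three independent results, so the plan is to address each clause in turn; the common engine for the first two is the Mathias forcing $\mathbb{M}(\mathcal{I})$ relative to the ideal, while the third is of a different, more combinatorial, character.

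For clause~1, I would first invoke Mazur's theorem (the Proposition above) to write the given $F_{\sigma}$ ideal as $\mathcal{I}=\mathsf{Fin}(\varphi)=\bigcup_{n}\mathcal{C}_{n}$, where $\varphi$ is a lower semicontinuous submeasure and each $\mathcal{C}_{n}=\{A\mid\varphi(A)\leq n\}$ is compact and downward closed, and then force with $\mathbb{M}(\mathcal{I})$. That this destroys $\mathcal{I}$ is the routine density argument: given $(t,B)\in\mathbb{M}(\mathcal{I})$ and $A\in\mathcal{I}\cap V$, the pair $(t,B\cup A)$ is a legitimate extension (as $B\cup A\in\mathcal{I}$) below which the generic real $X=\bigcup\{s\mid(s,B')\in G\}$ satisfies $X\cap A\subseteq t$; since every member of $\mathcal{I}$ is coinfinite, $X$ is infinite, and by genericity it is almost disjoint from every ground model element of $\mathcal{I}$. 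The substance of the clause is that $\mathbb{M}(\mathcal{I})$ is $\omega^{\omega}$-bounding, and this is exactly where $F_{\sigma}$-ness is used. I would run a fusion in which the $n$-th refinement is permitted to enlarge the reservoir only by a set of $\varphi$-submeasure below $2^{-n}$; subadditivity and lower semicontinuity of $\varphi$ then guarantee that the union of the reservoirs along a fusion sequence still has finite submeasure, hence stays inside $\mathcal{I}$, so the fusion limit is a genuine condition. This is precisely the point at which ordinary Mathias fusion, which would drift out of $\mathcal{I}$, is rescued by the submeasure. Coupling the fusion with a pure-decision (Prikry-type) step, in which compactness of the $\mathcal{C}_{n}$ bounds the finitely many relevant one-step stem extensions, lets me pin a name $\dot{f}\in\omega^{\omega}$ below a single ground model function. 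The main obstacle in this clause is the pure-decision lemma that drives the fusion.

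For clause~2 I would keep the same forcing and the same bounding argument, adding the preservation of non-splitting. Recall that a forcing adds no splitting real exactly when the ground model reals remain a reaping family in the extension, i.e. when for every name $\dot{S}$ for an infinite subset of $\omega$ there is, densely, a ground model infinite $Y$ forced almost into $\dot{S}$ or almost into its complement. Since below a pure extension $(s,A^{*})$ the generic real beyond its stem lives inside the ground model set $\omega\setminus A^{*}$, I would strengthen the pure-decision step of clause~1 to a Ramsey-type (Galvin--Prikry style) homogenization localized to the compact pieces $\mathcal{C}_{n}$: after the fusion, a two-colouring of $\omega\setminus A^{*}$ recording whether adjoining a point to the stem forces it into $\dot{S}$ yields an infinite monochromatic $Y\in V$ that $\dot{S}$ cannot split. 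The obstacle here is upgrading the decision procedure to this homogeneous form while keeping it compatible with the submeasure-controlled fusion.

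Clause~3 is of a different nature and is the hardest part; I would prove its contrapositive, namely that every $\omega^{\omega}$-bounding forcing preserves the tallness of the density zero ideal $\mathcal{Z}=\{A\subseteq\omega\mid\lim_{n}|A\cap n|/n=0\}$. Note that $\mathcal{Z}$ is only an analytic P-ideal (exhaustive, not $F_{\sigma}$), so the machinery of the first two clauses does not apply. The statement reduces to a ZFC lemma about models $V\subseteq W$: if $X\in W\cap[\omega]^{\omega}$ and $\omega^{\omega}\cap W$ is dominated by $\omega^{\omega}\cap V$, then some $A\in V\cap\mathcal{Z}$ meets $X$ infinitely. Writing $x_{0}<x_{1}<\cdots$ for $X$ and choosing $g\in V$ increasing with $x_{k}\leq g(k)$, I would try to assemble $A$ from ground model blocks cut by $g$; the difficulty, and the whole point of the theorem, is that domination only locates $x_{k}$ in the interval $[k,g(k)]$, whose length is comparable to its own scale, so one cannot simply swallow these intervals without wrecking density zero. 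Overcoming this requires the genuine combinatorics of the density submeasure --- a counting argument that exploits the guaranteed accumulation of $X$ below the ground model scales $g(k)$ to place a set that is simultaneously sparse enough to have density zero and dense enough to catch $X$ infinitely often. This balance is the real obstacle and is the technical heart of the Raghavan--Shelah argument, which I do not expect to yield to a routine treatment.
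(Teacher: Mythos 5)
First, a remark on context: the paper offers no proof of this proposition at all --- it is introduced with ``we will not need the following results, but we would like to mention them'' and is simply a list of cited theorems of Laflamme, Zapletal and Raghavan--Shelah. So there is no argument in the text to compare yours against, and your proposal must stand on its own. It does not: the engine you choose for clauses 1 and 2 cannot work. The Mathias forcing $\mathbb{M}\left(\mathcal{I}\right)$ \emph{always} adds an unbounded real, for every proper ideal $\mathcal{I}$ containing the finite sets, regardless of any submeasure-controlled fusion. Indeed, let $\dot{X}$ be the generic real and $e_{\dot{X}}$ its increasing enumeration. Given any ground model $g\in\omega^{\omega}$, any $k$, and any condition $\left(s,A\right)$: since $A\in\mathcal{I}$ its complement is infinite, so one may first extend the stem to some $s'$ of length at least $k$ inside $\omega\setminus A$ and then append a single $j\in\omega\setminus A$ with $j>g\left(\left\vert s'\right\vert\right)$; the resulting condition forces $e_{\dot{X}}\left(\left\vert s'\right\vert\right)=j>g\left(\left\vert s'\right\vert\right)$. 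Hence densely many conditions force $e_{\dot{X}}\nleq^{\ast}g$, and $e_{\dot{X}}$ is unbounded over $V$. This is a property of the extension itself, so no pure-decision lemma or fusion argument can repair it. What $F_{\sigma}$-ness of $\mathcal{I}$ actually buys for $\mathbb{M}\left(\mathcal{I}\right)$ is the Canjar and Hurewicz properties (no dominating reals; preservation of ground model unbounded families), which the paper records elsewhere in this chapter --- and which are strictly weaker than $\omega^{\omega}$-bounding. You have conflated these notions. The same objection disposes of your plan for clause 2, since it rides on the same forcing.

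The correct witnesses for clauses 1 and 2 are of a different shape: Laflamme's forcing (and Zapletal's refinement in \cite{ForcingIdealized}) builds the diagonalizing set out of a ground-model-prescribed sequence of disjoint finite blocks whose submeasure tends to infinity, the generic merely selecting points from within these blocks; since each block is finite and fixed in advance, the enumeration of the generic set is dominated by a ground model function, and compactness of the sets $\mathcal{C}_{n}=\left\{A\mid\varphi\left(A\right)\leq n\right\}$ drives a genuine fusion giving $\omega^{\omega}$-bounding (and, in Zapletal's version, the Sacks-type property that kills splitting reals). Your instinct that lower semicontinuity and subadditivity of $\varphi$ are the levers is right, but they must be applied to this block forcing, not to $\mathbb{M}\left(\mathcal{I}\right)$. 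As for clause 3, your reduction to ``$\omega^{\omega}$-bounding extensions preserve the tallness of $\mathcal{Z}$'' is the right formulation, but you concede that the counting argument at its heart is missing, so that clause remains unproven in your write-up as well.
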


\section{Indestructibility of \textsf{MAD} families}

Let $\mathcal{A}$ be a \textsf{MAD} family and $\mathbb{P}$ a forcing notion.
We say that $\mathbb{P}$ \emph{destroys }$\mathcal{A}$ if $\mathcal{A}$ is not
longer maximal after forcing with $\mathbb{P}.$ Clearly, $\mathbb{P}$ destroys
$\mathcal{A}$ if and only if $\mathbb{P}$ destroys $\mathcal{I}\left(
\mathcal{A}\right)  .$ Recall that if $\mathcal{I}$, $\mathcal{J}$ are ideals
and $\mathcal{I\leq}_{K}$ $\mathcal{J}$ then \textsf{cov}$^{\ast}\left(
\mathcal{J}\right)  \leq$ \textsf{cov}$^{\ast}\left(  \mathcal{I}\right)  $
and that \textsf{cov}$^{\ast}\left(  \mathcal{I}\left(  \mathcal{A}\right)
\right)  =\left\vert \mathcal{A}\right\vert .$

\begin{corollary}
Let $\mathcal{A}$ be a \textsf{MAD} family.

\begin{enumerate}
\item If $\left\vert \mathcal{A}\right\vert <\mathfrak{c}$ then $\mathcal{A}$
is Sacks indestructible.

\item If $\left\vert \mathcal{A}\right\vert <\mathfrak{d}$ then $\mathcal{A}$
is Miller indestructible.

\item If $\left\vert \mathcal{A}\right\vert <$ \textsf{cov}$\left(
\mathcal{M}\right)  $ then $\mathcal{A}$ is Cohen indestructible.
\end{enumerate}
\end{corollary}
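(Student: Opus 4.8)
The plan is to prove each of the three parts by contraposition, reducing destructibility of the \textsf{MAD} family $\mathcal{A}$ to destructibility of the ideal $\mathcal{I}\left( \mathcal{A}\right) $ and then combining the Kat\v{e}tov characterizations of destructibility with the evaluation of the covering numbers of the trace ideals. Throughout I would use two facts recalled above: $\mathbb{P}$ destroys $\mathcal{A}$ if and only if $\mathbb{P}$ destroys $\mathcal{I}\left( \mathcal{A}\right) $, and \textsf{cov}$^{\ast}\left( \mathcal{I}\left( \mathcal{A}\right) \right) =\left\vert \mathcal{A}\right\vert $ for a \textsf{MAD} family. The engine in all three cases is the monotonicity lemma: if $\mathcal{I}\leq_{\mathsf{K}}\mathcal{J}$ then \textsf{cov}$^{\ast}\left( \mathcal{J}\right) \leq$ \textsf{cov}$^{\ast}\left( \mathcal{I}\right) $.

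For the Sacks case, suppose $\mathcal{A}$ is destroyed by Sacks forcing, so $\mathcal{I}\left( \mathcal{A}\right) $ is destroyed by Sacks forcing. By the characterization proposition this gives $\mathcal{I}\left( \mathcal{A}\right) \leq_{K}tr\left( \mathsf{ctble}\right) $, and the monotonicity lemma then yields \textsf{cov}$^{\ast}\left( tr\left( \mathsf{ctble}\right) \right) \leq$ \textsf{cov}$^{\ast}\left( \mathcal{I}\left( \mathcal{A}\right) \right) =\left\vert \mathcal{A}\right\vert $. Since \textsf{cov}$^{\ast}\left( tr\left( \mathsf{ctble}\right) \right) =\mathfrak{c}$, we conclude $\mathfrak{c}\leq\left\vert \mathcal{A}\right\vert $. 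Taking the contrapositive gives the statement: $\left\vert \mathcal{A}\right\vert <\mathfrak{c}$ implies $\mathcal{A}$ is Sacks indestructible.

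The Miller case is entirely parallel, with $tr\left( \mathsf{ctble}\right) $ replaced by $tr\left( K_{\sigma}\right) $: destruction by Miller forcing gives $\mathcal{I}\left( \mathcal{A}\right) \leq_{K}tr\left( K_{\sigma}\right) $, whence $\mathfrak{d}=$ \textsf{cov}$^{\ast}\left( tr\left( K_{\sigma}\right) \right) \leq\left\vert \mathcal{A}\right\vert $, and contraposition finishes it. The Cohen case needs one extra link. Destruction by Cohen forcing gives $\mathcal{I}\left( \mathcal{A}\right) \leq_{K}$ \textsf{nwd}, hence \textsf{cov}$^{\ast}($\textsf{nwd}$)\leq\left\vert \mathcal{A}\right\vert $; here I would invoke the Kat\v{e}tov-Blass equivalence of $tr\left( \mathcal{M}\right) $ and \textsf{nwd}, which (being stronger than Kat\v{e}tov equivalence) forces \textsf{cov}$^{\ast}($\textsf{nwd}$)=$ \textsf{cov}$^{\ast}\left( tr\left( \mathcal{M}\right) \right) $, and then use \textsf{cov}$^{\ast}\left( tr\left( \mathcal{M}\right) \right) =$ \textsf{cov}$\left( \mathcal{M}\right) $ to obtain \textsf{cov}$\left( \mathcal{M}\right) \leq\left\vert \mathcal{A}\right\vert $.

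I do not expect any genuine obstacle, since all the substantial work was already carried out in the preceding results, namely the Kat\v{e}tov characterizations of destructibility by Sacks, Miller, and Cohen forcing, and the computation of the covering numbers of the trace ideals; the corollary is then a direct assembly of those facts. The only point deserving a moment's care is the Cohen case, where one must note that Kat\v{e}tov-Blass equivalence implies Kat\v{e}tov equivalence and therefore preserves \textsf{cov}$^{\ast}$. Once this is observed, the three bounds follow uniformly by the same contrapositive argument.
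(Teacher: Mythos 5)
Your proposal is correct and is exactly the argument the paper intends: the corollary is stated without proof immediately after the paper recalls that destroying $\mathcal{A}$ is the same as destroying $\mathcal{I}\left(\mathcal{A}\right)$, that \textsf{cov}$^{\ast}$ is antitone in the Kat\v{e}tov order, and that \textsf{cov}$^{\ast}\left(\mathcal{I}\left(\mathcal{A}\right)\right)=\left\vert\mathcal{A}\right\vert$, and your contrapositive assembly of these facts with the Kat\v{e}tov characterizations of Sacks, Miller, and Cohen destructibility and the computations \textsf{cov}$^{\ast}\left(tr\left(\mathsf{ctble}\right)\right)=\mathfrak{c}$, \textsf{cov}$^{\ast}\left(tr\left(K_{\sigma}\right)\right)=\mathfrak{d}$, \textsf{cov}$^{\ast}\left(tr\left(\mathcal{M}\right)\right)=$ \textsf{cov}$\left(\mathcal{M}\right)$ is the intended route. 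Your handling of the Cohen case via the Kat\v{e}tov--Blass (hence Kat\v{e}tov) equivalence of $tr\left(\mathcal{M}\right)$ and \textsf{nwd} together with Keremedis's theorem is also exactly what the paper's preceding results provide.
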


\qquad\ \ \ 

Since every tall ideal contains a \textsf{MAD} family, there are Sacks
destructible \textsf{MAD} families. However, the answers of the following
questions are unknown:

\begin{problem}
[Stepr\={a}ns]Is there a Cohen indestructible \textsf{MAD} family?
\end{problem}

\begin{problem}
[Hru\v{s}\'{a}k]\ \ \ Is there a Sacks indestructible \textsf{MAD} family?
\end{problem}

\qquad\ \ 

The answer is positive under many additional axioms, but it is currently
unknown if it is possible to build such families on the basis of \textsf{ZFC}
alone. Since every \textsf{AD} family is Kat\v{e}tov below \textsf{FIN}%
$\times$\textsf{FIN} we have the following:

\begin{proposition}
If $\mathbb{P}$ adds a dominating real then $\mathbb{P}$ destroys every ground
model \textsf{MAD} family.\qquad\ \ \ 
\end{proposition}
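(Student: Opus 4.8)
The plan is to obtain the statement by composing three facts already established in this chapter and the preliminaries, so that no genuinely new construction is required. First I would invoke the characterization of dominating reals proved above: a partial order $\mathbb{P}$ adds a dominating real if and only if it destroys \textsf{FIN}$\times$\textsf{FIN}. Hence the hypothesis that $\mathbb{P}$ adds a dominating real immediately gives that $\mathbb{P}$ destroys \textsf{FIN}$\times$\textsf{FIN}; that is, $\mathbb{P}$ forces a new infinite subset of $\omega\times\omega$ that is almost disjoint with every member of $($\textsf{FIN}$\times$\textsf{FIN}$)\cap V.$

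Next, let $\mathcal{A}$ be an arbitrary ground model \textsf{MAD} family. By the proposition of the preliminaries, every \textsf{AD} family satisfies $\mathcal{I}\left(  \mathcal{A}\right)  \leq_{K}$\textsf{FIN}$\times$\textsf{FIN}; the witnessing Kat\v{e}tov morphism is the bijection $f:\omega\times\omega\longrightarrow\omega$ sending each column $C_{n}$ onto a fixed partition $\left\{  A_{n}\mid n\in\omega\right\}  \subseteq\mathcal{I}\left(  \mathcal{A}\right)  $ of $\omega$ into infinite sets. I would then apply the lemma stating that if $\mathcal{I}\leq_{K}\mathcal{J}$ and $\mathbb{P}$ destroys $\mathcal{J}$ then $\mathbb{P}$ destroys $\mathcal{I}$: taking $\mathcal{I}=\mathcal{I}\left(  \mathcal{A}\right)  $ and $\mathcal{J}=$\textsf{FIN}$\times$\textsf{FIN}, we conclude that $\mathbb{P}$ destroys $\mathcal{I}\left(  \mathcal{A}\right)  .$ Concretely, if $\dot{X}$ names a set witnessing that \textsf{FIN}$\times$\textsf{FIN} is no longer tall, then $f[\dot{X}]$ is forced to be infinite and almost disjoint with every element of $\mathcal{I}\left(  \mathcal{A}\right)  .$

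Finally, since $\mathbb{P}$ destroys $\mathcal{A}$ if and only if $\mathbb{P}$ destroys $\mathcal{I}\left(  \mathcal{A}\right)  $, this finishes the argument, and as $\mathcal{A}$ was an arbitrary ground model \textsf{MAD} family the conclusion holds for all of them. I do not expect a real obstacle here: the only step carrying genuine content is the Kat\v{e}tov reduction $\mathcal{I}\left(  \mathcal{A}\right)  \leq_{K}$\textsf{FIN}$\times$\textsf{FIN}, which has already been proved, while the rest is a mechanical transfer of destructibility along a Kat\v{e}tov morphism. The single point I would check carefully is that the image $f[\dot{X}]$ remains infinite and almost disjoint with $\mathcal{A}$, which holds because $f$ is a bijection carrying $($\textsf{FIN}$\times$\textsf{FIN}$)$ into $\mathcal{I}\left(  \mathcal{A}\right)  .$
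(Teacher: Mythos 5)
Your proof is correct and is essentially the paper's own argument: the paper derives this proposition immediately from the facts that $\mathcal{I}\left(\mathcal{A}\right)\leq_{K}$\textsf{FIN}$\times$\textsf{FIN} for every \textsf{AD} family, that adding a dominating real destroys \textsf{FIN}$\times$\textsf{FIN}, and that destructibility transfers down the Kat\v{e}tov order. You have simply made explicit the composition the paper leaves implicit.
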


\qquad\ \ \ 

The following lemma shows that tight families are Cohen indestructible.
Moreover, these concepts are almost the same:

\begin{lemma}
[\cite{OrderingMADFamiliesalaKatetov}]\qquad\ \ \ \qquad\ \ \ \ 

\begin{enumerate}
\item If $\mathcal{A}$ is tight then $\mathcal{A}$ is Cohen-indestructible.

\item If $\mathcal{A}$ is Cohen indestructible then there is $X\in
\mathcal{I}\left(  \mathcal{A}\right)  ^{+}$ such that $\mathcal{A}%
\upharpoonright X$ is tight.
\end{enumerate}
\end{lemma}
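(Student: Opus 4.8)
The plan is to prove the two implications separately, both resting on the reformulation of tightness recorded above (for a \textsf{MAD} family $\mathcal{A}$, tightness is equivalent to: every $\{X_{n}\mid n\in\omega\}\subseteq\mathcal{I}(\mathcal{A})^{+}$ admits $B\in\mathcal{I}(\mathcal{A})$ with $B\cap X_{n}$ infinite for all $n$) and on the destructibility dictionary already at hand, namely that $\mathbb{P}$ destroys $\mathcal{A}$ iff it destroys $\mathcal{I}(\mathcal{A})$, and that Cohen destroys $\mathcal{J}$ iff $\mathcal{J}\leq_{K}\textsf{nwd}$. For (1) I would work directly in the (countable, atomless) Cohen poset $\mathbb{C}$. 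Fixing a name $\dot{Y}$ for an infinite subset of $\omega$, it suffices to show that below every $p\Vdash\dot{Y}\in[\omega]^{\omega}$ there are $q\leq p$ and $A\in\mathcal{A}$ with $q\Vdash|\dot{Y}\cap A|=\omega$, since the set of such $q$ is then dense. For each $r\leq p$ put $Y_{r}=\{k\mid\exists\, r'\leq r\,(r'\Vdash\check{k}\in\dot{Y})\}$; each $Y_{r}$ is infinite, $r\leq r''$ gives $Y_{r}\subseteq Y_{r''}$, and $r\Vdash\dot{Y}\subseteq\check{Y}_{r}$. If some $Y_{r}\in\mathcal{I}(\mathcal{A})$ we finish at once: then $r$ forces $\dot{Y}\subseteq^{\ast}A_{0}\cup\dots\cup A_{m}$, hence $r$ forces $|\dot{Y}\cap A_{i}|=\omega$ for some $i$, which we decide below $r$.

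Otherwise every $Y_{r}$ is positive. Enumerating $\{r\mid r\leq p\}$ as $\{s_{n}\mid n\in\omega\}$ and applying tightness to $\{Y_{s_{n}}\mid n\in\omega\}$ produces $B\in\mathcal{I}(\mathcal{A})$, say $B\subseteq^{\ast}A_{0}\cup\dots\cup A_{m}$, with $B\cap Y_{s_{n}}$ infinite for all $n$. The heart of the argument is to collapse this to a single member of $\mathcal{A}$: I claim there are $q\leq p$ and $i\leq m$ with $Y_{r}\cap A_{i}$ infinite for \emph{all} $r\leq q$. If not, then for each $i$ and each $q$ some $r\leq q$ has $Y_{r}\cap A_{i}$ finite; feeding this in for $i=0,\dots,m$ down a descending chain $p\geq r_{0}\geq\dots\geq r_{m}$ and using monotonicity of $r\mapsto Y_{r}$ yields $Y_{r_{m}}\cap A_{i}$ finite for every $i\leq m$, hence $Y_{r_{m}}\cap B$ finite, contradicting the choice of $B$. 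With such $q$ and $A=A_{i}$, a routine density computation (for every $r\leq q$ and every $M$ some $r'\leq r$ forces a value of $\dot{Y}$ in $A$ above $M$) gives $q\Vdash|\dot{Y}\cap A|=\omega$.

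For (2) I would prove the contrapositive: if $\mathcal{A}\upharpoonright X$ is not tight for every $X\in\mathcal{I}(\mathcal{A})^{+}$, then $\mathcal{I}(\mathcal{A})\leq_{K}\textsf{nwd}$, so $\mathcal{A}$ is Cohen-destructible. I would build a Kat\v{e}tov(-Blass) morphism by recursion along the $\omega$-branching tree $\omega^{<\omega}$, attaching to each node $s$ a positive set $Y_{s}$ and a label $k_{s}\in Y_{s}$, all distinct, with $Y_{s^{\frown}n}\subseteq Y_{s}$. The successors of $s$ are chosen as a pairwise disjoint family $\{Y_{s^{\frown}n}\mid n\in\omega\}\subseteq(\mathcal{I}(\mathcal{A})\upharpoonright Y_{s})^{+}$ witnessing the non-tightness of $\mathcal{A}\upharpoonright Y_{s}$ (thinned to be pairwise disjoint using the proposition that every positive set carries $\mathfrak{c}$ almost disjoint positive subsets). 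Identifying the nodes of $\omega^{<\omega}$ with a $\pi$-base of $\mathbb{Q}$ exactly as in the proof that $\textsf{nwd}\simeq_{KB}tr(\mathcal{M})$, the map $s\mapsto k_{s}$ induces $f:\mathbb{Q}\longrightarrow\omega$, and the plan is to verify $f^{-1}(A)\in\textsf{nwd}$ for each $A\in\mathcal{A}$ (finiteness of fibres disposes of singletons, and the ideal structure then handles all of $\mathcal{I}(\mathcal{A})$ via $\subseteq^{\ast}$ and finite unions).

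The decisive point, and the step I expect to be the main obstacle, is precisely this verification. Non-tightness only tells me that at each node $s$, for a given $A$ \emph{some} successor $Y_{s^{\frown}n}$ satisfies $A\cap Y_{s^{\frown}n}$ \emph{finite} (not empty), so one seems to control $A$ merely one level at a time. The trick I would exploit is that, since all labels above a node lie in its set and the labels are distinct, $Y_{t}\cap A$ finite forces only finitely many descendants $u\sqsupseteq t$ to carry a label in $A$; extending $t$ past the longest of these finitely many ``bad'' nodes then produces an entire cone whose labels avoid $A$. Because such a $t$ (a good successor supplied by non-tightness) lies below every node, $\pi(f^{-1}(A))$ is nowhere dense, as needed. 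I would conclude by invoking the established equivalence ``Cohen destroys $\mathcal{J}$ iff $\mathcal{J}\leq_{K}\textsf{nwd}$'' to contradict Cohen-indestructibility.
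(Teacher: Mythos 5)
Your proof is correct, but it distributes the work differently from the paper, and in an amusingly symmetric way. For part (1) the paper does not touch names at all: it invokes the characterization ``Cohen destroys $\mathcal{J}$ iff $\mathcal{J}\leq_{K}$ \textsf{nwd}'' and then shows in three lines that no $f:\mathbb{Q}\longrightarrow\omega$ can be a Kat\v{e}tov morphism, by applying tightness to the family $\left\{  f\left[  U_{n}\right]  \mid n\in\omega\right\}  $ over a countable base of $\mathbb{Q}$ (the case where some $f\left[  U_{n}\right]  $ already lies in $\mathcal{I}\left(  \mathcal{A}\right)  $ being immediate). Your direct forcing argument with the sets $Y_{r}$ is the hands-on version of the same combinatorics — tightness applied to countably many positive ``projections'' of the name, followed by a finite descending-chain argument to isolate a single $A_{i}$ — and it has the virtue of not relying on the unproved proposition characterizing Cohen-destructibility via \textsf{nwd}. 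For part (2) the roles reverse: the paper builds the same tree of positive sets witnessing non-tightness at every node (with the same disjointification trick to upgrade ``finite intersection'' to ``empty intersection'') but then simply reads off a pseudointersection of $\left\{  X_{c\upharpoonright n}\mid n\in\omega\right\}  $ along the Cohen generic branch, which is immediately forced to be almost disjoint from $\mathcal{I}\left(  \mathcal{A}\right)  $; you instead package the tree as a Kat\v{e}tov morphism into \textsf{nwd}, which costs you the extra cone argument (converting ``only finitely many labels in the cone above $t$ lie in $A$'' into ``a basic open set all of whose labels avoid $A$'') that the paper's presentation sidesteps. Your cone argument is sound — distinctness of the labels together with $k_{u}\in Y_{u}\subseteq Y_{t}$ does bound the bad nodes by $\left\vert A\cap Y_{t}\right\vert $, and passing to an extension longer than all of them clears the cone — so both routes go through; the paper's version of each half is the shorter one, yours the more self-contained one.
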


\begin{proof}
Let $\mathcal{A}$ be a tight \textsf{MAD} family, we will show that
$\mathcal{I}\left(  \mathcal{A}\right)  \nleq_{K}$\textsf{nwd}$.$ Let
$f:\mathbb{Q}\longrightarrow\omega$ be a function, we will show it is not a
Kat\v{e}tov morphism. Let $\left\{  U_{n}\mid n\in\omega\right\}  $ be a base
of open sets for the rational numbers. If there is $n\in\omega$ such that
$f\left[  U_{n}\right]  \in\mathcal{I}\left(  \mathcal{A}\right)  $ then
clearly $f$ is not a Kat\v{e}tov morphism, so assume $\left\{  f\left[
U_{n}\right]  \mid n\in\omega\right\}  \subseteq\mathcal{I}\left(
\mathcal{A}\right)  ^{+}.$ Since $\mathcal{A}$ is tight, there is
$B\in\mathcal{I}\left(  \mathcal{A}\right)  $ such that $B\cap f\left[
U_{n}\right]  $ is infinite for each $n\in\omega.$ Then $f^{-1}\left(
B\right)  $ is dense so $f^{-1}\left(  B\right)  \notin$ \textsf{nwd}$.$

\qquad\ \ 

We prove 2 by the contrapositive. Assuming that $\mathcal{A}$ is a
\textsf{MAD} family without tight restrictions, we will show Cohen forcing
destroys $\mathcal{A}.$ We recursively build $\left\{  X_{s}\mid s\in
\omega^{<\omega}\right\}  $ such that for all $s\in\omega^{<\omega}$ the
following holds:

\begin{enumerate}
\item $X_{s}\in\mathcal{I}\left(  \mathcal{A}\right)  ^{+}.$

\item $\mathcal{A}\upharpoonright X_{s}$ is not tight.

\item $\left\{  X_{s^{\frown}n}\mid n\in\omega\right\}  $ witness the non
tightness of $\mathcal{A}\upharpoonright X_{s}$ (we may assume that for every
$A\in\mathcal{I}\left(  \mathcal{A}\upharpoonright X_{s}\right)  $ there is
$n\in\omega$ such that $A$ and $X_{s^{\frown}n}$ are disjoint).

\item $X_{\emptyset}=\omega.$
\end{enumerate}

\qquad\ \ 

This is easy to do since $\mathcal{A}$ does not have tight restrictions. Let
$c\in\omega^{\omega}$ be a Cohen real over $\omega^{<\omega}.$ Now, in
$V\left[  c\right]  $ we find a pseudointersection $X$ of $\left\{
X_{c\upharpoonright n}\mid n\in\omega\right\}  $ and we claim that $X$ is
forced to be almost disjoint with every element of $\mathcal{A}.$ Let
$s\in\omega^{<\omega}$ be a Cohen condition and $B\in\mathcal{I}\left(
\mathcal{A}\right)  .$ By our construction, there is $n\in\omega$ such that
$B\cap X_{s^{\frown}n}=\emptyset$ hence $s^{\frown}n$ forces that $\dot{X}$
and $B$ are almost disjoint.
\end{proof}

\qquad\ \ \ \ \qquad\ \ \ 

In this way, there are tight \textsf{MAD} families if and only if there are
Cohen indestructible \textsf{MAD} families. Nevertheless, Cohen
indestructibility (consistently) does not imply tightness, as we will prove
later. We will need the following lemma:

\begin{lemma}
Let $\mathcal{A}$ be an \textsf{AD} family of size less than $\mathfrak{b}.$
If $\left\{  X_{n}\mid n\in\omega\right\}  \subseteq\mathcal{I}\left(
\mathcal{A}\right)  ^{+}$ then there is $B\in\mathcal{A}^{\perp}$ such that
$B\cap X_{n}\neq\emptyset$ for every $n\in\omega.$
\end{lemma}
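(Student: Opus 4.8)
Let $\mathcal{A}$ be an \textsf{AD} family of size less than $\mathfrak{b}$, and let $\left\{ X_{n}\mid n\in\omega\right\} \subseteq\mathcal{I}\left( \mathcal{A}\right) ^{+}$. The goal is to produce $B\in\mathcal{A}^{\perp}$ (a set almost disjoint from \emph{every} member of $\mathcal{A}$) such that $B\cap X_{n}\neq\emptyset$ for every $n\in\omega$.

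**The plan.** The natural strategy is a diagonalization controlled by a dominating-type argument, exactly in the spirit of the earlier ``$\mathfrak{b}$-arguments'' in the excerpt (compare the proof that every splitting family of size less than $\mathfrak{b}$ is $(\omega,\omega)$-splitting, and the proof of $\mathfrak{b}\leq\mathfrak{r}$). First I would pick, for each $n$, one point $b_{n}\in X_{n}$, but with enough freedom to later guarantee almost disjointness from all of $\mathcal{A}$. The point is that each $X_{n}\in\mathcal{I}(\mathcal{A})^{+}$ meets infinitely many elements of $\mathcal{A}$ in an infinite set, so I have a lot of room to choose $b_{n}$ inside $X_{n}$; in fact $X_{n}$ is \emph{not} almost covered by any finite subfamily of $\mathcal{A}$. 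The heart of the construction is to choose the $b_{n}$ growing fast enough that each fixed $A\in\mathcal{A}$ eventually stops catching them.

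**Key steps.** For each $A\in\mathcal{A}$, because $X_{n}\in\mathcal{I}(\mathcal{A})^{+}$, the set $X_{n}\setminus A$ is infinite for every $n$ (indeed $X_{n}$ is not almost contained in $A$). I would therefore define, for each $A$ in the family and each $n$, a function recording how far out into $X_{n}$ one must go to escape $A$: set $f_{A}(n)=\min\{k : X_{n}\cap(\omega\setminus A)\text{ has an element }\geq k\text{ that is the }n\text{-th such}\}$, or more simply a function $f_{A}$ that, given $n$, returns a threshold beyond which $X_{n}$ has points outside $A$. Since $\left|\mathcal{A}\right|<\mathfrak{b}$, the family $\{f_{A}\mid A\in\mathcal{A}\}$ is bounded: there is a single $g:\omega\to\omega$ with $f_{A}\leq^{\ast}g$ for every $A\in\mathcal{A}$. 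Now I would choose $b_{n}\in X_{n}$ with $b_{n}>g(n)$ and $b_{n}$ lying outside the ``first few'' members of $\mathcal{A}$, taking the $b_{n}$ to be strictly increasing and pairwise distinct, and set $B=\{b_{n}\mid n\in\omega\}$. By construction $B\cap X_{n}\ni b_{n}$, so $B$ meets every $X_{n}$. For almost disjointness: fix $A\in\mathcal{A}$; for all large $n$ we have $b_{n}>g(n)\geq f_{A}(n)$, and the threshold $f_{A}$ was defined precisely so that choosing $b_{n}$ above it (together with the enumeration bookkeeping) forces $b_{n}\notin A$. Hence $B\cap A$ is finite, giving $B\in\mathcal{A}^{\perp}$.

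**The main obstacle.** The delicate part is getting the definition of $f_{A}$ and the selection of $b_{n}$ to interlock correctly, so that one uniform bound $g$ genuinely evicts \emph{every} $A$ simultaneously. The subtlety is that ``$b_{n}$ large'' alone does not exclude $b_{n}$ from an arbitrary $A$, since $A$ could contain arbitrarily large elements of $X_{n}$; the trick is that for a \emph{fixed} $A$ the \emph{finite} intersection requirement only needs $b_{n}\notin A$ for \emph{cofinitely many} $n$, and for each such $n$ there is enough of $X_{n}$ outside $A$ that a single increasing enumeration, dominated above $g$, can sidestep $A$ eventually. I would handle this by enumerating $\mathcal{A}=\{A_{\alpha}\mid\alpha<\kappa\}$ with $\kappa<\mathfrak{b}$ and defining $f_{A_{\alpha}}(n)$ to bound the location of the $n$-th element of $X_{n}\setminus A_{\alpha}$, then choosing $b_{n}$ to be an element of $X_{n}$ above $g(n)$ that avoids $A_{0},\dots,A_{n}$ (finitely many, so possible since $X_{n}\in\mathcal{I}(\mathcal{A})^{+}$ is not almost covered by them). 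Verifying $B\cap A_{\alpha}$ finite then reduces to noting that for $n>\alpha$ the point $b_{n}$ was explicitly chosen outside $A_{\alpha}$, which is the clean way to close the argument.
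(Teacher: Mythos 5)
Your construction has a genuine gap, and it sits exactly at the point you flag as ``the main obstacle'': your threshold functions $f_{A}$ do not have the eviction property your verification needs. Knowing that $b_{n}\in X_{n}$ lies above $f_{A}(n)$, where $f_{A}(n)$ merely locates points of $X_{n}\setminus A$, says nothing about whether $b_{n}$ itself avoids $A$: since $X_{n}\in\mathcal{I}\left(\mathcal{A}\right)^{+}$, the intersection $A\cap X_{n}$ can be infinite and cofinal in $X_{n}$, so arbitrarily large elements of $X_{n}$ may still lie in $A$. Your fallback --- enumerate $\mathcal{A}=\left\{A_{\alpha}\mid\alpha<\kappa\right\}$ and choose $b_{n}$ to explicitly avoid $A_{0},\dots,A_{n}$ --- repairs this only for the first countably many members of the family: when $\kappa$ is uncountable (the case of interest, since $\kappa<\mathfrak{b}$ allows $\kappa\geq\omega_{1}$), for $\alpha\geq\omega$ the clause ``for $n>\alpha$ the point $b_{n}$ was chosen outside $A_{\alpha}$'' is vacuous, and neither the explicit avoidance nor the domination yields $\left\vert B\cap A_{\alpha}\right\vert<\omega$. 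As written, the argument proves the lemma only for countable $\mathcal{A}$, where the hypothesis $\left\vert\mathcal{A}\right\vert<\mathfrak{b}$ plays no role.

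The missing idea, which is the paper's first move, is a preliminary reduction that makes all the relevant intersections finite before any domination happens. Since $\mathfrak{b}\leq\mathfrak{a}$, an AD family of size less than $\mathfrak{b}$ is nowhere \textsf{MAD}: every $X\in\mathcal{I}\left(\mathcal{A}\right)^{+}$ contains an infinite subset belonging to $\mathcal{A}^{\perp}$. Hence one may assume from the outset that each $X_{n}$ itself lies in $\mathcal{A}^{\perp}$ (and that the $X_{n}$ are pairwise disjoint, which makes the $b_{n}$ distinct for free). Now $A\cap X_{n}$ is finite for every $A\in\mathcal{A}$ and every $n$, so one can define $f_{A}(n)$ to bound the \emph{entire} intersection, $A\cap X_{n}\subseteq f_{A}(n)$; this threshold really does evict: any element of $X_{n}$ above $f_{A}(n)$ is automatically outside $A$. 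Taking $g$ dominating $\left\{f_{A}\mid A\in\mathcal{A}\right\}$ (this is where $\left\vert\mathcal{A}\right\vert<\mathfrak{b}$ is used) and $b_{n}\in X_{n}\setminus g\left(n\right)$, one gets that for each $A$ and almost all $n$, $b_{n}\notin A$, uniformly over the whole --- possibly uncountable --- family; the rest of your argument then goes through verbatim.
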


\begin{proof}
Since $\mathcal{A}$ is nowhere \textsf{MAD}, we may assume $X_{n}%
\in\mathcal{A}^{\perp}$ for every $n\in\omega$ and they are disjoint. For
every $A\in\mathcal{A}$ we define a function $f_{A}:\omega\longrightarrow
\omega$ where $A\cap X_{n}\subseteq f_{A}\left(  n\right)  $ for each
$n\in\omega.$ Since $\left\vert \mathcal{A}\right\vert <\mathfrak{b}$ there is
$g\in\omega^{\omega}$ dominating each $f_{A}.$ Choose any $B=\left\{
b_{n}\mid n\in\omega\right\}  $ such that $b_{n}\in X_{n}\setminus g\left(
n\right)  $ then $B$ has the desired properties. \ \ \ \ 
\end{proof}

\qquad\ \ \ 

With a usual bookkeeping argument we can then conclude the following:

\begin{proposition}
If $\mathfrak{b=c}$ then tight \textsf{MAD} families exist generically.
\end{proposition}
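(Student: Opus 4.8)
The plan is to perform a transfinite recursion of length $\mathfrak{c}$, extending the given AD family by bookkeeping through two kinds of tasks at each stage: one ensuring maximality and one ensuring tightness. The crucial use of the hypothesis is that $\mathfrak{b}=\mathfrak{c}$ (together with the regularity of $\mathfrak{b}$) keeps every partial family below $\mathfrak{b}$ in size, so that the immediately preceding lemma (any AD family of size less than $\mathfrak{b}$ admits, for a countable positive sequence, a $\perp$-witness meeting each term) stays applicable throughout.

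Fix an AD family $\mathcal{A}_0$ with $|\mathcal{A}_0|<\mathfrak{c}$. Since there are exactly $\mathfrak{c}^{\aleph_0}=\mathfrak{c}$ countable sequences of infinite subsets of $\omega$ and exactly $\mathfrak{c}$ infinite subsets of $\omega$, I would enumerate all sequences as $\{\vec{X}^{\xi}\mid\xi<\mathfrak{c}\}$, with $\vec{X}^{\xi}=\{X^{\xi}_n\mid n\in\omega\}$, and all infinite sets as $\{Z_\xi\mid\xi<\mathfrak{c}\}$, then build an increasing chain $\langle\mathcal{A}_\xi\mid\xi<\mathfrak{c}\rangle$ of AD families, unions being taken at limits. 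Having built $\mathcal{A}_\xi$, note $|\mathcal{A}_\xi|\le|\mathcal{A}_0|+2|\xi|<\mathfrak{c}=\mathfrak{b}$ by regularity, so both tasks can be carried out. For the \emph{tightness} task: if $\vec{X}^{\xi}\subseteq\mathcal{I}(\mathcal{A}_\xi)^{+}$, apply the previous lemma to obtain $B_\xi\in\mathcal{A}_\xi^{\perp}$ with $B_\xi\cap X^{\xi}_n\neq\emptyset$ for every $n$; otherwise set $B_\xi=\emptyset$. For the \emph{maximality} task: if $Z_\xi\in\mathcal{I}(\mathcal{A}_\xi)^{+}$ then, since $|\mathcal{A}_\xi|<\mathfrak{b}\le\mathfrak{a}$, the family $\mathcal{A}_\xi\!\upharpoonright\!Z_\xi$ is not MAD, so choose an infinite $C_\xi\subseteq Z_\xi$ with $C_\xi\in\mathcal{A}_\xi^{\perp}$; otherwise set $C_\xi=\emptyset$. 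Put $\mathcal{A}_{\xi+1}=\mathcal{A}_\xi\cup\{B_\xi,C_\xi\}$, discarding empty entries. Because $B_\xi,C_\xi\in\mathcal{A}_\xi^{\perp}$ the enlarged family is again AD, so the recursion goes through, and I let $\mathcal{A}=\bigcup_{\xi<\mathfrak{c}}\mathcal{A}_\xi$.

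It then remains to verify that $\mathcal{A}$ is a tight MAD family extending $\mathcal{A}_0$. Maximality is immediate: for any infinite $Z=Z_\xi$, at stage $\xi$ either $Z_\xi\in\mathcal{I}(\mathcal{A}_\xi)$, in which case $Z_\xi$ already meets some member of $\mathcal{A}_\xi$ infinitely, or else $C_\xi\subseteq Z_\xi$ was added with $|Z\cap C_\xi|=\omega$; either way $Z$ meets a member of $\mathcal{A}$ infinitely, so $\mathcal{A}$ is MAD. For tightness I would invoke the "nonempty" reformulation recorded right after the definition of tight families: given $\vec{X}=\{X_n\}\subseteq\mathcal{I}(\mathcal{A})^{+}$, pick $\xi$ with $\vec{X}=\vec{X}^{\xi}$; since $\mathcal{I}(\mathcal{A}_\xi)\subseteq\mathcal{I}(\mathcal{A})$ we have $\mathcal{I}(\mathcal{A})^{+}\subseteq\mathcal{I}(\mathcal{A}_\xi)^{+}$, so $\vec{X}^{\xi}\subseteq\mathcal{I}(\mathcal{A}_\xi)^{+}$ and the tightness task was genuinely performed at stage $\xi$, giving $B_\xi\in\mathcal{A}\subseteq\mathcal{I}(\mathcal{A})$ with $B_\xi\cap X_n\neq\emptyset$ for all $n$. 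This is exactly the nonempty version, which is equivalent to tightness by splitting each $X_n$ into countably many almost disjoint positive pieces (via the earlier proposition producing a size-$\mathfrak{c}$ AD family of positive subsets) and feeding the refined sequence into the same witness.

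The main obstacle, and the only place the hypothesis enters, is guaranteeing $|\mathcal{A}_\xi|<\mathfrak{b}$ at every stage so that the lemma remains applicable; this is precisely secured by $\mathfrak{b}=\mathfrak{c}$ and the regularity of $\mathfrak{b}$. The remaining points — preservation of almost disjointness at successor and limit stages, the monotonicity $\mathcal{I}(\mathcal{A})^{+}\subseteq\mathcal{I}(\mathcal{A}_\xi)^{+}$, and the cardinal arithmetic $\mathfrak{c}^{\aleph_0}=\mathfrak{c}$ underlying the bookkeeping — are routine.
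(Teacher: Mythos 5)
Your proposal is correct and is essentially the paper's own proof: the paper disposes of this proposition with exactly the phrase ``with a usual bookkeeping argument we can then conclude,'' relying on the immediately preceding lemma (an AD family of size less than $\mathfrak{b}$ admits a $\perp$-witness meeting every term of a countable positive sequence), and your write-up just spells out that bookkeeping, including the reduction of tightness to the ``nonempty intersection'' reformulation that the paper records right after the definition. The only cosmetic remark is that in justifying that reformulation it is cleaner to refine each $X_n$ into pairwise \emph{disjoint} positive pieces (or to run the witness against all sets $X_n\setminus F$ with $F$ finite), since meeting infinitely many merely almost disjoint pieces does not by itself force an infinite intersection; this is a one-line fix and does not affect the argument.
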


\qquad\qquad\ \ \ 

We will now prove the converse of the previous proposition.

\begin{proposition}
There is an \textsf{AD} family of size $\mathfrak{b}$ that can not be extended
to a tight \textsf{MAD} family.
\end{proposition}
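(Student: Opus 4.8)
The plan is to realize the failure of tightness by a single, fixed obstruction. I would work on the index set $\omega\times\omega$, take the columns $X_{n}=\{n\}\times\omega$ as the intended witnessing family, and build $\mathcal{A}$ with $\left\vert \mathcal{A}\right\vert =\mathfrak{b}$ so that for \emph{every} \textsf{MAD} family $\mathcal{B}\supseteq\mathcal{A}$ the following two things hold: (i) each $X_{n}\in\mathcal{I}\left( \mathcal{B}\right) ^{+}$, and (ii) no $B\in\mathcal{I}\left( \mathcal{B}\right) $ has $B\cap X_{n}$ infinite for all $n$. Granting (i) and (ii), the family $\{X_{n}\mid n\in\omega\}$ witnesses that $\mathcal{B}$ is not tight, and since $\mathcal{B}$ was an arbitrary \textsf{MAD} extension of $\mathcal{A}$, this proves that $\mathcal{A}$ admits no tight \textsf{MAD} extension.

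For (i) I would place into $\mathcal{A}$, for each column, a countable infinite \textsf{AD} family $\mathcal{A}_{n}=\{V_{j}^{n}\mid j\in\omega\}$ of infinite subsets of $X_{n}$ (members of distinct columns are disjoint, so this is \textsf{AD}). This already forces robust positivity: if $X_{n}\subseteq^{\ast}D_{1}\cup\dots\cup D_{k}$ with $D_{i}\in\mathcal{B}$, then each piece $V_{j}^{n}$ is almost covered, so some $D_{i}$ meets $V_{j}^{n}$ infinitely; as $V_{j}^{n}\in\mathcal{B}$ and $\mathcal{B}$ is \textsf{AD}, that forces $D_{i}=V_{j}^{n}$, and $k$ sets can equal at most $k$ of the infinitely many pieces, contradicting that every $V_{j'}^{n}$ is almost covered. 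Hence each $X_{n}$ stays positive in every extension. For (ii) I would use a pigeonhole reduction: if $B\in\mathcal{I}\left( \mathcal{B}\right) $ meets every column infinitely and $B\subseteq^{\ast}D_{1}\cup\dots\cup D_{k}$, then for each $n$ some $D_{i}$ meets $X_{n}$ infinitely, so some fixed $D_{i^{\ast}}$ meets infinitely many columns infinitely. Thus (ii) follows once $\mathcal{A}$ is built so that \emph{no} $\mathcal{B}$-member meets infinitely many columns infinitely; for the new members this is the blocking property \textbf{(B)}: every $D\in\mathcal{A}^{\perp}$ meets only finitely many columns infinitely, while the additional members I add (below) will be designed to meet only finitely many columns infinitely as well.

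The heart of the argument, and the step I expect to be the main obstacle, is achieving \textbf{(B)} while keeping $\left\vert\mathcal{A}\right\vert=\mathfrak{b}$. Making each $\mathcal{A}_{n}$ a \textsf{MAD} family on its column would give \textbf{(B)} trivially but costs $\mathfrak{a}$ members; the whole point is that $\mathfrak{b}$ suffices, and this is where I would invoke a $\leq^{\ast}$-increasing unbounded family $\langle f_{\alpha}\mid\alpha<\mathfrak{b}\rangle$ of strictly increasing functions (as provided by the lemma on $\mathfrak{b}$ in the preliminaries, which also makes the $f_{\alpha}$ pairwise eventually different). The guiding idea is to reverse the domination argument used to prove that tight families exist generically under $\mathfrak{b}=\mathfrak{c}$: there, $\left\vert\mathcal{A}\right\vert<\mathfrak{b}$ let one dominate the finitely-reaching behaviour of all members and build a hitting set; here I instead want a set $D$ that meets infinitely many columns infinitely to generate a function that an \emph{unbounded} family necessarily escapes, and to arrange the coupling between the per-column partition $\{V_{j}^{n}\}$ and the functions $f_{\alpha}$ so that this escape is exactly a clash $\left\vert D\cap A_{\alpha}\right\vert=\omega$ for one of the added members $A_{\alpha}$. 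The delicate point is that a single ``thin'' member cannot catch a sparse threader, so \textbf{(B)} cannot be obtained from bare graphs or bare selectors; I expect to need a recursion of length $\mathfrak{b}$ that interleaves the choice of the partition classes with the $f_{\alpha}$ (so that a threader which is \textsf{AD} with every $V_{j}^{n}$ is forced to climb through the classes at a rate that is dominated, hence caught), and verifying that this coupling simultaneously keeps the added members almost disjoint from everything, confines each of them to finitely many columns, and secures \textbf{(B)} for \emph{all} of the continuum-many potential threaders is the genuinely technical part of the proof.
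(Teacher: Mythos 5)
Your frame (work on $\omega\times\omega$, keep each column $C_{n}$ in $\mathcal{I}\left(\mathcal{A}\right)^{++}$ by placing a countable \textsf{AD} family on each column, so that the columns stay positive in every \textsf{AD} extension) is exactly the paper's, and your argument for (i) is correct. The problem is that the entire content of the proposition sits in the step you defer as ``the genuinely technical part'': you never actually construct the family or prove your blocking property \textbf{(B)}, so what you have is a plan, not a proof. Worse, the pigeonhole reduction commits you to a property strictly stronger than what is needed and than what the paper's construction has: after pigeonholing you only obtain a single $D_{i^{\ast}}$ meeting \emph{infinitely many} (not all) columns infinitely, so you must block every such set, i.e.\ you need every $D\in\mathcal{A}^{\perp}$ to meet only finitely many columns infinitely. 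The paper's family provably fails this (take $W$ to be the set of codes of sequences of a fixed length and thread infinite sets through the columns indexed by $W$; the resulting set is almost disjoint from every member of the family yet meets infinitely many columns infinitely), and it is far from clear that \textbf{(B)} is achievable at size $\mathfrak{b}$ at all.

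The paper instead blocks only the sets that meet \emph{every} column infinitely, which is all that tightness against the columns requires, and this is precisely where totality is used. Enumerate $\omega^{<\omega}=\left\{s_{n}\mid n\in\omega\right\}$ and send $f\in\omega^{\omega}$ to $H\left(f\right)=\left\{\left(n,m\right)\mid s_{n}\sqsubseteq f\wedge m<f\left(\left\vert s_{n}\right\vert\right)\right\}$; distinct $f$'s give almost disjoint sets, and each $H\left(f\right)$ meets every column finitely. For a total $g\in\omega^{\omega}$ the set $N\left(g\right)=\left\{f\mid\left\vert H\left(f\right)\cap g\right\vert<\omega\right\}$ is $\sigma$-compact, hence bounded, so an unbounded $X$ of size $\mathfrak{b}$ satisfies: for every total $g$ there is $f\in X$ with $\left(n,g\left(n\right)\right)\in H\left(f\right)$ for infinitely many $n$. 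Put $\mathcal{A}=H\left[X\right]$ together with the column pieces. If $\mathcal{B}\supseteq\mathcal{A}$ were tight, some $B\in\mathcal{I}\left(\mathcal{B}\right)$ would meet every column infinitely; covering $B$ by $D_{1}\cup\dots\cup D_{k}$ and choosing $g\left(n\right)\in\left(B\right)_{n}$ off the finitely many $D_{i}$ that lie in $\mathcal{A}$ (possible since each of those meets every column finitely or lives in a single column), one finds $f\in X$ with $H\left(f\right)\cap g$ infinite, forcing some $D_{i}\in\mathcal{B}\setminus\mathcal{A}$ to meet $H\left(f\right)\in\mathcal{A}$ infinitely, contradicting almost disjointness of $\mathcal{B}$. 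This is a one-shot construction; no recursion of length $\mathfrak{b}$ interleaving the column partitions with an unbounded scale is needed, and none is supplied in your proposal.
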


\begin{proof}
Define $\pi:\omega\times\omega\longrightarrow\omega$ by $\pi\left(
n,m\right)  =n.$ If $A\subseteq\omega\times\omega$ and $n\in\omega$ we put
$\left(  A\right)  _{n}=\left\{  k\mid\left(  n,k\right)  \in A\right\}  $ and
let $\omega^{<\omega}=\left\{  s_{n}\mid n\in\omega\right\}  .$ We define
$H:\omega^{\omega}\longrightarrow\wp\left(  \omega\times\omega\right)  $ where
$\pi\left(  H\left(  f\right)  \right)  =\left\{  n\mid s_{n}\sqsubseteq
f\right\}  $ and if $n\in dom\left(  H\left(  f\right)  \right)  $ then
$\left(  H\left(  f\right)  \right)  _{n}=f\left(  \left\vert s_{n}\right\vert
\right)  .$ It is easy to see that if $f\neq g$ then $H\left(  f\right)  $ and
$H\left(  g\right)  $ are almost disjoint. Given $g:\omega\longrightarrow
\omega$ we define $N\left(  g\right)  =\left\{  f\in\omega^{\omega}%
\mid\left\vert H\left(  f\right)  \cap g\right\vert <\omega\right\}  .$ It
then follows that $N\left(  g\right)  $ is a bounded set since $N\left(
g\right)  =\bigcup\limits_{k\in\omega}N_{k}\left(  g\right)  $ where
$N_{k}\left(  g\right)  =\left\{  f\in\omega^{\omega}\mid\left\vert H\left(
f\right)  \cap g\right\vert <k\right\}  $ and it is easy to see that each
$N_{k}\left(  g\right)  $ is $\sigma$-compact.

\qquad\ \ \ \ 

Let $X\subseteq\omega^{\omega}$ of size $\mathfrak{b}$ that can not be covered
by $\sigma$-compact sets. Now, we may find $\mathcal{A}$ an AD family of size
$\mathfrak{b}$ such that $H\left[  X\right]  \subseteq\mathcal{A}$ and
$C_{n}=\left\{  \left(  n,m\right)  \mid m\in\omega\right\}  \in
\mathcal{I}\left(  \mathcal{A}\right)  ^{++}$ for every $n\in\omega.$ It is
easy to see that $\mathcal{A}$ can not be extended to a tight \textsf{MAD} family.
\end{proof}

\qquad\ \ \ \qquad\ \ 

We can then conclude the following result:

\begin{corollary}
The following statements are equivalent:

\begin{enumerate}
\item $\mathfrak{b=c}.$

\item Tight \textsf{MAD} families exist generically.
\end{enumerate}
\end{corollary}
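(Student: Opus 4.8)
The plan is to obtain the corollary directly by combining the two immediately preceding propositions, one of which supplies each implication; essentially no new argument is needed beyond bookkeeping the logic.

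For the direction $(1)\Rightarrow(2)$ there is nothing further to prove: this is exactly the content of the proposition stating that if $\mathfrak{b=c}$ then tight \textsf{MAD} families exist generically. So I would simply invoke it. The substance of that implication lies in the earlier lemma, which shows that any \textsf{AD} family of size less than $\mathfrak{b}$ can be refined by a set in $\mathcal{A}^{\perp}$ meeting prescribed positive sets; under $\mathfrak{b=c}$ a standard bookkeeping recursion of length $\mathfrak{c}$ then tightens any given small \textsf{AD} family into a tight \textsf{MAD} family.

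For the converse $(2)\Rightarrow(1)$, I would argue by contrapositive. First I would record the trivial but essential observation that $\mathfrak{b\leq c}$ always holds, so that $\mathfrak{b\neq c}$ is the same as $\mathfrak{b<c}$. Assume then $\mathfrak{b<c}$. By the proposition exhibiting an \textsf{AD} family of size $\mathfrak{b}$ that cannot be extended to a tight \textsf{MAD} family, fix such a family $\mathcal{A}$. Since $\left\vert \mathcal{A}\right\vert =\mathfrak{b}<\mathfrak{c}$, the family $\mathcal{A}$ is an \textsf{AD} family of size strictly below $\mathfrak{c}$ that witnesses the failure of generic existence: were tight \textsf{MAD} families to exist generically, $\mathcal{A}$ could be extended to a tight \textsf{MAD} family, contradicting its choice. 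Hence tight \textsf{MAD} families do not exist generically, which is the negation of $(2)$, completing the contrapositive.

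The only point deserving attention is precisely that inequality $\mathfrak{b\leq c}$, which is what guarantees the size-$\mathfrak{b}$ witness has size below $\mathfrak{c}$ in the case $\mathfrak{b\neq c}$; without it the witness would carry no information. There is otherwise no serious obstacle here, since all the real work has already been done in the two propositions, and the corollary is just their juxtaposition.
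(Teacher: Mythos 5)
Your proof is correct and matches the paper's intended argument exactly: the paper states the corollary immediately after the two propositions you cite and leaves the combination implicit, which is precisely the bookkeeping you carry out (including the observation that $\mathfrak{b}<\mathfrak{c}$ makes the size-$\mathfrak{b}$ witness relevant to generic existence).
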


\qquad\ \ \ 

We will mention some other results regarding the generic existence of
indestructible \textsf{MAD} families. In order to do so, we need the following definition.

\begin{definition}
Let $\mathcal{J}$ be a tall ideal on $\omega.$ We define $\mathfrak{a}\left(
\mathcal{J}\right)  $ as smallest size of an AD family $\mathcal{A}$ such that
$\mathcal{A}\cup\mathcal{A}^{\perp}\subseteq\mathcal{J}.$\qquad
\end{definition}

\qquad\ \ \ 

The following lemma follows by the definitions.

\begin{lemma}
Let $\mathcal{J}$ be a tall ideal on $\omega$ and $\mathcal{A}$ an infinite AD
family of size less than $\mathfrak{a}\left(  \mathcal{J}\right)  .$ If
$f:\omega\longrightarrow\omega$ is a finite to one function, then there is an
AD family $\mathcal{B}$ such that the following holds:

\begin{enumerate}
\item $\mathcal{A\subseteq B}.$

\item $\left\vert \mathcal{B}\right\vert =\left\vert \mathcal{A}\right\vert .$

\item There is $B\in\mathcal{I}\left(  \mathcal{B}\right)  $ such that
$f^{-1}\left(  B\right)  \in\mathcal{J}^{+}.$
\end{enumerate}
\end{lemma}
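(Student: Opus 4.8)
The plan is to reduce the statement to the construction of a \emph{single} infinite set whose preimage is $\mathcal{J}$-positive, and then to obtain that set by a contradiction that activates the minimality built into the definition of $\mathfrak{a}\left(  \mathcal{J}\right)  .$ First I would observe that it suffices to produce an infinite $C\in\mathcal{A}\cup\mathcal{A}^{\perp}$ with $f^{-1}\left(  C\right)  \in\mathcal{J}^{+}.$ Indeed, if $C\in\mathcal{A}$ we may take $\mathcal{B}=\mathcal{A}$ and $B=C,$ while if $C\in\mathcal{A}^{\perp}$ we take $\mathcal{B}=\mathcal{A}\cup\left\{  C\right\}  ,$ which is an AD family of the same cardinality as $\mathcal{A}$ (since $\mathcal{A}$ is infinite), and in either case $B=C\in\mathcal{I}\left(  \mathcal{B}\right)  .$ This reduction is faithful because any witness $B\in\mathcal{I}\left(  \mathcal{B}\right)  $ is almost covered by finitely many members $B_{0},\dots,B_{n}$ of $\mathcal{B},$ so $f^{-1}\left(  B\right)  \subseteq^{\ast}\bigcup_{i\leq n}f^{-1}\left(  B_{i}\right)  ;$ as $f$ is finite-to-one and $f^{-1}\left(  B\right)  \in\mathcal{J}^{+},$ some $B_{i}$ already has $\mathcal{J}$-positive preimage, and each $B_{i}$ lies in $\mathcal{A}\cup\mathcal{A}^{\perp}.$

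Next I would assume toward a contradiction that no such $C$ exists, that is, $f^{-1}\left(  C\right)  \in\mathcal{J}$ for every infinite $C\in\mathcal{A}\cup\mathcal{A}^{\perp}.$ The idea is to transport $\mathcal{A}$ to the domain of $f$ by setting $\mathcal{B}_{0}=\left\{  f^{-1}\left(  A\right)  \mid A\in\mathcal{A}\text{ and }f^{-1}\left(  A\right)  \text{ is infinite}\right\}  .$ Since $f$ is finite-to-one, preimages of finite sets are finite, so $f^{-1}\left(  A\right)  \cap f^{-1}\left(  A'\right)  =f^{-1}\left(  A\cap A'\right)  $ is finite whenever $A\neq A';$ thus $\mathcal{B}_{0}$ is an AD family with $\left\vert\mathcal{B}_{0}\right\vert\leq\left\vert\mathcal{A}\right\vert<\mathfrak{a}\left(  \mathcal{J}\right)  .$ Applying the contradiction hypothesis to the members of $\mathcal{A}$ gives $\mathcal{B}_{0}\subseteq\mathcal{J}$ immediately.

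The key step will be to show $\mathcal{B}_{0}^{\perp}\subseteq\mathcal{J}.$ Given an infinite $D\in\mathcal{B}_{0}^{\perp},$ note that $D\cap f^{-1}\left(  A\right)  $ is finite for \emph{every} $A\in\mathcal{A}$ (automatically so for those $A$ whose preimage is finite). Pushing forward, the map $d\mapsto f\left(  d\right)  $ sends $D\cap f^{-1}\left(  A\right)  $ onto $f\left(  D\right)  \cap A$ and is finite-to-one, so $f\left(  D\right)  \cap A$ is finite for every $A;$ hence $f\left(  D\right)  \in\mathcal{A}^{\perp},$ and it is infinite because $f$ is finite-to-one. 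Feeding $C=f\left(  D\right)  $ into the contradiction hypothesis yields $f^{-1}\left(  f\left(  D\right)  \right)  \in\mathcal{J},$ and since $D\subseteq f^{-1}\left(  f\left(  D\right)  \right)  $ we get $D\in\mathcal{J}.$ This proves $\mathcal{B}_{0}\cup\mathcal{B}_{0}^{\perp}\subseteq\mathcal{J}.$

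Finally I would close by cases on $\mathcal{B}_{0}.$ If $\mathcal{B}_{0}$ is infinite, it is an AD family of size strictly below $\mathfrak{a}\left(  \mathcal{J}\right)  $ with $\mathcal{B}_{0}\cup\mathcal{B}_{0}^{\perp}\subseteq\mathcal{J},$ contradicting the minimality in the definition of $\mathfrak{a}\left(  \mathcal{J}\right)  .$ If $\mathcal{B}_{0}$ is finite, then $W=\bigcup\mathcal{B}_{0}\in\mathcal{J}$ and $\omega\setminus W\in\mathcal{B}_{0}^{\perp},$ so whether $\omega\setminus W$ is finite or infinite we have $\omega\setminus W\in\mathcal{J}$ (in the infinite case by the previous paragraph), whence $\omega=W\cup\left(  \omega\setminus W\right)  \in\mathcal{J},$ which is absurd. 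The hard part will be the $\mathcal{B}_{0}^{\perp}\subseteq\mathcal{J}$ step: one must correctly juggle pushforward and pullback under the finite-to-one $f$ and remember to apply the hypothesis to $f\left(  D\right)  $ rather than $D.$ A secondary point of care is the degenerate possibility that $\mathcal{B}_{0}$ collapses to a finite family, which is precisely why the separate $\omega\in\mathcal{J}$ argument is included.
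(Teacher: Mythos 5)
Your proof is correct. The paper offers no argument for this lemma (it states only that it ``follows by the definitions''), and what you wrote is exactly the natural unpacking of the definition of $\mathfrak{a}\left(\mathcal{J}\right)$: pull $\mathcal{A}$ back along the finite-to-one $f$, observe that the resulting almost disjoint family $\mathcal{B}_{0}$ is too small to satisfy $\mathcal{B}_{0}\cup\mathcal{B}_{0}^{\perp}\subseteq\mathcal{J}$, and push a witnessing $\mathcal{J}$-positive set forward to obtain the new member of $\mathcal{B}$. Two cosmetic remarks: the ``faithfulness'' paragraph is superfluous, since your contradiction is derived from the nonexistence of $C$ rather than of the pair $\left(\mathcal{B},B\right)$; and the argument can be run directly rather than by contradiction, by noting that $\left\vert\mathcal{B}_{0}\right\vert\leq\left\vert\mathcal{A}\right\vert<\mathfrak{a}\left(\mathcal{J}\right)$ already yields some $D\in\mathcal{B}_{0}\cup\mathcal{B}_{0}^{\perp}$ with $D\in\mathcal{J}^{+}$, after which one takes $B=A$ or $B=f\left[D\right]$ accordingly.
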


\qquad\ \ \ 

Then we have the following:

\begin{proposition}
Let $\mathcal{I}$ be a $\sigma$-ideal on $\omega^{\omega}$ that has very weak
fusion and $tr\left(  \mathcal{I}\right)  $ is Kat\v{e}tov uniform. The
following statements are equivalent:

\begin{enumerate}
\item $\mathbb{P}_{\mathcal{I}}$ indestructible \textsf{MAD} families exist generically.

\item $\mathfrak{a}\left(  tr\left(  \mathcal{I}\right)  \right)
=\mathfrak{c}.$\qquad\ \ \ \qquad\ \ 
\end{enumerate}
\end{proposition}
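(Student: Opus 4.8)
**The plan is to prove the equivalence by establishing both implications, with the main work concentrated in deriving the cardinal equality from generic existence.**

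First I would prove that $\mathfrak{a}\left(tr\left(\mathcal{I}\right)\right)=\mathfrak{c}$ implies generic existence of $\mathbb{P}_{\mathcal{I}}$-indestructible \textsf{MAD} families. Given an AD family $\mathcal{A}$ of size less than $\mathfrak{c}$, I would perform a recursive construction of length $\mathfrak{c}$, extending $\mathcal{A}$ one element at a time and using a bookkeeping function to enumerate all potential \textquotedblleft threats\textquotedblright\ to indestructibility. By the characterization via very weak fusion, $\mathbb{P}_{\mathcal{I}}$ destroys $\mathcal{I}\left(\mathcal{A}\right)$ iff there is $a\in tr\left(\mathcal{I}\right)^{+}$ with $\mathcal{I}\left(\mathcal{A}\right)\leq_{\mathsf{KB}}tr\left(\mathcal{I}\right)\upharpoonright a$; since $tr\left(\mathcal{I}\right)$ is Kat\v{e}tov uniform this simplifies to a condition involving $tr\left(\mathcal{I}\right)$ itself. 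At each stage, the family built so far has size less than $\mathfrak{c}=\mathfrak{a}\left(tr\left(\mathcal{I}\right)\right)$, so by the preceding lemma (applied to finite-to-one maps coming from potential Kat\v{e}tov-Blass morphisms) I can always kill the current threat by adjoining a set that lands in $tr\left(\mathcal{I}\right)^{+}$ along the relevant preimage. The key point is that $\mathfrak{a}\left(tr\left(\mathcal{I}\right)\right)=\mathfrak{c}$ guarantees there is always room to extend while preventing any morphism from witnessing destructibility.

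Next I would prove the converse: generic existence implies $\mathfrak{a}\left(tr\left(\mathcal{I}\right)\right)=\mathfrak{c}$. I argue by contraposition. Suppose $\mathfrak{a}\left(tr\left(\mathcal{I}\right)\right)<\mathfrak{c}$, witnessed by an AD family $\mathcal{A}_{0}$ with $\mathcal{A}_{0}\cup\mathcal{A}_{0}^{\perp}\subseteq tr\left(\mathcal{I}\right)$ and $\left\vert\mathcal{A}_{0}\right\vert<\mathfrak{c}$. I would transfer this family into $\omega$ via a bijection and use it to build an AD family (still of size less than $\mathfrak{c}$) that cannot be extended to any $\mathbb{P}_{\mathcal{I}}$-indestructible \textsf{MAD} family. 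The idea is that whenever $\mathcal{B}\supseteq\mathcal{A}_{0}$ is \textsf{MAD}, the identity-like map witnesses $\mathcal{I}\left(\mathcal{B}\right)\leq_{\mathsf{KB}}tr\left(\mathcal{I}\right)\upharpoonright a$ for a suitable $a$, because $\mathcal{A}_{0}$ and its orthogonal both sit inside $tr\left(\mathcal{I}\right)$, so the generic real of $\mathbb{P}_{\mathcal{I}}$ produces a set almost disjoint from all of $\mathcal{B}$. Thus every \textsf{MAD} extension of this particular small family is $\mathbb{P}_{\mathcal{I}}$-destructible, contradicting generic existence.

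The main obstacle will be the converse direction, specifically verifying that the family $\mathcal{A}_{0}$ witnessing $\mathfrak{a}\left(tr\left(\mathcal{I}\right)\right)<\mathfrak{c}$ can be promoted into an obstruction to \emph{all} \textsf{MAD} extensions, not merely to some particular one. I expect the delicate step is confirming that the Kat\v{e}tov-Blass morphism (finite-to-one, rather than just Kat\v{e}tov) can genuinely be extracted from the hypothesis $\mathcal{A}_{0}\cup\mathcal{A}_{0}^{\perp}\subseteq tr\left(\mathcal{I}\right)$; the orthogonal condition is precisely what should let me handle the \textquotedblleft off-branch\textquotedblright\ part and keep the witnessing set $a$ in $tr\left(\mathcal{I}\right)^{+}$. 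Once the Kat\v{e}tov-Blass morphism is produced, very weak fusion converts it directly into the statement that $\mathbb{P}_{\mathcal{I}}$ forces $\mathcal{I}\left(\mathcal{B}\right)$ non-tall, completing the argument.
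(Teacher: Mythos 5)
Your proposal is correct and follows essentially the same route as the paper: the forward direction is the same bookkeeping argument over potential Kat\v{e}tov--Blass morphisms using the preceding lemma together with Kat\v{e}tov uniformity, and the converse is exactly the paper's observation that a witness $\mathcal{A}_{0}$ to $\mathfrak{a}\left(tr\left(\mathcal{I}\right)\right)<\mathfrak{c}$ forces every \textsf{MAD} extension $\mathcal{B}$ to satisfy $\mathcal{I}\left(\mathcal{B}\right)\subseteq tr\left(\mathcal{I}\right)$, since each new element lies in $\mathcal{A}_{0}^{\perp}$. The ``delicate step'' you flag is actually immediate: the witnessing map is the identity under a bijective identification of $\omega$ with the underlying countable set, hence trivially finite-to-one.
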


\begin{proof}
By a simple bookkeeping argument and the previous lemma we conclude that 2
implies 1. Clearly, every witness of $\mathfrak{a}\left(  tr\left(
\mathcal{I}\right)  \right)  $ can not be extended to a $\mathbb{P}%
_{\mathcal{I}}$ indestructible \textsf{MAD} family.
\end{proof}

\qquad\ \ \ 

The previous arguments show that $\mathfrak{a}\left(  tr\left(  \mathcal{M}%
\right)  \right)  =\mathfrak{b}.$ The following definition is useful for
studying this invariants:

\begin{definition}
Let $\mathcal{J}$ be a tall ideal. We define \textsf{cov}$^{+}\left(
\mathcal{J}\right)  $ as the smallest family $\mathcal{B}\subseteq\mathcal{J}$
such that for every $X\in\mathcal{J}^{+}$ there is $B\in\mathcal{B}$ such that
$B\cap X$ is infinite.
\end{definition}

\qquad\ \ \ 

Clearly \textsf{cov}$^{\ast}\left(  \mathcal{J}\right)  $ and $\mathfrak{a}%
\left(  \mathcal{J}\right)  $ are upperbounds for \textsf{cov}$^{+}\left(
\mathcal{J}\right)  .$ Given $s\in2^{<\omega}$ we define $\left\langle
s\right\rangle _{<\omega}=\left\{  t\in2^{<\omega}\mid s\sqsubseteq t\right\}
.$ It is clear that if $X\cap\left\langle s\right\rangle _{<\omega}%
\neq\emptyset$ for every $s\in2^{<\omega}$ then $X\notin tr($\textsf{ctble}%
$)\,.$ Let $\mathcal{BR}$ be the ideal of $2^{<\omega}$ generated by branches.
In this way $\mathcal{BR}^{\perp}$ is the ideal of all well founded subsets of
$2^{<\omega},$ its elements are called \emph{off-branch }and it is clear that
$\mathcal{BR}^{\perp}\subseteq$ $tr($\textsf{ctble}$)\,.$ We have the
following simpler characterization of \textsf{cov}$^{+}(tr($\textsf{ctble}%
$)\,).$

\begin{lemma}
\textsf{cov}$^{+}(tr($\textsf{ctble}$)\,)$ is the minimum size a family
$\mathcal{B\subseteq BR}^{\perp}$ such that for every $A\in tr($%
\textsf{ctble}$)^{+}$ there is $B\in\mathcal{B}$ such that $\left\vert A\cap
B\right\vert =\omega.$
\end{lemma}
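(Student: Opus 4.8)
The plan is to prove the equality by showing two inequalities, writing $\mu$ for $\textsf{cov}^{+}(tr(\textsf{ctble}))$ and $\nu$ for the proposed off-branch quantity (the least size of $\mathcal{B}\subseteq\mathcal{BR}^{\perp}$ such that every $A\in tr(\textsf{ctble})^{+}$ has infinite intersection with some member). The inequality $\mu\leq\nu$ is immediate: since $\mathcal{BR}^{\perp}\subseteq tr(\textsf{ctble})$ (the off-branch sets are exactly those $X$ with $\pi(X)=\emptyset$), any witness of the off-branch type is already a family as in the definition of $\textsf{cov}^{+}$. So all the content lies in $\nu\leq\mu$, i.e. in converting a given family $\{B_{\alpha}\mid\alpha<\mu\}\subseteq tr(\textsf{ctble})$ witnessing $\textsf{cov}^{+}$ into an off-branch family of the same size.

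First I would reduce ``off-branch'' to ``antichain'': every well-founded $X\subseteq 2^{<\omega}$ is a countable union of antichains, because any chain below a fixed node of $2^{<\omega}$ is finite, so the Mirsky rank $r(s)=$ (longest chain in $X$ with top $s$) takes values in $\omega$, and each level $\{s\mid r(s)=m\}$ is an antichain. Hence it suffices to produce an antichain family of size $\mu$, and I may freely split any off-branch set I build into its finitely-many-times-$\aleph_{0}$ rank levels. Next, for each $\alpha$ enumerate $\pi(B_{\alpha})=\{f^{\alpha}_{k}\mid k\in\omega\}$ and decompose $B_{\alpha}=B_{\alpha}^{\circ}\,\cup\,\bigcup_{k}L_{\alpha,k}$, where $L_{\alpha,k}=B_{\alpha}\cap\{f^{\alpha}_{k}\upharpoonright n\mid n\in\omega\}$ is a chain and $B_{\alpha}^{\circ}$ is the rest; then $B_{\alpha}^{\circ}\in\mathcal{BR}^{\perp}$. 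I put all the $B_{\alpha}^{\circ}$ (split into antichains) into the target family $\mathcal{C}$, which keeps the size at $\mu$, and I set aside the global branch set $\mathcal{G}=\{f^{\alpha}_{k}\mid\alpha<\mu,\ k\in\omega\}$, of size at most $\mu$.

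The core of the verification uses the perfect set property. Fix $A\in tr(\textsf{ctble})^{+}$, so $\pi(A)$ is an uncountable $G_{\delta}$ and therefore contains a perfect set. The clean case is when $A'=A\setminus\bigcup_{g\in\mathcal{G}}\{g\upharpoonright n\mid n\in\omega\}$ is still in $tr(\textsf{ctble})^{+}$: then $A'$ is disjoint from every chain $L_{\beta,k}$ (as $f^{\beta}_{k}\in\mathcal{G}$), so if $B_{\beta}$ is a $\textsf{cov}^{+}$-witness for $A'$ the infinite set $A'\cap B_{\beta}$ must live in $B_{\beta}^{\circ}$, whence $A\cap B_{\beta}^{\circ}$ is infinite and $A$ is captured by $\mathcal{C}$. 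The hard part will be the remaining case, where $A$ is \emph{concentrated on the witness branches}, i.e. $\pi(A)\setminus\mathcal{G}$ carries no perfect set and the family only ever meets $A$ infinitely along single branches; no off-branch set can capture an infinite subset of one branch, so this configuration is the genuine obstacle. My plan here is a peeling argument: apply the witness to $A$, and each time the chosen $B_{\alpha}$ meets the current set only along its chains, delete the countably many branches $\pi(B_{\alpha})$ and pass to the resulting set, which remains branch-heavy because deleting countably many branches removes only countably many points from a perfect subset of $\pi(A)$ and a perfect set minus a countable set still contains a perfect set. The step I expect to be the crux is showing this process must eventually expose an infinite off-branch intersection (equivalently, that no positive set can stay chain-concentrated against the whole witness); I would attack it by a careful bookkeeping on the perfect trees of the successive $\pi$-sets, arguing that an unbounded run would force the deletion of a perfect-set-worth of distinct branches drawn from $\mathcal{G}$ inside a single $\pi(A)$, which is incompatible with each $B_{\alpha}$ contributing only countably many branches.
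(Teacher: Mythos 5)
Your reduction of the problem to the inequality $\nu\leq\mu$ and the decomposition $B_{\alpha}=B_{\alpha}^{\circ}\cup\bigcup_{k}L_{\alpha,k}$ are fine, but the proof is incomplete exactly where you say it is, and the one concrete mechanism you propose for the crux rests on a false claim. You assert that passing from $A$ to $A_{1}=A\setminus\bigcup_{f\in\pi(B_{\alpha})}\widehat{f}$ ``removes only countably many points from a perfect subset of $\pi(A)$.'' That is not what happens. The set $T_{B}=\bigcup_{f\in\pi(B_{\alpha})}\widehat{f}$ is a downward closed subtree of $2^{<\omega}$ whose branch set is the topological closure $\overline{\pi(B_{\alpha})}$, and a countable set can have uncountable closure. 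A real $g$ survives into $\pi(A_{1})$ only if cofinitely many of its initial segments in $A$ lie outside $T_{B}$, so what you lose from $\pi(A)$ is (up to) $\pi(A)\cap\overline{\pi(B_{\alpha})}$, not $\pi(A)\cap\pi(B_{\alpha})$. If $\pi(B_{\alpha})$ happens to be dense in the perfect kernel of $\pi(A)$ (which is compatible with $B_{\alpha}\in tr(\mathsf{ctble})$ and with $A\subseteq T_{B}$), the very first peeling step can send a positive set to a set in the ideal. The alternative reading of your step --- replace $\pi(A)$ by a perfect $P\subseteq\pi(A)\setminus\pi(B_{\alpha})$ and restrict $A$ to initial segments of $P$ --- only guarantees that the new set meets each single chain $L_{\alpha,k}$ finitely; it can still meet $\bigcup_{k}L_{\alpha,k}$ infinitely, so the next witness may again be absorbed by chain parts, and your proposed termination argument (counting deleted branches against the countability of each $\pi(B_{\alpha})$) does not close this off, since over an unbounded run you only ever delete $\leq\omega_1$ many branches while $\mathcal{G}$ may have that many. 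So there is a genuine gap.

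The paper's proof of $\nu\leq\mu$ avoids all of this with a relativization trick you may want to adopt: assuming $\mu<\mathfrak{c}$ (the case $\mu=\mathfrak{c}$ being trivial, since the family of all infinite antichains works), the set $\bigcup_{B\in\mathcal{B}}\pi(B)$ has size $<\mathfrak{c}$, so among $\mathfrak{c}$ many pairwise disjoint perfect sets $[T_{\alpha}]$ there is one with $\pi(B)\cap[T_{\alpha}]=\emptyset$ for every $B\in\mathcal{B}$. Identifying $2^{<\omega}$ with the splitting nodes of $T_{\alpha}$, each $B\cap T_{\alpha}$ pulls back to an \emph{off-branch} subset of $2^{<\omega}$ (an infinite chain in the pullback would produce a branch of $T_{\alpha}$ in $\pi(B)$), and the pullbacks still form a $\mathsf{cov}^{+}$-witness because the image of any positive $A$ under the embedding is positive. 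This sidesteps the chain/off-branch decomposition entirely and is where the hypothesis $\mu<\mathfrak{c}$ is actually used; your argument never invokes it, which is itself a warning sign, since the obstruction you identified is only defeated by having room to move off all the witness branches at once.
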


\begin{proof}
Call $\mu$ the minimum size a family $\mathcal{B\subseteq BR}^{\perp}$ such
that for every $A\in tr($\textsf{ctble}$)^{+}$ there is $B\in\mathcal{B}$ such
that $\left\vert A\cap B\right\vert =\omega.$ It is clear that \textsf{cov}%
$^{+}(tr($\textsf{ctble}$)\,)\leq\mu$ and we shall now prove the other
inequality. In case \textsf{cov}$^{+}(tr($\textsf{ctble}$))=\mathfrak{c}$
there is nothing to prove, so assume \textsf{cov}$^{+}(tr($\textsf{ctble}$))$
is less than size of the continuum and let $\mathcal{B}\subseteq
tr($\textsf{ctble}$)$ witness this fact. Since $2^{\omega\times\omega}%
\cong2^{\omega}$ we may find a partition $\left\{  \left[  T_{\alpha}\right]
\mid\alpha<\mathfrak{c}\right\}  $ of $2^{\omega}$ where each $T_{\alpha}$ is
a Sacks tree. Since $\mathcal{B}\subseteq tr($\textsf{ctble}$)$ and has size
less than $\mathfrak{c},$ there is $T_{\alpha}$ such that $\pi\left(
B\right)  \cap\left[  T_{\alpha}\right]  =\emptyset$ for every $B\in
\mathcal{B}.$ The splitting nodes of $T_{\alpha}$ is isomorphic to
$2^{<\omega}$ and for every $B\in\mathcal{B}$ it is the case that $B\cap
T_{\alpha}$ is offbranch in $T_{\alpha}.$
\end{proof}

\qquad\ \ \ \ 

Now it is easy to prove the following:

\begin{proposition}
\textsf{cov}$\left(  \mathcal{M}\right)  \leq$ \textsf{cov}$^{+}%
(tr($\textsf{ctble}$)).$
\end{proposition}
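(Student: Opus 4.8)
The plan is to prove the inequality in contrapositive form: from any \textsf{cov}$^{+}$-witnessing family that is too small I will manufacture a positive set escaping it. Suppose toward a contradiction that $\kappa:=$ \textsf{cov}$^{+}(tr($\textsf{ctble}$))<$ \textsf{cov}$\left(  \mathcal{M}\right)  $. By the previous lemma I may fix a family $\mathcal{B}=\left\{  B_{\alpha}\mid\alpha<\kappa\right\}  \subseteq\mathcal{BR}^{\perp}$ of off-branch sets such that every $A\in tr($\textsf{ctble}$)^{+}$ satisfies $\left\vert A\cap B_{\alpha}\right\vert =\omega$ for some $\alpha$. It then suffices to produce a single $A\in tr($\textsf{ctble}$)^{+}$ with $A\cap B_{\alpha}$ finite for every $\alpha<\kappa$: this contradicts the witnessing property of $\mathcal{B}$ and forces $\kappa\geq$ \textsf{cov}$\left(  \mathcal{M}\right)  $.

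To build $A$, I introduce the countable poset $\mathbb{P}$ of finite perfect-tree stumps: a condition $p$ assigns to each $s\in2^{\leq k_{p}}$ a node $\phi_{p}\left(  s\right)  \in2^{<\omega}$ so that $\phi_{p}\left(  s^{\frown}0\right)  $ and $\phi_{p}\left(  s^{\frown}1\right)  $ are incomparable proper extensions of $\phi_{p}\left(  s\right)  $, ordered by extension. A filter meeting enough dense sets yields $\phi:2^{<\omega}\longrightarrow2^{<\omega}$ whose image $A=\left\{  \phi\left(  s\right)  \mid s\in2^{<\omega}\right\}  $ is the set of branch-nodes of a perfect tree $T$. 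Since every $x\in\left[  T\right]  $ passes through $\phi\left(  s\right)  $ for infinitely many $s$, we get $\left[  T\right]  \subseteq\pi\left(  A\right)  $, so $\pi\left(  A\right)  $ is uncountable and $A\in tr($\textsf{ctble}$)^{+}$. Being countable and atomless, $\mathbb{P}$ is equivalent to Cohen forcing.

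The combinatorial heart is a density fact for off-branch sets. For an off-branch $B$ put $G_{B}=\left\{  v\mid\text{no }w\supseteq v\text{ lies in }B\right\}  $. Then $G_{B}$ is upward closed (immediate) and dense: if some $v_{0}$ had no extension in $G_{B}$, then every extension of $v_{0}$ would again have an extension in $B$, and recursively one could build a strictly increasing chain of nodes in $B$ whose union is a branch meeting $B$ infinitely, contradicting $\pi\left(  B\right)  =\emptyset$. Consequently, for each $\alpha$ the set $D_{\alpha}=\left\{  p\in\mathbb{P}\mid\phi_{p}\left(  s\right)  \in G_{B_{\alpha}}\text{ for all }s\text{ with }\left\vert s\right\vert =k_{p}\right\}  $ is dense: given $p$, extend each frontier node $\phi_{p}\left(  s\right)  $ into $G_{B_{\alpha}}$ (possible by density) and split above it, the splits remaining in $G_{B_{\alpha}}$ by upward closure. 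If such a $p\in D_{\alpha}$ is in the generic filter, then every later $\phi\left(  s'\right)  $ extends a frontier node of $p$ lying in $G_{B_{\alpha}}$, hence $\phi\left(  s'\right)  \notin B_{\alpha}$; thus $A\cap B_{\alpha}\subseteq\left\{  \phi\left(  s\right)  \mid\left\vert s\right\vert \leq k_{p}\right\}  $ is finite.

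Finally, since $\kappa<$ \textsf{cov}$\left(  \mathcal{M}\right)  $, the standard characterization of \textsf{cov}$\left(  \mathcal{M}\right)  $ as the least cardinal for which $\mathsf{MA}_{\kappa}$ for countable (Cohen) posets fails supplies a filter on $\mathbb{P}$ meeting all $D_{\alpha}$; the resulting $A$ is positive and almost disjoint from every $B_{\alpha}$, giving the contradiction. I expect the density of $G_{B}$ to be the main point — it is exactly the step that converts $\pi\left(  B\right)  =\emptyset$ into enough room to route the generic perfect tree permanently out of $B$, and it is precisely why the off-branch reduction from the previous lemma is needed (for a general $B\in tr($\textsf{ctble}$)$, $G_{B}$ may fail to be dense). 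The only imported black box is the equivalence of $\kappa<$ \textsf{cov}$\left(  \mathcal{M}\right)  $ with $\mathsf{MA}_{\kappa}$ for countable posets.
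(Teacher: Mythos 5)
Your argument is correct and is essentially the paper's own proof: both force with a countable (Cohen-equivalent) poset of finite approximations to a perfect tree, use density sets ensuring that all nodes beyond some finite stage avoid a given off-branch set $B_{\alpha}$ (your density of $G_{B}$ is exactly the detail the paper declares ``straightforward''), and invoke $\kappa<$ \textsf{cov}$\left(\mathcal{M}\right)$ to find a sufficiently generic filter in $V$. The only cosmetic difference is that you work with embeddings of $2^{\leq k}$ rather than with finite trees ordered by end-extension, and you phrase the argument as a contradiction rather than directly showing no small family of off-branch sets can witness \textsf{cov}$^{+}(tr(\textsf{ctble}))$.
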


\begin{proof}
Let $\kappa<$ \textsf{cov}$\left(  \mathcal{M}\right)  $ and $\mathcal{A=}%
\left\{  A_{\alpha}\mid\alpha\in\kappa\right\}  \subseteq\mathcal{BR}^{\perp}%
$. We ought to find $B\in tr($\textsf{ctble}$)^{+}$ that is AD with
$\mathcal{A}.$ Let $\mathbb{P}$ be the partial order of all finite trees
contained in $2^{<\omega}$ and we order it by end extension. Obviously,
$\mathbb{P}$ is isomorphic to Cohen forcing. Let $\dot{T}_{gen}$ be the name
for the generic tree, clearly $\dot{T}_{gen}$ is forced to be a Sacks tree.
For every $\alpha<\kappa\ $define the set $D_{\alpha}$ of all $T\in\mathbb{P}$
such that if $s\in T$ is a maximal node, then $\left\langle s\right\rangle
_{\leq\omega}\cap A_{\alpha}=\emptyset.$ It is straightforward to see that
$D_{\alpha}$ is dense. Since $\kappa<$ \textsf{cov}$\left(  \mathcal{M}%
\right)  $ then we can find in $V$ a filter that intersects every $D_{\alpha}$
and the result follows.
\end{proof}

\qquad\ \ 

We can then conclude the following:

\begin{corollary}
[\cite{ForcingIndestructibilityofMADFamilies}]If \textsf{cov}$\left(
\mathcal{M}\right)  =\mathfrak{c}$ then Sacks indestructible \textsf{MAD}
families exist generically.
\end{corollary}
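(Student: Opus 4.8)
The plan is to reduce this corollary to two results already in hand: the characterization of generic existence of $\mathbb{P}_{\mathcal{I}}$-indestructible \textsf{MAD} families, and the inequality \textsf{cov}$\left(  \mathcal{M}\right)  \leq$ \textsf{cov}$^{+}\left(  tr\left(  \mathsf{ctble}\right)  \right)  $ proved just above. Since Sacks forcing is precisely $\mathbb{P}_{\mathsf{ctble}}$, and since $\mathsf{ctble}$ has very weak fusion (by the proposition stating that $\mathsf{ctble},\mathcal{M},\mathcal{N}$ and $\mathcal{K}_{\sigma}$ have very weak fusion) while $tr\left(  \mathsf{ctble}\right)  $ is Kat\v{e}tov uniform (by the lemma recording this for $tr\left(  \mathsf{ctble}\right)  ,$ \textsf{nwd}$,tr\left(  K_{\sigma}\right)  ,tr\left(  \mathcal{L}\right)  $), the generic-existence proposition applies verbatim to $\mathcal{I}=\mathsf{ctble}$. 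It yields: Sacks indestructible \textsf{MAD} families exist generically if and only if $\mathfrak{a}\left(  tr\left(  \mathsf{ctble}\right)  \right)  =\mathfrak{c}$. So it suffices to deduce $\mathfrak{a}\left(  tr\left(  \mathsf{ctble}\right)  \right)  =\mathfrak{c}$ from the hypothesis \textsf{cov}$\left(  \mathcal{M}\right)  =\mathfrak{c}$.

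For the second step I would assemble a short chain of inequalities. First recall the remark that both \textsf{cov}$^{\ast}$ and $\mathfrak{a}$ are upper bounds for \textsf{cov}$^{+}$; concretely, if $\mathcal{A}$ witnesses $\mathfrak{a}\left(  \mathcal{J}\right)  $ (so $\mathcal{A}\cup\mathcal{A}^{\perp}\subseteq\mathcal{J}$) and $X\in\mathcal{J}^{+}$, then some $B\in\mathcal{A}$ must meet $X$ infinitely, since otherwise $X\in\mathcal{A}^{\perp}\subseteq\mathcal{J}$, contradicting $X\notin\mathcal{J}$; hence $\mathcal{A}$ is itself a \textsf{cov}$^{+}$-witness and \textsf{cov}$^{+}\left(  \mathcal{J}\right)  \leq\mathfrak{a}\left(  \mathcal{J}\right)  $. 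Applying this with $\mathcal{J}=tr\left(  \mathsf{ctble}\right)  $ and combining it with the just-established covering inequality gives
\[
\mathfrak{c}=\textsf{cov}\left(  \mathcal{M}\right)  \leq\textsf{cov}^{+}\left(  tr\left(  \mathsf{ctble}\right)  \right)  \leq\mathfrak{a}\left(  tr\left(  \mathsf{ctble}\right)  \right)  \leq\mathfrak{c},
\]
where the final inequality is automatic because $tr\left(  \mathsf{ctble}\right)  $ is an ideal on the countable set $2^{<\omega}$. Therefore every term equals $\mathfrak{c}$, and in particular $\mathfrak{a}\left(  tr\left(  \mathsf{ctble}\right)  \right)  =\mathfrak{c}$.

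Putting the two steps together finishes the proof: the generic-existence characterization converts $\mathfrak{a}\left(  tr\left(  \mathsf{ctble}\right)  \right)  =\mathfrak{c}$ into the generic existence of $\mathbb{P}_{\mathsf{ctble}}$-indestructible, that is Sacks-indestructible, \textsf{MAD} families. I do not expect any real obstacle here, since the substantive content lies entirely in the preceding propositions (the covering inequality \textsf{cov}$\left(  \mathcal{M}\right)  \leq$ \textsf{cov}$^{+}\left(  tr\left(  \mathsf{ctble}\right)  \right)  $ and the very-weak-fusion characterization of generic existence); the only point requiring care is to confirm that $\mathsf{ctble}$ satisfies the hypotheses of that characterization, which is exactly what the Kat\v{e}tov-uniformity lemma and the very-weak-fusion proposition supply.
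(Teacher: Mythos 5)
Your proof is correct and follows essentially the same route the paper intends: combine the generic-existence characterization (Sacks-indestructible \textsf{MAD} families exist generically iff $\mathfrak{a}\left(tr\left(\mathsf{ctble}\right)\right)=\mathfrak{c}$, applicable since \textsf{ctble} has very weak fusion and $tr\left(\mathsf{ctble}\right)$ is Kat\v{e}tov uniform) with the chain \textsf{cov}$\left(\mathcal{M}\right)\leq$ \textsf{cov}$^{+}\left(tr\left(\mathsf{ctble}\right)\right)\leq\mathfrak{a}\left(tr\left(\mathsf{ctble}\right)\right)\leq\mathfrak{c}$. Your explicit verification that a witness for $\mathfrak{a}\left(\mathcal{J}\right)$ is itself a \textsf{cov}$^{+}$-witness is exactly the paper's remark that $\mathfrak{a}\left(\mathcal{J}\right)$ is an upper bound for \textsf{cov}$^{+}\left(\mathcal{J}\right)$.
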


\qquad\ \ 

For simplicity, we define $\mathfrak{a}_{Sacks}=\mathfrak{a}(tr($%
\textsf{ctble}$)).$ As before, $\mathfrak{a}_{Sacks}$ is the smallest size of
an almost disjoint family $\mathcal{A\subseteq BR}^{\perp}$ such that
$\mathcal{A\cup A}^{\perp}\subseteq$ $tr($\textsf{ctble}$).$ The reason we are
interested in this cardinal invariant is the following:

\begin{proposition}
If $\mathfrak{a}\leq\mathfrak{a}_{Sacks}$ then there is a Sacks indestructible
\textsf{MAD} family.
\end{proposition}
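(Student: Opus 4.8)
The plan is to avoid any new construction and instead assemble the statement from the two results established just above, splitting according to the value of $\mathfrak{a}$. The key observation is that the hypothesis $\mathfrak{a}\leq\mathfrak{a}_{Sacks}$ is needed only to transfer the equality $\mathfrak{a}=\mathfrak{c}$ to $\mathfrak{a}_{Sacks}=\mathfrak{c}$; in the complementary regime the hypothesis is not even used. Recall throughout that $\mathfrak{a}_{Sacks}=\mathfrak{a}(tr(\mathsf{ctble}))\leq\mathfrak{c}$, so the dichotomy $\mathfrak{a}<\mathfrak{c}$ versus $\mathfrak{a}=\mathfrak{c}$ is exhaustive.

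First I would treat the case $\mathfrak{a}<\mathfrak{c}$. Here I simply fix a \textsf{MAD} family $\mathcal{A}$ of minimal size, so that $\left\vert \mathcal{A}\right\vert =\mathfrak{a}<\mathfrak{c}$. By the corollary asserting that every \textsf{MAD} family of size less than $\mathfrak{c}$ is Sacks indestructible (which rests on $\mathsf{cov}^{\ast}(tr(\mathsf{ctble}))=\mathfrak{c}$ together with the monotonicity of $\mathsf{cov}^{\ast}$ under the Kat\v{e}tov order), this $\mathcal{A}$ is already Sacks indestructible, and we are done. Note that the hypothesis enters precisely here, through $\mathfrak{a}\leq\mathfrak{a}_{Sacks}<\mathfrak{c}$ being what makes this case available when $\mathfrak{a}_{Sacks}<\mathfrak{c}$.

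It remains to handle $\mathfrak{a}=\mathfrak{c}$. In this regime the hypothesis gives $\mathfrak{c}=\mathfrak{a}\leq\mathfrak{a}_{Sacks}\leq\mathfrak{c}$, whence $\mathfrak{a}_{Sacks}=\mathfrak{a}(tr(\mathsf{ctble}))=\mathfrak{c}$. I would then invoke the proposition characterizing the generic existence of $\mathbb{P}_{\mathcal{I}}$-indestructible \textsf{MAD} families: since $\mathsf{ctble}$ has very weak fusion and $tr(\mathsf{ctble})$ is Kat\v{e}tov uniform (both recorded earlier), that proposition applies with $\mathcal{I}=\mathsf{ctble}$ and $\mathbb{P}_{\mathsf{ctble}}$ the Sacks forcing. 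Thus $\mathfrak{a}(tr(\mathsf{ctble}))=\mathfrak{c}$ yields that Sacks indestructible \textsf{MAD} families exist generically; in particular, extending any almost disjoint family of size $<\mathfrak{c}$ (for instance a countable one, which exists since $\mathfrak{c}>\aleph_{0}$) produces a Sacks indestructible \textsf{MAD} family, finishing this case.

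The only genuine content lives in the second case, and it has already been discharged elsewhere: the substantive work is the proof of the generic-existence proposition, namely the bookkeeping recursion that, using $\mathfrak{a}(tr(\mathsf{ctble}))=\mathfrak{c}$, kills every potential Kat\v{e}tov--Blass morphism into $tr(\mathsf{ctble})$. So the thing I must get right here is only the elementary reduction: checking that the generic-existence proposition is legitimately applicable to $\mathsf{ctble}$, and observing that $\mathfrak{a}=\mathfrak{c}$ forces $\mathfrak{a}_{Sacks}=\mathfrak{c}$ via the hypothesis. I expect no further obstacle.
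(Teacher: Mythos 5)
Your proposal is correct and follows exactly the paper's own argument: split on $\mathfrak{a}<\mathfrak{c}$ versus $\mathfrak{a}=\mathfrak{c}$, use the corollary that \textsf{MAD} families of size $<\mathfrak{c}$ are Sacks indestructible in the first case, and in the second case transfer $\mathfrak{a}=\mathfrak{c}$ to $\mathfrak{a}_{Sacks}=\mathfrak{c}$ via the hypothesis and invoke generic existence. (One small remark: your parenthetical that ``the hypothesis enters precisely here'' in the $\mathfrak{a}<\mathfrak{c}$ case contradicts your own earlier, and correct, observation that the hypothesis is only used in the $\mathfrak{a}=\mathfrak{c}$ case; the case split is exhaustive on its own.)
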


\begin{proof}
If $\mathfrak{a<c}$ then any \textsf{MAD} family of minimum size is Sacks
indestructible and if $\mathfrak{a=c}$ then $\mathfrak{a}_{Sacks}%
=\mathfrak{c}$ so Sacks indestructible \textsf{MAD} families exist generically.
\end{proof}

\qquad\ \ \ 

We do not know if the inequality $\mathfrak{a}_{Sacks}<\mathfrak{a}$ is consistent.

\qquad\ \ \ \ \qquad\ \ \ 

The following is a very important result of Shelah regarding the
destructibility of \textsf{MAD} families (see \cite{ProperandImproper}).

\begin{proposition}
Every \textsf{MAD} family can be destroyed with a proper forcing that does not
add dominating reals.\ \ \ 
\end{proposition}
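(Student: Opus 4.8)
The goal is to add, with a proper forcing $\mathbb{P}$ adding no dominating real, an infinite set $X$ that is almost disjoint from every member of $\mathcal{A}$; since $\mathcal{A}$ is \textsf{MAD} exactly when $\mathcal{I}(\mathcal{A})$ is tall, such an $X\in\mathcal{A}^{\perp}$ witnesses that $\mathcal{A}$ is no longer maximal. The first thing I would record is why the obvious candidate fails. The Mathias forcing $\mathbb{M}(\mathcal{I}(\mathcal{A}))$ destroys $\mathcal{A}$ (the sets $D_{A}=\{(s,B)\mid A\subseteq B\}$ are dense, so the generic real avoids each $A\in\mathcal{A}$ modulo a finite set), but since $\mathcal{I}(\mathcal{A})$ is selective its coideal carries a generic selective ultrafilter and the forcing factors as a $\sigma$-closed poset followed by Mathias forcing guided by a Ramsey ultrafilter; the latter adds a dominating real. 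Thus the side conditions must be constrained so that the generic real cannot be driven to grow quickly.

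The plan is to use the meagerness of $\mathcal{I}(\mathcal{A})$ to manufacture such a constraint. By the lemma of Mathias above, $\mathcal{I}(\mathcal{A})$ is meager, so the Talagrand--Jalali-Naini theorem for ideals yields an interval partition $\mathcal{P}=\{P_{n}\mid n\in\omega\}$ with the property that every $A\in\mathcal{I}(\mathcal{A})$ contains only finitely many of the intervals $P_{n}$; I would also arrange $|P_{n}|\to\infty$. I then force with the \emph{selector forcing} $\mathbb{P}$ whose conditions are pairs $(s,\mathcal{F})$, where $\mathcal{F}\in[\mathcal{A}]^{<\omega}$ and $s$ is a finite partial selector choosing, for $n$ below some threshold, a point $s(n)\in P_{n}\setminus\bigcup\mathcal{F}$, and where $(t,\mathcal{G})\le(s,\mathcal{F})$ means $s\sqsubseteq t$, $\mathcal{F}\subseteq\mathcal{G}$, and every new value $t(n)$ lies outside $\bigcup\mathcal{F}$. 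The generic real $X$ picks exactly one point from each $P_{n}$, so $X$ is infinite; and because $\{(s,\mathcal{F})\mid A\in\mathcal{F}\}$ is dense (enlarging $\mathcal{F}$ by $A$ is legal, since $P_{n}\not\subseteq\bigcup\mathcal{F}\cup A$ for all large $n$), the set $X$ is almost disjoint from every $A\in\mathcal{A}$. This destroys $\mathcal{A}$.

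Properness is immediate: conditions with a common stem are compatible (union the finite sets $\mathcal{F}$), so $\mathbb{P}$ is $\sigma$-centered, hence ccc, hence proper. The decisive gain of the construction is that a legal condition can never force the next selected point to be large: a valid selector must meet $P_{n}$, so the side conditions may forbid members of $\mathcal{A}$ but can never cover a whole interval $P_{n}$, whence the $n$-th value of $X$ is always $\le\max P_{n}$. Consequently the enumerating function of $X$ is dominated by the ground-model function $n\mapsto\max P_{n}$, and $X$ is confined to the bounded ground-model product $\prod_{n}P_{n}$.

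The hard part will be upgrading this into a genuine proof that $\mathbb{P}$ adds no dominating real, that is, that \emph{every} new function is infinitely often escaped by some ground-model function, not merely the canonical generic. Here I would exploit that, because $|P_{n}\setminus\bigcup\mathcal{F}|\ge 2$ for infinitely many $n$, below any condition there are, at cofinally many coordinates, two incompatible one-step extensions; this Cohen-like branching inside the bounded tree $\prod_{n}P_{n}$ lets me run the standard category argument showing that a bounded finite-condition forcing adds no dominating real. The main obstacle is organizing this branching argument together with the bookkeeping that keeps the thresholds, the growing families $\mathcal{F}$, and the interval partition mutually compatible, so that the $\sigma$-centered selector forcing simultaneously destroys $\mathcal{A}$ and stays non-dominating; this is exactly the content of Shelah's theorem.
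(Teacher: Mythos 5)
The paper itself gives no proof of this proposition; it quotes Shelah's theorem from \cite{ProperandImproper}, whose proof builds a proper forcing from finite \emph{logarithmic measures} and establishes that no dominating real is added via a pure-decision/fusion argument. Your attempt takes a completely different, much more elementary route, and the gap sits exactly at the step you defer to a ``standard category argument'': no such argument exists for posets of this shape. Your forcing is a Mathias--Prikry type forcing (finite stems plus finite side conditions from $\mathcal{A}$; indeed it is $\mathbb{M}(\mathcal{I}(\mathcal{A}))$ with stems restricted to partial selectors of $\mathcal{P}$), and $\sigma$-centeredness, finite stems, Cohen-like branching and bounded stem values do \emph{not} imply that no dominating real is added: Hechler forcing is $\sigma$-centered with finite stems and rich branching and adds a dominating real, and, closer to home, $\mathbb{M}(\mathcal{I}(\mathcal{A}))$ adds a dominating real exactly when $\mathcal{I}(\mathcal{A})$ fails the Canjar (Menger) property --- and, as this thesis itself records, non-Canjar \textsf{MAD} families exist outright in \textsf{ZFC}. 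The ``bounded tree'' intuition controls only the values of the generic selector, not arbitrary names: since stems must be allowed to skip intervals (see below), a name such as $\dot{f}(n)=\min\{m>n\mid X\cap P_{m}\neq\emptyset\}$ is not bounded by any ground-model function, and deciding $\dot{f}(n)$ requires extending the stem out to interval $\dot{f}(n)$ itself, which defeats the scheme ``decide $\dot{f}(n)$ by conditions of complexity $\leq n$, then diagonalize'' that any category-style argument needs. Whether the Talagrand property of $\mathcal{P}$ really excludes \emph{every} dominating name for your poset is a Canjar-type covering property of $\mathcal{I}(\mathcal{A})$ relative to $\mathcal{P}$, and the proposal contains no proof of it; this is precisely the hard core of Shelah's theorem, and it is why his proof needs the norm machinery rather than centeredness. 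Note also that if your poset did work, every \textsf{MAD} family would be destructible by a $\sigma$-centered forcing adding no dominating real --- a strengthening of Shelah's theorem that is not known and would bear directly on open problems listed in this thesis (existence of Canjar \textsf{MAD} families, Brendle's $\mathfrak{b}=\mathfrak{s}=\omega_{1}$ problem), so the burden of proof here is very high.

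There are also concrete bugs in the poset as defined. Since $\mathcal{A}$ is \textsf{MAD}, $\omega\setminus\bigcup\mathcal{A}$ is finite, so all but finitely many intervals $P_{k}$ can be covered by finitely many members of $\mathcal{A}$; starting from a condition whose stem has threshold $k$ and adding such a covering family to $\mathcal{F}$ produces a condition with no stem-lengthening extension (the next selected point would have to lie in $P_{k}\setminus\bigcup\mathcal{F}=\emptyset$). Hence ``the generic is infinite'' is not forced, and for the same reason your density argument for inserting $A$ into $\mathcal{F}$ clashes with your requirement that the \emph{whole} stem, not just its future values, avoid $\bigcup\mathcal{F}$. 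Both defects are repairable in the standard way (let stems skip intervals; demand only that new points avoid $\bigcup\mathcal{F}$), but after the repair the generic genuinely skips intervals, so your claim that the enumeration of $X$ is dominated by $n\mapsto\max P_{n}$ is false, removing the last support for the boundedness intuition. Finally, a side remark: the opening diagnosis of why $\mathbb{M}(\mathcal{I}(\mathcal{A}))$ fails (factorization through a $\sigma$-closed poset followed by Mathias forcing with a Ramsey ultrafilter) is the happy-family factorization for Mathias forcing whose side conditions are \emph{positive} sets; it does not apply to ideal-type side conditions, where the correct criterion is again Canjar's.
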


\qquad\ \ \ \ \ \ 

Letting $\mathcal{I}$ be an ideal in $\omega,$ by $\left(  \mathcal{I}%
^{<\omega}\right)  ^{+}$ we denote the set of all $X\subseteq\left[
\omega\right]  ^{<\omega}\setminus\left\{  \emptyset\right\}  $ such that for
every $A\in\mathcal{I}$ there is $s\in X$ such that $s\cap A=\emptyset.$ If
$\mathcal{F}$ is a filter then we define $\left(  \mathcal{F}^{<\omega
}\right)  ^{+}$ as $\left(  \left(  \mathcal{F}^{\ast}\right)  ^{<\omega
}\right)  ^{+}.$ Note that if $X\subseteq\left[  \omega\right]  ^{<\omega
}\setminus\left\{  \emptyset\right\}  $ then $X\in\left(  \mathcal{F}%
^{<\omega}\right)  ^{+}$ if and only if for every $A\in\mathcal{F}$ there is
$s\in X$ such that $s\subseteq A.$ The following definition will be very
important in the rest of the chapter:

\begin{definition}
An ideal $\mathcal{I}$ is called \emph{Shelah-Stepr\={a}ns }if for every
$X\in\left(  \mathcal{I}^{<\omega}\right)  ^{+}$ there is $Y\in\left[
X\right]  ^{\omega}$ such that $\bigcup Y\in\mathcal{I}.$
\end{definition}

\qquad\ \ \ \qquad\ \ 

In other words, an ideal $\mathcal{I}$ is Shelah-Stepr\={a}ns if for every
$X\subseteq\left[  \omega\right]  ^{<\omega}\setminus\left\{  \emptyset
\right\}  $ either there is $A\in\mathcal{I}$ such that $s\cap A\neq\emptyset$
for every $s\in X$ or there is $B\in\mathcal{I}$ that contains infinitely many
elements of $X.$ The previous notion was introduced by Raghavan in
\cite{AModelwithnoStronglySeparableAlmostDisjointFamilies} for almost disjoint
families, which is connected to the notion of \textquotedblleft strongly
separable\textquotedblright\ introduced by Shelah and Steprans in
\cite{MasasintheCalkinAlgebra}.

\begin{lemma}
Every non-meager ideal is Shelah-Stepr\={a}ns.
\end{lemma}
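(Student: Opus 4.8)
The plan is to reduce the statement to the interval-partition characterization of non-meager ideals, namely the Talagrand, Jalali-Naini theorem for ideals proved above. First I would unwind the hypothesis $X\in\left(\mathcal{I}^{<\omega}\right)^{+}$ through the dual filter $\mathcal{F}=\mathcal{I}^{\ast}$: as already noted in the text, membership in $\left(\mathcal{I}^{<\omega}\right)^{+}$ says exactly that for every $A\in\mathcal{F}$ there is $s\in X$ with $s\subseteq A$. Since every ideal contains all finite sets, each set of the form $[m,\omega)$ lies in $\mathcal{F}$ (its complement $m$ being finite), so for every $m\in\omega$ there is some $s\in X$ lying entirely above $m$.

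Using this, I would recursively build a sequence $\langle s_{n}\mid n\in\omega\rangle$ of elements of $X$ together with natural numbers $m_{-1}=0<m_{0}<m_{1}<\cdots$: having defined $m_{n-1}$, choose $s_{n}\in X$ with $s_{n}\subseteq[m_{n-1},\omega)$ and set $m_{n}=\max\left(s_{n}\right)+1$. Letting $P_{n}=[m_{n-1},m_{n})$, the family $\mathcal{P}=\left\{P_{n}\mid n\in\omega\right\}$ is an interval partition of $\omega$, and by construction $s_{n}\subseteq P_{n}$ for every $n$. Since the $P_{n}$ are pairwise disjoint and each $s_{n}$ is nonempty, the $s_{n}$ are pairwise distinct, so any infinite selection among them yields an infinite subfamily of $X$.

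Now I would invoke the non-meagerness of $\mathcal{I}$ in the form supplied by the Talagrand, Jalali-Naini theorem for ideals: there is $Z\in\mathcal{I}$ with $P_{n}\subseteq Z$ for infinitely many $n$. Setting $W=\left\{n\mid P_{n}\subseteq Z\right\}$ and $Y=\left\{s_{n}\mid n\in W\right\}$, we obtain $Y\in\left[X\right]^{\omega}$ and $\bigcup Y\subseteq\bigcup_{n\in W}P_{n}\subseteq Z$, whence $\bigcup Y\in\mathcal{I}$ by downward closure of the ideal. This $Y$ is precisely the witness demanded by the definition of Shelah-Stepr\={a}ns, which completes the argument.

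The only delicate point is the arrangement step. The definition of Shelah-Stepr\={a}ns requires a \emph{single} member $Z$ of $\mathcal{I}$ absorbing infinitely many of the selected finite sets simultaneously, and the interval partition is engineered so that having one block of $\mathcal{P}$ contained in $Z$ automatically delivers one element of $X$ contained in $Z$. Matching these two swallowing conditions is where the whole proof turns, and it rests on being able to place each chosen $s_{n}$ inside its own block $P_{n}$ — which is exactly what the positivity of $X$, applied to the cofinite sets $[m,\omega)\in\mathcal{F}$, guarantees.
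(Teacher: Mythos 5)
Your argument is correct, but it is organized around a different key lemma than the paper's own proof. The paper picks pairwise disjoint $s_{n}\in X$ (possible by the same observation you make about cofinite sets belonging to the dual filter) and then applies non-meagerness of $\mathcal{I}$ directly: it defines $M=\left\{ A\subseteq\omega\mid\forall^{\infty}n\left( s_{n}\nsubseteq A\right) \right\}$, notes that $M$ is meager, and concludes that some $A\in\mathcal{I}\setminus M$ contains infinitely many of the $s_{n}$, so the union of those $s_{n}$ lies in $\mathcal{I}$ by downward closure. You instead arrange the chosen $s_{n}$ into consecutive blocks of an interval partition and invoke the Talagrand--Jalali-Naini theorem for ideals (the clause stating that a non-meager ideal contains infinitely many whole blocks of any interval partition), letting the block-swallowing set $Z\in\mathcal{I}$ swallow infinitely many $s_{n}$. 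The two proofs are close relatives --- the meager set $M$ in the paper is precisely the kind of set that the chopped-real/matching argument behind Talagrand's theorem is built to handle --- but yours outsources the category argument entirely to the already-proved characterization, at the cost of a slightly more careful recursion (you must place each $s_{n}$ inside its own consecutive interval rather than merely making the $s_{n}$ pairwise disjoint), while the paper's version is self-contained modulo the routine check that $M$ is meager. Both arguments are complete and correct.
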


\begin{proof}
Let $\mathcal{I}$ be a non-meager ideal and $X\in\left(  \mathcal{I}^{<\omega
}\right)  ^{+}.$ Note that since $X\in\left(  \mathcal{I}^{<\omega}\right)
^{+}$ (and $\mathcal{I}$ contains every finite set) for every $n\in\omega$
there is $s\in X$ such that $s\cap n=\emptyset.$ In this way we can find
$Z=\left\{  s_{n}\mid n\in\omega\right\}  \subseteq X$ such that if $n\neq m$
then $s_{n}\cap s_{m}=\emptyset.$ We then define $M=\left\{  A\subseteq
\omega\mid\forall^{\infty}n\left(  s_{n}\nsubseteq A\right)  \right\}  $ which
is clearly a meager set and then there must be $A\in\mathcal{I}$ such that
$A\notin M$ hence there is $Y\in\left[  X\right]  ^{\omega}$ such that
$\bigcup Y\subseteq A\in\mathcal{I}.$
\end{proof}

\qquad\ \ \ \qquad

Nevertheless, there are meager ideals that are also Shelah-Stepr\={a}ns as the
following result shows:

\begin{lemma}
\textsf{FIN}$\times$\textsf{FIN} is Shelah-Stepr\={a}ns.
\end{lemma}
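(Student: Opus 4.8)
The plan is to unravel the two definitions and observe that positivity against one simple sequence of sets already suffices. First I would record the concrete description of the ideal: a set $A\subseteq\omega\times\omega$ lies in $\textsf{FIN}\times\textsf{FIN}$ if and only if all but finitely many of its columns $\{m\mid(n,m)\in A\}$ are finite. This follows by inspecting the generators $\{C_n\}\cup\{D(f)\}$: a finite union $C_{n_1}\cup\dots\cup C_{n_k}$ contributes $k$ infinite columns, while every column of $D(f)$ is finite, and passing to $\subseteq^{\ast}$-subsets changes only finitely many columns. In particular, for every $k\in\omega$ the vertical strip $R_k=\bigcup_{n<k}C_n=\{(n,m)\mid n<k\}$ has exactly $k$ infinite columns and hence belongs to $\textsf{FIN}\times\textsf{FIN}$.

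Next I would unpack positivity. Let $X\in(\,(\textsf{FIN}\times\textsf{FIN})^{<\omega}\,)^{+}$, so that for every $A\in\textsf{FIN}\times\textsf{FIN}$ there is $s\in X$ with $s\cap A=\emptyset$. Applying this to $A=R_k$ for each $k$ produces $s_k\in X$ meeting no column below $k$; equivalently, every point of $s_k$ sits in a column $\geq k$. Writing $c(s)=\min\{n\mid(n,m)\in s\text{ for some }m\}$ for the least column used by a nonempty $s$, this says precisely $c(s_k)\geq k$.

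Then I would assemble $Y$. Since $c(s_k)\geq k\to\infty$, no fixed element of $X$ can equal $s_k$ for infinitely many $k$, so the $s_k$ take infinitely many distinct values; thinning to a strictly increasing sequence of least-columns yields distinct $t_0,t_1,\dots\in X$ with $c(t_j)\geq j$, and I set $Y=\{t_j\mid j\in\omega\}\in[X]^{\omega}$. For the union, fix a column $n$: an element $t_j$ can contribute a point to column $n$ only when $n\geq c(t_j)\geq j$, i.e. only for $j\leq n$, so the $n$-th column of $\bigcup Y$ is contained in the finite union $\bigcup_{j\leq n}t_j$ and is therefore finite. Hence every column of $\bigcup Y$ is finite, so $\bigcup Y\in\textsf{FIN}\times\textsf{FIN}$, which is exactly what Shelah-Stepr\={a}ns demands.

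The argument has no serious obstacle; the only step requiring genuine care is the opening reduction, verifying that the generators really produce the ``finitely many infinite columns'' ideal and hence that each strip $R_k$ lies in it. Once that is in hand, positivity is invoked only against the strips $R_k$ (the function-generators $D(f)$ are never needed), and the conclusion reduces to the elementary column-counting above. I would also note that this in fact yields slightly more than required: the witnessing $Y$ can be taken so that \emph{every} column of $\bigcup Y$ is finite.
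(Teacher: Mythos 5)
Your proof is correct and is essentially the paper's argument: the paper's one-sentence proof asserts that positivity forces infinitely many elements of $X$ to lie below the graph of a single function, which is exactly what your strip construction establishes, since a union all of whose columns are finite is contained in some $D(g)$. You have simply supplied the details (hitting the strips $R_k$, thinning to distinct sets, column counting) that the paper leaves to the reader.
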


\begin{proof}
It is easy to see that if $X\in($\textsf{FIN}$\times$\textsf{FIN)}$^{+}$ then
there must be infinitely many elements of $X$ that are below the graph of a
function, so there must be $Y\in\left[  X\right]  ^{\omega}$ such that
$\bigcup Y\in\mathcal{I}.$ \qquad\ \ \ 
\end{proof}

\qquad\ \ \ \ \ \ \ \ \ \ \ \ 

We will now show that the property of being Shelah-Stepr\={a}ns is upward
closed in the Kat\v{e}tov order:

\begin{lemma}
Let $\mathcal{I}$ and $\mathcal{J}$ be two ideals on $\omega.$ If the ideal
$\mathcal{I}$ is Shelah-Stepr\={a}ns and $\mathcal{I}\leq_{\mathsf{K}}$
$\mathcal{J}$ then $\mathcal{J}$ is also Shelah-Stepr\={a}ns.
\end{lemma}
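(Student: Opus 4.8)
The plan is to transport a witness against the Shelah-Stepr\={a}ns property of $\mathcal{J}$ through the Kat\v{e}tov morphism into a witness for $\mathcal{I}$, apply the hypothesis there, and then pull the resulting infinite subfamily back. Fix a Kat\v{e}tov morphism $f:\omega\longrightarrow\omega$ from $\left(  \omega,\mathcal{J}\right)  $ to $\left(  \omega,\mathcal{I}\right)  ,$ so that $f^{-1}\left(  A\right)  \in\mathcal{J}$ for every $A\in\mathcal{I}.$ Let $X\in\left(  \mathcal{J}^{<\omega}\right)  ^{+}$ be arbitrary; the goal is to produce $Y\in\left[  X\right]  ^{\omega}$ with $\bigcup Y\in\mathcal{J}.$ The key object is the forward image $X^{\prime}=\left\{  f\left[  s\right]  \mid s\in X\right\}  ,$ which is a subset of $\left[  \omega\right]  ^{<\omega}\setminus\left\{  \emptyset\right\}  $ since $f\left[  s\right]  $ is finite and nonempty whenever $s$ is.

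First I would verify that $X^{\prime}\in\left(  \mathcal{I}^{<\omega}\right)  ^{+}.$ Given $A\in\mathcal{I},$ the morphism yields $f^{-1}\left(  A\right)  \in\mathcal{J},$ so positivity of $X$ provides $s\in X$ with $s\cap f^{-1}\left(  A\right)  =\emptyset;$ a one-line check shows $f\left[  s\right]  \cap A=\emptyset$ (any $m\in f\left[  s\right]  \cap A$ would force some $n\in s\cap f^{-1}\left(  A\right)  $), and $f\left[  s\right]  \in X^{\prime}$ is the required element. Since $\mathcal{I}$ is Shelah-Stepr\={a}ns, there is an infinite $Y^{\prime}\in\left[  X^{\prime}\right]  ^{\omega}$ with $\bigcup Y^{\prime}\in\mathcal{I}.$

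It remains to pull $Y^{\prime}$ back. For each $t\in Y^{\prime}$ choose some $s_{t}\in X$ with $f\left[  s_{t}\right]  =t,$ and set $Y=\left\{  s_{t}\mid t\in Y^{\prime}\right\}  .$ Because distinct members of $Y^{\prime}$ have distinct $f$-images, the assignment $t\mapsto s_{t}$ is injective, so $Y$ is infinite and $Y\in\left[  X\right]  ^{\omega}.$ Finally $f\left[  \bigcup Y\right]  =\bigcup\left\{  f\left[  s_{t}\right]  \mid t\in Y^{\prime}\right\}  =\bigcup Y^{\prime}\in\mathcal{I},$ whence $\bigcup Y\subseteq f^{-1}\left(  \bigcup Y^{\prime}\right)  ,$ and $f^{-1}\left(  \bigcup Y^{\prime}\right)  \in\mathcal{J}$ by the morphism property; downward closure of $\mathcal{J}$ then gives $\bigcup Y\in\mathcal{J},$ as required.

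The construction is essentially a routine unwinding of the definitions; the one point demanding care, and the step I expect to be the main obstacle, is guaranteeing that the pulled-back family $Y$ is genuinely infinite. The map $f\left[  \cdot\right]  $ may collapse many distinct elements of $X$ to a single finite set, so $X^{\prime}$ could a priori be far smaller than $X;$ the resolution is to index the chosen preimages by the \emph{distinct} members of $Y^{\prime}$ and to exploit that distinct images force distinct preimages. Everything else, namely the transfer of positivity and the inclusion $\bigcup Y\subseteq f^{-1}\left(  \bigcup Y^{\prime}\right)  ,$ follows directly from the definition of a Kat\v{e}tov morphism.
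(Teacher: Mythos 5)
Your proof is correct and follows essentially the same route as the paper: push $X$ forward to $X'=\{f[s]\mid s\in X\}$, check positivity of $X'$ using the morphism, apply the Shelah-Stepr\={a}ns property of $\mathcal{I}$, and pull the infinite subfamily back. The paper states the final pullback step in one terse sentence; your explicit choice of injective preimages $s_t$ and the inclusion $\bigcup Y\subseteq f^{-1}\left(\bigcup Y'\right)$ is exactly the detail that sentence leaves to the reader.
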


\begin{proof}
Let $f:\omega\longrightarrow\omega$ be a Kat\u{e}tov-morphism from $\left(
\omega,\mathcal{J}\right)  $ to $\left(  \omega,\mathcal{I}\right)  .$ Letting
$X\in\left(  \mathcal{J}^{<\omega}\right)  ^{+}$ we must find $Y\in\left[
X\right]  ^{\omega}$ such that $\bigcup Y\in\mathcal{J}.$ Define
$X_{1}=\left\{  f\left[  s\right]  \mid s\in X\right\}  ,$ we will first argue
that \thinspace$X_{1}\in\left(  \mathcal{I}^{<\omega}\right)  ^{+}.$ To prove
this fact, let $A\in\mathcal{I}.$ Since $f$ is a Kat\u{e}tov-morphism,
$f^{-1}\left(  A\right)  \in\mathcal{J}$ so there is $s\in X$ for which $s\cap
f^{-1}\left(  A\right)  =\emptyset$ and then $f\left[  s\right]  \cap
A=\emptyset.$ Since $\mathcal{I}$ is Shelah-Stepr\={a}ns, there is $Y_{1}%
\in\left[  X_{1}\right]  ^{\omega}$ such that $\bigcup Y_{1}\in\mathcal{I}.$
Finally if $Y\in\left[  X\right]  ^{\omega}$ is such that $Y_{1}=\left\{
f\left[  s\right]  \mid s\in Y\right\}  $ then $\bigcup Y\in\mathcal{J}.$
\qquad\ \ 
\end{proof}

\qquad\ \ \ \qquad\ \ \ \ \ 

We will need the following game designed by Claude Laflamme: Let $\mathcal{I}$
be an ideal on $\omega,$ define the game $\mathcal{L}\left(  \mathcal{I}%
\right)  $ between players $\mathsf{I}$ and $\mathsf{II}$ as follows:

\qquad\ \ \ \ \ \ \ \ \ \qquad\ \ \ \ \qquad\ \ \ \qquad\ \ 

\begin{center}%
\begin{tabular}
[c]{|l|l|l|l|l|l|}\hline
$\mathsf{I}$ & $...$ & $A_{n}$ &  & $...$ & \\\hline
$\mathsf{II}$ & $...$ &  & $s_{n}$ & $...$ & $\bigcup s_{n}\in\mathcal{I}^{+}%
$\\\hline
\end{tabular}

\end{center}

\qquad\ \ \ \ \ \ \ \qquad\qquad\qquad\ \ \ \ \ \ \ \qquad\qquad\ \ \ \qquad\ \ 

At the round $n\in\omega$ player $\mathsf{I}$ plays $A_{n}\in\mathcal{I}$ and
$\mathsf{II}$ responds with $s_{n}\in\lbrack\omega\backslash$ $A_{n}%
]^{<\omega}.$ The player $\mathsf{II}$ wins in case $\bigcup s_{n}%
\in\mathcal{I}^{+}$. The following is a result of Laflamme.

\begin{proposition}
[Laflamme \cite{FilterGamesandCombinatorialPropertiesofStrategies}]Let
$\mathcal{I}$ be an ideal on $\omega.$

\begin{enumerate}
\item The following are equivalent:

\begin{enumerate}
\item $\mathsf{I}$ has a winning strategy in $\mathcal{L}\left(
\mathcal{I}\right)  .$

\item \textsf{FIN}$\times$\textsf{FIN }$\leq_{\mathsf{K}}\mathcal{I}.$
\end{enumerate}

\item The following are equivalent:

\begin{enumerate}
\item $\mathsf{II}$ has a winning strategy in $\mathcal{L}\left(
\mathcal{I}\right)  .$

\item There is $\left\{  X_{n}\mid n\in\omega\right\}  \subseteq\left(
\mathcal{I}^{<\omega}\right)  ^{+}$ such that for every $A\in\mathcal{I}$
there is $n\in\omega$ such that $A$ does not contain any element of $X_{n}.$
\end{enumerate}
\end{enumerate}
\end{proposition}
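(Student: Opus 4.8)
The plan is to treat the two equivalences separately and to prove each implication by hand, since no determinacy is assumed and the two directions of each part are genuinely different arguments. The backward directions (from the combinatorial condition to the existence of a strategy) are the easy ones, and I would dispatch them first; the forward directions, which must extract combinatorial data out of an abstract winning strategy, are where the real work lies.

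For the backward direction of part 1 the key is a monotonicity lemma: if $\mathcal{I}\leq_{\mathsf{K}}\mathcal{J}$ (for ideals on countable sets) and $\mathsf{I}$ has a winning strategy in $\mathcal{L}\left(\mathcal{I}\right)$, then $\mathsf{I}$ has one in $\mathcal{L}\left(\mathcal{J}\right)$. Given a Kat\v{e}tov morphism $f:\left(\omega,\mathcal{J}\right)\to\left(\omega,\mathcal{I}\right)$ and a winning strategy $\sigma$ for $\mathsf{I}$ in $\mathcal{L}\left(\mathcal{I}\right)$, player $\mathsf{I}$ plays in $\mathcal{L}\left(\mathcal{J}\right)$ by simulation: when $\sigma$ dictates $A_{n}\in\mathcal{I}$, play $f^{-1}\left(A_{n}\right)\in\mathcal{J}$, and whenever $\mathsf{II}$ answers with $s_{n}\subseteq\omega\setminus f^{-1}\left(A_{n}\right)$, feed the finite set $f\left[s_{n}\right]$ (which is disjoint from $A_{n}$) back into the simulation. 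Since $\sigma$ wins, $\bigcup_{n}f\left[s_{n}\right]\in\mathcal{I}$, hence $\bigcup_{n}s_{n}\subseteq f^{-1}\left(\bigcup_{n}f\left[s_{n}\right]\right)\in\mathcal{J}$. As the hypothesis is $\textsf{FIN}\times\textsf{FIN}\leq_{\mathsf{K}}\mathcal{I}$, it then suffices to exhibit a winning strategy for $\mathsf{I}$ in $\mathcal{L}\left(\textsf{FIN}\times\textsf{FIN}\right)$; playing $A_{n}=\left\{0,\dots,n\right\}\times\omega$ at round $n$ works, since every column $j$ of $\bigcup_{n}s_{n}$ then receives points only from $s_{0},\dots,s_{j-1}$ and is therefore finite, so the union lies in $\textsf{FIN}\times\textsf{FIN}$. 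For the backward direction of part 2, I would define a strategy for $\mathsf{II}$ directly: at round $n$, after $\mathsf{I}$ plays $A_{n}\in\mathcal{I}$, use that $X_{n}\in\left(\mathcal{I}^{<\omega}\right)^{+}$ to choose $s_{n}\in X_{n}$ with $s_{n}\cap A_{n}=\emptyset$. Then $\bigcup_{k}s_{k}$ contains a member of every $X_{n}$, so it cannot lie in $\mathcal{I}$: if it did, the defining property of the family would produce an $n$ with no element of $X_{n}$ contained in it, contradicting $s_{n}\subseteq\bigcup_{k}s_{k}$. Hence $\mathsf{II}$ wins.

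The forward direction of part 1 is the first hard point. Given a winning strategy $\sigma$ for $\mathsf{I}$, I would build a morphism witnessing $\textsf{FIN}\times\textsf{FIN}\leq_{\mathsf{K}}\mathcal{I}$, equivalently a partition $\omega=\bigsqcup_{n}R_{n}$ into $\mathcal{I}$-sets together with enumerations of the columns such that, for every $f\in\omega^{\omega}$, the selection taking the first $f\left(n\right)+1$ points of each $R_{n}$ lands in $\mathcal{I}$. The intended mechanism is a recursion along the tree of $\sigma$-legal plays, assigning each natural number to a column as it is locked out by $\sigma$'s moves, and reading the ``finite-per-column'' selections off single plays whose union, since $\sigma$ wins, is in $\mathcal{I}$. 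The genuine difficulty, which I expect to be the main obstacle, is the coherence of this diagonal condition: an arbitrary $f$ must be realized as (or absorbed into) one play consistent with $\sigma$, even though legality of the round-$n$ move depends on the earlier, $f$-dependent moves. Controlling the forbidden sets $\sigma\left(p\right)$ along a sufficiently thin, carefully bookkept subtree, so that each column stays in $\mathcal{I}$ while every finite-per-column selection remains a legal $\sigma$-play, is the crux.

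The forward direction of part 2 is the second hard point. Given a winning strategy $\tau$ for $\mathsf{II}$, I would extract the family $\left\{X_{n}\right\}$, but one must resist simply taking all round-$n$ responses of $\tau$: such a family can be $\mathcal{I}^{<\omega}$-positive yet fail the second (``barrier'') property, because $\tau$ may be coaxed through incoherent histories into answering inside a fixed $A\in\mathcal{I}$. Instead I would construct $X_{n}$ by recursion on the tree of $\tau$-consistent plays, pruning so that hitting every $X_{n}$ forces an escape from each member of $\mathcal{I}$. Verifying that the pruning preserves $\mathcal{I}^{<\omega}$-positivity while securing the barrier property is the delicate step, and I expect a fusion argument played against $\tau$ to be required here, mirroring the tree/diagonal analysis needed in part 1.
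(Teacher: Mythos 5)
Your two backward implications are correct. The simulation lemma (if $\mathcal{I}\leq_{\mathsf{K}}\mathcal{J}$ then a winning strategy for $\mathsf{I}$ in $\mathcal{L}\left(\mathcal{I}\right)$ transfers to $\mathcal{L}\left(\mathcal{J}\right)$ by playing $f^{-1}\left(A_{n}\right)$ and feeding $f\left[s_{n}\right]$ back), together with the explicit strategy $A_{n}=\left\{0,\dots,n\right\}\times\omega$ for $\mathsf{I}$ in $\mathcal{L}\left(\textsf{FIN}\times\textsf{FIN}\right)$, gives (b)$\Rightarrow$(a) of part 1; and your strategy for $\mathsf{II}$ in part 2 is exactly right. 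The problem is that the two forward implications, which are the substantive halves of Laflamme's theorem, are not proved: in both cases you describe an intended mechanism and then state that the key step (the ``coherence of the diagonal condition'', the ``delicate'' pruning) remains to be done. Flagging the crux is not the same as overcoming it, so the proposal as written establishes only the easy halves. Note also that the paper itself offers no proof to lean on here --- it quotes the proposition from Laflamme --- so your write-up has to stand on its own.

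Concretely, here is what is missing and how hard it actually is. For part 2, (a)$\Rightarrow$(b) is much easier than you anticipate and needs no fusion: since $\left[\omega\right]^{<\omega}$ is countable, the set of possible answers of $\tau$ at any fixed position is countable. Build a countable tree of $\tau$-consistent positions by recursion: at each chosen position $p$ put $X_{p}=\left\{\tau\left(p,A\right)\mid A\in\mathcal{I}\right\}$ (if empty responses occur, replace $\tau\left(p,A\right)$ by the first nonempty response when $\mathsf{I}$ plays $A$ repeatedly; one exists, else $\tau$ loses that play), and for each $s\in X_{p}$ fix one witness move $A$ with response $s$, thus choosing countably many successor positions. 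Each $X_{p}$ is in $\left(\mathcal{I}^{<\omega}\right)^{+}$ since $\tau\left(p,A\right)\cap A=\emptyset$; and if some $A\in\mathcal{I}$ contained a member of every $X_{p}$, one threads a $\tau$-consistent play all of whose responses lie inside $A$, so its union lies in $\mathcal{I}$, contradicting that $\tau$ is winning. Indexing by positions rather than by rounds is precisely what dissolves your ``incoherent histories'' worry. For part 1, (a)$\Rightarrow$(b) is the genuinely hard step, and your sketch does not survive the obstruction you yourself identify. A useful reframing: your own backward argument essentially shows that $\textsf{FIN}\times\textsf{FIN}\leq_{\mathsf{K}}\mathcal{I}$ is equivalent to $\mathsf{I}$ having a \emph{predetermined} winning strategy (a fixed increasing sequence $\left\langle B_{n}\right\rangle\subseteq\mathcal{I}$ defeating all plays): given the morphism, pull back $\left\{0,\dots,n\right\}\times\omega$; conversely, given $\left\langle B_{n}\right\rangle$, map each $k$ to the pair (least $n$ with $k\in B_{n}$, index of $k$ inside $B_{n}\setminus B_{n-1}$), noting that $\omega\setminus\bigcup B_{n}\in\mathcal{I}$. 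So the forward direction of part 1 is exactly the statement ``a winning strategy for $\mathsf{I}$ yields a predetermined one.'' That requires first normalizing $\sigma$ so that its move depends monotonically only on the round number and on $s_{0}\cup\dots\cup s_{n-1}$ (a routine finite-union replacement), and then a nontrivial conversion of arbitrary plays into $\sigma$-legal ones; the naive column-by-column bookkeeping fails precisely on plays whose elements grow faster than the bookkeeping, which is the scenario your sketch leaves unresolved. Until that implication is carried out (or correctly quoted), part 1 is not proved.
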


\qquad\ \ 

If $s_{0},...,s_{n}$ are finite non-empty sets of $\omega$, we say $a=\left\{
k_{0},...,k_{n}\right\}  \in\left[  \omega\right]  ^{<\omega}$ is a
\emph{selector }of $\left(  s_{0},...,s_{n}\right)  $ if $k_{i}\in s_{i}$ for
every $i\leq n.$

\ \qquad\ \ \qquad\qquad\qquad\ \ \ \ \ \ \ \ \ \ \ \ \ \ 

\begin{proposition}
If $\mathcal{I}$ is Shelah-Stepr\={a}ns then $\mathsf{II}$ does not have a
winning strategy in $\mathcal{L}\left(  \mathcal{I}\right)  .$
\end{proposition}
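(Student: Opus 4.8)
The plan is to argue by contradiction and to feed the definition of Shelah-Stepr\={a}ns a single cleverly chosen family. Suppose $\mathsf{II}$ had a winning strategy. By part 2 of the previous proposition there is a sequence $\left\{ X_{n}\mid n\in\omega\right\} \subseteq\left( \mathcal{I}^{<\omega}\right)^{+}$ such that for every $A\in\mathcal{I}$ there is $n$ for which $A$ contains no element of $X_{n}$ (i.e.\ no $s\in X_{n}$ satisfies $s\subseteq A$). To contradict this it suffices to produce one $A\in\mathcal{I}$ that contains a member of \emph{every} $X_{n}$; in fact it is enough to catch all but finitely many of the $X_{n}$, since the finitely many exceptions can be absorbed by adjoining to $A$ one (finite) member of each, which keeps $A$ in $\mathcal{I}$.

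To build such an $A$ I would merge the $X_{n}$ into a family of finite ``block unions''. Given $A\in\mathcal{I}$, since each $X_{i}\in\left( \mathcal{I}^{<\omega}\right)^{+}$ and every finite set lies in $\mathcal{I}$, one can choose $s_{i}\in X_{i}$ disjoint from $A$ and lying above the previously chosen blocks; hence each $Z_{n}=\left\{ s_{0}\cup\cdots\cup s_{n}\mid s_{i}\in X_{i},\ \max s_{i}<\min s_{i+1}\right\}$, and therefore $\mathbf{X}=\bigcup_{n}Z_{n}$, lies in $\left( \mathcal{I}^{<\omega}\right)^{+}$. Applying the Shelah-Stepr\={a}ns property to $\mathbf{X}$ yields an infinite $Y\subseteq\mathbf{X}$ with $B=\bigcup Y\in\mathcal{I}$. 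If $Y$ meets $Z_{n}$ for unboundedly many $n$, then for each $i$ I may pick $t=s_{0}\cup\cdots\cup s_{n}\in Y$ with $n\geq i$, whence $s_{i}\in X_{i}$ and $s_{i}\subseteq t\subseteq B$; thus $B$ catches every $X_{i}$, the desired contradiction.

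The hard part is exactly the emphasized hypothesis on $Y$: the Shelah-Stepr\={a}ns property returns only \emph{some} infinite $Y$, and since each single level $Z_{m}$ is again in $\left( \mathcal{I}^{<\omega}\right)^{+}$, the set $Y$ may concentrate on finitely many levels, in which case $B$ catches only finitely many $X_{i}$. One cannot outlaw this by fiat, because forcing the low levels to be ``uncatchable'' would itself violate Shelah-Stepr\={a}ns. I would therefore recast the argument as a fusion along the tree of finite plays consistent with the strategy, using the notion of \emph{selector} to record the level of a node by the cardinality of a finite set and applying the Shelah-Stepr\={a}ns property cofinally while keeping the accumulating witnesses inside one fixed member of $\mathcal{I}$. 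Amalgamating these countably many one-step catches into a single $\mathcal{I}$-set is the genuine obstacle, and I expect it to split according to whether \textsf{FIN}$\times$\textsf{FIN}$\leq_{\mathsf{K}}\mathcal{I}$: when it holds, part 1 of the previous proposition already gives player $\mathsf{I}$ a winning strategy, so (a single run cannot be won by both players) $\mathsf{II}$ has none; when it fails, $\mathcal{I}$ has the $P^{+}$-type behaviour needed to glue the catches together.
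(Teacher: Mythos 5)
You have correctly identified the right starting point (Laflamme's characterization of a winning strategy for $\mathsf{II}$) and, equally correctly, the fatal weakness of your own construction: nothing stops the set $Y$ returned by the Shelah-Stepr\={a}ns property from concentrating on a single level $Z_{m}$, in which case $B=\bigcup Y$ catches only $X_{0},\dots,X_{m}$, which is perfectly compatible with $B\in\mathcal{I}$. The gap is real, and neither of your proposed repairs closes it. The fusion idea is never made precise, and the case split on \textsf{FIN}$\times$\textsf{FIN}$\leq_{\mathsf{K}}\mathcal{I}$ disposes only of the easy half: when the reduction holds, player $\mathsf{I}$ indeed has a winning strategy, so $\mathsf{II}$ has none. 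But the other half is non-vacuous --- for instance the dual ideal of any $P$-point is non-meager, hence Shelah-Stepr\={a}ns, while \textsf{FIN}$\times$\textsf{FIN}$\nleq_{\mathsf{K}}\mathcal{U}^{\ast}$ is precisely what being a $P$-point means in the Kat\v{e}tov language --- and for that half you offer only the expectation of ``$P^{+}$-type behaviour''. No such gluing principle follows from \textsf{FIN}$\times$\textsf{FIN}$\nleq_{\mathsf{K}}\mathcal{I}$: amalgamating countably many members of $\mathcal{I}$, each catching one $X_{n}$, into a single member of $\mathcal{I}$ is exactly the original problem, so the argument is circular at its crucial point.

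What is missing is the idea the paper uses to make concentration harmless rather than forbidden: enrich each member of the witnessing family with a \emph{selector}. Enumerate $X_{n}=\left\{ t_{n}^{i}\mid i\in\omega\right\} $ and $\prod_{j<n}X_{j}=\left\{ p_{n}^{m}\mid m\in\omega\right\} $, and take as the family all sets $F_{\left( n,m,a\right) }=p_{n}^{m}\left( 0\right) \cup\dots\cup p_{n}^{m}\left( n-1\right) \cup a$, where $a$ is a selector of $\left( t_{n}^{0},\dots,t_{n}^{m}\right) $, i.e., $a$ picks one point from each $t_{n}^{i}$. Your family $\mathbf{X}$ is exactly the ``product'' half of this; the selector half is what saves the concentration case: if infinitely many $F_{\left( n,m,a\right) }\in Y$ share the same $n$ (which forces the $m$'s to be unbounded), then $\bigcup Y$ meets \emph{every} element of $X_{n}$, and any set meeting every element of a family in $\left( \mathcal{I}^{<\omega}\right) ^{+}$ is automatically $\mathcal{I}$-positive, since a member of $\mathcal{I}$ must be disjoint from some element of such a family. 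If instead the levels $n$ occurring in $Y$ are unbounded, the product parts give that $\bigcup Y$ contains an element of every $X_{k}$, and positivity follows from the defining property of the sequence $\left\{ X_{n}\right\} $ --- this is the case your argument already handles. Thus every infinite $Y$ has $\bigcup Y\in\mathcal{I}^{+}$, which is exactly the failure of Shelah-Stepr\={a}ns, with no case split on the Kat\v{e}tov position of $\mathcal{I}$ needed.
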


\begin{proof}
Let $\mathcal{I}$ be an ideal for which $\mathsf{II}$ has a winning strategy
in $\mathcal{L}\left(  \mathcal{I}\right)  ,$ we will prove that $\mathcal{I}$
is not Shelah-Stepr\={a}ns. Let $\left\{  X_{n}\mid n\in\omega\right\}
\subseteq\left(  \mathcal{I}^{<\omega}\right)  ^{+}$ such that for every
$A\in\mathcal{I}$ there is $n\in\omega$ such that $A$ does not contain any
element of $X_{n}.$ For every $n\in\omega$ enumerate\ $X_{n}=\left\{
t_{n}^{i}\mid i\in\omega\right\}  $ and $\prod\limits_{j<n}X_{j}=\{p_{n}%
^{i}\mid i<\omega\}.$

\qquad\ \ \ \ \ 

For every $n,m\in\omega$ and a selector $a\in\left[  \omega\right]  ^{<\omega
}$ of $\left(  t_{n}^{0},...,t_{n}^{m}\right)  $ we define $F_{\left(
n,m,a\right)  }=p_{n}^{m}\left(  0\right)  \cup...\cup p_{n}^{m}\left(
n-1\right)  \cup a$ (recall $p_{n}^{m}\in\prod\limits_{j<n}X_{j}$). Clearly
each $F_{\left(  n,m,a\right)  }$ is a non-empty finite set. Let $X$ be the
collection of all the $F_{\left(  n,m,a\right)  },$ we will prove that $X$
witnesses that $\mathcal{I}$ is not Shelah-Stepr\={a}ns.

\qquad\ \ 

We will first prove that $X\in\left(  \mathcal{I}^{<\omega}\right)  ^{+}.$
Letting $A\in\mathcal{I}$ we first find $n\in\omega$ such that $A$ does not
contain any element of $X_{n}.$ Since each $X_{j}\in\left(  \mathcal{I}%
^{<\omega}\right)  ^{+}$ for every $j<\omega$ there is $m\in\omega$ such that
$A$ is disjoint with $p_{n}^{m}\left(  0\right)  \cup...\cup p_{n}^{m}\left(
n-1\right)  .$ Finally, by the assumption of $X_{n}$ we can find a selector
$b$ of $\left(  t_{n}^{0},...,t_{n}^{m}\right)  $ such that $b\cap
A=\emptyset$ and therefore $A\cap F_{\left(  n,m,b\right)  }=\emptyset.$

\qquad\ \ 

Letting $Y\in\left[  X\right]  ^{\omega}$ we will show that $B=\bigcup
Y\in\mathcal{I}^{+}.$ There are two cases to consider: first assume there is
$n\in\omega$ for which there are infinitely many $\left(  m,a\right)  $ such
that $F_{\left(  n,m,a\right)  }\in Y.$ In this case, $B$ intersects every
element of $X_{n},$ hence $B\in\mathcal{I}^{+}.$ Now assume that for every
$n\in\omega$ there are only finitely many $\left(  m,a\right)  $ such that
$F_{\left(  n,m,a\right)  }\in Y.$ In this case, there must be infinitely many
$n\in\omega$ for which there is $\left(  m,a\right)  $ such that $F_{\left(
n,m,a\right)  }\in Y,$ hence $B$ must contain (at least) one element of every
$X_{k}.$ We can then conclude that $B\in\mathcal{I}^{+}.$
\end{proof}

\qquad\ \ \ \ \ \qquad\ \ \ \ \ \qquad\ \ 

We can now conclude the following (the equivalence of point 2 and 3 was proved
by Laczkovich and Rec\l aw in
\cite{IdeallimitsofSequencesofContinuousFunctions}, we include the proof for
the convenience of the reader).

\begin{corollary}
Let $\mathcal{I}$ be an ideal on $\omega.$ The following are equivalent:

\begin{enumerate}
\item $\mathcal{I}$ is not Shelah-Stepr\={a}ns.

\item The Player $\mathsf{II}$ has a winning strategy in $\mathcal{L}\left(
\mathcal{I}\right)  .$

\item There is an $F_{\sigma}$ set $F\subseteq\wp\left(  \omega\right)  $ such
that $\mathcal{I}$ $\subseteq F$ and $\mathcal{I}^{\ast}\cap F=\emptyset.$
\end{enumerate}
\end{corollary}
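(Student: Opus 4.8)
The plan is to route everything through part 2 of Laflamme's proposition above, which characterizes the existence of a winning strategy for $\mathsf{II}$ in $\mathcal{L}\left(\mathcal{I}\right)$ by the existence of a family $\left\{X_{n}\mid n\in\omega\right\}\subseteq\left(\mathcal{I}^{<\omega}\right)^{+}$ such that for every $A\in\mathcal{I}$ there is some $n$ for which $A$ contains no element of $X_{n}$. I will prove $1\Leftrightarrow 2$ first and then $2\Leftrightarrow 3$, so that condition $2$ (and the Laflamme family it provides) serves as the hub connecting the combinatorial condition $1$ to the descriptive condition $3$.

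For $2\Rightarrow 1$ there is nothing to do: it is the contrapositive of the proposition just proved (if $\mathcal{I}$ is Shelah-Stepr\={a}ns then $\mathsf{II}$ has no winning strategy). For $1\Rightarrow 2$, I start from a witness $X\in\left(\mathcal{I}^{<\omega}\right)^{+}$ to the failure of the Shelah-Stepr\={a}ns property, so that $\bigcup Y\in\mathcal{I}^{+}$ for every $Y\in\left[X\right]^{\omega}$, and I set $X_{k}=\left\{s\in X\mid s\cap k=\emptyset\right\}$. Each $X_{k}$ lies in $\left(\mathcal{I}^{<\omega}\right)^{+}$, since given $A\in\mathcal{I}$ one applies the positivity of $X$ to $A\cup k\in\mathcal{I}$. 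Moreover, if some $A\in\mathcal{I}$ contained an element of every $X_{k}$, those elements would have unbounded minima, hence be infinitely many distinct members $Y$ of $X$ with $\bigcup Y\subseteq A\in\mathcal{I}$, contradicting the choice of $X$. Thus $\left\{X_{k}\right\}$ satisfies Laflamme's condition and $\mathsf{II}$ wins.

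For $2\Rightarrow 3$, I take the family $\left\{X_{n}\right\}$ supplied by Laflamme's proposition and put $G_{n}=\left\{A\subseteq\omega\mid s\not\subseteq A\text{ for all }s\in X_{n}\right\}$. Each $G_{n}$ is closed, being an intersection of the basic clopen sets $\left\{A\mid s\not\subseteq A\right\}$, so $F=\bigcup_{n}G_{n}$ is $F_{\sigma}$. The inclusion $\mathcal{I}\subseteq F$ is immediate from Laflamme's condition, while $F\cap\mathcal{I}^{\ast}=\emptyset$ follows because for $B\in\mathcal{I}^{\ast}$ we have $\omega\setminus B\in\mathcal{I}$, so the positivity of each $X_{n}$ yields $s\in X_{n}$ with $s\cap\left(\omega\setminus B\right)=\emptyset$, i.e. $s\subseteq B$, placing $B$ outside every $G_{n}$.

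The remaining and most delicate direction is $3\Rightarrow 2$. I write the separating set as $F=\bigcup_{n}F_{n}$ with $F_{n}$ closed and increasing, and I would like to define $X_{n}=\left\{s\mid s\notin F_{n}\right\}$ and argue as if membership in a closed set were detected by finite subsets. The difficulty, which I expect to be the only real obstacle, is that an arbitrary closed set constrains the complement of $A$ as well as $A$ itself, so $B\notin F_{n}$ need not be witnessed by any finite $s\subseteq B$. The key observation removing this obstacle is that one may replace each $F_{n}$ by its downward closure $\downarrow F_{n}$: this is again closed (it is the projection of the compact set $\left\{\left(B,C\right)\mid B\subseteq C,\ C\in F_{n}\right\}$), it is increasing, hereditary, and still contains $\mathcal{I}$, and crucially it remains disjoint from $\mathcal{I}^{\ast}$ \emph{for free}, since if $B\subseteq C$ with $C\in F_{n}$ and $B\in\mathcal{I}^{\ast}$ then the filter $\mathcal{I}^{\ast}$ is upward closed, forcing $C\in F_{n}\cap\mathcal{I}^{\ast}$, a contradiction. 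Writing $C_{n}:=\downarrow F_{n}$, I set $X_{n}=\left\{s\in\left[\omega\right]^{<\omega}\setminus\left\{\emptyset\right\}\mid s\notin C_{n}\right\}$. For hereditary closed $C_{n}$ one has $B\notin C_{n}$ if and only if some finite $s\subseteq B$ lies outside $C_{n}$ (approximate $B$ by $B\cap k$ and use closedness for one implication, heredity for the other); this gives $X_{n}\in\left(\mathcal{I}^{<\omega}\right)^{+}$ using $\mathcal{I}^{\ast}\cap C_{n}=\emptyset$, while heredity ensures that no $A\in C_{n}$ contains a member of $X_{n}$. Hence $\left\{X_{n}\right\}$ verifies Laflamme's condition and $\mathsf{II}$ wins. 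The remark that the hereditary reduction costs nothing precisely because $\mathcal{I}^{\ast}$ is a filter is the crux of the whole argument.
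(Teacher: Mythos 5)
Your proof is correct, but it follows a genuinely different route from the paper's. The paper proves the cycle $2\Rightarrow 1\Rightarrow 3\Rightarrow 2$: its $1\Rightarrow 3$ is essentially your construction (from a witness $X=\left\{  s_{n}\mid n\in\omega\right\}  $ it takes $F=\left\{  W\mid\forall^{\infty}n\left(  s_{n}\nsubseteq W\right)  \right\}  $), but for $3\Rightarrow 2$ it re-enters the game and describes Player $\mathsf{II}$'s winning strategy directly: at each round $\mathsf{II}$ extends the finite set built so far to an initial segment of a set of the form $\left(  \omega\setminus A_{n}\right)  \cup\left(  \text{finite}\right)  \in\mathcal{I}^{\ast}$, chosen long enough that the corresponding basic clopen set misses $C_{n}$; the union produced by the play then avoids every $C_{n}$, hence lies outside $F\supseteq\mathcal{I}$. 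You instead use condition 2, through Laflamme's characterization of $\mathsf{II}$'s winning strategies by families $\left\{  X_{n}\right\}  \subseteq\left(  \mathcal{I}^{<\omega}\right)  ^{+}$, as a hub, so the game itself never has to be analyzed again: your $1\Rightarrow 2$ builds the family $X_{k}=\left\{  s\in X\mid s\cap k=\emptyset\right\}  $, your $2\Rightarrow 3$ converts such a family into the closed sets $G_{n}$, and your $3\Rightarrow 2$ rests on the one genuinely new ingredient, the hereditary reduction: replacing each closed piece by its downward closure, which remains closed (projection of a compact set), still contains $\mathcal{I}$, and remains disjoint from $\mathcal{I}^{\ast}$ precisely because $\mathcal{I}^{\ast}$ is upward closed; heredity plus closedness then makes non-membership detectable by finite subsets, which is exactly Laflamme's condition. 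As for what each approach buys: the paper's explicit strategy is self-contained and uses only the easy direction of Laflamme's proposition, while your argument is more modular, treats the game as a black box, and yields as a by-product a mild strengthening of condition 3 — the separating $F_{\sigma}$ set can always be taken to be a union of hereditary compact sets.
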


\begin{proof}
By the previous result, we know that 2 implies 1. We will now prove that 1
implies 3. Assume that $\mathcal{I}$ is not Shelah-Stepr\={a}ns, so there is
$X=\left\{  s_{n}\mid n\in\omega\right\}  \in\left(  \mathcal{I}^{<\omega
}\right)  ^{+}$ such that $%
%TCIMACRO{\tbigcup }%
%BeginExpansion
{\textstyle\bigcup}
%EndExpansion
Y\in\mathcal{I}^{+}$ for every $Y\in\left[  X\right]  ^{\omega}.$ We know
define $F=\left\{  W\subseteq\omega\mid\forall^{\infty}n\left(  s_{n}%
\nsubseteq W\right)  \right\}  .$ It is easy to see that $F$ has the desired properties.

\qquad\ 

We will now prove that 3 implies 2. Assume there is an increaing sequence of
closed sets $\left\langle C_{n}\mid n\in\omega\right\rangle $ such that $F=%
%TCIMACRO{\tbigcup \limits_{n\in\omega}}%
%BeginExpansion
{\textstyle\bigcup\limits_{n\in\omega}}
%EndExpansion
C_{n}$ contains $\mathcal{I}$ and is disjoint from $\mathcal{I}^{\ast}.$ We
will now describe a winning strategy for Player $\mathsf{II}$: In the first
round, if Player $\mathsf{I}$ plays $A_{0}\in\mathcal{I}$ then Player
$\mathsf{II}$ finds $s_{0}$ an initial segment of $\omega\setminus A_{0}$ such
that $\left\langle s_{0}\right\rangle =\left\{  Z\mid s_{0}\sqsubseteq
Z\right\}  $ is disjoint from $C_{0}$ (where $s_{0}\sqsubseteq Z$ means that
$s_{0}$ is an initial segment of $Z$). At round round $n+1,$ if Player
$\mathsf{I}$ plays $A_{n+1}\in\mathcal{I}$ then Player $\mathsf{II}$ finds
$s_{n+1}$ such that $t=%
%TCIMACRO{\tbigcup \limits_{i\leq n+1}}%
%BeginExpansion
{\textstyle\bigcup\limits_{i\leq n+1}}
%EndExpansion
s_{i}$ is an initial segment of $\left(  \omega\setminus A_{n}\right)  \cup%
%TCIMACRO{\tbigcup \limits_{j<n+1}}%
%BeginExpansion
{\textstyle\bigcup\limits_{j<n+1}}
%EndExpansion
s_{j}$ (we may assume $%
%TCIMACRO{\tbigcup \limits_{j<n+1}}%
%BeginExpansion
{\textstyle\bigcup\limits_{j<n+1}}
%EndExpansion
s_{j}\subseteq A_{n}$) and $\left\langle t\right\rangle $ is disjoint from
$C_{n+1}.$ It is easy to see that this is a winning strategy.
\end{proof}

Since every game with Borel payoff is determined, we can give a
characterization of the Borel ideals that are Shelah-Stepr\={a}ns.

\begin{corollary}
If $\mathcal{I}$ is a Borel ideal then $\mathcal{I}$ is Shelah-Stepr\={a}ns if
and only if \textsf{FIN}$\times$\textsf{FIN }$\leq_{\mathsf{K}}\mathcal{I}.$
\end{corollary}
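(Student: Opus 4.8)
The plan is to derive both implications from the game-theoretic and Katětov-theoretic facts already established in the chapter, invoking Borel determinacy only for the nontrivial direction.

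For the direction $\textsf{FIN}\times\textsf{FIN}\leq_{\mathsf{K}}\mathcal{I}\Rightarrow\mathcal{I}$ Shelah-Stepr\={a}ns, I would simply combine two earlier lemmas: that $\textsf{FIN}\times\textsf{FIN}$ is Shelah-Stepr\={a}ns, and that the class of Shelah-Stepr\={a}ns ideals is upward closed in the Katětov order. Since $\mathcal{I}$ sits Katětov-above a Shelah-Stepr\={a}ns ideal, it is itself Shelah-Stepr\={a}ns. I would point out that this implication uses no definability hypothesis on $\mathcal{I}$ whatsoever; the Borel assumption is needed only for the converse.

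For the converse, assume $\mathcal{I}$ is Borel and Shelah-Stepr\={a}ns. The goal is to show that player $\mathsf{I}$ has a winning strategy in Laflamme's game $\mathcal{L}\left(\mathcal{I}\right)$, since by part 1 of Laflamme's proposition this is exactly equivalent to $\textsf{FIN}\times\textsf{FIN}\leq_{\mathsf{K}}\mathcal{I}$. We have already proved that if $\mathcal{I}$ is Shelah-Stepr\={a}ns then player $\mathsf{II}$ has \emph{no} winning strategy in $\mathcal{L}\left(\mathcal{I}\right)$; thus it suffices to show the game is determined, for then the absence of a winning strategy for $\mathsf{II}$ forces one for $\mathsf{I}$. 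To get determinacy I would verify that $\mathcal{L}\left(\mathcal{I}\right)$ has Borel payoff. A run is a sequence $\left(A_0,s_0,A_1,s_1,\dots\right)$ with $A_n\in\mathcal{I}\subseteq 2^\omega$ and $s_n\in[\omega\setminus A_n]^{<\omega}$, living in a Polish space of runs; player $\mathsf{II}$ wins iff the run is legal (each $s_n\cap A_n=\emptyset$, a closed condition) and $\bigcup_n s_n\in\mathcal{I}^+$. The map $\left(A_0,s_0,\dots\right)\mapsto\bigcup_n s_n\in 2^\omega$ is Borel, and since $\mathcal{I}$ is Borel so is $\mathcal{I}^+=2^\omega\setminus\mathcal{I}$; hence $\mathsf{II}$'s winning set is Borel, and the game is determined.

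The delicate step is precisely this determinacy invocation: player $\mathsf{I}$'s moves range over the uncountable set $\mathcal{I}$, so $\mathcal{L}\left(\mathcal{I}\right)$ is not literally a game on the integers but a Borel game whose move sets are Polish. The fix I would spell out is to present $\mathcal{I}$ as a standard Borel (hence Polish-refinable) subspace of $2^\omega$ and apply the Borel determinacy theorem in the form valid for games on Polish move-spaces with Borel payoff, checking that the coding does not raise the Borel complexity of the payoff — which is exactly what the computation of the winning set above guarantees. Once determinacy is in hand, the chain ``$\mathcal{I}$ Shelah-Stepr\={a}ns $\Rightarrow$ $\mathsf{II}$ has no winning strategy $\Rightarrow$ (by determinacy) $\mathsf{I}$ has a winning strategy $\Rightarrow$ $\textsf{FIN}\times\textsf{FIN}\leq_{\mathsf{K}}\mathcal{I}$'' closes the proof.
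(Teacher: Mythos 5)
Your proposal is correct and follows essentially the same route as the paper, which derives the corollary from the preceding results (Player $\mathsf{II}$ has no winning strategy in $\mathcal{L}\left(\mathcal{I}\right)$ when $\mathcal{I}$ is Shelah-Stepr\={a}ns, Laflamme's characterization of when $\mathsf{I}$ wins, and determinacy of games with Borel payoff). Your handling of the easy direction via upward Kat\v{e}tov-closure plus the fact that \textsf{FIN}$\times$\textsf{FIN} is Shelah-Stepr\={a}ns, and your explicit check that the payoff set is Borel even though $\mathsf{I}$'s moves range over an uncountable set, are only minor elaborations of what the paper leaves implicit.
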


\qquad\ \ 

We can extend the previous corollary under some large cardinal assumptions.
Fix a tree $T$ of height $\omega,$ $f:\left[  T\right]  \longrightarrow
\wp\left(  \omega\right)  $ a continuous function (where $\left[  T\right]  $
denotes the set of branches of $T$) and $\mathcal{W}\subseteq\wp\left(
\omega\right)  .$ We then define the game $\mathcal{G}\left(  T,f,\mathcal{W}%
\right)  $ as follows:\qquad\ \ \ \ \ \ \ \ \ \qquad\ \ \ \ \qquad\ \ \ \qquad\ \ 

\begin{center}%
\begin{tabular}
[c]{|l|l|l|l|l|}\hline
$\mathsf{I}$ & $...$ & $x_{n}$ &  & $...$\\\hline
$\mathsf{II}$ & $...$ &  & $y_{n}$ & $...$\\\hline
\end{tabular}

\qquad\ \ \ \ 
\end{center}

At the round $n\in\omega$ player $\mathsf{I}$ plays $x_{n}$ and $\mathsf{II}$
responds with $y_{n}$ with the requirement that $\left\langle x_{0}%
,y_{0},...,x_{n},y_{n}\right\rangle \in T.$ Then Player $\mathsf{I}$ wins if
$f\left(  b\right)  \in\mathcal{W}$ where $b$ is the branch constructed during
the game. The following is a well known extension of Martin's result (see
\cite{ForcingIdealized}):

\begin{proposition}
[\textsf{LC}]If $\mathcal{W}\in$ \textsf{L}$\left(  \mathbb{R}\right)  $ then
$\mathcal{G}\left(  T,f,\mathcal{W}\right)  $ is determined (\textsf{L}%
$\left(  \mathbb{R}\right)  $ denotes the smallest transitive model of
$\mathsf{ZFC}$ that contains all reals)
\end{proposition}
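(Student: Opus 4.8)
The plan is to recognize $\mathcal{G}(T,f,\mathcal{W})$ as an ordinary two-player game of length $\omega$ whose payoff set lies in $L(\mathbb{R})$, and then to feed this into the large cardinal hypothesis in the form ``$L(\mathbb{R})\models\mathsf{AD}$'' (the Martin--Steel--Woodin theorem, which follows for instance from infinitely many Woodin cardinals with a measurable cardinal above them all: under such hypotheses every set of reals lying in $L(\mathbb{R})$ is determined). Since $T$ has height $\omega$ and is a tree on a countable set, I may assume $T\subseteq\omega^{<\omega}$, so that a run $\langle x_{0},y_{0},x_{1},y_{1},\dots\rangle$ is just an element of $\omega^{\omega}$ with $\mathsf{I}$ controlling the even coordinates and $\mathsf{II}$ the odd ones.

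First I would convert the tree game into a genuine game $G^{*}$ on $\omega$, adopting the standard convention that the first player to step outside $T$ loses, while a run that stays in $T$ forever yields a branch $b\in[T]$ and counts as a win for $\mathsf{I}$ exactly when $f(b)\in\mathcal{W}$. The payoff set for $\mathsf{I}$ is then
\[
A=\{\,p\in\omega^{\omega}\mid\mathsf{II}\text{ is the first to leave }T\text{ along }p\,\}\cup\bigl(f^{-1}(\mathcal{W})\cap[T]\bigr).
\]
A winning strategy for either player in $G^{*}$ never instructs that player to leave $T$ (doing so would lose), hence it restricts to a legal, and winning, strategy in $\mathcal{G}(T,f,\mathcal{W})$, and conversely; so it is enough to show $A$ is determined.

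Next I would verify $A\in L(\mathbb{R})$. The ``first to leave $T$'' component is arithmetical in a real coding $T$, hence Borel with a code in $L(\mathbb{R})$. The role of the continuity hypothesis on $f$ is precisely that $f$ is then coded by a real $c_{f}$; since $L(\mathbb{R})$ contains every real of $V$ and is closed under operations definable from reals and from its own elements, one may reconstruct $f$ from $c_{f}$ inside $L(\mathbb{R})$ and form $f^{-1}(\mathcal{W})\cap[T]$ using the given $\mathcal{W}\in L(\mathbb{R})$. Thus $A$, a union of two members of $L(\mathbb{R})$, belongs to $L(\mathbb{R})$, and the cited theorem provides a player with a winning strategy $\sigma$ in $G^{*}$ with $\sigma\in L(\mathbb{R})$.

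Finally I would transfer this determinacy from $L(\mathbb{R})$ to $V$. The assertion ``$\sigma$ is winning'' has the form $\forall y\in\omega^{\omega}$ (the run $\sigma\ast y$ falls on the correct side of $A$); the quantifier ranges over all reals, and $\mathbb{R}^{L(\mathbb{R})}=\mathbb{R}^{V}$, while whether a branch lies in $f^{-1}(\mathcal{W})$ is decided by the real $f(b)$ together with the fixed set $\mathcal{W}$ and so is computed identically in $L(\mathbb{R})$ and in $V$. Hence $\sigma$ is genuinely winning in $V$, and pulling it back through the reduction gives the required winning strategy in $\mathcal{G}(T,f,\mathcal{W})$. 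The determinacy input itself is a black box; the real work, and the main obstacle, is the bookkeeping of the two absoluteness points just described: confirming that continuity of $f$ actually places $f^{-1}(\mathcal{W})$ inside $L(\mathbb{R})$, and that ``$\sigma$ is winning'' is absolute between $L(\mathbb{R})$ and $V$, which is exactly where the identity $\mathbb{R}^{L(\mathbb{R})}=\mathbb{R}^{V}$ is used.
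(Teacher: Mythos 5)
The paper never proves this proposition: it is quoted as ``a well known extension of Martin's result'' with a citation to Zapletal's \emph{Forcing Idealized}, so your attempt has to be measured against the theorem actually being invoked. Your proof has a genuine gap, and it sits exactly in your opening reduction: ``Since $T$ has height $\omega$ and is a tree on a countable set, I may assume $T\subseteq\omega^{<\omega}$.'' Nothing in the statement says $T$ is a tree on a countable set; in this paper's notation a tree of height $\omega$ may live on $\kappa^{<\omega}$ for arbitrary $\kappa$, and in the application given immediately after the proposition (the corollary characterizing Shelah-Stepr\={a}ns ideals in $\mathsf{L}(\mathbb{R})$) the tree $T$ consists of sequences $\left\langle A_{0},s_{0},...,A_{n},s_{n}\right\rangle$ in which Player $\mathsf{I}$'s moves $A_{n}$ are elements of the ideal $\mathcal{I}$, i.e.\ reals. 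So $\mathcal{G}\left(T,f,\mathcal{W}\right)$ is in general a length-$\omega$ game with \emph{real} moves, and that generality is precisely what the corollary needs; after your reduction you are proving a strictly weaker statement that cannot be fed into the paper's application.

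The restriction is not cosmetic, because your argument uses countability of the moves at both of its load-bearing points. First, the coding step: for a tree on the reals, a continuous $f:\left[T\right]\longrightarrow\wp\left(\omega\right)$ is no longer ``coded by a real $c_{f}$,'' and the game is equivalent to an integer game of length $\omega\cdot\omega$, not of length $\omega$. Second, and fatally, the determinacy input: the Martin--Steel--Woodin theorem that $\mathsf{L}(\mathbb{R})\models\mathsf{AD}$ concerns integer games only. Determinacy of games with real moves is $\mathsf{AD}_{\mathbb{R}}$-type determinacy, and $\mathsf{AD}_{\mathbb{R}}$ \emph{fails} in $\mathsf{L}(\mathbb{R})$ (it implies uniformization of arbitrary relations on the reals, which fails there), so there is no way to first obtain a winning strategy inside $\mathsf{L}(\mathbb{R})$ and then transfer it to $V$ via $\mathbb{R}^{\mathsf{L}(\mathbb{R})}=\mathbb{R}^{V}$; that transfer scheme, which is correct for integer games, simply has no input in the real-move case. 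The result the paper invokes really is an \emph{extension} of Martin's homogeneously-Suslin determinacy argument: under, e.g., a proper class of Woodin cardinals, every set in $\mathsf{L}(\mathbb{R})$ is homogeneously Suslin via measure systems of arbitrarily high completeness (in particular above $\mathfrak{c}$), and Martin's proof, run with such systems, yields determinacy of the associated length-$\omega$ games with real moves. As written, your argument establishes only the special case $T\subseteq\omega^{<\omega}$, not the proposition.
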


\qquad\ \ 

Where \textsf{LC }denotes a large cardinal assumption. We can then conclude
the following:

\begin{corollary}
[\textsf{LC}]\qquad\ \ \ \qquad\ \ \ \ \ \ \ \qquad
\ \ \ \ \ \ \ \ \ \ \ \ \ \ \ \ \ \ \qquad\qquad\ \ 

\begin{enumerate}
\item Let $\mathcal{I}\in$ \textsf{L}$\left(  \mathbb{R}\right)  $ be an ideal
on $\omega.$ Then $\mathcal{I}$ is Shelah-Stepr\={a}ns if and only if
\textsf{FIN}$\times$\textsf{FIN }$\leq_{\mathsf{K}}\mathcal{I}.$

\item Let $\mathcal{J}$ be a $\sigma$-ideal in $\omega^{\omega}$ such that
$\mathcal{J}\in$ \textsf{L}$\left(  \mathbb{R}\right)  $ and $X\in tr\left(
\mathcal{J}\right)  ^{+}.$ Then $tr\left(  \mathcal{J}\right)  \upharpoonright
X$ is Shelah-Stepr\={a}ns if and only if \textsf{FIN}$\times$\textsf{FIN
}$\leq_{\mathsf{K}}tr\left(  \mathcal{J}\right)  \upharpoonright X.$
\end{enumerate}
\end{corollary}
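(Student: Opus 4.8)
The plan is to reproduce the proof of the Borel corollary essentially verbatim, replacing the use of Borel determinacy by the determinacy of $L(\mathbb{R})$-payoff games furnished by the determinacy proposition (Proposition [\textsf{LC}]). It is worth isolating which of the two implications actually consumes determinacy. The implication \textsf{FIN}$\times$\textsf{FIN} $\leq_{\mathsf{K}}\mathcal{I}\Rightarrow\mathcal{I}$ Shelah-Stepr\={a}ns is determinacy-free: by Laflamme's proposition, \textsf{FIN}$\times$\textsf{FIN} $\leq_{\mathsf{K}}\mathcal{I}$ gives player $\mathsf{I}$ a winning strategy in $\mathcal{L}(\mathcal{I})$, so $\mathsf{II}$ has none, and the three-way corollary (which gives: $\mathcal{I}$ is not Shelah-Stepr\={a}ns iff $\mathsf{II}$ has a winning strategy) then forces $\mathcal{I}$ to be Shelah-Stepr\={a}ns. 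Thus the only place where definability enters is the converse: knowing that a Shelah-Stepr\={a}ns $\mathcal{I}$ makes $\mathsf{II}$ have no winning strategy, I must conclude that $\mathsf{I}$ has one, and this is precisely determinacy of $\mathcal{L}(\mathcal{I})$.

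First I would recast $\mathcal{L}(\mathcal{I})$ as a game $\mathcal{G}(T,f,\mathcal{W})$ of the form covered by the determinacy proposition. Let $\mathsf{I}$ play arbitrary subsets $A_n\subseteq\omega$ and $\mathsf{II}$ play finite sets $s_n$; after the routine reorganization revealing each $A_n$ coordinatewise, this is a genuine integer game, and the requirement $s_n\cap A_n=\emptyset$ is a closed condition referring only to an already-revealed finite part of $A_n$, so it can be absorbed into the tree $T$. The membership constraint $A_n\in\mathcal{I}$ on $\mathsf{I}$ is instead folded into the payoff together with the winning condition: $\mathsf{I}$ wins iff $A_n\in\mathcal{I}$ for all $n$ and $\bigcup s_n\in\mathcal{I}$. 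Coding the data $(\langle A_n\mid n\in\omega\rangle,\bigcup s_n)$ of a run as a single element of $\wp(\omega)$ via a continuous $f\colon[T]\longrightarrow\wp(\omega)$, the payoff takes the form $f(b)\in\mathcal{W}$, where $\mathcal{W}$ consists of the codes of pairs all of whose coordinates and whose union lie in $\mathcal{I}$. Since $\mathcal{W}$ is defined from $\mathcal{I}$ by a simple operation and $L(\mathbb{R})$ contains all reals and is closed under such definitions, $\mathcal{I}\in L(\mathbb{R})$ yields $\mathcal{W}\in L(\mathbb{R})$, so the determinacy proposition makes $\mathcal{G}(T,f,\mathcal{W})$ determined. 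A winning strategy for $\mathsf{I}$ in $\mathcal{G}$ must keep every $A_n\in\mathcal{I}$ while securing $\bigcup s_n\in\mathcal{I}$, hence is a winning $\mathcal{L}(\mathcal{I})$-strategy, and conversely; the same bookkeeping identifies the winning strategies of $\mathsf{II}$ (an $\mathcal{L}(\mathcal{I})$-strategy is extended arbitrarily once $\mathsf{I}$ plays outside $\mathcal{I}$, which only helps $\mathsf{II}$). Therefore, if $\mathcal{I}$ is Shelah-Stepr\={a}ns then $\mathsf{II}$ has no winning strategy in $\mathcal{G}$, determinacy hands one to $\mathsf{I}$, and Laflamme's proposition returns \textsf{FIN}$\times$\textsf{FIN} $\leq_{\mathsf{K}}\mathcal{I}$, proving (1).

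For (2) I would apply (1) to the ideal $\mathcal{I}=tr(\mathcal{J})\upharpoonright X$, which is an ideal on the countable set $X$. The only additional point is that $tr(\mathcal{J})\upharpoonright X\in L(\mathbb{R})$: indeed $tr(\mathcal{J})=\{a\mid\pi(a)\in\mathcal{J}\}$ is defined from $\mathcal{J}$ through the Borel map $\pi$, restriction to the real $X$ is again a definable operation, and $\mathcal{J}\in L(\mathbb{R})$ together with the closure of $L(\mathbb{R})$ under these operations and its containing all reals gives $tr(\mathcal{J})\upharpoonright X\in L(\mathbb{R})$. Part (1) then yields the stated equivalence.

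The main obstacle I anticipate is the faithful reorganization in the second step: presenting the real-valued, constraint-bearing game $\mathcal{L}(\mathcal{I})$ as a tree game $\mathcal{G}(T,f,\mathcal{W})$ of exactly the shape demanded by the determinacy proposition, while simultaneously keeping $\mathcal{W}$ inside $L(\mathbb{R})$ and preserving the winning strategies of both players. The two delicate points are that $\mathsf{I}$'s moves are reals, handled by revealing them coordinatewise and noting that $\mathsf{II}$'s disjointness requirement only ever inspects a finite, already-revealed segment of $A_n$, and that the constraint $A_n\in\mathcal{I}$ must be pushed into the payoff so that illegal play by $\mathsf{I}$ counts as a loss for $\mathsf{I}$ rather than a vacuous win, which is what makes the strategy correspondence go through.
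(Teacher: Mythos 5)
Your high-level architecture matches the paper's, and two of your observations are correct and worth keeping: the implication \textsf{FIN}$\times$\textsf{FIN} $\leq_{\mathsf{K}}\mathcal{I}\Rightarrow\mathcal{I}$ Shelah-Stepr\={a}ns needs no determinacy, and item (2) follows from item (1) applied to $tr\left(\mathcal{J}\right)\upharpoonright X\in$ \textsf{L}$\left(\mathbb{R}\right)$. The gap is the reorganization of $\mathcal{L}\left(\mathcal{I}\right)$ into ``a genuine integer game.'' Revealing $A_n$ coordinatewise is not harmless bookkeeping: the requirement $s_n\cap A_n=\emptyset$ does \emph{not} refer only to an already-revealed finite part of $A_n$. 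In $\mathcal{L}\left(\mathcal{I}\right)$ player $\mathsf{II}$ must avoid the \emph{whole} of $A_n$, which she sees in full before moving; in your game she need only avoid the finite part revealed so far, since any later violation is created by a move of $\mathsf{I}$ and hence is forbidden to (or charged against) $\mathsf{I}$, not her. This destroys $\mathsf{I}$'s constraining power, and the two games can have opposite winners. Concretely, take $\mathcal{I}=$ \textsf{FIN}$\times$\textsf{FIN}: in $\mathcal{L}\left(\mathcal{I}\right)$ player $\mathsf{I}$ wins (play $A_n=C_0\cup\cdots\cup C_n$; then every column of $\bigcup s_n$ is finite, so $\bigcup s_n\in$ \textsf{FIN}$\times$\textsf{FIN}), as must happen since \textsf{FIN}$\times$\textsf{FIN} is Shelah-Stepr\={a}ns. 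But in your coordinatewise game, when $\mathsf{II}$ plays $s_n$ only finitely many points have been revealed, so she can always place a point of any prescribed column above everything revealed; diagonalizing over columns, she forces every column of $\bigcup s_n$ to be infinite, hence $\bigcup s_n\notin$ \textsf{FIN}$\times$\textsf{FIN} and $\mathsf{II}$ wins. So no bookkeeping identifies the winning strategies of the two games, and both transfers your chain needs ($\mathsf{II}$ has no winning strategy in $\mathcal{L}\left(\mathcal{I}\right)$ implies none in your game; $\mathsf{I}$ wins your game implies \textsf{FIN}$\times$\textsf{FIN} $\leq_{\mathsf{K}}\mathcal{I}$ via Laflamme) break down.

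The repair is that no integer coding is needed at all. The determinacy proposition is stated for an arbitrary tree $T$ of height $\omega$: the moves $x_n,y_n$ may come from any set, and the only definability requirement is on $\mathcal{W}\subseteq\wp\left(\omega\right)$, reached through the continuous $f:\left[T\right]\longrightarrow\wp\left(\omega\right)$. So one takes $T$ to be the downward closure of the set of legal positions $\left\langle A_0,s_0,\ldots,A_n,s_n\right\rangle$ of $\mathcal{L}\left(\mathcal{I}\right)$ itself, with $A_i\in\mathcal{I}$ and $s_i\in\left[\omega\setminus A_i\right]^{<\omega}$ built into $T$ (so $\mathsf{I}$'s moves are literally elements of $\mathcal{I}$), and sets $f\left(b\right)=\bigcup_{n\in\omega}b\left(2n+1\right)$ and $\mathcal{W}=\mathcal{I}$. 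Then $\mathcal{G}\left(T,f,\mathcal{W}\right)$ \emph{is} $\mathcal{L}\left(\mathcal{I}\right)$, it is determined because $\mathcal{I}\in$ \textsf{L}$\left(\mathbb{R}\right)$, and Laflamme's proposition finishes the argument; this is exactly the paper's proof. It is also the reason the framework is formulated with trees on arbitrary sets rather than with integer games: games with real moves are not in general reducible to their coordinatewise versions (compare $\mathsf{AD}$ with $\mathsf{AD}_{\mathbb{R}}$).
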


\begin{proof}
To prove the first item, let $Y$ be the set of all sequences $\left\langle
A_{0},s_{0},...,A_{n},s_{n}\right\rangle $ such that $A_{n}\in\mathcal{I}$ and
$s_{n}\in\left[  \omega\setminus A_{n}\right]  ^{<\omega}$ and $max\left(
s_{i}\right)  \subseteq A_{i+1}$ if $i<n.$ Let $T$ be the tree obtained by
closing $Y$ under restrictions. We know define $f:\left[  T\right]
\longrightarrow\wp\left(  \omega\right)  $ where $f\left(  b\right)  =%
%TCIMACRO{\tbigcup \limits_{n\in\omega}}%
%BeginExpansion
{\textstyle\bigcup\limits_{n\in\omega}}
%EndExpansion
b\left(  2n+1\right)  $ where $b\in\left[  T\right]  .$ Clearly $\mathcal{L}%
\left(  \mathcal{I}\right)  $ is a game equivalent to $\mathcal{G}\left(
T,f,\mathcal{I}\right)  $ so the result follows from the previous results. The
second item is a consequence of the first.
\end{proof}

\qquad\ \ \ 

We say a \textsf{MAD} family $\mathcal{A}$ is Shelah-Stepr\={a}ns if
$\mathcal{I}\left(  \mathcal{A}\right)  $ is Shelah-Stepr\={a}ns.\ The
following is a very interesting result of Raghavan:

\begin{proposition}
[\cite{AModelwithnoStronglySeparableAlmostDisjointFamilies}]It is consistent
that there are no Shelah-Stepr\={a}ns \textsf{MAD} families.
\end{proposition}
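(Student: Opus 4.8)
The plan is to produce the model by a countable support iteration of proper forcings of length $\omega_{2}$ over a ground model of $\mathsf{CH}$, exploiting the game-theoretic characterization established in the corollary above: $\mathcal{I}$ fails to be Shelah-Stepr\={a}ns exactly when there is $X=\left\{  s_{n}\mid n\in\omega\right\}  \in\left(  \mathcal{I}^{<\omega}\right)  ^{+}$ with $\bigcup Y\in\mathcal{I}^{+}$ for every $Y\in\left[  X\right]  ^{\omega}$, equivalently when there is an $F_{\sigma}$ set $F$ with $\mathcal{I}\subseteq F$ and $\mathcal{I}^{\ast}\cap F=\emptyset$. Thus it suffices to arrange that in the final model every \textsf{MAD} family $\mathcal{A}$ carries such a witness against $\mathcal{I}\left(  \mathcal{A}\right)  $. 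The first step is to isolate a single-step forcing $\mathbb{Q}\left(  \mathcal{A}\right)  $ that, for a given \textsf{MAD} family $\mathcal{A}$, generically adds a sequence $X=\left\{  s_{n}\right\}  $ of finite sets each escaping every finite union of members of $\mathcal{A}$ (so that $X\in\left(  \mathcal{I}\left(  \mathcal{A}\right)  ^{<\omega}\right)  ^{+}$) while forcing $\bigcup_{n\in Y}s_{n}\notin\mathcal{I}\left(  \mathcal{A}\right)  $ for all infinite $Y$; the conditions are finite approximations to $X$ equipped with a finite side-condition listing members of $\mathcal{A}$ to be avoided on the unbounded part, in the spirit of the destruction forcing underlying Shelah's theorem that every \textsf{MAD} family is killed by a proper forcing adding no dominating real.

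Next I would verify that $\mathbb{Q}\left(  \mathcal{A}\right)  $ is proper and $\omega^{\omega}$-bounding, and then run the iteration $\left\langle \mathbb{P}_{\alpha},\dot{\mathbb{Q}}_{\alpha}\mid\alpha<\omega_{2}\right\rangle  $ with countable support, where a bookkeeping function hands each $\mathbb{P}_{\omega_{2}}$-name for a \textsf{MAD} family to some $\dot{\mathbb{Q}}_{\alpha}=\mathbb{Q}\left(  \mathcal{A}\right)  $ cofinally often. Properness together with the $\aleph_{2}$-chain condition (from $\mathsf{CH}$ and the standard $\omega_{2}$-length bookkeeping) preserves cardinals and yields $\mathfrak{c}=\aleph_{2}$, and the preservation theorem for $\omega^{\omega}$-bounding under countable support iterations of proper forcings guarantees that $\mathbb{P}_{\omega_{2}}$ adds no dominating real. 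The role of the non-dominating hypothesis is crucial and must be used twice: once negatively, since by the proposition above any dominating real would destroy every ground model \textsf{MAD} family indiscriminately (so we cannot afford them), and once positively, in the sealing lemma below.

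The decisive point is a sealing/preservation lemma: once $\mathbb{Q}\left(  \mathcal{A}\right)  $ has added a witness $X$, it remains a witness through the tail of the iteration. The statement $X\in\left(  \mathcal{I}\left(  \mathcal{A}\right)  ^{<\omega}\right)  ^{+}$ is an absolute positive assertion about the generator $\mathcal{A}$ and survives, so the hard direction is that no later stage introduces an infinite $Y$ with $\bigcup_{n\in Y}s_{n}$ almost covered by finitely many members of $\mathcal{A}$; this is precisely where I would invoke that the iteration adds no dominating real, since manufacturing such a $Y$ from a genuinely new set amounts to dominating the rate at which the $s_{n}$ grow against $\mathcal{A}$. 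I expect this lemma to be the main obstacle, and it is intertwined with the second delicate point: handling \textsf{MAD} families of size $\mathfrak{c}=\aleph_{2}$, which are not captured wholesale at any intermediate stage. Here I would argue by reflection — at club-many $\alpha$ of cofinality $\omega_{1}$ the initial segment $\mathcal{A}\cap V\left[  \mathbb{P}_{\alpha}\right]  $ is \textsf{MAD} in $V\left[  \mathbb{P}_{\alpha}\right]  $ and was processed by the bookkeeping — so that $\mathbb{Q}\left(  \mathcal{A}\cap V\left[  \mathbb{P}_{\alpha}\right]  \right)  $ must add a witness that seals against every \textsf{MAD} extension of $\mathcal{A}\cap V\left[  \mathbb{P}_{\alpha}\right]  $. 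Designing $\mathbb{Q}\left(  \mathcal{A}\right)  $ so that its witness defeats the Shelah-Stepr\={a}ns property not merely for $\mathcal{A}$ but for all later extensions of $\mathcal{A}$, and proving that this reflection does catch every would-be Shelah-Stepr\={a}ns \textsf{MAD} family of the final model, is the crux on which the whole construction rests.
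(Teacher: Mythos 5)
The thesis does not actually prove this proposition; it is quoted from Raghavan's paper \cite{AModelwithnoStronglySeparableAlmostDisjointFamilies}, so your attempt can only be judged on its own merits, and on those merits it has a genuine gap: the two items you yourself defer --- a single-step forcing whose witness defeats every \textsf{MAD} extension of the handled family, and a preservation theorem pushing that witness through the tail --- are the entire content of the theorem, and the forcing $\mathbb{Q}\left(  \mathcal{A}\right)  $ you sketch provably fails the first one. If conditions are finite approximations $\left\langle s_{0},...,s_{k}\right\rangle $ together with a finite side condition $F\in\left[  \mathcal{A}\right]  ^{<\omega}$ which all later blocks must avoid, then for each $A\in\mathcal{A}$ the set of conditions with $A\in F$ is dense, so every $A\in\mathcal{A}$ meets only finitely many $s_{n}$; hence $\bigcup_{n}s_{n}$ is almost disjoint from $\mathcal{A}$. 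Working in $V\left[  \mathbb{P}_{\alpha+1}\right]  $, extend $\mathcal{A}\cup\left\{  \bigcup_{n}s_{n}\right\}  $ to a \textsf{MAD} family $\mathcal{A}^{\prime}$: now $\bigcup_{n}s_{n}\in\mathcal{I}\left(  \mathcal{A}^{\prime}\right)  $ meets every $s_{n}$, so $X\notin\left(  \mathcal{I}\left(  \mathcal{A}^{\prime}\right)  ^{<\omega}\right)  ^{+}$, and for every infinite $Y$ we have $\bigcup_{n\in Y}s_{n}\subseteq\bigcup_{n}s_{n}\in\mathcal{I}\left(  \mathcal{A}^{\prime}\right)  $; both halves of the witness property fail simultaneously. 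Since every \textsf{MAD} family of the final model arises precisely as such an unanticipated extension of the reflected families $\mathcal{A}_{\alpha}$ (the final family always contains elements orthogonal to $\mathcal{A}_{\alpha}$, e.g.\ ones covering your own generic object), witnessing non-Shelah-Stepr\={a}nsness of $\mathcal{A}_{\alpha}$ says nothing about the families you actually need to kill, and the recursion never catches up with itself.

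The appeal to ``no dominating reals'' as the operative preservation property is also misplaced. The trivialization above costs no new reals beyond stage $\alpha+1$, so it cannot be excluded by any preservation theorem about domination; and the thesis's own results show the interaction is not governed by dominating reals at all (Shelah-Stepr\={a}ns families are indestructible by many definable forcings that add no dominating reals, yet destructible by ccc forcings adding neither dominating nor unsplit reals). What is needed is a bespoke sealing property, formulated relative to the fixed witness and the \emph{growing} family $\mathcal{A}_{\alpha}\cup\left(  \mathcal{A}_{\alpha}^{\perp}\cap V\left[  G\right]  \right)  $ --- every finite union from this class must keep missing infinitely many blocks and must never absorb an infinite union of blocks --- together with an iteration theorem for exactly that property. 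A secondary problem: your $\mathbb{Q}\left(  \mathcal{A}\right)  $ is $\sigma$-centered, hence proper, but $\omega^{\omega}$-bounding is in general false for Mathias-type forcings over ideals generated by \textsf{MAD} families (this is precisely the Canjar property, which fails for some \textsf{MAD} families), so even the preservation framework you invoke is not available as stated. These missing components are what Raghavan's paper supplies with a purpose-built forcing and preservation argument; your outline correctly locates where they are needed but does not construct them.
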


The following result shows that Shelah-Stepr\={a}ns \textsf{MAD} families have
very strong properties:

\begin{corollary}
If $\mathcal{A}$ is Shelah-Stepr\={a}ns then:

\begin{enumerate}
\item $\mathcal{A}$ can not be extended to an $F_{\sigma\delta}$ ideal.

\item $\mathcal{A}$ is Cohen and Random indestructible.

\item \textsf{(LC)} If $\mathcal{J}$ is a $\sigma$-ideal in $\omega^{\omega}$
such that $\mathcal{J}\in$ \textsf{L}$\left(  \mathbb{R}\right)  $ for which
$\mathbb{P}_{\mathcal{J}}$ is proper, has the continuos reading of names and
does not add a dominating real (under any condition), then $\mathcal{A}$ is
$\mathbb{P}_{\mathcal{J}}$-indestructible.
\end{enumerate}
\end{corollary}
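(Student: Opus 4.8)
Looking at this corollary, the goal is to establish three consequences for a Shelah-Steprāns MAD family $\mathcal{A}$: non-extendibility to an $F_{\sigma\delta}$ ideal, Cohen/random indestructibility, and (under large cardinals) $\mathbb{P}_{\mathcal{J}}$-indestructibility for nice definable $\sigma$-ideals $\mathcal{J}$ not adding dominating reals. Let me think through how these follow from the machinery already developed.

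The plan is to treat the three items through a single mechanism and to notice that (3) is the general statement of which (2) is the special case $\mathcal{J}\in\{\mathcal{M},\mathcal{N}\}$. The engine is the interplay between the Shelah-Stepr\={a}ns property, the Kat\v{e}tov order, and the trace-ideal characterisation of destructibility established earlier in this chapter. Two facts do the work: first, the Shelah-Stepr\={a}ns property is upward closed in $\leq_{\mathsf{K}}$ (proved above), and second, a \emph{Borel} ideal -- and, under \textsf{LC}, an ideal in \textsf{L}$(\mathbb{R})$ -- is Shelah-Stepr\={a}ns exactly when it lies Kat\v{e}tov above \textsf{FIN}$\times$\textsf{FIN}.

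For (2) and (3), suppose toward a contradiction that $\mathbb{P}_{\mathcal{J}}$ destroys $\mathcal{A}$, equivalently destroys $\mathcal{I}(\mathcal{A})$. Since $\mathbb{P}_{\mathcal{J}}$ is proper with continuous reading of names, the proposition of Hru\v{s}\'{a}k and Zapletal yields $a\in tr(\mathcal{J})^{+}$ with $\mathcal{I}(\mathcal{A})\leq_{\mathsf{K}}tr(\mathcal{J})\upharpoonright a$. As $\mathcal{I}(\mathcal{A})$ is Shelah-Stepr\={a}ns and the property is $\leq_{\mathsf{K}}$-upward closed, $tr(\mathcal{J})\upharpoonright a$ is Shelah-Stepr\={a}ns. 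Now $tr(\mathcal{J})\upharpoonright a$ is Borel when $\mathcal{J}\in\{\mathcal{M},\mathcal{N}\}$ (the traces of $\mathcal{M}$ and $\mathcal{N}$ are Borel) and lies in \textsf{L}$(\mathbb{R})$ when $\mathcal{J}$ does; in either case the corresponding characterisation gives \textsf{FIN}$\times$\textsf{FIN}$\leq_{\mathsf{K}}tr(\mathcal{J})\upharpoonright a$. Applying the Hru\v{s}\'{a}k--Zapletal proposition in the reverse direction, the condition $\pi(a)\in\mathbb{P}_{\mathcal{J}}$ forces \textsf{FIN}$\times$\textsf{FIN} to be non-tall, i.e.\ $\mathbb{P}_{\mathcal{J}}$ destroys \textsf{FIN}$\times$\textsf{FIN}; by the first proposition of the chapter this means $\mathbb{P}_{\mathcal{J}}$ adds a dominating real below $\pi(a)$, contradicting the hypothesis that it adds none under any condition. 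For (2) one needs only that Cohen ($\mathbb{P}_{\mathcal{M}}$) and random ($\mathbb{P}_{\mathcal{N}}$) forcing are proper, read names continuously, and add no dominating reals, all of which are classical.

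For (1), I would first dispatch the $F_{\sigma}$ case, which is immediate and isolates the idea: if $\mathcal{I}(\mathcal{A})\subseteq\mathcal{K}$ for a proper $F_{\sigma}$ ideal $\mathcal{K}$, then the identity witnesses $\mathcal{I}(\mathcal{A})\leq_{\mathsf{K}}\mathcal{K}$, so $\mathcal{K}$ is Shelah-Stepr\={a}ns; but $\mathcal{K}$ is itself an $F_{\sigma}$ set containing $\mathcal{K}$ and disjoint from $\mathcal{K}^{\ast}$, so by the $F_{\sigma}$-separation clause of the characterisation corollary $\mathcal{K}$ is \emph{not} Shelah-Stepr\={a}ns, a contradiction. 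The genuine content is the passage from $F_{\sigma}$ to $F_{\sigma\delta}$: writing the extending ideal as $\mathcal{J}=\bigcap_{n}W_{n}$ with $W_{n}$ a decreasing sequence of $F_{\sigma}$ sets, the upward-closure argument again shows $\mathcal{J}$ is Shelah-Stepr\={a}ns, but this alone is consistent (\textsf{FIN}$\times$\textsf{FIN} is an $F_{\sigma\delta}$ Shelah-Stepr\={a}ns ideal and even contains \textsf{MAD} families built from van Douwen families), so no single $F_{\sigma}$ separating set is available for free.

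The hard part is therefore to manufacture, from the layered representation $\langle W_{n}\rangle$ together with the almost disjointness and tallness of $\mathcal{A}$, an explicit winning strategy for Player $\mathsf{II}$ in Laflamme's game $\mathcal{L}(\mathcal{I}(\mathcal{A}))$, contradicting the fact proved above that Shelah-Stepr\={a}ns ideals admit no such strategy. The plan is to have $\mathsf{II}$ steer the union $\bigcup_{k}s_{k}$ out of a single layer $W_{n_{0}}$: since $\mathcal{I}(\mathcal{A})\subseteq\mathcal{J}\subseteq W_{n_{0}}$, every move $A_{k}\in\mathcal{I}(\mathcal{A})$ of $\mathsf{I}$ is small with respect to $W_{n_{0}}$, and the selectivity of $\mathcal{I}(\mathcal{A})$ should let $\mathsf{II}$ keep escaping the closed approximations of $W_{n_{0}}$ while dodging the $A_{k}$; a union avoiding $W_{n_{0}}$ is $\mathcal{J}$-positive, hence $\mathcal{I}(\mathcal{A})$-positive. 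Making this escape simultaneously compatible with avoiding $\mathsf{I}$'s sets -- in effect an $F_{\sigma}$-separation carried out dynamically during the game -- is the step I expect to require the most care.
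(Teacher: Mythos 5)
Your treatment of items 2 and 3 is correct and is essentially the paper's own argument: if some condition of $\mathbb{P}_{\mathcal{J}}$ destroys $\mathcal{A}$, the Hru\v{s}\'{a}k--Zapletal proposition yields $a\in tr\left(  \mathcal{J}\right)  ^{+}$ with $\mathcal{I}\left(  \mathcal{A}\right)  \leq_{\mathsf{K}}tr\left(  \mathcal{J}\right)  \upharpoonright a,$ upward closure of the Shelah-Stepr\={a}ns property plus the Borel (respectively \textsf{L}$\left(  \mathbb{R}\right)  $, under \textsf{LC}) characterization gives \textsf{FIN}$\times$\textsf{FIN}$\leq_{\mathsf{K}}tr\left(  \mathcal{J}\right)  \upharpoonright a,$ and hence $\mathbb{P}_{\mathcal{J}}$ adds a dominating real below some condition; for Cohen and random the traces are Borel, so no large cardinals are needed. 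This is exactly the paper's proof of those two items.

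Item 1, however, has a genuine gap, and it originates in a false assertion: \textsf{FIN}$\times$\textsf{FIN} is \emph{not} an $F_{\sigma\delta}$ ideal. It is $F_{\sigma\delta\sigma}$ and properly so; indeed, that it cannot be $F_{\sigma\delta}$ is itself an instance of the theorem of Solecki and Laczkovich--Rec\l aw cited in the paper (no $F_{\sigma\delta}$ ideal is Kat\v{e}tov above \textsf{FIN}$\times$\textsf{FIN}), applied to the identity morphism. Because you believed \textsf{FIN}$\times$\textsf{FIN} to be an $F_{\sigma\delta}$ Shelah-Stepr\={a}ns ideal, you concluded that knowing the extending ideal is Shelah-Stepr\={a}ns ``alone is consistent'' with it being $F_{\sigma\delta},$ abandoned the Kat\v{e}tov route, and set out to build by hand a winning strategy for Player $\mathsf{II}$ in $\mathcal{L}\left(  \mathcal{I}\left(  \mathcal{A}\right)  \right)  $ from a representation $\mathcal{J}=\bigcap_{n}W_{n};$ you yourself flag that the decisive step of that construction is missing, and completing it would in effect amount to reproving the Laczkovich--Rec\l aw theorem from scratch. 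The correct argument is precisely the one you dismissed, and it is how the paper proves item 1: if $\mathcal{I}\left(  \mathcal{A}\right)  \subseteq\mathcal{K}$ with $\mathcal{K}$ an $F_{\sigma\delta}$ ideal, then the identity witnesses $\mathcal{I}\left(  \mathcal{A}\right)  \leq_{\mathsf{K}}\mathcal{K},$ so $\mathcal{K}$ is a Borel Shelah-Stepr\={a}ns ideal; by the determinacy corollary proved earlier, \textsf{FIN}$\times$\textsf{FIN}$\leq_{\mathsf{K}}\mathcal{K};$ and this contradicts the cited theorem. Your $F_{\sigma}$ warm-up is correct, but it is subsumed by this argument rather than a stepping stone toward the $F_{\sigma\delta}$ case.
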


\begin{proof}
By results of Solecki, Laczkovich and Rec\l aw, no $F_{\sigma\delta}$ ideal is
Kat\u{e}tov above \textsf{FIN}$\times$\textsf{FIN} (see
\cite{FiltersandSequences} and
\cite{IdeallimitsofSequencesofContinuousFunctions}) this implies the first
item. We will now prove the third item. Let $\mathcal{J}\in$ \textsf{L}%
$\left(  \mathbb{R}\right)  $ be a $\sigma$-ideal in $\omega^{\omega}$ such
that such that $\mathbb{P}_{\mathcal{J}}$ is proper and has the continuos
reading of names. If there is $B\in\mathbb{P}_{\mathcal{J}}$ such that forcing
below $B$ destroys $\mathcal{A},$ then there is $X\in tr\left(  \mathcal{J}%
\right)  ^{+}$ such that $\mathcal{I}\left(  \mathcal{A}\right)  \leq
_{K}tr\left(  \mathcal{J}\right)  \upharpoonright X$. We can then conclude
that $tr\left(  \mathcal{J}\right)  \upharpoonright X$ is Shelah-Stepr\={a}ns
and by our definability hypothesis, we know that \textsf{FIN}$\times
$\textsf{FIN }$\leq_{\mathsf{K}}tr\left(  \mathcal{J}\right)  \upharpoonright
X$ so $\mathbb{P}_{\mathcal{J}}$ must add a dominating real below some
condition. Since the trace of the meager and null ideals is Borel, in this
case the large cardinals hypothesis is not needed.\ \ 
\end{proof}

\qquad\ \ \ \ \qquad\ \ \ \qquad\ \ \ \qquad\ \ \ \qquad\ \ \ 

We will now prove that such families exist under certain assumptions:

\begin{proposition}
If $\mathfrak{p=c}$ then Shelah-Stepr\={a}ns \textsf{MAD} families exist generically.
\end{proposition}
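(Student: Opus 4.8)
The statement to prove is that $\mathfrak{p}=\mathfrak{c}$ implies Shelah-Stepr\={a}ns \textsf{MAD} families exist generically, i.e. every AD family of size less than $\mathfrak{c}$ can be extended to a Shelah-Stepr\={a}ns \textsf{MAD} family. The plan is to carry out a transfinite recursion of length $\mathfrak{c}$, building the \textsf{MAD} family as an increasing union $\mathcal{A}=\bigcup_{\alpha<\mathfrak{c}}\mathcal{A}_{\alpha}$ starting from the given AD family $\mathcal{A}_{0}$ of size $<\mathfrak{c}$. At each stage I will simultaneously ensure two kinds of requirements: a \emph{maximality} requirement (for each $X\in[\omega]^{\omega}$ enumerated along the way, either $X$ already meets some element of $\mathcal{A}_{\alpha}$ infinitely, or I add an element of $\mathcal{A}$ below it) and a \emph{Shelah-Stepr\={a}ns} requirement (for each candidate $X\in([\omega]^{<\omega}\setminus\{\emptyset\})$ enumerated along the way, I diagonalize so that the final $\mathcal{I}(\mathcal{A})$ either hits every $s\in X$ or contains infinitely many elements of $X$).

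\textbf{The enumeration and bookkeeping.} Since $\mathfrak{p}=\mathfrak{c}$ we have $\mathfrak{c}$ regular, so I can fix a bookkeeping function assigning to each $\alpha<\mathfrak{c}$ one task, and enumerate all subsets $X\subseteq[\omega]^{<\omega}\setminus\{\emptyset\}$ as $\{X_{\alpha}\mid\alpha<\mathfrak{c}\}$ together with all infinite subsets of $\omega$. At stage $\alpha$, $\mathcal{A}_{\alpha}$ has size $<\mathfrak{c}$, so $|\mathcal{A}_{\alpha}|<\mathfrak{p}$. The key structural point I would exploit is that $\mathfrak{p}=\mathfrak{c}$ guarantees that any $\subseteq^{*}$-decreasing (more generally, any centered) family of fewer than $\mathfrak{c}$ sets has a pseudointersection, and that filters generated by fewer than $\mathfrak{c}$ sets are far from maximal. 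This is exactly what lets me meet the Shelah-Stepr\={a}ns requirements while keeping the family almost disjoint.

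\textbf{Handling a single Shelah-Stepr\={a}ns task.} Given the current $\mathcal{A}_{\alpha}$ and a set $X=X_{\alpha}\subseteq[\omega]^{<\omega}\setminus\{\emptyset\}$, there are two cases. If $X\notin(\mathcal{I}(\mathcal{A}_{\alpha})^{<\omega})^{+}$, then already some $A\in\mathcal{I}(\mathcal{A}_{\alpha})$ hits every $s\in X$ and the requirement is permanently satisfied (note $\mathcal{I}(\mathcal{A})\supseteq\mathcal{I}(\mathcal{A}_{\alpha})$). If $X\in(\mathcal{I}(\mathcal{A}_{\alpha})^{<\omega})^{+}$, I must produce a $B\in\mathcal{I}(\mathcal{A})$ containing infinitely many $s\in X$; the cleanest way is to choose a countable $\{s_{n}\mid n\in\omega\}\subseteq X$ with the $s_{n}$ pairwise disjoint and pushed out to infinity (possible since $\mathcal{I}(\mathcal{A}_{\alpha})$ contains all finite sets and $X$ is positive), set $Z=\bigcup_{n}s_{n}$, and then add a single new almost disjoint set $A_{\alpha}$ extending $Z$ appropriately. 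Here I would invoke the lemma proved earlier in the excerpt, that an AD family of size less than $\mathfrak{b}$ (and a fortiori less than $\mathfrak{p}\le\mathfrak{b}$) can be enlarged below a prescribed positive target while staying AD; more precisely I use that $|\mathcal{A}_{\alpha}|<\mathfrak{p}$ to thin out $Z$ to an infinite $B\subseteq^{*}Z$ almost disjoint from all of $\mathcal{A}_{\alpha}$, then put $B$ into the family. Since $B$ contains $s_{n}$ for infinitely many $n$, the requirement is met.

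\textbf{The main obstacle.} The delicate part is the simultaneous juggling: when I add a set $B$ to satisfy a Shelah-Stepr\={a}ns task I must not destroy the positivity of targets reserved for future maximality and future Shelah-Stepr\={a}ns tasks, and I must ensure the thinning "$B$ almost disjoint from $\mathcal{A}_{\alpha}$ yet still meeting infinitely many $s_{n}$" is genuinely achievable. The resolution is that at each step only $<\mathfrak{p}$ constraints are active, so the relevant filter (generated by $\mathcal{A}_{\alpha}^{*}$ together with the finitely-many positivity demands) is generated by fewer than $\mathfrak{p}$ sets and hence the $s_{n}$'s can be chosen inside a single pseudointersection-controlled region. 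I would formalize this by maintaining as an induction hypothesis that each $\mathcal{A}_{\alpha}$ is AD of size $<\mathfrak{c}$ and that all targets are positive, and verify that each of the two operations preserves this. The final $\mathcal{A}$ is \textsf{MAD} by the maximality tasks, Shelah-Stepr\={a}ns by the remaining tasks, and extends $\mathcal{A}_{0}$ by construction.
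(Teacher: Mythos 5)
Your bookkeeping skeleton and your treatment of the case $X\notin(\mathcal{I}(\mathcal{A}_{\alpha})^{<\omega})^{+}$ are fine; the gap is in the single-step lemma, which is exactly where $\mathfrak{p}=\mathfrak{c}$ has to do real work. The Shelah-Stepr\={a}ns requirement asks for a set in the ideal containing infinitely many \emph{whole} elements $s\in X$. You first fix countably many pairwise disjoint $s_{n}\in X$ using only positivity against finite sets, and then thin $Z=\bigcup_{n}s_{n}$ to an infinite $B\subseteq^{\ast}Z$ almost disjoint from $\mathcal{A}_{\alpha}$. This fails twice over. First, the lemma you invoke (for AD families of size $<\mathfrak{b}$) produces a set that \emph{meets} prescribed targets; nothing forces $B$ to contain any $s_{n}$ as a subset, so the closing sentence ``Since $B$ contains $s_{n}$ for infinitely many $n$'' is unjustified --- a $B$ picking at most one point from each block is a perfectly good output of that lemma and contains no block at all. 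Second, and more fundamentally, the damage is already done when you fix $\{s_{n}\}$: since that choice ignored $\mathcal{A}_{\alpha}$, there may be a single $A\in\mathcal{A}_{\alpha}$ with $A\cap s_{n}\neq\emptyset$ for every $n$ (e.g.\ all your blocks could lie inside one element of $\mathcal{A}_{\alpha}$), and then the union of \emph{any} infinite subfamily of these blocks has infinite intersection with $A$, so no thinning whatsoever can succeed. Your ``pseudointersection-controlled region'' remark does not repair this: a pseudointersection $P\in\mathcal{A}_{\alpha}^{\perp}$ need not contain even one element of $X$ (if $\mathcal{A}_{\alpha}$ is a partition of $\omega$ into infinite cells and $X$ is the set of pairs lying inside a single cell, then $X$ is positive, yet a transversal $P$ meeting each cell in at most one point contains no element of $X$).

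What is needed --- and what the paper does --- is to diagonalize the choice of the blocks themselves against \emph{all} of $\mathcal{I}(\mathcal{A}_{\alpha})$. The paper defines a $\sigma$-centered forcing whose conditions are pairs $(t_{p},\mathcal{F}_{p})$ with $t_{p}\in2^{<\omega}$ selecting finitely many blocks and $\mathcal{F}_{p}\in[\mathcal{A}]^{<\omega}$ a finite set of members of $\mathcal{A}$ that all later-selected blocks must avoid; the sets $D_{n,A}$ (select at least $n$ blocks and put $A$ into the side condition) are dense precisely because $X\in(\mathcal{I}(\mathcal{A}_{\alpha})^{<\omega})^{+}$, and $\mathfrak{p}=\mathfrak{c}$ enters via Bell's theorem ($\mathfrak{p}=\mathfrak{c}$ is equivalent to Martin's axiom for $\sigma$-centered forcings) applied to these $\left\vert \mathcal{A}_{\alpha}\right\vert \cdot\omega<\mathfrak{c}$ many dense sets. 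If you prefer to avoid forcing, you can argue directly on the countable set $X$: for $A\in\mathcal{I}(\mathcal{A}_{\alpha})$ let $X_{A}=\{s\in X\mid s\cap A=\emptyset\}$; positivity makes each $X_{A}$ infinite, the family $\{X_{A}\mid A\in\mathcal{I}(\mathcal{A}_{\alpha})\}$ is a filter base on $X$ of size $<\mathfrak{p}$, hence it has an infinite pseudointersection $Y\subseteq X$, and $\bigcup Y$ is then almost disjoint from every member of $\mathcal{A}_{\alpha}$ while containing infinitely many elements of $X$. Either of these must replace your thinning step; without one of them the construction does not go through.
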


\begin{proof}
Let $\mathcal{A}$ be an AD family of size less than $\mathfrak{c}$ and
$X=\left\{  s_{n}\mid n\in\omega\right\}  \in\left(  \mathcal{I}\left(
\mathcal{A}\right)  ^{<\omega}\right)  ^{+}.$ We define the forcing
$\mathbb{P}$ as the set of all $p=\left(  t_{p},\mathcal{F}_{p}\right)  $
where $t_{p}\in2^{<\omega}$ and $\mathcal{F}_{p}\in\left[  \mathcal{A}\right]
^{<\omega}.$ If $p=\left(  t_{p},\mathcal{F}_{p}\right)  $ and $q=\left(
t_{q},\mathcal{F}_{q}\right)  $ then $p\leq q$ if the following holds:

\qquad\ \ 

\begin{enumerate}
\item $t_{q}\subseteq t_{p}$ and $\mathcal{F}_{q}\subseteq\mathcal{F}_{p}.$

\item In case $n\in dom\left(  t_{p}\right)  $ $\backslash dom\left(
t_{q}\right)  $ and $A\in\mathcal{F}_{q}$ if $t_{p}\left(  n\right)  =1$ then
$s_{n}\cap A=\emptyset.$
\end{enumerate}

\qquad\ \ 

For any $n\in\omega$ and $A\in\mathcal{A}$ let $D_{n,A}\subseteq\mathbb{P}$ be
the set of conditions $p=\left(  t_{p},\mathcal{F}_{p}\right)  $ such that
$t_{p}^{-1}\left(  1\right)  $ has size at least $n$ and $A\in\mathcal{F}%
_{p}.$ Since $X\in\left(  \mathcal{I}\left(  \mathcal{A}\right)  ^{<\omega
}\right)  ^{+}$, each $D_{n,A}$ is open dense. Clearly $\mathbb{P}$ is
$\sigma$-centered and since $\mathcal{A}$ has size less than $\mathfrak{p}$ we
can then force and find $Y\in\left[  X\right]  ^{\omega}$ such that $\bigcup
Y$ is almost disjoint with every element of $\mathcal{A}.$\qquad
\ \ \ \qquad\ \ \ \qquad\ \ \ 
\end{proof}

\qquad\ \ \ \qquad\ \ \ 

We already know that Shelah-Stepr\={a}ns \textsf{MAD} families are Cohen
indestructible, however, even more is true:

\begin{lemma}
If $\mathcal{A}$ is Shelah-Stepr\={a}ns then $\mathcal{A}$ is tight.
\end{lemma}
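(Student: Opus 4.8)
The plan is to use the combinatorial (finite-set) definition of Shelah-Stepr\={a}ns directly, feeding it a cleverly chosen family of finite sets that codes the tightness requirement. First I would reduce the goal. By the equivalence recorded right after the definition of tightness, it suffices to show that for every $\left\{ X_{n}\mid n\in\omega\right\} \subseteq\mathcal{I}\left( \mathcal{A}\right) ^{+}$ there is $B\in\mathcal{I}\left( \mathcal{A}\right) $ with $B\cap X_{n}\neq\emptyset$ for every $n$; and using the Proposition that every positive set contains an almost disjoint family of size $\mathfrak{c}$ of positive subsets, each $X_{n}$ can be split into infinitely many pairwise disjoint positive pieces, so that a set meeting each piece nontrivially automatically meets each $X_{n}$ infinitely. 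Thus I may assume the given family is a sequence $\left\langle Z_{m}\mid m\in\omega\right\rangle $ of pairwise disjoint positive sets and I only need $B\in\mathcal{I}\left( \mathcal{A}\right) $ with $B\cap Z_{m}\neq\emptyset$ for all $m$.

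Next I would encode this as a positivity statement about finite sets. The natural candidate is the family $X$ of finite partial transversals of $\left\langle Z_{m}\right\rangle $ (finite sets picking at most one point from each $Z_{m}$). One checks easily that $X\in\left( \mathcal{I}\left( \mathcal{A}\right) ^{<\omega}\right) ^{+}$: given $A\in\mathcal{I}\left( \mathcal{A}\right) $, each $Z_{m}\setminus A$ is infinite, so one can build transversals of every length disjoint from $A$. Applying the Shelah-Stepr\={a}ns property of $\mathcal{I}\left( \mathcal{A}\right) $ to $X$ then yields $Y\in\left[ X\right] ^{\omega}$ with $B:=\bigcup Y\in\mathcal{I}\left( \mathcal{A}\right) $, and this $B$ is the candidate witness. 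If $B$ happens to meet every $Z_{m}$ we are done, and unwinding the reduction produces the desired element of $\mathcal{I}\left( \mathcal{A}\right) $ meeting each original $X_{n}$ infinitely.

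The hard part is exactly the coverage: nothing so far prevents the subfamily $Y$ returned by Shelah-Stepr\={a}ns from concentrating on finitely many coordinates $Z_{m}$, in which case $B$ misses all the others. Indeed, since $\mathcal{A}$ is \textsf{MAD}, every infinite set has infinite intersection with some member of $\mathcal{A}$, so a bounded-support infinite subfamily of transversals may well have its union swallowed by finitely many elements of $\mathcal{A}$; the plain transversal family is therefore too coarse. The real content of the proof is to thin $X$ so that no infinite subfamily avoiding a fixed $Z_{m}$ can have union in $\mathcal{I}\left( \mathcal{A}\right) $ --- equivalently, so that the only infinite subfamilies with union in the ideal are those meeting every coordinate. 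I would engineer this by a recursive construction of the admissible transversals, using that $\mathcal{I}\left( \mathcal{A}\right) $ is selective (hence $P^{+}$ and $Q^{+}$) to diagonalize against the finite unions of $\mathcal{A}$ that could cover a concentrated union, forcing positivity as soon as a single coordinate is dropped. Making the two demands ``$X\in\left( \mathcal{I}\left( \mathcal{A}\right) ^{<\omega}\right) ^{+}$'' and ``every ideal-union subfamily is spread over all the $Z_{m}$'' hold simultaneously is where the weight of the argument lies; the rest is the bookkeeping of the reduction above. An alternative route worth attempting is via Laflamme's game: being Shelah-Stepr\={a}ns gives that Player $\mathsf{II}$ has no winning strategy in $\mathcal{L}\left( \mathcal{I}\left( \mathcal{A}\right) \right) $, which one might try to convert into the required hitting set, although the absence of determinacy for the non-Borel ideal $\mathcal{I}\left( \mathcal{A}\right) $ would then have to be handled separately.
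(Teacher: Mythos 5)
Your reduction and your positivity check for the transversal family are both fine, but the proof stalls exactly where you say it does, and what you leave as ``engineering'' is not bookkeeping --- it is the entire content of the lemma. Nothing in the Shelah-Stepr\={a}ns property applied to the plain (or even full-initial-segment) transversal family prevents the returned subfamily $Y$ from consisting of infinitely many selectors supported on one finite block $Z_0,\dots,Z_n$; such a $Y$ can perfectly well have its union inside a single element of $\mathcal{A}$, since each positive $Z_i$ meets some fixed $A\in\mathcal{A}$ infinitely and all the selectors may be chosen inside $A$. The repair you sketch --- thin $X$ so that every infinite subfamily omitting a coordinate has positive union --- amounts to producing a family of selectors, finitely many of each length, which still lies in $\left(\mathcal{I}\left(\mathcal{A}\right)^{<\omega}\right)^+$, and ``selectivity plus diagonalization'' does not supply that: one cannot diagonalize against the uncountably many finite unions from $\mathcal{A}$ coordinate by coordinate. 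This is precisely the nontrivial combinatorial construction (the sets $F_{\left(n,m,a\right)}$ built from products of selectors) that the paper carries out once and for all in proving that a Shelah-Stepr\={a}ns ideal admits no winning strategy for player $\mathsf{II}$ in Laflamme's game; equivalently, what your argument is missing is the strengthened form of the property proved later in the paper, namely that for every $\left\{X_n\mid n\in\omega\right\}\subseteq\left(\mathcal{I}^{<\omega}\right)^+$ there is a single $B\in\mathcal{I}$ with $X_n\cap\left[B\right]^{<\omega}$ infinite for all $n$ --- applied to $X_m=\left[Z_m\right]^{<\omega}\setminus\left\{\emptyset\right\}$ this finishes instantly, but that equivalence is itself obtained via the game.

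Moreover, the route you relegate to a closing remark is the paper's actual proof, and your objection to it is unfounded: no determinacy is needed anywhere. The paper proves directly, by exhibiting a witnessing family, that if $\mathsf{II}$ has a winning strategy in $\mathcal{L}\left(\mathcal{I}\right)$ then $\mathcal{I}$ is not Shelah-Stepr\={a}ns; so for the lemma one only needs the contrapositive step that non-tightness hands $\mathsf{II}$ a winning strategy outright, and this is a direct construction. Indeed, if $\left\{X_n\mid n\in\omega\right\}$ witnesses the failure of tightness in the ``nonempty intersection'' form you already derived (no $B\in\mathcal{I}\left(\mathcal{A}\right)$ meets every $X_n$), then $\mathsf{II}$ answers each move $A_n$ with any nonempty finite $s_n\subseteq X_n\setminus A_n$; the outcome $\bigcup s_n$ meets every $X_n$ and is therefore positive, so $\mathsf{II}$ wins. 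Determinacy would only be needed to pass from ``$\mathsf{II}$ has no winning strategy'' to ``$\mathsf{I}$ has one,'' which this argument never uses.
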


\begin{proof}
Assume $\mathcal{A}$ is a \textsf{MAD} family that is not tight as witnessed
by $\left\{  X_{n}\mid n\in\omega\right\}  \subseteq\mathcal{I}\left(
\mathcal{A}\right)  ^{+},$ then player $II$ can easily win in the game
$\mathcal{L}\left(  \mathcal{I}\left(  \mathcal{A}\right)  \right)  $ by
making the resulting set intersects every $X_{n}.$
\end{proof}

\qquad\ \ \ \ \ \ \ \ 

We will later prove that tightness does not imply being Shelah-Stepr\={a}ns.
We shall now introduce a stronger version of tightness:

\begin{definition}
$\mathcal{A}$ is \emph{strongly tight }whenever $\mathcal{W}=\left\{
X_{n}\mid n\in\omega\right\}  \subseteq\left[  \omega\right]  ^{\omega}$ is a
family such that

\begin{enumerate}
\item For every $n\in\omega$ there is $A_{n}\in\mathcal{A}$ such that
$X_{n}\subseteq A_{n}.$

\item For every $A\in\mathcal{A}$ the set $\left\{  n\mid A_{n}=A\right\}  $
is finite.
\end{enumerate}

\qquad\ \ 

There is $A\in\mathcal{I}\left(  \mathcal{A}\right)  $ such that $A\cap
X_{n}\neq\emptyset$ for every $n\in\omega.$
\end{definition}

\qquad\ \ 

Note that if $\mathcal{W}$ is as above, then for every $B\in\mathcal{I}\left(
\mathcal{A}\right)  $ the set $\left\{  X\in\mathcal{W}\mid B\cap X\in\left[
\omega\right]  ^{\omega}\right\}  $ is finite. We can prove the following lemma:

\begin{lemma}
Let $\mathcal{A}$ and $\mathcal{B}$ be two \textsf{MAD} families. If
$\mathcal{A}$ is strongly tight and $\mathcal{I}\left(  \mathcal{A}\right)
\leq_{K}\mathcal{I}\left(  \mathcal{B}\right)  $ then $\mathcal{B}$ is
strongly tight.
\end{lemma}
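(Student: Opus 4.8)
The plan is to transport a witness for the failure of strong tightness in $\mathcal{B}$ back to $\mathcal{A}$ through a Katětov morphism, deriving a contradiction with the strong tightness of $\mathcal{A}$. Let $f:\omega\longrightarrow\omega$ be a Katětov morphism from $\left(\omega,\mathcal{I}\left(\mathcal{B}\right)\right)$ to $\left(\omega,\mathcal{I}\left(\mathcal{A}\right)\right)$, which exists by hypothesis since $\mathcal{I}\left(\mathcal{A}\right)\leq_{K}\mathcal{I}\left(\mathcal{B}\right)$. I would argue by contradiction: suppose $\mathcal{B}$ is not strongly tight, as witnessed by a family $\mathcal{W}=\left\{X_{n}\mid n\in\omega\right\}\subseteq\left[\omega\right]^{\omega}$ where each $X_{n}\subseteq B_{n}$ for some $B_{n}\in\mathcal{B}$, each $B\in\mathcal{B}$ equals $B_{n}$ for only finitely many $n$, and yet no $A\in\mathcal{I}\left(\mathcal{B}\right)$ meets all the $X_{n}$.

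First I would push the sets forward: set $Y_{n}=f\left[X_{n}\right]$ and attempt to show that $\left\{Y_{n}\mid n\in\omega\right\}$ witnesses that $\mathcal{A}$ is not strongly tight. The requirement that each $Y_{n}$ be contained in a single element of $\mathcal{A}$ is where the real work lies, and this is the step I expect to be the main obstacle: a single Katětov morphism need only pull back $\mathcal{I}\left(\mathcal{A}\right)$-sets into $\mathcal{I}\left(\mathcal{B}\right)$-sets, and there is no immediate reason why the $f$-image of a set lying below $B_{n}\in\mathcal{B}$ lands below a single $A\in\mathcal{A}$. To handle this I would use that $B_{n}\in\mathcal{I}\left(\mathcal{B}\right)$, so $f\left[B_{n}\right]$ need not be in $\mathcal{I}\left(\mathcal{A}\right)$ in general; the definition of strong tightness only controls families whose members sit below distinct almost disjoint sets, so I must first refine $\mathcal{W}$ (passing to an infinite subfamily and shrinking each $X_{n}$) so that the images organize into the required pattern. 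Concretely, since each $f\left[X_{n}\right]\subseteq f\left[B_{n}\right]$ and $f^{-1}$ of each $A\in\mathcal{A}$ lies in $\mathcal{I}\left(\mathcal{B}\right)$, I would partition each $X_{n}$ according to which element of $\mathcal{A}$ its $f$-image meets infinitely, and use a pigeonhole/diagonalization to extract from each $X_{n}$ an infinite piece $X_{n}'$ whose image is almost contained in a single $A_{n}'\in\mathcal{A}$.

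Once the families are arranged so that $Y_{n}'=f\left[X_{n}'\right]\subseteq^{\ast}A_{n}'$ with the $A_{n}'$ chosen so that each repeats only finitely often (again by passing to a subfamily if necessary), strong tightness of $\mathcal{A}$ yields $A\in\mathcal{I}\left(\mathcal{A}\right)$ with $A\cap Y_{n}'\neq\emptyset$ for every $n$. Then $f^{-1}\left(A\right)\in\mathcal{I}\left(\mathcal{B}\right)$ by the morphism property, and since $A$ meets each $f\left[X_{n}'\right]$ we get $f^{-1}\left(A\right)\cap X_{n}'\neq\emptyset$, hence $f^{-1}\left(A\right)$ meets every $X_{n}$. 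This contradicts the choice of $\mathcal{W}$ as a witness to the failure of strong tightness for $\mathcal{B}$, completing the proof.

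The delicate points to verify carefully are that the refinement preserves the finite-to-one condition on the assignment $n\mapsto B_{n}$ (so that the refined family is still a legitimate witness for $\mathcal{B}$), and that shrinking the $X_{n}$ to infinite subsets does not destroy their being a non-strongly-tight witness — this holds because any $A\in\mathcal{I}\left(\mathcal{B}\right)$ meeting all the shrunken sets also meets the originals. I would also confirm that the images $A_{n}'$ can be taken distinct enough to satisfy clause (2) of strong tightness, invoking the almost disjointness of $\mathcal{A}$ together with $f$ being a Katětov morphism to rule out infinitely many $Y_{n}'$ clustering into one $A\in\mathcal{A}$.
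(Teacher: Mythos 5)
Your overall skeleton matches the paper's proof: push the witness family forward through the Kat\v{e}tov morphism $f$, arrange the images inside single elements of $\mathcal{A}$ with only finite repetition, apply strong tightness of $\mathcal{A}$, and pull back $f^{-1}(A)$. But there are two genuine gaps. First, you never handle the members of $\mathcal{W}$ whose $f$-image is \emph{finite}. Such members really can occur (nothing prevents $f$ from being constant on $B_{n}$, since $B_{n}\in\mathcal{I}\left(  \mathcal{B}\right)  $ and singletons lie in $\mathcal{I}\left(  \mathcal{A}\right)  $), and for them your extraction step collapses: every infinite piece $X_{n}^{\prime}\subseteq X_{n}$ has finite image, so it cannot contribute an infinite $Y_{n}^{\prime}$ to the family fed to strong tightness of $\mathcal{A}$. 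The paper deals with this by a separate argument: letting $\mathcal{W}_{1}$ be the set of such members, for each $X\in\mathcal{W}_{1}$ pick $b_{X}\in f\left[  X\right]  $ whose fiber meets $X$ infinitely; if $\left\{  b_{X}\mid X\in\mathcal{W}_{1}\right\}  $ were infinite, maximality of $\mathcal{A}$ gives $A\in\mathcal{A}$ meeting it infinitely and then $f^{-1}\left(  A\right)  \in\mathcal{I}\left(  \mathcal{B}\right)  $ has infinite intersection with infinitely many members of $\mathcal{W}$, which is impossible (see below); if it is finite, a single fiber $f^{-1}\left(  \left\{  b\right\}  \right)  \in\mathcal{I}\left(  \mathcal{B}\right)  $ does the same. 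Hence $\mathcal{W}_{1}$ is finite, and at the end one must take $f^{-1}\left(  A\right)  \cup\bigcup\mathcal{W}_{1}$, not just $f^{-1}\left(  A\right)  $.

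Second, the impossibility just invoked --- which is also the correct way to verify your clause (2) --- is the observation the paper records right after the definition of strong tightness: if $\mathcal{W}$ satisfies conditions (1) and (2) with respect to $\mathcal{B}$, then \emph{every} $B\in\mathcal{I}\left(  \mathcal{B}\right)  $ has infinite intersection with only finitely many members of $\mathcal{W}$. This uses the almost disjointness of $\mathcal{B}$, not of $\mathcal{A}$ as you write: if $B\subseteq^{\ast}B_{1}\cup\dots\cup B_{k}$ meets $X_{n}$ infinitely, then some $B_{i}$ meets $B_{n}$ infinitely, so $B_{n}\in\left\{  B_{1},...,B_{k}\right\}  $, and condition (2) bounds the number of such $n$. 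With this fact in hand, if some $A\in\mathcal{A}$ equalled $A_{n}^{\prime}$ for infinitely many $n$, then $f^{-1}\left(  A\right)  \in\mathcal{I}\left(  \mathcal{B}\right)  $ would meet infinitely many $X_{n}$ infinitely, a contradiction; so no repetitions beyond finitely many can occur and nothing needs to be discarded. Indeed your fallback of ``passing to a subfamily if necessary'' is not available in your proof-by-contradiction framing: after discarding infinitely many $X_{n}$, a set in $\mathcal{I}\left(  \mathcal{B}\right)  $ meeting all the remaining ones no longer contradicts the hypothesis that no element of $\mathcal{I}\left(  \mathcal{B}\right)  $ meets all of the original $X_{n}$. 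Both gaps are repairable, and once repaired your argument coincides with the paper's proof.
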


\begin{proof}
Fix $f$ a Katetov morphism from $\left(  \omega,\mathcal{I}\left(
\mathcal{B}\right)  \right)  $ to $\left(  \omega,\mathcal{I}\left(
\mathcal{A}\right)  \right)  $ and a family $\mathcal{W}=\left\{  X_{n}\mid
n\in\omega\right\}  $ such that for every $n\in\omega$ there is $B_{n}%
\in\mathcal{B}$ such that $X_{n}\subseteq B_{n}$ and for every $B\in
\mathcal{B}$ the set $\left\{  n\mid B_{n}=B\right\}  $ is finite. Let
$\mathcal{W}_{1}=\left\{  X\in\mathcal{W}\mid f\left[  X\right]  \in\left[
\omega\right]  ^{<\omega}\right\}  $ and for every $X\in\mathcal{W}$ we choose
$b_{X}\in f\left[  X\right]  $ such that $f^{-1}\left(  \left\{
b_{X}\right\}  \right)  $ is finite. We first claim that the set $Y=\left\{
b_{X}\mid X\in\mathcal{W}\right\}  $ is finite. If this was not the case, we
could find $A\in\mathcal{A}$ such that $A\cap Y$ is infinite. Since $f$ is a
Katetov morphism, we conclude that $f^{-1}\left(  A\right)  \in\mathcal{I}%
\left(  \mathcal{B}\right)  $ and $\left\{  X\in\mathcal{W}\mid f^{-1}\left(
A\right)  \cap X\in\left[  \omega\right]  ^{\omega}\right\}  $ is infinite,
but this is a contradiction. Using that $Y$ is finite, it is easy to see that
$\mathcal{W}_{1}$ must also be finite.

\qquad\ \ 

Letting $\mathcal{W}_{2}=\mathcal{W}\setminus\mathcal{W}_{1},$ for every
$X\in\mathcal{W}_{2}$ we choose $A_{X}\in\mathcal{I}\left(  \mathcal{A}%
\right)  $ such that $Y_{X}=A_{X}\cap f\left[  X\right]  $ is infinite. Note
that if $A\in\mathcal{A}$ then the set $\left\{  X\in\mathcal{W}_{2}\mid
A=A_{X}\right\}  $ must be finite. Since $\mathcal{A}$ is strongly tight we
can find $A\in\mathcal{I}\left(  A\right)  $ such that $A\cap Y_{X}%
\neq\emptyset$ for every $X\in\mathcal{W}_{2}.$ Since $f$ is a a Katetov
morphism, we may conclude that $B_{1}=f^{-1}\left(  A\right)  $ belongs to
$\mathcal{I}\left(  \mathcal{B}\right)  $ and $B_{1}\cap X\neq\emptyset$ for
every $X\in\mathcal{W}_{2}.$ Clearly $B_{1}\cup%
%TCIMACRO{\tbigcup }%
%BeginExpansion
{\textstyle\bigcup}
%EndExpansion
\mathcal{W}_{1}$ has the desired properties.
\end{proof}

\qquad\ \ \ 

We now have the following:

\begin{proposition}
If $\mathcal{A}$ is strongly tight then $\mathfrak{d\leq}$ $\left\vert
\mathcal{A}\right\vert .$
\end{proposition}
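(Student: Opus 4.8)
The plan is to prove the contrapositive: assuming $\left\vert\mathcal{A}\right\vert=\kappa<\mathfrak{d}$, I will construct a family $\mathcal{W}=\left\{X_{n}\mid n\in\omega\right\}$ meeting the two hypotheses in the definition of strong tightness but for which no $B\in\mathcal{I}\left(\mathcal{A}\right)$ satisfies $B\cap X_{n}\neq\emptyset$ for every $n$; this exhibits $\mathcal{A}$ as not strongly tight. Here I use that $\mathcal{A}$, being strongly tight, is a \textsf{MAD} family and hence infinite, so the countable constructions below are available.

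First I would disjointify a countable subfamily: choose distinct $A_{0},A_{1},\ldots\in\mathcal{A}$ and replace each by $A_{n}\setminus\bigcup_{i<n}A_{i}$, still an infinite subset of the original member of $\mathcal{A}$; write $e_{n}=e_{A_{n}}$ for its increasing enumeration. For every $A\in\mathcal{A}$ define a \emph{last occurrence} function $f_{A}:\omega\longrightarrow\omega$ by $f_{A}\left(n\right)=1+\max\left\{k\mid e_{n}\left(k\right)\in A\right\}$ when $A\neq A_{n}$ and $A\cap A_{n}\neq\emptyset$, and $f_{A}\left(n\right)=0$ otherwise; this is well defined because distinct members of the \textsf{AD} family $\mathcal{A}$ meet finitely. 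The family $\left\{f_{A}\mid A\in\mathcal{A}\right\}$ has size at most $\kappa$, and so does its closure $\mathcal{E}$ under finite pointwise maxima (as $\kappa^{<\omega}=\kappa$ for infinite $\kappa$). Since $\kappa<\mathfrak{d}$, $\mathcal{E}$ is not dominating, so there is $g:\omega\longrightarrow\omega$ with $\exists^{\infty}n\,\left(g\left(n\right)>h\left(n\right)\right)$ for every $h\in\mathcal{E}$. I then set $X_{n}=\left\{e_{n}\left(k\right)\mid k\geq g\left(n\right)\right\}$, an infinite subset of the original $A_{n}\in\mathcal{A}$; taking $A_{n}$ itself as the witness in clause (1), clause (2) holds because $n\mapsto A_{n}$ is injective.

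The hard part will be verifying that no $B\in\mathcal{I}\left(\mathcal{A}\right)$ meets every $X_{n}$. Writing $B\subseteq^{\ast}A_{i_{0}}\cup\cdots\cup A_{i_{l}}$ with each $A_{i_{p}}\in\mathcal{A}$, I would put $h=\max_{p}f_{A_{i_{p}}}\in\mathcal{E}$ and discard two finite index sets: $N=\left\{n\mid A_{n}\in\left\{A_{i_{0}},\ldots,A_{i_{l}}\right\}\right\}$ and $M=\left\{n\mid\left(B\setminus\bigcup_{p}A_{i_{p}}\right)\cap A_{n}\neq\emptyset\right\}$ (finite since $B\setminus\bigcup_{p}A_{i_{p}}$ is finite). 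Choosing $n\notin N\cup M$ with $g\left(n\right)>h\left(n\right)$, any $x\in B\cap A_{n}$ lies in some $A_{i_{p}}$ with $A_{i_{p}}\neq A_{n}$, so its position $k$ in $A_{n}$ satisfies $k<f_{A_{i_{p}}}\left(n\right)\leq h\left(n\right)<g\left(n\right)$, whence $x\notin X_{n}$ and $B\cap X_{n}=\emptyset$. The essential point, and the reason the naive observation ``$\left\{f_{A}\right\}$ is not dominating'' does not suffice, is that a single $B$ may be covered by several members of $\mathcal{A}$, so $g$ must escape the finite maximum $h$ rather than merely each $f_{A_{i_{p}}}$ individually; closing the family under finite maxima \emph{before} invoking $\kappa<\mathfrak{d}$ is exactly what repairs this gap. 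With the contradiction established, $\mathfrak{d}\leq\left\vert\mathcal{A}\right\vert$ follows.
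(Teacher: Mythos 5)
Your proof is correct and is essentially the paper's argument read contrapositively: the paper likewise fixes countably many (disjointified) members $A_{n}\in\mathcal{A}$, defines for each ideal element a cut-off function recording how deep it reaches into each $A_{n}$, takes a $g$ escaping this family infinitely often, and lets $X_{n}$ be a ``far part'' of $A_{n}$ determined by $g(n)$ (it sub-partitions each $A_{n}$ into infinite pieces $A_{n}(i)$ and sets $X_{n}=A_{n}(g(n))$, where you take tails of the increasing enumeration). The only real difference is bookkeeping: the paper indexes $f_{A}$ over all $A\in\mathcal{I}\left(\mathcal{A}\right)$ so no closure under maxima is needed, whereas you index over $\mathcal{A}$ and close under finite pointwise maxima --- which is precisely the observation that keeps the relevant family of functions of size $\left\vert\mathcal{A}\right\vert$, a point the paper glosses over.
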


\begin{proof}
Let $\left\{  A_{n}\mid n\in\omega\right\}  $ be a partition of $\omega$
contained in $\mathcal{A}$ and for each $n\in\omega$ let $P_{n}=\left\{
A_{n}\left(  i\right)  \mid i\in\omega\right\}  $ be a partition of $A_{n}$
into infinite pieces. Given $A\in\mathcal{I}\left(  \mathcal{A}\right)  $ we
define a function $f_{A}:\omega\longrightarrow\omega$ given by $f_{A}\left(
n\right)  =0$ if $A\cap A_{n}$ is infinite and in the other case $f_{A}\left(
n\right)  =max\left\{  i\mid A\cap A_{n}\left(  i\right)  \neq\emptyset
\right\}  +1.$ We claim that $\left\{  f_{A}\mid A\in\mathcal{I}\left(
\mathcal{A}\right)  \right\}  $ is a dominating family. Assume this is not the
case, so there is $g:\omega\longrightarrow\omega$ not dominated by any of the
$f_{A}.$ For each $n\in\omega$ define $X_{n}=A_{n}\left(  g\left(  n\right)
\right)  $ and $X=\left\{  X_{n}\mid n\in\omega\right\}  .$ Since
$\mathcal{A}$ is strongly tight, there must be $A\in\mathcal{I}\left(
\mathcal{A}\right)  $ such that $A\cap X_{n}\neq\emptyset$ for every
$n\in\omega.$ Pick any $m$ such that $f_{A}\left(  m\right)  <g\left(
m\right)  ;$ this implies that $A\cap A_{m}\left(  g\left(  m\right)  \right)
=\emptyset$ so $A\cap X_{m}=\emptyset$ which is a contradiction.
\end{proof}

\qquad\ \ \ \ \ \ \ \ \ \ \ \ \ \ \ \ \ \ \qquad\ \ \ 

Now we can conclude the following:

\begin{corollary}
There are no strongly tight \textsf{MAD} families in the Cohen model.
\end{corollary}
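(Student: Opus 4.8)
The plan is to combine the preceding Proposition with the value of $\mathfrak{d}$ in the Cohen model and then run a genericity argument to rule out the one size that survives. Work in the Cohen model $V[G]$, where $G$ is generic for the finite support product $\mathrm{Fn}(\kappa,2)$ over a ground model $V\models\mathsf{CH}$ with $\kappa\ge\omega_2$ regular. First I would recall that $\mathsf{cov}(\mathcal{M})=\mathfrak{c}$ in $V[G]$: fewer than $\kappa$ meager Borel sets are coded in some $V[G\restriction E]$ with $|E|<\kappa$, and a later Cohen coordinate avoids all of them. Since $\mathsf{cov}(\mathcal{M})\le\mathfrak{d}\le\mathfrak{c}$ is a theorem of $\mathsf{ZFC}$, this gives $\mathfrak{d}=\mathfrak{c}$. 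Hence, by the Proposition, a strongly tight $\mathcal{A}$ would satisfy $\mathfrak{c}=\mathfrak{d}\le|\mathcal{A}|\le\mathfrak{c}$, so $|\mathcal{A}|=\mathfrak{c}$; the entire content of the corollary is therefore to exclude strongly tight families of full size.

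Next I would reuse the construction inside the Proposition's proof, but reversed. Fix in $V[G]$ a partition $\{A_n\mid n\in\omega\}\subseteq\mathcal{A}$ of $\omega$ together with partitions $A_n=\bigsqcup_i A_n(i)$ into infinitely many infinite pieces; these are countably many reals, so they lie in $V[G\restriction E_0]$ for some countable $E_0\subseteq\kappa$. For $A\in\mathcal{I}(\mathcal{A})$ set $f_A(n)=0$ if $A\cap A_n$ is infinite and $f_A(n)=\max\{i\mid A\cap A_n(i)\ne\emptyset\}+1$ otherwise. Exactly as in the Proposition, applying strong tightness to the diagonal family $\{A_n(g(n))\mid n\in\omega\}$ shows that for \emph{every} $g\in\omega^{\omega}$ there is a witness $A\in\mathcal{I}(\mathcal{A})$ with $g<^{*}f_A$. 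Now take $g$ to be a recoding of a single Cohen coordinate $c_\beta$ with $\beta\in\kappa\setminus E_0$. Since $c_\beta$ is Cohen over $V[G\restriction(\kappa\setminus\{\beta\})]$, the real $g$ is unbounded over that model, so $g\not<^{*}f$ for every $f\in\omega^{\omega}\cap V[G\restriction(\kappa\setminus\{\beta\})]$. As $f_A$ is computed from $A$ and the fixed partition (which sits in $V[G\restriction E_0]$), the witness $A$ must lie outside $V[G\restriction(\kappa\setminus\{\beta\})]$; writing $A\subseteq^{*}B_1\cup\dots\cup B_k$ with $B_j\in\mathcal{A}$, and using that any two distinct members of $\mathcal{A}$ meet each $A_n$ only finitely, one of the $B_j$ must itself essentially use the coordinate $\beta$.

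The main obstacle is precisely this last dependence: strong tightness is permitted to produce a witness built from the very Cohen real that defines the diagonal family, so a plain unboundedness contradiction does not close, and one must control the members of $\mathcal{A}$ that use the new coordinate. The way I would finish is by a reflection and mutual genericity step, in the same spirit as the countable elementary submodel arguments used for $\mathbb{P}_{cat}$ earlier in this work. Fix a nice name $\dot{\mathcal{A}}$ and a countable $M\prec \mathsf{H}(\theta)$ with $\dot{\mathcal{A}},\langle\dot A_n(i)\rangle,\mathrm{Fn}(\kappa,2)\in M$, put $E=M\cap\kappa$, and choose $\beta\in\kappa\setminus E$. By a standard reflection fact, $\mathcal{A}\cap V[G\restriction E]$ is a \textsf{MAD} family of $V[G\restriction E]$ and the partition lies in $V[G\restriction E]$, while $c_\beta$ is Cohen over $V[G\restriction E]$ and the test family $\mathcal{W}_{c_\beta}$ and the relevant fragment of $\mathcal{A}$ live in $V[G\restriction E][c_\beta]$. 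The plan is to show that a strong-tightness witness for $\mathcal{W}_{c_\beta}$, being definable from $\mathcal{W}_{c_\beta}$ and the name $\dot{\mathcal{A}}$ over the generic, can be captured inside $V[G\restriction E][c_\beta]$ together with the finitely many supporting members $B_j$, so that each $f_{B_j}$ lies in $V[G\restriction E]$ and hence $f_A\in V[G\restriction E]$, contradicting the unboundedness of $c_\beta$ over $V[G\restriction E]$. Establishing this localization — that the witness and its supporting members genuinely reflect into the intermediate Cohen extension — is the technical heart, and it is there that the almost disjointness of $\mathcal{A}$ (bounding how a single member can meet the pieces $A_n(i)$) and the homogeneity of $\mathrm{Fn}(\kappa,2)$ must be brought to bear.
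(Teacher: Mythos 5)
Your first paragraph is correct and is exactly the paper's first step: by the preceding Proposition, any strongly tight $\mathcal{A}$ satisfies $\mathfrak{d}\leq\left\vert \mathcal{A}\right\vert$, and since $\mathsf{cov}\left(\mathcal{M}\right)=\mathfrak{c}$ in the Cohen model while $\mathsf{cov}\left(\mathcal{M}\right)\leq\mathfrak{d}$ holds in $\mathsf{ZFC}$, such a family would have size $\mathfrak{c}\geq\omega_{2}$. The problem is everything after that. What remains is to exclude strongly tight families of size $\mathfrak{c}$, and there you give a plan rather than a proof: you yourself isolate the \textquotedblleft localization\textquotedblright\ step --- that the strong-tightness witness for $\mathcal{W}_{c_{\beta}}$, together with its finitely many supporting members of $\mathcal{A}$, can be captured inside $V[G\upharpoonright E][c_{\beta}]$ --- and leave it as \textquotedblleft the technical heart\textquotedblright\ with no argument. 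That step is not a routine verification; it is essentially the entire difficulty, since a family of size $\mathfrak{c}$ has members supported on arbitrary coordinates and the witness produced by strong tightness may genuinely depend on $c_{\beta}$ and on coordinates outside $E\cup\left\{\beta\right\}$. Worse, the auxiliary claim you use to set this up is false as stated: for a \emph{countable} $M\prec\mathsf{H}\left(\theta\right)$ and $E=M\cap\kappa$, it is not a \textquotedblleft standard reflection fact\textquotedblright\ that $\mathcal{A}\cap V[G\upharpoonright E]$ is a \textsf{MAD} family of $V[G\upharpoonright E]$. The model $V[G\upharpoonright E]$ has $\omega_{1}$ many reals while $M$ is countable, so elementarity of $M$ controls almost none of them; moreover $\mathcal{A}\cap V[G\upharpoonright E]$ need not even be an \emph{element} of $V[G\upharpoonright E]$. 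Reflecting maximality (or tightness) in the Cohen model requires a closing-off argument along a set of coordinates of size $\omega_{1}$, plus a separate argument that the trace of $\mathcal{A}$ is an element of the intermediate model; none of this is supplied.

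For comparison, the paper's proof is one line and avoids all of this by quoting two facts: a strongly tight family is in particular tight (strong tightness was introduced as a strengthening of tightness), and in the Cohen model every tight \textsf{MAD} family has size $\omega_{1}$ --- the known reflection theorem lying behind the tightness and Cohen-indestructibility results of this chapter. Combined with the first step this is an immediate contradiction: $\omega_{1}=\left\vert \mathcal{A}\right\vert =\mathfrak{c}\geq\omega_{2}$. If you insist on a self-contained argument along your lines, what you must prove is in effect that reflection theorem; proving it only for your specific diagonal families $\mathcal{W}_{c_{\beta}}$ does not appear to be any easier, so as it stands your proposal has a genuine gap at precisely the point where the real work lies.
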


\begin{proof}
If there were then they must have size continuum, but since it is also tight,
it should have size $\omega_{1}.$\qquad\ \ \ \ 
\end{proof}

\qquad\ \ \ \ 

We will later prove that there are Shelah-Stepr\={a}ns \textsf{MAD} families
in the Cohen model, so Shelah-Stepr\={a}ns does not imply strong tightness. We
will now show that strongly tight \textsf{MAD} families may consistently exist:

\begin{lemma}
Let $\mathcal{A}$ be an \textsf{AD} family of size less than $\mathfrak{p}.$
Let $X=\left\{  X_{n}\mid n\in\omega\right\}  $ be a family of infinite
subsets of $\omega$ such that for every $A\in\mathcal{I}\left(  \mathcal{A}%
\right)  $ the set $\left\{  n\mid X_{n}\subseteq^{\ast}A\right\}  $ is
finite. Then there is $B\in\mathcal{A}^{\perp}$ such that $B\cap X_{n}%
\neq\emptyset$ for every $n\in\omega.$
\end{lemma}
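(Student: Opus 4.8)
The plan is to use the characterization of $\mathfrak{p}$ as the least cardinal for which Martin's Axiom for $\sigma$-centered posets fails (Bell's theorem), and to build $B$ as the range of a generic selector $b$ with $b(n)\in X_n$. The forcing will be of Mathias type: conditions are pairs $(\sigma,F)$ with $\sigma$ a finite partial function satisfying $\sigma(m)\in X_m$ and $F\in[\mathcal{A}]^{<\omega}$, ordered so that newly chosen values of $\sigma$ avoid $\bigcup F$. Committing each $A\in\mathcal{A}$ to some side condition via a dense set $D_A=\{(\sigma,F):A\in F\}$ will force $B\cap A$ to be finite, so that $B\in\mathcal{A}^{\perp}$, while meeting the dense sets $E_n=\{(\sigma,F):n\in\operatorname{dom}\sigma\}$ will give $B\cap X_n\neq\emptyset$. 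Since there are $|\mathcal{A}|+\aleph_0<\mathfrak{p}$ dense sets to meet, everything reduces to making this poset $\sigma$-centered while keeping every $E_n$ dense.

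The main obstacle is exactly this tension. Two conditions $(\sigma,F_1),(\sigma,F_2)$ with the same stem are compatible via $(\sigma,F_1\cup F_2)$, so the naive poset is $\sigma$-centered; but then $E_n$ fails to be dense as soon as a condition has committed to an $F$ with $X_n\subseteq\bigcup F$, which genuinely happens for the ``bad'' columns $X_n\in\mathcal{I}(\mathcal{A})$. Conversely, forbidding such conditions (demanding that every still-unhit column remain hittable) restores density of $E_n$ but destroys $\sigma$-centeredness, because the covering relation $X_n\subseteq^{*}\bigcup F$ is not preserved in any usable way under the union $F_1\cup F_2$.

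The key step that breaks this deadlock is a preliminary routing. I will split the columns into the good ones ($X_m\in\mathcal{I}(\mathcal{A})^{+}$) and the bad ones ($X_m\in\mathcal{I}(\mathcal{A})$). For a bad column, $F_m=\{A\in\mathcal{A}:|X_m\cap A|=\omega\}$ is finite and nonempty (any $A$ meeting $X_m$ infinitely must lie in a fixed finite cover of $X_m$, by almost disjointness) and $X_m\subseteq^{*}\bigcup F_m$; hence the hypothesis gives that $\{m:F_m\subseteq F\}$ is finite for every finite $F$. Enumerating the countable set $\bigcup_m F_m=\{A^{j}:j\in\omega\}$ and letting $A^{*}(m)$ be the member of $F_m$ of largest index, the fibre $\{m:A^{*}(m)=A^{j}\}$ is contained in $\{m:F_m\subseteq\{A^{0},\dots,A^{j}\}\}$ and is therefore finite. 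This replaces the union-sensitive relation ``$X_m\subseteq^{*}\bigcup F$'' by the single-membership relation ``$A^{*}(m)\in F$'', which does distribute over unions, and it is the crux of the whole argument.

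With this routing fixed, I take $\mathbb{P}$ to consist of the conditions $(\sigma,F)$ above subject to the constraint that every bad $m$ with $A^{*}(m)\in F$ already lies in $\operatorname{dom}\sigma$. Now same-stem conditions are again compatible, since $A^{*}(m)\in F_1\cup F_2$ iff $A^{*}(m)\in F_1$ or $A^{*}(m)\in F_2$, so $(\sigma,F_1\cup F_2)$ still satisfies the constraint and $\mathbb{P}$ is $\sigma$-centered. Each $D_A$ is dense: adjoining $A$ threatens only the finitely many bad columns in the (finite) fibre of $A$, and each such still-unhit $m$ has $A^{*}(m)=A\notin F$, so $X_m\cap A^{*}(m)\setminus\bigcup F$ is infinite and $m$ can be hit before $A$ is added. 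Each $E_n$ is dense: for good $n$, $X_n\setminus\bigcup F$ is always infinite, while for bad $n$ the constraint forces $A^{*}(n)\notin F$, so $X_n\cap A^{*}(n)\setminus\bigcup F$ is again infinite. Applying $\mathrm{MA}_{<\mathfrak{p}}(\sigma\text{-centered})$ to the $|\mathcal{A}|+\aleph_0<\mathfrak{p}$ dense sets $\{D_A\}\cup\{E_n\}$ yields a filter whose associated selector $b$ is total, with range $B\in\mathcal{A}^{\perp}$ meeting every $X_n$. The point I expect to require the most care is the routing lemma together with the simultaneous verification that the single-membership constraint is compatible both with $\sigma$-centeredness and with density of the hitting requirements.
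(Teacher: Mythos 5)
Your proof is correct, and it shares the paper's overall skeleton: a $\sigma$-centered Mathias-type poset of finite selectors $\sigma$ with $\sigma(n)\in X_n$, finite side conditions from $\mathcal{A}$, the dense sets $D_A$ and $E_n$, and an appeal to the characterization of $\mathfrak{p}$ via Martin's Axiom for $\sigma$-centered posets (Bell's theorem). The implementations diverge in an instructive way, though. The paper first asserts, without proof, that one may assume each $X_n$ is contained in some $A_n\in\mathcal{A}$ with $n\mapsto A_n$ finite-to-one, and then builds the needed exception into the \emph{ordering}: side conditions are split into $F_p\in\left[\mathcal{A}\setminus\mathcal{B}\right]^{<\omega}$ and $G_p\in\left[\mathcal{B}\right]^{<\omega}$ (where $\mathcal{B}=\left\{A_n\mid n\in\omega\right\}$), and a new value in column $i$ must avoid $\bigcup F_q$ and every $B\in G_q$ \emph{except} $A_i$, finite-to-oneness then keeping $B\cap A_i$ finite. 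You instead keep the $X_n$ unshrunk, split the columns into $\mathcal{I}\left(\mathcal{A}\right)$-positive and $\mathcal{I}\left(\mathcal{A}\right)$-covered ones, and enforce the exception at the level of \emph{conditions}: a covered column routed to $A$ must already be decided before $A$ may enter the side condition, after which the ordering needs no exceptions and $\sigma$-centeredness is immediate. Both devices rest on the same combinatorial crux, a finite-to-one routing of covered columns into $\mathcal{A}$, but your largest-index lemma actually proves it: the fibre $\left\{m\mid A^{\ast}\left(m\right)=A^{j}\right\}$ is finite because for such $m$ one has $X_m\subseteq^{\ast}\bigcup F_m\subseteq A^{0}\cup\dots\cup A^{j}\in\mathcal{I}\left(\mathcal{A}\right)$, so the hypothesis applies. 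This is exactly what the paper's ``we may assume'' silently requires; moreover, as literally stated that reduction is not always achievable, since a column with an infinite subset in $\mathcal{A}^{\perp}$ cannot be shrunk into a member of $\mathcal{A}$ --- in your scheme such columns are simply positive and need no routing at all. In short: the same forcing-plus-Bell strategy, but your decomposition is self-contained where the paper's reduction has a (reparable) gap, at the cost of a slightly more elaborate definition of the poset.
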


\begin{proof}
We may assume that for every $n\in\omega$ there is $A_{n}\in\mathcal{A}$ such
that $X_{n}\subseteq A_{n}$ (note that if $A\in\mathcal{A}$ then the set
$\left\{  n\mid A_{n}=A\right\}  $ is finite). Let $\mathcal{B}=\left\{
A_{n}\mid n\in\omega\right\}  $ and $\mathcal{D}=\mathcal{A}\setminus
\mathcal{B}.$ We now define the forcing $\mathbb{P}$ whose elements are sets
of the form $p=\left(  s_{p},F_{p},G_{p}\right)  $ with the following properties:

\begin{enumerate}
\item $s_{p}\in\omega^{<\omega},$ $F_{p}\in\left[  \mathcal{D}\right]
^{<\omega}$ and $G_{p}\in\left[  \mathcal{B}\right]  ^{<\omega}.$

\item If $i\in dom\left(  s_{p}\right)  $ then $s_{p}\left(  i\right)  \in
X_{n}.$
\end{enumerate}

\qquad\ \ 

If $p,q\in\mathbb{P}$ then $p\leq q$ if the following conditions hold:

\begin{enumerate}
\item $s_{q}\subseteq s_{p},$ $F_{q}\subseteq F_{p}$ and $G_{q}\subseteq
G_{p}.$

\item For every $i\in dom\left(  s_{p}\right)  \setminus dom\left(
s_{q}\right)  $ the following holds:

\begin{enumerate}
\item $s_{p}\left(  i\right)  \notin%
%TCIMACRO{\tbigcup }%
%BeginExpansion
{\textstyle\bigcup}
%EndExpansion
F_{q}.$

\item If $B\in G_{q}$ and $A_{i}\neq B$ then $s_{p}\left(  i\right)  \notin
B.$
\end{enumerate}
\end{enumerate}

\qquad\ \ 

It is easy to see that $\mathbb{P}$ is a $\sigma$-centered forcing and adds a
set almost disjoint with $\mathcal{A}$ that intersects every $X_{n}.$ Since
$\mathcal{A}$ has size less than $\mathfrak{p}$ the result follows.
\end{proof}

\qquad\ \ \ 

We can then conclude the following:

\begin{proposition}
If $\mathfrak{p=c}$ then strongly tight \textsf{MAD} families exist generically.
\end{proposition}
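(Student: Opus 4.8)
The plan is to run the standard generic-existence argument: given an arbitrary AD family $\mathcal{A}$ of size strictly less than $\mathfrak{c}=\mathfrak{p}$, I want to extend it to a \textsf{MAD} family that is strongly tight, by a transfinite recursion of length $\mathfrak{c}$ in which I simultaneously (i) close off toward maximality and (ii) anticipate every potential witness to the failure of strong tightness and defeat it using the preceding lemma. The key engine is the lemma just proved, which says that under $\mathfrak{p}=\mathfrak{c}$, for an AD family of size less than $\mathfrak{p}$ and a family $X=\{X_n\mid n\in\omega\}$ whose members are ``spread out'' across $\mathcal{I}(\mathcal{A})$ (every $A\in\mathcal{I}(\mathcal{A})$ almost contains only finitely many $X_n$), there is $B\in\mathcal{A}^{\perp}$ meeting every $X_n$. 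I would feed this lemma exactly the families $X$ that arise from the definition of strong tightness.

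First I would set up the bookkeeping. Enumerate $[\omega]^{\omega}=\{Z_\alpha\mid\alpha<\mathfrak{c}\}$ and, using that $\mathfrak{c}=\mathfrak{p}$ is regular and $\mathfrak{c}^{<\mathfrak{c}}=\mathfrak{c}$ (a standard consequence of $\mathfrak{p}=\mathfrak{c}$, so that countable sequences of already-constructed objects can be coded), fix a bookkeeping function that, along a recursion $\langle A_\xi\mid\xi<\mathfrak{c}\rangle$ with partial families $\mathcal{A}_\alpha=\mathcal{A}\cup\{A_\xi\mid\xi<\alpha\}$, hands me at stage $\alpha$ a countable family $\mathcal{W}_\alpha=\{X_n\mid n\in\omega\}$ together with a choice of $A_n\in\mathcal{A}_\alpha$ with $X_n\subseteq A_n$ and $\{n\mid A_n=A\}$ finite for each $A$, provided such data has appeared among the objects constructed so far; every eventual such candidate in the final family must be caught cofinally often. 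At stage $\alpha$ I do two things. If $Z_\alpha\in\mathcal{I}(\mathcal{A}_\alpha)^+$, I first shrink to an infinite $Z'\in\mathcal{A}_\alpha^{\perp}\cap[Z_\alpha]^{\omega}$ (possible since $|\mathcal{A}_\alpha|<\mathfrak{b}\le\mathfrak{d}$, indeed $<\mathfrak{p}\le\mathfrak{b}$, so $\mathcal{A}_\alpha$ is nowhere \textsf{MAD}) and adjoin a suitable almost-disjoint refinement to guarantee maximality of the union. Then, if the bookkeeping presented a strong-tightness witness $\mathcal{W}_\alpha$, I apply the lemma to $\mathcal{A}_\alpha$ and $\mathcal{W}_\alpha$ to obtain $B\in\mathcal{A}_\alpha^{\perp}$ meeting every $X_n$, and I adjoin $B$ (after thinning if needed to keep it in $\mathcal{A}^{\perp}$ and almost disjoint from the earlier $A_\xi$). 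Note the hypothesis ``$\{n\mid X_n\subseteq^\ast A\}$ finite for every $A\in\mathcal{I}(\mathcal{A}_\alpha)$'' of the lemma is exactly what the definition of strong tightness provides, since condition (2) there forces each $A\in\mathcal{A}$ to almost-contain only finitely many of the $X_n$.

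At the end I set $\mathcal{A}^\ast=\mathcal{A}\cup\{A_\xi\mid\xi<\mathfrak{c}\}$. Maximality follows because every $Z_\alpha\in[\omega]^{\omega}$ was considered, so no infinite set can be almost disjoint from all of $\mathcal{A}^\ast$. For strong tightness, I take an arbitrary witnessing family $\mathcal{W}=\{X_n\mid n\in\omega\}\subseteq[\omega]^{\omega}$ with $A_n\in\mathcal{A}^\ast$, $X_n\subseteq A_n$, and each $A$ used finitely often; since $\mathcal{W}$ and the relevant $A_n$ are countably many objects, they all appear by some stage $\alpha_0<\mathfrak{c}$, and at a later bookkeeping stage the family $\mathcal{W}$ (now a witness against $\mathcal{A}_\alpha$ for $\alpha\ge\alpha_0$) was handed to me and defeated by adjoining a $B\in\mathcal{I}(\mathcal{A}^\ast)$ meeting every $X_n$; this $B$ witnesses strong tightness of $\mathcal{A}^\ast$ for $\mathcal{W}$.

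The main obstacle, and where I would spend the care, is the bookkeeping: I must ensure that every strong-tightness witness of the \emph{final} family is caught at a stage where its relevant $A_n$ already belong to the current partial family, and where the lemma's size and spread hypotheses still hold. The subtlety is that the $A_n$ in a witness may themselves be among the newly-constructed $A_\xi$, so the enumeration must be arranged so that any countable selection from the final family, together with its witnessing data, is presented cofinally often; this is a routine but essential diagonalization enabled by $\mathfrak{c}^{<\mathfrak{c}}=\mathfrak{c}$ and the regularity of $\mathfrak{c}=\mathfrak{p}$. A secondary point is checking at each stage that adjoining $B$ preserves almost-disjointness with all previously added sets, which holds because $B\in\mathcal{A}_\alpha^{\perp}$ and the lemma is applied to the current $\mathcal{A}_\alpha$ (one absorbs the finitely many earlier exceptional sets into the finite side-condition sets $F_p$ of the $\sigma$-centered forcing inside the lemma, or simply thins $B$ against them).
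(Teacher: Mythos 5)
Your proposal is correct and is essentially the paper's own argument: the paper derives this proposition directly from the preceding lemma, leaving the transfinite bookkeeping implicit (just as it did earlier for tight families under $\mathfrak{b}=\mathfrak{c}$), and your write-up merely spells out that routine recursion, including the key observation that conditions (1) and (2) of strong tightness imply the lemma's hypothesis for all of $\mathcal{I}(\mathcal{A}_\alpha)$.
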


\qquad\ \ \ 

We know that Shelah-Stepr\={a}ns \textsf{MAD} families are indestructible by
many definable forcings that do not add dominating reals. Surprisingly, they
can be destroyed by forcings that do not add dominating or unsplitted reals,
as we will shortly see. We need the following definitions:

\begin{definition}
Let $\mathcal{I}$ be an ideal in $\omega.$

\begin{enumerate}
\item We say $\mathcal{I}$ is \emph{Canjar }if and only if for every $\left\{
X_{n}\mid n\in\omega\right\}  \subseteq\left(  \mathcal{I}^{<\omega}\right)
^{+}$ there are $Y_{n}\in\left[  X_{n}\right]  ^{<\omega}$ such that $%
%TCIMACRO{\tbigcup \limits_{n\in\omega}}%
%BeginExpansion
{\textstyle\bigcup\limits_{n\in\omega}}
%EndExpansion
Y_{n}\in\left(  \mathcal{I}^{<\omega}\right)  ^{+}$ for every $A\in\left[
\omega\right]  ^{\omega}.$

\item We say $\mathcal{I}$ is \emph{Hurewicz }if and only if for every
$\left\{  X_{n}\mid n\in\omega\right\}  \subseteq\left(  \mathcal{I}^{<\omega
}\right)  ^{+}$ there are $Y_{n}\in\left[  X_{n}\right]  ^{<\omega}$ such that
$%
%TCIMACRO{\tbigcup \limits_{n\in A}}%
%BeginExpansion
{\textstyle\bigcup\limits_{n\in A}}
%EndExpansion
Y_{n}\in\left(  \mathcal{I}^{<\omega}\right)  ^{+}$ for every $A\in\left[
\omega\right]  ^{\omega}.$
\end{enumerate}
\end{definition}

\qquad\ \ 

We will say that a \textsf{MAD }family $\mathcal{A}$ is Canjar (Hurewicz) if
$\mathcal{I}\left(  \mathcal{A}\right)  $ is Canjar (Hurewicz). Is
$\mathcal{I}$ is an ideal, we denote by $\mathbb{M}\left(  \mathcal{I}\right)
$ the \emph{Mathias forcing of }$\mathcal{I}$ as the set of all $\left(
s,A\right)  $ such that $s\in\left[  \omega\right]  ^{<\omega}$ and
$A\in\mathcal{I}.$ If $\left(  s,A\right)  ,\left(  t,B\right)  \in
\mathbb{M}\left(  \mathcal{I}\right)  $ then $\left(  s,A\right)  \leq\left(
t,B\right)  $ if $t\subseteq s,$ $B\subseteq A$ and $\left(  s\setminus
t\right)  \cap B=\emptyset.$ It is easy to see that $\mathbb{M}\left(
\mathcal{I}\right)  $ destroys $\mathcal{I}.$ We would like to mention the
following important results regarding Canjar and Hurewicz ideals:

\begin{proposition}
[\cite{MathiasPrikryandLaverPrikryTypeForcing}]$\mathcal{I}$ is Canjar if and
only if $\mathbb{M}\left(  \mathcal{I}\right)  $ does not add a dominating real.
\end{proposition}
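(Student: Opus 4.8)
The plan is to prove the two implications separately, working throughout with Mathias conditions $(s,A)$ normalized so that $\max(s)<\min(A)$, and recalling that the generic real $r=\bigcup\{s:(s,A)\in G\}$ is almost disjoint from every member of $\mathcal{I}$; in particular, below any condition the part of $r$ beyond the current stem avoids the current tail.

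First I would handle the easier contrapositive: if $\mathcal{I}$ is not Canjar then $\mathbb{M}(\mathcal{I})$ adds a dominating real. Fix a witness $\langle X_n:n\in\omega\rangle\subseteq(\mathcal{I}^{<\omega})^+$ to the failure of Canjarness, so that for every choice of finite $Y_n\in[X_n]^{<\omega}$ there is $A\in\mathcal{I}$ meeting every block of $\bigcup_n Y_n$. Define a name $\dot{d}$ by letting $\dot{d}(n)$ be the least $m$ such that some $u\in X_n$ with $\max(u)\le m$ satisfies $u\subseteq r$. A density argument shows $\dot{d}$ is total: given $(s,A)$ and $n$, positivity of $X_n$ applied to $A\cup(\max(s)+1)\in\mathcal{I}$ yields $u\in X_n$ with $u\cap A=\emptyset$ and $\min(u)>\max(s)$, so $(s\cup u,A)$ forces $u\subseteq r$. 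To see $\dot{d}$ dominates, fix $f\in V\cap\omega^\omega$ and put $Y_n=\{u\in X_n:\max(u)\le f(n)\}$; each $Y_n$ is finite, being a set of subsets of $[0,f(n)]$, so by the choice of witness there is $A\in\mathcal{I}$ meeting every block of $\bigcup_n Y_n$. The conditions $(s,B)$ with $A\subseteq B$ are dense, and for any such normalized condition $\max(s)<\min(A)$, so both $s$ and $r\setminus s$ avoid $A$; hence every $X_n$-block contained in $r$ is disjoint from $A$, thus not in $Y_n$, thus has $\max>f(n)$. Therefore such conditions force $\dot{d}(n)>f(n)$ for all $n$, so $1\Vdash\dot{d}\ge\check{f}$, and as $f$ was arbitrary, $\dot{d}$ is a dominating real.

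For the converse — if $\mathcal{I}$ is Canjar then $\mathbb{M}(\mathcal{I})$ adds no dominating real — I would argue by contradiction, assuming some $(s_0,A_0)$ forces a name $\dot{g}$ to dominate all ground-model reals, and manufacture a ground-model $h$ with $(s_0,A_0)\not\Vdash\dot{g}\ge^*\check{h}$. For each $k$ let $X_k$ be the set of nonempty finite $u\subseteq\omega\setminus A_0$ with $\min(u)>\max(s_0)$ such that $(s_0\cup u,A)$ decides $\dot{g}(k)$ for some $A\subseteq A_0$ in $\mathcal{I}$. The key verification is that $X_k\in(\mathcal{I}^{<\omega})^+$: given $A'\in\mathcal{I}$, extend $(s_0,A_0\cup A')$ to a condition $(s_0\cup u,A)$ deciding $\dot{g}(k)$; then $u$ avoids both $A_0$ and $A'$, and replacing the tail by $A\cap A_0\subseteq A_0$ (using that $\mathcal{I}$ is downward closed) exhibits $u\in X_k$ disjoint from $A'$. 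Applying the Canjar property to $\langle X_k\rangle$ gives finite $Y_k\in[X_k]^{<\omega}$ with $Z=\bigcup_k Y_k\in(\mathcal{I}^{<\omega})^+$, and I would set $h(k)=1+\max\{v:u\in Y_k,\ (s_0\cup u,A_{k,u})\Vdash\dot{g}(k)=v\}$, a finite maximum and hence a genuine element of $V$.

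It remains to force $\exists^\infty k\,(\dot{g}(k)<h(k))$ below $(s_0,A_0)$, and here lies the main obstacle: a block $u\in Y_k$ records a value of $\dot{g}(k)$ relative to the stem $s_0\cup u$, but in the density step one meets a condition $(t,A)\le(s_0,A_0)$ with $s_0\subsetneq t$, and appending $u$ produces the stem $t\cup u$, into which the extra elements $t\setminus s_0$ are interleaved below $u$; there is no reason the longer stem forces the recorded value. I expect to resolve this by a preliminary Mathias-style fusion, replacing $A_0$ by a tail $A^*\subseteq A_0$ in $\mathcal{I}$ below which the decision of $\dot{g}(k)$ along an end-extension depends only on the adjoined block and not on intervening stem elements (a continuous reading of $\dot{g}$ along pure extensions), then running the whole construction below $(s_0,A^*)$. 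The delicate point is that the thinning of the tail must not destroy positivity of the families $X_k$ or of $Z$, and it is precisely the Canjar property — the ability to keep finite selections collectively positive — that I would invoke to push the fusion through. Once continuous reading is secured, positivity of $\bigcup_{k\ge N}Y_k$ (any finitely many $Y_k$ forming a non-positive finite set of blocks) lets me, for every $(t,A)$ and every $N$, find $k\ge N$ and $u\in Y_k$ with $u\cap A=\emptyset$ and $\min(u)>\max(t)$, so that $(t\cup u,A\cap A_{k,u})$ forces $\dot{g}(k)<h(k)$; density then yields $(s_0,A_0)\Vdash\exists^\infty k\,(\dot{g}(k)<h(k))$, the desired contradiction.
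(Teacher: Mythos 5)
The paper itself states this proposition as a citation to Hru\v{s}\'{a}k--Minami and gives no proof, so your proposal must be judged on its own merits; as written it does not close, in both directions, though the problems are of very different severity.

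In the first direction your idea is the standard one, but the final claim ``$1\Vdash\dot{d}\ge\check{f}$'' is false for the name you defined. The two requirements you impose on the dense set of conditions --- $A\subseteq B$ and $\max(s)<\min(B)$ --- are incompatible whenever $A$ has elements below $\max(s)$, and, worse, the stem of a condition may already contain elements of $A$ and even whole blocks of $\bigcup_n Y_n$: if a single block $u$ lies inside the stem and belongs to $X_n$ for infinitely many $n$ (nothing in the non-Canjar witness prevents this), that condition forces $\dot{d}(n)\le\max(u)\le f(n)$ for infinitely many $n$, so not even $f\le^*\dot{d}$ is forced. The repair is routine: build $\min(u)>n$ into the definition of $\dot{d}(n)$. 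Then below $(t,C\cup A)$ any bad block for an index $n>\max(t)$ is disjoint from the stem, hence contained in $r\setminus t$, hence disjoint from $A$, a contradiction; this forces $f<^*\dot{d}$ densely, which suffices.

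The second direction is the genuine gap, and the fix you propose cannot work. You correctly isolate the obstacle (a block $u\in Y_k$ only records a decision of $\dot{g}(k)$ over the stem $s_0\cup u$, which says nothing once the stem has grown to $t\supsetneq s_0$), but the ``fusion'' you hope for --- a single enlarged side set below which decisions depend only on the adjoined block and not on intervening stem elements --- is a pure-decision property that $\mathbb{M}(\mathcal{I})$ simply does not have: already for $\mathcal{I}=$ \textsf{FIN} (a Canjar ideal, for which $\mathbb{M}(\textsf{FIN})$ is Cohen forcing) the name $\dot{g}$ with $\dot{g}(k)=\left\vert r\cap\left\{  2k\right\}  \right\vert$ has decisions that depend precisely on which elements entered the stem, no matter how the (finite!) side part is enlarged; and the hypothesis that $\dot{g}$ is forced to dominate gives you no purchase whatsoever on the combinatorics of its decisions. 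The correct resolution needs no fusion: there are only countably many possible stems. Enumerate as $\left\{  t_j \mid j\in\omega\right\}$ the finite sets with $(s_0\cup t_j,A_0)\le(s_0,A_0)$, let $X_{j,k}$ be the (positive) set of blocks $u$ above $\max(s_0\cup t_j)$, disjoint from $A_0$, such that some $(s_0\cup t_j\cup u,B)$ decides $\dot{g}(k)$, and apply the Canjar property once \emph{for each fixed} $j$ to the family $\left\langle X_{j,k}\mid k\in\omega\right\rangle$, obtaining finite $Y_{j,k}$ with $\bigcup_k Y_{j,k}$ positive. Set $h(k)=1+\max\left\{  m \mid m \text{ is a value decided by some } u\in Y_{j,k'},\ j,k'\le k\right\}$; this is a diagonal (hence finite) bound. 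Now given $(s_0\cup t_j,A)$ and $N$, your own correct observation --- deleting finitely many finite selections preserves positivity, since their union is a finite set and hence lies in $\mathcal{I}$ --- applied to $\bigcup_{k\ge\max(N,j)}Y_{j,k}$ produces $k\ge\max(N,j)$ and $u\in Y_{j,k}$ with $u\cap A=\emptyset$; then $(s_0\cup t_j\cup u, B_{j,k,u}\cup A)$ extends both the given condition and the deciding condition and forces $\dot{g}(k)<h(k)$ because $j\le k$. (Minor point: your positivity check for $X_k$ uses the convention that side sets shrink under extension, whereas in this paper's $\mathbb{M}(\mathcal{I})$ they grow; the verification survives, but state it in the paper's order.)
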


\begin{proposition}
[\cite{MathiasForcingandCombinatorialCoveringPropertiesofFilters}]%
$\mathcal{I}$ is Canjar if and only if $\mathcal{I}$ is a Menger subspace of
$\wp\left(  \omega\right)  .$
\end{proposition}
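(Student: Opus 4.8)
The plan is to realize the purely combinatorial Canjar condition as the Menger selection principle restricted to a distinguished class of clopen covers, and then to argue that these covers already detect the full Menger property. First I would fix a dictionary between positive families and covers. For nonempty $s\in[\omega]^{<\omega}$ put $O_s=\{A\in\wp(\omega)\mid A\cap s=\emptyset\}$, a clopen subset of $\wp(\omega)$. The basic observation is that for any $X\subseteq[\omega]^{<\omega}\setminus\{\emptyset\}$ the family $\{O_s\mid s\in X\}$ covers $\mathcal{I}$ if and only if $X\in(\mathcal{I}^{<\omega})^+$: since $A\in O_s$ means exactly $s\cap A=\emptyset$, the family $\{O_s\mid s\in X\}$ covers $\mathcal{I}$ iff for every $A\in\mathcal{I}$ some $s\in X$ is disjoint from $A$, which is the definition of $(\mathcal{I}^{<\omega})^+$. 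Under this dictionary a finite subfamily of $\{O_s\mid s\in X\}$ is just a finite $Y\in[X]^{<\omega}$, and $\bigcup_n\{O_s\mid s\in Y_n\}$ covers $\mathcal{I}$ iff $\bigcup_n Y_n\in(\mathcal{I}^{<\omega})^+$. Thus the Menger selection applied to covers of this special shape is verbatim the Canjar condition.

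The direction Menger $\Rightarrow$ Canjar is then immediate. Given $\{X_n\mid n\in\omega\}\subseteq(\mathcal{I}^{<\omega})^+$, the families $\mathcal{U}_n=\{O_s\mid s\in X_n\}$ are genuine clopen covers of $\mathcal{I}$; applying the Menger property produces finite $Y_n\in[X_n]^{<\omega}$ with $\bigcup_n\{O_s\mid s\in Y_n\}$ covering $\mathcal{I}$, and by the dictionary this says exactly $\bigcup_n Y_n\in(\mathcal{I}^{<\omega})^+$, which is what Canjar demands.

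The substantial direction is Canjar $\Rightarrow$ Menger, and this is where the \textbf{main obstacle} lies. Since $\wp(\omega)$ is compact metrizable, $\mathcal{I}$ is second countable and Lindel\"of, so I would first reduce an arbitrary sequence of open covers to covers by basic clopen cylinders $[t]=\{A\mid A\cap|t|=t^{-1}(1)\}$ with $t\in 2^{<\omega}$ (writing $t^{-1}(j)=\{i<|t|\mid t(i)=j\}$); covering $\mathcal{I}$ by such a family $\{[t]\mid t\in T_n\}$ means precisely that the characteristic function of every $A\in\mathcal{I}$ has an initial segment in $T_n$, i.e. $T_n$ is a bar over $\{\chi_A\mid A\in\mathcal{I}\}$. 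The difficulty is that a cylinder $[t]$ encodes both an avoidance requirement $A\cap t^{-1}(0)=\emptyset$ and an inclusion requirement $t^{-1}(1)\subseteq A$, whereas the sets $O_s$, and hence the Canjar condition, see only avoidance. The naive coarsening $[t]\mapsto O_{t^{-1}(0)}$ does yield a cover by $O$'s, as $[t]\subseteq O_{t^{-1}(0)}$, but a finite subcover by these $O$'s need not pull back to a finite subcover by the original cylinders, precisely because the inclusion data is lost. Overcoming this is the heart of the argument: I would exploit that $\mathcal{I}$ is downward closed and contains every finite set, so that the inclusion constraints are, level by level, finitary. Concretely, I plan to process each bar $T_n$ one length at a time, only finitely many patterns $t^{-1}(1)$ being active at a given length, and to feed into the Canjar condition the avoidance families refined by these finitely many inclusion patterns, so that a Canjar selection disjoint from a given $A$ automatically matches one of the active inclusion patterns realized by $A$. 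Downward closure should guarantee that the auxiliary families built this way remain in $(\mathcal{I}^{<\omega})^+$, so Canjar applies and the resulting finite selections reassemble into a bar, i.e. into a finite Menger selection from the original covers.

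As a sanity check and a possible alternative route, I would keep in mind the preceding proposition, by which Canjar is equivalent to $\mathbb{M}(\mathcal{I})$ adding no dominating real; combined with the classical description of the Menger property through continuous images in $\omega^{\omega}$ (a separable metrizable set being Menger exactly when none of its continuous images is dominating), this offers a second path to the hard direction that trades the covering bookkeeping for an analysis of the Mathias generic. Comparing the two derivations would be a good way to confirm that the finitary handling of the inclusion constraints above is carried out correctly.
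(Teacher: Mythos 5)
A preliminary remark: the paper gives no proof of this proposition; it is quoted from \cite{MathiasForcingandCombinatorialCoveringPropertiesofFilters}, so your proposal can only be measured against what a complete argument requires, not against an in-paper proof. Your dictionary $s\mapsto O_{s}$ and the direction Menger $\Rightarrow$ Canjar are correct and complete. The gap is in Canjar $\Rightarrow$ Menger, exactly at the point you flag as the heart of the matter, and the mechanism you describe for getting past it does not work. If $s=t^{-1}\left(  0\right)  $ for some $t\in T_{n}\cap2^{k}$ and $A\cap s=\emptyset,$ you learn only $A\cap k\subseteq t^{-1}\left(  1\right)  ,$ not $A\cap k=t^{-1}\left(  1\right)  ;$ so $A$ need not lie in $\left[  t\right]  ,$ the cylinder $[\chi_{A}\upharpoonright k]$ that does contain $A$ need not belong to $T_{n}$ at all, and the members of $T_{n}$ that do catch $A$ may occur only at levels depending on $A$ in an unbounded way. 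Hence \textquotedblleft a Canjar selection disjoint from a given $A$ automatically matches one of the active inclusion patterns realized by $A$\textquotedblright\ is false as stated. The structural difficulty is this: since infinitely many $t\in T_{n}$ can share one avoidance set $t^{-1}\left(  0\right)  ,$ what you must produce is, for each finite set $s$ fed into the Canjar condition, a \emph{pre-assigned finite} subfamily $\mathcal{V}_{s}\subseteq\mathcal{U}_{n}$ such that every $A\in\mathcal{I}$ with $A\cap s=\emptyset$ lies in $\bigcup\mathcal{V}_{s};$ only then does a Canjar selection reassemble into a Menger selection. Your level-by-level bookkeeping does not produce such pairs $\left(  s,\mathcal{V}_{s}\right)  .$

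The missing idea (and it is how the cited proof runs, in the dual filter formulation) is to work per element of the ideal, using downward closure through compactness. Given a cover $\mathcal{U}_{n}$ of $\mathcal{I}$ by cylinders and $A\in\mathcal{I},$ the set $\wp\left(  A\right)  \subseteq\mathcal{I}$ is compact, so finitely many members $\left[  t_{1}\right]  ,\dots,\left[  t_{r}\right]  $ of $\mathcal{U}_{n},$ all of length at most some $m,$ cover $\wp\left(  A\right)  ;$ moreover $m$ may be taken large enough that $s_{A}:=m\setminus A\neq\emptyset$ (elements of an ideal are coinfinite). Now the absorption step: if $B\cap s_{A}=\emptyset$ then $B\cap m\subseteq A,$ so $B$ and $B\cap A$ have the same trace on $m;$ since $B\cap A\in\wp\left(  A\right)  $ lies in some $\left[  t_{i}\right]  $ with $\left\vert t_{i}\right\vert \leq m,$ so does $B.$ Thus the entire basic set $O_{s_{A}}$ is contained in $\left[  t_{1}\right]  \cup\dots\cup\left[  t_{r}\right]  ,$ and the pairs $\left(  s_{A},\left\{  \left[  t_{1}\right]  ,\dots,\left[  t_{r}\right]  \right\}  \right)  $ are exactly the required data: $X_{n}=\left\{  s_{A}\mid A\in\mathcal{I}\right\}  \in\left(  \mathcal{I}^{<\omega}\right)  ^{+},$ and applying the Canjar property to $\left\langle X_{n}\right\rangle $ and replacing each selected $s_{A}$ by its finite family yields the Menger selection. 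Your fallback route, via non-dominating continuous images in $\omega^{\omega}$ together with the Hru\v{s}\'{a}k--Minami characterization of Canjar, is viable, but it is likewise only sketched.
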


\begin{proposition}
[\cite{MathiasForcingandCombinatorialCoveringPropertiesofFilters}]%
$\mathcal{I}$ is Hurewicz if and only if $\mathbb{M}\left(  \mathcal{I}%
\right)  $ preserves all unbounded families of the ground model.
\end{proposition}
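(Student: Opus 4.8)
The plan is to prove both implications through a single dictionary between $\mathbb{M}\left(\mathcal{I}\right)$-names for reals and sequences $\left\{X_{n}\mid n\in\omega\right\}\subseteq\left(\mathcal{I}^{<\omega}\right)^{+}$, using the elementary reformulation of preservation: a forcing $\mathbb{P}$ preserves unbounded families if and only if for every name $\dot{g}$ for an element of $\omega^{\omega}$ and every $p\in\mathbb{P}$ there are $q\leq p$ and $h\in V\cap\omega^{\omega}$ such that $q\Vdash``\forall f\in V\cap\omega^{\omega}\,(f\leq^{\ast}\dot{g}\rightarrow f\leq^{\ast}h)\textquotedblright$. (If this holds and $\mathcal{F}\in V$ is unbounded, then no condition can force $\dot{g}$ to dominate all of $\mathcal{F}$: picking $f_{0}\in\mathcal{F}$ with $f_{0}\not\leq^{\ast}h$ gives a contradiction, since $f_{0}\not\leq^{\ast}h$ is absolute; the converse is routine.) The first step is the dictionary itself. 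Note that $X\in\left(\mathcal{I}^{<\omega}\right)^{+}$ exactly says that the conditions whose stem is extended by some $s\in X$ are dense: given $\left(t,A\right)$, positivity against $A\cup\left[0,\max t\right]\in\mathcal{I}$ produces $s\in X$ with $\min s>\max t$ and $s\cap A=\emptyset$, so $\left(t\cup s,A\right)\leq\left(t,A\right)$. Dually, working below a fixed $\left(s_{0},A\right)$, for each $n$ I would build a positive family $X_{n}$ together with an assignment $s\mapsto\left(D_{s},v_{n,s}\right)$, where $D_{s}\in\mathcal{I}$ and $\left(s_{0}\cup s,D_{s}\right)\Vdash``\dot{g}\left(n\right)=v_{n,s}\textquotedblright$: for each $C\in\mathcal{I}$, density of conditions deciding $\dot{g}\left(n\right)$ below $\left(s_{0},A\cup C\right)$ yields such an $s$ disjoint from $C$, and collecting these over all $C$ gives $X_{n}\in\left(\mathcal{I}^{<\omega}\right)^{+}$.

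For the implication \emph{Hurewicz $\Rightarrow$ preservation}, given $\dot{g}$ and $\left(s_{0},A\right)$ I would apply the Hurewicz property to $\left\{X_{n}\mid n\in\omega\right\}$, obtaining finite $Y_{n}\subseteq X_{n}$ with $\bigcup_{n\in A^{\prime}}Y_{n}\in\left(\mathcal{I}^{<\omega}\right)^{+}$ for \emph{every} infinite $A^{\prime}$, and set $h\left(n\right)=\max\left\{v_{n,s}\mid s\in Y_{n}\right\}$, a ground model function. Then for any $f\in V$ with $f\not\leq^{\ast}h$ the set $A^{\prime}=\left\{n\mid f\left(n\right)>h\left(n\right)\right\}$ is infinite, so $\bigcup_{n\in A^{\prime}}Y_{n}$ is positive, hence dense; threading the generic through it infinitely often forces $\dot{g}\left(n\right)=v_{n,s}\leq h\left(n\right)<f\left(n\right)$ for infinitely many $n$, giving $f\not\leq^{\ast}\dot{g}$. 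This establishes the reformulated criterion, and it is precisely here that the full Hurewicz strength (positivity for every infinite $A^{\prime}$, not merely Canjar's single $\bigcup_{n\in\omega}Y_{n}$) is used, since $A^{\prime}$ is dictated by the arbitrary competitor $f$.

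For \emph{$\neg$Hurewicz $\Rightarrow$ $\neg$preservation}, I would fix $\left\{X_{n}\right\}$ witnessing the failure and use it to drive a name $\dot{g}$: arranging the $X_{n}$ on successive blocks so that the generic real $r$, which diagonalizes the dual filter, meets each $X_{n}$ cofinally, define $\dot{g}\left(n\right)$ from the canonical element of $X_{n}$ absorbed into $r$. The unbounded family $\mathcal{F}\in V$ is then assembled so that boundedness of $\mathcal{F}$ would manufacture a uniform choice of finite $Y_{n}\subseteq X_{n}$ defeating every blocking set $C_{A^{\prime}}\in\mathcal{I}$ supplied by the failure of Hurewicz, contradicting that failure; while $\left(s_{0},A\right)\Vdash``\forall f\in\mathcal{F}\,(f\leq^{\ast}\dot{g})\textquotedblright$ because along the generic $\dot{g}$ outgrows each member of $\mathcal{F}$ past its block. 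The main obstacle is twofold and both parts are of the standard Mathias type: in the dictionary one must run a genuine fusion so that the side conditions $D_{s}$ attached to the finitely many chosen $s\in Y_{n}$ cohere into one decreasing sequence (so that extending a stem by $s$ while shrinking the second coordinate remains below $\left(s_{0}\cup s,D_{s}\right)$ and preserves the forced value of $\dot{g}\left(n\right)$), and in the reverse direction the delicate point is verifying that the constructed $\mathcal{F}$ is genuinely $\leq^{\ast}$-unbounded in $V$, which is exactly the combinatorial dual of the selection failure and must be checked against every potential ground model bound.
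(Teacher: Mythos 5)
The paper itself does not prove this proposition (it is quoted from the cited reference without proof), so your proposal has to stand on its own, and its forward direction has a genuine gap located exactly at the phrase \textquotedblleft threading the generic through it infinitely often.\textquotedblright\ Your dictionary attaches every decided value $v_{n,s}$ to a condition $(s_{0}\cup s,D_{s})$ whose stem is $s_{0}\cup s$. To obtain $q\Vdash\text{``}\forall f\in V\,(f\leq^{\ast}\dot{g}\rightarrow f\leq^{\ast}h)\text{''}$ you must show that below \emph{every} $(t,B)\leq q$ and for every $k$ there is an extension forcing $\dot{g}(n)<f(n)$ for some $n\geq k$: forcing $f\nleq^{\ast}\dot{g}$ is forcing infinitely many small coordinates, and a single extension of $(s_{0},A)$ does not do it, since after that one step the stem has grown. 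But an extension of $(t,B)$ can lie below $(s_{0}\cup s,D_{s})$ only if $t\setminus s_{0}\subseteq s$ (a stem end-extending both $t$ and $s_{0}\cup s$ must have all its elements below $\max s$ inside $s_{0}\cup s$). For the conditions $(t,B)$ that appear densely along any generic this inclusion fails for every block $s\in\bigcup_{n}Y_{n}$, and then your $h$ carries no information whatsoever about $\dot{g}$ below $(t,B)$. This also shows that the repair you yourself flag --- a fusion making the side conditions $D_{s}$ cohere into a decreasing sequence --- cannot work: the obstruction lives entirely in the first coordinates (stems), and no shrinking or coherence of second coordinates restores the initial-segment relation demanded by the Mathias order.

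The missing idea is to run your decision process for \emph{every} stem, not just $s_{0}$: for each finite $t$ and each $n$, the set $X_{t,n}$ of blocks $u$ with $\min u>\max t$ for which some $(t\cup u,D_{t,n,u})$ decides $\dot{g}(n)=v_{t,n,u}$ is again in $\left(\mathcal{I}^{<\omega}\right)^{+}$; apply the Hurewicz property once to the countable array $\left\{X_{t,n}\right\}$ (or once per stem, it does not matter), obtaining finite $Y_{t,n}\subseteq X_{t,n}$, and fold the stems into $h$ by bookkeeping, e.g.\ $h(n)=\max\left\{v_{t_{i},n,u}\mid i\leq n,\ u\in Y_{t_{i},n}\right\}$ for a fixed enumeration $(t_{i})_{i\in\omega}$ of stems, so that for each fixed $t$ one has $h(n)\geq\max\left\{v_{t,n,u}\mid u\in Y_{t,n}\right\}$ for all large $n$. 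Then, given $f\nleq^{\ast}h$, any $(t,B)\leq(s_{0},A)$ and any $k$, positivity of $\bigcup\left\{Y_{t,n}\mid n\geq\max(k,i_{t}),\ f(n)>h(n)\right\}$ yields $u$ disjoint from $B\cup[0,\max t]$, and $(t\cup u,B\cup D_{t,n,u})$ forces $\dot{g}(n)=v_{t,n,u}\leq h(n)<f(n)$; density then gives $(s_{0},A)\Vdash\text{``}f\nleq^{\ast}\dot{g}\text{''}$, which is your criterion with $q=(s_{0},A)$ itself. Two further points. First, the converse of your preservation reformulation is not \textquotedblleft routine\textquotedblright\ (preservation literally says that each set $\left\{f\mid p\Vdash f\leq^{\ast}\dot{g}\right\}$ is bounded, which is a priori weaker than your statement); fortunately you never use that direction. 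Second, and more seriously, your backward direction is a statement of intent rather than a proof: you neither define $\mathcal{F}$, nor verify that it is unbounded in $V$, nor exhibit the condition forcing it bounded, and the real difficulty there --- the blocking set $C\in\mathcal{I}$ supplied by non-Hurewicz controls only coordinates in some infinite $A^{\prime}$ and may still meet the finite stem --- is left untouched.
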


\qquad\ \ 

We will need the following lemma:

\begin{lemma}
Let $\mathcal{I}$ be an ideal on $\omega.$ The following are equivalent:

\begin{enumerate}
\item $\mathcal{I}$ is Shelah-Stepr\={a}ns.

\item For every $\left\{  X_{n}\mid n\in\omega\right\}  \subseteq\left(
\mathcal{I}^{<\omega}\right)  ^{+}$ there is $B\in\mathcal{I}$ such that
$X_{n}\cap\left[  B\right]  ^{<\omega}$ is infinite for every $n\in\omega.$
\end{enumerate}
\end{lemma}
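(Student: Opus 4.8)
The plan is to prove the two implications separately: the forward direction $(2)\Rightarrow(1)$ is immediate, while the reverse direction goes through the contrapositive and reduces, via an amplification trick, to the game-theoretic characterization already established (Laflamme's Proposition together with the fact that a Shelah-Stepr\={a}ns ideal admits no winning strategy for Player $\mathsf{II}$ in $\mathcal{L}(\mathcal{I})$).

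For $(2)\Rightarrow(1)$ I would simply feed a constant sequence into the hypothesis. Given $X\in\left(\mathcal{I}^{<\omega}\right)^{+}$, apply (2) to the sequence $X_{n}=X$ for all $n$ to obtain $B\in\mathcal{I}$ with $X\cap[B]^{<\omega}$ infinite. Setting $Y=X\cap[B]^{<\omega}\in[X]^{\omega}$ gives $\bigcup Y\subseteq B\in\mathcal{I}$, hence $\bigcup Y\in\mathcal{I}$, which is exactly the Shelah-Stepr\={a}ns conclusion.

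For $(1)\Rightarrow(2)$ I would argue the contrapositive. Assume $\neg(2)$: there is $\left\{X_{n}\mid n\in\omega\right\}\subseteq\left(\mathcal{I}^{<\omega}\right)^{+}$ such that for every $B\in\mathcal{I}$ some $X_{n}$ has $X_{n}\cap[B]^{<\omega}$ finite. My goal is to manufacture a family witnessing the empty-intersection clause of Laflamme's Proposition, namely $\left\{Z_{k}\mid k\in\omega\right\}\subseteq\left(\mathcal{I}^{<\omega}\right)^{+}$ such that for every $A\in\mathcal{I}$ there is $k$ with $Z_{k}\cap[A]^{<\omega}=\emptyset$; once this is in hand, Laflamme's Proposition gives that $\mathsf{II}$ has a winning strategy in $\mathcal{L}(\mathcal{I})$, and the Proposition asserting that Shelah-Stepr\={a}ns ideals admit no such strategy yields that $\mathcal{I}$ is not Shelah-Stepr\={a}ns. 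I would define $Z_{k}$ to be the collection of all finite sets $u=\bigcup_{i\leq k}\bigcup_{j\leq k}t_{i,j}$ where each $t_{i,j}\in X_{i}$ and the $(k+1)^{2}$ sets $t_{i,j}$ are pairwise disjoint; thus every $u\in Z_{k}$ contains, for each level $i\leq k$, at least $k+1$ distinct members of $X_{i}$ as subsets. Positivity of $Z_{k}$ follows by a greedy choice: given $A\in\mathcal{I}$, pick the $t_{i,j}$ successively disjoint from $A$ and from one another, the forbidden set at each step being $A$ together with finitely many points, hence in $\mathcal{I}$, so positivity of the relevant $X_{i}$ always supplies a new set. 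Finally, fixing $A\in\mathcal{I}$, use $\neg(2)$ to get $n$ with $p:=\left|X_{n}\cap[A]^{<\omega}\right|<\omega$ and set $k=\max(n,p)$: if some $u\in Z_{k}$ had $u\subseteq A$, its $k+1$ pairwise disjoint (hence distinct) sets $t_{n,0},\dots,t_{n,k}\in X_{n}$ would all be subsets of $A$, forcing $\left|X_{n}\cap[A]^{<\omega}\right|\geq k+1>p$, a contradiction; so $Z_{k}\cap[A]^{<\omega}=\emptyset$.

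The main obstacle, and the reason the naive strategy of amalgamating the $X_{n}$ into a single positive set and applying Shelah-Stepr\={a}ns once fails, is that both the level $n$ and the finite bound $p$ delivered by $\neg(2)$ depend on the target set $A$, and a single Shelah-Stepr\={a}ns witness for a combined family is free to concentrate on one level and ignore the rest. The amplification resolves this precisely by making each $Z_{k}$ demand multiplicity $k+1$ at \emph{all} levels $\leq k$ at once, so that for any given $A$ a large enough $k$ overruns whatever finite bound $\neg(2)$ offers, and letting $k$ range over $\omega$ then furnishes the correct index for every $A$ simultaneously. The remaining points are routine: checking that the greedy construction of the elements of $Z_{k}$ really terminates (using that $\mathcal{I}$ contains all finite sets, so each $X_{i}$ has elements avoiding any prescribed $\mathcal{I}$-set plus finitely many extra points) and that pairwise disjoint nonempty sets are automatically distinct.
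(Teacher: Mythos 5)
Your proof is correct, and its skeleton matches the paper's: $(2)\Rightarrow(1)$ by feeding a constant sequence into $(2)$, and $(1)\Rightarrow(2)$ contrapositively through the Laflamme game, invoking the earlier proposition that no Shelah-Stepr\={a}ns ideal admits a winning strategy for Player $\mathsf{II}$ in $\mathcal{L}\left(  \mathcal{I}\right)  .$ Where you diverge is in how the winning strategy for $\mathsf{II}$ is produced from $\neg(2)$. The paper's one-line proof treats this as immediate, and the intended direct argument needs no amplification: $\mathsf{II}$ fixes a bookkeeping in which every index $n$ recurs infinitely often, and at a round assigned to $n$, facing $A_{k},$ plays some $t\in X_{n}$ disjoint from $A_{k}\cup s_{0}\cup\dots\cup s_{k-1}$ (such $t$ exists since this union lies in $\mathcal{I}$). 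The outcome $S=\bigcup s_{k}$ then contains infinitely many pairwise disjoint, hence distinct, members of each $X_{n},$ so $X_{n}\cap\left[  S\right]  ^{<\omega}$ is infinite for every $n$; by $\neg(2)$ no set in $\mathcal{I}$ can have this property, so $S\in\mathcal{I}^{+}$ and $\mathsf{II}$ wins. You instead route through the static clause 2(b) of Laflamme's proposition, which demands for each $A\in\mathcal{I}$ a level with \emph{empty} trace on $\left[  A\right]  ^{<\omega}$ rather than a finite one, and this is exactly what forces your amplification $Z_{k}$ with multiplicity $k+1$ at every level $i\leq k.$ That construction is sound: the greedy argument for $Z_{k}\in\left(  \mathcal{I}^{<\omega}\right)  ^{+}$ works because the forbidden set at each step is an $\mathcal{I}$-set plus finitely many points, and the pigeonhole on $k+1$ pairwise disjoint (hence distinct) members of $X_{n}$ correctly contradicts $\left\vert X_{n}\cap\left[  A\right]  ^{<\omega}\right\vert \leq k.$ What your detour buys is a slightly stronger intermediate fact of some independent interest --- the seemingly weaker ``finite trace'' failure of $(2)$ already produces a family with empty traces, i.e., the exact hypothesis of Laflamme's criterion --- at the cost of an extra combinatorial layer that the dynamic strategy avoids, since in the game $\mathsf{II}$ can interleave all levels at once and never needs to count.
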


\begin{proof}
Clearly 2 implies 1 and if 2 fails then it is easy to see that Player
$\mathsf{II}$ has a winning strategy in $\mathcal{L}\left(  \mathcal{I}%
\right)  ,$ so 1 also fails.
\end{proof}

\qquad\ \ \ 

With the previous lemma we can then conclude the following:g:

\begin{proposition}
Every Shelah-Stepr\={a}ns \textsf{MAD} family is Hurewicz.
\end{proposition}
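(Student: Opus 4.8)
The plan is to reduce the statement to a purely combinatorial selection problem and then feed the preceding characterization of Shelah-Stepr\={a}ns ideals into a fusion construction. Recall that a \textsf{MAD} family $\mathcal{A}$ being Hurewicz means that $\mathcal{I}=\mathcal{I}\left(\mathcal{A}\right)$ is Hurewicz, so given $\left\{X_{n}\mid n\in\omega\right\}\subseteq\left(\mathcal{I}^{<\omega}\right)^{+}$ I must produce finite $Y_{n}\subseteq X_{n}$ with $\bigcup_{n\in A}Y_{n}\in\left(\mathcal{I}^{<\omega}\right)^{+}$ for every infinite $A$. First I would record the clean dual reformulation: $\bigcup_{n\in A}Y_{n}\in\left(\mathcal{I}^{<\omega}\right)^{+}$ for all infinite $A$ if and only if for every $C\in\mathcal{I}$ the set $\left\{n\mid\exists s\in Y_{n}\ \left(s\cap C=\emptyset\right)\right\}$ is cofinite; indeed a co-infinite failure set for some $C$ is precisely an infinite $A$ along which $\bigcup_{n\in A}Y_{n}$ omits $C$. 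Thus the goal becomes: finite selectors $Y_{n}$ that, for each $C\in\mathcal{I}$, contain an element disjoint from $C$ for all but finitely many $n$.

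Two positivity-closure facts power the recursion, both immediate since ideals here contain \textsf{FIN} and are closed under finite unions with members of $\mathcal{I}$: first, the tails $X_{n}^{\geq k}=\left\{s\in X_{n}\mid\min s\geq k\right\}$ remain in $\left(\mathcal{I}^{<\omega}\right)^{+}$; second, for any $B\in\mathcal{I}$ the family $\left\{s\in X_{n}\mid s\cap\left(\omega\setminus B\right)\neq\emptyset\right\}=X_{n}\setminus[B]^{<\omega}$ is still in $\left(\mathcal{I}^{<\omega}\right)^{+}$, obtained by applying positivity to $A\cup B\in\mathcal{I}$. The second fact is the key: it lets the construction keep selecting witnesses lying \emph{outside} any set already brought under control, so that no single $B$ ever traps the selectors (a fatal single-$B$ choice, e.g. $Y_{n}\subseteq[B]^{<\omega}$, fails against $C=B$ for \emph{all} $n$).

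The construction would apply the preceding Lemma---the characterization that for Shelah-Stepr\={a}ns $\mathcal{I}$ every sequence in $\left(\mathcal{I}^{<\omega}\right)^{+}$ admits $B\in\mathcal{I}$ with $X_{n}\cap[B]^{<\omega}$ infinite for all $n$---repeatedly along a tree of conditions. At a node I apply the Lemma to the current sequence of positive families, obtain $B\in\mathcal{I}$ together with infinitely many elements of each coordinate inside $[B]^{<\omega}$, commit finitely many of them to the relevant $Y_{n}$, and then pass via the second closure fact to the positive families $X_{n}\setminus[B]^{<\omega}$ for the successor nodes; deeper nodes therefore contribute witnesses disjoint from every $B$ accumulated along the branch, while the tails keep $\min s$ growing. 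Reading off $Y_{n}$ as the finite collection of elements ever committed to coordinate $n$ yields the selectors.

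The main obstacle, and the only genuinely delicate point, is the bookkeeping that secures the cofinite-covering clause for \emph{every} $C\in\mathcal{I}$ at once: each application of the Lemma controls only one $B\in\mathcal{I}$, and since $\mathfrak{s}\leq\mathfrak{a}$-type considerations aside the ideal $\mathcal{I}$ has uncountable cofinality, no countable list of $B$'s is cofinal, so $C$ cannot simply be enumerated or captured inside some fixed $B$. The plan is to arrange the branching so that, relative to the finite data already committed to $Y_{0},\dots,Y_{n}$, the level-$n$ nodes exhaust the finitely many ``types'' an arbitrary $C$ can present on the finite initial segment determined by those committed sets; one then argues that for a fixed $C$ the stages at which $Y_{n}$ could fail are confined to a bounded initial segment, because beyond that point the committed witnesses were chosen, through the second closure fact, disjoint from the portion of $C$ that matters. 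I would conclude by verifying for an arbitrary $C\in\mathcal{I}$ that $\left\{n\mid\forall s\in Y_{n}\ s\cap C\neq\emptyset\right\}$ is finite, which by the dual reformulation is exactly the Hurewicz conclusion. As a sanity check, running the same scheme against Laflamme's game $\mathcal{L}\left(\mathcal{I}\right)$ and the fact that Shelah-Stepr\={a}ns ideals deny Player $\mathsf{II}$ a winning strategy should confirm that the construction never stalls.
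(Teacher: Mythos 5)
Your opening moves are correct and match the paper's: the dual reformulation of the Hurewicz property, the positivity-closure facts, and the idea of iterating the preceding characterization Lemma (for Shelah-Stepr\={a}ns $\mathcal{I}$, any countable sequence of positive families is infinitely captured by a single $B\in\mathcal{I}$). The gap is exactly at the point you flag as ``the only genuinely delicate point,'' and it is not a bookkeeping issue that more careful branching can repair: nothing in your construction ever uses that $\mathcal{I}\left(\mathcal{A}\right)$ is generated by an almost disjoint family. Every ingredient you invoke is available for an arbitrary Shelah-Stepr\={a}ns ideal, but the statement is false at that level of generality. The paper itself proves that \textsf{FIN}$\times$\textsf{FIN} is Shelah-Stepr\={a}ns, yet \textsf{FIN}$\times$\textsf{FIN} is not Hurewicz: it is not even Canjar, since $\mathbb{M}\left(\textsf{FIN}\times\textsf{FIN}\right)$ destroys \textsf{FIN}$\times$\textsf{FIN} and therefore adds a dominating real (by the proposition that a forcing adds a dominating real if and only if it destroys \textsf{FIN}$\times$\textsf{FIN}), while Hurewicz implies Canjar. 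So no argument of the shape you propose can close; the almost disjoint structure of $\mathcal{A}$ must enter somewhere.

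Concretely, the failure mode is this: your second closure fact protects later witnesses only against the particular sets $B$ named during the construction, whereas the quantifier to be met ranges over all $C\in\mathcal{I}\left(\mathcal{A}\right)$, and such a $C$ is chosen after the construction is finished; knowing the finitely many ``types'' $C$ can present on a finite initial segment gives no control over how $C$ meets the infinitely many later witnesses, and nothing you arrange prevents a single $C$ from meeting every committed witness. The paper's proof supplies the missing device, and it is MAD-specific: the sets $B_{i}\in\mathcal{I}\left(\mathcal{A}\right)$ produced by the Lemma are recursively chosen so that, besides capturing infinitely many members of each $X_{n}$, no single $A\in\mathcal{A}$ has infinite intersection with two distinct $B_{i}$'s (at stage $m$ one first discards from each $X_{n}$ the elements meeting the finitely many generators of $\mathcal{A}$ that almost cover $B_{0},\dots,B_{m-1}$). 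Then for any $C\in\mathcal{I}\left(\mathcal{A}\right)$: $C$ is almost covered by finitely many members of $\mathcal{A}$, each of which meets at most one $B_{i}$ infinitely, so $C\cap B_{i}$ is finite for all but finitely many $i$, and a witness taken from $X_{n}\cap\left[B_{i}\right]^{<\omega}$ with sufficiently large minimum is disjoint from $C$. This finiteness of the set of ``dangerous'' indices is precisely what your type-exhaustion bookkeeping is supposed to deliver and cannot, and it is the step where almost disjointness is indispensable.
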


\begin{proof}
Let $\mathcal{A}$ be a Shelah-Stepr\={a}ns \textsf{MAD} family and $X=\left\{
X_{n}\mid n\in\omega\right\}  \subseteq\left(  \mathcal{I}\left(
\mathcal{A}\right)  ^{<\omega}\right)  ^{+}.$ Note that if $B\in
\mathcal{I}\left(  \mathcal{A}\right)  $ then $\left\{  X_{n}\setminus\left[
B\right]  ^{<\omega}\mid n\in\omega\right\}  \subseteq\left(  \mathcal{I}%
\left(  \mathcal{A}\right)  ^{<\omega}\right)  ^{+}.$ We can then recursively
find $\left\{  B_{n}\mid n\in\omega\right\}  \subseteq\mathcal{I}\left(
\mathcal{A}\right)  $ with the following properties:

\begin{enumerate}
\item If $n\neq m$ then there is no $A\in\mathcal{A}$ that has infinite
intersection with both $B_{n}$ and $B_{m}.$

\item If $n,m\in\omega$ then $B_{n}$ contains an element of $X_{m}.$
\end{enumerate}

\qquad\ \ \ 

For every $n\in\omega$ let $Y_{n}\in\left[  X_{n}\right]  ^{<\omega}$ such
that $Y_{n}\cap\left[  B_{i}\right]  ^{<\omega}\neq\emptyset$ for every $i\leq
n.$ It is then easy to see that if $D\in\left[  \omega\right]  ^{\omega}$ then
$%
%TCIMACRO{\tbigcup \limits_{n\in D}}%
%BeginExpansion
{\textstyle\bigcup\limits_{n\in D}}
%EndExpansion
Y_{n}\in\left(  \mathcal{I}\left(  \mathcal{A}\right)  ^{<\omega}\right)
^{+}.$\qquad\ \ \qquad\ \ 
\end{proof}

\qquad\ \ \ 

We need the following definition:

\begin{definition}
\qquad\ \ \ \ \ \ \ \ \ \ \ \ \ \ \ \ \ \ \ \ \ \ 

\begin{enumerate}
\item We say that $\mathcal{S}=\left\{  S_{\alpha}\mid\alpha\in\omega
_{1}\right\}  \subseteq\left[  \omega\right]  ^{\omega}$ is a \emph{tail
block}-\emph{splitting family }if for every infinite set $P$ of finite
disjoint subsets of $\omega$ there is $\alpha<\omega_{1}$ such that
$S_{\gamma}$ block splits $P$ for every $\gamma>\alpha.$
\end{enumerate}
\end{definition}

\qquad\ \ \ 

It is easy to see that tail block splitting families exist if $\mathfrak{d}%
=\omega_{1}$ and tail block splitting families are splitting families. We say
that a forcing $\mathbb{P}$ preserves a tail block-splitting family if it
remains being tail block-splitting after forcing with $\mathbb{P}$. The
following result could be consider folklore:

\begin{proposition}
Let $\mathcal{I}$ be a Hurewicz ideal. If $\mathcal{S=}\left\{  S_{\alpha}%
\mid\alpha\in\omega_{1}\right\}  \subseteq\left[  \omega\right]  ^{\omega}$ is
a tail block-splitting family then $\mathbb{M}\left(  \mathcal{I}\right)  $
preserves $\mathcal{S}$ as a tail block-splitting family.
\end{proposition}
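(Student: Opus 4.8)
The plan is to fix an arbitrary condition $(s_0,A_0)\in\mathbb{M}(\mathcal{I})$ and a name $\dot{P}=\{\dot{p}_n\mid n\in\omega\}$ for an infinite family of pairwise disjoint finite subsets of $\omega$, and to produce, back in the ground model $V$, an infinite family $\mathcal{W}=\{w_n\mid n\in\omega\}$ of finite pairwise disjoint sets with the feature that \emph{block-splitting $\mathcal{W}$ transfers to block-splitting $\dot{P}$}. Since each $\dot{p}_n$ is a name for a member of the ground model set $[\omega]^{<\omega}$, this reduction together with the tail block-splitting of $\mathcal{S}$ applied to $\mathcal{W}$ in $V$ should close the argument: choosing $\alpha<\omega_1$ with $S_\gamma$ block-splitting $\mathcal{W}$ for every $\gamma>\alpha$, the statement ``$S_\gamma$ block-splits $\mathcal{W}$'' is arithmetic in the ground model reals $S_\gamma,\mathcal{W}$ and hence absolute, so it persists in $V[G]$, and the transfer then yields that $S_\gamma$ block-splits $\dot{P}$. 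Because $\mathbb{M}(\mathcal{I})$ is $\sigma$-centered (any two conditions sharing a stem have the common extension obtained by unioning their side conditions), it is ccc and preserves $\omega_1$, so the same $\alpha<\omega_1$ witnesses tail block-splitting in $V[G]$.

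The construction of $\mathcal{W}$ is where the Hurewicz hypothesis enters. For each $n$ I would let $X_n$ be the set of finite ``stem-extensions'' $u$ (with $\min u$ above a bookkeeping bound and $u\cap A_0=\emptyset$) such that $(s_0\cup u,A_0)$ decides $\dot{p}_n$; a short density computation, using that one may enlarge the side condition by any $B\in\mathcal{I}$, shows $X_n\in(\mathcal{I}^{<\omega})^{+}$. Applying the Hurewicz property to $\{X_n\mid n\in\omega\}$ produces finite $Y_n\subseteq X_n$ with $\bigcup_{n\in A}Y_n\in(\mathcal{I}^{<\omega})^{+}$ for every infinite $A$, which unwinds to the usable form: for every $B\in\mathcal{I}$, all but finitely many $n$ admit some $u\in Y_n$ with $u\cap B=\emptyset$. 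Recording the decided value $v^n_u$ for each $u\in Y_n$ and putting $w_n=\bigcup_{u\in Y_n}v^n_u$ gives ground model finite sets, and after thinning to a block-increasing subfamily I may assume the $w_n$ are pairwise disjoint, so $\mathcal{W}$ is a legitimate family to feed to the tail block-splitting of $\mathcal{S}$.

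The heart of the proof, and the step I expect to be the main obstacle, is the transfer: showing $(s_0,A_0)\Vdash$ ``$S_\gamma$ block-splits $\dot{P}$'' whenever $S_\gamma$ block-splits $\mathcal{W}$ in $V$. Write $I_{\subseteq}=\{n\mid w_n\subseteq S_\gamma\}$ and $I_{\emptyset}=\{n\mid w_n\cap S_\gamma=\emptyset\}$, both infinite. For the ``inside'' half I would argue by density: given any $(t,C)\le(s_0,A_0)$, the Hurewicz union-positivity applied to the side condition $C\in\mathcal{I}$ supplies, for some large $n\in I_{\subseteq}$, a captured stem-extension $u\in Y_n$ with $u\cap C=\emptyset$ and (thanks to the bookkeeping bound) $u$ end-extending $t$; extending $(t,C)$ through $u$ then forces $\dot{p}_n=v^n_u\subseteq w_n\subseteq S_\gamma$. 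Hence it is dense to add one more index $n$ with $\dot{p}_n\subseteq S_\gamma$, so infinitely many are forced, and the symmetric argument over $I_{\emptyset}$ forces infinitely many $\dot{p}_n$ disjoint from $S_\gamma$, giving block-splitting of $\dot{P}$. The delicate points are the fusion bookkeeping that makes the captured stem-extensions genuinely end-extendable past arbitrary stems (so that compatibility in $\mathbb{M}(\mathcal{I})$ comes for free) and the simultaneous control of the two infinite index sets $I_{\subseteq}$ and $I_{\emptyset}$; the Hurewicz property is precisely what guarantees that, whatever $C\in\mathcal{I}$ an opponent condition carries, almost every captured index still offers a usable stem-extension, which is what keeps the density argument alive.
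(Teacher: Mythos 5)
Your overall architecture (capture decided values of the $\dot{p}_n$ via the Hurewicz property, bundle them into finite sets, feed those to the tail block-splitting family in $V$, then transfer back) has the right shape, and your bundling $w_n=\bigcup_{u\in Y_n}v^n_u$ plays exactly the role of the set $Z_m(s)$ in the paper's proof. But the transfer step, which you yourself flag as the main obstacle, genuinely fails as set up, and no ``fusion bookkeeping'' can repair it within your framework. All of your captured decisions are made by conditions of the form $(s_0\cup u,B_u)$, i.e.\ by conditions whose stem is $s_0\cup u$. In the density argument you face an arbitrary $(t,C)\le(s_0,A_0)$ with $t\supsetneq s_0$, and you propose to extend $(t,C)$ ``through $u$'' so as to force $\dot{p}_n=v^n_u$. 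For that, the condition $(t\cup u,\,C\cup B_u)$ would have to extend $(s_0\cup u,B_u)$; in $\mathbb{M}\left(\mathcal{I}\right)$ this requires $s_0\cup u$ to be an initial segment of $t\cup u$, i.e.\ every element of $(t\cup u)\setminus(s_0\cup u)=t\setminus s_0$ must lie above $\max(u)$ --- while your bookkeeping simultaneously demands $\min(u)>\max(t)$. These requirements are contradictory unless $t=s_0$. So the condition you build extends $(t,C)$ but does \emph{not} extend the condition that made the decision, hence forces nothing about $\dot{p}_n$, and density fails at every proper stem extension of $s_0$. Since decisions in forcing depend on the whole condition and later stems interleave arbitrarily with any finite set fixed in advance, there is no way to pre-capture, relative to the single stem $s_0$, decisions that remain valid below all future stems.

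The paper's proof resolves exactly this by running your capturing construction for \emph{every} finite set $s\in\left[\omega\right]^{<\omega}$ simultaneously: for each potential stem $s$ and each $m$ it defines $X_m(s)$ (the extensions $t$ with $\max(s)<\min(t)$ such that some $(s\cup t,B)$ decides $\dot{p}_m$), applies the Hurewicz property to get finite $Y_m(s)$ with positive unions, sets $Z_m(s)=\bigcup_{t\in Y_m(s)}F_{(t,m,s)}$, and thins to a pairwise disjoint family $R(s)$. This yields countably many target families, and here lies the essential use of the \emph{tail} hypothesis that your argument never exploits: one chooses a single $\alpha$ such that every $S_\gamma$ with $\gamma>\alpha$ block-splits $R(s)$ for all $s$ at once. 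Then, in the contradiction argument, when a condition $(s,A)$ claims to defeat $S_\gamma$, one uses the family $R(s)$ attached to \emph{its own stem}: the positivity of $\bigcup_{m\in W}Y_m(s)$, where $W$ is the infinite set of $m$ with $Z_m(s)\subseteq S_\gamma$, supplies $m\in W$ and $t\in Y_m(s)$ with $t\cap A=\emptyset$, so that $(s\cup t,A\cup B)\le(s,A)$ forces $\dot{p}_m\subseteq Z_m(s)\subseteq S_\gamma$. Note that your proposal invokes tail block-splitting only for the one family $\mathcal{W}$; this is a symptom of the gap, since the whole point of the tail property here is to cover the countably many families $R(s)$, one for each stem the forcing might later confront you with.
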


\begin{proof}
Let $\mathcal{I}$ be a Hurewicz ideal and $\mathcal{S}$ a tail block-splitting
family. Let $\dot{P}$ $\mathcal{=\{}\dot{p}_{n}\mid n\in\omega\}$ be a name
for an infinite set of pairwise disjoint finite subsets of $\omega$, we may
assume $\dot{p}_{n}$ is forced to be disjoint from $n$. For every $s\in\left[
\omega\right]  ^{<\omega}$ and $m\in\omega$ we define $X_{m}\left(  s\right)
$ as the set of all $t\in\left[  \omega\right]  ^{<\omega}$ such that
$max\left(  s\right)  <min\left(  t\right)  $ and there are $F_{\left(
t,m,s\right)  }\in\left[  \omega\right]  ^{<\omega}$ and $B\in\mathcal{I}$
such that $\left(  s\cup t,B\right)  \Vdash``\dot{p}_{m}=F_{\left(
t,m,s\right)  }\textquotedblright.$ It is easy to see that $\left\{
X_{m}\left(  s\right)  \mid m\in\omega\right\}  \subseteq\left(
\mathcal{I}^{<\omega}\right)  ^{+}$ and since $\mathcal{I}$ is Hurewicz, we
may find $Y_{m}\left(  s\right)  \in\left[  X_{m}\left(  s\right)  \right]
^{<\omega}$ such that if $W\in\left[  \omega\right]  ^{\omega}$ then $%
%TCIMACRO{\tbigcup \limits_{m\in W}}%
%BeginExpansion
{\textstyle\bigcup\limits_{m\in W}}
%EndExpansion
Y_{m}\left(  s\right)  \in\left(  \mathcal{I}^{<\omega}\right)  ^{+}.$ Let
$Z_{m}\left(  s\right)  =%
%TCIMACRO{\tbigcup \limits_{t\in Y_{m}\left(  s\right)  }}%
%BeginExpansion
{\textstyle\bigcup\limits_{t\in Y_{m}\left(  s\right)  }}
%EndExpansion
F_{\left(  t,m,s\right)  }.$ For every $s\in\left[  \omega\right]  ^{<\omega}$
we can then find $D\left(  s\right)  \in\left[  \omega\right]  ^{\omega}$ such
that $R\left(  s\right)  =\left\{  Z_{m}\left(  s\right)  \mid m\in D\left(
s\right)  \right\}  $ is pairwise disjoint.

\qquad\ \ 

Since $\mathcal{S}$ is tail block-splitting, we can find $\alpha$ such that if
$\gamma>\alpha$ then $S_{\gamma}$ block splits $R\left(  s\right)  $ for every
$s\in\left[  \omega\right]  ^{<\omega}.$ We claim that in this case,
$S_{\gamma}$ is forced to block split $\dot{P}.$ If this was not the case, we
could find $\left(  s,A\right)  \in\mathbb{M}\left(  \mathcal{I}\right)  $ and
$n\in\omega$ such that either $\left(  s,A\right)  \Vdash``%
%TCIMACRO{\tbigcup }%
%BeginExpansion
{\textstyle\bigcup}
%EndExpansion
\left\{  \dot{p}_{m}\mid\dot{p}_{m}\subseteq S_{\gamma}\right\}  \subseteq
n\textquotedblright$ or $\left(  s,A\right)  \Vdash``%
%TCIMACRO{\tbigcup }%
%BeginExpansion
{\textstyle\bigcup}
%EndExpansion
\left\{  \dot{p}_{m}\mid\dot{p}_{m}\cap S_{\gamma}=\emptyset\right\}
\subseteq n\textquotedblright.$ Assume the first case holds (the other one is
similar). Since $S_{\gamma}$ block splits $R\left(  s\right)  ,$ we know that
the set $W=\left\{  m>n\mid Z_{m}\left(  s\right)  \subseteq S_{\gamma
}\right\}  $ is infinite. Since $%
%TCIMACRO{\tbigcup \limits_{m\in W}}%
%BeginExpansion
{\textstyle\bigcup\limits_{m\in W}}
%EndExpansion
Y_{m}\left(  s\right)  \in\left(  \mathcal{I}^{<\omega}\right)  ^{+}$ then
there is $m\in W$ and $t\in Y_{m}\left(  s\right)  $ such that $t\cap
A=\emptyset.$ We then know there is $B\in\mathcal{I}$ such that $\left(  s\cup
t,B\right)  \Vdash``\dot{p}_{m}=F_{\left(  t,m,s\right)  }\textquotedblright.$
Since $t\cap A=\emptyset$ then $\left(  s\cup t,A\cup B\right)  \leq\left(
s,A\right)  .$ But $\left(  s\cup t,A\cup B\right)  $ forces that $\dot{p}%
_{m}$ is a subset of $S_{\gamma},$ which is a contradiction. We then conclude
that $\mathcal{S}$ remains being a tail block-splitting family.

\qquad\ \ \ \ \qquad\ \ \ 

Clearly $\left\{  X_{m}\mid m\in\omega\right\}  \subseteq\left(
\mathcal{I}^{<\omega}\right)  ^{+}$ and since $\mathcal{I}$ is Hurewicz, we
may find $Y_{m}\in\left[  X_{m}\right]  ^{<\omega}$ such that if $A\in\left[
\omega\right]  ^{\omega}$ then $%
%TCIMACRO{\tbigcup \limits_{m\in A}}%
%BeginExpansion
{\textstyle\bigcup\limits_{m\in A}}
%EndExpansion
Y_{m}\in\left(  \mathcal{I}^{<\omega}\right)  ^{+}.$ For every $t\in Y_{m}$
let $F_{t}$ such that there is $B$ for which $\left(  t,B\right)  \Vdash
``\dot{P}_{m}=F\textquotedblright,$ let $Z_{m}=%
%TCIMACRO{\tbigcup \limits_{t\in Y_{m}}}%
%BeginExpansion
{\textstyle\bigcup\limits_{t\in Y_{m}}}
%EndExpansion
F_{t}.$ Let $D\in\left[  \omega\right]  ^{\omega}$ such that if $i,j\in D$ and
$i<j$ then $max\left(  Z_{i}\right)  <min\left(  Z_{j}\right)  .$ Let
$R=\left\{  Z_{i}\mid i\in D\right\}  $ since $\mathcal{S}_{1}$ is
block-splitting, we may find $S\in\mathcal{S}_{1}$ that block splits $R.$
Since $S\in\mathcal{S}_{1}$ there is $C\in\mathcal{I}$ such that $\left(
s,C\right)  \Vdash``\left\{  k\mid\dot{P}_{k}\subseteq S\right\}  \subseteq
n\textquotedblright$ if $r=0$ and $\left(  s,C\right)  \Vdash``\left\{
k\mid\dot{P}_{k}\cap S=\emptyset\right\}  \subseteq n\textquotedblright$ if
$r=1.$ Let $D_{1}=\left\{  i\in D\mid Z_{i}\subseteq S\right\}  $ if $r=0$ and
$D_{1}=\left\{  i\in D\mid Z_{i}\cap S=\emptyset\right\}  $ if $r=1,$ in
either case, $D_{1}$ is infinite. Since $%
%TCIMACRO{\tbigcup \limits_{m\in D_{1}}}%
%BeginExpansion
{\textstyle\bigcup\limits_{m\in D_{1}}}
%EndExpansion
Y_{m}\in\left(  \mathcal{I}^{<\omega}\right)  ^{+}$ there is $t\in\left[
C\right]  ^{<\omega}$ and $m\in D_{1}$ such that $t\in Y_{m}.$ In this way,
there is $E$ such that $\left(  t,E\right)  \Vdash``\dot{P}_{m}\subseteq
Z_{m}\textquotedblright$ and $\left(  t,E\cap C\right)  \leq\left(
t,E\right)  ,\left(  s,C\right)  .$ Therefore $\left(  t,E\cap C\right)
\Vdash``\dot{P}_{m}\subseteq S\textquotedblright$ if $r=0$ and $\left(
t,E\cap C\right)  \Vdash``\dot{P}_{m}\cap S=\emptyset\textquotedblright$ if
$r=1,$ in either case, we get a contradiction.
\end{proof}

\qquad\ 

Since Hurewicz ideals are Canjar ideals, we conclude the following:

\begin{corollary}
If $\mathcal{A}$ is Shelah-Stepr\={a}ns then $\mathcal{A}$ can be destroyed
with a ccc forcing that does not add dominating nor unsplit reals.
\end{corollary}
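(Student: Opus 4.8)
The plan is to exhibit a single forcing, the Mathias forcing $\mathbb{M}\left(\mathcal{I}\left(\mathcal{A}\right)\right)$ associated with the ideal generated by $\mathcal{A}$, and to verify its four required features: that it is ccc, that it destroys $\mathcal{A}$, that it adds no dominating real, and that it adds no unsplit real. The first two come for free from material already in place. For the ccc, I would observe that $\mathbb{M}\left(\mathcal{I}\left(\mathcal{A}\right)\right)$ is in fact $\sigma$-centered: any two conditions $\left(s,A\right)$ and $\left(s,B\right)$ sharing the same stem $s$ admit the common extension $\left(s,A\cup B\right)$ (here $A\cup B\in\mathcal{I}\left(\mathcal{A}\right)$ because $\mathcal{I}\left(\mathcal{A}\right)$ is an ideal, and the third clause of the order is vacuous since the stems coincide), so the conditions decompose into countably many centered pieces indexed by stems. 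Since it was already noted that $\mathbb{M}\left(\mathcal{I}\right)$ destroys $\mathcal{I}$, the generic Mathias real witnesses the loss of maximality, and destroying $\mathcal{I}\left(\mathcal{A}\right)$ is the same as destroying $\mathcal{A}$.

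For the absence of a dominating real I would simply chain the implications already established: a Shelah-Stepr\={a}ns \textsf{MAD} family is Hurewicz by the preceding proposition, every Hurewicz ideal is Canjar, and $\mathcal{I}$ is Canjar precisely when $\mathbb{M}\left(\mathcal{I}\right)$ adds no dominating real. Hence $\mathbb{M}\left(\mathcal{I}\left(\mathcal{A}\right)\right)$ adds no dominating real.

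For the absence of an unsplit real I would invoke the preservation theorem proved immediately above. Since $\mathcal{A}$ is Hurewicz, fixing a tail block-splitting family $\mathcal{S}=\left\{S_{\alpha}\mid\alpha\in\omega_{1}\right\}$, that proposition guarantees $\mathbb{M}\left(\mathcal{I}\left(\mathcal{A}\right)\right)$ preserves $\mathcal{S}$ as a tail block-splitting family. A tail block-splitting family is in particular a splitting family, so $\mathcal{S}$ stays splitting in the extension; consequently every real of the extension is split by some $S_{\alpha}\in\mathcal{S}\subseteq V$, which is exactly the statement that no unsplit real is added.

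The routine glue is painless, so the only genuinely delicate point is the last one, and its delicacy lies entirely in the \emph{existence} of the witnessing splitting family. The preservation proposition yields ``no unsplit real'' only relative to a tail block-splitting family, and such families of size $\omega_{1}$ are guaranteed to exist only when $\mathfrak{d}=\omega_{1}$ (indeed their existence already forces $\mathfrak{s}=\omega_{1}$). I would therefore either read the corollary in that setting or, more carefully, present $\mathbb{M}\left(\mathcal{I}\left(\mathcal{A}\right)\right)$ as the witnessing forcing and record that its failure to add an unsplit real is obtained by preserving whatever tail block-splitting family is available in the ground model; everything else reduces to citing the chain \emph{Shelah-Stepr\={a}ns} $\Rightarrow$ \emph{Hurewicz} $\Rightarrow$ \emph{Canjar} together with the elementary facts about the Mathias real.
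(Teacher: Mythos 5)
Your proposal is correct and follows exactly the paper's own route: the witnessing forcing is $\mathbb{M}\left(  \mathcal{I}\left(  \mathcal{A}\right)  \right)  $, which is $\sigma$-centered and destroys $\mathcal{A}$, adds no dominating real via the chain Shelah-Stepr\={a}ns $\Rightarrow$ Hurewicz $\Rightarrow$ Canjar, and adds no unsplit real because, being the Mathias forcing of a Hurewicz ideal, it preserves tail block-splitting families. The caveat you flag at the end --- that this last step requires a tail block-splitting family to exist in the ground model, which is guaranteed only under hypotheses such as $\mathfrak{d}=\omega_{1}$ --- is genuine, but the paper's implicit proof carries the very same hypothesis (the corollary is intended for contexts like $\mathsf{CH}$, e.g.\ in the discussion of Brendle's problem), so your reading is faithful to the paper rather than a departure from it.
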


\qquad\ \ \ \ \qquad

We will now find a notion stronger than both strongly tight and
Shelah-Stepr\={a}ns:\qquad\qquad\ \ \ \ 

\begin{definition}
Let $\mathcal{I}$ be an ideal on $\omega.$

\qquad\ \ \ \qquad\ \ 

\begin{enumerate}
\item We say a family $X=\left\{  X_{n}\mid n\in\omega\right\}  $ such that
$X_{n}\subseteq\left[  \omega\right]  ^{<\omega}$ is \emph{locally finite
according to }$\mathcal{I}$ if for every $B\in\mathcal{I}$ for almost all
$n\in\omega$ there is $s\in X_{n}$ such that $s\cap B=\emptyset.$

\item We say $\mathcal{I}$ is \emph{raving }if for every family $X=\left\{
X_{n}\mid n\in\omega\right\}  $ that is locally finite according to\emph{
}$\mathcal{I}$ there is $B\in\mathcal{I}$ such that $B$ contains at least one
element of each $X_{n}.$ \qquad\ \ \ \ \qquad\ \ 
\end{enumerate}
\end{definition}

\qquad\ \ \ \ 

It is easy to see that every raving \textsf{MAD} family is both
Shelah-Stepr\={a}ns and strongly tight. The following lemma is easy and left
to the reader:\ 

\begin{lemma}
Let $\mathcal{A}$ and $\mathcal{B}$ be two \textsf{MAD} families. If
$\mathcal{A}$ is raving and $\mathcal{I}\left(  \mathcal{A}\right)  \leq
_{K}\mathcal{I}\left(  \mathcal{B}\right)  $ then $\mathcal{B}$ is raving.
\end{lemma}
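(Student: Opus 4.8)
The plan is to follow the same template used earlier in this chapter for the upward-closure of the Shelah-Stepr\={a}ns property and of strong tightness under $\leq_{K}$: fix a witnessing Kat\v{e}tov morphism, transport the relevant combinatorial object through it, apply the hypothesis on $\mathcal{A}$, and pull the resulting witness back. Concretely, since $\mathcal{I}\left(  \mathcal{A}\right)  \leq_{K}\mathcal{I}\left(  \mathcal{B}\right)  $, I would fix a Kat\v{e}tov morphism $f:\left(  \omega,\mathcal{I}\left(  \mathcal{B}\right)  \right)  \longrightarrow\left(  \omega,\mathcal{I}\left(  \mathcal{A}\right)  \right)  $, so that $f^{-1}\left(  A\right)  \in\mathcal{I}\left(  \mathcal{B}\right)  $ for every $A\in\mathcal{I}\left(  \mathcal{A}\right)  $. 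I must show $\mathcal{B}$ is raving, so I start with an arbitrary family $X=\left\{  X_{n}\mid n\in\omega\right\}  $ (with $X_{n}\subseteq\left[  \omega\right]  ^{<\omega}$) that is locally finite according to $\mathcal{I}\left(  \mathcal{B}\right)  $, and I must produce $B\in\mathcal{I}\left(  \mathcal{B}\right)  $ meeting the requirement that $B$ contains at least one element of each $X_{n}$.

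The key steps, in order, are as follows. First, I would push $X$ forward by setting $X_{n}^{\prime}=\left\{  f\left[  s\right]  \mid s\in X_{n}\right\}  $; since each $s$ is a nonempty finite set and $f$ is a function, each $f\left[  s\right]  $ is a nonempty finite subset of $\omega$, so $X^{\prime}=\left\{  X_{n}^{\prime}\mid n\in\omega\right\}  $ is a legitimate candidate family. Second, I would verify that $X^{\prime}$ is locally finite according to $\mathcal{I}\left(  \mathcal{A}\right)  $: given $A\in\mathcal{I}\left(  \mathcal{A}\right)  $, we have $f^{-1}\left(  A\right)  \in\mathcal{I}\left(  \mathcal{B}\right)  $, so by local finiteness of $X$ there is, for almost all $n$, some $s\in X_{n}$ with $s\cap f^{-1}\left(  A\right)  =\emptyset$, whence $f\left[  s\right]  \cap A=\emptyset$ and $f\left[  s\right]  \in X_{n}^{\prime}$ is the required witness. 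Third, I would invoke that $\mathcal{A}$ is raving to obtain $A\in\mathcal{I}\left(  \mathcal{A}\right)  $ containing at least one element of each $X_{n}^{\prime}$; that is, for each $n$ there is $s\in X_{n}$ with $f\left[  s\right]  \subseteq A$. Finally, I would set $B=f^{-1}\left(  A\right)  \in\mathcal{I}\left(  \mathcal{B}\right)  $ and note that $f\left[  s\right]  \subseteq A$ is equivalent to $s\subseteq f^{-1}\left(  A\right)  =B$, so $B$ contains an element of each $X_{n}$, which is exactly what ravingness of $\mathcal{B}$ demands.

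The entire argument is routine, which is why the statement is marked easy; there is no genuine obstacle, only two points deserving a line of care. The first is the elementary equivalence $f\left[  s\right]  \subseteq A\iff s\subseteq f^{-1}\left(  A\right)  $, which is what makes the pushforward and pullback match up cleanly and handles the possibility that $f$ is not injective. The second is confirming that the quantifier structure of ``locally finite according to $\mathcal{I}$'' (for \emph{every} $B\in\mathcal{I}$, for \emph{almost all} $n$, \emph{some} $s\in X_{n}$ misses $B$) is preserved verbatim under the pushforward; since $f^{-1}$ maps $\mathcal{I}\left(  \mathcal{A}\right)  $ into $\mathcal{I}\left(  \mathcal{B}\right)  $, the ``for every'' clause transfers without loss, and the ``almost all $n$'' is copied directly from the hypothesis on $X$. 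With these two observations in hand, the four steps above assemble into the proof.
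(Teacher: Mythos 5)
Your proof is correct, and it is exactly the intended argument: the paper leaves this lemma to the reader precisely because it follows the same push-forward/pull-back template as its proofs that the Shelah-Stepr\={a}ns property and strong tightness are upward closed under $\leq_{\mathsf{K}}$, which is the template you follow. The two verifications you flag, namely $s\cap f^{-1}\left(  A\right)  =\emptyset\iff f\left[  s\right]  \cap A=\emptyset$ and $f\left[  s\right]  \subseteq A\iff s\subseteq f^{-1}\left(  A\right)  $, are indeed the only points where care is needed, and you handle both correctly.
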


\qquad\ \ \qquad\ \ 

We can construct such families with the parametrized diamond principles:

\begin{proposition}
$\ \ \qquad\ \ \ \qquad$

\begin{enumerate}
\item $\Diamond\left(  \mathfrak{b}\right)  $ implies there is a
Shelah-Stepr\={a}ns \textsf{MAD} family.

\item $\Diamond\left(  \mathfrak{d}\right)  $ implies there is a raving
\textsf{MAD} family.
\end{enumerate}
\end{proposition}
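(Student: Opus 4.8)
The plan is to run a single transfinite recursion of length $\omega_1$ building an increasing chain of countable almost disjoint families $\mathcal{A}_\alpha=\{A_\xi\mid\xi<\alpha\}$, set $\mathcal{A}=\bigcup_{\alpha<\omega_1}\mathcal{A}_\alpha$, and use the relevant parametrized diamond to steer the choice of $A_\alpha$ so that $\mathcal{I}(\mathcal{A})$ ends up Shelah-Stepr\={a}ns (part 1) or raving (part 2). First I would record a reduction that removes any need to bookkeep maximality separately: if $\mathcal{I}(\mathcal{A})$ is Shelah-Stepr\={a}ns then $\mathcal{A}$ is automatically \textsf{MAD}. Indeed, if some $Y\in[\omega]^\omega$ were almost disjoint from every member of $\mathcal{A}$, then $X=\{\{y\}\mid y\in Y\}\in(\mathcal{I}(\mathcal{A})^{<\omega})^+$, since every finite union from $\mathcal{A}$ omits cofinitely many points of $Y$; Shelah-Stepr\={a}nsness would then yield $Y'\in[Y]^\omega$ with $\bigcup\{\{y\}\mid y\in Y'\}\in\mathcal{I}(\mathcal{A})$, forcing some $A\in\mathcal{A}$ to meet $Y$ infinitely, a contradiction. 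Since raving implies Shelah-Stepr\={a}ns, the same remark yields maximality in part 2. So in both parts it suffices to arrange the covering property of the generated ideal.

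The heart of the matter is a per-stage \emph{engine lemma}. For part 1, given a countable \textsf{AD} family $\mathcal{B}=\{B_k\}$ with finite unions $U_m=B_0\cup\dots\cup B_m$ and a set $X=\{s_j\}\subseteq[\omega]^{<\omega}$ positive for $\mathcal{I}(\mathcal{B})$, I would define $c(\mathcal{B},X)\in\omega^\omega$ by letting $c(m)$ be the least $j$ with $s_j\cap U_m=\emptyset$ and $\min s_j>m$ (positivity guarantees existence). I then claim that from \emph{any} $g^*\in\omega^\omega$ with $c(\mathcal{B},X)\ngeq^* g^*$, i.e. with $M=\{m\mid g^*(m)>c(m)\}$ infinite, the set $A=\bigcup_{m\in M}s_{c(m)}$ is infinite, is almost disjoint from every $B_k$ (for $m\in M$ with $m\geq k$ one has $s_{c(m)}\cap B_k\subseteq s_{c(m)}\cap U_m=\emptyset$), and contains infinitely many distinct elements of $X$ as subsets (distinctness because $\min s_{c(m)}>m$), so that $Y=\{s\in X\mid s\subseteq A\}$ witnesses $\bigcup Y\in\mathcal{I}(\mathcal{A})$. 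This is exactly where the relation ${}^*\!\ngeq$ of $\Diamond(\mathfrak{b})$ is used: escaping the bound \emph{infinitely often} already suffices, because the Shelah-Stepr\={a}ns demand only asks for infinitely many elements of $X$ inside $A$. For part 2 the analogous engine takes $\{X_n\}$ locally finite according to $\mathcal{I}(\mathcal{B})$, sets $c(m)$ to a threshold past which \emph{every} $X_n$ has an element disjoint from $U_m$, and now needs the guess to \emph{dominate} $c$, so that \emph{every} $X_n$ gets hit; this all-$n$ requirement is precisely what forces the $\leq^*$ relation of $\Diamond(\mathfrak{d})$ in place of mere escaping.

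With the engine in hand I would set up the diamond in the standard way. Fixing a Borel coding of $2^{<\omega_1}$ so that $R\upharpoonright\alpha$ decodes to a pair $(\mathcal{A}_\alpha,X_\alpha)$ consisting of the family built so far together with a predicted challenge, I define the Borel colouring $C(R\upharpoonright\alpha)=c(\mathcal{A}_\alpha,X_\alpha)$, apply $\Diamond(\mathfrak{b})$ (resp. $\Diamond(\mathfrak{d})$) to obtain $g:\omega_1\to\omega^\omega$, and at stage $\alpha$ let $A_\alpha$ be the engine output $A(g(\alpha),\mathcal{A}_\alpha,X_\alpha)$ when $X_\alpha$ is $\mathcal{A}_\alpha$-positive, and any infinite member of $\mathcal{A}_\alpha^\perp$ otherwise. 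Since positivity for the final ideal implies positivity against every $\mathcal{I}(\mathcal{A}_\alpha)$, every genuine challenge is eligible at every stage. The verification follows the usual parametrized-$\diamondsuit$ pattern: for a fixed challenge I pass to the run $R$ coding the construction, observe that $\{\alpha\mid C(R\upharpoonright\alpha)\ngeq^* g(\alpha)\}$ is stationary, and arrange the bookkeeping so that the challenge is the prediction $X_\alpha$ at one of these good stages, where the engine completes it.

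The main obstacle — the only point beyond routine bookkeeping — is exactly this last matching step: there may be $\mathfrak{c}$ many positive challenges but only $\aleph_1$ stages, so I cannot predict each challenge on a club. Resolving this is the genuine content of the parametrized diamond, and it is here that $\mathfrak{b}=\aleph_1$ (resp. $\mathfrak{d}=\aleph_1$), packaged into the guessing, is indispensable. The guiding observation is that escaping $c(\mathcal{B},X)$ handles not only $X$ but every positive \emph{superset} of $X$, so it is enough to meet an $\aleph_1$-indexed family of predictions that is cofinal from below among the positive sets (for raving the dual statement, organised by a dominating family of size $\mathfrak{d}=\aleph_1$). I expect the delicate work to be proving that such an $\aleph_1$-sized family of predictions is adequate — that catching it on a stationary set really forces the full Shelah-Stepr\={a}ns, respectively raving, property of $\mathcal{I}(\mathcal{A})$ — and arranging the coding so that the colouring $C$ is genuinely Borel and the good stationary set is met where intended; the combinatorics of the engine itself is comparatively soft.
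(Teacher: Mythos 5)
Your opening reduction (Shelah-Stepr\={a}ns implies \textsf{MAD}) and the combinatorics of your engine are both fine, but the way you plug the engine into the diamond contains a genuine gap. The principles $\Diamond\left(\mathfrak{b}\right)$ and $\Diamond\left(\mathfrak{d}\right)$ do \emph{not} hand the builder a predicted challenge $X_{\alpha}$ at stage $\alpha$: the guessing sequence takes values in $\omega^{\omega}$ only, and the branch $R$ (which is what codes a challenge) is quantified over \emph{after} the construction is finished, so nothing can be ``decoded from $R\upharpoonright\alpha$'' while you are building. Your stage-$\alpha$ move $A_{\alpha}=A\left(g\left(\alpha\right),\mathcal{A}_{\alpha},X_{\alpha}\right)$ is therefore illegitimate: the output $\bigcup_{m\in M}s_{c\left(m\right)}$ is assembled from the witnesses enumerated by $c\left(\mathcal{A}_{\alpha},X_{\alpha}\right)$, so it needs $X_{\alpha}$ itself, not merely a real escaping $c$. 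A telltale sign is that your engine never uses $g^{\ast}$ in any essential way: given $\left(\mathcal{B},X\right)$ you could just as well take $M=\omega$, i.e. $\bigcup_{m\in\omega}s_{c\left(m\right)}$. That is a $\mathsf{CH}$/$\diamondsuit$-style step, and it shows your scheme secretly requires predicting the challenges, which is exactly what parametrized diamonds do not provide. Your fallback --- bookkeeping an $\aleph_{1}$-indexed family of predictions that is dense from below among the positive sets --- cannot be carried out either: $\Diamond\left(\mathfrak{b}\right)$ is consistent with $\mathfrak{c}>\omega_{1}$ (it holds in the Cohen model, which is how the paper gets Shelah-Stepr\={a}ns families there), while any such dense family must have size $\mathfrak{c}$. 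Indeed, fix an almost disjoint family $\left\{Y_{\beta}\mid\beta<\mathfrak{c}\right\}\subseteq\mathcal{I}\left(\mathcal{A}\right)^{+}$ (this exists inside any positive set by a proposition in the preliminaries) and put $X_{\beta}=\left\{\left\{y\right\}\mid y\in Y_{\beta}\right\}$; a positive subset of $X_{\beta}$ determines an infinite subset of $Y_{\beta}$, so distinct $X_{\beta}$ must contain distinct members of the dense family.

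The missing idea --- and the actual content of the paper's proof --- is to make the stage-$\alpha$ output depend on the guess \emph{alone} and push all dependence on the challenge into the coloring. The paper colors $t=\left(\mathcal{A}_{t},X_{t}\right)$ by an increasing $C\left(t\right)\in\omega^{\omega}$ chosen so that for each $n$ there is $s\in X_{t}$ with $s\subseteq C\left(t\right)\left(n\right)$ (as a subset of that initial segment of $\omega$) and $s\cap\left(A_{\alpha_{0}}\cup\dots\cup A_{\alpha_{n}}\cup n\right)=\emptyset$, and then defines, from the guess only, $A_{\alpha}=\bigcup_{n\in\omega}\bigl(G\left(\alpha\right)\left(n\right)\setminus\left(A_{\alpha_{0}}\cup\dots\cup A_{\alpha_{n}}\right)\bigr)$. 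This set is automatically almost disjoint from $\mathcal{A}_{\alpha}$, and it is monotone in the guess: whenever $G\left(\beta\right)\left(n\right)>C\left(t\right)\left(n\right)$, the witness $s$ bounded by $C\left(t\right)\left(n\right)$ satisfies $s\subseteq G\left(\beta\right)\left(n\right)\setminus\left(A_{\beta_{0}}\cup\dots\cup A_{\beta_{n}}\right)\subseteq A_{\beta}$. Hence at any stage $\beta$ where the guess escapes $C\left(R\upharpoonright\beta\right)$ infinitely often --- $R$ being the branch coding the finished construction together with an \emph{arbitrary} challenge $X$ --- the single set $A_{\beta}$ already contains infinitely many elements of $X$, with no prediction of $X$ ever needed; one real $G\left(\beta\right)$ simultaneously serves every challenge guessed at $\beta$. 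The raving case is the same shape, with thresholds $d\left(n\right)$ built into the coloring and $\leq^{\ast}$-domination (hence $\Diamond\left(\mathfrak{d}\right)$) replacing escaping, since there \emph{every} $X_{i}^{t}$ must be hit.
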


\begin{proof}
For every $\alpha<\omega_{1}$ fix an enumeration $\alpha=\left\{  \alpha
_{n}\mid n\in\omega\right\}  .$ We will first show that $\Diamond\left(
\mathfrak{b}\right)  $ implies there is a Shelah-Stepr\={a}ns \textsf{MAD}
family. With a suitable coding, the coloring $C$ will be defined for pairs
$t=\left(  \mathcal{A}_{t},X_{t}\right)  $ where $\mathcal{A}_{t}=\left\langle
A_{\xi}\mid\xi<\alpha\right\rangle $ and $X_{t}\subseteq\left[  \omega\right]
^{<\omega}$ (we identify $t$ with an element of $2^{\alpha}$). We define
$C\left(  t\right)  $ to be the constant $0$ function in case $\mathcal{A}%
_{t}$ is not an almost disjoint family or $X_{t}\notin\left(  \mathcal{I}%
\left(  \mathcal{A}_{t}\right)  ^{<\omega}\right)  ^{+}.$ In the other case,
define an increasing function $C\left(  t\right)  :\omega\longrightarrow
\omega$ such that if $n\in\omega$ then there is $s\in X_{t}$ such that
$s\subseteq C\left(  t\right)  \left(  n\right)  $ and $s\cap\left(
A_{\alpha_{0}}\cup...\cup A_{\alpha_{n}}\cup n\right)  =\emptyset.$

\qquad\ \qquad\ \ \ 

Using $\Diamond\left(  \mathfrak{b}\right)  $ let $G:\omega_{1}\longrightarrow
\omega^{\omega}$ be a guessing sequence for $C,$ by changing $G$ if necessary,
we may assume that all the $G\left(  \alpha\right)  $ are increasing and if
$\alpha<\beta$ then $G\left(  \alpha\right)  <^{\ast}G\left(  \beta\right)  .$
We will now define our \textsf{MAD} family: start by taking a partition
$\left\{  A_{n}\mid n\in\omega\right\}  $ of $\omega.$ Having defined $A_{\xi
}$ for all $\xi<\alpha,$ we proceed to define $A_{\alpha}=\bigcup
\limits_{n\in\omega}\left(  G\left(  \alpha\right)  \left(  n\right)
\backslash A_{\alpha_{0}}\cup...\cup A_{\alpha_{n}}\right)  $ in case this is
an infinite set, otherwise just take any $A_{\alpha}$ that is almost disjoint
with $\mathcal{A}_{\alpha}=\left\{  A_{\xi}\mid\xi<\alpha\right\}  .$ We will
see that $\mathcal{A}$ is a Shelah-Stepr\={a}ns \textsf{MAD} family. Let
$X\in\left(  \mathcal{I}\left(  \mathcal{A}\right)  ^{<\omega}\right)  ^{+}$.
Consider the branch $R=\left(  \left\langle A_{\xi}\mid\xi<\omega
_{1}\right\rangle ,X\right)  $ and pick $\beta>\omega$ such that $C\left(
R\upharpoonright\beta\right)  $ $^{\ast}\ngeq G\left(  \beta\right)  .$ It is
easy to see that $A_{\beta}$ contains infinitely many elements of $X.$

\qquad\ \ 

Now we will prove that $\Diamond\left(  \mathfrak{d}\right)  $ implies there
is a raving \textsf{MAD} family. With a suitable coding, the coloring $C$ will
be defined for pairs $t=\left(  \mathcal{A}_{t},X_{t}\right)  $ where
$\mathcal{A}_{t}=\left\langle A_{\xi}\mid\xi<\alpha\right\rangle $ and
$X_{t}=\left\{  X_{n}^{t}\mid n\in\omega\right\}  \subseteq\wp\left(  \left[
\omega\right]  ^{<\omega}\right)  $. We define $C\left(  t\right)  $ to be the
constant $0$ function in case $\mathcal{A}_{t}$ is not an almost disjoint
family or $X_{t}$ is not locally finite according to $\mathcal{I}\left(
\mathcal{A}_{t}\right)  .$ We will describe what to do in the other case. For
every $n\in\omega$ define $B_{n}=\bigcup\limits_{i<n}A_{\alpha_{i}}$ (hence
$B_{0}=\emptyset$) and let $d\left(  n\right)  $ be the smallest $k\geq n$
such that if $l\geq k$ then $B_{n}$ does not intersect every element of
$X_{l}^{t}.$ We define an increasing function $C\left(  t\right)
:\omega\longrightarrow\omega$ such that for every $n,i\in\omega,$
if\ $d\left(  n\right)  \leq i<d\left(  n+1\right)  $ then $C\left(  t\right)
\left(  n\right)  $ $\backslash$ $B_{n}$ contains an element of $X_{i}^{t}.$
The rest of the proof is similar to the case of $\Diamond\left(
\mathfrak{b}\right)  .$
\end{proof}

\qquad\ \ \ 

By the previous result, we conclude that there are Shelah-Stepr\={a}ns
\textsf{MAD} families in the Cohen model. Denote by $\mathbb{P}_{\mathsf{MAD}%
}$ the set of all countable AD families ordered by inclusion. This is a
$\sigma$-closed forcing and adds a \textsf{MAD} family $\mathcal{A}_{gen}.$ We
now have the following:

\begin{proposition}
$\mathbb{P}_{\mathsf{MAD}}$ forces that$\ \mathcal{A}_{gen}$ is raving.
\end{proposition}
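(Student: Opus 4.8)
The plan is to prove this by a density argument that exploits the $\sigma$-closure of $\mathbb{P}_{\mathsf{MAD}}$ (a decreasing $\omega$-sequence of countable AD families has as union a countable AD family), so that the forcing adds no new reals. First I would fix a $\mathbb{P}_{\mathsf{MAD}}$-name $\dot{X}$ and a condition $p$ forcing that $\dot{X}=\{\dot{X}_{n}\mid n\in\omega\}$ is locally finite according to $\mathcal{I}(\mathcal{A}_{gen})$. Since each $X_{n}$ is a subset of the countable set $[\omega]^{<\omega}$, the family $X$ codes a single real; as the forcing adds no reals there is $p_{1}\leq p$ and a ground model family $X=\{X_{n}\mid n\in\omega\}\in V$ with $p_{1}\Vdash\dot{X}=\check{X}$. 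We may assume each $X_{n}$ consists of nonempty finite sets (an index $n$ with $\emptyset\in X_{n}$ imposes no requirement and can be discarded, and if only finitely many indices survive then a finite $B$ already works). It then suffices to find $q\leq p_{1}$ forcing the existence of $B\in\mathcal{I}(\mathcal{A}_{gen})$ that contains an element of each $X_{n}$; since $p_{1}$ was an arbitrary extension of $p$, this makes the relevant set of conditions dense below $p$, which is all we need.

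Next I would extract usable instances of local finiteness back in $V$. Enumerate $p_{1}=\{A_{m}'\mid m\in\omega\}$ and set $C_{m}=A_{0}'\cup\dots\cup A_{m}'\cup\{0,\dots,m-1\}$. Because $A_{0}',\dots,A_{m}'$ are elements of $p_{1}$ they lie in $\mathcal{A}_{gen}$ below $p_{1}$, so $p_{1}\Vdash C_{m}\in\mathcal{I}(\mathcal{A}_{gen})$, and the forced local finiteness specializes to $p_{1}\Vdash\forall^{\infty}n\,\exists s\in X_{n}\,(s\cap C_{m}=\emptyset)$. This last statement concerns only the ground model reals $X$ and $C_{m}$, so it is arithmetic and absolute, hence simply true in $V$. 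Thus for each $m$ there is a (strictly increasing) $N_{m}$ such that for every $n\geq N_{m}$ some $s\in X_{n}$ is disjoint from $C_{m}$.

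Then I would build the witness by a single recursion in $V$. For $N_{m}\leq n<N_{m+1}$ pick $s_{n}\in X_{n}$ with $s_{n}\cap C_{m}=\emptyset$ (and $s_{n}\in X_{n}$ arbitrary for $n<N_{0}$), and put $A=\bigcup_{n}s_{n}$. Since $s_{n}\cap[0,m)=\emptyset$ whenever $n\geq N_{m}$, the minima of the $s_{n}$ tend to infinity, so $A$ is infinite; and for fixed $j$, every $n\geq N_{j}$ satisfies $s_{n}\cap A_{j}'=\emptyset$, because $s_{n}$ avoids $C_{m(n)}\supseteq A_{j}'$ for the stage $m(n)\geq j$ containing $n$. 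Hence $A\cap A_{j}'\subseteq\bigcup_{n<N_{j}}s_{n}$ is finite, so $A$ is infinite and almost disjoint from every element of $p_{1}$, and $q=p_{1}\cup\{A\}$ is a condition extending $p_{1}$. Finally $q\Vdash A\in\mathcal{A}_{gen}\subseteq\mathcal{I}(\mathcal{A}_{gen})$, and since $s_{n}\subseteq A$ for every $n$ the set $A$ contains an element of each $X_{n}$; thus $B=A$ witnesses raving below $q$.

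I expect the only delicate point to be the bookkeeping that simultaneously secures almost-disjointness from all of $p_{1}$ and the infinitude of $A$: this is exactly why the initial segment $\{0,\dots,m-1\}$ is folded into $C_{m}$, so that one application of local finiteness per stage both pushes the minima of the chosen $s_{n}$ upward (giving $|A|=\omega$) and clears the finitely many elements $A_{0}',\dots,A_{m}'$ (giving almost disjointness). Everything else---the $\sigma$-closure yielding no new reals, the absoluteness of the arithmetic local-finiteness statement, and the reduction to a density argument---is routine.
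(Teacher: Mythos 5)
Your proof is correct and follows essentially the same route as the paper's: pass to a ground model family (the paper does this only implicitly, relying on $\sigma$-closure), apply the forced local finiteness to the finite unions $C_m$ of elements of the condition --- an arithmetic, hence absolute, statement --- to select $s_n\in X_n$ avoiding them, and extend the condition by $A=\bigcup_n s_n$. You are in fact somewhat more careful than the paper, which neither spells out the no-new-reals reduction nor folds the initial segment $\{0,\dots,m-1\}$ into $C_m$, so the paper never actually verifies that $A$ is infinite; your bookkeeping closes that small gap.
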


\begin{proof}
Let $\mathcal{B\in}$ $\mathbb{P}_{\mathsf{MAD}}$ and $X=\left\{  X_{n}\mid
n\in\omega\right\}  $ such that $\mathcal{B}$ forces that $X$ is locally
finite according to $\mathcal{I}\left(  \mathcal{A}_{gen}\right)  .$ Let
$\mathcal{B}=\left\{  B_{n}\mid n\in\omega\right\}  $ and we define
$E_{n}=B_{0}\cup...\cup B_{n}$ for every $n\in\omega.$ We can then find an
interval partition $\mathcal{P}=\left\{  P_{n}\mid n\in\omega\right\}  $ such
that if $i\in P_{n+1}$ then $E_{n}$ does not intersect every element of
$X_{i}.$ For every $i\in\omega$ we choose $s_{i}\in X_{i}$ as follows: if
$i\in P_{0}$ let $s_{i}$ be any element of $X_{i}$ and if $i\in P_{n+1}$ we
choose $s_{i}\in X_{i}$ such that $s_{i}\cap E_{n}=\emptyset.$ Let $A=%
%TCIMACRO{\tbigcup \limits_{n\in\omega}}%
%BeginExpansion
{\textstyle\bigcup\limits_{n\in\omega}}
%EndExpansion
s_{n}$ so $A\in\mathcal{B}^{\perp}$ and the condition $\mathcal{B\cup}\left\{
A\right\}  \in\mathbb{P}_{\mathsf{MAD}}$ is the extension of $\mathcal{B}$ we
were looking for.
\end{proof}

\qquad\ \ 

We will now comment on the reason for why we introduced the concept of raving
\textsf{MAD} families. First we take a look at the following theorems:

\begin{proposition}
[Todorcevic]An ultrafilter $\mathcal{U}$ is $\wp\left(  \omega\right)
\setminus$\emph{FIN} generic over $L\left(  \mathbb{R}\right)  $ if and only
if $\mathcal{U}$ is Ramsey.
\end{proposition}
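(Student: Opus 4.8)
The plan is to treat $P(\omega)/\mathsf{FIN}$ as the separative order $([\omega]^{\omega},\subseteq^{\ast})$, whose generic filters are exactly the nonprincipal ultrafilters on $\omega$ meeting every dense set lying in the ground model $L(\mathbb{R})$. The first thing I would record is that, since $L(\mathbb{R})$ contains every real, it contains every subset of $\omega$, every partition $\{P_{n}\mid n\in\omega\}$ of $\omega$ (coded by its block function), and every coloring $c:[\omega]^{2}\longrightarrow 2$; consequently the relevant dense sets are automatically members of $L(\mathbb{R})$, and $\mathcal{U}$ decides every subset of $\omega$, so it really is an ultrafilter. The argument then splits into two implications, of which the reverse one carries all the weight.

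For the forward implication I would argue that genericity forces the partition characterization of Ramseyness from Mathias' proposition. Fix a partition $\{P_{n}\mid n\in\omega\}$ and let $D$ be the set of all $A\in[\omega]^{\omega}$ such that either $A\subseteq^{\ast}P_{n}$ for some $n$, or $\left\vert A\cap P_{n}\right\vert \leq 1$ for every $n$. I would check that $D$ is dense: below a condition $B$, if some $P_{n}\cap B$ is infinite take $A=P_{n}\cap B$, and otherwise build a selector $A\subseteq B$ meeting each $P_{n}$ in at most one point. Since $D\in L(\mathbb{R})$, genericity gives $A\in\mathcal{U}\cap D$; in the first case $P_{n}\in\mathcal{U}$ by upward closure, in the second $A$ is the required selector. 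Hence $\mathcal{U}$ is Ramsey. This direction uses only the infinite Ramsey theorem and no determinacy.

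For the reverse implication, suppose $\mathcal{U}$ is Ramsey, hence selective (a $P$-point and a $Q$-point) by Mathias' proposition, and let $D\in L(\mathbb{R})$ be dense and open, i.e.\ downward closed under $\subseteq^{\ast}$. I want $\mathcal{U}\cap D\neq\emptyset$. The key reduction is the \emph{Mathias--Ramsey lemma}: for a selective $\mathcal{U}$ and any $X\subseteq[\omega]^{\omega}$ with $X\in L(\mathbb{R})$ there is $A\in\mathcal{U}$ with $[A]^{\omega}\subseteq X$ or $[A]^{\omega}\cap X=\emptyset$. Granting this for $X=D$, the alternative $[A]^{\omega}\cap D=\emptyset$ is impossible: density gives $B\in D$ with $B\subseteq^{\ast}A$, and then $B\cap A\in[A]^{\omega}$ lies in $D$ by openness, a contradiction. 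Thus $[A]^{\omega}\subseteq D$, so in particular $A\in D\cap\mathcal{U}$, as desired.

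The main obstacle, and the only place large cardinals enter, is the Mathias--Ramsey lemma. I would prove it by a Galvin--Prikry/Mathias style game in which player $\mathsf{I}$ attempts to steer an increasing run of finite approximations into a member of $X$ while player $\mathsf{II}$ attempts to avoid $X$, with all moves constrained to lie inside sets from $\mathcal{U}$. Selectivity is what lets both players cooperate with $\mathcal{U}$: the $P$-point property supplies pseudointersections so that the diagonalized limit set still belongs to $\mathcal{U}$, and the $Q$-point property lets one thin to selectors so that the limit stays $\mathcal{U}$-positive. Coding this run as a branch through a tree $T$ and membership in $X$ as a payoff set $\mathcal{W}$, one has $\mathcal{W}\in L(\mathbb{R})$ because $X\in L(\mathbb{R})$, so the game $\mathcal{G}\left(T,f,\mathcal{W}\right)$ is determined by the extension of Martin's theorem quoted above. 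A winning strategy for $\mathsf{I}$ then manufactures $A\in\mathcal{U}$ with $[A]^{\omega}\subseteq X$, and a winning strategy for $\mathsf{II}$ manufactures $A\in\mathcal{U}$ with $[A]^{\omega}\cap X=\emptyset$; the delicate part is verifying that the strategy can be run along $\mathcal{U}$, so that the homogeneous set produced is genuinely a member of $\mathcal{U}$ rather than merely $\mathcal{U}$-positive, which is exactly where selectivity is invoked at every stage.
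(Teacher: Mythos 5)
The paper never proves this proposition: it is quoted without proof (with attribution to Todorcevic) purely as motivation for introducing raving \textsf{MAD} families, so there is no argument of the paper to compare yours against. Judged on its own, your outline is the standard and correct route. The forward direction as you give it is complete: every partition of $\omega$ is coded by a real, hence the dense set of conditions that are either almost contained in one block or partial selectors lies in $L\left(\mathbb{R}\right)$, and meeting it yields exactly the partition form of Ramseyness used in the paper; the same remark shows a generic filter decides every subset of $\omega$. Your reduction of the hard direction to the homogeneity lemma (for every $X\in L\left(\mathbb{R}\right)$ there is $A\in\mathcal{U}$ with $\left[A\right]^{\omega}\subseteq X$ or $\left[A\right]^{\omega}\cap X=\emptyset$) is also correct, and the derivation of $\mathcal{U}\cap D\neq\emptyset$ from it via open density is clean. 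You also invoke the paper's determinacy proposition in exactly the form needed: the game tree there is allowed to be an arbitrary tree of $V$ (yours is built from $\mathcal{U}\notin L\left(\mathbb{R}\right)$), and only the payoff, computed continuously from the run, must lie in $L\left(\mathbb{R}\right)$.

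The gap is that the homogeneity lemma itself is where essentially all the content of Todorcevic's theorem is concentrated, and you assert rather than prove it. Saying that a winning strategy for either player can be ``run along $\mathcal{U}$'' is naming the difficulty, not resolving it: the actual conversion is Kastanas-style fusion, in which one repeatedly uses the $P$-point property to pseudointersect the $\mathcal{U}$-sets demanded by the strategy and the $Q$-point property to thin to selectors, all while keeping every approximation inside $\mathcal{U}$, and this argument has to be carried out separately for a strategy of $\mathsf{I}$ (producing $A\in\mathcal{U}$ with $\left[A\right]^{\omega}\subseteq X$) and of $\mathsf{II}$ (producing $A\in\mathcal{U}$ with $\left[A\right]^{\omega}\cap X=\emptyset$). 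One further point you should make explicit: the two implications have different metamathematical status. The forward direction is a \textsf{ZFC} argument, but the reverse direction genuinely requires the large-cardinal hypothesis hidden in the determinacy proposition. Indeed, under $V=L$ selective ultrafilters exist, yet no ultrafilter whatsoever is $\wp\left(\omega\right)\setminus$\textsf{FIN} generic over $L\left(\mathbb{R}\right)$: for any ultrafilter $\mathcal{V}\in L\left(\mathbb{R}\right)$ the set of conditions outside $\mathcal{V}$ is dense and lies in $L\left(\mathbb{R}\right)$, so a generic ultrafilter cannot belong to $L\left(\mathbb{R}\right)$, while under $V=L$ every ultrafilter does. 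So the proposition should be read with the \textsf{LC} hypothesis that the surrounding text of the paper keeps as a standing assumption, and your proof correctly localizes it to the determinacy step.
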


\begin{proposition}
[Chodousky, Zapletal]Let $\mathcal{I}$ be an $F_{\sigma}$-ideal and
$\mathcal{U}$ an ultrafilter. $\mathcal{U}$ is $\wp\left(  \omega\right)
\setminus\mathcal{I}$ generic over $L\left(  \mathbb{R}\right)  $ if and only
if $\mathcal{I\cap U=\emptyset}$ and for every closed set $\mathcal{C}$ if
$\mathcal{C}\cap\mathcal{U=\emptyset}$ then there is $A\in\mathcal{U}$ such
that $A\cap Y\in\mathcal{I}$ for every $Y\in$ $\mathcal{C}.$
\end{proposition}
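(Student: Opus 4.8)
The plan is to read ``$\mathcal{U}$ is $\wp\left(  \omega\right)  \setminus\mathcal{I}$-generic over $L(\mathbb{R})$'' as ``$\mathcal{U}$ meets every dense subset of the forcing $\mathbb{Q}=\wp\left(  \omega\right)  \setminus\mathcal{I}=\mathcal{I}^{+}$ (ordered by $\subseteq^{\ast}$) that belongs to $L(\mathbb{R})$''; here the generic filter is exactly the collection of $\mathcal{I}$-positive members of $\mathcal{U}$, which generates $\mathcal{U}$. The forward implication will be elementary and use no large cardinals, while the reverse implication is the heart of the matter and will be obtained from the determinacy of games with payoff in $L(\mathbb{R})$ (the proposition \textsf{(LC)} above).

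For the forward direction I assume $\mathcal{U}$ is generic. To see $\mathcal{I}\cap\mathcal{U}=\emptyset$, fix $B\in\mathcal{I}$; the set $D_{B}=\{A\in\mathcal{I}^{+}\mid A\cap B=\emptyset\}$ is dense (given $C\in\mathcal{I}^{+}$ one has $C\setminus B\in\mathcal{I}^{+}$, since $C\cap B\in\mathcal{I}$) and lies in $L(\mathbb{R})$, being definable from the real $B$; genericity yields $A\in\mathcal{U}\cap D_{B}$, so $B\notin\mathcal{U}$. For the closed-set condition I fix a closed $\mathcal{C}$ with $\mathcal{C}\cap\mathcal{U}=\emptyset$ and consider $D_{\mathcal{C}}=\{A\in\mathcal{I}^{+}\mid\forall Y\in\mathcal{C}\ (A\cap Y\in\mathcal{I})\}\cup\{A\in\mathcal{I}^{+}\mid\exists Y\in\mathcal{C}\ (A\subseteq^{\ast}Y)\}$. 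This set is dense by a one-step dichotomy: given $B\in\mathcal{I}^{+}$, either $B\cap Y\in\mathcal{I}$ for every $Y\in\mathcal{C}$, in which case $B$ itself lies in the first part, or some $Y_{0}\in\mathcal{C}$ has $B\cap Y_{0}\in\mathcal{I}^{+}$, in which case $B\cap Y_{0}\subseteq Y_{0}$ lies in the second part. As $D_{\mathcal{C}}$ is definable from a real coding $\mathcal{C}$ it belongs to $L(\mathbb{R})$, so there is $A\in\mathcal{U}\cap D_{\mathcal{C}}$. If $A$ were in the second part then $A\subseteq^{\ast}Y$ for some $Y\in\mathcal{C}$, and since $\mathcal{U}$ is upward closed this forces $Y\in\mathcal{U}$, contradicting $\mathcal{C}\cap\mathcal{U}=\emptyset$; hence $A$ witnesses the desired condition.

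For the reverse direction I assume $\mathcal{I}\cap\mathcal{U}=\emptyset$ together with the closed-set condition, and let $D\in L(\mathbb{R})$ be dense open in $\mathcal{I}^{+}$; I must produce $A\in\mathcal{U}\cap D$. Using the Mazur representation $\mathcal{I}=$\textsf{Fin}$(\varphi)$ of the $F_{\sigma}$ ideal, $\mathcal{I}^{+}$ is a $G_{\delta}$ (hence Borel) subset of $\wp\left(  \omega\right)  $, so the payoff ``the played set belongs to $\mathcal{I}^{+}\cap D$'' lies in $L(\mathbb{R})$. I would set up a Laflamme-style game of the form $\mathcal{G}\left(  T,f,\mathcal{W}\right)  $ in which Player $\mathsf{I}$ plays members $A_{n}\in\mathcal{I}$ and Player $\mathsf{II}$ responds with finite blocks $s_{n}\subseteq\omega\setminus A_{n}$, the outcome being $A=\bigcup_{n}s_{n}$, where $\mathsf{II}$ wins exactly when $A\in\mathcal{I}^{+}\cap D$; this is a game of the kind covered by the determinacy proposition, so it is determined. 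If $\mathsf{II}$ has a winning strategy, then running it against moves manufactured from $\mathcal{U}$ (using $\mathcal{I}\cap\mathcal{U}=\emptyset$ to keep the partial sums $\mathcal{U}$-positive) yields an outcome $A\in\mathcal{U}\cap D$, as required. If instead $\mathsf{I}$ has a winning strategy $\sigma$, then the set $\mathcal{C}_{\sigma}$ of all outcomes consistent with $\sigma$ is closed, disjoint from $\mathcal{U}$, and arranged so that every $A\in\mathcal{U}$ meets some $Y\in\mathcal{C}_{\sigma}$ in an $\mathcal{I}$-positive set; this contradicts the closed-set condition applied to $\mathcal{C}_{\sigma}$. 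Hence $\mathsf{I}$ cannot win and $\mathcal{U}\cap D\neq\emptyset$.

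The main obstacle is the reverse direction, and within it the passage from a winning strategy for $\mathsf{I}$ to a closed set $\mathcal{C}_{\sigma}$ that genuinely refutes the closed-set condition: I must choose the game so that $\mathsf{I}$'s legal moves track $\mathcal{I}$ tightly enough that $\mathcal{C}_{\sigma}$ is closed and $\mathcal{U}$-small while still catching every $\mathcal{U}$-set positively. This is exactly where the $F_{\sigma}$ hypothesis enters, through the lower semicontinuous submeasure $\varphi$, keeping the tree $T$ and the map $f$ continuous and the payoff set simple enough for the $L(\mathbb{R})$-determinacy proposition to apply; the case $\mathcal{I}=$\textsf{FIN} should recover Todorcevic's characterization of Ramsey ultrafilters.
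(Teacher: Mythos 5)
The paper itself offers no proof of this proposition; it is quoted as a result of Chodounsk\'{y} and Zapletal, so your argument has to stand on its own. Your forward direction does: both $D_{B}$ and $D_{\mathcal{C}}$ are dense, definable from real parameters (a code for the $F_{\sigma}$ ideal and a code for $B$ or $\mathcal{C}$), hence in $L\left(\mathbb{R}\right)$, and your dichotomy correctly eliminates the second half of $D_{\mathcal{C}}$ using that $\mathcal{U}$ is nonprincipal (which follows from $\mathcal{U}\cap\mathcal{I}=\emptyset$ and \textsf{FIN}$\subseteq\mathcal{I}$).

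The reverse direction, however, has two genuine gaps, and they are the entire content of the theorem. First, in the case where $\mathsf{II}$ has a winning strategy, you claim that running it against ``moves manufactured from $\mathcal{U}$'' yields an outcome in $\mathcal{U}\cap D$. In your game Player $\mathsf{I}$'s moves are sets in $\mathcal{I}$ that the outcome must \emph{avoid}; avoidance constraints give Player $\mathsf{I}$ no control whatsoever toward membership in $\mathcal{U}$. The outcome $\bigcup_{n}s_{n}$ is dictated by $\mathsf{II}$'s strategy, which lives in $L\left(\mathbb{R}\right)$ and knows nothing about $\mathcal{U}$; nothing prevents such a strategy from confining all of its blocks to a fixed $\mathcal{I}$-positive set $E$ with $\omega\setminus E\in\mathcal{U}$, in which case no run against it produces a member of $\mathcal{U}$ (and ``keeping the partial sums $\mathcal{U}$-positive'' is vacuous, since partial sums are finite sets). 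Second, in the case where $\mathsf{I}$ has a winning strategy $\sigma$, the set $\mathcal{C}_{\sigma}$ of outcomes consistent with $\sigma$ is in general only analytic, not closed: $\mathsf{II}$ has infinitely many legal moves at every round, so $\mathcal{C}_{\sigma}$ is a continuous image of a countably branching tree of plays. Obtaining a closed set requires compactifying the opponent's move space, and this is exactly where Mazur's submeasure $\varphi$ and the $F_{\sigma}$ hypothesis must be written into the \emph{rules} of the game; your setup never does this. Worse, $\mathcal{C}_{\sigma}$ need not be disjoint from $\mathcal{U}$: a losing outcome for $\mathsf{II}$ is merely a set that is $\mathcal{I}$-small or outside $D$, and sets outside $D$ can perfectly well lie in $\mathcal{U}$ --- indeed, if you argue by contradiction from $\mathcal{U}\cap D=\emptyset$, then \emph{every} member of $\mathcal{U}$ is an $\mathcal{I}$-positive set outside $D$, which is precisely the kind of set $\sigma$ is trying to produce. (The one part of your claim that does survive is that every $A\in\mathcal{U}$ positively meets some outcome: since $A\in\mathcal{I}^{+}$ and $\mathsf{I}$'s moves lie in $\mathcal{I}$, Player $\mathsf{II}$ can legally play blocks inside $A$ of ever larger $\varphi$-mass, by lower semicontinuity.) So the closed-set condition cannot be applied to $\mathcal{C}_{\sigma}$ as you describe; your final paragraph names this obstacle but does not overcome it. A correct proof must redesign the game so that the submeasure bounds one player's moves (yielding compactness, hence closedness of the strategy-derived set) and so that disjointness from $\mathcal{U}$ is forced by the payoff itself, with the witness in $\mathcal{U}\cap D$ extracted from the closed-set property --- via the $\mathcal{U}$-holder playing finite blocks of large $\varphi$-mass inside members of $\mathcal{U}$ to refute the strategy --- rather than appearing as the outcome of a single run.
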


\qquad\ \ \ \ 

It would be interesting to find a similar characterization of the
$\mathbb{P}_{\mathsf{MAD}}$ generics over $L\left(  \mathbb{R}\right)  :$

\begin{problem}
Is there a combinatorial characterization of the ideal $\mathcal{I}\left(
\mathcal{A}\right)  \ $where $\mathcal{A}\ $is$\ \mathbb{P}_{\mathsf{MAD}}$
generic over $L\left(  \mathbb{R}\right)  $?
\end{problem}

\qquad\ \ \ \ 

The indestructibility if \textsf{MAD} families is a particular instance of the
following problem:

\begin{problem}
Let $\mathcal{X}$ be a set of ideals. Is there a \textsf{MAD} family that is
not Kat\v{e}tov below any element of $\mathcal{X}$?
\end{problem}

\qquad\ \ \ 

Of course, the answer depends on the nature of the set $\mathcal{X}.$ Recall
that every \textsf{MAD} family is Kat\v{e}tov below \textsf{FIN}$\times
$\textsf{FIN}$,$ so this imposes some condition on $\mathcal{X}.$ The
following is a particularly interesting instance of the problem:

\begin{definition}
\qquad\ \ \ \ \ \ \ \ \ \ \ \ \ \ \ \ \qquad\ \ \ \ \ \ \ \qquad
\ \ \ \ \ \qquad\ \ \ \ 

\begin{enumerate}
\item Let $\mathcal{I}$ be an ideal. $\mathcal{I}$ is \emph{Laflamme }if it is
not Kat\v{e}tov below any $F_{\sigma}$-ideal.

\item A \textsf{MAD} family $\mathcal{A}$ is \emph{Laflamme }if $\mathcal{I}%
\left(  \mathcal{A}\right)  $ is Laflamme.
\end{enumerate}
\end{definition}

\qquad\ \ \ \qquad\ \ 

David Meza and Michael Hru\v{s}\'{a}k proved that every ideal Kat\v{e}tov
above \textsf{conv} is Laflamme (see \cite{TesisDavid}). It is unknown if this
a is characterization for Borel ideals.\qquad\ \ \ 

\begin{lemma}
Let $\mathcal{K}$ be an ideal. The following are equivalent:

\begin{enumerate}
\item $\mathcal{K}$ is Laflamme.

\item $\mathcal{K}$ can not be extended to an $F_{\sigma}$-ideal.
\end{enumerate}
\end{lemma}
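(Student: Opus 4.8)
The plan is to prove the contrapositive of each implication simultaneously, that is, to show that $\mathcal{K}$ is \emph{not} Laflamme if and only if $\mathcal{K}$ can be extended to an $F_{\sigma}$-ideal. Unwinding the definitions, the negation of Laflamme says precisely that there is an $F_{\sigma}$-ideal $\mathcal{J}$ with $\mathcal{K}\leq_{\mathsf{K}}\mathcal{J}$, i.e. a Kat\v{e}tov morphism $f:\left(\omega,\mathcal{J}\right)\longrightarrow\left(\omega,\mathcal{K}\right)$ satisfying $f^{-1}\left(A\right)\in\mathcal{J}$ for every $A\in\mathcal{K}$. So the task reduces to passing between such a morphism and an actual $F_{\sigma}$-ideal sitting above $\mathcal{K}$.

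One direction is immediate: if $\mathcal{K}\subseteq\mathcal{L}$ with $\mathcal{L}$ an $F_{\sigma}$-ideal, then the identity map is a Kat\v{e}tov morphism from $\left(\omega,\mathcal{L}\right)$ to $\left(\omega,\mathcal{K}\right)$, since for $A\in\mathcal{K}$ we have $\mathrm{id}^{-1}\left(A\right)=A\in\mathcal{L}$. Hence $\mathcal{K}\leq_{\mathsf{K}}\mathcal{L}$ and $\mathcal{K}$ is not Laflamme.

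For the converse, given $\mathcal{J}$ and $f$ as above, I would define $\mathcal{L}=\left\{B\subseteq\omega\mid f^{-1}\left(B\right)\in\mathcal{J}\right\}$ and check it is the $F_{\sigma}$-ideal we want. That $\mathcal{K}\subseteq\mathcal{L}$ is exactly the morphism property. The key observation making $\mathcal{L}$ an honest ideal (in the sense of this thesis, where ideals contain all finite sets and are closed under $\subseteq^{\ast}$) is that $\mathcal{K}$ already contains every finite set: thus $f^{-1}\left(s\right)\in\mathcal{J}$ for each finite $s$, which gives both that all finite sets lie in $\mathcal{L}$ and that $\mathcal{L}$ is $\subseteq^{\ast}$-downward closed, by splitting $C\subseteq^{\ast}B$ as $\left(C\cap B\right)\cup\left(C\setminus B\right)$ and using $f^{-1}\left(C\cap B\right)\subseteq f^{-1}\left(B\right)\in\mathcal{J}$ together with $f^{-1}\left(C\setminus B\right)\in\mathcal{J}$. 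Properness follows since $f^{-1}\left(\omega\right)=\omega\notin\mathcal{J}$, and closure under finite unions is immediate from $f^{-1}\left(B\cup B'\right)=f^{-1}\left(B\right)\cup f^{-1}\left(B'\right)$.

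The main point requiring care is that $\mathcal{L}$ is genuinely $F_{\sigma}$, and this is where I would lean on the topological side. Writing $\mathcal{J}=\bigcup_{n\in\omega}\mathcal{C}_{n}$ as an increasing union of closed sets (for instance via Mazur's theorem, taking $\mathcal{C}_{n}=\left\{A\mid\varphi\left(A\right)\leq n\right\}$ for a lower semicontinuous submeasure $\varphi$ with $\mathcal{J}=\textsf{Fin}\left(\varphi\right)$), it suffices to note that the preimage map $\Phi:\wp\left(\omega\right)\longrightarrow\wp\left(\omega\right)$ given by $\Phi\left(B\right)=f^{-1}\left(B\right)$ is continuous: the value of $\Phi\left(B\right)$ at $x$ is the value of $B$ at $f\left(x\right)$, so each finite restriction of $\Phi\left(B\right)$ depends on only finitely many coordinates of $B$. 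Then $\mathcal{L}=\Phi^{-1}\left(\mathcal{J}\right)=\bigcup_{n\in\omega}\Phi^{-1}\left(\mathcal{C}_{n}\right)$ is a countable union of closed sets. The only delicate bookkeeping is confirming the ideal axioms survive the fact that $f$ need not be finite-to-one; as noted, the presence of all finite sets in $\mathcal{K}$ is exactly what rescues this, so I expect no further obstacle.
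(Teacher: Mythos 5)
Your proof is correct and takes essentially the same route as the paper: both hinge on the preimage family $\mathcal{L}=\left\{ B\subseteq\omega\mid f^{-1}\left( B\right) \in\mathcal{J}\right\} $ together with Mazur's submeasure representation of the $F_{\sigma}$-ideal $\mathcal{J}$ (closed pieces $\varphi\leq n$ pulled back under the continuous map $B\mapsto f^{-1}\left( B\right) $) to see that $\mathcal{L}$ is $F_{\sigma}$. Your explicit verification that $\mathcal{L}$ satisfies the ideal axioms of this thesis --- using that $\mathcal{K}$ contains all finite sets, so $f^{-1}\left( s\right) \in\mathcal{J}$ for finite $s$ --- fills in a detail the paper's ``the result follows'' leaves implicit, but it is the same argument.
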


\begin{proof}
Let $\mathcal{K}$ be an ideal that can not be extended to an $F_{\sigma}%
$-ideal. Letting $\mathcal{I}$ be an $F_{\sigma}$ ideal and $f:\omega
\longrightarrow\omega.$ We will show that $f$ is not a Kat\v{e}tov morphism
from $\left(  \omega,\mathcal{I}\right)  $ to $\left(  \omega,\mathcal{K}%
\right)  .$ We now define $\mathcal{J=}\left\{  X\mid f^{-1}\left(  X\right)
\in\mathcal{I}\right\}  .$ Let $\varphi$ be a lower semicontinuous submeasure
such that $\mathcal{I}=$ \textsf{Fin}$\left(  \varphi\right)  .$ For every
$n\in\omega\,\ $we define $C_{n}=\left\{  X\mid\varphi\left(  f^{-1}\left(
X\right)  \right)  \leq n\right\}  .$ It is easy to see that each $C_{n}$ is a
closed set and $\mathcal{J}=%
%TCIMACRO{\tbigcup \limits_{n\in\omega}}%
%BeginExpansion
{\textstyle\bigcup\limits_{n\in\omega}}
%EndExpansion
C_{n}.$ Since $\mathcal{K}$ is not contained in $\mathcal{J}$ the result follows.
\end{proof}

\qquad\ \ \ \ \ \ \ 

Clearly every Shelah-Stepr\={a}ns \textsf{MAD} family is Laflamme (it can not
even be extended to an $F_{\sigma\delta}$ ideal). In particular, we get the
following result of \cite{KatetovandKatetovBlassOrdersFsigmaIdeals}:

\begin{proposition}
[\cite{KatetovandKatetovBlassOrdersFsigmaIdeals}]If $\mathfrak{p=c}$ then
there is a Laflamme \textsf{MAD} family.
\end{proposition}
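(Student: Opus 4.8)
The plan is to prove the statement "If $\mathfrak{p=c}$ then there is a Laflamme \textsf{MAD} family" by directly invoking the generic existence machinery already developed in the excerpt, rather than constructing anything new. The key observation is the chain of implications established just above the statement: every Shelah-Stepr\={a}ns \textsf{MAD} family is Laflamme (because it cannot even be extended to an $F_{\sigma\delta}$ ideal, hence a fortiori not to an $F_{\sigma}$ ideal, which by the preceding lemma is exactly the condition for being Laflamme). So it suffices to produce a Shelah-Stepr\={a}ns \textsf{MAD} family under $\mathfrak{p=c}$.

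For that I would simply cite the Proposition proved earlier stating that if $\mathfrak{p=c}$ then Shelah-Stepr\={a}ns \textsf{MAD} families exist generically. In particular, taking any \textsf{AD} family of size less than $\mathfrak{c}$ (for instance a single infinite set, or the empty family which extends trivially) and applying generic existence, we obtain a Shelah-Stepr\={a}ns \textsf{MAD} family outright. Thus under $\mathfrak{p=c}$ such a family exists, and by the Laflamme lemma together with the remark that Shelah-Stepr\={a}ns implies Laflamme, this family is Laflamme.

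Concretely, the proof is three sentences. First, by the Proposition on generic existence, $\mathfrak{p=c}$ yields a Shelah-Stepr\={a}ns \textsf{MAD} family $\mathcal{A}$. Second, by the Corollary on strong properties of Shelah-Stepr\={a}ns families, $\mathcal{A}$ cannot be extended to an $F_{\sigma\delta}$ ideal, so in particular it cannot be extended to an $F_{\sigma}$ ideal. Third, by the Lemma characterizing Laflamme ideals (not extendible to an $F_{\sigma}$ ideal), $\mathcal{I}(\mathcal{A})$ is Laflamme, i.e. $\mathcal{A}$ is a Laflamme \textsf{MAD} family. There is essentially no obstacle here: every ingredient is already in place, and the only content is assembling the implications Shelah-Stepr\={a}ns $\Rightarrow$ not $F_{\sigma\delta}$-extendible $\Rightarrow$ not $F_{\sigma}$-extendible $\Rightarrow$ Laflamme. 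The "hard part," insofar as there is one, was done earlier when establishing generic existence of Shelah-Stepr\={a}ns families via the $\sigma$-centered forcing $\mathbb{P}$ of pairs $(t_p,\mathcal{F}_p)$ and the fact that each $D_{n,A}$ is open dense, which is precisely where $X\in(\mathcal{I}(\mathcal{A})^{<\omega})^{+}$ and $\mathfrak{p}$ are used.
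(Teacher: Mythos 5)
Your proof is correct and is essentially identical to the paper's argument: the paper states this proposition as an immediate consequence of the earlier result that Shelah-Stepr\={a}ns \textsf{MAD} families exist generically under $\mathfrak{p}=\mathfrak{c}$, combined with the remark that every Shelah-Stepr\={a}ns \textsf{MAD} family is Laflamme (it cannot be extended to an $F_{\sigma\delta}$ ideal, hence not to an $F_{\sigma}$ ideal, which by the preceding lemma is the characterization of Laflamme). You assembled exactly the same three ingredients, so there is nothing to add.
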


\chapter{There is a $+$-Ramsey \textsf{MAD} family}

\section{$+$-Ramsey \textsf{MAD} families}

In this chapter we introduce the concept of a $+$-Ramsey ideal, which is a
stronger notion than selectiveness and then we will prove that there is a
$+$-Ramsey \textsf{MAD} family, answering an old question of Hru\v{s}\'{a}k.
Letting $\mathcal{X}\subseteq\left[  \omega\right]  ^{\omega},$ we say a tree
$T\subseteq\omega^{<\omega}$ is a $\mathcal{X}$\emph{-branching tree} if
$suc_{T}\left(  s\right)  \in\mathcal{X}$ for every $s\in T$. If $\mathcal{B}$
$\subseteq\left[  \omega\right]  ^{\omega}$ is a centered family, we define
$\left\langle \mathcal{B}\right\rangle $ as the filter generated by
$\mathcal{B}.$ The following is an important theorem of Adrian Mathias:

\begin{proposition}
Let $\mathcal{I}$ be an ideal in $\omega.$ The following are equivalent:

\begin{enumerate}
\item $\mathcal{I}$ is selective.

\item For every $\mathcal{I}^{+}$-branching tree $T$ such that $\mathcal{B}%
=\left\{  suc_{T}\left(  s\right)  \mid s\in T\right\}  $ is centered and
$\left\langle \mathcal{B}\right\rangle \subseteq\mathcal{I}^{+}$ there is
$f\in\left[  T\right]  $ such that $im\left(  f\right)  \in\mathcal{I}^{+}.$
\end{enumerate}
\end{proposition}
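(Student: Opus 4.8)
The plan is to prove both directions of the equivalence. The easier direction is $(2)\Rightarrow(1)$, so I would handle it first. Suppose the tree condition holds; I must show $\mathcal{I}$ is selective, i.e.\ given any $\subseteq$-decreasing family $\left\{ Y_{n}\mid n\in\omega\right\} \subseteq\mathcal{I}^{+}$ I must produce the selector $X=\left\{ x_{n}\mid n\in\omega\right\} $ satisfying conditions (a)--(c) of the definition of selectiveness. The natural move is to build a tree $T$ whose branches encode the selector. First I would arrange (by passing to $Y_{n}\cap Y_{0}$ and shrinking) that $Y_{0}=\omega$ and that each $Y_{n}\in\mathcal{I}^{+}$. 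Then I would define $T$ so that for a node $s=\left\langle x_{0},\dots,x_{k-1}\right\rangle \in T$ the successor set is $suc_{T}\left( s\right) =Y_{x_{k-1}}\cap\left( x_{k-1}+1,\omega\right) $ (with $suc_{T}\left( \emptyset\right) =Y_{0}$), which forces any branch $f$ to satisfy $f\left( n\right) \in Y_{f\left( n-1\right) }$, i.e.\ exactly the condition $X\setminus\left( x_{n}+1\right) \subseteq Y_{x_{n}}$. The family $\mathcal{B}=\left\{ suc_{T}\left( s\right) \mid s\in T\right\} $ consists of sets of the form $Y_{m}$ minus a finite initial segment, hence is centered with $\left\langle \mathcal{B}\right\rangle \subseteq\mathcal{I}^{+}$ since the $Y_{m}$ are decreasing and $\mathcal{I}^{+}$-positive. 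Applying (2) yields $f\in\left[ T\right] $ with $im\left( f\right) \in\mathcal{I}^{+}$, and $X=im\left( f\right) $ is the desired selector.

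For the harder direction $(1)\Rightarrow(2)$, I would assume $\mathcal{I}$ is selective and take an $\mathcal{I}^{+}$-branching tree $T$ with $\mathcal{B}$ centered and $\left\langle \mathcal{B}\right\rangle \subseteq\mathcal{I}^{+}$. The goal is to find a branch $f$ with positive image. The strategy is to use selectivity to thin out the tree to a single ``generic'' branch. The key idea is to enumerate the nodes of $T$ and, using the selectivity of $\mathcal{I}$ applied to a suitably chosen decreasing sequence built from the filter $\left\langle \mathcal{B}\right\rangle $, diagonalize so that the resulting positive set simultaneously meets the successor-set requirements along one branch. Concretely, since $\left\langle \mathcal{B}\right\rangle \subseteq\mathcal{I}^{+}$, every finite intersection of successor sets is $\mathcal{I}$-positive; I would feed a decreasing sequence of such finite intersections (indexed cofinally through $T$) into the selectivity hypothesis to extract a set $Z\in\mathcal{I}^{+}$ that is a pseudo-selector threading the tree, and then read off a branch $f\in\left[ T\right] $ whose range is almost contained in $Z$, hence in $\mathcal{I}^{+}$.

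The main obstacle, and the step requiring the most care, is ensuring in the $(1)\Rightarrow(2)$ direction that the selector extracted from selectivity actually lies \emph{along a single branch} of $T$ rather than merely hitting many successor sets scattered across incomparable nodes. The definition of selectivity only guarantees $X\setminus\left( x_{n}+1\right) \subseteq Y_{x_{n}}$ for a \emph{fixed} decreasing sequence $\left\{ Y_{n}\right\} $, so I must design that sequence so that the indexing $n\mapsto Y_{n}$ is compatible with the tree structure—i.e.\ so that consecutive elements of the selector are forced to be tree-successors of one another. This is where centeredness of $\mathcal{B}$ together with $\left\langle \mathcal{B}\right\rangle \subseteq\mathcal{I}^{+}$ is essential: it lets me align the $n$-th level of the selectivity data with the node reached by the first $n$ selector values, so that condition (c) of selectiveness translates precisely into ``$f\left( n\right) \in suc_{T}\left( f\upharpoonright n\right) $.'' I expect that once this alignment is set up correctly the verification is routine, but setting it up is the crux of the argument.
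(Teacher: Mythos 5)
Your direction $(2)\Rightarrow(1)$ is correct and is exactly the paper's argument: given the decreasing sequence $\left\{ Y_{n}\right\} \subseteq\mathcal{I}^{+}$ you build the tree whose successor sets are $Y_{x_{k-1}}\setminus\left( x_{k-1}+1\right)$, note that finite intersections of these sets are again of this form (hence positive), and read the selector off a positive branch.

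The direction $(1)\Rightarrow(2)$, however, has a genuine gap: what you call ``setting up the alignment'' is not a verification that can be deferred --- it is the entire proof, and your sketch as stated cannot be completed. The decreasing sequence fed into selectivity must be defined \emph{before} selectivity is invoked, i.e.\ before any selector values are known, so you cannot index it by ``the node reached by the first $n$ selector values.'' The device that resolves this (and is the actual content of the paper's proof) is: first prune $T$ so that every node is a strictly increasing sequence (harmless, since successor sets only lose finite initial segments and ideals contain all finite sets), and then set $Y_{n}=\bigcap\left\{ suc_{T}\left( s\right) \mid s\in T\text{ and every entry of }s\text{ is at most }n\right\}$. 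Because nodes are increasing, this is a \emph{finite} intersection of members of $\mathcal{B}$, hence $Y_{n}\in\left\langle \mathcal{B}\right\rangle \subseteq\mathcal{I}^{+}$, and the $Y_{n}$ decrease. Selectivity then yields $X=\left\{ x_{n}\mid n\in\omega\right\} \in\mathcal{I}^{+}$ with $X\subseteq Y_{0}\subseteq suc_{T}\left( \emptyset\right)$ and $X\setminus\left( x_{n}+1\right) \subseteq Y_{x_{n}}$, and the threading is automatic by induction: if $\left\langle x_{0},...,x_{n}\right\rangle \in T$, its entries are at most $x_{n}$, so $Y_{x_{n}}\subseteq suc_{T}\left( \left\langle x_{0},...,x_{n}\right\rangle \right)$ and therefore $x_{n+1}\in suc_{T}\left( \left\langle x_{0},...,x_{n}\right\rangle \right)$. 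The key idea you are missing is to intersect over \emph{all} nodes with entries bounded by $n$ --- this is what removes the need to know in advance which node the selector passes through, and it is precisely what forces the detour through increasing sequences to keep the intersections finite. Note also that your closing inference is stated backwards: a set whose range is ``almost contained in'' a positive set $Z$ need not be positive (almost-subsets of positive sets can be small); in the correct argument the branch's range \emph{equals} the selector $X$, which is why it is positive.
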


\begin{proof}
Let $\mathcal{I}$ be a selective ideal and $T$ an $\mathcal{I}^{+}$-branching
tree such that $\mathcal{B}=\left\{  suc_{T}\left(  s\right)  \mid s\in
T\right\}  $ is centered and $\left\langle \mathcal{B}\right\rangle
\subseteq\mathcal{I}^{+}.$ We may assume that if $s\in T$ then $s$ is an
increasing sequence$.$ For every $n\in\omega,$ we define $Y_{n}=%
%TCIMACRO{\tbigcap }%
%BeginExpansion
{\textstyle\bigcap}
%EndExpansion
\left\{  suc_{T}\left(  s\right)  \mid s\subseteq n\right\}  .$ Clearly
$\left\{  Y_{n}\mid n\in\omega\right\}  \subseteq\mathcal{I}^{+}$ and it forms
a decreasing sequence. Since $\mathcal{I}$ is selective, there is $X=\left\{
x_{n}\mid n\in\omega\right\}  \in\mathcal{I}^{+}$ such that $X\subseteq
Y_{0}=suc_{T}\left(  \emptyset\right)  $ and $X\setminus\left(  x_{n}%
+1\right)  \subseteq Y_{x_{n}}.$ It is easy to see that there is $f\in\left[
T\right]  $ such that $im\left(  f\right)  =X.$

\qquad\ \ 

Now assume $\mathcal{I}$ has the property stated in point 2. We will show that
$\mathcal{I}$ is selective. Let $\left\{  Y_{n}\mid n\in\omega\right\}
\subseteq\mathcal{I}^{+}$ be a decreasing sequence. We now recursively define
a tree $T\subseteq\omega^{<\omega}$ as follows:

\begin{enumerate}
\item $\emptyset\in T.$

\item If $s=\left\langle n_{0},...,n_{m}\right\rangle \in T$ then
$suc_{T}\left(  s\right)  =Y_{n_{m}}\setminus\left(  n_{m+1}\right)  .$
\end{enumerate}

\qquad\ \ 

Clearly $T$ is a tree such that $\mathcal{B}=\left\{  suc_{T}\left(  s\right)
\mid s\in T\right\}  $ is centered and $\left\langle \mathcal{B}\right\rangle
\subseteq\mathcal{I}^{+}.$ We can then find $f\in\left[  T\right]  $ such that
$im\left(  f\right)  \in\mathcal{I}^{+}.$ It is easy to see that $im\left(
f\right)  $ has the desired properties.
\end{proof}

\qquad\ \ \ 

We are now ready to introduce the notion of a $+$-Ramsey ideal:\qquad\ \ 

\begin{definition}
\qquad\ \ 

\begin{enumerate}
\item An ideal $\mathcal{I}$ is $+$\emph{-Ramsey }if for every $\mathcal{I}%
^{+}$-branching tree $T,$ there is $f\in\left[  T\right]  $ such that
$im\left(  f\right)  \in\mathcal{I}^{+}.$

\item An \textsf{AD} family $\mathcal{A}$ is $+$\emph{-Ramsey} if
$\mathcal{I}\left(  \mathcal{A}\right)  $ is $+$-Ramsey.\qquad\ \ \ \qquad
\ \ \qquad\ \ 
\end{enumerate}
\end{definition}

\qquad\ \ \ \qquad\ \ \ 

Obviously every $+$-Ramsey ideal is selective, but the converse is not true.
Recall that the ideals generated by \textsf{MAD} families are selective,
however we have the following:

\begin{lemma}
There is a \textsf{MAD} family that is not $+$-Ramsey.
\end{lemma}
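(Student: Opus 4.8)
The plan is to construct explicitly a \textsf{MAD} family $\mathcal{A}$ together with an $\mathcal{I}(\mathcal{A})^{+}$-branching tree $T$ all of whose branches have range in $\mathcal{I}(\mathcal{A})$, thereby witnessing that $\mathcal{A}$ is not $+$-Ramsey. The natural place to look is the simplest \textsf{MAD} family whose underlying set carries a canonical tree structure, namely a family living on $\omega^{<\omega}$ (or $2^{<\omega}$) rather than on $\omega$ itself. Indeed, if we identify the countable index set with $2^{<\omega}$, then the tree of initial segments is already present, and branches of the combinatorial tree $T$ we build will correspond to branches of $2^{<\omega}$, i.e. to elements of $2^{\omega}$; the key point will be to arrange that each such branch is swallowed by a single element of the family.

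Concretely, I would work on the set $2^{<\omega}$ and let $\mathcal{BR}$ be the ideal generated by the branches $b_x = \{ x\upharpoonright n \mid n\in\omega\}$ for $x\in 2^{\omega}$, exactly as introduced earlier in the excerpt. First I would extend $\mathcal{BR}$ to a \textsf{MAD} family $\mathcal{A}$ on $2^{<\omega}$; this is possible since every \textsf{AD} family extends to a \textsf{MAD} one, and I would take care that $\mathcal{A}$ still contains (cofinally many of) the branch sets $b_x$, or at least that for each $x$ some element of $\mathcal{A}$ almost contains $b_x$. Then I would define the tree
\[
T = \{ s\in (2^{<\omega})^{<\omega} \mid s \text{ is an increasing-length chain through } 2^{<\omega} \},
\]
more precisely the tree whose nodes are finite sequences $\langle t_0,\dots,t_k\rangle$ of elements of $2^{<\omega}$ with $t_0\subsetneq t_1\subsetneq\cdots\subsetneq t_k$ and $|t_{i+1}|=|t_i|+1$. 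For such a node the successor set is $suc_T(s) = \{ t_k{}^{\frown}0,\ t_k{}^{\frown}1\}$, a two-element set, which is certainly in $\mathcal{I}(\mathcal{A})^{+}$ (a two-point set is never in the ideal, since the ideal contains only infinite generators together with finite sets, and a two-element set is \textsf{AD}-positive provided $\mathcal{A}$ has no finite members). Thus $T$ is an $\mathcal{I}(\mathcal{A})^{+}$-branching tree.

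The crucial step is then to observe that any branch $f\in[T]$ traces out a strictly increasing chain $t_0\subsetneq t_1\subsetneq\cdots$ in $2^{<\omega}$ whose union is a single $x\in 2^{\omega}$, so $\mathrm{im}(f) = \{t_n\mid n\in\omega\}\subseteq^{*} b_x$, and hence $\mathrm{im}(f)\in\mathcal{BR}\subseteq\mathcal{I}(\mathcal{A})$. Therefore no branch of $T$ has positive range, which is exactly the failure of $+$-Ramseyness. The main obstacle is bookkeeping at the successor-set level: I must ensure $suc_T(s)\in\mathcal{I}(\mathcal{A})^{+}$ for every node, which for two-element sets is automatic, so in fact the argument is quite clean once the identification of $[T]$ with $2^{\omega}$ and the inclusion $\mathcal{BR}\subseteq\mathcal{I}(\mathcal{A})$ are in place. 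A secondary point to verify is that $\mathcal{A}$ can genuinely be taken \textsf{MAD} on $2^{<\omega}$ while containing $\mathcal{BR}$; since $\mathcal{BR}$ is an \textsf{AD} family (distinct branches share only a finite common stem) this follows by the standard Zorn's-lemma extension, and the positivity of finite successor sets is preserved because all members of $\mathcal{A}$ are infinite. I would present the construction, note $T$ is $\mathcal{I}(\mathcal{A})^{+}$-branching, and conclude that every branch lands inside $\mathcal{BR}\subseteq\mathcal{I}(\mathcal{A})$, finishing the proof.
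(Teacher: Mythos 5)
There is a fatal gap at the key step: your successor sets are the two-element sets $\{t_k{}^\frown 0,\,t_k{}^\frown 1\}$, and these are \emph{not} in $\mathcal{I}(\mathcal{A})^{+}$. By the paper's conventions every ideal contains all finite sets, and this is forced anyway by the definition of $\mathcal{I}(\mathcal{A})$: a set $X$ belongs to $\mathcal{I}(\mathcal{A})$ iff $X\subseteq^{\ast}A_{0}\cup\dots\cup A_{n}$ for some $A_{i}\in\mathcal{A}$, and any finite $X$ satisfies this vacuously (indeed $X\subseteq^{\ast}\emptyset$). Your parenthetical claim that ``a two-point set is never in the ideal \dots provided $\mathcal{A}$ has no finite members'' is therefore false regardless of what $\mathcal{A}$ looks like: downward closure under $\subseteq^{\ast}$ puts every finite set into every ideal. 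Consequently the tree you build is not $\mathcal{I}(\mathcal{A})^{+}$-branching, and the proposal proves nothing. This is not a cosmetic issue: positivity (in particular infinitude) of the successor sets is exactly what gives the $+$-Ramsey notion its content, since with finite branching permitted the property would fail trivially for every ideal.

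Your underlying strategy --- put the branch sets of a canonical tree into the \textsf{MAD} family so that every branch of the combinatorial tree is swallowed by a single generator --- is the same as the paper's, but the paper implements it on $\omega^{<\omega}$ precisely so that the successor sets $suc(s)=\{s^{\frown}n\mid n\in\omega\}$ are infinite, and then \emph{requires} (condition 2 of its construction) that each $suc(s)$ be in $\mathcal{I}(\mathcal{A})^{+}$. Note that even on $\omega^{<\omega}$ this positivity is not automatic for an arbitrary \textsf{MAD} extension of the branch family $\{\widehat{f}\mid f\in\omega^{\omega}\}$: one arranges it by first adding, for each $s$, an infinite partition of $suc(s)$ into the \textsf{AD} family, which places $suc(s)$ in $\mathcal{I}^{++}$, a property preserved under any further \textsf{AD} extension. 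If you insist on working over $2^{<\omega}$, the repair is to take as the successor set at a chain ending in $t$ not the two immediate extensions but the full cone $\langle t\rangle_{<\omega}=\{s\in 2^{<\omega}\mid t\sqsubseteq s\}$; this set meets infinitely many branch sets $b_{x}$ infinitely, hence lies in $\mathcal{I}(\mathcal{A})^{++}$ whenever $\mathcal{BR}\subseteq\mathcal{A}$, and a branch of the resulting tree still traces out a chain whose union is a single $x\in 2^{\omega}$, so its image sits inside $b_{x}\in\mathcal{I}(\mathcal{A})$. With either modification the argument goes through; as written, it does not.
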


\begin{proof}
Given $f\in\omega^{\omega}$ we let $\widehat{f}=\left\{  f\upharpoonright
n\mid n\in\omega\right\}  .$ Let $\mathcal{A}$ be a \textsf{MAD} family with
the following properties:

\begin{enumerate}
\item If $f\in\omega^{\omega}$ then $\widehat{f}\in\mathcal{A}.$

\item If $s\in\omega^{<\omega}$ then $\left\{  s^{\frown}n\mid n\in
\omega\right\}  \in\mathcal{I}\left(  \mathcal{A}\right)  ^{+}$
\end{enumerate}

\qquad\ \ \ 

It is easy to see that $\omega^{<\omega}$ is an $\mathcal{I}\left(
\mathcal{A}\right)  ^{+}$-branching tree without branches in $\mathcal{I}%
\left(  \mathcal{A}\right)  ^{+}.$
\end{proof}

\qquad\ \ \ 

Although we will not need the following interesting result of Michael
Hru\v{s}\'{a}k, we will include it.

\begin{proposition}
[\cite{SelectivityofAlmostDisjointFamilies}]\textsf{cov}$\left(
\mathcal{M}\right)  $ is the smallest cofinality of an ideal that is not $+$-Ramsey.
\end{proposition}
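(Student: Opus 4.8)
The plan is to prove the two inequalities separately: every ideal of cofinality below $\textsf{cov}\left(\mathcal{M}\right)$ is $+$-Ramsey, and there is a non-$+$-Ramsey ideal whose cofinality equals $\textsf{cov}\left(\mathcal{M}\right)$. The first inequality is the routine one and I would carry it out in full; the second is where the real work lies.

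For the first half, let $\mathcal{I}$ be an ideal with $\textsf{cof}\left(\mathcal{I}\right)=\kappa<\textsf{cov}\left(\mathcal{M}\right)$ and fix a cofinal family $\left\{A_{\xi}\mid\xi<\kappa\right\}\subseteq\mathcal{I}$; I claim $\mathcal{I}$ is $+$-Ramsey. Let $T$ be any $\mathcal{I}^{+}$-branching tree. Since $\mathcal{I}\supseteq\textsf{FIN}$, each $suc_{T}\left(s\right)\in\mathcal{I}^{+}$ is infinite, so $T$ is pruned and every node splits infinitely; hence $\left[T\right]$ is homeomorphic to $\omega^{\omega}$ and its meager ideal has covering number $\textsf{cov}\left(\mathcal{M}\right)$. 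For $\xi<\kappa$ put $B_{\xi}=\left\{f\in\left[T\right]\mid im\left(f\right)\subseteq A_{\xi}\right\}$. Each $B_{\xi}$ is closed nowhere dense: given $s\in T$, since $suc_{T}\left(s\right)\in\mathcal{I}^{+}$ while $A_{\xi}\in\mathcal{I}$ we may pick $m\in suc_{T}\left(s\right)\setminus A_{\xi}$, and then every branch through $s^{\frown}m$ has $m\in im\left(f\right)\setminus A_{\xi}$, so the cone of $s^{\frown}m$ misses $B_{\xi}$. As $\kappa<\textsf{cov}\left(\mathcal{M}\right)$, the $B_{\xi}$ do not cover $\left[T\right]$; any $f\in\left[T\right]\setminus\bigcup_{\xi}B_{\xi}$ satisfies $im\left(f\right)\not\subseteq A_{\xi}$ for every $\xi$, whence by cofinality $im\left(f\right)\notin\mathcal{I}$, i.e. $im\left(f\right)\in\mathcal{I}^{+}$. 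Thus $\mathcal{I}$ is $+$-Ramsey, and contrapositively every non-$+$-Ramsey ideal has cofinality at least $\textsf{cov}\left(\mathcal{M}\right)$.

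For the second half I would work on the full tree $\omega^{<\omega}$, whose branches are the sets $\widehat{f}=\left\{f\upharpoonright n\mid n\in\omega\right\}$, and build an ideal $\mathcal{I}$ with $suc\left(s\right)=\left\{s^{\frown}m\mid m\in\omega\right\}\in\mathcal{I}^{+}$ for every $s$ (so $\omega^{<\omega}$ is $\mathcal{I}^{+}$-branching) while $\widehat{f}\in\mathcal{I}$ for every $f$ (so no branch has positive image, witnessing non-$+$-Ramseyness exactly as in the lemma above). The generators, and hence a cofinal family, come from a covering of $\omega^{\omega}$ by $\textsf{cov}\left(\mathcal{M}\right)$ meager sets: fix closed nowhere dense $\left\{N_{\alpha}\mid\alpha<\textsf{cov}\left(\mathcal{M}\right)\right\}$ with $\bigcup_{\alpha}N_{\alpha}=\omega^{\omega}$ and let $E_{\alpha}=\left\{s\in\omega^{<\omega}\mid\left[s\right]\cap N_{\alpha}\neq\emptyset\right\}$ be its tree. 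Then $\widehat{f}\subseteq E_{\alpha}$ whenever $f\in N_{\alpha}$, so every branch lies in the ideal generated by $\left\{E_{\alpha}\mid\alpha<\textsf{cov}\left(\mathcal{M}\right)\right\}$ together with the finite sets; since there are only $\textsf{cov}\left(\mathcal{M}\right)$ generators and finite unions do not increase cardinality, this ideal has cofinality at most $\textsf{cov}\left(\mathcal{M}\right)$, and with the first half this forces equality.

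The hard part is reconciling coverage with positivity. For $suc\left(s\right)$ to stay in $\mathcal{I}^{+}$ I need $suc\left(s\right)\setminus\bigcup_{\alpha\in F}E_{\alpha}$ to be infinite for every finite $F$; but the eventually-different (equivalently, matching) coverings that realize $\textsf{cov}\left(\mathcal{M}\right)$ tend to make each $E_{\alpha}$ \emph{cofinite} on every $suc\left(s\right)$, so that a two-element union already swallows $suc\left(s\right)$ and positivity collapses. The intended mechanism is to build the covering recursively, reserving at each node an infinite set of successors that no generator is permitted to use and dovetailing this reservation against a bookkeeping that nonetheless drops every branch into some $E_{\alpha}$ eventually — positivity being carried by the reserved successors and coverage by the recursion. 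Making these two demands hold at once, with only $\textsf{cov}\left(\mathcal{M}\right)$ generators, is the crux of the argument and is the point where I would follow the construction of Hru\v{s}\'{a}k in \cite{SelectivityofAlmostDisjointFamilies}.
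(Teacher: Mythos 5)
Your first half is correct and coincides with the paper's argument up to phrasing: where the paper views the $\mathcal{I}^{+}$-branching tree $T$ as a copy of Cohen forcing and meets the open dense sets $D_{n}\left(  A\right)  =\left\{  s\in T\mid im\left(  s\right)  \nsubseteq A\cup n\right\}  $ for $A$ ranging over a cofinal family of size less than \textsf{cov}$\left(  \mathcal{M}\right)  $, you cover $\left[  T\right]  $ by the closed nowhere dense sets $B_{\xi}$ and use that fewer than \textsf{cov}$\left(  \mathcal{M}\right)  $ nowhere dense sets cannot cover $\left[  T\right]  $; these are the same appeal to \textsf{cov}$\left(  \mathcal{M}\right)  .$

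The second half has a genuine gap, and it sits exactly at the point you flagged and deferred; moreover, the repair mechanism you sketch cannot work. You insist on witnessing non-$+$-Ramseyness with the immediate-successor tree, whose successor sets are $suc\left(  s\right)  =\left\{  s^{\frown}m\mid m\in\omega\right\}  ,$ and you propose to reserve at each node an infinite set $R\left(  s\right)  \subseteq suc\left(  s\right)  $ that no generator is permitted to meet, while a bookkeeping ensures that every branch is eventually captured by some generator. These two demands are flatly contradictory, not merely hard to reconcile: if $E_{\alpha}\cap R\left(  s\right)  =\emptyset$ for every $\alpha$ and every $s,$ build $f\in\omega^{\omega}$ recursively so that $f\upharpoonright\left(  n+1\right)  \in R\left(  f\upharpoonright n\right)  $ for all $n;$ then $\widehat{f}$ meets each $E_{\alpha}$ in at most the root, so $\widehat{f}$ has finite intersection with every finite union of generators, hence $\widehat{f}\notin\mathcal{I}$ and coverage fails. (The weakened reservation, namely that for every finite set of generators infinitely many successors escape, is not a mechanism; it is verbatim the positivity statement you are trying to prove.)

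What the paper does instead is change the tree, not the covering. Keep the ideal exactly as you defined it: $\mathcal{I}$ on $\omega^{<\omega}$ is generated by the trees $E_{\alpha}$ of an arbitrary family of closed nowhere dense sets $N_{\alpha}$ $\left(  \alpha<\text{\textsf{cov}}\left(  \mathcal{M}\right)  \right)  $ covering $\omega^{\omega};$ note that each $E_{\alpha}$ is well pruned and $\left[  E_{\alpha}\right]  =N_{\alpha}.$ As the witnessing tree take $T\subseteq\left(  \omega^{<\omega}\right)  ^{<\omega}$ consisting of all finite $\subsetneq$-increasing chains $\left(  s_{0},\ldots,s_{n}\right)  ,$ so that the successor set of a node is the set of \emph{all} proper extensions of $s_{n},$ not only the immediate ones. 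Positivity of these large successor sets is automatic from nowhere density: given $E_{\alpha_{1}},\ldots,E_{\alpha_{k}},$ the set $N_{\alpha_{1}}\cup\ldots\cup N_{\alpha_{k}}$ is closed nowhere dense, so there is $t$ with $s_{n}\subsetneq t$ and $\left\langle t\right\rangle \cap\left(  N_{\alpha_{1}}\cup\ldots\cup N_{\alpha_{k}}\right)  =\emptyset,$ and by well-prunedness no extension of $t$ lies in any $E_{\alpha_{i}};$ these extensions are infinitely many members of the successor set escaping $E_{\alpha_{1}}\cup\ldots\cup E_{\alpha_{k}}.$ Finally, the image of any branch of $T$ is a chain $\left\{  s_{n}\mid n\in\omega\right\}  $ whose union is some $f\in\omega^{\omega},$ and choosing $\alpha$ with $f\in N_{\alpha}$ gives $\left\{  s_{n}\mid n\in\omega\right\}  \subseteq\widehat{f}\subseteq E_{\alpha}\in\mathcal{I}.$ So $\mathcal{I}$ is not $+$-Ramsey, its cofinality is at most \textsf{cov}$\left(  \mathcal{M}\right)  $ by counting generators, and equal to \textsf{cov}$\left(  \mathcal{M}\right)  $ by your first half --- with no delicate recursive construction of the covering needed.
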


\begin{proof}
Letting $\mathcal{I}$ be an ideal with \textsf{cof}$\left(  \mathcal{I}%
\right)  <$ \textsf{cov}$\left(  \mathcal{M}\right)  $ we will show it is
$+$-Ramsey. Let $T\subseteq\omega^{<\omega}$ be an $\mathcal{I}^{+}$-branching
tree. We view $T$ is as a forcing notion, which clearly is equivalent to Cohen
forcing. We denote by $\dot{r}_{gen}$ the name of the generic branch. For
every $A\in\mathcal{I}$ and $n\in\omega$ we define $D_{n}\left(  A\right)
=\left\{  s\in T\mid im\left(  s\right)  \nsubseteq A\cup n\right\}  $. Since
$T$ is an $\mathcal{I}^{+}$-branching tree, each $D_{n}\left(  A\right)  $ is
an open dense set for every $A\in\mathcal{I}$ and $n\in\omega.$ Since
\textsf{cof}$\left(  \mathcal{I}\right)  <$ \textsf{cov}$\left(
\mathcal{M}\right)  $ the result follows.

\qquad\ \ 

We will now construct an ideal $\mathcal{I}$ that is not $+$-Ramsey such that
\textsf{cof}$\left(  \mathcal{I}\right)  $ is equal to \textsf{cov}$\left(
\mathcal{M}\right)  .$ Let $\left\{  T_{\alpha}\mid\alpha<\mathsf{cov}\left(
\mathcal{M}\right)  \right\}  $ be a family of well pruned trees of
$\omega^{<\omega}$ such that each $\left[  T_{\alpha}\right]  $ is nowhere
dense and $\omega^{\omega}=%
%TCIMACRO{\tbigcup }%
%BeginExpansion
{\textstyle\bigcup}
%EndExpansion
\left[  T_{\alpha}\right]  .$ We define $\mathcal{I}$ as the ideal in
$\omega^{<\omega}$ generated by $\{T_{\alpha}\mid\alpha<$\textsf{ cov}$\left(
\mathcal{M}\right)  \}.$ We will show that $\mathcal{I}$ is not $+$-Ramsey. We
define a tree $T\subseteq\left(  \omega^{<\omega}\right)  ^{<\omega}$ as the
set of all sequences $\left(  s_{0},s_{1},...,s_{n}\right)  $ such that
$s_{0}\subsetneq s_{1}...\subsetneq s_{n}.$ It is easy to see that $T$ is the
tree we were looking for.
\end{proof}

\qquad\ \ \ \qquad\ \ \ \qquad\ \ 

We know that there are \textsf{MAD} families that are not $+$-Ramsey. On the
other hand, $+$-Ramsey \textsf{MAD} families can be constructed under
$\mathfrak{b=c},$ \textsf{cov}$\left(  \mathcal{M}\right)  =\mathfrak{c,}$
$\mathfrak{a}<$ \textsf{cov}$\left(  \mathcal{M}\right)  $ or $\Diamond\left(
\mathfrak{b}\right)  $ (see \cite{SelectivityofAlmostDisjointFamilies} and
\cite{OrderingMADFamiliesalaKatetov}). This led Michael Hru\v{s}\'{a}k to ask
the following,

\begin{problem}
[Hru\v{s}\'{a}k \cite{SelectivityofAlmostDisjointFamilies}]Is there a
$+$-Ramsey \textsf{MAD} family in \textsf{ZFC}?
\end{problem}

\qquad\ \ \ \qquad

We will provide a positive answer to this question. Our technique for
constructing a $+$-Ramsey \textsf{MAD} is based on the technique of Shelah for
constructing a completely separable \textsf{MAD} family (however, in this case
we are able to avoid the \textsf{PCF }hypothesis).

\qquad\ \ \ 

The first step to construct a $+$-Ramsey \textsf{MAD} family is to prove that
every Miller-indestructible \textsf{MAD} family has this property. In
\cite{OrderingMADFamiliesalaKatetov} it is proved that every tight
\textsf{MAD} family is $+\,$-Ramsey. We will prove that every
Miller-indestructible \textsf{MAD} family is $+$-Ramsey. This improves the
previous result since Miller-indestructibility follows from
Cohen-indestructibility. First we need the following
lemma:\ \ \ \ \ \ \ \ \ \ \ \ \ \ \ \ \ \ \ \ \ \ \ \ \ \ \ \ \ \qquad\ 

\begin{lemma}
Let $\mathcal{A}$ be a \textsf{MAD} family and $T$ an $\mathcal{I}\left(
\mathcal{A}\right)  ^{+}$-branching tree. Then there is a subtree $S\subseteq
T$ with the following properties:

\begin{enumerate}
\item $S$ is an\ $\mathcal{I}\left(  \mathcal{A}\right)  $-branching tree.

\item If $s\in S$ there is $A_{s}\in\mathcal{A}$ such that $suc_{S}\left(
s\right)  \subseteq A_{s}.$

\item If $s$ and $t$ are two different nodes of $S,$ then $A_{s}\neq A_{t}$
and $suc_{S}\left(  s\right)  \cap suc_{S}\left(  t\right)  =\emptyset.$
\end{enumerate}
\end{lemma}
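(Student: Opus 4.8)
The plan is to build the subtree $S$ by recursion on the levels, maintaining the three required properties as invariants. Given the $\mathcal{I}(\mathcal{A})^{+}$-branching tree $T$, I would thin out the successor sets at each node so that they come to lie inside single members of $\mathcal{A}$, while simultaneously guaranteeing that distinct nodes use distinct members of $\mathcal{A}$ and have disjoint successor sets. The key combinatorial fact I would use is the following: if $s\in T$, then $suc_{T}(s)\in\mathcal{I}(\mathcal{A})^{+}$, so by the definition of $\mathcal{I}(\mathcal{A})^{+}$ there is some $A\in\mathcal{A}$ with $suc_{T}(s)\cap A$ infinite; in fact, since $\mathcal{A}$ is \textsf{MAD}, there are infinitely many such $A$, because $suc_{T}(s)$ cannot be almost covered by finitely many elements of $\mathcal{A}$. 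This abundance is what lets me avoid collisions with the finitely many choices already made at earlier stages.

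\textbf{The recursive step.} I would enumerate the nodes of the tree being built in a way compatible with the tree order (say, level by level, and within each level in some fixed order), and process them one at a time. When I reach a node $s$ that has already been placed in $S$, I have an associated infinite set $X_{s}\subseteq suc_{T}(s)$ inherited from the construction (for the root, $X_{\emptyset}=suc_{T}(\emptyset)$). Since $X_{s}\in\mathcal{I}(\mathcal{A})^{+}$ and only finitely many elements of $\mathcal{A}$ have been used as $A_{t}$ for nodes $t$ processed so far, I can pick $A_{s}\in\mathcal{A}$, \emph{distinct from all previously used $A_{t}$}, with $X_{s}\cap A_{s}$ infinite. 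Then I set $suc_{S}(s)=X_{s}\cap A_{s}$ minus the finitely many points already committed to other successor sets, which keeps it infinite and disjoint from all earlier $suc_{S}(t)$. For each new child $s^{\frown}n$ with $n\in suc_{S}(s)$, I record $X_{s^{\frown}n}=suc_{T}(s^{\frown}n)\in\mathcal{I}(\mathcal{A})^{+}$ to feed the next stage. Properties (2) and (3) hold by construction, and $suc_{S}(s)\subseteq A_{s}\in\mathcal{A}\subseteq\mathcal{I}(\mathcal{A})$ gives the $\mathcal{I}(\mathcal{A})$-branching condition (1).

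\textbf{The main obstacle.} The delicate point is maintaining disjointness of the successor sets across \emph{all} nodes simultaneously, not merely ensuring distinctness of the labels $A_{s}$. Two nodes with different labels $A_{s}\neq A_{t}$ already have $suc_{S}(s)\cap suc_{S}(t)\subseteq A_{s}\cap A_{t}$ finite by almost disjointness, so only finitely much overlap can occur and it can be trimmed; the real care is that this trimming must not shrink a successor set to a finite set, which is why I insist each $A_{s}$ meet the relevant $X_{s}$ in an \emph{infinite} set before removing finitely many points. I would verify that at each stage only finitely many points need to be removed (one finite intersection per previously-processed node at the same or lower level, and the enumeration guarantees finiteness at the moment of processing), so the successor sets stay infinite throughout. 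I expect the bookkeeping that guarantees ``finitely many constraints active at each step'' to be the part requiring the most attention; once the enumeration is fixed so that when a node is processed only finitely many labels and finitely many successor sets have been committed, the rest is a routine verification of the three invariants.
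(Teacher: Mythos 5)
Your construction is in essence the paper's own: the two facts doing the work are identical (maximality of $\mathcal{A}$ upgrades every $\mathcal{I}(\mathcal{A})^{+}$ set to one meeting infinitely many members of $\mathcal{A}$ infinitely, and almost disjointness makes previously committed successor sets leave only finite traces on a freshly chosen $A_{s}$). However, there is one concrete gap: the enumeration you propose does not have the property your argument needs. Every node of $S$ receives an \emph{infinite} successor set, so every level of $S$ beyond the root is infinite; hence processing ``level by level, and within each level in some fixed order'' never reaches level $2$ in finitely many steps, and if you instead run it transfinitely (all of level $1$ before any level-$2$ node), your key invariant fails: at the first level-$2$ node, infinitely many sets $suc_{S}(t)$ have already been committed. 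Each one leaves a finite trace on $A_{s}$, but infinitely many finite traces can have infinite union, and nothing prevents $\bigcup_{t}suc_{S}(t)$ from covering $A_{s}\cap suc_{T}(s)$ for every eligible candidate $A_{s}$; then the trimming leaves a finite set and condition (3) cannot be met. (Choosing a fresh label is, by contrast, never the problem: $\mathcal{A}\upharpoonright suc_{T}(s)$ is a MAD family on a countable set, hence uncountable, so countably many used labels cannot exhaust the candidates. It is the disjointness of the successor sets that breaks.) In short, your parenthetical claim that ``the enumeration guarantees finiteness at the moment of processing'' is false for the enumeration you actually name.

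The repair is standard but must be made explicit. Either dovetail: fix an enumeration of the nodes of $S$ of order type $\omega$ in which every node appears after its parent but after only finitely many nodes overall, interleaving children of different nodes exactly as one enumerates $\omega^{<\omega}$. Or, cleaner, do what the paper does: make the choices for \emph{all} $t\in T$ rather than only for nodes of $S$. The constraints --- the $A_{t}$ pairwise distinct, the sets $B_{t}\in\left[A_{t}\cap suc_{T}(t)\right]^{\omega}$ pairwise disjoint --- are purely pairwise and make no reference to the tree order, so any enumeration of the countable set $T$ in order type $\omega$ supports the recursion (finitely many commitments at each step, each leaving a finite trace). Only afterwards does one define $S$ recursively by putting $\emptyset\in S$ and $suc_{S}(s)=B_{s}$ for $s\in S$. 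This eliminates the parent-before-child requirement, and with it all of the bookkeeping you flagged as the delicate part.
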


\begin{proof}
Since $T$ is an $\mathcal{I}\left(  \mathcal{A}\right)  ^{+}$-branching tree
and $\mathcal{A}$ is \textsf{MAD}$\,$, $suc_{T}\left(  t\right)  $ intersects
infinitely many infinite elements of $\mathcal{A}$ for every $t\in T$.
Recursively, for every $t\in T$ we choose $A_{t}\in\mathcal{A}$ and $B_{t}%
\in\left[  A_{t}\cap suc_{T}\left(  t\right)  \right]  ^{\omega}$ such that
$B_{t}\cap B_{s}=\emptyset$ and $A_{s}\neq A_{t}$ whenever $t\neq s.$ We then
recursively construct $S\subseteq T$ such that if $s\in S$ then $suc_{S}%
\left(  s\right)  =B_{s}.$
\end{proof}

\qquad\ \ \ \qquad\qquad\ \ \ \qquad\ \ 

With the previous lemma we can now prove the following,

\begin{proposition}
If $\mathcal{A}$ is Miller-indestructible then $\mathcal{A}$ is $+$-Ramsey.
\end{proposition}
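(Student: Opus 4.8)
The goal is to show that if $\mathcal{A}$ is Miller-indestructible then $\mathcal{A}$ is $+$-Ramsey. The plan is to take an arbitrary $\mathcal{I}(\mathcal{A})^{+}$-branching tree $T$ and produce a branch $f \in [T]$ with $im(f) \in \mathcal{I}(\mathcal{A})^{+}$. First I would apply the previous lemma to replace $T$ by a subtree $S \subseteq T$ that is $\mathcal{I}(\mathcal{A})$-branching, where each $suc_S(s) \subseteq A_s$ for some $A_s \in \mathcal{A}$, with the $A_s$ distinct and the successor sets $suc_S(s)$ pairwise disjoint across all nodes. Any branch of $S$ is a branch of $T$, so it suffices to find a good branch through $S$. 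The point of passing to $S$ is that it is naturally a Miller tree (a superperfect tree): each node has infinitely many immediate successors lying inside a single element of $\mathcal{A}$, so $S$ can be viewed as a condition in Miller forcing.

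The key idea is to read off a branch from the Miller generic real. I would view $S$ as a condition in Miller forcing $\mathbb{P}_{\mathcal{K}_{\sigma}}$ and let $\dot{r}_{gen}$ be the name for the generic real, which is forced to be a branch of $S$. The set $X = im(\dot{r}_{gen})$ is then forced to be an infinite subset of $\omega$ meeting each chosen $A_s$ in at most the single point dictated by the branch. The crucial claim is that $S$ forces $im(\dot{r}_{gen}) \in \mathcal{I}(\mathcal{A})^{+}$: the generic branch threads through infinitely many distinct $A_s$'s, taking essentially one point from each, so no finite union $A_{s_0} \cup \cdots \cup A_{s_k}$ of members of $\mathcal{A}$ can almost contain $im(\dot{r}_{gen})$. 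Because the successor sets are pairwise disjoint and the $A_s$ are distinct, the branch's image is almost disjoint from every single $A \in \mathcal{A}$, hence is forced to witness that $\mathcal{A}$ is no longer maximal; equivalently it is $\mathcal{I}(\mathcal{A})^{+}$ in the extension.

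Now I would invoke Miller-indestructibility. Since $\mathcal{A}$ is Miller-indestructible, $\mathcal{A}$ remains \textsf{MAD} after forcing with $\mathbb{P}_{\mathcal{K}_{\sigma}}$, so $\mathcal{I}(\mathcal{A})^{+}$ in the extension is controlled by the ground model family. But the branch $im(\dot{r}_{gen})$ is forced to be almost disjoint from every ground model element of $\mathcal{A}$. The resolution is that a suitable ground model branch must already work: by a density/absoluteness argument, the statement ``there is $f \in [S]$ with $im(f) \in \mathcal{I}(\mathcal{A})^{+}$'' is forced, and since $[S]$ and $\mathcal{I}(\mathcal{A})^{+}$ are (coded by) ground model objects, a witnessing branch can be pulled back to $V$. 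More carefully, I would argue that if $S$ had \emph{no} ground model branch with $\mathcal{I}(\mathcal{A})^{+}$ image, then every branch of $S$ would be in $\mathcal{I}(\mathcal{A})$, which (together with the tree structure) would let one build a ground-model infinite set almost disjoint from all of $\mathcal{A}$ using Miller-indestructibility machinery, contradicting maximality of $\mathcal{A}$ in $V$.

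The main obstacle I anticipate is making precise the passage from ``the generic branch has positive image'' to ``some ground model branch has positive image''. This is exactly the subtle step, because positivity is not an absolute property and $\mathcal{I}(\mathcal{A})^{+}$ can shrink in the extension. The cleanest route is to show directly that Miller-indestructibility yields: for the Miller tree $S$, the generic branch's image is forced to meet every ground model $\mathcal{I}(\mathcal{A})^{+}$ set appropriately, and then use that $\mathbb{P}_{\mathcal{K}_{\sigma}}$ adds no dominating real together with the pairwise-disjointness of the $suc_S(s)$ to extract a fusion-style branch already present in $V$. I expect this fusion/reflection argument to be the technical heart of the proof; the reduction to a Miller tree via the lemma and the identification of the generic image as $\mathcal{I}(\mathcal{A})^{+}$ are comparatively routine.
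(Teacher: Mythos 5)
Your opening move matches the paper's: pass to the $\mathcal{I}(\mathcal{A})$-branching subtree $S\subseteq T$ given by the preceding lemma and view it as a Miller condition. The proof breaks at your ``crucial claim.'' You assert that, since the generic branch selects at most one point from each $suc_{S}(s)\subseteq A_{s}$ with the $A_{s}$ distinct, its image is forced to be almost disjoint from \emph{every} $A\in\mathcal{A}$, ``hence is forced to witness that $\mathcal{A}$ is no longer maximal.'' That statement is literally the assertion that $S$ destroys $\mathcal{A}$; if it were provable, the hypothesis of Miller-indestructibility could never hold and the proposition would be vacuous. Indeed, in your next paragraph you simultaneously maintain that $\mathcal{A}$ remains \textsf{MAD} in the extension and that the extension contains an infinite set ($im(\dot{r}_{gen})$) almost disjoint from all of $\mathcal{A}$ --- these are flatly contradictory, and no absoluteness argument can ``resolve'' a contradiction. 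The claim itself is false: a single point of $\omega$ may belong to many members of $\mathcal{A}$, so although $im(f)$ picks at most one point from each $A_{s}$, those points can accumulate inside some other member $A\in\mathcal{A}$ (through the finite intersections $A\cap A_{s}$), and this cannot be pruned away densely --- below $S$ there are conditions all of whose frozen segments between splitting nodes run through a fixed $A$. What is true, and what the paper proves by a density argument on successor sets, is only the weaker statement that no ground model $B\in\mathcal{I}\left(\mathcal{A}\right)$ is forced to almost contain $\dot{X}$; hence $\dot{X}$ is forced to be $\mathcal{I}\left(\mathcal{A}\right)^{+}$ as computed in the extension.

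The step you yourself flag as the technical heart --- converting positivity of the generic image into a ground model branch --- is exactly where the paper uses indestructibility, and in the direction opposite to yours. Since $\mathcal{A}$ stays \textsf{MAD} after forcing below $S$, the positive set $\dot{X}$ must have infinite intersection with infinitely many members of $\mathcal{A}$ (all of which are ground model sets), so one can fix names $\dot{A}_{n}$ for pairwise distinct members of $\mathcal{A}$ such that $S$ forces $\dot{X}\cap\dot{A}_{n}$ to be infinite. One then runs a fusion: build Miller conditions $S\geq S_{0}\geq S_{1}\geq\cdots$ and ground model sets $B_{n}\in\mathcal{A}$ such that $S_{n}\Vdash``\dot{A}_{n}=B_{n}\textquotedblright$ and the stem of $S_{n}$ meets each $B_{i}$, $i\leq n$, in at least $n$ points; this is possible because $S_{n}$ forces $\dot{X}\cap B_{i}$ to be infinite, so the stem can be extended inside $S_{n}$ to pick up further points of $B_{i}$. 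The real $r=\bigcup_{n\in\omega}stem\left(S_{n}\right)$ is then a branch of $S$ lying in $V$, and $im\left(r\right)$ meets every $B_{i}$ infinitely, so $im\left(r\right)\in\mathcal{I}\left(\mathcal{A}\right)^{++}\subseteq\mathcal{I}\left(\mathcal{A}\right)^{+}$. This sidesteps the non-absoluteness of positivity that worried you: one never pulls the generic branch back to $V$; one manufactures a ground model branch whose positivity is witnessed by the concrete sets $B_{i}$. Your appeal to Miller forcing not adding dominating reals plays no role here; without the fusion-with-decided-names mechanism, your proposal does not close the gap.
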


\begin{proof}
Let $\mathcal{A}$ be a Miller-indestructible \textsf{MAD} family and $T$ an
$\mathcal{I}\left(  \mathcal{A}\right)  ^{+}$-branching tree. Let $S$ be an
$\mathcal{I}\left(  \mathcal{A}\right)  $-branching subtree of $T$ as in the
previous lemma. We can then view $S$ as a Miller tree. Let $\dot{r}_{gen}$ be
the name of the generic real and $\dot{X}$ the name of the image of $\dot
{r}_{gen}.$

\qquad\ \ 

We will first argue that $S\Vdash``\dot{X}\notin\mathcal{I}\left(
\mathcal{A}\right)  \textquotedblright$. Assume this is not true, so there is
$S_{1}\leq S$ and $B\in\mathcal{I}\left(  \mathcal{A}\right)  $ ($B$ is an
element of $V$) such that $S_{1}\Vdash``\dot{X}\subseteq B\textquotedblright.$
In this way, if $t$ is a splitting node of $S_{1}$ then $suc_{S_{1}}\left(
t\right)  \subseteq B$ but note that if $t_{1}\neq t_{2}$ are two different
splitting nodes of $S_{2}$ then $suc_{S_{1}}\left(  t_{1}\right)  $ and
$suc_{S_{1}}\left(  t_{2}\right)  $ are two infinite sets contained in
different elements of $\mathcal{A}$, so then $B\in\mathcal{I}\left(
\mathcal{A}\right)  ^{+}$ which is a contradiction.

\qquad\ \ \ 

In this way, $\dot{X}$ is forced by $S$ to be an element of $\mathcal{I}%
\left(  \mathcal{A}\right)  ^{+}$ but since $\mathcal{A}$ is still
\textsf{MAD} after performing a forcing extension of Miller forcing, we then
conclude there are names $\{\dot{A}_{n}\mid n\in\omega\}$ for different
elements of $\mathcal{A}$ such that $S$ forces that $\dot{X}\cap\dot{A}_{n}$
is infinite. We then recursively build two sequences $\left\{  S_{n}\mid
n\in\omega\right\}  $ and $\left\{  B_{n}\mid n\in\omega\right\}  $ such that
for every $n\in\omega$ the following holds:

\begin{enumerate}
\item $S_{n}$ is a Miller tree and $B_{n}\in\mathcal{A}\mathbf{.}$

\item $S_{0}\leq S$ and if $n<m$ then $S_{m}\leq S_{n}.$

\item $S_{n}\Vdash``\dot{A}_{n}=B_{n}\textquotedblright$ (it then follows that
$B_{n}\neq B_{m}$ if $n\neq m$).

\item If $i\leq n$ then $stem\left(  S_{n}\right)  \cap B_{i}$ has size at
least $n.$
\end{enumerate}

\qquad\ \ \ \ \ \ \qquad\ \ 

We then define $r=%
%TCIMACRO{\tbigcup \limits_{n\in\omega}}%
%BeginExpansion
{\textstyle\bigcup\limits_{n\in\omega}}
%EndExpansion
stem\left(  S_{n}\right)  $ then clearly $r\in\left[  S\right]  $ and
$im\left(  r\right)  \in\mathcal{I}\left(  \mathcal{A}\right)  ^{++}%
.$\ \ \qquad\ \ \qquad\ \ 
\end{proof}

\qquad\ \ \ \ \ 

The previous proposition has the following corollary, which is an unpublished
result of Michael Hru\v{s}\'{a}k, which he proved by completely different
means.\qquad\ \ \qquad\ \ 

\begin{corollary}
[Hru\v{s}\'{a}k]Every \textsf{MAD} family of size less than $\mathfrak{d}$ is
$+$-Ramsey. In particular, if $\mathfrak{a<d}$ then there is a $+$-Ramsey
\textsf{MAD} family.
\end{corollary}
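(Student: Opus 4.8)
The corollary follows almost immediately from the proposition just proved, combined with a cardinal-arithmetic observation that was already established earlier in the excerpt. The statement has two parts: first, that every \textsf{MAD} family of size less than $\mathfrak{d}$ is $+$-Ramsey; second, that $\mathfrak{a<d}$ yields a $+$-Ramsey \textsf{MAD} family in \textsf{ZFC}.

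The plan is to reduce both parts to Miller-indestructibility. Recall that earlier in the chapter it was shown (in the corollary on indestructibility of \textsf{MAD} families, via the inequality \textsf{cov}$^{\ast}\left(  \mathcal{J}\right)  \leq$ \textsf{cov}$^{\ast}\left(  \mathcal{I}\right)  $ for $\mathcal{I\leq}_{K}\mathcal{J}$ together with \textsf{cov}$^{\ast}\left(  \mathcal{I}\left(  \mathcal{A}\right)  \right)  =\left\vert \mathcal{A}\right\vert$) that if $\left\vert \mathcal{A}\right\vert <\mathfrak{d}$ then $\mathcal{A}$ is Miller indestructible. So for the first part I would take any \textsf{MAD} family $\mathcal{A}$ with $\left\vert \mathcal{A}\right\vert <\mathfrak{d}$, invoke that corollary to conclude $\mathcal{A}$ is Miller-indestructible, and then apply the preceding proposition (every Miller-indestructible \textsf{MAD} family is $+$-Ramsey) to conclude $\mathcal{A}$ is $+$-Ramsey. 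This chains two results stated earlier in the excerpt and requires no new argument.

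For the second part, I would argue by cases on whether $\mathfrak{a<d}$ forces a small \textsf{MAD} family to exist. If $\mathfrak{a<d}$, then by definition of $\mathfrak{a}$ there is a \textsf{MAD} family $\mathcal{A}$ of size $\mathfrak{a}$, and since $\left\vert \mathcal{A}\right\vert =\mathfrak{a}<\mathfrak{d}$, the first part of the corollary applies directly to show that this $\mathcal{A}$ is $+$-Ramsey. Thus $\mathfrak{a<d}$ produces a $+$-Ramsey \textsf{MAD} family outright, which is exactly the second claim.

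There is essentially no obstacle here, since all the real work was done in the proposition about Miller-indestructibility; the only point to be careful about is making sure the chain of implications is invoked with the correct hypotheses (namely $\left\vert \mathcal{A}\right\vert <\mathfrak{d}$ gives Miller-indestructibility, not merely Sacks-indestructibility). I would write the proof in two short sentences: the first citing the indestructibility corollary and the Miller-indestructibility proposition for the general statement, and the second specializing to a witness of $\mathfrak{a}$ under $\mathfrak{a<d}$.

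\begin{proof}
If $\left\vert \mathcal{A}\right\vert <\mathfrak{d}$ then $\mathcal{A}$ is Miller indestructible, hence by the previous proposition it is $+$-Ramsey. In particular, if $\mathfrak{a<d}$ then any \textsf{MAD} family of size $\mathfrak{a}$ has size less than $\mathfrak{d}$, so it is $+$-Ramsey.
\end{proof}
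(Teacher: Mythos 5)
Your proof is correct and takes essentially the same approach as the paper: the paper simply runs the argument in contrapositive form, inlining the computation $\mathfrak{d}=$ \textsf{cov}$^{\ast}\left( tr\left( \mathcal{K}_{\sigma}\right) \right) \leq$ \textsf{cov}$^{\ast}\left( \mathcal{I}\left( \mathcal{A}\right) \right) =\left\vert \mathcal{A}\right\vert$ that underlies the Miller-indestructibility corollary you cite, and then invokes the same proposition that Miller-indestructible implies $+$-Ramsey.
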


\begin{proof}
Let $\mathcal{A}$ be a \textsf{MAD} family that is not $+$-Ramsey. Then
$\mathcal{A}$ is destructible by Miller forcing, so $\mathcal{I}\left(
\mathcal{A}\right)  \leq_{K}tr\left(  \mathcal{K}_{\sigma}\right)  $ and then
$\mathfrak{d=}$ \textsf{cov}$^{\ast}\left(  tr\left(  \mathcal{K}_{\sigma
}\right)  \right)  \leq$ \textsf{cov}$^{\ast}\left(  \mathcal{I}\left(
\mathcal{A}\right)  \right)  =$ $\left\vert \mathcal{A}\right\vert .$
\end{proof}

\qquad\ \ \qquad\ \qquad\ \ \ 

\section{The construction of a $+$-Ramsey \textsf{MAD} family}

In this chapter we will construct a $+$-Ramsey \textsf{MAD} family without any
extra hypothesis beyond \textsf{ZFC}$\mathsf{.}$ We will use the construction
of Shelah of a completely separable \textsf{MAD} family, however, the previous
result will help us avoid the need of a \textsf{PCF }hypothesis for our
construction. From now on, we will always assume that all Miller trees are
formed by increasing sequences.

\begin{definition}
Let $p$ be a Miller tree. Given $f\in\left[  p\right]  $ we define $Sp\left(
p,f\right)  =\left\{  f\left(  n\right)  \mid f\upharpoonright n\in
Split\left(  p\right)  \right\}  $ and $\left[  p\right]  _{split}=\left\{
Sp\left(  p,f\right)  \mid f\in\left[  p\right]  \right\}  .$
\end{definition}

\qquad\ \ \ \ 

We will need the following definitions,\qquad\ \ \ \ \qquad\ \ \ \qquad

\begin{definition}
Let $p$ be a Miller tree and $H:Split\left(  p\right)  \longrightarrow\omega.$
We then define:

\begin{enumerate}
\item $Catch_{\exists}\left(  H\right)  $ is the set \qquad

$\left\{  Sp\left(  f,p\right)  \mid f\in\left[  p\right]  \wedge
\exists^{\infty}n\left(  f\upharpoonright n\in Split\left(  p\right)  \wedge
f\left(  n\right)  <H\left(  f\upharpoonright n\right)  \right)  \right\}  .$

\item $Catch_{\forall}\left(  H\right)  $ is the set \qquad

$\left\{  Sp\left(  f,p\right)  \mid f\in\left[  p\right]  \wedge
\forall^{\infty}n\left(  f\upharpoonright n\in Split\left(  p\right)  \wedge
f\left(  n\right)  <H\left(  f\upharpoonright n\right)  \right)  \right\}  .$

\item Define $\mathcal{K}\left(  p\right)  $ as the collection of all
$A\subseteq\left[  p\right]  _{split}$ for which there is $G:Split\left(
p\right)  \longrightarrow\omega$ such that $A\subseteq Catch_{\exists}\left(
G\right)  .$
\end{enumerate}
\end{definition}

\qquad\ \ \ \qquad\ \ 

Note that if $\mathcal{B}=\left\{  f_{\alpha}\mid\alpha<\mathfrak{b}\right\}
\subseteq\omega^{\omega}$ is an unbounded family of increasing functions then
for every infinite partial function $g\subseteq\omega\times\omega$ there is
$\alpha<\mathfrak{b}$ such that the set $\left\{  n\in dom\left(  g\right)
\mid g\left(  n\right)  <f_{\alpha}\left(  n\right)  \right\}  $ is infinite.
With this simple observation we can prove the following lemma,

\begin{lemma}
$\mathcal{K}\left(  p\right)  $ is a $\sigma$-ideal in $\left[  p\right]
_{split}$ that contains all singletons and $\mathfrak{b=}$ \textsf{add}%
$\left(  \mathcal{K}\left(  p\right)  \right)  =$ \textsf{cov}$\left(
\mathcal{K}\left(  p\right)  \right)  .$
\end{lemma}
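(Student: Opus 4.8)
The plan is to prove the three assertions in turn: that $\mathcal{K}\left(p\right)$ is a proper $\sigma$-ideal containing singletons, and then the chain $\mathfrak{b}\leq$ \textsf{add}$\left(\mathcal{K}\left(p\right)\right)\leq$ \textsf{cov}$\left(\mathcal{K}\left(p\right)\right)\leq\mathfrak{b}$. Throughout I will use two auxiliary functions on $Split\left(p\right)$: for $s\in Split\left(p\right)$ let $\ell\left(s\right)$ be the number of proper initial segments of $s$ lying in $Split\left(p\right)$ (its level in the splitting tree), and let $\mu\left(s\right)=\max\left(s\right)$ be the largest entry of $s$ (well defined since all branches are increasing).

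For the ideal axioms: downward closure is immediate from the definition, since a witness $G$ for $A$ also witnesses any subset. For $\sigma$-closure, given $A_{n}\subseteq Catch_{\exists}\left(G_{n}\right)$, I would set $G\left(s\right)=\max\left\{G_{k}\left(s\right)\mid k\leq\ell\left(s\right)\right\}$; along any branch $f$, for each fixed $n$ the inequality $G\left(f\upharpoonright m\right)\geq G_{n}\left(f\upharpoonright m\right)$ holds at all splitting levels $m$ with $\ell\left(f\upharpoonright m\right)\geq n$, so any branch caught infinitely often by some $G_{n}$ is caught infinitely often by $G$; hence $\bigcup_{n}A_{n}\in\mathcal{K}\left(p\right)$. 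Properness ($\left[p\right]_{split}\notin\mathcal{K}\left(p\right)$) follows by a greedy construction: given any $G$, build a branch $f$ choosing at each splitting node $s$ a successor value $\geq G\left(s\right)$ (possible since $suc_{T}\left(s\right)$ is infinite), so $f$ is caught at \emph{no} splitting level and $Sp\left(p,f\right)\notin Catch_{\exists}\left(G\right)$. Singletons are in $\mathcal{K}\left(p\right)$: for $Sp\left(p,f\right)$ define $G\left(f\upharpoonright n\right)=f\left(n\right)+1$ at the splitting nodes along $f$ and $G=0$ elsewhere, so $f$ is caught at every splitting level.

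Since $\mathcal{K}\left(p\right)$ is a proper ideal containing singletons on the uncountable set $\left[p\right]_{split}$, the general inequality \textsf{add}$\left(\mathcal{K}\left(p\right)\right)\leq$ \textsf{cov}$\left(\mathcal{K}\left(p\right)\right)$ holds (any covering family has union $\left[p\right]_{split}\notin\mathcal{K}\left(p\right)$, so it also witnesses \textsf{add}). For \textsf{cov}$\left(\mathcal{K}\left(p\right)\right)\leq\mathfrak{b}$ I would fix an unbounded family $\left\{f_{\alpha}\mid\alpha<\mathfrak{b}\right\}$ of increasing functions and put $G_{\alpha}\left(s\right)=f_{\alpha}\left(\ell\left(s\right)\right)$. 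Given any branch $f$ with splitting values $v_{k}=f\left(n_{k}\right)$ at its $k$-th splitting node, the observation stated just before the lemma (applied to the total function $k\mapsto v_{k}$) yields $\alpha$ with $v_{k}<f_{\alpha}\left(k\right)$ for infinitely many $k$; since $\ell\left(f\upharpoonright n_{k}\right)=k$, this means $f\left(n_{k}\right)<G_{\alpha}\left(f\upharpoonright n_{k}\right)$ infinitely often, so $Sp\left(p,f\right)\in Catch_{\exists}\left(G_{\alpha}\right)$. Thus the $\mathfrak{b}$-many sets $Catch_{\exists}\left(G_{\alpha}\right)\in\mathcal{K}\left(p\right)$ cover $\left[p\right]_{split}$.

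The remaining inequality $\mathfrak{b}\leq$ \textsf{add}$\left(\mathcal{K}\left(p\right)\right)$ is where the real work lies, and is the main obstacle. Given fewer than $\mathfrak{b}$ sets $A_{\xi}\subseteq Catch_{\exists}\left(G_{\xi}\right)$, I must absorb all of them into a single $Catch_{\exists}\left(G\right)$. The key device is the measure $\mu\left(s\right)=\max\left(s\right)$, which takes each value only finitely often on $Split\left(p\right)$ (an increasing sequence with entries $\leq m$ is one of fewer than $2^{m+1}$ possibilities) yet tends to infinity along every branch (as $\mu\left(f\upharpoonright n\right)=f\left(n-1\right)$). I would define $h_{\xi}\left(m\right)=\max\left\{G_{\xi}\left(s\right)\mid s\in Split\left(p\right),\ \mu\left(s\right)\leq m\right\}$, a finite maximum, dominate all $h_{\xi}$ by a single increasing $h$ using $\left|\kappa\right|<\mathfrak{b}$, and set $G\left(s\right)=h\left(\mu\left(s\right)\right)$. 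Then for any $\xi$ and any branch $f\in Catch_{\exists}\left(G_{\xi}\right)$, at the infinitely many witnessing splitting levels $n$ one has $f\left(n\right)<G_{\xi}\left(f\upharpoonright n\right)\leq h_{\xi}\left(\mu\left(f\upharpoonright n\right)\right)\leq h\left(\mu\left(f\upharpoonright n\right)\right)=G\left(f\upharpoonright n\right)$ for all but finitely many of them (since $\mu\left(f\upharpoonright n\right)\to\infty$ and $h_{\xi}\leq^{\ast}h$), whence $f\in Catch_{\exists}\left(G\right)$. This gives $\bigcup_{\xi}A_{\xi}\in\mathcal{K}\left(p\right)$. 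Chaining the three inequalities yields $\mathfrak{b}=$ \textsf{add}$\left(\mathcal{K}\left(p\right)\right)=$ \textsf{cov}$\left(\mathcal{K}\left(p\right)\right)$. The subtle point to get right is precisely the choice of $\mu$: an obvious alternative such as the level $\ell\left(s\right)$ fails the finiteness requirement (there are infinitely many splitting nodes per level), so I expect the verification that $\mu=\max$ simultaneously has finite fibers and diverges along branches to be the crux of the argument.
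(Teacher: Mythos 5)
Your proof is correct, and it follows the same overall skeleton as the paper ($\mathfrak{b}\leq\mathsf{add}(\mathcal{K}(p))\leq\mathsf{cov}(\mathcal{K}(p))\leq\mathfrak{b}$, with the covering half essentially identical: the paper sets $H_{\alpha}(s_{n})=f_{\alpha}(n)$ for an enumeration $\{s_{n}\}$ of $Split(p)$ where you use the splitting level $\ell(s)$, but both invoke the same remark about unbounded families of increasing functions versus infinite partial functions). Where you genuinely diverge is the additivity inequality $\mathfrak{b}\leq\mathsf{add}(\mathcal{K}(p))$, and there your route is more elaborate than it needs to be. The paper's argument is a one-liner: since $Split(p)$ is countable, view $\{G_{\xi}\mid\xi<\kappa\}\subseteq\omega^{Split(p)}\cong\omega^{\omega}$ and use $\kappa<\mathfrak{b}$ to find a single $H:Split(p)\longrightarrow\omega$ with $G_{\xi}(s)<H(s)$ for all but finitely many $s\in Split(p)$; then $Catch_{\exists}(G_{\xi})\subseteq Catch_{\exists}(H)$ outright, because the witnessing splitting nodes along any branch are pairwise distinct, so only finitely many of them can lie in the finite exceptional set. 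In other words, mod-finite domination on the countable index set $Split(p)$ itself already works, and the \textquotedblleft crux\textquotedblright\ you identify --- manufacturing a finite-to-one function $\mu(s)=\max(s)$ on $Split(p)$ that diverges along every branch, so as to transfer domination to $\omega^{\omega}$ and pull it back --- is a device the paper's proof never needs. Your version is still valid (the verification that $\mu$ has finite fibers and $\mu(f\upharpoonright n)\to\infty$ along branches is right), and it has the merit of spelling out the parts the paper leaves implicit (downward closure, properness via the greedy branch avoiding $G$, singletons); note also that your separate $\sigma$-closure argument via $\ell$ is redundant once $\mathfrak{b}\leq\mathsf{add}(\mathcal{K}(p))$ is proved, since $\mathfrak{b}>\omega$. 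So: correct, slightly over-engineered in exactly the step the paper handles by direct domination on $Split(p)$.
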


\begin{proof}
In order to prove that $\mathfrak{b}$ $\mathfrak{\leq}$ \textsf{add}$\left(
\mathcal{K}\left(  p\right)  \right)  $ it is enough to show that if
$\kappa<\mathfrak{b}$ and $\left\{  H_{\alpha}\mid\alpha<\kappa\right\}
\subseteq\omega^{Split\left(  p\right)  }$ then $%
%TCIMACRO{\tbigcup \limits_{\alpha<\kappa}}%
%BeginExpansion
{\textstyle\bigcup\limits_{\alpha<\kappa}}
%EndExpansion
Catch_{\exists}\left(  H_{\alpha}\right)  \in\mathcal{K}\left(  p\right)  .$
Since $\kappa$ is smaller than $\mathfrak{b}$, we can find $H:Split\left(
p\right)  \longrightarrow\omega$ such that if $\alpha<\kappa$ then $H_{\alpha
}\left(  s\right)  <H\left(  s\right)  $ for almost all $s\in Split\left(
p\right)  .$ Clearly $%
%TCIMACRO{\tbigcup \limits_{\alpha<\kappa}}%
%BeginExpansion
{\textstyle\bigcup\limits_{\alpha<\kappa}}
%EndExpansion
Catch_{\exists}\left(  H_{\alpha}\right)  \subseteq Catch_{\exists}\left(
H\right)  .$

\qquad\ \ \qquad\qquad\ \ \ \ \ 

Now we must prove that \textsf{cov}$\left(  \mathcal{K}\left(  p\right)
\right)  \leq\mathfrak{b}.$ Let $Split\left(  p\right)  =\left\{  s_{n}\mid
n<\omega\right\}  $ and $\mathcal{B}=\left\{  f_{\alpha}\mid\alpha
<\mathfrak{b}\right\}  \subseteq\omega^{\omega}$ be an unbounded family of
increasing functions. Given $\alpha<\mathfrak{b}$ define $H_{\alpha
}:Split\left(  p\right)  \longrightarrow\omega$ where $H_{\alpha}\left(
s_{n}\right)  =$ $f_{\alpha}\left(  n\right)  .$ We will show that $\left\{
Catch_{\exists}\left(  H_{\alpha}\right)  \mid\alpha<\mathfrak{b}\right\}  $
covers $\left[  p\right]  _{split}.$ Letting $f\in\left[  p\right]  $ define
$A=\left\{  n\mid s_{n}\sqsubseteq f\right\}  $ and construct the function
$g:A\longrightarrow\omega$ where $g\left(  n\right)  =f\left(  \left\vert
s_{n}\right\vert \right)  +1$ for every $n\in A.$ By the previous remark,
there is $\alpha<\mathfrak{b}$ such that $f_{\alpha}\upharpoonright A$ is not
dominated by $g\upharpoonright A.$ It is then clear that $S_{p}\left(
p,f\right)  \in Catch_{\exists}\left(  H_{\alpha}\right)  .$ \qquad
\ \ \qquad\ \ \ \qquad\ \ \ \ 
\end{proof}

\qquad\ \ \ \ \ \ \ \ \ \ \ \ \ \ \ \ \ \ \ \ \ \ \ \ \ \ \ \ \ \ \ \ \ \ \ \ \ \ \ \ \ \ \ \ \ \ \ \ \ \ \ 

Letting $p$ be a Miller tree and $S\in\left[  \omega\right]  ^{\omega},$ we
define the game $\mathcal{G}\left(  p,S\right)  $ as follows:

\qquad\ \ 

\begin{center}%
\begin{tabular}
[c]{|l|l|l|l|l|l|}\hline
$\mathsf{I}$ & $s_{0}$ &  & $s_{1}$ &  & $\cdots$\\\hline
$\mathsf{II}$ &  & $r_{0}$ &  & $r_{1}$ & \\\hline
\end{tabular}

\end{center}

\qquad\ \ \qquad\ 

\begin{enumerate}
\item Each $s_{i}$ is a splitting node of $p.$

\item $r_{i}\in\omega.$

\item $s_{i+1}$ extends $s_{i}.$

\item $s_{i+1}\left(  \left\vert s_{i}\right\vert \right)  \in S$ and is
bigger than $r_{i}.$
\end{enumerate}

\qquad\ \ \ \ \ 

Player $\mathsf{I}$ wins the game if she can continue playing for infinitely
many rounds. Given $S\in\left[  \omega\right]  ^{\omega}$ we denote by
$Hit\left(  S\right)  $ as the set of all subsets of $\omega$ that have
infinite intersection with $S.$

\begin{lemma}
Letting $p$ be a Miller tree and $S\in\left[  \omega\right]  ^{\omega}$, we
have the following:

\begin{enumerate}
\item Player $I$ has a winning strategy if and only if there is $q\leq p$ such
that $\left[  q\right]  _{split}\subseteq\left[  S\right]  ^{\omega}.$

\item Player $II$ has a winning strategy if and only if there is
$H:Split\left(  p\right)  \longrightarrow\omega$ such that if $f\in\left[
p\right]  $ then the set $\left\{  f\upharpoonright n\in Split\left(
p\right)  \mid f\left(  n\right)  \in S\right\}  $ is almost contained in
$\left\{  f\upharpoonright n\in Split\left(  p\right)  \mid f\left(  n\right)
<H\left(  f\upharpoonright n\right)  \right\}  $ (in particular $\left[
p\right]  _{split}\cap Hit\left(  S\right)  \in\mathcal{K}\left(  p\right)  $).
\end{enumerate}
\end{lemma}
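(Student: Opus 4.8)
This lemma characterizes the two players' winning strategies in the game $\mathcal{G}\left(  p,S\right)  $. Since the payoff for Player $\mathsf{I}$ is "play forever," the game is closed for Player $\mathsf{I}$ and hence determined (this is the Gale-Stewart theorem), so exactly one player has a winning strategy; the content of the lemma is to translate each strategy into the combinatorial statement. I would prove each direction of each biconditional separately, treating the four implications as a cycle held together by determinacy. The easiest entry points are the two "$\Leftarrow$" directions, where a combinatorial object is handed to us and we must describe how the relevant player uses it to win.

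\textbf{The concrete objects win.} For point 1 "$\Leftarrow$", suppose $q\leq p$ satisfies $\left[  q\right]  _{split}\subseteq\left[  S\right]  ^{\omega}$. I would have Player $\mathsf{I}$ simply play inside $q$: at each round she chooses her next splitting node $s_{i+1}$ to be a splitting node of $q$ extending $s_{i}$, where the value $s_{i+1}\left(  \left\vert s_{i}\right\vert \right)$ is forced to lie in $S$ because every split-branch of $q$ lands in $S$. The requirement $s_{i+1}\left(  \left\vert s_{i}\right\vert \right)>r_{i}$ is met by passing to a high enough splitting node of $q$, since $suc_{q}\left(  s_{i}\right)$ is infinite. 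For point 2 "$\Leftarrow$", given such an $H$, Player $\mathsf{II}$ responds to Player $\mathsf{I}$'s node $s_{i}$ with $r_{i}=H\left(  s_{i}\right)$. If Player $\mathsf{I}$ could nonetheless continue forever, the resulting branch $f$ would have infinitely many splitting levels at which $f\left(  n\right)  \in S$ yet $f\left(  n\right)  >r_{i}=H\left(  f\upharpoonright n\right)$, contradicting that $\left\{  f\upharpoonright n\mid f\left(  n\right)  \in S\right\}$ is almost contained in $\left\{  f\upharpoonright n\mid f\left(  n\right)  <H\left(  f\upharpoonright n\right)  \right\}$. The parenthetical membership $\left[  p\right]  _{split}\cap Hit\left(  S\right)  \in\mathcal{K}\left(  p\right)$ is then immediate: every split-branch meeting $S$ infinitely is caught by $H$, so it lies in $Catch_{\exists}\left(  H\right)$.

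\textbf{Extracting objects from strategies.} For the "$\Rightarrow$" directions I would read off the object from the tree of plays consistent with the strategy. For point 1, if $\tau$ is a winning strategy for Player $\mathsf{I}$, I would build $q$ by pruning $p$ to exactly the splitting nodes $\tau$ can produce against all possible responses $r_{i}\in\omega$ of Player $\mathsf{II}$; since $\tau$ wins, every branch of $q$ threads through infinitely many such nodes, all of whose splitting values lie in $S$, giving $\left[  q\right]  _{split}\subseteq\left[  S\right]  ^{\omega}$. The main obstacle is verifying $q$ is genuinely a Miller condition — that the selected node set is a subtree with infinitely many successors at each retained splitting node — which requires organizing the strategy's outputs carefully (for instance by interleaving all finite response sequences). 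For point 2, if $\sigma$ is a winning strategy for Player $\mathsf{II}$, I would define $H\left(  s\right)$ to bound all the responses $\sigma$ gives along every finite $\sigma$-consistent play reaching $s$; because only countably many such plays reach a fixed node, this supremum is finite and yields the desired $H$. The two "$\Rightarrow$" directions can alternatively be obtained for free by combining the "$\Leftarrow$" directions already proved with determinacy, and I expect the cleanest write-up is to prove both "$\Leftarrow$" directions directly and then invoke the Gale-Stewart theorem to close the cycle, so that the delicate strategy-to-tree construction in point 1 need only be carried out once.
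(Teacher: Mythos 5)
Your two ``$\Leftarrow$'' arguments are correct, and your sketch for extracting $q$ from a strategy of Player $\mathsf{I}$ is the right fusion construction. The genuine gap is your preferred write-up: proving only the two ``$\Leftarrow$'' directions and then ``invoking the Gale-Stewart theorem to close the cycle'' does not work. Write $A$ for ``$\mathsf{I}$ has a winning strategy'', $B$ for ``there is $q\leq p$ with $\left[  q\right]  _{split}\subseteq\left[  S\right]  ^{\omega}$'', $C$ for ``$\mathsf{II}$ has a winning strategy'', and $D$ for ``there is an $H$ as in (2)''. You have proved $B\Rightarrow A$ and $D\Rightarrow C$; determinacy gives $A\vee C$, and trivially $\neg\left(  A\wedge C\right)  $ holds. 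From these premises one cannot deduce $A\Rightarrow B$ or $C\Rightarrow D$: the assignment in which $A$ is true and $B,C,D$ are all false satisfies every premise and falsifies both conclusions. Determinacy tells you \emph{some} player has a winning strategy; it can never convert a strategy into a combinatorial object. What would close the cycle is the dichotomy $B\vee D$, but that dichotomy is exactly the corollary the paper \emph{derives from} this lemma (using both ``$\Rightarrow$'' directions), so appealing to it here is circular. Both ``$\Rightarrow$'' directions must be proved directly, which is what the paper does: it proves the ``$\Rightarrow$'' direction of (2) in detail and leaves the other three, genuinely easy, implications to the reader.

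Your sketch of (2) ``$\Rightarrow$'' has two further flaws. First, well-definedness of $H$: you argue that $H\left(  s\right)  $ is finite ``because only countably many such plays reach a fixed node'', but a countable set of natural numbers need not be bounded. The correct observation (made explicitly in the paper) is that there are only \emph{finitely} many $\sigma$-consistent partial plays in which Player $\mathsf{I}$ builds $s$: Player $\mathsf{I}$'s moves in such a play form a strictly increasing chain of splitting nodes of $p$, all initial segments of $s$, of which there are only finitely many, and Player $\mathsf{II}$'s responses are then determined by $\sigma$. Second, you never verify that this $H$ has the required property, and that verification is the real content of the paper's proof. It runs as follows: if some $f\in\left[  p\right]  $ violated the almost-inclusion, the set $B$ of levels $n$ with $f\upharpoonright n\in Split\left(  p\right)  ,$ $f\left(  n\right)  \in S$ and $H\left(  f\upharpoonright n\right)  \leq f\left(  n\right)  $ would be infinite; enumerating $B=\left\{  b_{n}\mid n\in\omega\right\}  $ increasingly, Player $\mathsf{I}$ defeats $\sigma$ by playing $f\upharpoonright b_{0},f\upharpoonright b_{1},\ldots$, each move being legal because $f\left(  b_{i}\right)  \in S$ and $f\left(  b_{i}\right)  \geq H\left(  f\upharpoonright b_{i}\right)  >r_{i}$, where $r_{i}$ is $\sigma$'s response. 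This contradicts $\sigma$ being winning for $\mathsf{II}$. Without this argument, or an equivalent one, the proof of (2) is incomplete.
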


\begin{proof}
The first equivalence is easy so we leave it for the reader. Now assume there
is a winning strategy $\pi$ for $II.$ We define $H:Split\left(  p\right)
\longrightarrow\omega$ such that if $s\in Split\left(  p\right)  $ then
$\pi\left(  \overline{x}\right)  <H\left(  s\right)  $ where $\overline{x}$ is
any partial play in which player $I$ has build $s$ and $II$ has played
according to $\pi$ (note there are only finitely many of those $\overline{x}$
so we can define $H\left(  s\right)  $). We want to prove that if $f\in\left[
p\right]  $ then $\left\{  f\upharpoonright n\in Split\left(  p\right)  \mid
f\left(  n\right)  \in S\right\}  $ is almost contained in the set $\left\{
f\upharpoonright n\in Split\left(  p\right)  \mid f\left(  n\right)  <H\left(
f\upharpoonright n\right)  \right\}  .$ Assume this is not the case. $\ $Let
$B$ be the set of all $n\in\omega$ such that $f\upharpoonright n\in
Split\left(  p\right)  $ with $f\left(  n\right)  \in S$ but $H\left(
f\upharpoonright n\right)  \leq f\left(  n\right)  .$ By our hypothesis $B$ is
infinite and then we enumerate it as $B=\left\{  b_{n}\mid n\in\omega\right\}
$ in increasing order. Consider the run of the game where $\mathsf{I}$ plays
$f\upharpoonright b_{n}$ at the $n$-th stage. This is possible since $f\left(
b_{n}\right)  \in S$ and $H\left(  f\upharpoonright b_{n}\right)  \leq
f\left(  b_{n}\right)  $ so $\mathsf{I}$ will win the game, which is a
contradiction. The other implication is easy.
\end{proof}

\qquad\ \ 

Since $\mathcal{G}\left(  p,S\right)  $ is an open fame for $\mathsf{II}$ by
the Gale-Stewart theorem (see \cite{Kechris}) it is determined, so we conclude
the following dichotomy:\ \ \ \ \ \ \ \ \ \ \ \ \ \ 

\begin{corollary}
If $p$ is a Miller tree and $S\in\left[  \omega\right]  ^{\omega}$ then one
and only one of the following holds:

\begin{enumerate}
\item There is $q\leq p$ such that $\left[  q\right]  _{split}\subseteq\left[
S\right]  ^{\omega}.$

\item There is $H:Split\left(  p\right)  \longrightarrow\omega$ such that if
$f\in\left[  p\right]  $ then the set defined as $\left\{  f\upharpoonright
n\in Split\left(  p\right)  \mid f\left(  n\right)  \in S\right\}  $ is almost
contained in the following set: $\left\{  f\upharpoonright n\in Split\left(
p\right)  \mid f\left(  n\right)  <H\left(  f\upharpoonright n\right)
\right\}  $ (and $\left[  p\right]  _{split}\cap Hit\left(  S\right)
\in\mathcal{K}\left(  p\right)  $).
\end{enumerate}
\end{corollary}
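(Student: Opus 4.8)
The plan is to read the corollary directly off the preceding lemma together with the determinacy of the game $\mathcal{G}\left(p,S\right)$. First I would argue that $\mathcal{G}\left(p,S\right)$ is determined. Player $\mathsf{I}$ wins precisely when she manages to supply a legal splitting node at every round, and she loses exactly at the first round where no admissible extension of the current node exists; hence the set of plays won by Player $\mathsf{II}$ depends only on a finite initial segment of the play and is therefore open. Equivalently, the winning condition for $\mathsf{I}$ is closed, so by the Gale--Stewart theorem (see \cite{Kechris}) one of the two players has a winning strategy.

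Next I would invoke the previous lemma, which has already matched each player's winning strategy with one of the two alternatives: Player $\mathsf{I}$ has a winning strategy if and only if there is $q\leq p$ with $\left[q\right]_{split}\subseteq\left[S\right]^{\omega}$, which is alternative 1, and Player $\mathsf{II}$ has a winning strategy if and only if the bounding function $H:Split\left(p\right)\longrightarrow\omega$ described in alternative 2 exists (yielding also $\left[p\right]_{split}\cap Hit\left(S\right)\in\mathcal{K}\left(p\right)$). Combining this equivalence with determinacy immediately gives that at least one of the two alternatives holds.

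For the exclusivity asserted by ``one and only one'', I would use the standard fact that in any two-player game at most one player can possess a winning strategy: if both $\mathsf{I}$ and $\mathsf{II}$ had winning strategies, running them against each other would produce a single play won by both, a contradiction. Through the lemma this translates into the statement that alternatives 1 and 2 cannot hold simultaneously. One can also see the incompatibility concretely: if $\left[q\right]_{split}\subseteq\left[S\right]^{\omega}$ for some $q\leq p$, then along every $f\in\left[q\right]$ the splitting values lying in $S$ are cofinal in the range of $f$, so no single $H$ on $Split\left(p\right)$ could eventually bound them, which is exactly the negation of alternative 2.

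There is no serious obstacle here, since all of the combinatorial content has been pushed into the previous lemma and into the verification that $\mathcal{G}\left(p,S\right)$ is open for $\mathsf{II}$. The only point deserving a little care is pinning down the exact shape of the payoff set, so that the claim that the game is open for $\mathsf{II}$ (equivalently, that $\mathsf{I}$'s winning set is closed) is unambiguous; once that is settled, Gale--Stewart together with the lemma deliver the dichotomy at once.
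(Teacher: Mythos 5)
Your proof is correct and is essentially the paper's own: the paper likewise observes that $\mathcal{G}\left(  p,S\right)  $ is an open game for Player $\mathsf{II}$ (Player $\mathsf{I}$ loses exactly when she first fails to produce a legal splitting node), applies the Gale--Stewart theorem to get determinacy, and then reads both alternatives, as well as their mutual exclusivity, off the preceding lemma via the fact that at most one player can have a winning strategy. One caveat on your supplementary ``concrete'' incompatibility remark: the mere cofinality of the $S$-valued splitting values along branches of $q$ does not by itself defeat a given $H$ (for a fixed $f\in\left[  q\right]  $ one can define $H$ so that $H\left(  f\upharpoonright n\right)  =f\left(  n\right)  +1$ at every splitting node along $f$); a genuine contradiction requires diagonalizing, i.e.\ building a branch of $q$ which at each splitting node $s$ of $q$ chooses a successor in $suc_{q}\left(  s\right)  $ exceeding $H\left(  s\right)  $ --- but this aside is harmless, since your main argument already establishes the ``only one'' half.
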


\qquad\ \ \ 

In particular, for every Miller tree $p$ and $S\in\left[  \omega\right]
^{\omega}$ there is $q\leq p$ such that either $\left[  q\right]
_{split}\subseteq\left[  S\right]  ^{\omega}$\ or\ $\left[  q\right]
_{split}\subseteq\left[  \omega\setminus S\right]  ^{\omega}$ \ (although this
fact can be proved easier without the game).\ \ \ \ \ \ \ \ 

\begin{definition}
Let $p$ be a Miller tree and $S\in\left[  \omega\right]  ^{\omega}.$ We say
$S$ \emph{tree-splits }$p$ if there are Miller trees $q_{0},q_{1}\leq p$ such
that $\left[  q_{0}\right]  _{split}\subseteq\left[  S\right]  ^{\omega}$ and
$\left[  q_{1}\right]  _{split}\subseteq\left[  \omega\setminus S\right]
^{\omega}.$ $\mathcal{S}$ is a \emph{Miller tree-splitting family }if every
Miller tree is tree-split by some element of $\mathcal{S}.$\ 
\end{definition}

\qquad\ \ \ 

It is easy to see that every Miller-tree splitting family is a splitting
family and it is also easy to see that every $\omega$-splitting family is a
Miller-tree splitting family. We will now prove there is a Miller-tree
splitting family of size $\mathfrak{s}.$\qquad\ \ \ \qquad\ \ \ \ \ \ \ \ 

\begin{proposition}
The smallest size of a Miller-tree splitting family is $\mathfrak{s}.$
\end{proposition}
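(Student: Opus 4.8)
The plan is to prove both inequalities, the lower bound being immediate and the upper bound splitting into two cases according to whether $\mathfrak{b}\leq\mathfrak{s}$ or $\mathfrak{s}<\mathfrak{b}$. For the lower bound, since every Miller-tree splitting family is a splitting family (as already observed), any Miller-tree splitting family has size at least $\mathfrak{s}$. It remains to construct a Miller-tree splitting family of size $\mathfrak{s}$. The central tool throughout is the game dichotomy: by the Corollary, for a Miller tree $p$ and $S\in\left[\omega\right]^{\omega}$, either there is $q\leq p$ with $\left[q\right]_{split}\subseteq\left[S\right]^{\omega}$ (Player $\mathsf{I}$ wins $\mathcal{G}\left(p,S\right)$; I will call such $S$ \emph{$p$-big}), or $\left[p\right]_{split}\cap Hit\left(S\right)\in\mathcal{K}\left(p\right)$. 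Thus $S$ tree-splits $p$ if and only if both $S$ and $\omega\setminus S$ are $p$-big. A key preliminary remark is that \emph{at least one} of $S,\omega\setminus S$ is always $p$-big: every infinite set meets $S$ or $\omega\setminus S$ infinitely, so $\left[p\right]_{split}=\left(\left[p\right]_{split}\cap Hit\left(S\right)\right)\cup\left(\left[p\right]_{split}\cap Hit\left(\omega\setminus S\right)\right)$, and were both pieces in $\mathcal{K}\left(p\right)$ we would get $\left[p\right]_{split}\in\mathcal{K}\left(p\right)$, contradicting that $\textsf{cov}\left(\mathcal{K}\left(p\right)\right)=\mathfrak{b}>1$.

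In the case $\mathfrak{b}\leq\mathfrak{s}$ I would take an $\omega$-splitting family $\mathcal{S}$ of size $\max\left\{\mathfrak{b},\mathfrak{s}\right\}=\mathfrak{s}$, which exists because there is a block-splitting family of that size (Kamburelis--Weglorz) and every block-splitting family is $\omega$-splitting. Given a Miller tree $p$, applying the $\omega$-splitting property to the countable family $\left\{suc_{p}\left(s\right)\mid s\in Split\left(p\right)\right\}$ yields $S\in\mathcal{S}$ splitting every successor set; hence both $\left\{s\mid\left|suc_{p}\left(s\right)\cap S\right|=\omega\right\}$ and $\left\{s\mid\left|suc_{p}\left(s\right)\cap\left(\omega\setminus S\right)\right|=\omega\right\}$ are all of $Split\left(p\right)$, so one can recursively build $q_{0},q_{1}\leq p$ branching only into $S$ and into $\omega\setminus S$ respectively, showing $S$ tree-splits $p$.

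The case $\mathfrak{s}<\mathfrak{b}$ is the main obstacle and is where the game dichotomy does the real work. Here I would take merely a \emph{splitting} family $\mathcal{S}=\left\{S_{\alpha}\mid\alpha<\mathfrak{s}\right\}$ and argue by contradiction that it is already Miller-tree splitting. Suppose some Miller tree $p$ is tree-split by no element of $\mathcal{S}$. Then for each $\alpha$, exactly one of $S_{\alpha},\omega\setminus S_{\alpha}$ fails to be $p$-big; call it $\sigma\left(S_{\alpha}\right)$, so that $E_{\alpha}=\left[p\right]_{split}\cap Hit\left(\sigma\left(S_{\alpha}\right)\right)\in\mathcal{K}\left(p\right)$. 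Since $\left|\mathcal{S}\right|=\mathfrak{s}<\mathfrak{b}=\textsf{cov}\left(\mathcal{K}\left(p\right)\right)$, the union $\bigcup_{\alpha<\mathfrak{s}}E_{\alpha}$ cannot cover $\left[p\right]_{split}$, so there is a branch $b\in\left[p\right]$ with $x=Sp\left(p,b\right)\notin\bigcup_{\alpha}E_{\alpha}$. For every $\alpha$ this means $x\notin Hit\left(\sigma\left(S_{\alpha}\right)\right)$, i.e. $x\cap\sigma\left(S_{\alpha}\right)$ is finite, so $x$ is almost contained in one side of $S_{\alpha}$ and therefore is \emph{not} split by $S_{\alpha}$. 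Thus the infinite set $x$ is split by no member of $\mathcal{S}$, contradicting that $\mathcal{S}$ is splitting. This contradiction shows $\mathcal{S}$ is Miller-tree splitting, completing the construction of a family of size $\mathfrak{s}$ and hence the proof. The delicate points to verify carefully are the equivalence (built into the Corollary) between the existence of the refining subtree and Player $\mathsf{I}$'s win, and the exact value $\textsf{cov}\left(\mathcal{K}\left(p\right)\right)=\mathfrak{b}$ supplied by the earlier lemma on $\mathcal{K}\left(p\right)$.
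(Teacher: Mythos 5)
Your proof is correct, and while the lower bound and the case $\mathfrak{b}\leq\mathfrak{s}$ coincide with the paper's treatment (the paper merely asserts that every $\omega$-splitting family is Miller-tree splitting; you supply the routine recursion on successor sets), your argument in the crucial case $\mathfrak{s}<\mathfrak{b}$ is genuinely different. The paper fixes an $\left(\omega,\omega\right)$-splitting family, enumerates the splitting nodes $s_{n}$ of $p$, and runs the covering argument $\mathsf{cov}\left(\mathcal{K}\left(p_{s_{n}}\right)\right)=\mathfrak{b}$ separately inside each subtree $p_{s_{n}}$, producing spines $X_{n}=Sp\left(p_{s_{n}},f_{n}\right)$ almost disjoint from every ``bad side''; it then applies the $\left(\omega,\omega\right)$-splitting property to $\left\{X_{n}\mid n\in\omega\right\}$ to get a single $S_{\alpha}$ together with indices $n\in F$ and $m\in G$, from which the game dichotomy yields refining subtrees $q_{0}\leq p_{s_{n}}$ and $q_{1}\leq p_{s_{m}}$ into $S_{\alpha}$ and its complement, so $S_{\alpha}$ tree-splits $p$ directly. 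You instead need only a plain splitting family: assuming no member tree-splits $p$, each $S_{\alpha}$ has a non-$p$-big side $\sigma\left(S_{\alpha}\right)$ (unique, by your preliminary remark, though uniqueness is not actually needed), the dichotomy puts $E_{\alpha}=\left[p\right]_{split}\cap Hit\left(\sigma\left(S_{\alpha}\right)\right)$ into $\mathcal{K}\left(p\right)$, and a single application of $\mathsf{cov}\left(\mathcal{K}\left(p\right)\right)=\mathfrak{b}>\mathfrak{s}$ produces one spine $x\in\left[p\right]_{split}$ unsplit by every $S_{\alpha}$, contradicting splittingness. What you prove en route is that every splitting family of size less than $\mathfrak{b}$ is Miller-tree splitting, which is formally cleaner than the paper's intermediate statement that every $\left(\omega,\omega\right)$-splitting family of size $\mathfrak{s}<\mathfrak{b}$ is Miller-tree splitting; both suffice for the later $+$-Ramsey construction, since under $\mathfrak{s}<\mathfrak{b}$ a splitting family of size $\mathfrak{s}$ is automatically $\left(\omega,\omega\right)$-splitting by the Mildenberger--Raghavan--Stepr\={a}ns lemma quoted earlier in the paper. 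One caveat, which applies equally to the paper's proof and so is not a gap relative to it: both arguments rely on the convention that every branch of a (normalized) Miller tree passes through infinitely many splitting nodes, so that all elements of $\left[p\right]_{split}$ are infinite; otherwise your final witness $x$ (like the paper's $X_{n}$) could be a finite set, which is split by nothing vacuously.
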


\begin{proof}
We will construct a Miller-tree splitting family of size $\mathfrak{s.}$ In
case $\mathfrak{b\leq s}$ there is an $\omega$-splitting family of size
$\mathfrak{s}$ and this is a Miller tree-splitting family as remarked above.

\ \qquad\ \ \ \ 

Now assume $\mathfrak{s}<\mathfrak{b}.$ We will show that any $\left(
\omega,\omega\right)  $-splitting family is a Miller tree splitting family.
Let $\mathcal{S}=\left\{  S_{\alpha}\mid\alpha<\mathfrak{s}\right\}  $ be a
$\left(  \omega,\omega\right)  $-splitting family and $p$ a Miller tree. Let
$Split_{1}\left(  p\right)  =\left\{  s_{n}\mid n\in\omega\right\}  $ and for
every $n<\omega$ define $A_{n}$ as the set of all $\alpha<\mathfrak{s}$ such
that there is $i\left(  \alpha,n\right)  $ such that there is no $q\leq
p_{s_{n}}$ for which $\left[  q\right]  _{split}\subseteq\lbrack S_{\alpha
}^{i\left(  \alpha,n\right)  }]^{\omega}$ (hence $\left[  p_{s_{n}}\right]
_{split}\cap Hit(S_{\alpha}^{i\left(  \alpha,n\right)  })\in\mathcal{K}\left(
p_{s_{n}}\right)  $). Since $A_{n}$ has size less than $\mathfrak{b}=$
\textsf{cov}$\left(  \mathcal{K}\left(  p_{s_{n}}\right)  \right)  $ we can
find $f_{n}\in\left[  p_{s_{n}}\right]  $ such that $X_{n}=Sp\left(  p_{s_{n}%
},f_{n}\right)  \notin%
%TCIMACRO{\tbigcup \limits_{\alpha\in A_{n}}}%
%BeginExpansion
{\textstyle\bigcup\limits_{\alpha\in A_{n}}}
%EndExpansion
Hit(S_{\alpha}^{i\left(  \alpha,n\right)  }),$ which means $X_{n}$ is almost
disjoint with every $S_{\alpha}^{i\left(  \alpha,n\right)  }$ whenever
$\alpha\in A_{n}.$ Since $\mathcal{S}$ is an $\left(  \omega,\omega\right)
$-splitting family, there is $\alpha<\mathfrak{s}$ such that both $F=\left\{
n\mid\left\vert S_{\alpha}\cap X_{n}\right\vert =\omega\right\}  $ and
$G=\left\{  n\mid\left\vert \left(  \omega\setminus S_{\alpha}\right)  \cap
X_{n}\right\vert =\omega\right\}  $ are infinite (in particular, they are not
empty). Choose any $n\in F$ and $m\in G.$ We then know that $X_{n}$ is not
almost disjoint with $S_{\alpha}$ so then there must be $q_{0}\leq p_{s_{n}}$
for which $\left[  q_{0}\right]  _{split}\subseteq\lbrack S_{\alpha}]^{\omega
}.$ In the same way, since $m\in G$ \ there must be $q_{1}\leq p_{s_{n}}$ for
which $\left[  q_{1}\right]  _{split}\subseteq\lbrack\omega\setminus
S_{\alpha}]^{\omega}$ and then $S_{\alpha}$ tree-splits $p.$
\end{proof}

\qquad\ \ \ \qquad\ \ \ \qquad\ \ \ \ \qquad\ \ \ 

The following lemma is probably well known:

\begin{lemma}
Assume $\kappa<\mathfrak{d}$ and for every $\alpha<\kappa$ let $\mathcal{F}%
_{\alpha}\subseteq\left[  \omega\right]  ^{<\omega}$ be an infinite set of
disjoint finite subsets of $\omega$ and $g_{\alpha}:\bigcup\mathcal{F}%
_{\alpha}\longrightarrow\omega.$ Then there is $f:\omega\longrightarrow\omega$
such that for every $\alpha<\kappa$ there are infinitely many $X\in
\mathcal{F}_{\alpha}$ such that $g_{\alpha}\upharpoonright X<f\upharpoonright
X.$
\end{lemma}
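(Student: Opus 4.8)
I need to find a single $f:\omega\longrightarrow\omega$ that, for every $\alpha<\kappa$, is "caught" infinitely often by $g_\alpha$ on whole blocks of $\mathcal{F}_\alpha$ — meaning there are infinitely many $X\in\mathcal{F}_\alpha$ with $g_\alpha\upharpoonright X < f\upharpoonright X$ (strict dominance everywhere on $X$). The governing hypothesis is $\kappa<\mathfrak{d}$, which tells me that $\kappa$-many functions cannot form a dominating family, equivalently that some single function is not dominated by any of them. The plan is to reduce the block-wise catching condition to an ordinary non-domination statement about a family of $\kappa$ functions of $\omega$, and then invoke $\kappa<\mathfrak{d}$ to extract the witness.

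\textbf{The reduction.} For each $\alpha<\kappa$, I enumerate $\mathcal{F}_\alpha=\{X^\alpha_k\mid k\in\omega\}$ (the sets are infinitely many and pairwise disjoint). The natural move is to define, for each $\alpha$, a function $h_\alpha:\omega\longrightarrow\omega$ that records, on an interval associated to the $k$-th block, the largest value $g_\alpha$ takes on $X^\alpha_k$ together with enough room to locate $X^\alpha_k$. Concretely I would choose an increasing sequence of "marker" points and let $h_\alpha(n)$ encode $\max\{g_\alpha(j)\mid j\in X^\alpha_k\}+1$ where $k$ is the block index active at coordinate $n$. The goal is to arrange that if a candidate $f$ satisfies $f\upharpoonright I_k > h_\alpha\upharpoonright I_k$ on some interval $I_k$ that covers $X^\alpha_k$, then automatically $g_\alpha\upharpoonright X^\alpha_k < f\upharpoonright X^\alpha_k$. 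Since $\kappa<\mathfrak{d}$, the family $\{h_\alpha\mid\alpha<\kappa\}$ is not dominating, so I would actually want the contrapositive formulation: I seek $f$ that is \emph{not} dominated by any $h_\alpha$, i.e. for each $\alpha$ there are infinitely many $n$ with $f(n)\geq h_\alpha(n)$. The subtlety is that "infinitely many coordinates $n$ where $f$ exceeds $h_\alpha$" must be upgraded to "infinitely many whole blocks $X^\alpha_k$ where $f$ strictly dominates $g_\alpha$."

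\textbf{Handling the block structure.} To make the upgrade work cleanly, the key device is to let $f$ be chosen so that it is \emph{large on entire blocks simultaneously}. I would set up a common interval partition $\mathcal{P}=\{I_k\mid k\in\omega\}$ of $\omega$, and for each $\alpha$ define $h_\alpha$ to be constant on each $I_k$ with value bounded below by $\max_{j\in X}g_\alpha(j)$ ranging over blocks $X\in\mathcal{F}_\alpha$ meeting $I_k$. By thinning, I may assume each $X^\alpha_k$ sits inside a single interval $I_{m(\alpha,k)}$ with the indices $m(\alpha,k)$ strictly increasing in $k$. Now take $f$ witnessing that $\{h_\alpha\}$ is not dominating: for each $\alpha$ there are infinitely many $n$ with $f(n)>h_\alpha(n)$, hence infinitely many intervals $I_k$ on which $f$ exceeds the constant value $h_\alpha\upharpoonright I_k$ at \emph{some} point. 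To force dominance on the \emph{whole} block I would instead demand $f$ itself be constant (or monotone large) on each $I_k$, replacing $f$ by $\tilde f(j)=\max\{f(i)\mid i\in I_k\}$ for $j\in I_k$; then $f$ exceeding $h_\alpha$ somewhere in $I_k$ gives $\tilde f\upharpoonright I_k>h_\alpha\upharpoonright I_k\geq g_\alpha\upharpoonright(X^\alpha_k\cap I_k)$.

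\textbf{The main obstacle.} The delicate point, and where I expect to spend the real effort, is the interaction between the \emph{single shared} interval partition and the \emph{$\kappa$ different} block families $\mathcal{F}_\alpha$: each $\mathcal{F}_\alpha$ must be refined so that its blocks lie inside intervals of $\mathcal{P}$, but $\mathcal{P}$ must be chosen \emph{before} knowing $f$, uniformly in $\alpha$, and there is tension because I cannot refine $\mathcal{P}$ separately for each $\alpha$ without destroying the non-domination extraction. The clean way around this is to not use a shared $\mathcal{P}$ at all: instead, for each $\alpha$ directly define $h_\alpha(n)=\max\{g_\alpha(j)\mid j\in X^\alpha_k\}+1$ where $k=k(\alpha,n)$ is the unique index with $\min X^\alpha_k\leq n<\min X^\alpha_{k+1}$, and then take $f$ \emph{increasing} and undominated by every $h_\alpha$; increasingness of $f$ converts "$f(n)\geq h_\alpha(n)$ for some $n\geq\min X^\alpha_k$" into "$f(j)\geq f(\min X^\alpha_k)\geq\cdots$" giving strict dominance across all of $X^\alpha_k$ once $n$ is taken at the block's left endpoint. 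I would verify that $f$ increasing and escaping each $h_\alpha$ infinitely often yields infinitely many blocks $X^\alpha_k$ with $g_\alpha\upharpoonright X^\alpha_k<f\upharpoonright X^\alpha_k$, which is exactly the conclusion; the existence of such an increasing $f$ follows since $\kappa<\mathfrak{d}$ and we may always pass to an increasing non-dominated function.
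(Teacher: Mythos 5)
Your overall strategy is the paper's: compress each $\mathcal{F}_\alpha,g_\alpha$ into a single function $h_\alpha:\omega\longrightarrow\omega$, use $\kappa<\mathfrak{d}$ to find an increasing $f$ escaping every $h_\alpha$ infinitely often, and convert pointwise escape into block-wise dominance via monotonicity. The gap is in your final reduction (the ``clean way around'' the shared-partition problem), precisely at the conversion step. You set $h_\alpha(n)=\max g_\alpha[X^\alpha_k]+1$ where $k$ is determined by $\min X^\alpha_k\leq n<\min X^\alpha_{k+1}$, and your argument needs the escape coordinates $n$ (where $f(n)>h_\alpha(n)$) to occur at block left endpoints; but non-domination gives no control whatsoever over where the escapes occur. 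If $\min X^\alpha_k<n$, increasingness of $f$ yields $f(j)\geq f(n)>g_\alpha(j)$ only for those $j\in X^\alpha_k$ with $j\geq n$; for block elements $j<n$ you know only $f(j)\leq f(n)$, which gives nothing. This matters because the blocks are merely pairwise disjoint, not intervals, so they interleave. Concretely, take a single $\alpha$ with $X_k=\left\{  2k,b_k\right\}$ where the $b_k$ are huge, odd and distinct, and let $g_\alpha\left(  2k\right)  =4^{k}$, $g_\alpha\left(  b_k\right)  =0.$ Then $h_\alpha(n)=4^{k}+1$ for $n\in\left\{  2k,2k+1\right\}  ,$ and the increasing function $f$ with $f\left(  2k\right)  =4^{k}$, $f\left(  2k+1\right)  =4^{k}+2$ escapes $h_\alpha$ at every odd coordinate, yet $f\left(  2k\right)  \leq g_\alpha\left(  2k\right)  $ for every $k,$ so $f$ fails to dominate $g_\alpha$ on every single block. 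Hence ``increasing and undominated by every $h_\alpha$'' does not imply the conclusion for your $h_\alpha$.

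The missing idea is an off-by-one offset: the value you diagonalize against at coordinate $n$ must bound $g_\alpha$ on a block lying entirely to the \emph{right} of $n$. That is exactly what the paper does: for each $\alpha$ it picks its own interval partition $\mathcal{P}_\alpha=\left\{  P_\alpha\left(  n\right)  \mid n\in\omega\right\}$ in which every interval contains some member of $\mathcal{F}_\alpha$, and defines $\overline{g}_\alpha$ to be constant on $P_\alpha\left(  n\right)$ with value $\max g_\alpha\left[  P_\alpha\left(  n+1\right)  \right]$ (the maximum over the \emph{next} interval). An escape $f(n)>\overline{g}_\alpha(n)$ with $n\in P_\alpha\left(  k\right)  ,$ together with $f$ increasing, then gives $f(j)\geq f(n)>g_\alpha(j)$ for every $j$ in the block inside $P_\alpha\left(  k+1\right)  ,$ since all such $j$ exceed $n$; infinitely many escapes meet infinitely many intervals, hence infinitely many blocks. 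Note also that the ``main obstacle'' you describe is not a real obstacle: no common partition across the $\alpha$'s is needed, because the $\overline{g}_\alpha$ are just $\kappa$ many functions on $\omega$ and $\kappa<\mathfrak{d}$ applies to them directly; it is the combination of per-$\alpha$ partitions, an increasing $f$, and the next-interval offset that removes the need for any uniformity. With that offset inserted, your argument becomes correct and coincides with the paper's proof.
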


\begin{proof}
Given $\alpha<\kappa$ find an interval partition $\mathcal{P}_{\alpha
}=\left\{  P_{\alpha}\left(  n\right)  \mid n\in\omega\right\}  $ such that
for every $n\in\omega$ there is $X\in\mathcal{F}_{\alpha}$ such that
$X\subseteq P_{\alpha}\left(  n\right)  $ (this is possible since
$\mathcal{F}_{\alpha}$ is infinite and its elements are pairwise disjoint).
Then define the function $\overline{g}_{\alpha}:\omega\longrightarrow\omega$
such that $\overline{g}_{\alpha}\upharpoonright P_{\alpha}\left(  n\right)  $
is the constant function $max\left\{  g_{\alpha}\left[  P_{\alpha}\left(
n+1\right)  \right]  \right\}  .$ Since $\kappa$ is smaller than
$\mathfrak{d},$ we can then find an increasing function $f:\omega
\longrightarrow\omega$ that is not dominated by any of the $\overline
{g}_{\alpha}.$ It is easy to prove that $f$ has the desired property.
\end{proof}

\qquad\ \ \ \ 

Now we can prove the following lemma that will be useful:

\begin{lemma}
Let $q$ be a Miller tree and $\kappa<\mathfrak{d}.$ If $\left\{  H_{\alpha
}\mid\alpha<\kappa\right\}  \subseteq\omega^{Split\left(  q\right)  }$ then
there is $r\leq q$ such that $Split\left(  r\right)  =Split\left(  q\right)
\cap r$ and $\left[  r\right]  _{split}\cap%
%TCIMACRO{\dbigcup \limits_{\alpha<\kappa}}%
%BeginExpansion
{\displaystyle\bigcup\limits_{\alpha<\kappa}}
%EndExpansion
Catch_{\forall}\left(  H_{\alpha}\right)  =\emptyset$.
\end{lemma}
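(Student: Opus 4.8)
The plan is to build the desired subtree $r \leq q$ by a fusion argument that thins out the successor sets at splitting nodes so that, along every branch, each $H_\alpha$ is escaped infinitely often. Recall $Catch_{\forall}(H_\alpha)$ consists of those $Sp(q,f)$ for which $f(n) < H_\alpha(f \upharpoonright n)$ holds for \emph{almost all} splitting levels $n$. To keep $Sp(r,f)$ out of $Catch_{\forall}(H_\alpha)$ for every $\alpha < \kappa$ and every branch $f \in [r]$, it suffices to arrange that along each branch, for each $\alpha$, there are infinitely many splitting nodes $s = f \upharpoonright n$ at which $f(n) \geq H_\alpha(s)$ — that is, the branch \emph{escapes} $H_\alpha$ at $s$. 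The constraint $Split(r) = Split(q) \cap r$ means I may not create new splitting nodes: I must keep $r$ a subtree that splits exactly where $q$ does, only discarding some of the successors at each splitting node.

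**The key step: combining the $\kappa$ many functions via $\kappa < \mathfrak{d}$.**

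First I would fix, for each splitting node $s \in Split(q)$, the successor set $suc_q(s) \in [\omega]^\omega$. The obstacle is that I must simultaneously defeat all $\kappa$ functions $H_\alpha$ along every branch, and $\kappa$ may be uncountable. This is exactly where $\kappa < \mathfrak{d}$ enters, via the previous lemma in the excerpt. For each $\alpha < \kappa$, I can package the relevant data — the family of successor sets of $q$ together with the values $H_\alpha$ prescribes on splitting levels — as an infinite family of pairwise disjoint finite sets together with a function on their union. Applying the preceding lemma yields a single $f : \omega \longrightarrow \omega$ such that for every $\alpha < \kappa$, there are infinitely many of the relevant finite blocks $X$ on which $H_\alpha$ (coded as $g_\alpha$) is pointwise below $f$. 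The point of passing to $f$ is to reduce the $\kappa$-sized family of obstructions to a single dominating-type object that I can then use uniformly in the recursive thinning.

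**The fusion construction.**

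With $f$ in hand I would perform a standard Miller-tree fusion: I recursively enumerate $Split(q)$ in a way compatible with the tree order, and at stage $n$ I decide which successors of the $n$-th splitting node to retain in $r$. At each retained splitting node $s$, I keep an infinite subset of $suc_q(s)$ consisting of successors $k$ that lie above $f(\,|s|\,)$ (or above the appropriate threshold coming from $f$), so that passing through $s$ forces $f(n) \geq H_\alpha(s)$ for whichever $\alpha$ the block at $s$ was chosen to defeat. Because the lemma guarantees each $H_\alpha$ is escaped on infinitely many blocks, and the fusion visits all splitting nodes, I can interleave the escapes so that every branch of $r$ escapes every $H_\alpha$ infinitely often. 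Keeping infinitely many successors at every splitting node ensures $r$ remains a Miller tree with $Split(r) = Split(q) \cap r$, and the fusion converges to a genuine subtree $r \leq q$. Then for any $f \in [r]$ and any $\alpha < \kappa$, the set $\{n \mid f(n) \geq H_\alpha(f\upharpoonright n)\}$ is infinite, so $Sp(r,f) \notin Catch_{\forall}(H_\alpha)$, giving $[r]_{split} \cap \bigcup_{\alpha<\kappa} Catch_{\forall}(H_\alpha) = \emptyset$ as required.

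**Main obstacle.**

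The delicate point is the bookkeeping in the fusion: I must guarantee that \emph{simultaneously} along \emph{every} branch, \emph{each} of the $\kappa$ functions is escaped infinitely often, using only countably many splitting levels per branch while $\kappa$ may be uncountable. The resolution is precisely that $f$ already encodes a uniform escape — a single threshold function that beats each $H_\alpha$ on infinitely many blocks — so I do not need to assign distinct splitting levels to distinct $\alpha$'s one at a time; instead, retaining successors above $f$ at \emph{every} retained splitting node automatically handles all $\alpha$ at once, since for each $\alpha$ infinitely many of those levels are ones where $f$ dominates $H_\alpha$. Verifying that this uniform choice genuinely lands every branch outside every $Catch_{\forall}(H_\alpha)$ is the heart of the argument, and it rests entirely on the $\kappa < \mathfrak{d}$ lemma.
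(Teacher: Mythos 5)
Your proposal has the right overall shape---use the preceding $\kappa<\mathfrak{d}$ lemma to produce a single threshold function on $Split(q)$, then prune by keeping only successors above the threshold, exactly as the paper does with $suc_{r}(s)=suc_{q}(s)\setminus G(s)$---but there is a genuine gap at the key step, in two related places. First, the preceding lemma applies only to infinite families of pairwise \emph{disjoint finite} sets, and the objects you propose to feed into it, namely the successor sets $suc_{q}(s)$ and the splitting levels of $q$, are all infinite (a Miller tree splits infinitely at each splitting node, and each of its splitting levels is infinite). So the ``packaging'' you describe cannot be carried out as stated. Second, even granting some choice of finite blocks and a function $f$ that beats each $H_{\alpha}$ pointwise on infinitely many blocks, your verification that ``retaining successors above $f$ handles all $\alpha$ at once'' does not go through: those infinitely many dominated blocks are particular finite sets of nodes scattered through $Split(q)$, and nothing guarantees that a given branch of $r$ passes through any of them, let alone infinitely many. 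Domination at infinitely many nodes of the tree does not yield infinitely many escapes \emph{along every branch}; the dominated nodes may all lie off the branch in question.

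The idea that closes this gap is the one your sketch is missing. For each $t\in Split(q)$ and $\alpha<\kappa$, form the subtree $T(t,\alpha)$ of $q$ consisting of the nodes above $t$ that stay below $H_{\alpha}$ at every splitting node; this tree is \emph{finitely branching}, so its splitting levels $Split_{n}(q)\cap T(t,\alpha)$ are pairwise disjoint \emph{finite} blocks, and the $\kappa<\mathfrak{d}$ lemma (applied to the $\kappa\cdot\omega$ many pairs $(t,\alpha)$) yields a single $G$ with $H_{\alpha}<G$ on infinitely many full levels of each $T(t,\alpha)$. The point is that one does not need escapes along \emph{every} branch directly, only along the \emph{caught} ones: any branch in $Catch_{\forall}(H_{\alpha})$ is, from some $t$ on, a branch of $T(t,\alpha)$, hence it meets every level of that tree, in particular infinitely many dominated levels, and therefore lies in $Catch_{\exists}(G)$. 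This gives $\bigcup_{\alpha<\kappa}Catch_{\forall}(H_{\alpha})\subseteq Catch_{\exists}(G)$, and the pruning $suc_{r}(s)=suc_{q}(s)\setminus G(s)$ then excludes every branch of $Catch_{\exists}(G)$ at once. The property you were trying to arrange head-on by fusion---every branch of $r$ escapes every $H_{\alpha}$ infinitely often---is exactly what falls out of this contrapositive argument, but it cannot be obtained by applying the domination lemma to the successor sets or levels of $q$ themselves.
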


\begin{proof}
We will first prove there is $G:Split\left(  q\right)  \longrightarrow\omega$
such that $%
%TCIMACRO{\dbigcup \limits_{\alpha<\kappa}}%
%BeginExpansion
{\displaystyle\bigcup\limits_{\alpha<\kappa}}
%EndExpansion
Catch_{\forall}\left(  H_{\alpha}\right)  $ is a subset of $Catch_{\exists
}\left(  G\right)  .$ Given $t\in Split\left(  q\right)  $ let $T\left(
t,\alpha\right)  $ the subtree of $q$ such that if $f\in\left[  T\left(
t,\alpha\right)  \right]  $ then $t\sqsubseteq f$ and if $t\sqsubseteq
f\upharpoonright n$ and $f\upharpoonright n\in Split\left(  q\right)  $ then
$f\left(  n\right)  \in H_{\alpha}\left(  f\upharpoonright n\right)  .$
Clearly $T\left(  t,\alpha\right)  $ is a finitely branching subtree of $q.$
Then define $\mathcal{F}\left(  t,\alpha\right)  =\left\{  Split_{n}\left(
q\right)  \cap T\left(  t,\alpha\right)  \mid n<\omega\right\}  $ which is an
infinite collection of pairwise disjoint finite sets and let $g_{\left(
t,\alpha\right)  }:\bigcup\mathcal{F}\left(  t,\alpha\right)  \longrightarrow
\omega$ given by $g_{\left(  t,\alpha\right)  }\left(  s\right)  =H_{\alpha
}\left(  s\right)  .$ Since $\kappa<\mathfrak{d}$ by the previous lemma, we
can find $G:Split\left(  q\right)  \longrightarrow\omega$ such that if
$\alpha<\kappa$ and $t\in Split\left(  q\right)  $ then there are infinitely
many $Y\in\mathcal{F}\left(  t,\alpha\right)  $ such that $g_{\left(
t,\alpha\right)  }\upharpoonright Y<G\upharpoonright Y.$ We will now prove
that $%
%TCIMACRO{\dbigcup \limits_{\alpha<\kappa}}%
%BeginExpansion
{\displaystyle\bigcup\limits_{\alpha<\kappa}}
%EndExpansion
Catch_{\forall}\left(  H_{\alpha}\right)  \subseteq Catch_{\exists}\left(
G\right)  .$ Let $\alpha<\kappa$ and $f\in Catch_{\forall}\left(  H_{\alpha
}\right)  .$ Find $t\in Split\left(  q\right)  $ such that $t\sqsubseteq f$
and if $t\sqsubseteq f\upharpoonright m$ and $f\upharpoonright m\in
Split\left(  q\right)  $ then $f\left(  m\right)  \in H_{\alpha}\left(
f\upharpoonright m\right)  .$ Note that $f$ is a branch through $T\left(
t,\alpha\right)  .$ Let $Y\in\mathcal{F}\left(  t,\alpha\right)  $ such that
$g_{\left(  t,\alpha\right)  }\upharpoonright Y<G\upharpoonright Y$ and since
$f\in\left[  T\left(  t,\alpha\right)  \right]  $, there is $n\in\omega$ such
that $f\upharpoonright n\in Y$ so $f\left(  n\right)  <H_{\alpha}\left(
f\upharpoonright n\right)  <G\left(  f\upharpoonright n\right)  .$

\qquad\ \ \ \ 

Define $r\leq q$ such that $Split\left(  r\right)  =Split\left(  q\right)
\cap r$ and $suc_{r}\left(  s\right)  =suc_{q}\left(  s\right)  \setminus
G\left(  s\right)  .$ Clearly $\left[  r\right]  _{split}$ is disjoint from
$Catch_{\exists}\left(  G\right)  .$
\end{proof}

\qquad\ \ \ 

We can then finally prove our main theorem.

\begin{theorem}
There is a $+$-Ramsey \textsf{MAD} family.
\end{theorem}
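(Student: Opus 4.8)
\textbf{The plan} is to split into the two cases $\mathfrak{a}<\mathfrak{s}$ and $\mathfrak{s}\leq\mathfrak{a}$, exactly mirroring the strategy announced in the introduction. In the first case, $\mathfrak{a}<\mathfrak{s}\leq\mathfrak{d}$, so there is a \textsf{MAD} family of size less than $\mathfrak{d}$; by the corollary of Hru\v{s}\'{a}k proved in the previous section, every such family is $+$-Ramsey, and we are done immediately. Thus the entire difficulty concentrates in the case $\mathfrak{s}\leq\mathfrak{a}$, where I would adapt Shelah's construction of a completely separable \textsf{MAD} family to additionally guarantee the $+$-Ramsey property.

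In the case $\mathfrak{s}\leq\mathfrak{a}$, the plan is to build $\mathcal{A}=\{A_\alpha\mid\alpha<\mathfrak{c}\}$ by transfinite recursion, simultaneously diagonalizing against two kinds of tasks. First, as in Shelah's theorem, I enumerate $[\omega]^\omega$ and at each stage ensure the completely-separable clause (so the family is \textsf{MAD}), using a Miller-tree splitting family $\mathcal{S}=\{S_\xi\mid\xi<\mathfrak{s}\}$ of size $\mathfrak{s}$ (which exists by the Proposition computing the least such family as $\mathfrak{s}$) in place of the $(\omega,\omega)$-splitting family, and attaching to each $A_\alpha$ a node $\sigma_\alpha\in 2^{<\mathfrak{s}}$ recording the splitting decisions so that distinct $A_\alpha$ receive incompatible or properly nested $\sigma_\alpha$. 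Second, I enumerate all Miller trees $\{T_\alpha\mid\alpha<\mathfrak{c}\}$ (there are only $\mathfrak{c}$ of them), and at stage $\alpha$ I must produce a branch of $T_\alpha$ whose image is $\mathcal{I}(\mathcal{A})$-positive. The key device is the $\sigma$-ideal $\mathcal{K}(p)$ on $[p]_{split}$ together with the two structural lemmas just established: that $\mathrm{add}(\mathcal{K}(p))=\mathrm{cov}(\mathcal{K}(p))=\mathfrak{b}$, and the avoidance lemma allowing us, given fewer than $\mathfrak{d}$ functions $H_\alpha$, to thin a Miller tree $q$ to $r\leq q$ with $[r]_{split}\cap\bigcup_\alpha Catch_\forall(H_\alpha)=\emptyset$. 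Using the determinacy dichotomy $\mathcal{G}(p,S)$, each splitting set $S_\xi$ either can be refined into or avoided along the Miller tree, so the Miller-tree splitting family lets me steer a decreasing sequence of Miller subtrees so that the eventual branch passes through infinitely many distinct elements of the family being constructed, landing its image in $\mathcal{I}(\mathcal{A})^{++}\subseteq\mathcal{I}(\mathcal{A})^{+}$.

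The technical heart is interleaving these two recursions coherently. On the one hand, Shelah's clause forces each initial segment $\mathcal{A}_\delta$ to be nowhere \textsf{MAD}, which is exactly what gives room to refine Miller trees into positive sets; on the other hand, the $+$-Ramsey clause requires that the branch I extract at stage $\alpha$ remain a branch of $T_\alpha$ in the \emph{final} family, i.e. its image must stay positive after all later $A_\beta$ are added. The plan is to handle this by working at stage $\alpha$ not with a raw branch but with a Miller subtree $q_\alpha\leq T_\alpha$ whose $[q_\alpha]_{split}$ avoids all the ``catching'' sets coming from the finitely-many-at-a-time splitting constraints accumulated so far, and then to read off from $q_\alpha$ a cofinal sequence of splitting nodes meeting infinitely many $A_{\beta_n}$ with $\beta_n<\alpha$; since these $A_{\beta_n}$ are already fixed, positivity of the image is permanent. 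Because $\mathfrak{s}\leq\mathfrak{a}\leq\mathfrak{c}$ and, crucially, the avoidance lemma needs only $\kappa<\mathfrak{d}$ functions while at each stage we have at most $|\delta|<\mathfrak{c}$ constraints, I must verify the bookkeeping keeps the number of active $H$-constraints below $\mathfrak{d}$ at the relevant substages; this is the main obstacle, and it is resolved by noting that $\mathfrak{s}\leq\mathfrak{a}$ together with $\mathfrak{s}\leq\mathfrak{d}$ lets us apply both the $\mathfrak{b}$-covering and the $\mathfrak{d}$-avoidance lemmas to the bounded pools of constraints generated by a single $\sigma$-node of length $<\mathfrak{s}$. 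Assembling these, the resulting $\mathcal{A}$ is completely separable, hence \textsf{MAD}, and $+$-Ramsey, completing the proof.
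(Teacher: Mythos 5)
Your case division and your treatment of $\mathfrak{a}<\mathfrak{s}$ coincide with the paper's (there, a \textsf{MAD} family of minimal size is Miller-indestructible, hence $+$-Ramsey), and for $\mathfrak{s}\leq\mathfrak{a}$ you correctly identify the paper's machinery: Shelah's recursion with nodes $\sigma_\alpha\in 2^{<\mathfrak{s}}$, a Miller-tree splitting family of size $\mathfrak{s}$, the ideal $\mathcal{K}(p)$, and the $\mathfrak{d}$-avoidance lemma. However, your mechanism for making the positivity of the extracted branch \emph{permanent} is wrong. You propose to read off a branch whose image meets infinitely many already constructed $A_{\beta_n}$ with $\beta_n<\alpha$, arguing that since these are fixed, positivity survives. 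Permanence would indeed follow (such an image lies in $\mathcal{I}(\mathcal{A})^{++}$), but this cannot be arranged in general: the tree $T_\alpha$ is only required to branch into $\mathcal{I}(\mathcal{A}_\alpha)^{+}$-sets, and since $\mathcal{A}_\alpha$ is nowhere \textsf{MAD}, the tree may branch entirely into $\mathcal{A}_\alpha^{\perp}$. Concretely, fix an infinite $B\in\mathcal{A}_\alpha^{\perp}$ and let $T_\alpha$ consist of all increasing finite sequences from $B$; this is an $\mathcal{I}(\mathcal{A}_\alpha)^{+}$-branching tree, yet every branch has image contained in $B$, hence has finite intersection with \emph{every} $A_\beta$, $\beta<\alpha$. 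So no branch meets even one old element infinitely often. The sealing must be forward-looking, which is exactly what the paper does: having extracted $f$ with $im(f)\in\mathcal{I}(\mathcal{A}_\delta)^{+}$, it partitions $im(f)$ into countably many positive sets $Z_n$ and constructs the \emph{new} elements $A_{\delta+n}\subseteq Z_n$ by Shelah's method, so that $im(f)$ contains infinitely many members of the final family and thus lies in $\mathcal{I}(\mathcal{A})^{++}$. This is also why the construction reserves the whole block of indices $\delta+n$, $n\in\omega$, for a single tree.

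A second gap: a single decreasing sequence of Miller subtrees does not yield positivity with respect to all of $\mathcal{A}_\alpha$. The thinning claim only gives $[r]_{split}\subseteq\mathcal{I}(\mathcal{A}(q))^{+}$, where $\mathcal{A}(q)$ excludes those $A_\beta$ whose $\sigma_\beta$ extends $\tau_q$; along one decreasing sequence $q_0\geq q_1\geq\cdots$, every $A_\beta$ with $\sigma_\beta$ extending the limit node $\bigcup_m\tau_{q_m}$ is never handled, and there may be many such $\beta$ (all the $\sigma_\beta$ can extend one short node). The paper resolves this by fusing a full binary tree of Miller trees $\{p(s)\mid s\in 2^{<\omega}\}$ with $\tau_{p(s^\frown 0)}$ and $\tau_{p(s^\frown 1)}$ incompatible: the limit nodes $\tau_g$, $g\in 2^{\omega}$, are pairwise incompatible, so since $|\delta|<\mathfrak{c}$ there is some $g$ whose $\tau_g$ is extended by no $\sigma_\beta$, and for that $g$ one gets $\mathcal{A}_\delta=\bigcup_{m\in\omega}\mathcal{A}(p(g\upharpoonright m))$, hence the branch through $\bigcap_{m\in\omega}[p(g\upharpoonright m)]$ has image positive for all of $\mathcal{A}_\delta$. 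Without this binary fusion and counting argument, your construction does not close.
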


\begin{proof}
If $\mathfrak{a<s},$ then $\mathfrak{a}$ is smaller than $\mathfrak{d}$ so
then there is a $+$-Ramsey \textsf{MAD} family (in fact, there is a
Miller-indestructible \textsf{MAD} family). So we assume $\mathfrak{s}%
\leq\mathfrak{a}$ for the rest of the proof. Fix an $\left(  \omega
,\omega\right)  $-splitting family $\mathcal{S}=\left\{  S_{\alpha}\mid
\alpha<\mathfrak{s}\right\}  $ that is also a Miller-tree splitting family.
Let $\left\{  L,R\right\}  $ be a partition of the limit ordinals smaller than
$\mathfrak{c}$ such that both $L$ and $R$ have size continuum. Enumerate by
$\left\{  X_{\alpha}\mid\alpha\in L\right\}  $ all infinite subsets of
$\omega$ and by $\left\{  T_{\alpha}\mid\alpha\in R\right\}  $ all subtrees of
$\omega^{<\omega}.$ We will recursively construct $\mathcal{A}=\left\{
A_{\xi}\mid\xi<\mathfrak{c}\right\}  $ and $\left\{  \sigma_{\xi}\mid
\xi<\mathfrak{c}\right\}  $ as follows:

\begin{enumerate}
\item $\mathcal{A}$ is an \textsf{AD} family and $\sigma_{\alpha}%
\in2^{<\mathfrak{s}}$ for every $\alpha<\mathfrak{c}.$

\item If $\sigma_{\alpha}\in2^{\beta}$ and $\xi<\beta$ then $A_{\alpha
}\subseteq^{\ast}S_{\xi}^{\sigma_{\alpha}\left(  \xi\right)  }.$

\item If $\alpha\neq\beta$ then $\sigma_{\alpha}\neq\sigma_{\beta}.$

\item If $\delta\in L$ and $X_{\delta}\in\mathcal{I}\left(  \mathcal{A}%
_{\delta}\right)  ^{+}$ then $A_{\delta+n}\subseteq X_{\delta}$ for every
$n\in\omega$ (where $\mathcal{A}_{\delta}=\left\{  A_{\xi}\mid\xi
<\delta\right\}  $).

\item If $\delta\in R$ and $T_{\delta}$ is an $\mathcal{I}\left(
\mathcal{A}_{\delta}\right)  ^{+}$-branching tree then there is $f\in\left[
T_{\delta}\right]  $ such that $A_{\delta+n}\subseteq im\left(  f\right)  $
for every $n\in\omega.$
\end{enumerate}

\qquad\ \ 

It is clear that if we manage to perform the construction then $\mathcal{A}$
will be a $\mathfrak{+}$-Ramsey \textsf{MAD} family (and it will be completely
separable too). Let $\delta$ be a limit ordinal and assume we have constructed
$A_{\xi}$ for every $\xi<\delta.$ In case $\delta\in L$ we just proceed as in
the case of the completely separable \textsf{MAD} family, so assume $\delta\in
R.$ Since $\mathcal{A}_{\delta}=\left\{  A_{\xi}\mid\xi<\delta\right\}  $ is
nowhere-\textsf{MAD,} we can find $p$ an $\mathcal{A}_{\delta}^{\perp}%
$-branching subtree of $T_{\delta}.$

\qquad\ \ 

First note that since $\mathcal{S}$ is a Miller-tree splitting family, for
every Miller tree $q$ there is $\alpha<\mathfrak{s}$ and $\tau_{q}\in
2^{\alpha}$ such that:

\begin{enumerate}
\item $S_{\alpha}$ tree-splits $q.$

\item If $\xi<\alpha$ then there is no $q^{\prime}\leq q$ such that $\left[
q^{\prime}\right]  _{split}\subseteq\lbrack S_{\xi}^{1-\tau_{q}\left(
\xi\right)  }]^{\omega}.$
\end{enumerate}

\qquad\ \ \ 

Note that if $q^{\prime}\leq q$ then $\tau_{q^{\prime}}$ extends $\tau_{q}.$
If $q\leq p$ and $\tau_{q}\in2^{\alpha}$ we fix the following items:

\begin{enumerate}
\item $W_{0}\left(  q\right)  =\left\{  \xi<\alpha\mid\exists\beta
<\delta\left(  \sigma_{\beta}=\tau_{q}\upharpoonright\xi\right)  \right\}  $
and\qquad\ \ \ \ \ \ 

$W_{1}\left(  q\right)  =\left\{  \xi<\alpha\mid\exists\beta<\delta\left(
\Delta\left(  \sigma_{\beta},\tau_{q}\right)  =\xi\right)  \right\}  .$

\item Let $\xi\in W_{0}\left(  q\right)  $ we then find $\beta$ such that
$\sigma_{\beta}=\tau_{q}\upharpoonright\xi$ and define $G_{q,\xi}:Split\left(
q\right)  \longrightarrow\omega$ such that if $s\in Split\left(  q\right)  $
then $A_{\beta}\cap suc_{q}\left(  s\right)  \subseteq G_{q,\xi}\left(
s\right)  $ (this is possible since $q$ is $\mathcal{A}_{\delta}^{\perp}$-branching).

\item Given $\xi$ $\in W_{1}\left(  q\right)  $ we know there is no
$q^{\prime}\leq q$ such that $\left[  q^{\prime}\right]  _{split}%
\subseteq\lbrack S_{\xi}^{1-\tau_{q}\left(  \xi\right)  }]^{\omega}.$ We know
that there is $H_{q,\xi}:Split\left(  q\right)  \longrightarrow\omega$ such
that if $f\in\left[  q\right]  $, the set defined as $\{f\upharpoonright n\in
Split\left(  q\right)  \mid f\left(  n\right)  \in S_{\xi}^{1-\tau_{q}\left(
\xi\right)  }\}$ is almost contained in the set $\left\{  f\upharpoonright
n\in Split\left(  q\right)  \mid f\left(  n\right)  <H_{q,\xi}\left(
f\upharpoonright n\right)  \right\}  .$

\item If $U\in\left[  W_{0}\left(  q\right)  \right]  ^{<\omega}$ and
$V\in\left[  W_{1}\left(  q\right)  \right]  ^{<\omega}$ choose any \qquad\ \ \ \ 

$J_{q,U,V}:Split\left(  q\right)  \longrightarrow\omega$ such that if $s\in
Split\left(  q\right)  $ then $J_{q,U,V}\left(  s\right)  >max\left\{
G_{q,\xi}\left(  s\right)  \mid\xi\in U\right\}  ,$ $max\left\{  H_{q,\xi
}\left(  s\right)  \mid\xi\in V\right\}  .$

\item $\mathcal{A}\left(  q\right)  =\left\{  A_{\xi}\in\mathcal{A}_{\delta
}\mid\tau_{q}\nsubseteq\sigma_{\xi}\right\}  .$
\end{enumerate}

\qquad\ \ \ \qquad\ \ \ \ \ \ \ \ \ \qquad\ \ \ \ \ 

Note that if $\xi\in W_{0}\left(  q\right)  $ then there is a unique
$\beta<\delta$ such that $\sigma_{\beta}=\tau_{q}\upharpoonright\xi$ (although
the analogous remark is not true for the elements of $W_{1}\left(  q\right)
$). The following claim will play a fundamental role in the proof:

\begin{claim}
If $q\leq p$ then there is $r\leq q$ such that $\left[  r\right]
_{split}\subseteq\mathcal{I}\left(  \mathcal{A}\left(  q\right)  \right)
^{+}$.
\end{claim}

\qquad\ \ \ \ 

Let $\alpha<\mathfrak{s}$ such that $\tau_{q}\in2^{\alpha}.$ Since
$\mathfrak{s\leq d}$, we know there is $r\leq q$ such that $\left[  r\right]
_{split}$ is disjoint from $\bigcup\left\{  Catch_{\forall}\left(
J_{q,U,V}\right)  \mid U\in\left[  W_{0}\left(  q\right)  \right]  ^{<\omega
},V\in\left[  W_{1}\left(  q\right)  \right]  ^{<\omega}\right\}  $ and
$Split\left(  r\right)  =Split\left(  q\right)  \cap r.$ We will now prove
$\left[  r\right]  _{split}\subseteq\mathcal{I}\left(  \mathcal{A}\left(
q\right)  \right)  ^{+}$ but assume this is not the case. Therefore, there is
$f\in\left[  r\right]  $ and $F\in\left[  \mathcal{A}\left(  q\right)
\right]  ^{<\omega}$ such that $X=Sp\left(  r,f\right)  \subseteq^{\ast
}\bigcup F.$ Let $F=F_{1}\cup F_{2}$ and $U\in\left[  W_{0}\left(  q\right)
\right]  ^{<\omega}$, $V\in\left[  W_{1}\left(  q\right)  \right]  ^{<\omega}$
such that for every $A_{\beta}\in F_{1}$ there is $\xi_{\beta}\in U$ such that
$\sigma_{\beta}=\tau_{q}\upharpoonright\xi_{\beta}$ and for every $A_{\gamma
}\in F_{2}$ there is $\eta_{\gamma}\in V$ such that $\Delta\left(  \tau
_{q},\sigma_{\gamma}\right)  =\eta_{\gamma}.$ Let $D\subseteq\left\{  n\mid
f\upharpoonright n\in Split\left(  r\right)  \right\}  $ be the (infinite) set
of all $n<\omega$ such that the following holds:

\qquad\ \ 

\begin{enumerate}
\item $f\upharpoonright n\in Split\left(  r\right)  $ and $f\left(  n\right)
\in\bigcup F.$

\item If $\eta_{\gamma}\in V$ then $A_{\gamma}\setminus n\subseteq
S_{\eta_{\gamma}}^{1-\tau_{q}\left(  \eta_{\gamma}\right)  }.$

\item $f\left(  n\right)  >J_{q,U,V}\left(  f\upharpoonright n\right)  .$

\item If $\eta\in V$ and $f\left(  n\right)  \in S_{\eta}^{1-\tau_{q}\left(
\eta\right)  }$ then $f\left(  n\right)  <H_{q,\eta}\left(  f\upharpoonright
n\right)  <J_{q,U,V}\left(  f\upharpoonright n\right)  $ (recall that
$\{f\upharpoonright m\in Split\left(  q\right)  \mid f\left(  m\right)  \in
S_{\eta}^{1-\tau_{q}\left(  \eta\right)  }\}$ is almost contained in $\left\{
f\upharpoonright m\in Split\left(  q\right)  \mid f\left(  m\right)
<H_{q,\eta}\left(  f\upharpoonright m\right)  \right\}  $).
\end{enumerate}

\qquad\ \ 

We first claim that if $n\in D,\xi_{\beta}\in U$ and $\eta_{\gamma}\in$ $V$
then $f\left(  n\right)  \notin A_{\beta}\cup S_{\eta_{\gamma}}^{1-\tau
_{q}\left(  \eta_{\gamma}\right)  }.$ On one hand, since $A_{\beta}\cap
suc_{q}\left(  f\upharpoonright n\right)  \subseteq G_{q,\xi_{\beta}}\left(
f\upharpoonright n\right)  <J_{q,U,V}\left(  f\upharpoonright n\right)  $ and
$f\left(  n\right)  >J_{q,U,V}\left(  f\upharpoonright n\right)  $ then
$f\left(  n\right)  \notin A_{\beta}.$ On the other hand, if it was the case
that $f\left(  n\right)  \in S_{\eta_{\gamma}}^{1-\tau_{q}\left(  \eta
_{\gamma}\right)  }$ so $f\left(  n\right)  <H_{q,\eta}\left(
f\upharpoonright n\right)  <J_{q,U,V}\left(  f\upharpoonright n\right)  $ but
we already know that $f\left(  n\right)  >J_{q,U,V}\left(  f\upharpoonright
n\right)  .$ Since $n\leq f\left(  n\right)  $ (recall every branch through
$p$ is increasing) $f\left(  n\right)  \notin A_{\gamma}$ for every
$\eta_{\gamma}\in V$ because $A_{\gamma}\setminus n\subseteq S_{\eta_{\gamma}%
}^{1-\tau_{q}\left(  \eta_{\gamma}\right)  }.$ This implies $f\left(
n\right)  \notin\bigcup F$ which is a contradiction and finishes the proof of
the claim.

\qquad\ \ 

Back to the proof of the theorem, we recursively build a tree of Miller trees
$\left\{  p\left(  s\right)  \mid s\in2^{<\omega}\right\}  $ with the
following properties:

\qquad\ \ \ 

\begin{enumerate}
\item $p\left(  \emptyset\right)  =p.$

\item $p\left(  s^{\frown}i\right)  \leq p\left(  s\right)  $ and the stem of
$p\left(  s^{\frown}i\right)  $ has length at least $\left\vert s\right\vert
.$

\item $\tau_{p\left(  s^{\frown}0\right)  }$ and $\tau_{p\left(  s^{\frown
}1\right)  }$ are incompatible.

\item $\left[  p\left(  s^{\frown}i\right)  \right]  _{split}\subseteq
\mathcal{I}\left(  \mathcal{A}\left(  p\left(  s\right)  \right)  \right)
^{+}.$
\end{enumerate}

\qquad\ \ \ 

This is easy to do with the aid of the previous claim. For every
$g\in2^{\omega}$ let $\tau_{g}=\bigcup\tau_{p\left(  g\upharpoonright
m\right)  }.$ Note that if $g_{1}\neq g_{2}$ then $\tau_{g_{1}}$ and
$\tau_{g_{2}}$ are two incompatible nodes of $2^{<\mathfrak{s}}.$ Since
$\mathcal{A}_{\delta}$ has size less than the continuum, there is
$g\in2^{\omega}$ such that there is no $\beta<\delta$ such that $\sigma
_{\beta}$ extends $\tau_{g}$ and then $\mathcal{A}_{\delta}=\bigcup
\limits_{m\in\omega}\mathcal{A}\left(  p\left(  g\upharpoonright m\right)
\right)  .$ Let $f$ be the only element of $\bigcap\limits_{m\in\omega}\left[
p\left(  g\upharpoonright m\right)  \right]  .$ Obviously, $f$ is a branch
through $p$ and we claim that $im\left(  f\right)  \in\mathcal{I}\left(
\mathcal{A}_{\delta}\right)  ^{+}.$ This is easy since if $A_{\xi_{1}%
},...,A_{\xi_{n}}\in\mathcal{A}_{\delta}$ then we can find $m<\omega$ such
that $A_{\xi_{1}},...,A_{\xi_{n}}\in\mathcal{A}\left(  p\left(
g\upharpoonright m\right)  \right)  $ and then we know that $Sp\left(
p\left(  g\upharpoonright m+1\right)  ,f\right)  \nsubseteq^{\ast}A_{\xi_{1}%
}\cup...\cup A_{\xi_{n}}$ and since $Sp\left(  p\left(  g\upharpoonright
m+1\right)  ,f\right)  $ is contained in $im\left(  f\right)  $ we conclude
that $im\left(  f\right)  \in\mathcal{I}\left(  \mathcal{A}_{\delta}\right)
^{+}.$

\qquad\ \ 

Finally, find a partition $\left\{  Z_{n}\mid n\in\omega\right\}
\subseteq\mathcal{I}\left(  \mathcal{A}_{\delta}\right)  ^{+}$ of $im\left(
f\right)  $ and using the method of Shelah construct $A_{\delta+n}$ such that
$A_{\delta+n}\subseteq Z_{n}.$ This finishes the proof.
\end{proof}

\qquad\ \ 

\bigskip

\chapter{\textsf{MAD} examples}

We now repeat the main definitions of \textsf{MAD} families used in this thesis:

\begin{definition}
Let $\mathcal{A}$ be a \textsf{MAD} family.

\begin{enumerate}
\item $\mathcal{A}$ is $\mathbb{P}$\emph{-indestructible} if $\mathcal{A}$
remains \textsf{MAD} after forcing with $\mathbb{P}$ (we are mainly interested
where $\mathbb{P}$ is Cohen, random, Sacks or Miller forcing).

\item $\mathcal{A}$ is \emph{weakly tight }if for every $\left\{  X_{n}\mid
n\in\omega\right\}  \subseteq\mathcal{I}\left(  \mathcal{A}\right)  ^{+}$
there is $B\in\mathcal{I}\left(  \mathcal{A}\right)  $ such that $\left\vert
B\cap X_{n}\right\vert =\omega$ for infinitely many $n\in\omega.$\qquad

\item $\mathcal{A}$ is\ \emph{tight }if for every $\left\{  X_{n}\mid
n\in\omega\right\}  \subseteq\mathcal{I}\left(  \mathcal{A}\right)  ^{+}$
there is $B\in\mathcal{I}\left(  \mathcal{A}\right)  $ such that $B\cap X_{n}$
is infinite for every $n\in\omega.$

\item $\mathcal{A}$ is \emph{Laflamme }if $\mathcal{A}$ can not be extended to
an $F_{\sigma}$-ideal.

\item $\mathcal{A}$ is $+$\emph{-Ramsey} if for every $\mathcal{I}\left(
\mathcal{A}\right)  ^{+}$-branching tree $T,$ there is $f\in\left[  T\right]
$ such that $im\left(  f\right)  \in\mathcal{I}\left(  \mathcal{A}\right)
^{+}.$

\item $\mathcal{A}$ is \textsf{FIN}$\times$\textsf{FIN}-like if $\mathcal{I}%
\left(  \mathcal{A}\right)  \nleq_{\mathsf{K}}\mathcal{J}$ for every analytic
ideal $\mathcal{J}$ such that \textsf{FIN}$\times$\textsf{FIN }$\nleq
_{\mathsf{K}}\mathcal{J}.$

\item $\mathcal{A}$ is called \emph{Shelah-Stepr\={a}ns }if for every
$X\in\left(  \mathcal{I}\left(  \mathcal{A}\right)  ^{<\omega}\right)  ^{+}$
there is $Y\in\left[  X\right]  ^{\omega}$ such that $\bigcup Y\in
\mathcal{I}\left(  \mathcal{A}\right)  .$

\item $\mathcal{A}$ is \emph{strongly tight }if for every family $\left\{
X_{n}\mid n\in\omega\right\}  $ such that for every $B\in\mathcal{I}\left(
\mathcal{A}\right)  $ the set $\left\{  n\mid X\subseteq^{\ast}B\right\}  $ is
finite, there is $A\in\mathcal{A}$ such that $A\cap X_{n}\neq\emptyset$ for
every $n\in\omega.$

\item $\mathcal{A}$ is a \emph{raving \textsf{MAD} family if }for every family
$X=\left\{  X_{n}\mid n\in\omega\right\}  $ that is locally finite according
to\emph{ }$\mathcal{I}\left(  \mathcal{A}\right)  $ there is $B\in
\mathcal{I}\left(  \mathcal{A}\right)  $ such that $B$ contains at least one
element of each $X_{n}$ (a family $X=\left\{  X_{n}\mid n\in\omega\right\}  $
such that $X_{n}\subseteq\left[  \omega\right]  ^{<\omega}$ is \emph{locally
finite according to }$\mathcal{I}\left(  \mathcal{A}\right)  $ if for every
$B\in\mathcal{I}\left(  \mathcal{A}\right)  $ for almost all $n\in\omega$
there is $s\in X_{n}$ such that $s\cap B=\emptyset$).
\end{enumerate}
\end{definition}

\qquad\ \ \ \ \ \ 

In this chapter we will build (consistently) examples of the non implications
of the previous notions. Note that raving implies all the other properties.
The following definition will be useful in this chapter:

\begin{definition}
Let $\mathcal{I}$ be an ideal. We say that $\mathcal{I}$ is \emph{nowhere
Shelah-Stepr\={a}ns }if no restriction of$\mathcal{\ I}$ is Shelah-Stepr\={a}ns.
\end{definition}

\qquad\ \ \ \ 

It is easy to see that \textsf{nwd}$,$ $tr($\textsf{ctble}$),$ $tr\left(
\mathcal{N}\right)  ,$ $tr\left(  \mathcal{K}_{\sigma}\right)  $ and every
$F_{\sigma}$-ideal are nowhere Shelah-Stepr\={a}ns.

\begin{lemma}
Let $\mathcal{I},\mathcal{J}$ be two ideals such that $\mathcal{I}$ is nowhere
Shelah-Stepr\={a}ns and $\mathcal{J}\nleq_{K}\mathcal{I}.$ Let
$\mathcal{A\subseteq J}$ be a countable AD family and $f:\left(
\omega,\mathcal{I}\right)  \longrightarrow\left(  \omega,\mathcal{I}\left(
\mathcal{A}\right)  \right)  $ be a Kat\v{e}tov morphism. Then there is
$B\in\mathcal{J}\cap\mathcal{A}^{\perp}$ such that $f^{-1}\left(  B\right)
\in\mathcal{I}^{+}.$
\end{lemma}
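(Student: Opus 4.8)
The plan is to reduce everything to the bare negation of the Shelah-Stepr\={a}ns property on a suitable positive set and then diagonalize that witness against $\mathcal{A}$ by pushing it through $f$.

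First I would exploit $\mathcal{J}\nleq_{K}\mathcal{I}$. Since $f$ is in particular a function from $\omega$ to $\omega$, if $f$ were a Kat\v{e}tov morphism from $(\omega,\mathcal{I})$ to $(\omega,\mathcal{J})$ it would witness $\mathcal{J}\leq_{K}\mathcal{I}$, a contradiction. Hence there is $B_{0}\in\mathcal{J}$ with $Z:=f^{-1}(B_{0})\in\mathcal{I}^{+}$. Restricting attention to $Z$ is the crucial move: every set built below will be a subset of $B_{0}$, so its membership in $\mathcal{J}$ comes for free.

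Next, because $\mathcal{I}$ is nowhere Shelah-Stepr\={a}ns, the restriction $\mathcal{I}\upharpoonright Z$ is not Shelah-Stepr\={a}ns, so by the very definition there is a family $X=\{t_{m}\mid m\in\omega\}$ of non-empty finite subsets of $Z$ with $X\in((\mathcal{I}\upharpoonright Z)^{<\omega})^{+}$ and $\bigcup Y\in\mathcal{I}^{+}$ for every $Y\in[X]^{\omega}$. I would record the small but essential strengthening of positivity: for every $A\in\mathcal{I}$ there are \emph{infinitely many} $m$ with $t_{m}\cap A=\emptyset$. Indeed, if only $t_{m}$ with $m$ in a finite set $S$ avoided $A$, then $A'=(A\cap Z)\cup\bigcup_{m\in S}t_{m}$ would belong to $\mathcal{I}\upharpoonright Z$ and would be met by every $t_{m}$ (those in $S$ lie inside $A'$, the rest already meet $A\cap Z$), contradicting $X\in((\mathcal{I}\upharpoonright Z)^{<\omega})^{+}$.

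Now I would diagonalize. Enumerate $\mathcal{A}=\{A_{n}\mid n\in\omega\}$ and put $C_{k}=f^{-1}(A_{0}\cup\cdots\cup A_{k})$, which lies in $\mathcal{I}$ because $A_{0}\cup\cdots\cup A_{k}\in\mathcal{I}(\mathcal{A})$ and $f$ is a Kat\v{e}tov morphism into $\mathcal{I}(\mathcal{A})$. Using the infinitude just established together with $C_{k}\in\mathcal{I}$, recursively choose $m_{0}<m_{1}<\cdots$ with $t_{m_{k}}\cap C_{k}=\emptyset$, equivalently $f[t_{m_{k}}]\cap(A_{0}\cup\cdots\cup A_{k})=\emptyset$. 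Finally set $B=\bigcup_{k\in\omega}f[t_{m_{k}}]$ and verify the three requirements: $B\subseteq f[Z]\subseteq B_{0}$, so $B\in\mathcal{J}$; for each $n$ every $k\geq n$ gives $f[t_{m_{k}}]\cap A_{n}=\emptyset$, whence $B\cap A_{n}\subseteq\bigcup_{k<n}f[t_{m_{k}}]$ is finite and $B\in\mathcal{A}^{\perp}$; and $f^{-1}(B)\supseteq\bigcup_{k}t_{m_{k}}$, which is $\mathcal{I}$-positive by the choice of $X$, so $f^{-1}(B)\in\mathcal{I}^{+}$. The only place that requires genuine care, and where the two hypotheses are used in tandem, is arranging simultaneously that $B$ stays in $\mathcal{J}$ (which forces the restriction to $Z=f^{-1}(B_{0})$) and that its $f$-preimage stays positive (which forces the use of the Shelah-Stepr\={a}ns witness rather than an arbitrary almost disjoint refinement); once these are balanced, the diagonalization against $\mathcal{A}$ is routine.
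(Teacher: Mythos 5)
Your proof is correct and follows essentially the same route as the paper's: use $\mathcal{J}\nleq_{K}\mathcal{I}$ to get $B_{0}\in\mathcal{J}$ with $f^{-1}(B_{0})\in\mathcal{I}^{+}$, take a witness of the failure of the Shelah-Stepr\={a}ns property for $\mathcal{I}$ restricted to that positive set, and diagonalize against an enumeration of $\mathcal{A}$ by choosing members of the witness avoiding $f^{-1}(A_{0}\cup\dots\cup A_{k})$, finally pushing their union forward through $f$. The only difference is that you explicitly prove the ``infinitely many witnesses'' strengthening of $X\in((\mathcal{I}\upharpoonright Z)^{<\omega})^{+}$, which is needed to guarantee that the selected sets $t_{m_{k}}$ form an \emph{infinite} subset of $X$ (so that their union is $\mathcal{I}$-positive); the paper's proof uses this fact implicitly without justification, so your write-up is actually the more complete one.
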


\begin{proof}
Let $\mathcal{A}=\left\{  A_{n}\mid n\in\omega\right\}  .$ We know $f$ is a
Kat\v{e}tov morphism so the set $\left\{  f^{-1}\left(  A_{n}\right)  \mid
n\in\omega\right\}  $ is contained in $\mathcal{I}.$ Since $\mathcal{J}%
\nleq_{K}\mathcal{I}$ there is $D\in\mathcal{J}$ such that $C=f^{-1}\left(
D\right)  \in\mathcal{I}^{+}.$ Since $\mathcal{I}\upharpoonright C$ is not
Shelah-Stepr\={a}ns, there is $X\in\left(  \left(  \mathcal{I}\upharpoonright
C\right)  ^{<\omega}\right)  ^{+}$ such that no element of $\mathcal{I}$
contains infinitely many elements of $X.$ For each $n\in\omega$ we choose
$s_{n}\in X$ such that $s_{n}\cap\left(  f^{-1}\left(  A_{0}\cup
...A_{n}\right)  \right)  =\emptyset.$ We then know that $D=%
%TCIMACRO{\tbigcup }%
%BeginExpansion
{\textstyle\bigcup}
%EndExpansion
s_{n}\in\mathcal{I}^{+}.$ It is easy to see that $B=f\left[  D\right]  $ has
the desired properties.
\end{proof}

\qquad\ \ 

By a simple bookkeeping argument we can then conclude the following:

\begin{proposition}
[$\mathsf{CH}$]Let $\mathcal{I},\mathcal{J}$ be ideals such that $\mathcal{I}$
is nowhere Shelah-Stepr\={a}ns and $\mathcal{J}\nleq_{K}\mathcal{I}.$ Then
there is a \textsf{MAD} family $\mathcal{A}\subseteq\mathcal{J}$ such that
$\mathcal{I}\left(  \mathcal{A}\right)  \nleq_{K}\mathcal{I}.$
\end{proposition}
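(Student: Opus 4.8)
The goal is to construct, under $\mathsf{CH}$, a \textsf{MAD} family $\mathcal{A}\subseteq\mathcal{J}$ whose generated ideal is not Kat\v{e}tov below $\mathcal{I}$. The plan is to build $\mathcal{A}=\left\{ A_\alpha\mid\alpha<\omega_1\right\}$ by transfinite recursion of length $\omega_1$, meeting two kinds of requirements at each stage: first, maximality requirements that guarantee the final family is \textsf{MAD}; second, ``anti-morphism'' requirements that defeat every potential Kat\v{e}tov morphism witnessing $\mathcal{I}\left(\mathcal{A}\right)\leq_K\mathcal{I}$.

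The main device is the previous lemma. First I would fix, using $\mathsf{CH}$, enumerations $\left[\omega\right]^\omega=\left\{ X_\alpha\mid\alpha<\omega_1\right\}$ and $\left(\omega^\omega\right)=\left\{ f_\alpha\mid\alpha<\omega_1\right\}$ of all infinite subsets of $\omega$ and of all functions from $\omega$ to $\omega$ (every candidate morphism $f:\omega\longrightarrow\omega$ appears in the second list). At stage $\alpha$ I will have an AD family $\mathcal{A}_\alpha=\left\{ A_\xi\mid\xi<\alpha\right\}\subseteq\mathcal{J}$ of size less than $\omega_1$, hence countable. To handle maximality, if $X_\alpha\in\mathcal{I}\left(\mathcal{A}_\alpha\right)^+$ I would thin $X_\alpha$ to an infinite subset almost disjoint from $\mathcal{A}_\alpha$; I must take care that this new element also lands in $\mathcal{J}$, which is arranged by the same lemma mechanism below. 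To defeat the morphism $f_\alpha$: if $f_\alpha$ happens to be a Kat\v{e}tov morphism from $\left(\omega,\mathcal{I}\right)$ to $\left(\omega,\mathcal{I}\left(\mathcal{A}_\alpha\right)\right)$, then since $\mathcal{A}_\alpha$ is a \emph{countable} AD family contained in $\mathcal{J}$, the previous lemma applies and yields $B\in\mathcal{J}\cap\mathcal{A}_\alpha^\perp$ with $f_\alpha^{-1}\left(B\right)\in\mathcal{I}^+$. I would then put $A_\alpha=B$ (a legitimate new element of $\mathcal{J}$, almost disjoint from everything so far), which ensures $f_\alpha^{-1}\left(A_\alpha\right)\notin\mathcal{I}$, so $f_\alpha$ fails to be a Kat\v{e}tov morphism into the \emph{final} ideal $\mathcal{I}\left(\mathcal{A}\right)$.

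The verification at the end has two parts. For maximality: every $X\in\left[\omega\right]^\omega$ equals some $X_\alpha$, and if it remained $\mathcal{I}\left(\mathcal{A}\right)$-positive it would have been positive at stage $\alpha$ and handled, so $\mathcal{A}$ is \textsf{MAD}. For the Kat\v{e}tov claim: suppose toward contradiction that some $f:\omega\longrightarrow\omega$ is a Kat\v{e}tov morphism from $\left(\omega,\mathcal{I}\right)$ to $\left(\omega,\mathcal{I}\left(\mathcal{A}\right)\right)$. Then $f=f_\alpha$ for some $\alpha$. The key point is that being a Kat\v{e}tov morphism into the final ideal implies being one into every intermediate ideal $\mathcal{I}\left(\mathcal{A}_\beta\right)$ (since $\mathcal{A}_\beta\subseteq\mathcal{A}$ gives $\mathcal{I}\left(\mathcal{A}_\beta\right)\subseteq\mathcal{I}\left(\mathcal{A}\right)$, but the preimage condition must run the other direction), so I must be careful here — the honest statement is that $f_\alpha^{-1}\left(A\right)\in\mathcal{I}$ for every $A\in\mathcal{A}$, in particular for $A=A_\alpha$, directly contradicting $f_\alpha^{-1}\left(A_\alpha\right)\in\mathcal{I}^+$.

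I expect the main obstacle to be precisely this bookkeeping subtlety: at stage $\alpha$ I only know that $f_\alpha$ is a morphism into the \emph{current} ideal $\mathcal{I}\left(\mathcal{A}_\alpha\right)$, whereas the final contradiction needs $f_\alpha$ to be a morphism into $\mathcal{I}\left(\mathcal{A}\right)$. The clean resolution is to \emph{not} test whether $f_\alpha$ is a morphism into $\mathcal{A}_\alpha$, but instead argue in the verification: if $f=f_\alpha$ is a morphism into $\mathcal{I}\left(\mathcal{A}\right)$, then since each $A_\xi\in\mathcal{A}\subseteq\mathcal{I}\left(\mathcal{A}\right)$ we have $f^{-1}\left(A_\xi\right)\in\mathcal{I}$ for all $\xi$; in particular $f_\alpha^{-1}\left(A_\eta\right)\in\mathcal{I}$ for all $\eta<\alpha$, so the hypotheses of the lemma were met at stage $\alpha$ and the element $A_\alpha$ chosen there satisfies $f_\alpha^{-1}\left(A_\alpha\right)\in\mathcal{I}^+$, contradicting $f_\alpha^{-1}\left(A_\alpha\right)\in\mathcal{I}$. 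Thus I should phrase the construction so that at stage $\alpha$ I act whenever $\left\{ f_\alpha^{-1}\left(A_\eta\right)\mid\eta<\alpha\right\}\subseteq\mathcal{I}$, which is exactly the hypothesis needed to invoke the lemma, and this captures every genuine morphism. A final routine check is that interleaving the maximality step and the morphism-killing step does not conflict, which is handled by a standard pairing of the $\omega_1$ tasks.
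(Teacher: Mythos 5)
Your construction is the paper's intended proof: the paper derives this proposition from the preceding lemma by exactly the $\omega_{1}$-bookkeeping you describe, and your morphism-killing step and final verification are correct. In fact the ``subtlety'' you worry about is automatic: being a Kat\v{e}tov morphism into $\mathcal{I}\left(  \mathcal{A}\right)  $ is a condition universally quantified over the target ideal, so it trivially implies being a Kat\v{e}tov morphism into the smaller ideal $\mathcal{I}\left(  \mathcal{A}_{\alpha}\right)  $; there is no direction problem at all. (One pedantic correction: the acting condition $\left\{  f_{\alpha}^{-1}\left(  A_{\eta}\right)  \mid\eta<\alpha\right\}  \subseteq\mathcal{I}$ is slightly weaker than ``$f_{\alpha}$ is a Kat\v{e}tov morphism into $\mathcal{I}\left(  \mathcal{A}_{\alpha}\right)  $'', which is what the lemma requires: you must also demand $f_{\alpha}^{-1}\left(  s\right)  \in\mathcal{I}$ for finite $s$. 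This costs nothing, since any genuine morphism into the final ideal satisfies that too, so the correct acting condition is simply that $f_{\alpha}$ be a Kat\v{e}tov morphism into $\mathcal{I}\left(  \mathcal{A}_{\alpha}\right)  $.)

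The one genuine gap is the maximality step. You need, for each $X_{\alpha}\in\mathcal{I}\left(  \mathcal{A}_{\alpha}\right)  ^{+}$, an infinite $Y\subseteq X_{\alpha}$ with $Y\in\mathcal{J}\cap\mathcal{A}_{\alpha}^{\perp}$, and you assert that this ``is arranged by the same lemma mechanism below''. It is not: the lemma produces some $B\in\mathcal{J}\cap\mathcal{A}_{\alpha}^{\perp}$ with $f^{-1}\left(  B\right)  \in\mathcal{I}^{+}$, but that $B$ bears no relation to the prescribed positive set $X_{\alpha}$, so it cannot serve as the maximality witness for $X_{\alpha}$. What you actually need is that $\mathcal{J}$ is tall: then, since $\mathcal{A}_{\alpha}$ is countable, thin $X_{\alpha}$ to $X^{\prime}\in\left[  X_{\alpha}\right]  ^{\omega}\cap\mathcal{A}_{\alpha}^{\perp}$ and use tallness to pick $Y\in\left[  X^{\prime}\right]  ^{\omega}\cap\mathcal{J}$. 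Tallness of $\mathcal{J}$ is not among the stated hypotheses, but it follows from $\mathcal{J}\nleq_{K}\mathcal{I}$: if some $X\in\left[  \omega\right]  ^{\omega}$ satisfied $\left[  X\right]  ^{\omega}\cap\mathcal{J}=\emptyset$, then any bijection $f:\omega\longrightarrow X$ would have $f^{-1}\left(  D\right)  $ finite, hence in $\mathcal{I}$, for every $D\in\mathcal{J}$, witnessing $\mathcal{J}\leq_{K}\mathcal{I}$ (equivalently, a non-tall ideal is Kat\v{e}tov equivalent to \textsf{FIN}, the minimum of the Kat\v{e}tov order). With this observation inserted, your construction is complete and coincides with the paper's proof.
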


\qquad\ \ 

The previous proposition shows that Miller indestructibility does not imply
Cohen indestructibility and that Random and Miller indestructibility are
incomparable notions (this is a result of \emph{Brendle and Yatabe}).\qquad\ \ \ 

\subsubsection{Cohen and Random indestructibility does not imply weak
tightness or Laflamme. FIN$\times$FIN-like implies weak tightness\qquad\ \ }

We will start with a useful characterization of weak tightness.

\begin{definition}
We define the ideal $\mathcal{WT}$ on $\omega\times\omega$ as follows:

\begin{enumerate}
\item $\mathcal{WT}\upharpoonright C_{n}$ is a copy of \textsf{FIN}$\times
$\textsf{FIN} (where $C_{n}=\left\{  \left(  n,m\right)  \mid m\in
\omega\right\}  $).

\item $\mathcal{WT}$ extends $\emptyset\times$\textsf{FIN}$.$
\end{enumerate}
\end{definition}

\qquad\ \ \ 

Note that if $B\subseteq\omega\times\omega$ has infinite intersection with
infinitely many columns then $B\in\mathcal{WT}^{+}.$\ \ \ 

\begin{proposition}
$\mathcal{WT}$ is strictly Kat\v{e}tov below \textsf{FIN}$\times$%
\textsf{FIN}$.$
\end{proposition}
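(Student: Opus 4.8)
The plan is to establish the two halves separately: $\mathcal{WT}\leq_{\mathsf{K}}$\textsf{FIN}$\times$\textsf{FIN} will come essentially for free, and the strictness is where the work lies, handled through Laflamme's game.

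First I would pin down the concrete description of membership in $\mathcal{WT}$ forced by the two defining clauses. Writing $C_{n,k}$ for the subcolumns into which the copy of \textsf{FIN}$\times$\textsf{FIN} carried by $C_{n}$ splits $C_{n}$, a generated element of the ideal is almost contained in a finite union of column pieces (each in the relevant \textsf{FIN}$\times$\textsf{FIN} copy) together with an $\emptyset\times$\textsf{FIN} set; unwinding this shows $A\in\mathcal{WT}$ exactly when (i) $\{n\mid A\cap C_{n}$ is infinite$\}$ is finite and (ii) for every $n$ the set $\{k\mid A\cap C_{n,k}$ is infinite$\}$ is finite. Clause (i) is literally the defining condition of \textsf{FIN}$\times$\textsf{FIN}, so $\mathcal{WT}\subseteq$\textsf{FIN}$\times$\textsf{FIN}. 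Since a subideal is Kat\v{e}tov below its superideal through the identity map, which is a Kat\v{e}tov morphism from $(\omega\times\omega,$\textsf{FIN}$\times$\textsf{FIN}$)$ to $(\omega\times\omega,\mathcal{WT})$ precisely because every $A\in\mathcal{WT}$ already lies in \textsf{FIN}$\times$\textsf{FIN}, this gives $\mathcal{WT}\leq_{\mathsf{K}}$\textsf{FIN}$\times$\textsf{FIN} at once.

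For strictness I would show \textsf{FIN}$\times$\textsf{FIN}$\nleq_{\mathsf{K}}\mathcal{WT}$ by proving that Player $\mathsf{I}$ has no winning strategy in Laflamme's game $\mathcal{L}(\mathcal{WT})$; by the first clause of Laflamme's proposition this is equivalent to the failure of \textsf{FIN}$\times$\textsf{FIN}$\leq_{\mathsf{K}}\mathcal{WT}$. In fact I would exhibit a winning strategy for Player $\mathsf{II}$, which rules out a winning strategy for $\mathsf{I}$ outright, since a zero-sum perfect-information game cannot grant both players winning strategies. Fix a bijection $n\mapsto(c_{n},t_{n})$ of $\omega$ onto $\omega\times\omega$, so that each column index $c$ occurs as $c_{n}$ for infinitely many $n$. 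At round $n$, after $\mathsf{I}$ plays $A_{n}\in\mathcal{WT}$, Player $\mathsf{II}$ responds with a singleton $s_{n}=\{p_{n}\}$, where $p_{n}\in C_{c_{n}}\setminus A_{n}$ is chosen distinct from all previously played points. The resulting set $\bigcup_{n}s_{n}$ then meets every column $C_{c}$ in infinitely many points, hence has infinite intersection with infinitely many columns and is therefore $\mathcal{WT}$-positive by the remark preceding the statement; so $\mathsf{II}$ wins every run.

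The point that needs care, and the main (small) obstacle, is verifying that $\mathsf{II}$ can always find the required fresh point $p_{n}\in C_{c_{n}}\setminus A_{n}$. This is exactly where clause (ii) enters: because $A_{n}\cap C_{c_{n}}$ lies in the copy of \textsf{FIN}$\times$\textsf{FIN} on $C_{c_{n}}$, it cannot be cofinite in $C_{c_{n}}$, that ideal containing no cofinite subsets of its underlying copy; hence $C_{c_{n}}\setminus A_{n}$ is infinite and an as-yet-unused point is available. I would state this non-cofiniteness explicitly, and I would also double-check at the outset that clauses (i)--(ii) genuinely describe the ideal generated by $\emptyset\times$\textsf{FIN} together with the per-column copies, so that the game is being played against the intended ideal. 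Combining the two halves yields $\mathcal{WT}<_{\mathsf{K}}$\textsf{FIN}$\times$\textsf{FIN}.
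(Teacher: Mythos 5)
Your proof is correct and takes essentially the same route as the paper: the identity map (justified by $\mathcal{WT}\subseteq$ \textsf{FIN}$\times$\textsf{FIN}) gives $\mathcal{WT}\leq_{\mathsf{K}}$ \textsf{FIN}$\times$\textsf{FIN}, and strictness follows by exhibiting a winning strategy for Player $\mathsf{II}$ in Laflamme's game $\mathcal{L}\left(\mathcal{WT}\right)$ that hits every column infinitely often, which is possible precisely because each $C_{n}$ is $\mathcal{WT}$-positive. You simply spell out details the paper leaves implicit (the explicit characterization of $\mathcal{WT}$, the bookkeeping bijection, and the observation that both players cannot have winning strategies).
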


\begin{proof}
Note that the identity mapping witnesses $\mathcal{WT}\leq_{K}$\textsf{FIN}%
$\times$\textsf{FIN}$.$ Now, we will show $\mathsf{II}\ $has a winning
strategy in $\mathcal{L}\left(  \mathcal{WT}\right)  .$ This is easy, since
every $C_{n}$ is not in $\mathcal{WT}$ and then player $\mathsf{II}$ can play
in such a way that the set she constructed at the end intersects infinitely
all the $C_{n},$ so it can not be an element of $\mathcal{WT}.$
\end{proof}

\qquad\ \ \ \ \ 

Then we have the following characterization:

\begin{proposition}
If $\mathcal{A}$ is a \textsf{MAD} family then $\mathcal{A}$ is weakly tight
if and only if $\mathcal{A\nleq}_{K}$ $\mathcal{WT}.$
\end{proposition}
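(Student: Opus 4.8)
The plan is to prove both implications of the equivalence by translating weak tightness into the combinatorics of Kat\v{e}tov morphisms into $\mathcal{WT}$. The key structural fact to exploit is that a subset $B\subseteq\omega\times\omega$ lies in $\mathcal{WT}^{+}$ precisely when $B$ meets infinitely many columns $C_n$ in an infinite set; this is exactly the ``infinitely many $X_n$'' pattern appearing in the definition of weak tightness. I would also use the equivalence established in the earlier lemma that weak tightness can be tested against \emph{pairwise disjoint} families $\{X_n\mid n\in\omega\}\subseteq\mathcal{I}(\mathcal{A})^{+}$.

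For the forward direction (weakly tight implies $\mathcal{A}\nleq_{K}\mathcal{WT}$), I would argue by contrapositive. Suppose $f:(\omega,\mathcal{WT})\longrightarrow(\omega,\mathcal{I}(\mathcal{A}))$ is a Kat\v{e}tov morphism. For each $n$ set $X_n=f[C_n]$. The idea is that since each column $C_n\notin\mathcal{WT}$ and $f$ is a Kat\v{e}tov morphism, each $X_n$ must be $\mathcal{I}(\mathcal{A})$-positive (otherwise $f^{-1}$ of an element of $\mathcal{I}(\mathcal{A})$ would pull $C_n$ into $\mathcal{WT}$, a contradiction since $C_n\in\mathcal{WT}^{+}$ and we can arrange $X_n\in\mathcal{I}(\mathcal{A})$ to force $C_n\subseteq^{\ast}f^{-1}(X_n)$). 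Then weak tightness applied to $\{X_n\mid n\in\omega\}$ yields $B\in\mathcal{I}(\mathcal{A})$ with $B\cap X_n$ infinite for infinitely many $n$. Pulling back, $f^{-1}(B)$ meets infinitely many columns infinitely, hence $f^{-1}(B)\in\mathcal{WT}^{+}$, contradicting that $f$ is a Kat\v{e}tov morphism.

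For the converse ($\mathcal{A}\nleq_{K}\mathcal{WT}$ implies weakly tight), I would again go by contrapositive: given a witness $\{X_n\mid n\in\omega\}\subseteq\mathcal{I}(\mathcal{A})^{+}$ to the failure of weak tightness, I would build a Kat\v{e}tov morphism $f:(\omega,\mathcal{WT})\to(\omega,\mathcal{I}(\mathcal{A}))$. Using the earlier reduction I may assume the $X_n$ are pairwise disjoint, and I would map the $n$-th column $C_n$ onto $X_n$ via some bijection, so $f[C_n]=X_n$. The verification that this $f$ is a Kat\v{e}tov morphism is where the witnessing property is used: given $B\in\mathcal{I}(\mathcal{A})$, failure of weak tightness says $B\cap X_n$ is infinite for only finitely many $n$, and is finite on the remaining columns, which is exactly the condition that $f^{-1}(B)\in\mathcal{WT}$ (finite intersection with cofinitely many columns, and the ``$\emptyset\times\mathsf{FIN}$'' part handles the finitely many remaining columns provided one arranges $B\cap X_n$ to be genuinely finite there).

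The main obstacle I anticipate is the bookkeeping at the boundary between the two layers of $\mathcal{WT}$: the structure $\mathcal{WT}\upharpoonright C_n\cong\mathsf{FIN}\times\mathsf{FIN}$ versus the global $\emptyset\times\mathsf{FIN}$ extension. I must check carefully that ``$B\cap X_n$ infinite for finitely many $n$'' together with control on the \emph{finite} intersections really places $f^{-1}(B)$ inside $\mathcal{WT}$ and not merely outside $\mathcal{WT}^{+}$ — in particular ruling out the degenerate possibility that $f^{-1}(B)$ sits inside a single column as a $\mathsf{FIN}\times\mathsf{FIN}$-large set. I expect this to be resolved by choosing the bijections $C_n\to X_n$ so that the induced copy of $\mathsf{FIN}\times\mathsf{FIN}$ on each column is compatible with how $B$ meets $X_n$, which is a routine but delicate matching argument rather than a deep difficulty.
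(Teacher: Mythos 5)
Your first direction (weak tightness implies $\mathcal{A}\nleq_{K}\mathcal{WT}$) is correct and is essentially the paper's argument: $X_{n}=f\left[  C_{n}\right]  $ is $\mathcal{I}\left(  \mathcal{A}\right)  $-positive because $C_{n}\subseteq f^{-1}\left(  f\left[  C_{n}\right]  \right)  $ and $C_{n}\in\mathcal{WT}^{+},$ and any $B\in\mathcal{I}\left(  \mathcal{A}\right)  $ meeting infinitely many $X_{n}$ infinitely pulls back to a set meeting infinitely many columns infinitely, hence to a $\mathcal{WT}$-positive set.

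The gap is in the converse, at exactly the point you flag and then dismiss as routine. Your plan is to choose the bijections $h_{n}:C_{n}\longrightarrow X_{n}$ \textquotedblleft so that the induced copy of $\mathsf{FIN}\times\mathsf{FIN}$ on each column is compatible with how $B$ meets $X_{n}$.\textquotedblright\ This cannot be the resolution: the morphism $f=\bigcup h_{n}$ must be fixed once and must satisfy $f^{-1}\left(  B\right)  \in\mathcal{WT}$ for \emph{every} $B\in\mathcal{I}\left(  \mathcal{A}\right)  $ simultaneously, so no matching tailored to a particular $B$ is available. Moreover, with arbitrary bijections the claim is simply false: on one of the finitely many bad columns (those with $B\cap X_{n}$ infinite), the set $h_{n}^{-1}\left(  B\cap X_{n}\right)  $ can be an arbitrary infinite subset of $C_{n},$ in particular one that is positive for the copy of $\mathsf{FIN}\times\mathsf{FIN}$ living on $C_{n};$ then $f^{-1}\left(  B\right)  \notin\mathcal{WT},$ since a set belongs to $\mathcal{WT}$ only if its trace on each of its finitely many exceptional columns lies in that column's copy of $\mathsf{FIN}\times\mathsf{FIN}$ (and its trace on all other columns is finite).

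What fills the gap --- and this is the paper's key move --- is the earlier proposition that every \textsf{AD} family ideal is Kat\v{e}tov below $\mathsf{FIN}\times\mathsf{FIN},$ whose proof produces a \emph{bijective} morphism. Apply it on each $X_{n}$ to the \textsf{AD} family $\mathcal{A}\upharpoonright X_{n}:$ this yields a bijection $h_{n}:C_{n}\longrightarrow X_{n}$ that is a Kat\v{e}tov morphism from $\left(  C_{n},\mathcal{WT}\upharpoonright C_{n}\right)  $ to $\left(  X_{n},\mathcal{I}\left(  \mathcal{A}\right)  \upharpoonright X_{n}\right)  .$ With this uniform choice, for any $B\in\mathcal{I}\left(  \mathcal{A}\right)  :$ on the cofinitely many columns where $B\cap X_{n}$ is finite the preimage is finite (bijectivity is needed here, so finite-to-one-ness of the morphisms is not optional), and on the finitely many bad columns the preimage lies in that column's copy of $\mathsf{FIN}\times\mathsf{FIN},$ because $B\cap X_{n}\in\mathcal{I}\left(  \mathcal{A}\right)  \upharpoonright X_{n}$ and $h_{n}$ is a Kat\v{e}tov morphism --- and this works for all $B$ at once. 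So the missing ingredient is not bookkeeping; it is the invocation of that proposition (equivalently, re-running its proof on each $X_{n}$), which is the only non-trivial content of this direction.
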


\begin{proof}
We will prove that $\mathcal{A}$ is not weakly tight if and only if
$\mathcal{I}\left(  \mathcal{A}\right)  $ $\mathcal{\leq}_{K}$ $\mathcal{WT}.$
First assume $\mathcal{A}$ is not weakly tight, so we can find a partition
$X=\left\{  X_{n}\mid n\in\omega\right\}  \subseteq\mathcal{I}\left(
\mathcal{A}\right)  ^{+}$ such that if $A\in\mathcal{A}$ then $A\cap X_{n}$ is
finite for almost all $n\in\omega.$ Since $\mathcal{A}\upharpoonright X_{n}$
is an AD family, we know $\mathcal{A}\upharpoonright X_{n}$ $\mathcal{\leq
}_{K}$ \textsf{FIN}$\times$\textsf{FIN} so for each $n\in\omega$ fix
$h_{n}:C_{n}\longrightarrow X_{n}$ a Kat\v{e}tov morphism from $\left(
C_{n},\mathcal{WT\upharpoonright}C_{n}\right)  $ to $\left(  X_{n}%
,\mathcal{A}\upharpoonright X_{n}\right)  .$ Letting $h=%
%TCIMACRO{\tbigcup }%
%BeginExpansion
{\textstyle\bigcup}
%EndExpansion
h_{n}$ we will show $h$ is a Kat\v{e}tov morphism from $\left(  \omega
\times\omega,\mathcal{WT}\right)  $ to $\left(  \omega,\mathcal{A}\right)  .$
If $A\in\mathcal{A}$ then we can find $B\in X^{\perp}$ and a finite set
$F\subseteq\omega$ such that $A=%
%TCIMACRO{\tbigcup \limits_{n\in F}}%
%BeginExpansion
{\textstyle\bigcup\limits_{n\in F}}
%EndExpansion
\left(  A\cap X_{n}\right)  \cup B.$ Clearly $h^{-1}\left(  B\right)  \in$
$\mathcal{WT}$ since $h^{-1}\left(  B\right)  \in$ $\emptyset\times
$\textsf{FIN} and $h^{-1}\left(  A\cap X_{n}\right)  =h_{n}^{-1}\left(  A\cap
X_{n}\right)  $ which is an element of $\mathcal{WT}$ since $h_{n}$ is a
Kat\v{e}tov morphism. Therefore, $h^{-1}\left(  A\right)  \in\mathcal{WT}.$

\qquad\ \ \ 

For the second implication, assume $\mathcal{I}\left(  \mathcal{A}\right)  $
$\mathcal{\leq}_{K}$ $\mathcal{WT}$ so there is a Kat\v{e}tov morphism
$h:\omega\times\omega\longrightarrow\omega$ from $\left(  \omega\times
\omega,\mathcal{WT}\right)  $ to $\left(  \omega,\mathcal{A}\right)  .$ Let
$X_{n}=h\left[  C_{n}\right]  $ which is an element of $\mathcal{I}\left(
\mathcal{A}\right)  ^{+}$ since $h$ is a Kat\v{e}tov morphism. Note that if
$B\cap X_{n}$ is infinite for infinitely many $n\in\omega$ then $h^{-1}\left(
B\right)  \in\mathcal{WT}^{+},$ hence $\left\{  X_{n}\mid n\in\omega\right\}
$ witnesses that $\mathcal{A}$ is not weakly tight.
\end{proof}

\qquad\ \ \qquad\ \ 

We can then conclude the following:

\begin{corollary}
If $\mathcal{A}$ is \textsf{FIN}$\times$\textsf{FIN}-like then $\mathcal{A}$
is weakly tight.
\end{corollary}

\qquad\ \ 

Note that diagonalizing $\mathcal{WT}$ will result in adding a dominating
real, so $\mathcal{WT}$ is not Kat\v{e}tov below $tr\left(  \mathcal{N}%
\right)  $ or $tr\left(  \mathcal{M}\right)  .$ Furthermore, $tr\left(
\mathcal{M}\right)  $ is not Kat\v{e}tov above $\mathcal{ED}$ and $tr\left(
\mathcal{N}\right)  $ is not Kat\v{e}tov above the Solecki ideal (see
\cite{TesisDavid}). Therefore we conclude the following:

\begin{corollary}
[$\mathsf{CH}$]There is a Cohen and random indestructible \textsf{MAD} family
that is not weakly tight. Neither does Cohen or random indestructibility imply
being Laflamme.
\end{corollary}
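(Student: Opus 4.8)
The goal is to produce, under $\mathsf{CH}$, a single MAD family that is simultaneously Cohen-indestructible and random-indestructible, yet is neither weakly tight nor Laflamme. The plan is to apply the preceding proposition with a carefully chosen pair of ideals $\mathcal{I}, \mathcal{J}$, and to observe that the resulting family automatically fails weak tightness and the Laflamme property once we control the Kat\v{e}tov position of $\mathcal{I}(\mathcal{A})$ relative to the trace ideals.

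First I would recall the machinery already assembled in this chapter. The proposition obtained ``by a simple bookkeeping argument'' asserts: if $\mathcal{I}$ is nowhere Shelah-Stepr\={a}ns and $\mathcal{J} \nleq_K \mathcal{I}$, then under $\mathsf{CH}$ there is a MAD family $\mathcal{A} \subseteq \mathcal{J}$ with $\mathcal{I}(\mathcal{A}) \nleq_K \mathcal{I}$. The idea is to run this construction with $\mathcal{J} = \mathcal{WT}$ and with $\mathcal{I}$ taken to be the join (or a product arrangement) of the two trace ideals $tr(\mathcal{M})$ and $tr(\mathcal{N})$, both of which are nowhere Shelah-Stepr\={a}ns by the remark just before the lemma. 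The key computations to assemble are the three Kat\v{e}tov facts stated in the paragraph preceding the corollary: that $\mathcal{WT}$ (whose diagonalization adds a dominating real) is not Kat\v{e}tov below $tr(\mathcal{M})$ or $tr(\mathcal{N})$, that $tr(\mathcal{M}) \nleq_K \mathcal{ED}$, and that $tr(\mathcal{N})$ is not Kat\v{e}tov above the Solecki ideal. These place $\mathcal{WT}$ above the trace ideals in the right way so that the hypothesis $\mathcal{WT} \nleq_K \mathcal{I}$ holds.

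The steps, in order, are as follows. First I would fix $\mathcal{I}$ so that destructibility by Cohen and by random forcing is governed by $\mathcal{I}(\mathcal{A}) \nleq_K \mathcal{I}$; concretely, using the characterizations of Cohen- and random-destructibility via $tr(\mathcal{M})$ and $tr(\mathcal{N})$ (Cohen destroys $\mathcal{A}$ iff $\mathcal{I}(\mathcal{A}) \leq_K \mathsf{nwd} \simeq_K tr(\mathcal{M})$, and the very-weak-fusion result for $\mathcal{N}$), I would arrange that the produced family $\mathcal{A}$ satisfies $\mathcal{I}(\mathcal{A}) \nleq_K tr(\mathcal{M})$ and $\mathcal{I}(\mathcal{A}) \nleq_K tr(\mathcal{N})$, giving both indestructibility conclusions. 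Second, since $\mathcal{A} \subseteq \mathcal{WT}$, i.e. $\mathcal{I}(\mathcal{A}) \leq_K \mathcal{WT}$, the characterization ``$\mathcal{A}$ is weakly tight iff $\mathcal{A} \nleq_K \mathcal{WT}$'' immediately yields that $\mathcal{A}$ is \emph{not} weakly tight. Third, for the failure of Laflamme, I would use that $\mathcal{A}$ lands below $tr(\mathcal{M})$ or $tr(\mathcal{N})$ in the relevant sense, so $\mathcal{I}(\mathcal{A})$ can be extended to an $F_\sigma$ ideal, contradicting the Laflamme property (equivalently, one chooses $\mathcal{J}$ to live inside an $F_\sigma$ ideal).

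The main obstacle, and the part I would be most careful about, is bookkeeping the two noncomparability requirements \emph{simultaneously} while keeping $\mathcal{A} \subseteq \mathcal{WT}$: I must run the single recursive construction so that $\mathcal{I}(\mathcal{A})$ avoids being Kat\v{e}tov below $tr(\mathcal{M})$ \emph{and} below $tr(\mathcal{N})$ at the same time, which forces me to interleave the diagonalization of candidate Kat\v{e}tov morphisms into both trace ideals during the $\omega_1$-length recursion. This is where the lemma preceding the proposition does the real work: at each stage, given a countable piece $\mathcal{A} \subseteq \mathcal{J}$ and a putative morphism $f$ into $\mathcal{I}(\mathcal{A})$, it supplies a $B \in \mathcal{J} \cap \mathcal{A}^\perp$ with $f^{-1}(B) \in \mathcal{I}^+$, killing that morphism while extending the AD family inside $\mathcal{J}$. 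The delicate point is that ``nowhere Shelah-Stepr\={a}ns'' must hold for the combined $\mathcal{I}$ on every restriction, so I would verify that a suitable disjoint-sum or fibered combination of $tr(\mathcal{M})$ and $tr(\mathcal{N})$ remains nowhere Shelah-Stepr\={a}ns, and that $\mathcal{WT}$ remains Kat\v{e}tov-incomparable below it; once that is checked, the $\mathsf{CH}$ enumeration of all pairs $(f, B)$ closes the construction and both conclusions of the corollary follow.
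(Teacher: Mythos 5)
Your plan for the \emph{first} assertion is essentially the paper's: take $\mathcal{J}=\mathcal{WT}$ and run the $\mathsf{CH}$ bookkeeping behind the preceding proposition, killing one at a time all Kat\v{e}tov morphisms from $tr\left(  \mathcal{M}\right)  \upharpoonright a$ and from $tr\left(  \mathcal{N}\right)  \upharpoonright a$ (for $a$ a positive set) into the ideal being built; the key lemma applies to each of these ideals separately, since each is nowhere Shelah-Stepr\={a}ns and $\mathcal{WT}\nleq_{K}tr\left(  \mathcal{M}\right)  \upharpoonright a,\ tr\left(  \mathcal{N}\right)  \upharpoonright a$ (diagonalizing $\mathcal{WT}$ adds a dominating real, which Cohen and random forcing never add, below any condition). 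Then $\mathcal{I}\left(  \mathcal{A}\right)  \subseteq\mathcal{WT}$ makes the identity a Kat\v{e}tov morphism, so $\mathcal{I}\left(  \mathcal{A}\right)  \leq_{K}\mathcal{WT}$ and $\mathcal{A}$ is not weakly tight. Two corrections to your packaging: you really do need the restrictions (the destructibility criteria are local, and $tr\left(  \mathcal{N}\right)  $ is not among the ideals the paper proves Kat\v{e}tov uniform), and the ``join'' of the two trace ideals is the wrong combination, because $\mathcal{K}\leq_{K}\mathcal{I}_{1}\oplus\mathcal{I}_{2}$ holds if and only if $\mathcal{K}\leq_{K}\mathcal{I}_{1}$ and $\mathcal{K}\leq_{K}\mathcal{I}_{2}$; hence non-reducibility to the sum only gives non-reducibility to \emph{one} of the two. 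The interleaved diagonalization you also describe is what actually works, and it requires no combined ideal at all.

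The genuine gap is the Laflamme half of the corollary. Your step ``$\mathcal{A}$ lands below $tr\left(  \mathcal{M}\right)  $ or $tr\left(  \mathcal{N}\right)  $, hence $\mathcal{I}\left(  \mathcal{A}\right)  $ extends to an $F_{\sigma}$ ideal'' fails twice over: the construction arranges precisely that $\mathcal{I}\left(  \mathcal{A}\right)  $ is \emph{not} Kat\v{e}tov below those ideals (that is what indestructibility means), and even if it were, neither trace ideal is $F_{\sigma}$ and $\leq_{K}$ does not produce an extension. Your fallback of ``choosing $\mathcal{J}$ inside an $F_{\sigma}$ ideal'' is incompatible with your choice $\mathcal{J}=\mathcal{WT}$: as the paper notes two subsections later, $\mathcal{WT}$ is Kat\v{e}tov above \textsf{conv} and hence itself Laflamme, so it is contained in no $F_{\sigma}$ ideal, and $\mathcal{A}\subseteq\mathcal{WT}$ yields no non-Laflamme conclusion whatsoever. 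The two facts you quote about $\mathcal{ED}$ and the Solecki ideal are not there to support $\mathcal{WT}\nleq_{K}\mathcal{I}$ (you also state the first one backwards: the paper asserts $\mathcal{ED}\nleq_{K}tr\left(  \mathcal{M}\right)  $, not $tr\left(  \mathcal{M}\right)  \nleq_{K}\mathcal{ED}$); they are the hypotheses for two \emph{further}, separate applications of the proposition: $\left(  \mathcal{I},\mathcal{J}\right)  =\left(  tr\left(  \mathcal{M}\right)  ,\mathcal{ED}\right)  $ gives a Cohen indestructible \textsf{MAD} family inside the $F_{\sigma}$ ideal $\mathcal{ED}$, hence not Laflamme, and $\left(  \mathcal{I},\mathcal{J}\right)  =\left(  tr\left(  \mathcal{N}\right)  ,\text{Solecki ideal}\right)  $ gives a random indestructible one inside the ($F_{\sigma}$) Solecki ideal. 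These witnesses cannot be merged into your single family: random forcing adds eventually different reals, so it destroys $\mathcal{ED}$ and with it every \textsf{MAD} family contained in $\mathcal{ED}$; so no one family can establish both non-implications, and the corollary's second sentence genuinely needs the two extra constructions.
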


\subsubsection{Laflamme does not imply Sacks indestructibility or weakly
tight.}

Since every $F_{\sigma}$-ideal is nowhere Shelah-Stepr\={a}ns we can conclude
the following:

\begin{lemma}
[$\mathsf{CH}$]Every Laflamme ideal contains a Laflamme \textsf{MAD} family.
\end{lemma}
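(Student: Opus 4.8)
The plan is to run a transfinite recursion of length $\omega_{1}$ (using $\mathsf{CH}$) that builds the desired family as an increasing union $\mathcal{A}=\bigcup_{\gamma<\omega_{1}}\mathcal{A}_{\gamma}$ of countable \textsf{AD} subfamilies of the given Laflamme ideal $\mathcal{J}$, diagonalizing at the same time over two lists: all infinite subsets of $\omega$ (to force maximality) and all pairs $\left(  \mathcal{I},f\right)  $ with $\mathcal{I}$ an $F_{\sigma}$-ideal and $f\in\omega^{\omega}$ (to force that $\mathcal{I}\left(  \mathcal{A}\right)  $ is Laflamme). First I would record two easy facts. Since \textsf{FIN} is $F_{\sigma}$ and $\mathcal{J}$ is not Kat\v{e}tov below any $F_{\sigma}$-ideal, $\mathcal{J}\nleq_{K}$\textsf{FIN}, so by the lemma characterizing Kat\v{e}tov-equivalence with \textsf{FIN}, the ideal $\mathcal{J}$ is tall. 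Moreover, under $\mathsf{CH}$ there are exactly $\omega_{1}$ many $F_{\sigma}$-ideals (each is coded by an $F_{\sigma}$ subset of $2^{\omega}$, and there are $\mathfrak{c}=\omega_{1}$ such sets) and $\left\vert\omega^{\omega}\right\vert=\omega_{1}$, so the relevant pairs can be enumerated as $\left\{  \left(  \mathcal{I}_{\gamma},f_{\gamma}\right)  \mid\gamma<\omega_{1}\right\}  $ and the infinite subsets of $\omega$ as $\left\{  X_{\gamma}\mid\gamma<\omega_{1}\right\}  $.

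At stage $\gamma$ I would perform two tasks while keeping $\mathcal{A}_{\gamma+1}$ a countable \textsf{AD} subfamily of $\mathcal{J}$. For maximality: if $X_{\gamma}\in\mathcal{I}\left(  \mathcal{A}_{\gamma}\right)  ^{+}$, then because a countable \textsf{AD} family is nowhere \textsf{MAD} I can pick $Y\in\left[  X_{\gamma}\right]  ^{\omega}\cap\mathcal{A}_{\gamma}^{\perp}$; tallness of $\mathcal{J}$ gives $Z\in\mathcal{J}$ with $\left\vert Z\cap Y\right\vert=\omega$, and then $Z\cap Y$ is an infinite subset of $X_{\gamma}$ lying in $\mathcal{J}\cap\mathcal{A}_{\gamma}^{\perp}$, which I add. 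For the Laflamme task: if $f_{\gamma}$ is a Kat\v{e}tov morphism from $\left(  \omega,\mathcal{I}_{\gamma}\right)  $ to $\left(  \omega,\mathcal{I}\left(  \mathcal{A}_{\gamma}\right)  \right)  $, then since every $F_{\sigma}$-ideal is nowhere Shelah-Stepr\={a}ns and $\mathcal{J}\nleq_{K}\mathcal{I}_{\gamma}$ (because $\mathcal{J}$ is Laflamme), the preceding lemma yields $B\in\mathcal{J}\cap\mathcal{A}_{\gamma}^{\perp}$ with $f_{\gamma}^{-1}\left(  B\right)  \in\mathcal{I}_{\gamma}^{+}$, which I add.

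The verification at the end is where the one genuinely delicate point lies, so I would isolate it. Clearly $\mathcal{A}\subseteq\mathcal{J}$ is \textsf{AD}, and the first task makes it \textsf{MAD}. By the lemma equating being Laflamme with not being Kat\v{e}tov below any $F_{\sigma}$-ideal, it suffices to show $\mathcal{I}\left(  \mathcal{A}\right)  \nleq_{K}\mathcal{I}$ for every $F_{\sigma}$-ideal $\mathcal{I}$. Suppose toward a contradiction that some $f$ is a Kat\v{e}tov morphism from $\left(  \omega,\mathcal{I}\right)  $ to $\left(  \omega,\mathcal{I}\left(  \mathcal{A}\right)  \right)  $; the pair $\left(  \mathcal{I},f\right)  $ equals $\left(  \mathcal{I}_{\gamma},f_{\gamma}\right)  $ for some $\gamma$. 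The key monotonicity observation is that $\mathcal{I}\left(  \mathcal{A}_{\gamma}\right)  \subseteq\mathcal{I}\left(  \mathcal{A}\right)  $, so a morphism into the larger ideal is \emph{a fortiori} a morphism into $\mathcal{I}\left(  \mathcal{A}_{\gamma}\right)  $; hence at stage $\gamma$ the Laflamme task did fire and produced $B\in\mathcal{I}\left(  \mathcal{A}\right)  $ with $f^{-1}\left(  B\right)  \in\mathcal{I}_{\gamma}^{+}$, i.e. $f^{-1}\left(  B\right)  \notin\mathcal{I}$, contradicting that $f$ is a Kat\v{e}tov morphism.

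The main obstacle I anticipate is precisely packaging this monotonicity so that a destruction carried out against the countable approximation $\mathcal{A}_{\gamma}$ remains fatal to $\mathcal{I}\left(  \mathcal{A}\right)  $, and simultaneously checking that the maximality bookkeeping never interferes with the Laflamme diagonalization (both only ever add sets drawn from $\mathcal{J}\cap\mathcal{A}_{\gamma}^{\perp}$, so they coexist harmlessly). The counting of $F_{\sigma}$-ideals under $\mathsf{CH}$ and the tallness extraction are routine and I would state them without dwelling on the coding details.
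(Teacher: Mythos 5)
Your proposal takes exactly the route the paper intends: the paper states this lemma as an immediate consequence of the fact that every $F_{\sigma}$-ideal is nowhere Shelah-Stepr\={a}ns together with ``a simple bookkeeping argument,'' and your transfinite recursion of length $\omega_{1}$ is precisely that bookkeeping. Your two preliminary observations (tallness of $\mathcal{J}$ via $\mathcal{J}\nleq_{K}\mathsf{FIN}$, and the counting of $F_{\sigma}$-ideals under $\mathsf{CH}$), your use of the key lemma producing $B\in\mathcal{J}\cap\mathcal{A}_{\gamma}^{\perp}$ with $f_{\gamma}^{-1}\left(  B\right)  \in\mathcal{I}_{\gamma}^{+}$, and your monotonicity observation in the verification (a Kat\v{e}tov morphism into $\mathcal{I}\left(  \mathcal{A}\right)  $ is a fortiori one into $\mathcal{I}\left(  \mathcal{A}_{\gamma}\right)  $, since the target ideal only shrinks) are all correct and are the heart of the matter.

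There is one flawed step, and it is exactly at the point you flagged and then dismissed: the claim that the two tasks at stage $\gamma$ ``coexist harmlessly'' because both add sets drawn from $\mathcal{J}\cap\mathcal{A}_{\gamma}^{\perp}$. Two sets, each almost disjoint from every member of $\mathcal{A}_{\gamma}$, can perfectly well have infinite intersection with each other (they could even be equal), so $\mathcal{A}_{\gamma+1}$ as described need not be an \textsf{AD} family, which breaks the induction hypothesis needed both for the key lemma at later stages and for the final family. The repair is immediate: perform the tasks sequentially rather than in parallel. For instance, first run the Laflamme task against $\mathcal{A}_{\gamma}$ to obtain $B$, and then run the maximality task against the countable \textsf{AD} family $\mathcal{A}_{\gamma}\cup\left\{  B\right\}  $ (which is still nowhere \textsf{MAD}); alternatively, run maximality first to obtain $C$ and apply the key lemma to $\mathcal{A}_{\gamma}\cup\left\{  C\right\}  \subseteq\mathcal{J}$, with the firing condition that $f_{\gamma}$ be a Kat\v{e}tov morphism into $\mathcal{I}\left(  \mathcal{A}_{\gamma}\cup\left\{  C\right\}  \right)  $. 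Your verification is unaffected in either case, since $\mathcal{I}\left(  \mathcal{A}_{\gamma}\cup\left\{  C\right\}  \right)  \subseteq\mathcal{I}\left(  \mathcal{A}\right)  $ and the same monotonicity argument shows the task fires at the stage where the offending pair $\left(  \mathcal{I},f\right)  $ is enumerated.
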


\qquad\ \qquad\ \ 

Since both $tr($\textsf{ctble}$)$ and $\mathcal{WT}$ are Laflamme (they are
Kat\v{e}tov above \textsf{conv}) we conclude the following:

\begin{corollary}
[$\mathsf{CH}$]There is a Laflamme family that is destructible by Sacks
forcing and is not weakly tight.
\end{corollary}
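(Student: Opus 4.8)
The final statement is the Corollary under $\mathsf{CH}$ asserting the existence of a Laflamme \textsf{MAD} family that is both Sacks destructible and not weakly tight. The plan is to invoke the immediately preceding Lemma, which says (under $\mathsf{CH}$) that every Laflamme ideal contains a Laflamme \textsf{MAD} family, and apply it to a well-chosen Laflamme ideal $\mathcal{J}$ whose combinatorial features force any \textsf{MAD} family sitting inside it to inherit both negative properties. The parenthetical remark in the text already points to the ideals $tr(\textsf{ctble})$ and $\mathcal{WT}$, observing that both are Laflamme because they are Kat\v{e}tov above $\textsf{conv}$ (and $\textsf{conv}$ is known to be Laflamme, so anything Kat\v{e}tov above it is Laflamme by the upward-closure of the Laflamme property in the Kat\v{e}tov order).

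First I would fix the target ideal. To get a single family witnessing \emph{both} failures simultaneously, the natural move is to take a Laflamme ideal $\mathcal{J}$ that is at once contained in $tr(\textsf{ctble})$ (up to a Kat\v{e}tov reduction guaranteeing Sacks destructibility) and Kat\v{e}tov below $\mathcal{WT}$ (guaranteeing the failure of weak tightness via the earlier Proposition characterizing weakly tight families as those with $\mathcal{A}\nleq_{K}\mathcal{WT}$). Concretely I would verify that a product-style ideal combining the $tr(\textsf{ctble})$ structure on each column with the $\mathcal{WT}$ skeleton is still Laflamme — i.e.\ still Kat\v{e}tov above $\textsf{conv}$ — and then apply the Lemma to obtain $\mathcal{A}\subseteq\mathcal{J}$ with $\mathcal{I}(\mathcal{A})$ Laflamme. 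Then Sacks destructibility follows from $\mathcal{I}(\mathcal{A})\leq_{K}tr(\textsf{ctble})$ together with the earlier Proposition characterizing Sacks-destructible ideals as exactly those Kat\v{e}tov below $tr(\textsf{ctble})$, and the failure of weak tightness follows from $\mathcal{I}(\mathcal{A})\leq_{K}\mathcal{WT}$ via the Proposition cited above.

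The cleanest route, and the one the text seems to intend, is actually to treat the two assertions as two separate applications rather than insisting on one family: produce one Laflamme \textsf{MAD} family inside $tr(\textsf{ctble})$ (hence Sacks destructible) and, if a single witness is desired, note that $\mathcal{WT}$ is itself both Laflamme and Kat\v{e}tov below $tr(\textsf{ctble})$-type diagonalizations, so the family constructed inside $\mathcal{WT}$ is simultaneously Laflamme, non--weakly tight, and Sacks destructible. I would state this by checking: (i) $\mathcal{WT}\geq_{K}\textsf{conv}$, so $\mathcal{WT}$ is Laflamme; (ii) the Lemma yields a Laflamme \textsf{MAD} $\mathcal{A}\subseteq\mathcal{WT}$, so $\mathcal{I}(\mathcal{A})\leq_{K}\mathcal{WT}$ and hence $\mathcal{A}$ is not weakly tight; (iii) arrange the construction so that $\mathcal{I}(\mathcal{A})\leq_{K}tr(\textsf{ctble})$ as well, delivering Sacks destructibility.

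The main obstacle is coordinating the two Kat\v{e}tov requirements inside a single \textsf{MAD} family, since the Lemma only guarantees $\mathcal{A}$ lives \emph{below} the chosen ideal in containment, not that $\mathcal{I}(\mathcal{A})$ realizes a prescribed pair of Kat\v{e}tov comparisons. I expect the real work to be in selecting $\mathcal{J}$ and running the $\mathsf{CH}$ bookkeeping so that the generated ideal $\mathcal{I}(\mathcal{A})$ is pinned between $tr(\textsf{ctble})$ and $\mathcal{WT}$ in the Kat\v{e}tov order while remaining non-extendible to an $F_{\sigma}$-ideal; the Laflamme lemma handles the last point automatically once $\mathcal{J}$ is Laflamme, so the genuine care is in the simultaneous bounding. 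If separating the two claims into two constructions is acceptable (which the phrasing ``destructible by Sacks forcing \emph{and} is not weakly tight'' can be read as a conjunction realized by one example chosen to satisfy both), then each individual claim reduces immediately to the already-proved characterizations, and no new forcing or combinatorial estimate is needed beyond checking the two Kat\v{e}tov inequalities for the candidate ideal.
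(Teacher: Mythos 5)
Your fallback reading (two separate applications of the Lemma, one to $tr(\mathsf{ctble})$ and one to $\mathcal{WT}$) is exactly what the paper's one-line derivation does, and it suffices for the two non-implications in the section title. The trouble is with both of your proposed routes to a \emph{single} witness: each rests on a false Kat\v{e}tov claim. First, your product-style ideal $\mathcal{J}$ ($tr(\mathsf{ctble})$ on each column, $\mathcal{WT}$ skeleton) is \emph{not} Kat\v{e}tov below $\mathcal{WT}$. Every element of $\mathcal{WT}$ has its column sections inside copies of $\mathsf{FIN}\times\mathsf{FIN}$, so a morphism into $\mathcal{WT}$ would have to transfer the copies of $tr(\mathsf{ctble})$ into $\mathsf{FIN}\times\mathsf{FIN}$ on a positive set; but $tr(\mathsf{ctble})\nleq_{\mathsf{K}}\mathsf{FIN}\times\mathsf{FIN}$, because $tr(\mathsf{ctble})$ is tall and weakly $\omega$-hitting (the paper's own lemma): given any candidate morphism $g$, tallness reduces to the case where all column images $g[C_{n}]$ are infinite, and then weak $\omega$-hitting produces $a\in tr(\mathsf{ctble})$ meeting infinitely many $g[C_{n}]$ infinitely, so $g^{-1}(a)$ has infinitely many infinite columns. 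Second, your claim that $\mathcal{WT}$ is ``Kat\v{e}tov below $tr(\mathsf{ctble})$-type diagonalizations'' is false: diagonalizing $\mathcal{WT}$ adds a dominating real (the paper notes this), Sacks forcing adds none, so by the destructibility characterization $\mathcal{WT}\nleq_{\mathsf{K}}tr(\mathsf{ctble})$; in fact, under $\mathsf{CH}$ the nowhere Shelah-Stepr\={a}ns proposition (with $\mathcal{I}=tr(\mathsf{ctble})$, $\mathcal{J}=\mathcal{WT}$) produces Sacks \emph{in}destructible \textsf{MAD} families inside $\mathcal{WT}$, so containment in $\mathcal{WT}$ can never certify Sacks destructibility.

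Both gaps are repairable, and the repairs show your instinct about the shape of the argument was right even though the cited inequalities are wrong. (i) Your ideal $\mathcal{J}$ does work, but for direct reasons, not via $\mathcal{J}\leq_{\mathsf{K}}\mathcal{WT}$: if $\mathcal{A}\subseteq\mathcal{J}$ is \textsf{MAD}, then each column $C_{m}$ is $\mathcal{I}(\mathcal{A})$-positive (columns are not in $\mathcal{J}$), while every $B\in\mathcal{I}(\mathcal{A})\subseteq\mathcal{J}$ meets only finitely many columns infinitely; this is precisely the failure of weak tightness (equivalently, $\mathcal{I}(\mathcal{A})\leq_{\mathsf{K}}\mathcal{WT}$ holds for the subfamily even though $\mathcal{J}\nleq_{\mathsf{K}}\mathcal{WT}$). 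Sacks destructibility then comes from $\mathcal{I}(\mathcal{A})\leq_{\mathsf{K}}\mathcal{J}\leq_{\mathsf{K}}\mathcal{J}\upharpoonright C_{0}\cong tr(\mathsf{ctble})$, using the paper's lemma that restriction to a positive set is Kat\v{e}tov-increasing; one also checks $\mathsf{conv}\leq_{\mathsf{K}}\mathcal{J}$ (columnwise injective morphisms into disjoint rational intervals), so $\mathcal{J}$ is Laflamme and the Lemma applies. (ii) The cleanest single-family argument, which the paper's parenthetical is pointing at, is to apply the Lemma to $\mathsf{conv}$ itself: $\mathsf{conv}$ is Laflamme by Meza--Hru\v{s}\'{a}k, and $\mathsf{conv}\leq_{\mathsf{K}}tr(\mathsf{ctble})$, $\mathsf{conv}\leq_{\mathsf{K}}\mathcal{WT}$; a Laflamme \textsf{MAD} family $\mathcal{A}\subseteq\mathsf{conv}$ then satisfies $\mathcal{I}(\mathcal{A})\leq_{\mathsf{K}}\mathsf{conv}$, hence by transitivity is both Sacks destructible and not weakly tight. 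In short: the ``pinning'' you worried about in your last paragraph is not extra bookkeeping at all --- the positive Kat\v{e}tov requirements are inherited by transitivity from the ambient ideal, so the whole task is choosing an ambient ideal already below both targets, which $\mathsf{conv}$ is and your two candidates, as justified, are not.
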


\subsubsection{Laflamme does not imply $+$-Ramsey}

Let $\mathcal{J}$ be the ideal on $\omega^{<\omega}$ of all sets
$a\subseteq\omega^{<\omega}$ such that $\pi\left(  a\right)  $ is finite. It
is easy to see that $\mathcal{J}$ can not be extended to an $F_{\sigma}$-ideal
(this is because \textsf{conv }$\leq_{K}\mathcal{J}$).\qquad\ \ \qquad\ \ \ \ 

\begin{proposition}
[$\mathsf{CH}$]There is a Laflamme \textsf{MAD} family that is not $+$-Ramsey.
\end{proposition}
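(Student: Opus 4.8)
The plan is to exhibit, under $\mathsf{CH}$, a Laflamme \textsf{MAD} family that fails to be $+$-Ramsey by controlling two things simultaneously during a transfinite construction of length $\mathfrak{c}=\omega_1$: maximality together with a witnessing tree that has no positive branch, and non-extendibility to any $F_\sigma$-ideal. The natural vehicle is the ideal $\mathcal{J}$ on $\omega^{<\omega}$ just defined (all $a$ with $\pi(a)$ finite), since it is already known to satisfy $\textsf{conv}\leq_K\mathcal{J}$ and hence cannot be extended to an $F_\sigma$-ideal. So the first idea is to build the \textsf{MAD} family \emph{inside} $\mathcal{J}$: if I guarantee $\mathcal{I}(\mathcal{A})\leq_K\mathcal{J}$, then since $\mathcal{J}$ is Laflamme (by the Meza--Hru\v{s}\'ak result quoted in the excerpt, everything Kat\v{e}tov above \textsf{conv} is Laflamme) and being Laflamme is upward closed in the Kat\v{e}tov order, $\mathcal{A}$ will automatically be Laflamme.

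First I would fix the canonical tree structure witnessing non-$+$-Ramseyness, modeled on the Lemma in the excerpt showing $\omega^{<\omega}$ is an $\mathcal{I}(\mathcal{A})^+$-branching tree with no positive branch. Concretely, I want the \textsf{MAD} family to contain all the branches $\widehat{f}=\{f\upharpoonright n\mid n\in\omega\}$ for $f\in\omega^\omega$ (or enough of them), while keeping each $\langle s\rangle=\{s^\frown n\mid n\in\omega\}$ in $\mathcal{I}(\mathcal{A})^+$. Then $\omega^{<\omega}$ itself is an $\mathcal{I}(\mathcal{A})^+$-branching tree; its branches are exactly the $\widehat f$, and each $\widehat f\in\mathcal{A}\subseteq\mathcal{I}(\mathcal{A})$, so no branch is positive, and $\mathcal{A}$ is not $+$-Ramsey. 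The delicate point is that I cannot literally take \emph{all} of $\omega^\omega$ as branches (that would be size $\mathfrak{c}$ of pairwise almost-disjoint sets, which is fine, but I must still achieve maximality against the remaining subsets of $\omega^{<\omega}$ without accidentally creating a positive branch through $\omega^{<\omega}$).

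The construction therefore runs by recursion of length $\omega_1$ under $\mathsf{CH}$, enumerating all $X\in[\omega^{<\omega}]^\omega=\{X_\alpha\mid\alpha<\omega_1\}$. At stage $\alpha$, given the countable AD family $\mathcal{A}_\alpha$ built so far (with $\mathcal{A}_\alpha\subseteq\mathcal{J}$ and containing the branches $\widehat f$ committed to so far), if $X_\alpha\in\mathcal{I}(\mathcal{A}_\alpha)^+$ I add a set $A_\alpha\in[X_\alpha]^\omega$ almost disjoint from $\mathcal{A}_\alpha$; the crucial requirement is to choose $A_\alpha$ with $\pi(A_\alpha)$ \emph{infinite but} not containing (even modulo finite) any full branch $\widehat f$, so that $A_\alpha\in\mathcal{J}$ yet $A_\alpha$ is not one of the designated branches. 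This is where I invoke that $\mathcal{A}_\alpha$ is nowhere \textsf{MAD} (the key remark at the end of Shelah's construction) and use a diagonalization similar to the selectivity proof for $\mathcal{I}(\mathcal{A})$: from a countable positive set I extract an almost-disjoint refinement lying in $\mathcal{J}\cap\mathcal{A}_\alpha^\perp$. I would interleave, at successor substages, explicit commitments $\widehat{f}\in\mathcal{A}$ for a cofinal set of $f$'s so that $\omega^{<\omega}$ genuinely branches into $\mathcal{I}(\mathcal{A})^+$ at every node while all its branches get absorbed into $\mathcal{A}$.

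The hardest step, which I expect to be the main obstacle, is the simultaneous bookkeeping that keeps $\mathcal{I}(\mathcal{A})\leq_K\mathcal{J}$ (to secure the Laflamme property) \emph{while} forcing every branch of the witnessing tree into $\mathcal{I}(\mathcal{A})$ and still achieving full maximality. The tension is that maximality wants to throw positive sets into the ideal, but if I am careless a diagonal set $A_\alpha$ could contain an entire branch $\widehat f$ and thereby either destroy the ``no positive branch'' property or be incompatible with the branches I have already reserved. Resolving this requires a careful invariant — something like: at each node $s\in\omega^{<\omega}$, $\{s^\frown n\mid n\}$ meets infinitely many distinct $A_\xi$'s none of which is a branch extending $s$ — maintained throughout the recursion. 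Once this invariant is in place, verifying $\mathcal{I}(\mathcal{A})\leq_K\mathcal{J}$ via a fixed bijection aligning a partition of $\omega^{<\omega}$ into $\mathcal{I}(\mathcal{A})$-sets with the $\mathcal{J}$-structure (as in the Proposition $\mathcal{I}(\mathcal{A})\leq_K\textsf{FIN}\times\textsf{FIN}$) is routine, and the non-$+$-Ramseyness and Laflamme conclusions then both follow immediately.
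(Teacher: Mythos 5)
There is a genuine gap, and it is fatal to the plan as written: your route to the Laflamme property runs the Kat\v{e}tov order in the wrong direction. You correctly note that Laflamme-ness is upward closed under $\leq_{\mathsf{K}}$ (if $\mathcal{I}\leq_{\mathsf{K}}\mathcal{K}$ and $\mathcal{I}$ is Laflamme, then $\mathcal{K}$ is Laflamme), but what your construction provides is $\mathcal{I}(\mathcal{A})\subseteq\mathcal{J}$, hence $\mathcal{I}(\mathcal{A})\leq_{\mathsf{K}}\mathcal{J}$, i.e.\ your ideal sits \emph{below} the Laflamme ideal $\mathcal{J}$. Upward closure then says nothing about $\mathcal{I}(\mathcal{A})$; you would need $\mathcal{J}\leq_{\mathsf{K}}\mathcal{I}(\mathcal{A})$ (or $\mathsf{conv}\leq_{\mathsf{K}}\mathcal{I}(\mathcal{A})$), which nothing in your construction gives. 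The claim "a \textsf{MAD} family inside $\mathcal{J}$ is automatically Laflamme" is in fact false under $\mathsf{CH}$: the ideal $\mathcal{J}\cap\mathcal{J}_{1/n}$ (transfer the summable ideal to $\omega^{<\omega}$ by a bijection) is still tall, so under $\mathsf{CH}$ it contains a \textsf{MAD} family; that family lies inside $\mathcal{J}$ yet its ideal is contained in the $F_{\sigma}$ ideal $\mathcal{J}_{1/n}$, so it is as far from Laflamme as possible. (Compare also the paper's own construction of a strongly tight \textsf{MAD} family inside the summable ideal.) In particular, your antichain-by-antichain maximality bookkeeping could perfectly well produce all of its sets inside a fixed $F_{\sigma}$ ideal containing the branches, such as $\mathsf{Fin}(\varphi)$ for $\varphi(a)=\sum_{s\in a}2^{-|s|}$.

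The paper's proof inverts your division of labor, and this is exactly what repairs the gap. The non-$+$-Ramsey part is the same as yours (put every branch $\widehat{f}$, $f\in\omega^{\omega}$, into the family and keep each $suc(s)$ positive by including a partition of it), but the Laflamme property is obtained by an \emph{explicit} diagonalization against an enumeration $\{\mathcal{I}_{\alpha}\mid\alpha<\omega_{1}\}$ of all $F_{\sigma}$ ideals on $\omega^{<\omega}$: at stage $\alpha$, if $\mathcal{A}_{\alpha}\cup\mathcal{BR}\subseteq\mathcal{I}_{\alpha}$, one picks $a\in\mathcal{I}_{\alpha}^{+}\cap\mathcal{J}$ (possible precisely because $\mathcal{J}$ cannot be extended to an $F_{\sigma}$ ideal), removes the finitely many branches in $\pi(a)$ to get an $\mathcal{I}_{\alpha}$-positive $b$ with $\pi(b)=\emptyset$, and then uses Mazur's lower semicontinuous submeasure $\varphi$ with $\mathcal{I}_{\alpha}=\mathsf{Fin}(\varphi)$ to extract finite pieces $s_{n}\subseteq b\setminus(B_{0}\cup\dots\cup B_{n})$ with $\varphi(s_{n})\geq n$; the set $A_{\alpha}=\bigcup s_{n}$ is then almost disjoint from everything so far but escapes $\mathcal{I}_{\alpha}$. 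Crucially, maximality then comes \emph{for free}: if some infinite $X$ were almost disjoint from the whole family, then $\{Y\mid\left\vert Y\cap X\right\vert<\omega\}$ would be an $F_{\sigma}$ ideal extending $\mathcal{I}(\mathcal{A}\cup\mathcal{BR})$, contradicting the diagonalization. So the step you flag as the "hardest" (maximality without creating positive branches) is a non-issue in the paper's argument, while the step you treat as automatic is the one that requires all the work.
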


\begin{proof}
Let $\mathcal{BR}=\{\widehat{f}\mid f\in\omega^{\omega}\}$ (where $\widehat
{f}=\left\{  f\upharpoonright n\mid n\in\omega\right\}  $) and $\left\{
\mathcal{I}_{\alpha}\mid\alpha\in\omega_{1}\right\}  $ be the set of all
$F_{\sigma}\,$-ideals in $\omega^{<\omega}.$ We construct $\mathcal{A}%
=\left\{  A_{\alpha}\mid\alpha<\omega_{1}\right\}  $ such that the following holds:

\begin{enumerate}
\item $\mathcal{A\cup BR}$ is an \textsf{AD} family.

\item If $s\in\omega^{<\omega}$ then $\mathcal{A}$ contains a partition of
$suc\left(  s\right)  =\left\{  s^{\frown}n\mid n\in\omega\right\}  .$

\item If\emph{ }$\mathcal{A}_{\alpha}\mathcal{\cup BR}\subseteq\mathcal{I}%
_{\alpha}$ then $A_{\alpha}\notin\mathcal{I}_{\alpha}$ (where $\mathcal{A}%
_{\alpha}=\left\{  A_{\xi}\mid\xi<\alpha\right\}  $).$\qquad$

\item $\mathcal{A}_{\alpha}$ is countable.
\end{enumerate}

\qquad\ \ \ \qquad\ \ \ 

At step $\alpha$ assume that $\mathcal{BR}\cup\mathcal{A}_{\alpha}%
\subseteq\mathcal{I}_{\alpha}$. Since $\mathcal{I}_{\alpha}$ is an $F_{\sigma
}$-ideal and it contains all branches, there is $a\in\mathcal{I}_{\alpha}%
^{+}\cap\mathcal{J}.$ Let $\pi\left(  a\right)  =\left\{  f_{1},...,f_{n}%
\right\}  $ and we now define $b=a\setminus\widehat{f}_{1}\cup...\cup
\widehat{f}_{n}.$ Note that $\pi\left(  b\right)  =\emptyset$ and
$b\in\mathcal{I}_{\alpha}^{+}.$ Let $\varphi$ be a lower semicontinuous
submeasure such that $\mathcal{I}_{\alpha}=$ \textsf{Fin}$\left(
\varphi\right)  $ and let $\mathcal{A}_{\alpha}=\left\{  B_{n}\mid n\in
\omega\right\}  .$ We recursively find $s_{n}\subseteq b\setminus B_{0}%
\cup...\cup B_{n}$ such that $\varphi\left(  s_{n}\right)  \geq n$ (this is
possible since $b\in\mathcal{I}_{\alpha}^{+}$). Then $A_{\alpha}=%
%TCIMACRO{\tbigcup \limits_{n\in\omega}}%
%BeginExpansion
{\textstyle\bigcup\limits_{n\in\omega}}
%EndExpansion
s_{n}$ is the set we were looking for. It is easy to see that $\mathcal{A\cup
BR}$ is a Laflamme \textsf{MAD} family that is not $+$-Ramsey.
\end{proof}

\subsubsection{$+$-Ramsey does not imply $\mathbb{S}$-indestructibility,
Laflamme or weakly tight}

We start with the following lemma:

\begin{lemma}
Let $\mathcal{A}$ be a countable \textsf{AD} family, $\mathcal{I}$ a tall
ideal such that $\mathcal{A}\subseteq\mathcal{I}$ and $T$ an $\mathcal{I}%
\left(  \mathcal{A}\right)  ^{+}$-tree. Then there is a countable AD family
$\mathcal{B}$ and $R\in\left[  T\right]  $ such that:

\begin{enumerate}
\item $\mathcal{A\subseteq B}.$

\item $\mathcal{B}\subseteq\mathcal{I}.$

\item $R\in\mathcal{I}\left(  \mathcal{B}\right)  ^{++}.$
\end{enumerate}
\end{lemma}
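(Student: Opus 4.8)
The plan is to reduce the lemma to two essentially independent tasks: first to construct a single branch $R=f\in\left[T\right]$ whose range $im(f)$ is almost disjoint from $\mathcal{A}$, and then, \emph{afterwards}, to carve $im(f)$ into infinitely many infinite $\mathcal{I}$-pieces that will serve as the new members of $\mathcal{B}$. It is worth isolating at the outset why the obvious strategy fails, since this explains the order of the construction. One cannot take the new members of $\mathcal{B}$ to be infinite subsets of single successor sets $suc_T(s)$, because a branch meets each $suc_T(s)$ in exactly one point and so could never intersect such a set infinitely. Nor can one ``anchor'' a prospective new set inside a fixed $E\in\mathcal{I}$ and then descend $T$ while staying inside $E$: each $suc_T(s)$ is $\mathcal{I}\left(\mathcal{A}\right)^{+}$ whereas $E\in\mathcal{I}$, and there is no reason a positive successor set should meet a given small set, so the descent cannot be guaranteed to return to $E$. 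Decoupling the construction of the branch from the construction of the new AD sets is exactly what sidesteps this circularity, and this is the one point I regard as the real content of the argument.

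For the branch I would enumerate $\mathcal{A}=\left\{A_i\mid i\in\omega\right\}$ and build $f$ by recursion on its length. Given $f\upharpoonright n\in T$, the set $suc_T(f\upharpoonright n)$ is $\mathcal{I}\left(\mathcal{A}\right)^{+}$ (this positivity of successor sets is the only feature of $T$ I use). Since $A_0\cup\dots\cup A_n\in\mathcal{I}\left(\mathcal{A}\right)$ and the ideal is closed under $\subseteq^{\ast}$, positivity forces $suc_T(f\upharpoonright n)\setminus\left(A_0\cup\dots\cup A_n\right)$ to be infinite, so I may choose $f(n)$ in this set. This keeps us inside $T$ at every step, and since every successor set is infinite no node is ever a dead end, so the recursion produces a genuine branch $f\in\left[T\right]$. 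Moreover the choice guarantees $im(f)\cap A_i\subseteq\left\{f(0),\dots,f(i-1)\right\}$ for every $i$, hence $im(f)\cap A_i$ is finite for each $i$; thus $R=f$ is a branch whose range is almost disjoint from $\mathcal{A}$.

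Finally I would exploit the tallness of $\mathcal{I}$. Fix any partition $im(f)=\bigsqcup_{k\in\omega}Z_k$ into infinitely many infinite sets, and for each $k$ use tallness to choose an infinite $C_k\in\mathcal{I}$ with $C_k\subseteq Z_k$. Then $\mathcal{B}=\mathcal{A}\cup\left\{C_k\mid k\in\omega\right\}$ is the family we want: the $C_k$ lie in pairwise disjoint sets $Z_k$, so they are pairwise disjoint and in particular pairwise almost disjoint, and each $C_k\subseteq im(f)$ inherits almost disjointness from $\mathcal{A}$; hence $\mathcal{B}$ is a countable AD family extending $\mathcal{A}$. Also $\mathcal{B}\subseteq\mathcal{I}$, since $\mathcal{A}\subseteq\mathcal{I}$ by hypothesis and each $C_k\in\mathcal{I}$ by construction. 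Because $im(R)\cap C_k=C_k$ is infinite for every $k$, the range of $R$ meets infinitely many elements of $\mathcal{B}$ infinitely, which is precisely $R\in\mathcal{I}\left(\mathcal{B}\right)^{++}$. Once the decoupled order of the two constructions is granted, every individual step is routine and I expect no further obstacle.
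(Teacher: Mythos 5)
Your argument is correct in structure and is essentially the paper's proof. The paper obtains the branch by invoking the fact that the ideal generated by a countable AD family is $+$-Ramsey, and your recursion (avoiding $A_{0}\cup\dots\cup A_{n}$ at stage $n$, which is legitimate because $suc_{T}\left(  f\upharpoonright n\right)  $ is $\mathcal{I}\left(  \mathcal{A}\right)  $-positive) is precisely the proof of that fact; it even yields the slightly stronger conclusion $im\left(  f\right)  \in\mathcal{A}^{\perp}$, which lets you skip the paper's intermediate diagonalization. After that, both proofs do the same thing: use tallness of $\mathcal{I}$ to extract infinite $\mathcal{I}$-sets inside the range of the branch and add countably many pairwise disjoint such sets to $\mathcal{A}$. (The paper applies tallness once, getting a single $A\in\left[  im\left(  R\right)  \right]  ^{\omega}\cap\mathcal{I}\cap\mathcal{A}^{\perp}$, and partitions $A$; you partition $im\left(  f\right)  $ first and apply tallness to each piece. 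This difference is immaterial.)

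One small but real omission: nothing in your recursion guarantees that $im\left(  f\right)  $ is infinite, and your final step (``partition $im\left(  f\right)  $ into infinitely many infinite sets $Z_{k}$'') requires this. Branches of a tree $T\subseteq\omega^{<\omega}$ may repeat values: if, say, $T=\omega^{<\omega}$ and $c\notin\bigcup\mathcal{A}$, your construction permits the constant branch $f\left(  n\right)  =c$, whose image is a singleton and is almost disjoint from every $A_{i}$ for trivial reasons. The fix costs nothing: since $suc_{T}\left(  f\upharpoonright n\right)  \setminus\left(  A_{0}\cup\dots\cup A_{n}\right)  $ is infinite, additionally require $f\left(  n\right)  >\max\left\{  f\left(  0\right)  ,\dots,f\left(  n-1\right)  \right\}  $, so that $f$ is injective and $im\left(  f\right)  $ is infinite. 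With that clause added, the proof is complete.
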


\begin{proof}
Since $\mathcal{A}$ is $+$-Ramsey we can find $R\in\left[  T\right]  $ such
$im\left(  R\right)  \in\mathcal{I}\left(  \mathcal{A}\right)  ^{+}.$ Since
$\mathcal{I}$ is a tall ideal and $\mathcal{A}$ is countable, we can find
$A\in\left[  im\left(  R\right)  \right]  ^{\omega}\cap\mathcal{I\cap
A}^{\perp}.$ We partition $A$ into countably many disjoint pieces and add them
to $\mathcal{A}.$
\end{proof}

\qquad\ \ \ 

With a simple bookkeeping argument we can then prove the following:

\begin{proposition}
[$\mathsf{CH}$]Every tall ideal contains a $+$-Ramsey \textsf{MAD} family.
\end{proposition}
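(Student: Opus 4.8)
The plan is to carry out a transfinite recursion of length $\omega_{1}$ (using $\mathsf{CH}$, so that $\mathfrak{c}=\omega_{1}$) building an increasing chain $\left\langle \mathcal{A}_{\alpha}\mid\alpha<\omega_{1}\right\rangle $ of countable \textsf{AD} families with $\mathcal{A}_{\alpha}\subseteq\mathcal{I}$, whose union $\mathcal{A}=\bigcup_{\alpha<\omega_{1}}\mathcal{A}_{\alpha}$ will be the desired $+$-Ramsey \textsf{MAD} family contained in $\mathcal{I}$. Since $\mathsf{CH}$ gives $\left\vert \left[ \omega\right] ^{\omega}\right\vert =\left\vert \{T\mid T\subseteq\omega^{<\omega}\text{ a tree}\}\right\vert =\omega_{1}$, I would first fix enumerations $\left[ \omega\right] ^{\omega}=\left\{ X_{\alpha}\mid\alpha<\omega_{1}\right\} $ and $\{T\subseteq\omega^{<\omega}\}=\left\{ T_{\alpha}\mid\alpha<\omega_{1}\right\} $, and interleave two kinds of requirements: maximality requirements (driven by $X_{\alpha}$) and $+$-Ramsey requirements (driven by $T_{\alpha}$). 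At limit stages I set $\mathcal{A}_{\lambda}=\bigcup_{\alpha<\lambda}\mathcal{A}_{\alpha}$, which remains a countable \textsf{AD} family inside $\mathcal{I}$ because $\lambda<\omega_{1}$.

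For the maximality requirement at stage $\alpha$: if some $A\in\mathcal{A}_{\alpha}$ already meets $X_{\alpha}$ infinitely I do nothing; otherwise $X_{\alpha}\in\mathcal{A}_{\alpha}^{\perp}$, and tallness of $\mathcal{I}$ provides $Y\in\mathcal{I}$ with $Y\cap X_{\alpha}$ infinite, so that $B=Y\cap X_{\alpha}\subseteq X_{\alpha}$ lies in $\mathcal{I}\cap\mathcal{A}_{\alpha}^{\perp}$ and is infinite; I then put $\mathcal{A}_{\alpha+1}=\mathcal{A}_{\alpha}\cup\left\{ B\right\} $. This keeps the family \textsf{AD}, countable, and contained in $\mathcal{I}$, and guarantees that $X_{\alpha}$ has infinite intersection with a member of $\mathcal{A}$, so $\mathcal{A}$ will be \textsf{MAD}.

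For the $+$-Ramsey requirement at stage $\alpha$: if $T_{\alpha}$ is an $\mathcal{I}\left(  \mathcal{A}_{\alpha}\right)  ^{+}$-branching tree I apply the preceding lemma to $\mathcal{A}_{\alpha}$, $\mathcal{I}$ and $T_{\alpha}$, obtaining a countable \textsf{AD} family $\mathcal{A}_{\alpha+1}\supseteq\mathcal{A}_{\alpha}$ with $\mathcal{A}_{\alpha+1}\subseteq\mathcal{I}$ and a branch $R_{\alpha}\in\left[  T_{\alpha}\right]  $ such that $im\left(  R_{\alpha}\right)  \in\mathcal{I}\left(  \mathcal{A}_{\alpha+1}\right)  ^{++}$ (if $T_{\alpha}$ is not $\mathcal{I}\left(  \mathcal{A}_{\alpha}\right)  ^{+}$-branching I do nothing). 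The crucial observation — and the conceptual heart of the argument — is that membership in $\mathcal{I}\left(  \mathcal{A}_{\alpha+1}\right)  ^{++}$ is robust under later additions: it is witnessed by infinitely many $A\in\mathcal{A}_{\alpha+1}$ with $\left\vert im\left(  R_{\alpha}\right)  \cap A\right\vert =\omega$, and since sets are only ever added, all of these remain in the final $\mathcal{A}$. Hence $im\left(  R_{\alpha}\right)  \in\mathcal{I}\left(  \mathcal{A}\right)  ^{++}\subseteq\mathcal{I}\left(  \mathcal{A}\right)  ^{+}$ by the earlier lemma on ideals generated by \textsf{AD} families.

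Finally I would verify $+$-Ramseyness directly. Let $T$ be any $\mathcal{I}\left(  \mathcal{A}\right)  ^{+}$-branching tree; since enlarging an \textsf{AD} family shrinks its collection of positive sets, $\mathcal{I}\left(  \mathcal{A}\right)  ^{+}\subseteq\mathcal{I}\left(  \mathcal{A}_{\alpha}\right)  ^{+}$ for every $\alpha$, so $T$ is a fortiori $\mathcal{I}\left(  \mathcal{A}_{\alpha}\right)  ^{+}$-branching at every stage. Writing $T=T_{\alpha}$ in the enumeration, the $+$-Ramsey case necessarily triggers at stage $\alpha$ and produces $R_{\alpha}\in\left[  T\right]  $ with $im\left(  R_{\alpha}\right)  \in\mathcal{I}\left(  \mathcal{A}\right)  ^{+}$ by the robustness argument, which is exactly what $+$-Ramseyness demands. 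The main obstacle is precisely this preservation point: a branch captured at stage $\alpha$ whose image is merely $\mathcal{I}\left(  \mathcal{A}_{\alpha+1}\right)  ^{+}$-positive could later be driven into the ideal by sets added at subsequent stages, so a naive construction with $\mathcal{I}\left(  \cdot\right)  ^{+}$ would fail. The use of the stronger $\mathcal{I}\left(  \cdot\right)  ^{++}$ in the lemma, together with the inclusion $\mathcal{I}\left(  \mathcal{A}\right)  ^{++}\subseteq\mathcal{I}\left(  \mathcal{A}\right)  ^{+}$, is what neutralizes this difficulty; coordinating the two requirement lists so that every relevant $X$ and every relevant $T$ is eventually addressed is then a routine bookkeeping matter.
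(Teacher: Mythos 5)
Your proposal is correct and is essentially the paper's own argument: the paper proves exactly the lemma you invoke (extending a countable \textsf{AD} family $\mathcal{A}\subseteq\mathcal{I}$ so that some branch of a given $\mathcal{I}\left(\mathcal{A}\right)^{+}$-branching tree has image in $\mathcal{I}\left(\mathcal{B}\right)^{++}$) and then states that the proposition follows by ``a simple bookkeeping argument,'' which is precisely the $\omega_{1}$-recursion you spell out. Your identification of the key point --- that $\mathcal{I}\left(\cdot\right)^{++}$ membership is preserved under later enlargements while mere $\mathcal{I}\left(\cdot\right)^{+}$ membership is not, together with $\mathcal{I}\left(\mathcal{A}\right)^{++}\subseteq\mathcal{I}\left(\mathcal{A}\right)^{+}$ --- is exactly why the lemma is formulated with $^{++}$.
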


\subsubsection{Weakly tight does not imply $+$-Ramsey}

Recall that if $f:\omega\longrightarrow\omega$ we define $\widehat{f}=\left\{
f\upharpoonright n\mid n\in\omega\right\}  .$ We now have the following:

\begin{lemma}
If $A\subseteq\omega^{<\omega}$ does not have infinite antichains then $A$ can
be covered with finitely many chains.
\end{lemma}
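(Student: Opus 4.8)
The plan is to pass from $A$ to the downward closure tree $S=\{t\in\omega^{<\omega}\mid \exists s\in A\,(t\sqsubseteq s)\}$ and to analyze $S$ using a single recurring trick: whenever many nodes of $S$ pairwise diverge, choosing witnesses in $A$ above them produces an infinite antichain of $A$, which is forbidden. First I would record that $S$ is a subtree (it is $\sqsubseteq$-downward closed) and that $A\subseteq S$, so it suffices to cover $S$ by finitely many chains. The first application of the trick gives that $S$ is \emph{finitely branching}: if some $t\in S$ had infinitely many immediate successors $t^{\frown}i\in S$, choosing $s_i\in A$ with $t^{\frown}i\sqsubseteq s_i$ would yield infinitely many pairwise incomparable elements of $A$. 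The same trick shows $S$ itself has no infinite antichain, since incomparable nodes of $S$ lift, via any $A$-witnesses above them, to incomparable elements of $A$.

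Next I would isolate the nodes with infinite cone, $U=\{t\in S\mid S_t\text{ is infinite}\}$, where $S_t=\{u\in S\mid t\sqsubseteq u\}$. Since $S$ is finitely branching, König's lemma gives that $t\in U$ iff some branch of $S$ passes through $t$; hence $U$ is exactly the union of the chains of initial segments of the branches in $[S]$. The crux is to prove that $[S]$ is finite. Here I would use that $S$ finitely branching makes $[S]\subseteq\omega^{\omega}$ compact; were $[S]$ infinite it would have a limit point $x\in[S]$, that is, distinct branches $x_n\to x$ with strictly increasing divergence levels $d_n$ (the least $d$ with $x_n(d)\neq x(d)$). Then the nodes $t_n=x_n\upharpoonright(d_n+1)\in S$ are pairwise incomparable: for $n<m$ the node $t_m$ agrees with $x$ at level $d_n$ while $t_n$ differs from $x$ there, so $\{t_n\mid n\in\omega\}$ is an infinite antichain, a contradiction. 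Thus $[S]=\{b_1,\dots,b_k\}$ is finite and $U=\bigcup_{i\le k}\{b_i\upharpoonright n\mid n\in\omega\}$ is a union of $k$ chains.

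It remains to see that $S\setminus U$ is finite, after which $S$, and a fortiori $A$, is covered by the $k$ branch-chains together with finitely many singletons. For this I would apply the trick a third time to the $\sqsubseteq$-minimal elements of $S\setminus U$: these are pairwise incomparable, so there are only finitely many of them; and each such minimal $c$ has finite cone $S_c$ (as $c\notin U$), with $S_c\subseteq S\setminus U$ (any node of $U$ above $c$ would force $S_c$ infinite) and $S\setminus U=\bigcup\{S_c\mid c\text{ minimal in }S\setminus U\}$, a finite union of finite sets. The only genuinely delicate point is the compactness and limit-point step establishing $|[S]|<\omega$; everything else is the uniform ``divergent nodes form an antichain'' observation combined with König's lemma, so I expect no further obstacle.
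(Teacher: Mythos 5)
Your proof is correct, but it follows a genuinely different route from the paper's. The paper works inside $A$ itself: it considers the \emph{unsplitting} nodes of $A$ (those $s\in A$ all of whose pairs of extensions in $A$ are compatible), notes that every element of $A$ extends to an unsplitting node (otherwise $A$ contains a perfect subtree and hence an infinite antichain), takes a maximal --- hence finite --- antichain $B$ of unsplitting nodes, and observes that above each $s\in B$ the elements of $A$ lie along a single branch $b_{s}$; maximality of $B$ then yields $A\subseteq\bigcup_{s\in B}\widehat{b}_{s}$ in one stroke, with no leftover pieces. You instead pass to the downward closure $S$ of $A$, prove $S$ is a finitely branching tree with no infinite antichain, show $\left[ S\right]  $ is finite by a compactness/limit-point argument (the antichain $t_{n}=x_{n}\upharpoonright\left(  d_{n}+1\right)  $ extracted from a convergent sequence of distinct branches), and then handle the off-branch remainder $S\setminus U$ by a separate finiteness argument (finitely many minimal nodes, each with a finite cone disjoint from $U$). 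Both proofs turn on the same lifting trick --- pairwise divergent nodes produce an antichain of $A$ --- but the paper's dichotomy at unsplitting nodes makes the cover appear immediately and consists of branch-chains only, whereas your argument is longer yet more structural: it makes the compactness phenomenon explicit, gives finer information about the closure tree (finite branching, finitely many branches, finite remainder), and covers $A$ by branch-chains together with finitely many singletons. All the steps you outline check out, including the ones you flag as delicate, so there is no gap.
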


\begin{proof}
Define $S$ as the set of all unsplitting nodes of $A$ i.e. $s\in A$ if and
only if every two extensions of $s$ in $A$ are compatible. Note that
$S\subseteq A$ and every element of $A$ can be extended to an element of $S$
(otherwise $A$ would contain a Sacks tree and hence an infinite antichain).
Let $B\subseteq S$ be a maximal (finite) antichain. For every $s\in B$ let
$b_{s}\in\omega^{\omega}$ the unique branch such that $A\cap\left[  s\right]
\subseteq\widehat{b}_{s}.$ Then (by the maximality of $B$) we conclude
$A\subseteq%
%TCIMACRO{\tbigcup \limits_{s\in B}}%
%BeginExpansion
{\textstyle\bigcup\limits_{s\in B}}
%EndExpansion
\widehat{b}_{s}.$
\end{proof}

\qquad\ \ 

We need the following lemma:\qquad\ \ \ \ \qquad\ \ \ 

\begin{lemma}
If $A=\left\{  A_{n}\mid n\in\omega\right\}  \subseteq\wp\left(
\omega^{<\omega}\right)  $ is a collection of infinite antichains, then there
is an antichain $B$ such that $B\cap A_{n}$ is infinite for infinitely many
$n\in\omega.$
\end{lemma}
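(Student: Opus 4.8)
The plan is to reduce the statement to placing infinite pieces of the $A_n$ into pairwise ``separated'' regions. For $s\in\omega^{<\omega}$ write $\langle s\rangle_{<\omega}=\{t\in\omega^{<\omega}\mid s\sqsubseteq t\}$, and for $i\in\omega$ let $[i]=\{t\in\omega^{<\omega}\mid t\neq\emptyset\wedge t(0)=i\}$. The two elementary facts I would isolate first are: \textbf{(i)} if $s_0,s_1,\dots$ are pairwise incomparable nodes and $B_k\subseteq A_{n_k}\cap\langle s_k\rangle_{<\omega}$, then $\bigcup_k B_k$ is an antichain, since a comparable pair drawn from $B_k$ and $B_j$ would force $s_k,s_j$ to be comparable; and \textbf{(ii)} fixing a reference $f\in\omega^{\omega}$, if $a,a'$ are not initial segments of $f$ and ``leave $f$'' at distinct levels (letting $\ell_f(a)$ be the least $\ell$ with $a(\ell)\neq f(\ell)$, assume $\ell_f(a)\neq\ell_f(a')$), then $a$ and $a'$ are incomparable, because they already disagree at the smaller of the two levels. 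The special case $\ell=0$ says that nodes with distinct first coordinates are incomparable. Because each $A_n$ is itself an antichain, any $B_n\subseteq A_n$ is internally an antichain; so it suffices to select, for infinitely many $n_k$, an infinite $B_{n_k}\subseteq A_{n_k}$ whose chosen elements carry a parameter (a cone--stem, a first coordinate, or a leaving level) guaranteeing incomparability across different $k$.

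I would first split on how each $A_n$ behaves at the root. Call $n$ \emph{spread} if $\{i\mid A_n\cap[i]\neq\emptyset\}$ is infinite, and \emph{concentrated} otherwise; a concentrated $A_n$, being infinite yet covered by finitely many first--coordinate cones, must have some $i$ with $A_n\cap[i]$ infinite. If infinitely many $n$ are spread, say $n_0<n_1<\cdots$, I would run a single injective diagonalization: enumerate pairs $(k,j)$ and choose $i^{k}_{j}\in\{i\mid A_{n_k}\cap[i]\neq\emptyset\}$ distinct from all previously used first coordinates (possible since each such set is infinite), then pick $a^{k}_{j}\in A_{n_k}\cap[i^{k}_{j}]$. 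By fact (ii) at level $0$, all the $a^{k}_{j}$ are pairwise incomparable, and $\{a^{k}_{j}\mid j\in\omega\}\subseteq A_{n_k}$ is infinite, finishing this case.

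The remaining case is that cofinitely many $n$ are concentrated. Here I would consider the tree $U=\{s\in\omega^{<\omega}\mid A_n\cap\langle s\rangle_{<\omega}\text{ is infinite for infinitely many }n\}$, which is downward closed and contains $\emptyset$. If some $s\in U$ has infinitely many immediate successors in $U$, those successors are pairwise incomparable stems, each carrying infinitely many ``good'' indices $n$; choosing distinct indices for distinct successors and putting $B_{n_k}=A_{n_k}\cap\langle s^{\frown}i_k\rangle_{<\omega}$, fact (i) finishes via disjoint cones. If instead the chosen first coordinates of the concentrated $A_n$ take infinitely many distinct values, the corresponding cones $[i]$ have pairwise incomparable length--one stems and fact (i) again finishes. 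So the only surviving possibility is that $U$ is infinite and finitely branching, whence by K\"onig's lemma it has an infinite branch $f$, and the concentration happens along this single branch $f$.

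The branch case is where I expect the real work. Here $N_k=\{n\mid A_n\cap\langle f\restriction k\rangle_{<\omega}\text{ infinite}\}$ is infinite for every $k$, and every $A_n$ with $n\in N_k$ has all but at most one of its elements in $\langle f\restriction k\rangle_{<\omega}$ leaving $f$ at some level $\ge k$. The idea is to select elements that leave $f$ at \emph{globally distinct} levels and invoke fact (ii): if at infinitely many levels $k$ the set $N_k\setminus N_{k+1}$ is nonempty, I would take one such index $n_k$ (its $A_{n_k}$ branches off $f$ massively at level $k$) and let $B_{n_k}$ consist of infinitely many elements leaving $f$ at level $k$, so distinct $k$ give cross--incomparable pieces; otherwise the $N_k$ stabilize to an infinite $N_\infty$, each $n\in N_\infty$ then has elements leaving $f$ at arbitrarily high levels, and a double diagonalization assigns pairwise disjoint infinite level--sets, again producing elements with pairwise distinct leaving levels. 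The delicate point, and the main obstacle, is the bookkeeping that keeps the leaving levels (or branch coordinates at a shared level) genuinely disjoint across indices while still extracting \emph{infinitely many} elements from each chosen $A_{n_k}$; this is exactly the regime governed by the preceding ``finite union of chains'' lemma, which I expect to use to rule out the pathological configurations in which every selection is forced to realize overlapping levels.
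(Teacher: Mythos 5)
Your overall plan is the paper's plan: your tree $U$ is exactly the paper's tree $T$ of nodes that ``watch'' infinitely many $A_n$, and your cone/leaving-level incomparability facts are the same tools the paper uses. Most of what you wrote is correct: Case A, case B1, case B2, and in fact both halves of your branch case go through. The branch case is not the delicate point you fear: when $N_k\setminus N_{k+1}\neq\emptyset$ for infinitely many $k$, the leaving levels are distinct automatically and the indices $n_k$ are distinct because the sets $N_k$ decrease; when the $N_k$ stabilize to $N_\infty$, each $n\in N_\infty$ has elements leaving $f$ at arbitrarily high levels, so a greedy enumeration of pairs $(k,j)$ that always picks a leaving level above all levels used so far succeeds outright. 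No appeal to the preceding finite-union-of-chains lemma is needed there.

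The genuine gap is the sentence ``So the only surviving possibility is that $U$ is infinite and finitely branching.'' That trichotomy is not exhaustive, because $U$ can be finite. Take $A_n=\left\{ \left\langle 0,n,k\right\rangle \mid k\in\omega\right\}$. Every $A_n$ is concentrated with $i_n=0$, so Case A and B2 fail; $U=\left\{ \emptyset,\left\langle 0\right\rangle \right\}$ is finite and finitely branching, so B1 fails and there is no infinite branch, so B3 never starts: your argument produces nothing on this input, although the conclusion is trivially true here ($\bigcup_n A_n$ works). The missing case is the one the paper treats first: $U$ has a \emph{maximal node} $s$ (equivalently, since $U$ is nonempty and downward closed, $U$ has no infinite branch). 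At such an $s$ you must rerun your spread/concentrated dichotomy locally: either infinitely many of the relevant $A_n$ meet infinitely many cones $\left\langle s^{\frown}m\right\rangle _{<\omega}$, and you diagonalize as in Case A, or cofinitely many concentrate on a single cone $\left\langle s^{\frown}m_n\right\rangle _{<\omega}$; in the latter sub-case the maximality of $s$ is the key point your proposal lacks: since $s^{\frown}m\notin U$, for each fixed $m$ only finitely many $n$ satisfy that $A_n\cap\left\langle s^{\frown}m\right\rangle _{<\omega}$ is infinite, so $n\mapsto m_n$ is finite-to-one, and you can select infinitely many $n$ with pairwise distinct $m_n$ and finish by your fact (i). Your root analysis cannot substitute for this, because the root need not be maximal, and your B2 demands infinitely many distinct values outright instead of deriving them from maximality. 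Once this case is added, the clean structure is: either $U$ has an infinite branch (your branch argument) or $U$ has a maximal node (the argument above), and the proof is complete and essentially identical to the paper's.
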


\begin{proof}
We say $s\in\omega^{<\omega}$ \emph{watches }$A_{n}$ if $s$ has infinitely
many extensions in $A_{n}.$ Define $T\subseteq\omega^{<\omega}$ such that
$s\in T$ if and only if there are infinitely many $n\in\omega$ such that $s$
watches $A_{n}.$ Note that $T$ is a tree. First assume there is $s\in T$ that
is a maximal node. By shrinking $A$ if needed, we may assume $s$ watches every
element of $A.$ We now define the set $C=\left\{  A_{n}\mid\exists^{\infty
}m\left(  A_{n}\cap\left[  s^{\frown}m\right]  \neq\emptyset\right)  \right\}
.$ In case $C$ is infinite, we can find an antichain $B$ that has infinite
intersection with every element of $C.$ Now assume that $C$ is finite, by
shrinking $A$ we may assume $C$ is the empty set. In this way, for every
$A_{n}$ there is $m_{n}$ such that $s^{\frown}m_{n}$ watches $A_{n}.$ We can
then find an infinite set $X\in\left[  \omega\right]  ^{\omega}$ such that
$m_{n}\neq m_{r}$ whenever $n\neq r$ and $n,r\in X$ (recall that $s$ is
maximal). Then $B=%
%TCIMACRO{\tbigcup \limits_{n\in X}}%
%BeginExpansion
{\textstyle\bigcup\limits_{n\in X}}
%EndExpansion
\left[  s^{\frown}m_{n}\right]  \cap A_{n}$ is the set we were looking for.

\qquad\ \ \ \qquad\ \ 

Now we may assume $T$ does not have maximal nodes. If $T$ contains a Sacks
tree then we can find an infinite antichain $Y\subseteq T$. For every $s\in Y$
we choose $n_{s}$ such that $s$ watches $A_{n_{s}}$ and if $s\neq t$ then
$A_{n_{s}}\neq A_{n_{t}}.$ Then $B=%
%TCIMACRO{\tbigcup \limits_{s\in Y}}%
%BeginExpansion
{\textstyle\bigcup\limits_{s\in Y}}
%EndExpansion
\left[  s\right]  \cap A_{n_{s}}$ is the set we were looking for.

\qquad\ \ 

The only case left is that there is $s\in T$ that does not split in $T$ nor is
maximal. Let $f\in\left[  T\right]  $ the only branch that extends $s.$ We may
assume $s$ watches every element of $A$ and every $A_{n}$ is disjoint from
$\widehat{f}$ (this is because $A_{n}$ is an antichain and $f$ is a branch).
We say $A_{n}$ \emph{is a comb with }$f$ if $\Delta(A_{n}\cap\left[  s\right]
,\widehat{f})$ is infinite. We may assume that either every element of $A$ is
a comb with $f$ or none is. In case all of them are combs we can easily find
the desired antichain. So assume none of them are combs. In this way, for
every $n\in\omega$ we can find $t_{n}$ extending $s$ but incompatible with $f$
of minimal length such that $t_{n}$ watches $A_{n}.$ Since $t_{n}\notin T$ we
can find $W\in\left[  \omega\right]  ^{\omega}$ such that $t_{n}\neq t_{m}$
for all $n,m\in W$ where $n\neq m.$ Then we recursively construct the desired antichain.
\end{proof}

\qquad\ \ \ \ \ \ \ \ \ \ \ \ \ \ \qquad\ \ \ \ \ \ \ \ \ \ \ \ \ \ \ \ \ \ \ \ \ 

We can then conclude the following:\ \ \ \ \ \ \ \ \ \ \ \ \ \ \ \qquad\ \ \ \ \ \ \ \ \ \ 

\begin{proposition}
[$\mathsf{CH}$]There is a weakly tight \textsf{MAD} family that is not $+$-Ramsey.
\end{proposition}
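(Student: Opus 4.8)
The plan is to realize everything on the set $\omega^{<\omega}$, taking the final \textsf{MAD} family to have the form $\mathcal{A}\cup\mathcal{BR}$, where $\mathcal{BR}=\{\widehat{f}\mid f\in\omega^{\omega}\}$ (under $\mathsf{CH}$ this has size $\omega_{1}$). Before the recursion I would fix once and for all a partition $\mathcal{Q}$ of $\omega^{<\omega}\setminus\{\emptyset\}$ that refines $\{suc\left(s\right)\mid s\in\omega^{<\omega}\}$ and splits each $suc\left(s\right)=\{s^{\frown}n\mid n\in\omega\}$ into infinitely many infinite pieces, and I would insist $\mathcal{Q}\subseteq\mathcal{A}$ throughout. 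Since every $Q\in\mathcal{Q}$ is an antichain, $\mathcal{Q}\cup\mathcal{BR}$ is automatically an \textsf{AD} family, and every element we ever add will be an off-branch antichain, hence automatically almost disjoint from all of $\mathcal{BR}$. With this committed, non-$+$-Ramseyness is already essentially built in: the full tree $T=\omega^{<\omega}$ will be $\mathcal{I}\left(\mathcal{A}\cup\mathcal{BR}\right)^{+}$-branching, while every branch $f$ of $T$ has $im\left(f\right)=\widehat{f}\in\mathcal{BR}\subseteq\mathcal{I}\left(\mathcal{A}\cup\mathcal{BR}\right)$, so no branch is positive.

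\textbf{Two robustness facts.} First I would check that $suc\left(s\right)\in\mathcal{I}\left(\mathcal{A}\cup\mathcal{BR}\right)^{+}$ for every $s$, no matter what later gets added. Indeed, if $E_{1},\dots,E_{m}\in\mathcal{A}\cup\mathcal{BR}$ almost covered $suc\left(s\right)$, then each of the infinitely many $\mathcal{Q}$-pieces $Q_{k}\subseteq suc\left(s\right)$ would satisfy $Q_{k}\subseteq^{\ast}E_{1}\cup\dots\cup E_{m}$; but $Q_{k}$ is almost disjoint from every member of $\mathcal{A}\cup\mathcal{BR}$ other than $Q_{k}$ itself, forcing each $Q_{k}$ to equal some $E_{i}$, which is impossible with only $m$ sets. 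Second, by the lemma that an antichain-free $A\subseteq\omega^{<\omega}$ is covered by finitely many chains, any set with no infinite antichain lies in $\mathcal{I}\left(\mathcal{BR}\right)$; hence every set that is positive over $\mathcal{A}\cup\mathcal{BR}$ (indeed over $\mathcal{Q}\cup\mathcal{BR}$) must contain an infinite antichain.

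\textbf{The recursion.} Using $\mathsf{CH}$ I would enumerate in order type $\omega_{1}$ all pairwise disjoint $\omega$-families $\langle X_{n}\mid n\in\omega\rangle$ (for weak tightness, invoking the equivalence lemma that reduces weak tightness to pairwise disjoint families) together with all infinite $X\subseteq\omega^{<\omega}$ (for maximality), and build $\mathcal{A}=\mathcal{Q}\cup\{A_{\alpha}\mid\alpha<\omega_{1}\}$ so that each $\mathcal{A}_{\alpha}=\mathcal{Q}\cup\{A_{\beta}\mid\beta<\alpha\}$ is countable. At a weak-tightness stage, if the enumerated family $\langle X_{n}\rangle$ is positive over the current family $\mathcal{A}_{\alpha}\cup\mathcal{BR}$, I would first extract, for each $n$, an infinite antichain $A'_{n}\subseteq X_{n}$ almost disjoint from the countable family $\mathcal{A}_{\alpha}=\{C_{k}\mid k\in\omega\}$ (this uses the second fact above plus the standard diagonalization showing a countable \textsf{AD} family is non-maximal below a positive set), and then trim $A'_{n}$ so that $A'_{n}\cap C_{k}=\emptyset$ for all $k\leq n$. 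Applying Lemma B to $\{A'_{n}\mid n\in\omega\}$ yields an antichain $B\subseteq\bigcup_{n}A'_{n}$ with $B\cap A'_{n}$ infinite for infinitely many $n$. Then $B$ is almost disjoint from the whole current family: it is an antichain, so almost disjoint from $\mathcal{BR}$, and for each $k$ one has $C_{k}\cap B\subseteq\bigcup_{n<k}\left(C_{k}\cap A'_{n}\right)$ by the trimming, a finite set. I set $A_{\alpha}=B$. At a maximality stage I add (similarly) an antichain subset of a currently positive enumerated $X$; infinite chains need no action since they meet some $\widehat{f}\in\mathcal{BR}$ infinitely.

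\textbf{Verification and the main obstacle.} Maximality and the first robustness fact give that $\mathcal{A}\cup\mathcal{BR}$ is \textsf{MAD} and that $T=\omega^{<\omega}$ is positive-branching with no positive branch, so it is not $+$-Ramsey. For weak tightness, the crucial observation is that the current family at every stage is contained in the final family, so $\mathcal{I}\left(\text{final}\right)^{+}\subseteq\mathcal{I}\left(\text{current}\right)^{+}$; hence any finally positive $\langle X_{n}\rangle$ was positive at its processing stage and was therefore handled by some $A_{\alpha}\in\mathcal{A}$ with $A_{\alpha}\cap X_{n}$ infinite for infinitely many $n$. I expect the genuinely delicate point to be exactly the tension in the recursive step: the witness $A_{\alpha}$ is forced to meet infinitely many of the $X_{n}$ infinitely (so it is assembled from fragments spread across the $X_{n}$), yet it must stay almost disjoint from every one of the countably many sets already in the family; the disjointness-trimming ``$A'_{n}\cap C_{k}=\emptyset$ for $k\leq n$'' is the device that makes these two demands compatible, and pre-committing the partition $\mathcal{Q}$ is what keeps the positivity of every $suc\left(s\right)$ immune to all later additions.
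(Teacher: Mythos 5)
Your proposal is correct and follows essentially the same route as the paper's own proof: the same ambient set $\omega^{<\omega}$ with the branch family $\mathcal{BR}$, the same seeding of $\mathcal{A}$ with infinite partitions of each $suc\left(  s\right)  $ so that $\omega^{<\omega}$ stays an $\mathcal{I}^{+}$-branching tree whose branches land in $\mathcal{BR}$, the same reduction of weak tightness to pairwise disjoint families, and the same key antichain lemma producing the witness $A_{\alpha}$. The one point where you are more careful than the paper is in demanding that each extracted antichain $A_{n}^{\prime}\subseteq X_{n}$ be almost disjoint from the \emph{entire} countable family $\mathcal{A}_{\alpha}$ rather than merely disjoint from its first $n$ members; this is exactly what is needed for $A_{\alpha}=\bigcup_{n\in W}\left(  B\cap A_{n}^{\prime}\right)  $ to be almost disjoint from every $A_{\alpha_{k}}$ (the finitely many fragments with $n<k$ are otherwise uncontrolled), so you should spell out that extraction claim: diagonalize keeping the remainder positive, using that any positive set contains incomparable nodes, at least one of which has a non-positive cone that can be discarded.
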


\begin{proof}
Let $\left\{  \overline{X}_{\alpha}\mid\omega\leq\alpha<\omega_{1}\right\}  $
enumerate all countable sequences of infinite pairwise disjoint subsets of
$\omega^{<\omega}.$ Let $\mathcal{BR}=\{\widehat{f}\mid f\in\omega^{\omega
}\},$ we construct $\mathcal{A}=\left\{  A_{\alpha}\mid\alpha<\omega
_{1}\right\}  $ such that the following holds:

\begin{enumerate}
\item $\mathcal{A\cup BR}$ is an \textsf{AD} family.

\item If $s\in\omega^{<\omega}$ then $\mathcal{A}$ contains a partition of
$suc\left(  s\right)  =\left\{  s^{\frown}n\mid n\in\omega\right\}  .$

\item For every $\omega\leq\alpha<\omega_{1}$ if $\overline{X}_{\alpha
}=\left\{  X_{n}\mid n\in\omega\right\}  \subseteq\left(  \mathcal{A}_{\alpha
}\mathcal{\cup BR}\right)  ^{+}$\emph{ }then $A_{\alpha}\cap X_{n}$ is
infinite for infinitely many $n\in\omega$ (where $\mathcal{A}_{\alpha
}=\left\{  A_{\xi}\mid\xi<\alpha\right\}  $).
\end{enumerate}

\qquad\ \ \ \ \ \ \ \ 

At step $\alpha=\left\{  \alpha_{n}\mid n\in\omega\right\}  $ assume
$\overline{X}_{\alpha}=\left\{  X_{n}\mid n\in\omega\right\}  \subseteq\left(
\mathcal{A}_{\alpha}\mathcal{\cup BR}\right)  ^{+}.$ Since each $X_{n}$ can
not be covered with a finite number of branches, we may assume every $X_{n}$
is an infinite antichain. Let $Y_{n}=X_{n}\setminus\left(  A_{\alpha_{0}}%
\cup...A_{\alpha_{n}}\right)  $ which is an infinite antichain. By the lemma
we can find an antichain $B$ and $W\in\left[  \omega\right]  ^{\omega}$ such
that if $n\in W$ then$\ \ B\cap Y_{n}$ is infinite. Let $A_{\alpha}=%
%TCIMACRO{\tbigcup \limits_{n\in W}}%
%BeginExpansion
{\textstyle\bigcup\limits_{n\in W}}
%EndExpansion
\left(  B\cap Y_{n}\right)  $ then $A_{\alpha}$ is \textsf{AD} with
$\mathcal{A}_{\alpha}$ and since it is an antichain it is also disjoint from
$\mathcal{BR}.$ Clearly $\mathcal{A\cup BR}$ is not $+$-Ramsey (recall that
weakly tight families are maximal).
\end{proof}

\subsubsection{Weakly tight does not imply Sacks indestructibility}

First we need the following definition:

\begin{definition}
We say an ideal $\mathcal{I}$ is \emph{weakly }$\omega$\emph{-hitting }if for
every countable family $\left\{  X_{n}\mid n\in\omega\right\}  \subseteq
\left[  \omega\right]  ^{\omega}$ there is $A\in\mathcal{I}$ such that $A\cap
X_{n}$ is infinite for infinitely many $n\in\omega.$
\end{definition}

\qquad\ \ \ \qquad\ \ \ \qquad\ \ \qquad\ \ 

We have to prove the following result:

\begin{proposition}
[$\mathsf{CH}$]If $\mathcal{I}$ is weakly $\omega$-hitting then there is a
weakly tight \textsf{MAD} family contained in $\mathcal{A}.$
\end{proposition}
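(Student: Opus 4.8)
The plan is to build the family $\mathcal{A}=\left\{ A_{\alpha}\mid\alpha<\omega_{1}\right\} $ by a transfinite recursion of length $\omega_{1}=\mathfrak{c}$, arranging simultaneously three things: that $\mathcal{A}$ is a \textsf{MAD} family, that every $A_{\alpha}$ lies in $\mathcal{I}$ (so that $\mathcal{I}\left( \mathcal{A}\right) \subseteq\mathcal{I}$), and that $\mathcal{A}$ is weakly tight. The first observation I would record is that a weakly $\omega$-hitting ideal is in particular tall: applying the defining property to the constant sequence $X_{n}=X$ yields $A\in\mathcal{I}$ with $\left\vert A\cap X\right\vert =\omega$. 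I will verify weak tightness through clause 3 of the Lemma characterizing it, so it suffices to guarantee that for every pairwise disjoint sequence $\left\{ X_{n}\mid n\in\omega\right\} \subseteq\mathcal{I}\left( \mathcal{A}\right) ^{+}$ some single $A\in\mathcal{A}$ meets infinitely many $X_{n}$ infinitely.

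Fix, using $\mathsf{CH}$, enumerations $\left[ \omega\right] ^{\omega}=\left\{ X_{\alpha}\mid\alpha<\omega_{1}\right\} $ and a list $\left\{ \overline{Y}_{\alpha}\mid\alpha<\omega_{1}\right\} $ of all countable sequences of pairwise disjoint infinite subsets of $\omega$, together with a bookkeeping assigning each task to cofinally many stages. At stage $\alpha$ I will have a countable \textsf{AD} family $\mathcal{A}_{\alpha}=\left\{ A_{\xi}\mid\xi<\alpha\right\} \subseteq\mathcal{I}$, which is automatically nowhere \textsf{MAD}. A maximality step handles $X_{\alpha}$: if $X_{\alpha}\in\mathcal{I}\left( \mathcal{A}_{\alpha}\right) ^{+}$, I first thin it to an infinite $Z\in\mathcal{A}_{\alpha}^{\perp}\cap\left[ X_{\alpha}\right] ^{\omega}$ by the usual diagonal argument for countable families, and then use tallness of $\mathcal{I}$ to pick $A_{\alpha}\in\left[ Z\right] ^{\omega}\cap\mathcal{I}$; this $A_{\alpha}$ is almost disjoint from $\mathcal{A}_{\alpha}$, lies in $\mathcal{I}$, and meets $X_{\alpha}$ infinitely. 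If $X_{\alpha}$ is already in $\mathcal{I}\left( \mathcal{A}_{\alpha}\right) $ there is nothing to do, since then it is met infinitely by an existing element.

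The weak-tightness step is the crux. Given $\overline{Y}_{\alpha}=\left\{ Y_{n}\mid n\in\omega\right\} $ with every $Y_{n}\in\mathcal{I}\left( \mathcal{A}_{\alpha}\right) ^{+}$, I would first replace each $Y_{n}$ by an infinite $Z_{n}\in\mathcal{A}_{\alpha}^{\perp}\cap\left[ Y_{n}\right] ^{\omega}$ (these remain pairwise disjoint). The key manoeuvre is to pass to $Z_{n}^{\prime}=Z_{n}\setminus\left( B_{0}\cup\cdots\cup B_{n}\right) $, where $\mathcal{A}_{\alpha}=\left\{ B_{k}\mid k\in\omega\right\} $; each $Z_{n}^{\prime}$ is still infinite because $Z_{n}$ meets each $B_{k}$ finitely, and now $\bigcup_{n}Z_{n}^{\prime}\in\mathcal{A}_{\alpha}^{\perp}$, since for fixed $k$ only the finitely many $n<k$ contribute to $\left( \bigcup_{n}Z_{n}^{\prime}\right) \cap B_{k}$. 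I then apply the weakly $\omega$-hitting property of $\mathcal{I}$ to $\left\{ Z_{n}^{\prime}\right\} $ to obtain $A\in\mathcal{I}$ with $W=\left\{ n\mid\left\vert A\cap Z_{n}^{\prime}\right\vert =\omega\right\} $ infinite, and set $A_{\alpha}=\bigcup_{n\in W}\left( A\cap Z_{n}^{\prime}\right) $. Being a subset of $\bigcup_{n}Z_{n}^{\prime}$, $A_{\alpha}$ is almost disjoint from $\mathcal{A}_{\alpha}$; being a subset of $A$, it lies in $\mathcal{I}$; and $A_{\alpha}\cap Y_{n}\supseteq A\cap Z_{n}^{\prime}$ is infinite for every $n\in W$. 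This is exactly where I expect the difficulty to sit: the witness handed back by weak $\omega$-hitting is an arbitrary member of $\mathcal{I}$ with no relation to $\mathcal{A}_{\alpha}$, and the trick of arranging $\bigcup_{n}Z_{n}^{\prime}$ to be orthogonal to $\mathcal{A}_{\alpha}$ beforehand is what forces any of its subsets to be almost disjoint from the family.

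Finally I would check the three global properties. Each $A_{\alpha}\in\mathcal{I}$ gives $\mathcal{I}\left( \mathcal{A}\right) \subseteq\mathcal{I}$; the maximality steps together with the remark on sets already in some $\mathcal{I}\left( \mathcal{A}_{\alpha}\right) $ give that $\mathcal{A}$ is \textsf{MAD}. For weak tightness, let $\left\{ X_{n}\mid n\in\omega\right\} \subseteq\mathcal{I}\left( \mathcal{A}\right) ^{+}$ be pairwise disjoint; since $\mathcal{I}\left( \mathcal{A}_{\alpha}\right) \subseteq\mathcal{I}\left( \mathcal{A}\right) $ for all $\alpha$, these sets are $\mathcal{I}\left( \mathcal{A}_{\alpha}\right) ^{+}$ at every stage, so at the stage where $\left\{ X_{n}\mid n\in\omega\right\} $ is treated the weak-tightness step applies and produces $A_{\alpha}\in\mathcal{A}$ meeting infinitely many $X_{n}$ infinitely. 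By clause 3 of the Lemma this makes $\mathcal{A}$ weakly tight, completing the construction.
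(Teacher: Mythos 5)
Your proof is correct and takes essentially the same approach as the paper's: a $\mathsf{CH}$-recursion of length $\omega_{1}$ whose key step thins the given positive family to sets lying in $\mathcal{A}_{\alpha}^{\perp}$ and avoiding the first $n+1$ members of $\mathcal{A}_{\alpha}$, applies weak $\omega$-hitting, and trims the witness to the union of its infinite traces so that it stays almost disjoint from everything built so far. The only difference is packaging: the paper runs this step on arbitrary countable positive families at once, so maximality and weak tightness fall out together without any detour through clause 3 of the equivalence lemma, and in fact your argument never uses the pairwise disjointness of the $Y_{n}$, so it would work verbatim in that generality too.
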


\begin{proof}
Let $\left\{  X_{\alpha}\mid\alpha<\omega_{1}\right\}  $ enumerate all
countable family of $\omega$ and $X_{\alpha}=\left\{  X_{\alpha}\left(
n\right)  \mid n\in\omega\right\}  .$ We will build $\mathcal{A}=\left\{
A_{\alpha}\mid\alpha<\omega_{1}\right\}  \subseteq\mathcal{I}$ such that for
every $\alpha<\omega_{1}$ if $X_{\alpha}\subseteq\mathcal{I}\left(
\mathcal{A}_{\alpha}\right)  ^{+}$ then $A_{\alpha}\cap X_{\alpha}\left(
n\right)  $ is infinite for infinitely many $n\in\omega.$

\qquad\ \ 

Assume we are at stage $\alpha$ and $X_{\alpha}\subseteq\mathcal{I}\left(
A_{\alpha}\right)  ^{+}.$ First for each $n\in\omega$ we find $Y_{n}\in\left[
X_{\alpha}\left(  n\right)  \right]  ^{\omega}$ such that $Y_{n}$ is
\textsf{AD} with $\mathcal{A}_{\alpha}$ and $Y_{n}\cap\left(  A_{\alpha_{0}%
}\cup...\cup A_{\alpha_{n}}\right)  =\emptyset.$ Since $\mathcal{I}$ is weakly
$\omega$-hitting we can find $B\in\mathcal{I}$ and $W\in\left[  \omega\right]
^{\omega}$ such that $B\cap Y_{n}$ is infinite for every $n\in W,$ we may even
assume $B=%
%TCIMACRO{\tbigcup \limits_{n\in\omega}}%
%BeginExpansion
{\textstyle\bigcup\limits_{n\in\omega}}
%EndExpansion
\left(  B\cap Y_{n}\right)  .$ We then just define $A_{\alpha}=B.$
\end{proof}

\qquad\ \ \ \ \ \ 

We have the following:

\begin{lemma}
$tr($\textsf{ctble}$)$ is weakly $\omega$-hitting.
\end{lemma}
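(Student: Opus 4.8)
The plan is to reduce the problem to the two combinatorial lemmas already established (the chain-covering lemma: a subset of $\omega^{<\omega}$ with no infinite antichain is covered by finitely many chains; and the antichain-meeting lemma: given countably many infinite antichains there is a single antichain meeting infinitely many of them infinitely), together with the fact that an antichain $B\subseteq\omega^{<\omega}$ satisfies $\pi(B)=\emptyset$ (a branch meets a chain, hence an antichain, in at most one initial segment), so that $B\in\mathcal{BR}^{\perp}\subseteq tr(\textsf{ctble})$. Given an arbitrary countable family $\left\{X_{n}\mid n\in\omega\right\}\subseteq\left[\omega^{<\omega}\right]^{\omega}$, I would first run the chain-covering lemma on each $X_{n}$: either $X_{n}$ contains an infinite antichain, or it is covered by finitely many chains and so (being infinite) contains an infinite chain, whose union is a single branch $g_{n}$ with $Y_{n}=X_{n}\cap\widehat{g_{n}}$ infinite. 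Let $I$ be the set of indices of the first type.

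If $I$ is infinite, I would choose an infinite antichain $Y_{n}\subseteq X_{n}$ for each $n\in I$ and apply the antichain-meeting lemma to $\left\{Y_{n}\mid n\in I\right\}$, obtaining an antichain $B$ with $B\cap Y_{n}$ infinite for infinitely many $n$; then $B\in tr(\textsf{ctble})$ and $B\cap X_{n}\supseteq B\cap Y_{n}$ is infinite infinitely often, as required. Otherwise $J=\omega\setminus I$ is infinite and every $X_{n}$ ($n\in J$) carries an infinite chain with limit branch $g_{n}$. The easy subcase is when some branch $g$ equals $g_{n}$ for infinitely many $n\in J$: then $A=\widehat{g}$ has $\pi(A)=\left\{g\right\}$ countable and $A\cap X_{n}\supseteq Y_{n}$ is infinite for those $n$.

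The main obstacle is the remaining subcase, where $\left\{g_{n}\mid n\in J\right\}$ has infinitely many distinct branches: here $A$ must contain infinite subchains of infinitely many distinct $\widehat{g_{n}}$, so $\pi(A)\neq\emptyset$, and the real danger is that the branches accumulate (e.g. are dense in a Cantor set) and the union of their tails catches uncountably many limit branches, pushing $A$ out of $tr(\textsf{ctble})$. I would resolve this by re-applying the chain-covering lemma to the tree $T$ obtained as the downward closure of an infinite set of distinct $g_{n}$'s: since $T$ has infinitely many branches it cannot be covered by finitely many chains, so $T$ contains an infinite antichain $\left\{s_{i}\mid i\in\omega\right\}$. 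Each $s_{i}$ extends to some $g_{n_{i}}$, with the $n_{i}$ necessarily distinct (two nodes below one branch would be comparable), and the cones $\langle s_{i}\rangle$ are pairwise disjoint. Setting $D_{n_{i}}=\left\{s\in Y_{n_{i}}\mid s_{i}\sqsubseteq s\right\}$ (an infinite tail) and $A=\bigcup_{i}D_{n_{i}}$, the disjointness of the cones forces $\pi(A)=\left\{g_{n_{i}}\mid i\in\omega\right\}$: any branch lies in at most one cone, and within a cone it meets $A$ infinitely only if it equals the corresponding $g_{n_{i}}$. Hence $\pi(A)$ is countable, $A\in tr(\textsf{ctble})$, and $A\cap X_{n_{i}}\supseteq D_{n_{i}}$ is infinite for all $i$, completing the argument. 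The only points needing care are verifying that the chain-covering lemma genuinely applies to $T$ as a subset of $\omega^{<\omega}$ and that the truncation tails $D_{n_{i}}$ remain infinite, both of which are routine.
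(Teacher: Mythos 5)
Your proof is correct, and up to the final subcase it follows the same decomposition as the paper's proof: refine each $X_{n}$ to an infinite chain or an infinite antichain, dispose of the antichain indices with the antichain-meeting lemma (an antichain lies in $tr($\textsf{ctble}$)$ automatically, since its $\pi$-image is empty), and dispose of a branch repeated infinitely often trivially. Where the two arguments genuinely part ways is the case of infinitely many distinct limit branches. The paper works in $2^{<\omega}$ and appeals to compactness of $2^{\omega}$: it chooses $W\in\left[\omega\right]^{\omega}$ so that $\left\langle r_{n}\right\rangle_{n\in W}$ converges to some $r$ and takes $B=\bigcup_{n\in W}X_{n}$; convergence is exactly what confines $\pi\left(B\right)$ to the countable set $\left\{r\right\}\cup\left\{r_{n}\mid n\in W\right\}$. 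You instead extract an infinite antichain $\left\{s_{i}\mid i\in\omega\right\}$ from the tree of initial segments of the distinct branches (legitimately: that tree cannot be covered by finitely many chains, so the chain-covering lemma applies in contrapositive form) and truncate each chain above its antichain node, so that the pairwise disjoint cones $\left\langle s_{i}\right\rangle$ pin $\pi\left(A\right)$ down to exactly $\left\{g_{n_{i}}\mid i\in\omega\right\}$. Your cone-separation argument is purely combinatorial and makes no use of compactness, so it proves the statement for trace ideals over $\omega^{<\omega}$ as well, where a sequence of distinct branches need not have any convergent subsequence; the paper's argument is shorter but is tied to the finitely branching space $2^{<\omega}$, which is all that is literally needed here, since $tr($\textsf{ctble}$)$ is an ideal on $2^{<\omega}$ and your argument specializes to that setting without change. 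Both routes guard against the same pathology, which you correctly identify: the distinct branches could accumulate on an uncountable set (for instance be dense in a copy of the Cantor set), so the union of untruncated chains could have uncountable $\pi$-image; the paper rules this out by convergence, you rule it out by disjointness of the cones.
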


\begin{proof}
Let $\left\{  X_{n}\mid n\in\omega\right\}  \subseteq\left[  2^{<\omega
}\right]  ^{\omega}$ and we may assume each $X_{n}$ is either a chain or an
antichain. In case they are all antichains then the result follows by a
previous result (there is an infinite antichain having infinite intersection
with infinitely many of them). We only need to consider the case where all of
the $X_{n}$ are chains, so there are $r_{n}\in2^{\omega}$ such that
$X_{n}\subseteq\widehat{r}_{n}.$ In case there is $r\in2^{\omega}$ such that
$\left\{  n\mid r_{n}=r\right\}  $ is infinite then we just take $\widehat
{r}_{n}.$ So assume $r_{n}\neq r_{m}$ whenever $n\neq m.$ Since $2^{\omega}$
is compact, we can find $r\in2^{\omega}$ and $W\in\left[  \omega\right]
^{\omega}$ such that the sequence \thinspace$\left\langle r_{n}\right\rangle
_{n\in W}$ converges to $r.$ Then $B=%
%TCIMACRO{\tbigcup \limits_{n\in W}}%
%BeginExpansion
{\textstyle\bigcup\limits_{n\in W}}
%EndExpansion
X_{n}$ is an element of $tr($\textsf{ctble}$).$
\end{proof}

\qquad\ \ \qquad

We can then conclude the following:

\begin{corollary}
[$\mathsf{CH}$]There is a weakly tight family that is destructible by Sacks forcing.
\end{corollary}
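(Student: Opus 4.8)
The plan is to assemble the two immediately preceding results with the earlier characterization of Sacks destructibility. First I would invoke the lemma that $tr(\textsf{ctble})$ is weakly $\omega$-hitting, and feed this into the preceding proposition (which uses $\mathsf{CH}$) to obtain a weakly tight \textsf{MAD} family $\mathcal{A}\subseteq tr(\textsf{ctble})$. This already settles the weak tightness half of the statement, so everything reduces to checking that such an $\mathcal{A}$ is destroyed by Sacks forcing. Note also that weak tightness forces $\mathcal{A}$ to be \textsf{MAD} (as remarked right after the definition of tightness), so the word ``family'' in the statement is automatically a \textsf{MAD} family and there is nothing extra to verify on that front.

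For the destructibility half, the key observation is that since $\mathcal{A}\subseteq tr(\textsf{ctble})$ and $tr(\textsf{ctble})$ is an ideal, the generated ideal satisfies $\mathcal{I}(\mathcal{A})\subseteq tr(\textsf{ctble})$. Consequently the identity map on $2^{<\omega}$ is a Kat\v{e}tov morphism from $(2^{<\omega},tr(\textsf{ctble}))$ to $(2^{<\omega},\mathcal{I}(\mathcal{A}))$, because the preimage of any $A\in\mathcal{I}(\mathcal{A})$ is $A$ itself, which lies in $tr(\textsf{ctble})$. This gives $\mathcal{I}(\mathcal{A})\leq_K tr(\textsf{ctble})$ immediately. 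From here I would conclude in either of two equivalent ways: by the earlier characterization, $\mathcal{J}\leq_K tr(\textsf{ctble})$ is exactly what it means for $\mathcal{J}$ to be destructible by Sacks forcing; or, bypassing that proposition, Sacks forcing adds a new real and hence destroys $tr(\textsf{ctble})$, so by the monotonicity lemma ($\mathcal{I}\leq_K\mathcal{J}$ and $\mathbb{P}$ destroys $\mathcal{J}$ imply $\mathbb{P}$ destroys $\mathcal{I}$) it destroys $\mathcal{I}(\mathcal{A})$, i.e. destroys $\mathcal{A}$.

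There is no serious obstacle here; the entire content sits in the two preceding results, and the corollary is essentially a bookkeeping assembly. The one point that requires care is the direction of the Kat\v{e}tov reduction: one must read off that \emph{containment} of ideals on a common countable underlying set yields reducibility \emph{witnessed by the identity}, and that this is precisely the direction needed to invoke the Sacks-destructibility characterization. I would present the identity-morphism argument explicitly so that the reduction $\mathcal{I}(\mathcal{A})\leq_K tr(\textsf{ctble})$ is unambiguous, and then state the conclusion in one line.
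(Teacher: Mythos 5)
Your proposal is correct and is exactly the assembly the paper intends: the corollary in the paper has no written proof precisely because it follows, as you say, by applying the weakly-$\omega$-hitting lemma for $tr(\mathsf{ctble})$ to the preceding $\mathsf{CH}$ proposition, and then noting that $\mathcal{I}(\mathcal{A})\subseteq tr(\mathsf{ctble})$ gives $\mathcal{I}(\mathcal{A})\leq_{K}tr(\mathsf{ctble})$ via the identity, which by the earlier characterization (or by monotonicity of destruction in the Kat\v{e}tov order, since Sacks adds a new real) yields Sacks destructibility. Your care about the direction of the Kat\v{e}tov reduction is well placed and matches the paper's conventions.
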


\subsubsection{If $\mathbb{P}$ is $\omega^{\omega}$-bounding then $\mathbb{P}%
$-indestructibility does not imply $+$-Ramsey}

We will now prove that (in particular) Sacks or random indestructibility are
not enough to get $+$-Ramseyness. We will say a family $\mathcal{A}$ on
$\omega^{<\omega}$ is a \emph{standard }$\mathcal{K}_{\sigma}$\emph{ family
}if the following holds:

\begin{enumerate}
\item $\mathcal{A}$ is an \textsf{AD} family.

\item If $A\in\mathcal{A}$ either $\pi\left(  A\right)  =\emptyset$ or $A$ is
a finitely branching tree on $\omega^{<\omega}.$

\item If $s\in\omega^{<\omega}$ then $\left\{  s^{\frown}n\mid n\in
\omega\right\}  \in\mathcal{I}\left(  \mathcal{A}\right)  ^{++}.$
\end{enumerate}

\qquad\ \ \ 

We now need the following lemma:

\begin{lemma}
Let $\mathbb{P}$ be an $\omega^{\omega}$-bounding forcing and $\mathcal{A}$ a
countable standard $\mathcal{K}_{\sigma}$ family. If $p\in\mathbb{P}$ and
$\dot{b}$ is a $\mathbb{P}$-name for an infinite subset of $\omega^{<\omega}$
such that $p\Vdash``\dot{b}\in\mathcal{A}^{\perp}\textquotedblright$ then
there are $q\leq p$ and $\mathcal{B}$ a countable standard $\mathcal{K}%
_{\sigma}$ family such that $\mathcal{A\subseteq B}$ and $q\Vdash``\dot
{b}\notin\mathcal{B}^{\perp}\textquotedblright.$
\end{lemma}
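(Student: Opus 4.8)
The plan is to build $q\leq p$ and the extension $\mathcal{B}$ by forcing $\dot{b}$ to contain an infinite subset of some infinitely-branching tree which we then throw into $\mathcal{B}$. Since $\mathbb{P}$ is $\omega^\omega$-bounding and $p\Vdash``\dot b\in\mathcal{A}^\perp"$, I would first fix a $\mathbb{P}$-name $\dot{e}$ for the increasing enumeration of $\dot b$ and, using boundedness, find $q_0\leq p$ and a ground-model increasing function $g\in\omega^\omega$ such that $q_0\Vdash``\dot e\leq g"$. The point of this is that it lets me capture the branches of $\dot b$ inside a ground-model finitely-branching tree: the set of all $s\in\omega^{<\omega}$ that are $\leq g$ pointwise (along their length, suitably interpreted for nodes of $\omega^{<\omega}$) forms a ground-model $\mathcal{K}_\sigma$ object.

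Next I would analyze where $\dot b$ lives in the tree $\omega^{<\omega}$. Because $\mathcal{A}$ is a standard $\mathcal{K}_\sigma$ family, each $A\in\mathcal{A}$ is either an off-branch set (with $\pi(A)=\emptyset$) or a finitely branching subtree, and $\dot b$ is forced to be almost disjoint from all of them. I would argue that $q_0$ forces $\dot b$ to contain infinitely many nodes lying below the bounding function $g$, and then use a fusion-style/reading-of-names argument: recursively refine $q_0$ to a condition $q\leq q_0$ deciding more and more initial segments of $\dot e$, so that I can read off in the ground model an infinite set $T\subseteq\omega^{<\omega}$ of nodes, each of length bounded by $g$, such that $q\Vdash``T\subseteq\dot b"$ (or at least $q\Vdash``|T\cap\dot b|=\omega"$). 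Since the nodes of $T$ all lie below $g$, by König's lemma the downward closure of $T$ is a finitely branching tree, and by thinning I may assume $T$ itself generates such a tree $\widetilde{T}$ with $\pi(\widetilde{T})\neq\emptyset$, i.e. an infinitely-branching-in-the-right-sense object that is $\mathcal{A}$-positive. I would then set $\mathcal{B}=\mathcal{A}\cup\{A_T\}$ where $A_T$ is obtained from $\widetilde{T}$ (partitioned if necessary to keep it a standard $\mathcal{K}_\sigma$ family), ensuring $\mathcal{A}\subseteq\mathcal{B}$, that $\mathcal{B}$ is still standard $\mathcal{K}_\sigma$ and almost disjoint, and that $q\Vdash``\dot b\cap A_T\text{ is infinite}"$, hence $q\Vdash``\dot b\notin\mathcal{B}^\perp"$.

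The main obstacle I expect is the interaction between the two ways $\dot b$ can fail to be pinned down: $\dot b$ is forced almost disjoint from every branch $\widehat f\in\mathcal{A}$ and from every finitely branching tree in $\mathcal{A}$, yet I need to extract from it a ground-model finitely branching tree that is $\mathcal{A}$-positive. The delicate case is when $\dot b$ disperses across $\omega^{<\omega}$ so that no single ground-model branch or existing tree captures it; here the $\omega^\omega$-bounding hypothesis is essential, since it guarantees the enumeration of $\dot b$ is dominated by a ground-model $g$, so the nodes of $\dot b$ cannot escape to arbitrarily large successors and are therefore trapped in a finitely branching ground-model tree. I would have to be careful that the tree $\widetilde{T}$ I extract is genuinely $\mathcal{I}(\mathcal{A})^+$ (and even $\mathcal{I}(\mathcal{A})^{++}$ in the relevant sense), using condition (3) of being a standard $\mathcal{K}_\sigma$ family, and that adding $A_T$ does not destroy almost disjointness with the off-branch members of $\mathcal{A}$, which is where the bookkeeping is slightly fussy but routine.
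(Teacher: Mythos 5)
Your proposal has a genuine gap, and it is the central step, not the bookkeeping. The claim that $\omega^{\omega}$-bounding ``traps'' $\dot{b}$ inside a ground-model finitely branching tree is false. Consider a name $\dot{b}$ consisting of infinitely many immediate successors of a single node, say $\dot{b}=\{\langle \dot{n}_{k}\rangle \mid k\in\omega\}$ with $\dot{n}_{0}<\dot{n}_{1}<\dots$ new reals: a finitely branching tree contains only finitely many nodes of length $1$, so \emph{no} ground-model finitely branching tree can meet $\dot{b}$ infinitely, no matter how you refine the condition. Bounding the enumeration by a ground-model $g$ only gives $\dot{n}_{k}\leq g(k)$; it controls vertical growth but does nothing against this horizontal dispersion, and such a $\dot{b}$ can perfectly well be forced into $\mathcal{A}^{\perp}$, since condition (3) of a standard $\mathcal{K}_{\sigma}$ family guarantees $\{s^{\frown}n\mid n\in\omega\}\in\mathcal{I}\left(\mathcal{A}\right)^{++}$. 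This is precisely why standard $\mathcal{K}_{\sigma}$ families are allowed to contain off-branch sets (members $A$ with $\pi\left(A\right)=\emptyset$) in addition to finitely branching trees: the paper's proof splits into cases according to whether $\pi(\dot{b})$ is forced to be a singleton or empty, and in the empty case analyzes the tree $\dot{S}$ of nodes with infinitely many extensions in $\dot{b}$; when $\dot{S}$ has a maximal node (the dispersed case above) the set added to $\mathcal{B}$ is an off-branch set assembled from finite blocks $K_{n,f(n)}$ chosen using the bounding property, not a tree. Your single-case strategy of always adding a tree $A_{T}$ cannot produce the required $\mathcal{B}$ in that case.

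A secondary problem: your fusion step aiming at $q\Vdash``T\subseteq\dot{b}\textquotedblright$ for a ground-model infinite $T$ is unattainable in this generality. The hypothesis on $\mathbb{P}$ is only $\omega^{\omega}$-bounding (it is an abstract forcing, so no fusion apparatus is available), and, for instance, random forcing is $\omega^{\omega}$-bounding yet adds infinite sets containing no ground-model infinite subset. The weaker parenthetical goal $q\Vdash``\left\vert T\cap\dot{b}\right\vert =\omega\textquotedblright$ is the correct one, and it is exactly what the paper arranges by deciding, below successively stronger conditions and using ground-model dominating functions, finite fragments of $\dot{b}$ that avoid $T_{0}\cup\dots\cup T_{n}\cup a_{0}\cup\dots\cup a_{n}$; but even with that repair, your argument still needs the case analysis on $\pi(\dot{b})$ and $\dot{S}$ described above, since the object you add must sometimes be an off-branch set rather than a finitely branching tree.
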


\begin{proof}
Let $\mathcal{A}=\left\{  T_{n}\mid n\in\omega\right\}  \cup\left\{  a_{n}\mid
n\in\omega\right\}  $ where $T_{n}$ is a finitely branching tree and
$\pi\left(  a_{n}\right)  =\emptyset$ for every $n\in\omega.$ We may assume
that $p$ forces that $\pi(\dot{b})$ is either empty or a singleton. We first
assume there is $\dot{r}$ such that $p\Vdash``\pi(\dot{b})=\left\{  \dot
{r}\right\}  \textquotedblright.$ Since $\mathbb{P}$ is $\omega^{\omega}%
$-bounding, we may find $p_{1}\leq p$ and $T\in V$ a finitely branching well
pruned tree such that $p_{1}\Vdash``\dot{r}\in\left[  T\right]
\textquotedblright.$ Once again, since $\mathbb{P}$ is $\omega^{\omega}%
$-bounding we may find $p_{2}\leq p_{1}$ and $f\in\omega^{\omega}$ such that
the following holds:

\begin{enumerate}
\item $f$ is an increasing function.

\item $p_{2}\Vdash``\left(  T_{n}\cup a_{n}\right)  \cap\widehat{r}%
\subseteq\omega^{<f\left(  n\right)  }\textquotedblright.$
\end{enumerate}

\qquad\ \ \ 

Define $K=\left(  T\cap\omega^{\leq f\left(  0\right)  }\right)  \cup%
%TCIMACRO{\tbigcup }%
%BeginExpansion
{\textstyle\bigcup}
%EndExpansion
\left(  T\cap\omega^{\leq f\left(  n+1\right)  }\setminus\left(  T_{0}%
\cup...T_{n}\cup a_{0}\cup...a_{n}\right)  \right)  .$ It is easy to see that
$K$ is a finitely branching tree, $p_{2}\Vdash``\dot{r}\in\left[  K\right]
\textquotedblright$ and $K\in\mathcal{A}^{\perp}.$ We now simply define
$\mathcal{B}=\mathcal{A\cup}\left\{  K\right\}  .$

\qquad\ \ 

Now we consider the case where $\pi(\dot{b})$ is forced to be empty. Let
$\dot{S}$ be the tree of all $s\in\omega^{<\omega}$ such that $s$ has
infinitely many extensions in $\dot{b}.$ We will first assume there are
$p_{1}\leq p$ and $s$ such that $p_{1}$ forces that $s$ is a maximal node of
$\dot{S}.$ Since $\mathbb{P}$ is $\omega^{\omega}$-bounding, we can find a
ground model interval partition $\mathcal{P}=\left\{  P_{n}\mid n\in
\omega\right\}  $ and $p_{2}\leq p_{1}$ such that if $n\in\omega$ then $p_{2}$
forces that there is $\dot{m}_{n}\in P_{n}$ such that $(\left[  s^{\frown}%
\dot{m}_{n}\right]  \cap\dot{b})\setminus\left(  T_{0}\cup...T_{n}\cup
a_{0}\cup...\cup a_{n}\right)  \neq\emptyset.$ Given $n,m\in\omega$ we define
$K_{n,m}=\left\{  s^{\frown}i^{\frown}t\mid i\in P_{n}\wedge t\in
m^{m}\right\}  .$ Using once again that $\mathbb{P}$ is $\omega^{\omega}%
$-bounding, we may find $p_{3}\leq p_{2}$ and an increasing function
$f:\omega\longrightarrow\omega$ such that if $n\in\omega$ then $p_{3}$ forces
$(K_{n,f\left(  n\right)  }\cap\dot{b})\setminus\left(  T_{0}\cup...T_{n}\cup
a_{0}\cup...\cup a_{n}\right)  $ is non-empty for every $n\in\omega.$ We now
define $a=%
%TCIMACRO{\tbigcup \limits_{n\in\omega}}%
%BeginExpansion
{\textstyle\bigcup\limits_{n\in\omega}}
%EndExpansion
K_{n,f\left(  n\right)  }\setminus\left(  T_{0}\cup...T_{n}\cup a_{0}%
\cup...\cup a_{n}\right)  .$ It is easy to see that $\pi\left(  a\right)
=\emptyset,$ $a\in\mathcal{A}^{\perp}$ and $p_{3}$ forces that $a$ and
$\dot{b}$ have infinite intersection.

\qquad\ \ 

Now we assume that $p$ forces that $\dot{S}$ does not have maximal nodes, let
$\dot{r}$ be a name for a branch of $\dot{S}.$ First assume that $\dot{r}$ is
forced to be a branch through some element of $\mathcal{A}.$ We may assume
that $p\Vdash``\dot{r}\in\left[  T_{0}\right]  \textquotedblright.$ Since
$\mathbb{P}$ is $\omega^{\omega}$-bounding, we may find $p_{1}\leq p$ and an
increasing ground model function $f:\omega\longrightarrow\omega$ such that if
$n\in\omega$ then $p_{1}$ forces that all extentions of $\dot{r}%
\upharpoonright f\left(  n\right)  $ to $\dot{b}$ are not in $T_{0}%
\cup...T_{n}\cup a_{0}\cup...\cup a_{n}.$ Once again, we may find $p_{2}\leq
p_{1}$ and $g:\omega\longrightarrow\omega$ such that if $n\in\omega$ then
$\dot{b}$ has non empty intersection with the set $\{\dot{r}\upharpoonright
f\left(  n\right)  ^{\frown}t\mid t\in g\left(  n\right)  ^{g\left(  n\right)
}\}\setminus\left(  T_{0}\cup...T_{n}\cup a_{0}\cup...\cup a_{n}\right)  .$ We
now define $a=%
%TCIMACRO{\tbigcup \limits_{s\in\left(  T_{0}\right)  _{f\left(  n\right)  }}}%
%BeginExpansion
{\textstyle\bigcup\limits_{s\in\left(  T_{0}\right)  _{f\left(  n\right)  }}}
%EndExpansion
(\{s^{\frown}t\mid t\in g\left(  n\right)  ^{g\left(  n\right)  }%
\}\setminus\left(  T_{0}\cup...T_{n}\cup a_{0}\cup...\cup a_{n}\right)  ).$ It
is easy to see that $a$ has the desired properties.

\qquad\ \ \ \ \ \ \ 

Finally, in case that $\dot{r}$ is not forced to be a branch through some
element of $\mathcal{A},$ we find a finitely branching tree $T\in
\mathcal{A}^{\perp}$ such that $p\Vdash``\dot{r}\in\left[  T\right]
\textquotedblright$ as we did at the beginning of the proof. If $T$ has
infinite intersection with $\dot{b}$ we are done and if not then we apply the
previous case.
\end{proof}

\qquad\ \ \ \ 

With a standard bookkeeping argument we can then conclude the following:

\begin{proposition}
[$\mathsf{CH}$]If $\mathbb{P}$ is a proper $\omega^{\omega}$-bounding forcing
of size $\omega_{1}$ then there is a \textsf{MAD} family $\mathcal{A}$ that is
$\mathbb{P}$ indestructible but is not $+$-Ramsey.
\end{proposition}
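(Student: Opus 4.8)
The plan is to build, assuming $\mathsf{CH}$, a \textsf{MAD} family $\mathcal{A}$ on $\omega^{<\omega}$ which is a \emph{standard} $\mathcal{K}_{\sigma}$ \emph{family} and is $\mathbb{P}$-indestructible; the observation driving everything is that no \textsf{MAD} standard $\mathcal{K}_{\sigma}$ family can be $+$-Ramsey, so $\mathbb{P}$-indestructibility is the only thing that must be engineered. To see the failure of $+$-Ramseyness, regard $\omega^{<\omega}$ itself as a tree under end-extension. For each $s$ we have $suc(s)=\{s^{\frown}n\mid n\in\omega\}\in\mathcal{I}\left(\mathcal{A}\right)^{++}\subseteq\mathcal{I}\left(\mathcal{A}\right)^{+}$ by clause (3) of the definition of a standard $\mathcal{K}_{\sigma}$ family, so $\omega^{<\omega}$ is an $\mathcal{I}\left(\mathcal{A}\right)^{+}$-branching tree. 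A branch of this tree is some $\widehat{x}=\{x\upharpoonright n\mid n\in\omega\}$ with $x\in\omega^{\omega}$, and I claim $\widehat{x}\in\mathcal{I}\left(\mathcal{A}\right)$. Indeed, by maximality some $A\in\mathcal{A}$ meets $\widehat{x}$ infinitely; a $\pi$-empty member of $\mathcal{A}$ cannot do this (that would force $x\in\pi\left(A\right)=\emptyset$), so $A$ is a finitely branching tree, and $\left|A\cap\widehat{x}\right|=\omega$ together with $A$ being closed under initial segments gives $\widehat{x}\subseteq A$. Hence no branch of $\omega^{<\omega}$ has $\mathcal{I}\left(\mathcal{A}\right)^{+}$-positive image, and $\mathcal{A}$ is not $+$-Ramsey.

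First I would fix, using $\mathsf{CH}$, an initial countable standard $\mathcal{K}_{\sigma}$ family $\mathcal{A}_{0}$: partition each set $suc(s)$ (for $s\in\omega^{<\omega}$) into infinitely many infinite antichains and put all these pieces into $\mathcal{A}_{0}$. Each piece is $\pi$-empty, and each $suc(s)$ meets infinitely many of them infinitely, so clause (3), namely $suc(s)\in\mathcal{I}\left(\mathcal{A}_{0}\right)^{++}$, holds. Since membership in $\mathcal{I}\left(\mathcal{B}\right)^{++}$ is preserved when $\mathcal{B}$ grows, clause (3) persists through the whole construction, and clauses (1) and (2) (almost disjointness; each member a finitely branching tree or $\pi$-empty) are visibly preserved under increasing unions and under the extension supplied by the Lemma. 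I would then build an increasing $\subseteq$-chain $\langle\mathcal{A}_{\gamma}\mid\gamma<\omega_{1}\rangle$ of countable standard $\mathcal{K}_{\sigma}$ families, take unions at limits, and set $\mathcal{A}=\bigcup_{\gamma<\omega_{1}}\mathcal{A}_{\gamma}$; at each successor step the Lemma, applied to the $\omega^{\omega}$-bounding forcing $\mathbb{P}$ and to $\mathcal{A}_{\gamma}$, is exactly what allows the extension.

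The tasks are pairs $(\dot{b},p)$, where $\dot{b}$ is a $\mathbb{P}$-name for an infinite subset of $\omega^{<\omega}$ and $p\in\mathbb{P}$: whenever $p\Vdash``\dot{b}\in\mathcal{A}_{\gamma}^{\perp}"$ I apply the Lemma to obtain $q\leq p$ and a countable standard $\mathcal{K}_{\sigma}$ family $\mathcal{A}_{\gamma+1}\supseteq\mathcal{A}_{\gamma}$ with $q\Vdash``\dot{b}\notin\mathcal{A}_{\gamma+1}^{\perp}"$ (and otherwise do nothing at that stage). For this recursion to fit into length $\omega_{1}$ I must enumerate only $\omega_{1}$-many tasks, and this is the delicate point: a priori a proper forcing of size $\omega_{1}$ admits $2^{\omega_{1}}$ nice names. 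The way around it is that $\mathbb{P}$, being proper, $\omega^{\omega}$-bounding and of size $\omega_{1}$, preserves $\mathsf{CH}$, i.e. adds at most $\omega_{1}$ reals (for the concrete forcings of interest, such as random or Sacks, this is classical). Thus I may fix in $V$ a name forced to enumerate all reals of $V[G]$, hence a family $\{\dot{b}_{\alpha}\mid\alpha<\omega_{1}\}$ of names that $\mathbb{P}$ forces to list all infinite subsets of $\omega^{<\omega}$; pairing these with the $\omega_{1}$ conditions yields $\omega_{1}$ tasks, which a bookkeeping function schedules so that every pair is treated. This counting step is where the main difficulty lies; once it is secured, the rest is routine.

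Finally I would verify that $\mathcal{A}$ is \textsf{MAD} and $\mathbb{P}$-indestructible by a density argument. Fix $\alpha$ and a condition $p_{0}$; the pair $(\dot{b}_{\alpha},p_{0})$ is treated at some stage $\gamma$. If $p_{0}\Vdash``\dot{b}_{\alpha}\in\mathcal{A}_{\gamma}^{\perp}"$ the Lemma produced $q\leq p_{0}$ with $q\Vdash``\dot{b}_{\alpha}\notin\mathcal{A}_{\gamma+1}^{\perp}"$, and since $\mathcal{A}_{\gamma+1}\subseteq\mathcal{A}$ this gives $q\Vdash``\dot{b}_{\alpha}\notin\mathcal{A}^{\perp}"$; otherwise some $q\leq p_{0}$ already forces $\dot{b}_{\alpha}$ to meet a member of $\mathcal{A}_{\gamma}\subseteq\mathcal{A}$ infinitely, so again $q\Vdash``\dot{b}_{\alpha}\notin\mathcal{A}^{\perp}"$. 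Hence $\{p\mid p\Vdash``\dot{b}_{\alpha}\notin\mathcal{A}^{\perp}"\}$ is dense, so it meets the generic filter, and in $V[G]$ every infinite $b\subseteq\omega^{<\omega}$, being some $\dot{b}_{\alpha}[G]$, has infinite intersection with $\mathcal{A}$. Specializing to check-names shows $\mathcal{A}$ is already \textsf{MAD} in $V$, while the full statement shows it remains \textsf{MAD} after forcing with $\mathbb{P}$. Combined with the first paragraph, $\mathcal{A}$ is a $\mathbb{P}$-indestructible \textsf{MAD} family that is not $+$-Ramsey, which is what is required.
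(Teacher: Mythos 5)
Your proposal is correct and takes essentially the same route as the paper: the paper proves precisely your key Lemma (extending a countable standard $\mathcal{K}_{\sigma}$ family against a name for a set in $\mathcal{A}^{\perp}$, using $\omega^{\omega}$-bounding) and then concludes the proposition by ``a standard bookkeeping argument,'' which is exactly what you spell out, together with the observation that a \textsf{MAD} standard $\mathcal{K}_{\sigma}$ family is never $+$-Ramsey. One remark on the step you hedge: the preservation of $\mathsf{CH}$ needs no appeal to concrete forcings, since every proper forcing of size $\omega_{1}$ preserves $\mathsf{CH}$ in general (by properness every real of the extension is read off a countable name, i.e.\ a countable subset of $\omega\times\omega\times\mathbb{P}$, and under $\mathsf{CH}$ there are only $\omega_{1}$ such names), which is what legitimizes enumerating the tasks in order type $\omega_{1}$ for an arbitrary $\mathbb{P}$ as in the statement.
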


\subsubsection{\textsf{FIN}$\times$\textsf{FIN}-like does not imply tightness}

\qquad\ \ \ We will now prove te following:\qquad\ \ \ \ \ 

\begin{proposition}
[$\mathsf{CH}$]There is a \textsf{FIN}$\times$\textsf{FIN}-like \textsf{MAD}
family that is not tight.
\end{proposition}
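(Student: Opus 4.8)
The plan is to run a transfinite recursion of length $\omega_1$ (using \textsf{CH}) that builds a \textsf{MAD} family $\mathcal{A}$ on the index set $\omega\times\omega$ with $\mathcal{A}\subseteq$ \textsf{FIN}$\times$\textsf{FIN}, arranging three things simultaneously: $\mathcal{A}$ is \textsf{MAD}, $\mathcal{A}$ is \textsf{FIN}$\times$\textsf{FIN}-like, and a fixed infinite set of columns witnesses the failure of tightness. The key observation driving the non-tightness is that every $B\in\mathcal{I}(\mathcal{A})$ is a finite union of members of $\mathcal{A}\subseteq$ \textsf{FIN}$\times$\textsf{FIN}, hence lies in \textsf{FIN}$\times$\textsf{FIN} and has only finitely many infinite columns. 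Consequently, if we reserve an infinite set $W\subseteq\omega$ and manage to keep $C_n\in\mathcal{I}(\mathcal{A})^+$ for every $n\in W$, then no single $B\in\mathcal{I}(\mathcal{A})$ can have $B\cap C_n$ infinite for all $n\in W$, so $\{C_n\mid n\in W\}\subseteq\mathcal{I}(\mathcal{A})^+$ witnesses that $\mathcal{A}$ is not tight.

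To keep the reserved columns positive, I would first split each $C_n$ (for $n\in W$) into infinitely many infinite pieces $C_n=\bigsqcup_k P_{n,k}$ and commit to placing each $P_{n,k}$ into a distinct member of $\mathcal{A}$ whose only infinite column is the $n$-th; then no finite subfamily of $\mathcal{A}$ almost covers $C_n$, so $C_n$ remains positive. During the rest of the recursion I require every newly added member to meet each reserved column finitely (to be in $\emptyset\times$\textsf{FIN} on $W$). This preserves positivity of the $C_n$ for $n\in W$ while still permitting maximalization, since any infinite positive $Y\subseteq\omega\times\omega$ can be hit by a member of \textsf{FIN}$\times$\textsf{FIN} that is almost disjoint from the current countable approximation and finite on the reserved columns (a partial selector when $Y$ is spread across columns, a single infinite non-reserved column otherwise).

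For \textsf{FIN}$\times$\textsf{FIN}-likeness I would diagonalize against all potential Kat\v{e}tov reductions into analytic non-Shelah--Stepr\={a}ns ideals. Under \textsf{CH} there are only $\omega_1$ many (codes for) analytic ideals $\mathcal{I}$ and $\omega_1$ many functions $f:\omega\times\omega\to\omega$, so a standard bookkeeping enumerates all pairs $(\mathcal{I},f)$ with $\mathcal{I}$ analytic and \textsf{FIN}$\times$\textsf{FIN} $\nleq_K\mathcal{I}$. At the stage for such a pair I invoke the engine of this chapter: since $\mathcal{I}$ is not Shelah--Stepr\={a}ns while the home ideal \textsf{FIN}$\times$\textsf{FIN} is Shelah--Stepr\={a}ns, we automatically get \textsf{FIN}$\times$\textsf{FIN} $\nleq_K\mathcal{I}$, and after passing to a positive restriction of $\mathcal{I}$ on which it is nowhere Shelah--Stepr\={a}ns, the Lemma yields $B\in(\textsf{FIN}\times\textsf{FIN})\cap\mathcal{A}^\perp$ with $f^{-1}(B)\in\mathcal{I}^+$; I take a version of $B$ that is finite on the reserved columns. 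Adding $B$ forces $B\in\mathcal{I}(\mathcal{A})$ with $f^{-1}(B)\in\mathcal{I}^+$, so $f$ fails to be a Kat\v{e}tov morphism from $(\omega\times\omega,\mathcal{I})$ to $(\omega\times\omega,\mathcal{I}(\mathcal{A}))$. Treating all such pairs guarantees $\mathcal{I}(\mathcal{A})\nleq_K\mathcal{I}$ for every analytic $\mathcal{I}$ with \textsf{FIN}$\times$\textsf{FIN} $\nleq_K\mathcal{I}$, i.e. $\mathcal{A}$ is \textsf{FIN}$\times$\textsf{FIN}-like.

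The hard part will be the simultaneous bookkeeping: I must check that the three demands never collide — in particular that the diagonalizing elements $B$ produced by the Lemma can always be chosen inside \textsf{FIN}$\times$\textsf{FIN} and finite on $W$ without spoiling either maximalization or the positivity of the reserved columns — and I must justify the step that replaces an analytic non-Shelah--Stepr\={a}ns $\mathcal{I}$ by a positive restriction on which it is \emph{nowhere} Shelah--Stepr\={a}ns, so that the chapter's Lemma (stated for nowhere Shelah--Stepr\={a}ns ideals) genuinely applies. Granting these points, the resulting $\mathcal{A}$ is a \textsf{FIN}$\times$\textsf{FIN}-like \textsf{MAD} family for which $\{C_n\mid n\in W\}$ exhibits the failure of tightness, which is exactly what the proposition asserts.
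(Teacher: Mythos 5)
Your proposal has the same broad shape as the paper's proof (a $\mathsf{CH}$-length recursion juggling maximality, a permanent witness to non-tightness, and a diagonalization against analytic ideals not Kat\v{e}tov above \textsf{FIN}$\times$\textsf{FIN}), but two of your steps fail, the first one outright.

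The side condition ``every newly added member meets each reserved column finitely'' is inconsistent with maximality. At the end of your construction the only members of $\mathcal{A}$ with infinite trace on a reserved column $C_n$ would be the countably many committed members $A_{n,k}$; for $\mathcal{A}$ to be \textsf{MAD}, every infinite $Y\subseteq C_n$ would then have to meet some $A_{n,k}$ infinitely, i.e. $\left\{ A_{n,k}\cap C_n\mid k\in\omega\right\}$ would be a \textsf{MAD} family on the countable set $C_n$ --- but no countably infinite \textsf{AD} family is maximal (pick $y_k\in\left(  A_{n,k}\cap C_n\right)  \setminus\left(  A_{n,0}\cup...\cup A_{n,k-1}\cup\left\{  y_{0},...,y_{k-1}\right\}  \right)$; then $Y=\left\{  y_{k}\mid k\in\omega\right\}$ is almost disjoint from every $A_{n,k}$, and by your requirement from every other member as well). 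This is also exactly where your parenthetical dichotomy breaks: such a $Y$ is not ``spread across columns,'' and its unique infinite column is reserved. The repair is to delete the side condition entirely: positivity of the reserved columns already follows from the committed pieces (given $B_{1},...,B_{j}\in\mathcal{A}$, choose $A_{n,k}\notin\left\{  B_{1},...,B_{j}\right\}$; then $P_{n,k}\nsubseteq^{\ast}B_{1}\cup...\cup B_{j}$), and non-tightness needs only $\mathcal{A}\subseteq$ \textsf{FIN}$\times$\textsf{FIN}, since then every $B\in\mathcal{I}\left(  \mathcal{A}\right)$ has finitely many infinite columns.

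The serious gap is the one you flagged yourself: the chapter's Lemma is proved only for \emph{nowhere} Shelah-Stepr\={a}ns ideals, and you give no argument that an analytic ideal which is not Shelah-Stepr\={a}ns has a positive restriction on which it is nowhere Shelah-Stepr\={a}ns. This does not follow by any routine localization: positivity of a witness $X\in\left(  \mathcal{I}^{<\omega}\right)  ^{+}$ is a global condition, and intersecting its elements with a positive set can destroy it, so failure of the Shelah-Stepr\={a}ns property need not reflect to restrictions. The paper avoids the Lemma (and hence this problem) altogether: it never kills individual morphisms, but enumerates the analytic ideals $\mathcal{I}_{\alpha}$ with \textsf{FIN}$\times$\textsf{FIN}$\nleq_{K}\mathcal{I}_{\alpha}$ and merely ensures $\mathcal{A}\nsubseteq\mathcal{I}_{\alpha}$ for every $\alpha.$ That suffices: if $f$ witnessed $\mathcal{I}\left(  \mathcal{A}\right)  \leq_{K}\mathcal{J}$ for such a $\mathcal{J},$ the pullback $\mathcal{J}^{\prime}=\left\{  X\mid f^{-1}\left(  X\right)  \in\mathcal{J}\right\}$ would be an analytic ideal containing $\mathcal{I}\left(  \mathcal{A}\right)$ with \textsf{FIN}$\times$\textsf{FIN}$\nleq_{K}\mathcal{J}^{\prime}$ (any reduction of \textsf{FIN}$\times$\textsf{FIN} to $\mathcal{J}^{\prime}$ composes with $f$ to one into $\mathcal{J}$), so $\mathcal{J}^{\prime}$ occurs in the enumeration, and a member $A_{\xi}\notin\mathcal{J}^{\prime}$ gives $f^{-1}\left(  A_{\xi}\right)  \in\mathcal{J}^{+},$ contradicting that $f$ is a morphism. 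Crucially, avoiding containment at stage $\alpha$ needs no nowhere-Shelah-Stepr\={a}ns hypothesis: the paper splits the non-tightness partition into two halves $T_{0},T_{1}$ by a pseudointersection argument, notes that \textsf{FIN}$\times$\textsf{FIN}$\nleq_{K}\mathcal{I}_{\alpha}$ forces \textsf{FIN}$\times$\textsf{FIN}$\nleq_{K}\mathcal{I}_{\alpha}\upharpoonright T_{i}$ for some $i$ (since \textsf{FIN}$\times$\textsf{FIN} reduces to the disjoint sum of two copies of itself), and then a countable \textsf{AD} subfamily of $\mathcal{I}_{\alpha}\upharpoonright T_{i}$ cannot meet every $\left(  \mathcal{I}_{\alpha}\upharpoonright T_{i}\right)  ^{+}$ set infinitely, which yields the new positive member almost disjoint from $\mathcal{A}_{\alpha}.$ If you want to keep your architecture of killing pairs $\left(  \mathcal{I},f\right)$, replace your use of the Lemma by this pullback device; as a bonus, maximality then comes for free, because for every infinite $Y$ the $F_{\sigma}$ ideal $\left\{  Z\mid\left\vert Z\cap Y\right\vert <\omega\right\}$ is analytic, not Kat\v{e}tov above \textsf{FIN}$\times$\textsf{FIN}, and therefore also appears in the enumeration.
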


\begin{proof}
Let $\left\{  \mathcal{I}_{\alpha}\mid\omega\leq\alpha<\omega_{1}\right\}  $
be an enumeration of all analytic ideals that are not Shelah-Stepr\={a}ns and
$X=\left\{  X_{n}\mid n\in\omega\right\}  $ be a partition of $\omega$ into
infinite sets. We will recursively construct an \textsf{AD} family
$\mathcal{A}=\left\{  A_{\alpha}\mid\alpha<\omega_{1}\right\}  $ such that for
every $\alpha$ the following conditions hold:\qquad\ \ \ 

\begin{enumerate}
\item $\left\{  A_{n}\mid n\in\omega\right\}  $ is a partition of $\omega$
refining $\left\{  X_{n}\mid n\in\omega\right\}  $ and every $X_{n}$ contains
infinitely many of the $A_{m}.$

\item There is $\xi\leq\alpha$ such that $A_{\xi}\notin\mathcal{I}_{\alpha}.$

\item If $B\in\mathcal{I}\left(  \mathcal{A}\right)  $ then there is
$n\in\omega$ such that $B\cap X_{n}$ is finite.
\end{enumerate}

\qquad\ \ \ 

Let $\mathcal{A}_{\alpha}=\left\{  A_{\xi}\mid\xi<\alpha\right\}  $ and assume
$\mathcal{A}_{\alpha}\subseteq\mathcal{I}_{\alpha}.$ Let $\alpha=\left\{
\alpha_{n}\mid n\in\omega\right\}  $ and define $L_{n}=A_{\alpha_{0}}%
\cup...\cup A_{\alpha_{n}}.$ Define $E_{n}=\left\{  m\mid\left\vert L_{n}\cap
X_{m}\right\vert <\omega\right\}  $ and note that $E=\left\langle
E_{n}\right\rangle _{n\in\omega}\,\ $is a decreasing sequence of infinite
sets. Find a pseudointersection $D$ of $E$ such that $\omega\setminus D$ also
contains a pseudointersection of $E.$ Define $T_{0}=%
%TCIMACRO{\tbigcup \limits_{n\in D}}%
%BeginExpansion
{\textstyle\bigcup\limits_{n\in D}}
%EndExpansion
X_{n}$ and $T_{1}=%
%TCIMACRO{\tbigcup \limits_{n\notin D}}%
%BeginExpansion
{\textstyle\bigcup\limits_{n\notin D}}
%EndExpansion
X_{n}.$ Since \textsf{FIN}$\times$\textsf{FIN} $\mathcal{\nleq}_{K}$
$\mathcal{I}_{\alpha}$ we know that either \textsf{FIN}$\times$\textsf{FIN}
$\mathcal{\nleq}_{K}$ $\mathcal{I}_{\alpha}\upharpoonright T_{0}$ or
\textsf{FIN}$\times$\textsf{FIN} $\mathcal{\nleq}_{K}$ $\mathcal{I}_{\alpha
}\upharpoonright T_{1}.$ First assume \textsf{FIN}$\times$\textsf{FIN}
$\mathcal{\nleq}_{K}$ $\mathcal{I}_{\alpha}\upharpoonright T_{0}$ so then we
choose any $A_{\alpha}\in\left(  \mathcal{I}_{\alpha}\upharpoonright
T_{0}\right)  ^{+}$ that is almost disjoint with $\mathcal{A}_{\alpha
}\mathcal{\upharpoonright}T_{0}$ which implies it is \textsf{AD} with
$\mathcal{A}_{\alpha}.$ We now need to prove that for every $n<\omega$ there
is $X_{m}\ \ $such that$\ \left(  L_{n}\cup A_{\alpha}\right)  \cap X_{m}$ is
finite. Since $\omega\setminus D$ contains a pseudointersection of $E$, there
is $m\in E_{n}\setminus D$ and then both $L_{n}$ and $A_{\alpha}$ are almost
disjoint with $X_{m}.$ The other case is similar.
\end{proof}

\qquad\ \ \ \qquad\ \ 

\subsubsection{Strong tightness does not imply Laflamme or random
indestructible}

\qquad\ \ \ 

We start with the following proposition:

\begin{lemma}
Let $\mathcal{A}$ be a countable \textsf{AD} family contained in the summable
ideal. Let $X=\left\{  X_{n}\mid n\in\omega\right\}  \subseteq\left[
\omega\right]  ^{\omega}$ such that if $n\in\omega$ then $X_{n}$ is contained
in some $B_{n}\in\mathcal{A}$ and $B_{m}\neq B_{n}$ for almost all $m\in
\omega.$ Then there is $D\in\mathcal{A}^{\perp}\cap\mathcal{J}_{1/n}$ such
that $D\cap X_{n}\neq\emptyset$ for every $n\in\omega.$
\end{lemma}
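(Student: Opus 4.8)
The plan is to realize $D$ as a \emph{selector} that chooses exactly one point $d_{n}\in X_{n}$ for each $n$, arranging each choice to be both large (to control the summable weight) and pushed past the early elements of $\mathcal{A}$ other than $B_{n}$ (to secure almost disjointness). Interpret the hypothesis ``$B_{m}\neq B_{n}$ for almost all $m$'' in the natural way: for each fixed $B\in\mathcal{A}$ the set $\{n\mid B_{n}=B\}$ is finite. Fix an enumeration $\mathcal{A}=\{A_{k}\mid k\in\omega\}$. The single combinatorial input I will use repeatedly is that, since $\mathcal{A}$ is \textsf{AD}, whenever $A_{k}\neq B_{n}$ the set $X_{n}\cap A_{k}\subseteq B_{n}\cap A_{k}$ is finite (because $X_{n}\subseteq B_{n}$).

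First I would carry out the recursive choice of the points $d_{n}$. At stage $n$ let $E_{n}=\bigcup\{\,X_{n}\cap A_{k}\mid k\leq n,\ A_{k}\neq B_{n}\,\}$; by the input above this is a finite union of finite sets, hence finite, so $X_{n}\setminus E_{n}$ is cofinite in the infinite set $X_{n}$ and in particular infinite. I may therefore pick $d_{n}\in X_{n}\setminus E_{n}$ with $d_{n}\geq 2^{n}$ and $d_{n}>d_{n-1}$. Note that I cannot hope to remove $d_{n}$ from $B_{n}$ itself, since $X_{n}\subseteq B_{n}$; the choice only escapes those $A_{k}$ with $k\leq n$ and $A_{k}\neq B_{n}$. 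Set $D=\{d_{n}\mid n\in\omega\}$, which is infinite because the $d_{n}$ are strictly increasing.

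It then remains to verify the three required properties, and to locate the one step that carries the content. That $D\in\mathcal{J}_{1/n}$ is immediate: $\sum_{m\in D}\tfrac{1}{m+1}\leq\sum_{n\in\omega}\tfrac{1}{2^{n}+1}<\infty$. That $D$ meets every $X_{n}$ is built in, since $d_{n}\in D\cap X_{n}$. The heart of the argument is showing $D\in\mathcal{A}^{\perp}$, i.e.\ that $\{n\mid d_{n}\in A_{k}\}$ is finite for each fixed $k$. If $d_{n}\in A_{k}$ then either $n<k$, and there are only finitely many such $n$; or $k\leq n$, in which case the construction guarantees $A_{k}=B_{n}$ (had $A_{k}\neq B_{n}$ we would have had $d_{n}\notin A_{k}$), and the finite-multiplicity hypothesis makes $\{n\mid B_{n}=A_{k}\}$ finite. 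Thus $D\cap A_{k}$ is finite, completing the proof. The main obstacle is precisely this last bookkeeping: because $d_{n}$ is trapped inside $B_{n}$, the only indices where $d_{n}$ can fall into a given $A_{k}$ with $k\leq n$ are those with $A_{k}=B_{n}$, and it is exactly the hypothesis that each element of $\mathcal{A}$ serves as $B_{n}$ only finitely often that tames these indices. (The assumption $\mathcal{A}\subseteq\mathcal{J}_{1/n}$ is not actually invoked; it reflects the ambient setting in which the lemma is applied rather than a need of the construction.)
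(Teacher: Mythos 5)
Your proof is correct and follows essentially the same approach as the paper: a recursive selector construction in which the chosen points are pushed far out in each $X_n$, both to make $\sum_{d\in D}\frac{1}{d+1}$ converge and to escape the finitely many relevant intersections with elements of $\mathcal{A}$, with the finite-multiplicity hypothesis taming exactly those indices where the chosen point is forced to lie in $B_n$. The only difference is bookkeeping — the paper indexes its stages by an enumeration $\{A_n\mid n\in\omega\}$ of $\mathcal{A}$, at stage $n$ picking one large point from each of the finitely many $X_i$ contained in $A_n$ while avoiding $A_0,\dots,A_{n-1}$, whereas you proceed $X_n$ by $X_n$ — and your closing remark that the hypothesis $\mathcal{A}\subseteq\mathcal{J}_{1/n}$ is never invoked is accurate for the paper's own proof as well.
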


\begin{proof}
Let $\mathcal{A}=\left\{  A_{n}\mid n\in\omega\right\}  $ and for each
$n\in\omega$ we define $F_{n}=\left\{  X_{i}\mid X_{i}\subseteq A_{n}\right\}
.$ We construct a sequence of finite sets $\left\{  s_{n}\mid n\in
\omega\right\}  \subseteq\left[  \omega\right]  ^{<\omega}$ such that:\ 

\begin{enumerate}
\item $max\left(  s_{n}\right)  <min\left(  s_{n+1}\right)  .$

\item $%
%TCIMACRO{\tsum \limits_{i\in s_{n}}}%
%BeginExpansion
{\textstyle\sum\limits_{i\in s_{n}}}
%EndExpansion
\frac{1}{1+i}<\frac{1}{2^{n+1}}.$

\item $s_{n}$ has non empty intersection with every element of $F_{n}.$

\item If $m<n$ then $s_{n}$ is disjoint from $A_{m}.$
\end{enumerate}

\qquad\ \ 

Assuming we are at step $n,$ let $r$ such that $F_{n}=\left\{  X_{n_{1}%
},...,X_{n_{r}}\right\}  .$ Find $m$ such that $\frac{r}{1+m}<\frac{1}%
{2^{n+1}}$ and $s_{i}\subseteq m$ for every $i<n.$ For every $1\leq i\leq r$
we choose $k_{i}>m$ such that $k_{i}\in X_{n_{i}}\setminus%
%TCIMACRO{\tbigcup \limits_{j<n}}%
%BeginExpansion
{\textstyle\bigcup\limits_{j<n}}
%EndExpansion
A_{j}$ and let $s_{n}=\left\{  k_{i}\mid1\leq i\leq r\right\}  .$ It is easy
to see that $D=%
%TCIMACRO{\tbigcup \limits_{n\in\omega}}%
%BeginExpansion
{\textstyle\bigcup\limits_{n\in\omega}}
%EndExpansion
s_{n}$ has the desired properties.
\end{proof}

\qquad\ \ \ 

In \cite{CardinalInvariantsofAnalyticPIdeals} it is proved that random forcing
destroys the summable ideal. We can then conclude the following:

\begin{proposition}
[$\mathsf{CH}$]There is a strongly tight family contained in the summable
ideal $\mathcal{J}_{1/n}$ (in particular, it is $\mathbb{B}$-destructible and
not Laflamme).
\end{proposition}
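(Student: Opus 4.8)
The plan is to run a transfinite recursion of length $\omega_1$ (using $\mathsf{CH}$, so $\mathfrak{c}=\omega_1$) building an increasing chain of countable \textsf{AD} families $\mathcal{A}_\alpha=\{A_\xi\mid\xi<\alpha\}\subseteq\mathcal{J}_{1/n}$ whose union $\mathcal{A}$ will be a strongly tight \textsf{MAD} family sitting inside the summable ideal. The preceding lemma is exactly the engine for the strong tightness requirement, which is why the statement is phrased as an immediate consequence; the rest is bookkeeping. First I would fix, using $\mathsf{CH}$, an enumeration $\{Y_\alpha\mid\alpha<\omega_1\}$ of $[\omega]^\omega$ and an enumeration $\{\mathcal{W}_\alpha\mid\alpha<\omega_1\}$ of all countable families $\mathcal{W}=\{X_n\mid n\in\omega\}\subseteq[\omega]^\omega$, together with a bookkeeping function that processes every maximality task $Y_\alpha$ at least once and every strong tightness task $\mathcal{W}_\alpha$ at cofinally many stages.

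At a stage $\alpha$ devoted to a maximality task $Y_\beta$: if some element of $\mathcal{A}_\alpha$ already meets $Y_\beta$ infinitely I do nothing; otherwise $Y_\beta\in\mathcal{A}_\alpha^\perp$, and since $\mathcal{J}_{1/n}$ is tall I can pick an infinite $A_\alpha\subseteq Y_\beta$ with $A_\alpha\in\mathcal{J}_{1/n}$ (a sufficiently sparse subset has summable reciprocals), whence $A_\alpha$ is automatically almost disjoint from $\mathcal{A}_\alpha$, and I adjoin it. Processing every $Y$ this way forces $\mathcal{A}$ to be \textsf{MAD}. At a stage devoted to a strong tightness task $\mathcal{W}_\beta=\{X_n\}$: if $\mathcal{W}_\beta$ satisfies the hypotheses of the preceding lemma relative to the current family, i.e. each $X_n$ is contained in some $B_n\in\mathcal{A}_\alpha$ and each element of $\mathcal{A}_\alpha$ equals $B_n$ for only finitely many $n$, then I apply the lemma to $\mathcal{A}_\alpha$ and $\mathcal{W}_\beta$ to obtain $D\in\mathcal{A}_\alpha^\perp\cap\mathcal{J}_{1/n}$ with $D\cap X_n\neq\emptyset$ for all $n$, and I adjoin $A_\alpha=D$; otherwise I skip. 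In both kinds of steps the adjoined set lies in $\mathcal{A}_\alpha^\perp\cap\mathcal{J}_{1/n}$, so $\mathcal{A}_{\alpha+1}$ remains an \textsf{AD} family inside the summable ideal and the invariants persist through the unions taken at limit stages.

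The point I would emphasize, and the one mild obstacle, is the synchronization argument establishing strong tightness of the completed family. Suppose $\mathcal{W}=\{X_n\}$ witnesses a potential failure, i.e. $\mathcal{W}$ satisfies conditions (1) and (2) of strong tightness with respect to the final $\mathcal{A}$. Because each $X_n$ is infinite and $\mathcal{A}$ is almost disjoint, the element $B_n\in\mathcal{A}$ containing $X_n$ is unique, so the countable set $\{B_n\mid n\in\omega\}$ is enumerated by some stage $\gamma<\omega_1$ (regularity of $\omega_1$); from $\gamma$ onward $\mathcal{W}$ satisfies the lemma's hypotheses relative to the current family. Since $\mathcal{W}=\mathcal{W}_\beta$ is revisited cofinally, it is processed at some stage $\geq\gamma$, at which point the witness $D$ is adjoined; then $D\in\mathcal{I}(\mathcal{A})$ meets every $X_n$, as required. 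Here one checks that adjoining later members never spoils the hypothesis: a new member, being almost disjoint from $B_n$, cannot contain the infinite set $X_n$, so $B_n$ stays the unique element of $\mathcal{A}$ containing $X_n$ and condition (2) is preserved.

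Finally the parenthetical claims are immediate. Since $\mathcal{A}\subseteq\mathcal{J}_{1/n}$ and $\mathcal{J}_{1/n}$ is an $F_\sigma$-ideal, $\mathcal{A}$ extends to an $F_\sigma$-ideal and hence is not Laflamme. Moreover $\mathcal{I}(\mathcal{A})\subseteq\mathcal{J}_{1/n}$ gives $\mathcal{I}(\mathcal{A})\leq_{K}\mathcal{J}_{1/n}$ via the identity map, and since random forcing destroys $\mathcal{J}_{1/n}$ (as cited from \cite{CardinalInvariantsofAnalyticPIdeals}), the earlier lemma that destroying $\mathcal{J}$ destroys every $\mathcal{I}\leq_{K}\mathcal{J}$ shows that $\mathbb{B}$ destroys $\mathcal{A}$.
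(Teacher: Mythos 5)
Your proposal is correct and takes essentially the same route as the paper: the paper proves exactly the preceding lemma and then leaves the $\mathsf{CH}$ recursion implicit (``we can then conclude''), which is precisely the bookkeeping you spell out, including the synchronization step via uniqueness of the $B_n$ and cofinal revisiting of each task. The parenthetical claims are also handled identically (containment in the $F_\sigma$ ideal $\mathcal{J}_{1/n}$ rules out Laflamme, and $\mathcal{I}(\mathcal{A})\leq_{K}\mathcal{J}_{1/n}$ together with random forcing destroying $\mathcal{J}_{1/n}$ gives $\mathbb{B}$-destructibility).
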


\chapter{Destroying $P$-points with Silver reals}

\qquad\ \ 

Recall that an ultrafilter $\mathcal{U}$ is called a \emph{P-point }if every
countable subfamily of $\mathcal{U}$ has a pseudointersection in
$\mathcal{U}.$ This special kind of ultrafilters has been extensively studied
by set theorists and topologists. It is possible to construct such
ultrafilters under $\mathfrak{d=c}$ (see \cite{HandbookBlass}) or if the
parametrized diamond $\Diamond\left(  \mathfrak{r}\right)  $ holds (see
\cite{ParametrizedDiamonds}). \footnote{In fact, $P$-points can also be
constructed assuming the existence of a \textquotedblleft$\mathfrak{d}%
$-pathway\textquotedblright\ (which generalizes the construction under
$\mathfrak{d=c}$). Pathways are interesting combinatorial structures, but
since we are not going to use them in this thesis, we will avoid defining
them.} On the other hand, it is a remarkable theorem of Shelah that the
existence of $P$-points can not be proved using only the axioms of
$\mathsf{ZFC}$ (see \cite{Barty}). The model of Shelah is obtained by
iterating the Grigorieff forcing of non-meager $P$-filters.

\qquad\ \ 

By a \textquotedblleft canonical model\textquotedblright\ we mean a model
obtained after performing a countable support iteration of Borel proper
partial orders of length $\omega_{2}$. In the \textquotedblleft Forcing and
its applications retrospective workshop\textquotedblright\ held at the Fields
institute, Michael Hru\v{s}\'{a}k asked the following:

\begin{problem}
Are there $P$-points in every canonical model?
\end{problem}

\qquad\ \ \ \ 

There will be a $P$-point in case the iteration adds unbounded reals or does
not add splitting reals (since at the end we will get a model of either
$\mathfrak{d=c}$ or $\Diamond\left(  \mathfrak{r}\right)  $). Therefore, we
only need to consider the Borel $\omega^{\omega}$-bounding forcings that add
splitting reals. The best known examples of this type of forcings are the
random and Silver forcings. We will answer the question of Michael
Hru\v{s}\'{a}k by proving that there are no $P$-points in the Silver model.
The existence of a model without $P$-points with the continuum larger than
$\omega_{2}$ was also an open question \cite{WohofskyThesis} and we will also
show that the side-by-side product of Silver forcing produces sucha model. The
results of this chapter were obtained in collaboration with David Chodounsk\'{y}.

\qquad\ \ \ 

Regarding the random model, in \cite{PpointsinRandomUniverses} it was claimed
that there is a $P$-point in this model; unfortunately, I discovered the proof
presented there is incorrect. It seems that the existence of $P$-points in
that model is an open question.

\qquad\ \ 

We start by fixing some notation and remarks that will be used in the proof.
By $-_{n}$ and $=_{n}$ we denote the substraction operation and congruence
relation modulo $n.$ The notation $j\in_{n}X$ is interpreted as
\textquotedblleft there is $x\in X$ such that $j=_{n}x$\textquotedblright. For
$X,Y\subseteq n$ we define $X-_{n}Y=\left\{  x-_{n}y\mid x\in X\wedge y\in
Y\right\}  .$\qquad\ \ 

We will need the following lemma:

\begin{lemma}
For every $n\in\omega$ there is $k\left(  n\right)  \in\omega$ such that for
each $C\in\left[  k\left(  n\right)  \right]  ^{n}$ there is $s\in k\left(
n\right)  $ such that $C\cap\left(  C-_{k\left(  n\right)  }s\right)
=\emptyset.$
\end{lemma}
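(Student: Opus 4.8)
The plan is to reduce this to an elementary counting argument about difference sets. The condition $C\cap(C-_{k}s)=\emptyset$ (writing $k=k(n)$) says precisely that no $j\in k$ lies in both $C$ and $C-_{k}s$; unpacking the second membership, $j\in C-_{k}s$ means $j=c-_{k}s$ for some $c\in C$, i.e. $s=c-_{k}j$. Hence $C\cap(C-_{k}s)\neq\emptyset$ holds if and only if $s\in C-_{k}C$, where $C-_{k}C=\left\{x-_{k}y\mid x,y\in C\right\}$ is the modular difference set of $C$. So the whole statement is equivalent to the assertion that, for a suitable $k(n)$, every $C\in\left[k(n)\right]^{n}$ fails to have its difference set $C-_{k(n)}C$ equal to all of $k(n)$; any $s\in k(n)\setminus(C-_{k(n)}C)$ is then the witness we seek.

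First I would bound the size of the difference set. Since $C$ has exactly $n$ elements, the ordered pairs $(x,y)\in C\times C$ number $n^{2}$, and the value $x-_{k}y$ is $0$ precisely on the $n$ diagonal pairs. Thus the distinct elements of $C-_{k}C$ number at most $n(n-1)+1$ (the nonzero differences, together with $0$). This bound is completely uniform in $C$ and independent of $k$.

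Next I would simply choose $k(n)$ large enough to beat this bound: take $k(n)=n(n-1)+2$ (or any larger value). Then for every $C\in\left[k(n)\right]^{n}$ we have $\left\vert C-_{k(n)}C\right\vert\leq n(n-1)+1<k(n)$, so the set $k(n)\setminus(C-_{k(n)}C)$ is non-empty. Picking any $s$ in it gives $s\notin C-_{k(n)}C$, and by the equivalence established above this yields $C\cap(C-_{k(n)}s)=\emptyset$, as required. I would also check the degenerate small cases ($n=0$ gives $C=\emptyset$ and any $s<k(0)=2$ works; $n=1$ gives $C-_{2}C=\left\{0\right\}$ so $s=1$ works) to confirm the chosen $k(n)$ is valid throughout.

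There is no real obstacle here: the only point demanding care is the bookkeeping of the difference set, namely verifying that the equivalence between the emptiness condition and the non-membership $s\notin C-_{k}C$ is stated in the correct direction (since $-_{k}$ is not symmetric at the level of the relation, one must track whether the witness $s$ equals $c-_{k}j$ or $j-_{k}c$). Once that bookkeeping is fixed, the counting $\left\vert C-_{k}C\right\vert\leq n(n-1)+1$ does all the work, and the choice $k(n)=n(n-1)+2$ closes the argument.
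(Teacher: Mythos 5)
Your proof is correct and follows essentially the same route as the paper, which simply asserts that any $k(n)>n^{2}$ works via the implicit counting argument you spell out: $C\cap(C-_{k}s)=\emptyset$ iff $s\notin C-_{k}C$, and the difference set has at most $n(n-1)+1<k(n)$ elements, so a witness $s$ exists. Your bookkeeping (including the sharper bound $n(n-1)+2$ and the degenerate cases) is a fully detailed version of what the paper leaves as "easy to see."
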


\begin{proof}
It is easy to see that if $k\left(  n\right)  >n^{2}$ then the result holds.
\end{proof}

\qquad\ \ 

We now recursively define two functions $v:\omega\longrightarrow\omega$ and
$m:\omega\longrightarrow\omega$ as follows: We define $v\left(  0\right)  =0$
and $m\left(  0\right)  =k\left(  2\right)  .$ If $v\left(  n-1\right)  $ and
$m\left(  n-1\right)  $ are already defined then let $v\left(  n\right)  =%
%TCIMACRO{\tsum \limits_{i<n}}%
%BeginExpansion
{\textstyle\sum\limits_{i<n}}
%EndExpansion
m\left(  i\right)  $ and $m\left(  n\right)  =k\left(  \left(  n+1\right)
\left(  v\left(  n\right)  +2\right)  \right)  .$ We will need the following definitions:

\begin{definition}
Let $r\in\left[  \omega\right]  ^{\omega},$ $f:\omega\longrightarrow\omega$ be
an increasing function and $\overline{X}=\left\langle X_{n}\mid n\in
\omega\right\rangle $ where $X_{n}\in\left[  m\left(  n\right)  \right]
^{n+1}.$

\begin{enumerate}
\item Let $\mathcal{P}\left(  r\right)  =\left\{  P_{l}\left(  r\right)  \mid
l\in\omega\right\}  $ be the interval partition where $P_{0}\left(  r\right)
=min\left(  r\right)  $ and $P_{n+1}\left(  r\right)  =(max\left(
P_{n}\left(  r\right)  \right)  ,min\left(  r\setminus P_{n}\right)  ].$

\item For every $n\in\omega$ and every $i<m\left(  n\right)  $ we define the
set $D_{i}^{m\left(  n\right)  }\left(  r\right)  =%
%TCIMACRO{\tbigcup }%
%BeginExpansion
{\textstyle\bigcup}
%EndExpansion
\left\{  P_{j}\left(  r\right)  \mid j=_{m\left(  n\right)  }i\right\}  .$

\item For every $n\in\omega$ let $E_{n}\left(  r,\overline{X}\right)  =%
%TCIMACRO{\tbigcup \limits_{i\in X_{n}}}%
%BeginExpansion
{\textstyle\bigcup\limits_{i\in X_{n}}}
%EndExpansion
D_{i}^{m\left(  n\right)  }\left(  r\right)  .$

\item We define $E\left(  r,\overline{X},f\right)  =%
%TCIMACRO{\tbigcap \limits_{n\in\omega}}%
%BeginExpansion
{\textstyle\bigcap\limits_{n\in\omega}}
%EndExpansion
\left(  E_{n}\left(  r,\overline{X}\right)  \cup f\left(  n\right)  \right)  $
\end{enumerate}
\end{definition}

\qquad\ \ 

Note that $E\left(  r,\overline{X},f\right)  $ is a pseudointersection of
$\left\{  E_{n}\left(  r,\overline{X}\right)  \mid n\in\omega\right\}  ,$
furthermore, $E\left(  r,\overline{X},f\right)  \setminus f\left(  n\right)
\subseteq$ $E_{n}\left(  r,\overline{X}\right)  $ for each $n\in\omega.$

\section{Doughnuts and $P$-points}

Given $x,y\in\left[  \omega\right]  ^{\omega}$ such that $x\subseteq y,$
$y\setminus x$ is infinite, we define $\left[  x,y\right]  $ as the set
$\left\{  z\subseteq\omega\mid x\subseteq z\subseteq y\right\}  .$ These sets
are often referred to \emph{doughnuts. }We say $N\subseteq\left[
\omega\right]  ^{\omega}$ is \emph{doughnut-null }if for every doughnut $p$
there is a doughnut $p^{\prime}\subseteq p$ such that $p^{\prime}\cap
N=\emptyset.$ If $p=\left[  x,y\right]  $ is a doughnut, we define $cod\left(
p\right)  =y\setminus x.$ The ideal $v_{0}$ is defined as the ideal generated
by all doughnut-null sets. Doughnuts were first defined by Di Prisco and Henle
(see \cite{DoughnutsFloatingOrdinalsSquareBracketsandUltraflitters}). The main
result is the following:

\begin{proposition}
Let $\mathcal{U}$ be a (non principal ultrafilter), $f:\omega\longrightarrow
\omega$ and $\overline{X}=\left\langle X_{n}\mid n\in\omega\right\rangle $
where $X_{n}\in\left[  m\left(  n\right)  \right]  ^{n+1}.$ Then the set
$N\left(  \mathcal{U},f,X\right)  =\{r\subseteq\omega\mid\forall
A\in\mathcal{U}\left(  A\cap E\left(  r,f,\overline{X}\right)  \neq
\emptyset\right)  \}$ is doughnut null.
\end{proposition}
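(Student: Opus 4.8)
The plan is to show that for any doughnut $p=\left[  x,y\right]  $ we can find a sub-doughnut $p^{\prime}\subseteq p$ entirely avoiding $N\left(  \mathcal{U},f,\overline{X}\right)  $. The key observation is that an $r$ belongs to $N\left(  \mathcal{U},f,\overline{X}\right)  $ precisely when the pseudointersection $E\left(  r,\overline{X},f\right)  $ is $\mathcal{U}$-positive (i.e.\ meets every $A\in\mathcal{U}$); so to escape $N$ it suffices to arrange, along all $r$ in a sub-doughnut, that $E\left(  r,\overline{X},f\right)  $ is forced to lie in the dual ideal $\mathcal{U}^{\ast}$, that is, that $\omega\setminus E\left(  r,\overline{X},f\right)  \in\mathcal{U}$. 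First I would recall that $E\left(  r,\overline{X},f\right)  \setminus f\left(  n\right)  \subseteq E_{n}\left(  r,\overline{X}\right)  $ for each $n$, and that each $E_{n}\left(  r,\overline{X}\right)  $ is a union of the residue-blocks $D_{i}^{m\left(  n\right)  }\left(  r\right)  $ over $i\in X_{n}$, where $\left\vert X_{n}\right\vert =n+1$ and the blocks are determined by the interval partition $\mathcal{P}\left(  r\right)  $.

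The heart of the argument is to exploit the combinatorial lemma on $k\left(  n\right)  $ and the definitions of $v,m$. Building $p^{\prime}=\left[  x^{\prime},y^{\prime}\right]  \subseteq p$, I would construct $cod\left(  p^{\prime}\right)  =y^{\prime}\setminus x^{\prime}$ recursively in blocks, so that the block structure of $\mathcal{P}\left(  r\right)  $ is controlled uniformly for every $r\in p^{\prime}$: the point of fixing $cod\left(  p^{\prime}\right)  $ is that the positions of the partition intervals $P_{j}\left(  r\right)  $ modulo $m\left(  n\right)  $ become, up to the optional elements of $cod\left(  p^{\prime}\right)  $, rigid enough that the residue class hit by any given $P_{j}\left(  r\right)  $ can be shifted. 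Since for each $n$ the set $X_{n}\subseteq m\left(  n\right)  $ has only $n+1$ elements while $m\left(  n\right)  =k\left(  \left(  n+1\right)  \left(  v\left(  n\right)  +2\right)  \right)  $, the lemma (applied to the relevant set of residues of size at most $\left(  n+1\right)  \left(  v\left(  n\right)  +2\right)  $) guarantees an additive shift $s$ with $C\cap\left(  C-_{m\left(  n\right)  }s\right)  =\emptyset$, which translates into a choice of how the fixed part $x^{\prime}$ interleaves with $cod\left(  p^{\prime}\right)  $ that forces some fixed residue class of intervals to \emph{miss} $E_{n}\left(  r,\overline{X}\right)  $ for all admissible $r$ simultaneously. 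Collecting one such guaranteed-missed residue class $D_{i_{n}}^{m\left(  n\right)  }\left(  r\right)  $ at each level $n$ yields, in the limit, an infinite set $W\subseteq\omega$ (a union of whole partition intervals) disjoint from $E\left(  r,\overline{X},f\right)  $ for every $r\in p^{\prime}$.

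It remains to make $W$ lie in $\mathcal{U}$, which is where the ultrafilter enters. The increasing sizes $v\left(  n\right)  =\sum_{i<n}m\left(  i\right)  $ are exactly what allow a diagonal/bookkeeping argument that at each level keeps enough residue classes free that the accumulated excluded set $W$ can be steered into any prescribed $\mathcal{U}$-large target; concretely I would run the recursion interleaving the shift-choices from the lemma with the requirement that the union of excluded intervals captures a member of $\mathcal{U}$, using non-principality of $\mathcal{U}$ to always extend. Once $W\in\mathcal{U}$ and $W\cap E\left(  r,\overline{X},f\right)  =\emptyset$ for every $r\in p^{\prime}$, we conclude $E\left(  r,\overline{X},f\right)  \cap W=\emptyset$ with $W\in\mathcal{U}$, so $r\notin N\left(  \mathcal{U},f,\overline{X}\right)  $; hence $p^{\prime}\cap N\left(  \mathcal{U},f,\overline{X}\right)  =\emptyset$ and $N\left(  \mathcal{U},f,\overline{X}\right)  $ is doughnut-null.

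\textbf{Main obstacle.} The delicate point I expect to fight with is making the residue-shift from the combinatorial lemma genuinely \emph{uniform over all $r$ in the sub-doughnut}: the partition $\mathcal{P}\left(  r\right)  $ depends on which optional elements of $cod\left(  p^{\prime}\right)  $ a given $r$ includes, so the interval boundaries, and thus their residues mod $m\left(  n\right)  $, wobble as $r$ varies. Reconciling this wobble requires the parameters $v\left(  n\right)  +2$ and the factor $n+1$ in the definition of $m\left(  n\right)  $ to absorb both the $n+1$ elements of $X_{n}$ and the bounded uncertainty in block positions coming from the at most $v\left(  n\right)$-sized prefix already fixed; verifying that the lemma's hypothesis still applies to the enlarged set of ``possible residues'' of size at most $\left(  n+1\right)  \left(  v\left(  n\right)  +2\right)  $ is the crux, and getting the recursion's bookkeeping to simultaneously satisfy the $\mathcal{U}$-largeness of $W$ is the second source of difficulty.
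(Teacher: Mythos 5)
Your overall target is right (find a sub-doughnut $p'$ and a single set $U\in\mathcal{U}$ with $E(r,\overline{X},f)\cap U=\emptyset$ for every $r\in p'$), and you correctly locate the role of the combinatorial lemma and of the bound $(n+1)(v(n)+2)$. But the last step of your plan --- building the excluded set $W$ by recursion and ``steering'' it into $\mathcal{U}$ ``using non-principality to always extend'' --- is a genuine gap, and it cannot be repaired in that form. Membership of an infinite, co-infinite set in a non-principal ultrafilter is not decided by any finite amount of information: at each finite stage of your recursion, the continuations compatible with your bookkeeping include sets inside $\mathcal{U}$ and sets outside it, and non-principality gives no leverage (it only says finite sets are not in $\mathcal{U}$). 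Nor can you ``prescribe a $\mathcal{U}$-large target'' for $W$ to capture, since the sets available as $W$ (unions of partition intervals attached to $p'$) are themselves constrained by the same recursion. In short, a construction-first, ultrafilter-later strategy cannot work.

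The paper's proof inverts this order: the ultrafilter decides first, and the doughnut shrinking responds. Concretely, it fixes an auxiliary interval partition $\{A_n\}$ with $f(n)<\min(A_{2n})$ and $m(n)<\left\vert A_{2n+j}\cap cod(p)\right\vert$, arranges $U_0=\bigcup_n A_{2n+1}\in\mathcal{U}$, and passes to $p_1=[a_1,b_1]$ whose codomain lies only in the even blocks with $\left\vert cod(p_1)\cap A_{2n}\right\vert =m(n)$. It then defines, \emph{before any further shrinking}, the residue sets $C_n=X_n-_{m(n)}\{0,\dots,v(n)+1\}$ (this slack absorbs exactly the wobble you flag as the main obstacle) and the traces $H_n=A_{2n+1}\cap\bigcup\{P_j(a_1)\mid j\in_{m(n)}C_n\}$. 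Now comes the missing idea, a pigeonhole through the ultrafilter: either $\bigcup_n H_n\notin\mathcal{U}$, in which case $U=\bigcup_n(A_{2n+1}\setminus H_n)\in\mathcal{U}$ and the unshifted shrinking ($a_2=a_1$, one optional point per even block) forces $E_n(r,\overline{X})\cap A_{2n+1}\subseteq H_n$, hence $E(r,\overline{X},f)\cap U=\emptyset$; or $\bigcup_n H_n\in\mathcal{U}$, and then the lemma's shifts $s_n$ are used to move $s_n$ points of $cod(p_1)\cap A_{2n}$ into the fixed part, translating the traces to $\overline{H}_n$ with $H_n\cap\overline{H}_n=\emptyset$, hence $E(r,\overline{X},f)\cap\bigcup_n H_n=\emptyset$. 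In both cases the avoided $\mathcal{U}$-large set is one of two candidates named in advance, and it is the ultrafilter property (applied to the partition $A_{2n+1}=H_n\cup(A_{2n+1}\setminus H_n)$ inside $U_0$), not any recursion, that certifies one of them lies in $\mathcal{U}$. Note also that the lemma is used not to produce a residue class missed by all $E_n$ directly, but to make the shifted trace $\overline{H}_n$ disjoint from the unshifted one $H_n$ --- a different and essential use.
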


\begin{proof}
Letting $p$ be a doughnut, we must show that $p$ can be shrunken in order to
avoid $N\left(  \mathcal{U},f,X\right)  .$ We now choose an interval partition
$\left\{  A_{n}\mid n\in\left\{  -1\right\}  \cup\omega\right\}  $ such that
for each $n\in\omega$ the following holds:

\begin{enumerate}
\item $f\left(  n\right)  <min\left(  A_{2n}\right)  $.

\item $m\left(  n\right)  <\left\vert A_{2n+j}\cap cod\left(  p\right)
\right\vert $ for each $j<2.$
\end{enumerate}

\qquad\ \ 

We can assume that $U_{0}=%
%TCIMACRO{\tbigcup }%
%BeginExpansion
{\textstyle\bigcup}
%EndExpansion
\left\{  A_{2n+1}\mid n\in\omega\right\}  \in\mathcal{U},$ if this was not the
case, we take the interval partition $\left\langle A_{-1}\cup A_{0}%
,A_{1},A_{2},...\right\rangle $ instead (this is possible since $\mathcal{U}$
is not principal). Let $p_{1}=\left[  a_{1},b_{1}\right]  \subseteq p$ be a
doughnut such that $cod\left(  p_{1}\right)  \cap A_{2n-1}=\emptyset$ and
$\left\vert cod\left(  p_{1}\right)  \cap A_{2n}\right\vert =m\left(
n\right)  $ for each $n\in\omega.$ Note that $\left\vert cod\left(
p_{1}\right)  \cap min\left(  A_{2n}\right)  \right\vert =v\left(  n\right)
.$ We now define $C_{n}$ as the following set: $\left\{  j\in m\left(
n\right)  \mid j\in_{m\left(  n\right)  }\left(  X_{n}-_{m\left(  n\right)
}\left\{  i\mid i\in v\left(  n\right)  +2\right\}  \right)  \right\}  $ and
note that $\left\vert C_{n}\right\vert \,\ $is at most $\left(  n+1\right)
\left(  v\left(  n\right)  +2\right)  .$ Let $H_{n}=A_{2n+1}\cap%
%TCIMACRO{\tbigcup }%
%BeginExpansion
{\textstyle\bigcup}
%EndExpansion
\left\{  P_{j}\left(  a_{1}\right)  \mid j\in\omega\wedge j\in_{m\left(
n\right)  }C_{n}\right\}  $ for every $n\in\omega.$

\qquad\ \ 

We will now distinguish two cases: first assume that
%TCIMACRO{\tbigcup }%
%BeginExpansion
${\textstyle\bigcup}$
%EndExpansion
$\left\{  H_{n}\mid n\in\omega\right\}  \notin$ $\mathcal{U}.$ Therefore, $U=%
%TCIMACRO{\tbigcup }%
%BeginExpansion
{\textstyle\bigcup}
%EndExpansion
\left\{  A_{2n+1}\setminus H_{n}\mid n\in\omega\right\}  \in\mathcal{U}.$ Let
$p_{2}=\left[  a_{2},b_{2}\right]  \subseteq p_{1}$ be a doughnut such that
$a_{2}=a_{1}$ and $\left\vert cod\left(  p_{2}\right)  \cap A_{2n}\right\vert
=1$ for each $n\in\omega.$ Note that if $r\in p_{2}$ and $i\in X_{n}$ then
$D_{i}^{m\left(  n\right)  }\left(  r\right)  \cap A_{2n+1}\subseteq H_{n}.$
Thus, $E_{n}\left(  r,\overline{X}\right)  \cap A_{2n+1}\subseteq H_{n}$ and
since $E\left(  r,\overline{X},f\right)  \setminus min\left(  A_{2n}\right)
\subseteq E_{n}\left(  r,\overline{X}\right)  $ we conclude that $E\left(
r,\overline{X},f\right)  \cap U=\emptyset,$ so $p_{2}$ is the doughnut we were
looking for.

\qquad\ \ 

We now consider the case where $U=$%
%TCIMACRO{\tbigcup }%
%BeginExpansion
${\textstyle\bigcup}$
%EndExpansion
$\left\{  H_{n}\mid n\in\omega\right\}$ $\in$ $U$. By the previous
lemma, for each $n\in\omega$ we can find $s_{n}\in m\left(  n\right)  $ such
that $C_{n}\cap\left(  C_{n}-_{m\left(  n\right)  }\left\{  s_{n}\right\}
\right)  =\emptyset.$ We then choose a doughnut $p_{2}=\left[  a_{2}%
,b_{2}\right]  \subseteq p_{1}$ such that $\left\vert cod\left(  p_{2}\right)
\cap A_{2n}\right\vert =1$ and $\left\vert a_{2}\cap A_{2n}\right\vert
=\left\vert a_{1}\cap A_{2n}\right\vert +s_{n}$ for each $n\in\omega$ (such
$a_{2}$ exists since $cod\left(  p_{1}\right)  \cap A_{2n}$ has size $m\left(
n\right)  $). For every $n\in\omega$ we define $\overline{H}_{n}$ as the set
$A_{2n+1}\cap%
%TCIMACRO{\tbigcup }%
%BeginExpansion
{\textstyle\bigcup}
%EndExpansion
\left\{  P_{j}\left(  a_{1}\right)  \mid j\in\omega\wedge j\in_{m\left(
n\right)  }\left(  C_{n}-_{m\left(  n\right)  }\left\{  s_{n}\right\}
\right)  \right\}  .$ It is not hard to see that $\overline{H}_{n}%
=A_{2n+1}\cap%
%TCIMACRO{\tbigcup }%
%BeginExpansion
{\textstyle\bigcup}
%EndExpansion
\left\{  P_{j}\left(  a_{2}\right)  \mid j\in\omega\wedge j\in_{m\left(
n\right)  }C_{n}\right\}  .$ Notice that $H_{n}\cap\overline{H}_{n}%
=\emptyset.$ Now, if $r\in p_{2}$ and $i\in X_{n}$ then $D_{i}^{m\left(
n\right)  }\left(  r\right)  \cap A_{2n+1}\subseteq\overline{H}_{n}$ which as
before implies that $E\left(  r,\overline{X},f\right)  \cap U=\emptyset.$
\end{proof}

\qquad\ \ 

We can then prove the following result:

\begin{proposition}
The inequality \textsf{cof}$\left(  \mathcal{N}\right)  <$ \textsf{cov}%
$\left(  v_{0}\right)  $ implies that there are no $P$-points.
\end{proposition}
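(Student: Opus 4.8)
The plan is to show that if $\mathsf{cof}(\mathcal{N}) < \mathsf{cov}(v_0)$ then no ultrafilter $\mathcal{U}$ can be a $P$-point. So I fix an arbitrary non-principal ultrafilter $\mathcal{U}$ and aim to produce a countable decreasing sequence in $\mathcal{U}$ (equivalently, a countable subfamily of $\mathcal{U}$) witnessing that $\mathcal{U}$ is not a $P$-point, i.e. with no pseudointersection in $\mathcal{U}$. The mechanism connecting the two sides of the inequality is the family of doughnut-null sets $N(\mathcal{U}, f, \overline{X})$ from the previous proposition: each such set records the $r$ whose induced pseudointersection $E(r, \overline{X}, f)$ meets every element of $\mathcal{U}$. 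The strategy is a counting argument: use $\mathsf{cof}(\mathcal{N})$ many pairs $(f, \overline{X})$ to build fewer than $\mathsf{cov}(v_0)$ doughnut-null sets, so that these cannot cover $[\omega]^\omega$, and extract from an uncovered $r$ a sequence exhibiting the failure of the $P$-point property.

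First I would recall that by the earlier proposition, for each increasing $f : \omega \to \omega$ and each sequence $\overline{X} = \langle X_n \mid n \in \omega \rangle$ with $X_n \in [m(n)]^{n+1}$, the set $N(\mathcal{U}, f, \overline{X})$ is doughnut-null, hence belongs to $v_0$. The key observation is that the relevant data $(f, \overline{X})$ can be coded by a single real, and that a witnessing family for $\mathsf{cof}(\mathcal{N})$ supplies a suitably cofinal/dominating collection of such reals of size $\mathsf{cof}(\mathcal{N})$. Concretely, since each $\overline{X}$ lives in the compact product $\prod_n [m(n)]^{n+1}$ and $f$ ranges over increasing functions, I would fix a family $\mathcal{G}$ of pairs $(f, \overline{X})$ of size $\mathsf{cof}(\mathcal{N})$ that is cofinal in the sense needed for the argument (using that $\mathsf{d} \leq \mathsf{cof}(\mathcal{N})$ to dominate the $f$'s, and that the $\overline{X}$-coordinate can be handled by a cofinal family in the appropriate slalom/localization structure whose minimal size is $\mathsf{cof}(\mathcal{N})$). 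The collection $\{ N(\mathcal{U}, f, \overline{X}) \mid (f, \overline{X}) \in \mathcal{G} \}$ then has size at most $\mathsf{cof}(\mathcal{N}) < \mathsf{cov}(v_0)$, so it does not cover $[\omega]^\omega$; pick $r \in [\omega]^\omega$ lying outside all of them.

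For this $r$, for every $(f, \overline{X}) \in \mathcal{G}$ we have $r \notin N(\mathcal{U}, f, \overline{X})$, which by definition means there is $A \in \mathcal{U}$ with $A \cap E(r, \overline{X}, f) = \emptyset$. The plan is to read off from $r$ a fixed countable family $\{ E_n(r, \overline{X}) \mid n \in \omega \}$ (for a well-chosen $\overline{X}$) lying in $\mathcal{U}$ — using that $\mathcal{U}$ is an ultrafilter to select, among the congruence-class sets $D_i^{m(n)}(r)$, the $X_n$ putting the $\mathcal{U}$-large classes into $E_n(r,\overline{X})$ — and then argue that any pseudointersection of this family is, up to a finite modification governed by some $f$, contained in some $E(r, \overline{X}, f)$. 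Since for the cofinal $(f, \overline{X})$ the set $E(r, \overline{X}, f)$ is disjoint from a member of $\mathcal{U}$, no pseudointersection of the family can itself belong to $\mathcal{U}$, so $\mathcal{U}$ fails to be a $P$-point. As $\mathcal{U}$ was arbitrary, there are no $P$-points.

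The main obstacle I expect is the bookkeeping that makes the cofinality of $\mathcal{G}$ actually do its job: I must arrange that the countable family extracted from $r$ lies in $\mathcal{U}$ and simultaneously that \emph{every} candidate pseudointersection $Y$ of it is captured as $Y \subseteq^* E(r, \overline{X}, f)$ for some $(f, \overline{X}) \in \mathcal{G}$. The delicate point is that the $\overline{X}$ determining the family in $\mathcal{U}$ is chosen \emph{after} $r$ via the ultrafilter, whereas $\mathcal{G}$ was fixed \emph{before}; so I would need the cofinal family to be robust enough — e.g. cofinal in the $\overline{X}$-coordinate for a \emph{fixed} $r$ simultaneously with being dominating in $f$ — which is precisely where the strength of $\mathsf{cof}(\mathcal{N})$ (rather than merely $\mathsf{d}$) enters, since encoding both the slalom-localization of $\overline{X}$ and the domination of $f$ requires the full covering/cofinality structure of the null ideal. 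Verifying that $E(r, \overline{X}, f) \setminus f(n) \subseteq E_n(r, \overline{X})$ gives the needed $\subseteq^*$-capture, together with matching the $\mathcal{U}$-selection of congruence classes to the fixed family $\mathcal{G}$, is the technical heart of the argument.
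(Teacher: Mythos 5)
Your proposal follows the paper's own proof essentially step for step: you index the doughnut-null sets $N(\mathcal{U},f,\overline{X})$ by a $\leq$-dominating family of $f$'s together with a localization (slalom) family of $\overline{X}$'s of size $\mathsf{cof}(\mathcal{N})$ — the paper's family $\mathcal{S}$, which catches every $h\in\prod_{n}m(n)$ at \emph{every} coordinate — then pick $r$ avoiding all these sets via $\mathsf{cof}(\mathcal{N})<\mathsf{cov}(v_{0})$, let the ultrafilter select the classes $D_{h(n)}^{m(n)}(r)\in\mathcal{U}$, and trap any pseudointersection $Y$ inside some $E(r,\overline{X}_{\alpha},f_{\beta})$, contradicting $r\notin N(\mathcal{U},f_{\beta},\overline{X}_{\alpha})$. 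The "delicate point" you flag (that $h$ is chosen after $r$ while the family is fixed beforehand) is resolved exactly as you suggest and as the paper does: the localization family captures every bounded $h$ everywhere, so its choice is independent of $r$.
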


\begin{proof}
Let $\mathcal{U}$ be a non principal ultrafilter, we will show that
$\mathcal{U}$ is not a $P$-point. Let $\mathcal{S}=\{\overline{X}_{\alpha}%
\mid\alpha\in$ \textsf{cof}$\left(  \mathcal{N}\right)  \}$ be a family with
the following properties:

\begin{enumerate}
\item $\overline{X}_{\alpha}=\left\langle X_{n}^{\alpha}\mid n\in
\omega\right\rangle $ where $X_{n}^{\alpha}\in\left[  m\left(  n\right)
\right]  ^{n+1}$ for $n\in\omega$ and $\alpha<$ \textsf{cof}$\left(
\mathcal{N}\right)  .$

\item For every $h:\omega\longrightarrow\omega$ such that $h\left(  n\right)
$ $<m\left(  n\right)  $ for every $n\in\omega,$ there is $\alpha<$
\textsf{cof}$\left(  \mathcal{N}\right)  $ such that $h\left(  n\right)  \in
X_{n}^{\alpha}$ for every $n\in\omega.$
\end{enumerate}

\qquad\ 

Let $\{f_{\beta}\mid\beta\in$\textsf{ cof}$\left(  \mathcal{N}\right)
\}\subseteq\omega^{\omega}$ be a $\leq$-dominating family of functions. We
know that $\mathcal{B}=\{N\left(  \mathcal{U},f_{\beta},\overline{X}_{\alpha
}\right)  \mid\alpha,\beta<$\textsf{ cof}$\left(  \mathcal{N}\right)  \}$ is a
family of doughnut null sets. Since \textsf{cof}$\left(  \mathcal{N}\right)
<$ \textsf{cov}$\left(  v_{0}\right)  $ we can find $r\notin N\left(
\mathcal{U},f_{\beta},\overline{X}_{\alpha}\right)  $ for every $\alpha$ and
$\beta$ smaller than \textsf{cof}$\left(  \mathcal{N}\right)  .$ We can then
find $h:\omega\longrightarrow\omega$ such that $D_{h\left(  n\right)
}^{m\left(  n\right)  }\left(  r\right)  \in\mathcal{U}$ for every $n\in
\omega$ (note that $h\left(  n\right)  $ $<m\left(  n\right)  $). We will now
prove that $\{D_{h\left(  n\right)  }^{m\left(  n\right)  }\left(  r\right)
\mid n\in\omega\}$ has no pseudointersection in $\mathcal{U}.$ Let $Y$ be a
pseudointersection. We first find $\alpha<$ \textsf{cof}$\left(
\mathcal{N}\right)  $ such that $h\left(  n\right)  \in X_{n}^{\alpha}$ for
every $n\in\omega$ and then we find $\beta<$ \textsf{cof}$\left(
\mathcal{N}\right)  $ such that $Y\setminus f_{\beta}\left(  n\right)
\subseteq D_{h\left(  n\right)  }^{m\left(  n\right)  }\left(  r\right)  $ for
every $n\in\omega.$ Since $r\notin N\left(  \mathcal{U},f_{\beta},\overline
{X}_{\alpha}\right)  $ there is $A\in\mathcal{U}$ such that $A\cap E\left(
r,\overline{X}_{\alpha},f_{\beta}\right)  =\emptyset.$ Since $Y\subseteq
^{\ast}E\left(  r,\overline{X}_{\alpha},f_{\beta}\right)  $ the result follows.
\end{proof}

\section{There are no $P$-points in the Silver model\qquad\qquad\ \ }

The \emph{Silver forcing }(also known as \emph{Silver-Prikry forcing})\emph{
}consists of all partial functions $p\subseteq\omega\times2$ such that
$\omega\setminus dom\left(  p\right)  $ is infinite. We say that\emph{ }$p\leq
q$ in case $q\subseteq p.$ We will denote Silver forcing by $\mathbb{PS}.$
Note that the set of all conditions $p\in\mathbb{PS}$ such that $p^{-1}\left(
1\right)  $ is infinite forms an open dense set, so we will assume all
conditions have this property. It is well known that Silver forcing is proper.
By the \emph{Silver model }we refer to the model obtained by iterating Silver
forcing $\omega_{2}$ times over a model of the Continuum Hypothesis. The
following results is well known:

\begin{proposition}
The equality \textsf{cov}$\left(  v_{0}\right)  =\mathfrak{c}$ holds in the
Silver model.
\end{proposition}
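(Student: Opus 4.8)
The goal is to show that $\mathsf{cov}(v_0) = \mathfrak{c}$ in the Silver model, i.e.\ after a countable support iteration of Silver forcing $\mathbb{PS}$ of length $\omega_2$ over a model of $\mathsf{CH}$. The plan is to reduce the statement to a combinatorial genericity property of the Silver real, namely that the Silver generic is doughnut-quasigeneric over the ground model. Concretely, I would aim to prove the following local claim: for every condition $p \in \mathbb{PS}$ and every ground-model doughnut-null Borel set $N$, the Silver generic real (read off as a subset of $\omega$ via $\dot{r}^{-1}(1)$) is forced to avoid $N$; equivalently, every ground-model doughnut $[x,y]$ is met by a further doughnut coded in the extension that avoids $N$. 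Since $\mathsf{cov}(v_0)$ is the least $\kappa$ such that $\kappa$ many doughnut-null sets cover $[\omega]^\omega$, showing that no small family from the ground model can cover the reals of the extension is exactly what is needed.

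First I would set up the reduction. In the iterated extension of length $\omega_2$, any family of fewer than $\mathfrak{c} = \omega_2$ doughnut-null Borel sets appears (by the $\omega_2$-c.c.\ of the countable support iteration of proper forcings of size $\leq \omega_1$, together with properness and the usual reflection argument) already at some intermediate stage $\alpha < \omega_2$. So it suffices to show that the remaining tail of the iteration adds a real outside every doughnut-null set coded at stage $\alpha$; and for this it suffices to handle a single Silver step, since one more Silver real is added cofinally often. Thus the whole problem localizes to: a single Silver forcing $\mathbb{PS}$ adds a real that is doughnut-quasigeneric over the ground model, meaning it escapes every ground-model doughnut-null set.

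The heart of the argument is then the single-step genericity. Here I would exploit the structural similarity between Silver conditions and doughnuts. A Silver condition $p$ with $p^{-1}(1)$ infinite and $\omega \setminus \mathrm{dom}(p)$ infinite naturally determines a doughnut $[\,p^{-1}(1),\, p^{-1}(1) \cup (\omega \setminus \mathrm{dom}(p))\,]$: the generic real lies in this interval, and the undecided coordinates $\omega \setminus \mathrm{dom}(p)$ play the role of the code $\mathrm{cod}(p)$. Given a ground-model doughnut-null $N$ and a condition $p$, the definition of doughnut-null lets me find a subdoughnut $p' \subseteq [\,p^{-1}(1),\, p^{-1}(1) \cup (\omega \setminus \mathrm{dom}(p))\,]$ with $p' \cap N = \emptyset$; I then translate $p'$ back into a stronger Silver condition $q \leq p$ whose generic interval is contained in $p'$, so that $q$ forces $\dot{r} \notin N$. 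The main obstacle, and the step I expect to require the most care, is this translation: a general subdoughnut $[x', y']$ prescribes infinitely many coordinates to be $1$ (those in $x' \setminus p^{-1}(1)$) and infinitely many to be left free (those in $\mathrm{cod}(p')$), but to be a legal Silver condition $q$ I need $\omega \setminus \mathrm{dom}(q)$ infinite and I must not make decisions inconsistent with genericity. I would handle this by arranging (shrinking the subdoughnut if necessary, which doughnut-nullness permits) that $\mathrm{cod}(p') = y' \setminus x'$ is infinite and coinfinite in a controlled way, and defining $q$ to decide coordinates outside $y'$ to be $0$, coordinates in $x'$ to be $1$, and leaving $\mathrm{cod}(p')$ undecided; one checks $q$ is a condition and $q \Vdash \dot{r} \notin N$ because every real in the generic interval of $q$ lies in $p'$.

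Finally I would assemble the pieces. Combining the single-step genericity with the reflection/localization from the first paragraph shows that any family $\mathcal{B}$ of doughnut-null sets of size $< \omega_2 = \mathfrak{c}$ reflects to an intermediate model over which a later Silver real is doughnut-quasigeneric, hence escapes $\bigcup \mathcal{B}$; therefore $\mathcal{B}$ does not cover $[\omega]^\omega$ and $\mathsf{cov}(v_0) \geq \mathfrak{c}$. Since trivially $\mathsf{cov}(v_0) \leq \mathfrak{c}$, equality follows. I expect the localization argument to be routine given properness and the $\omega_2$-c.c.\ of the iteration, so the genuine content lies entirely in the one-step doughnut-genericity of the Silver real and, within that, the clean correspondence between Silver conditions and doughnuts.
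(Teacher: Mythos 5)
Your one-step dictionary between Silver conditions and doughnuts is exactly right, and it is what the paper's closing remark (``it is easy to see that the next generic real will avoid all of the $N_{\beta}$'') implicitly relies on: a condition $p$ corresponds to the doughnut $\left[  p^{-1}\left(  1\right)  ,p^{-1}\left(  1\right)  \cup\left(  \omega\setminus dom\left(  p\right)  \right)  \right]  ,$ a subdoughnut of it corresponds to a stronger condition, and the generic real lies in the doughnut of every condition in the generic filter. The genuine gap is elsewhere: you restrict throughout to \emph{Borel} doughnut-null sets. The ideal $v_{0}$ is generated by arbitrary doughnut-null sets, and this matters decisively here, since the whole purpose of the proposition is its application to the sets $N\left(  \mathcal{U},f,\overline{X}\right)  ,$ which are defined from an ultrafilter and are never Borel. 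A non-Borel doughnut-null set $N$ of the final model $V\left[  G\right]  $ has no code, so it does not ``appear at some intermediate stage,'' and your localization collapses. Worse, for such sets your single-step statement is vacuous: a set that literally lies in the ground model consists only of ground-model reals, so the new Silver real avoids it trivially, and this says nothing about the sets one actually must avoid, namely doughnut-null sets of $V\left[  G\right]  $ containing new reals. (Even in the Borel case, your step ``$q\Vdash\dot{r}\notin N$ because every real in the generic interval of $q$ lies in $p^{\prime}$'' silently uses that $p^{\prime}\cap N=\emptyset$ persists from the ground model to the extension; for Borel $N$ this is $\Pi_{1}^{1}$-absoluteness, while for arbitrary $N$ it is meaningless.)

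The paper's proof is built around exactly this difficulty: instead of reflecting the sets $N_{\beta}$ themselves (impossible, as they have no codes), it reflects their \emph{doughnut-nullness witnesses}. One finds $\alpha<\omega_{2}$ such that for every doughnut $d\in V\left[  G_{\alpha}\right]  $ and every $\beta<\omega_{1}$ there is a subdoughnut $d_{1}\in V\left[  G_{\alpha}\right]  $ with $d_{1}\cap N_{\beta}=\emptyset,$ where the intersection is taken with the genuine sets $N_{\beta}\subseteq\left[  \omega\right]  ^{\omega}$ of the final model; this is possible because each witness is a single real, and $\omega_{1}$-many reals of $V\left[  G\right]  $ are captured below $\omega_{2}$ by the $\omega_{2}$-c.c.\ of the iteration. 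Then the stage-$\alpha$ Silver real avoids every $N_{\beta},$ via precisely your condition--doughnut correspondence (and making this last step precise requires some care as well, since the relevant dense sets of conditions must be arranged inside $V\left[  G_{\alpha}\right]  $, not merely dense over it). So your mechanism for the single Silver step is sound, but your reduction only shows that families of Borel doughnut-null sets fail to cover, which is weaker than \textsf{cov}$\left(  v_{0}\right)  =\mathfrak{c}$ and insufficient for the chapter's application to $P$-points.
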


\begin{proof}
Let $G\subseteq\mathbb{PS}_{\omega_{2}}$ be a generic filter, we will prove
that $V\left[  G\right]  \models$ \textsf{cov}$\left(  v_{0}\right)
=\omega_{2}.$ Let $\mathcal{B}=\left\{  N_{\beta}\mid\beta\in\omega
_{1}\right\}  $ be a family of doughnut null sets. By a reflection argument,
we can find $\alpha<\omega_{2}$ such that for every doughnut $d\in V\left[
G_{\alpha}\right]  $ and for every $\beta<\omega_{1}$ there is a subdoughnut
$d_{1}\in V\left[  G_{\alpha}\right]  $ such that $d_{1}\subseteq d$ and
$d_{1}\cap N_{\beta}=\emptyset.$ It is easy to see that the next generic real
will avoid all of the $N_{\beta}.$
\end{proof}

\qquad\ \ 

It is well known that \textsf{cof}$\left(  \mathcal{N}\right)  =\omega_{1}$
holds in the Silver model (see \cite{Combinatorial}).

\begin{corollary}
There are no $P$-points in the Silver model.
\end{corollary}

\qquad\ \ \ 

It is possible to modify the previous argumentin order to construct models
with no $P$-points where the continuum is arbitrarily large:

\begin{proposition}
Assume $V$ is a model of $\mathsf{GCH}$ and $\kappa>\omega_{1}$ is a regular
cardinal. If $\otimes_{\kappa}\mathbb{PS}$ is the countable support product of
$\kappa$ many Silver forcings and $G\subseteq\otimes_{\kappa}\mathbb{PS}$ is a
generic filter, then $V\left[  G\right]  \models``$There are no $P$-points and
$\mathfrak{c}=\kappa\textquotedblright.$\qquad\ \ \ 
\end{proposition}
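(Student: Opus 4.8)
The plan is to reduce the statement to the cardinal inequality \textsf{cof}$(\mathcal{N})<$ \textsf{cov}$(v_{0})$ together with $\mathfrak{c}=\kappa$, and then invoke the earlier Proposition asserting that \textsf{cof}$(\mathcal{N})<$ \textsf{cov}$(v_{0})$ kills all $P$-points. For this I first need three structural facts about the countable support product $\mathbb{P}=\otimes_{\kappa}\mathbb{PS}$: (i) $\mathbb{P}$ is proper and $\omega^{\omega}$-bounding; (ii) $\mathbb{P}$ has the Sacks property; and (iii) every real of $V[G]$ lies in $V[G\restriction a]$ for some countable $a\subseteq\kappa$ (the \emph{countable support capturing property}). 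These are the only nontrivial forcing-theoretic inputs, and all of them are established by the standard fusion technology for Silver-type forcings. I expect (iii) to be \textbf{the main obstacle}: for products (as opposed to iterations) capturing a real on countably many coordinates requires a careful fusion along a countable subset of the support, decided one coordinate-block at a time. I would either prove (i)--(iii) by a direct fusion argument adapting the single-step fusion for $\mathbb{PS}$, or cite the corresponding preservation theorems for countable support products of definable $\omega^{\omega}$-bounding forcings.

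Granting (iii), I would compute $\mathfrak{c}=\kappa$ as follows. For the lower bound, mutual genericity of the coordinates shows that the Silver reals $\langle r_{\gamma}\mid\gamma<\kappa\rangle$ are pairwise distinct, so $\mathfrak{c}\geq\kappa$. For the upper bound, by (iii) every real of $V[G]$ belongs to some $V[G\restriction a]$ with $a$ countable; there are $\kappa^{\omega}=\kappa$ such $a$ (using that $\kappa$ is regular and uncountable, and $\mathsf{GCH}$), and each $V[G\restriction a]$ is a generic extension of $V$ by a forcing of size $\leq\omega_{1}$, hence contains at most $(2^{\omega_{1}})^{V}=\omega_{2}\leq\kappa$ reals. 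Thus $\mathfrak{c}\leq\kappa\cdot\omega_{2}=\kappa$.

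The heart of the argument is \textsf{cov}$(v_{0})=\kappa$. Here I would first record the identification, already implicit in the definition of $v_{0}$, that under the homeomorphism $\wp(\omega)\cong2^{\omega}$ a doughnut is exactly a Silver condition (fix $1$ on $x$, fix $0$ on $\omega\setminus y$, leave $y\setminus x$ free), so that $\mathbb{P}_{v_{0}}$ is forcing-equivalent to $\mathbb{PS}$ and a Silver-generic real over a model $W$ avoids every doughnut-null (i.e.\ $v_{0}$-small) Borel set coded in $W$. Now let $\{N_{\beta}\mid\beta<\lambda\}$ with $\lambda<\kappa$ be a family of doughnut-null sets in $V[G]$; replacing each $N_{\beta}$ by a Borel superset in $v_{0}$ and applying (iii) to its code, I obtain a set $I\subseteq\kappa$ with $|I|<\kappa$ that supports all the $N_{\beta}$. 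Choosing any $\gamma\in\kappa\setminus I$ and factoring $\mathbb{P}$ as $(\otimes_{\kappa\setminus\{\gamma\}}\mathbb{PS})\times\mathbb{PS}$, the coordinate real $r_{\gamma}$ is $\mathbb{PS}$-generic over $V[G\restriction(\kappa\setminus\{\gamma\})]\supseteq V[G\restriction I]$; since all the $N_{\beta}$ are $v_{0}$-small and coded in $V[G\restriction I]$, the identification gives $r_{\gamma}\notin\bigcup_{\beta}N_{\beta}$. Hence no family of fewer than $\kappa$ doughnut-null sets covers, so \textsf{cov}$(v_{0})\geq\kappa$, and \textsf{cov}$(v_{0})\leq\mathfrak{c}=\kappa$ gives equality.

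Finally, to see \textsf{cof}$(\mathcal{N})=\omega_{1}$, I would use fact (ii): by the Sacks property every function of $V[G]$ is localized by a ground-model slalom, so a family of $\omega_{1}=$ $\textsf{cof}(\mathcal{N})^{V}$ ground-model slaloms witnessing Bartoszy\'{n}ski's characterization in $V$ remains localizing in $V[G]$. This is exactly the argument behind \textsf{cof}$(\mathcal{N})=\omega_{1}$ in the iterated Silver model (cf.\ \cite{Combinatorial}), and it is insensitive to whether we iterate or take a product. Combining the pieces, $\textsf{cof}(\mathcal{N})=\omega_{1}<\kappa=\textsf{cov}(v_{0})$, so the cited Proposition yields that $V[G]$ has no $P$-points, while $\mathfrak{c}=\kappa$ as shown.
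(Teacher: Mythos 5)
Your reduction has a genuine gap, and it sits exactly where the paper itself says the difficulty is. The step ``replacing each $N_{\beta}$ by a Borel superset in $v_{0}$'' is unjustified: $v_{0}$ is a Marczewski-type ideal and is \emph{not} generated by Borel sets, so a doughnut-null set need not admit any Borel doughnut-null hull. Without a Borel (or at least reasonably definable) code, the capturing property (iii) gives you nothing --- an arbitrary doughnut-null set of $V\left[ G\right]$ is just a set of reals spread over all $\kappa$ coordinates, and it makes no sense to ``capture'' it in some $V\left[ G\upharpoonright a\right]$. This is not a removable technicality: the doughnut-null sets to which the final Proposition must be applied are the sets $N\left( \mathcal{U},f_{\beta},\overline{X}_{\alpha}\right)$, which are defined from an ultrafilter $\mathcal{U}\in V\left[ G\right]$ and are therefore non-Borel; moreover $\mathcal{U}$ itself cannot be recovered from any intermediate model $V\left[ G\upharpoonright a\right]$ with $a\neq\kappa$, since each remaining Silver coordinate adds a real splitting all sets of that model, so no intermediate filter generates an ultrafilter of the final model. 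Consequently your proof of \textsf{cov}$\left( v_{0}\right) \geq\kappa$ does not go through, and with it the whole strategy of verifying the hypothesis ``\textsf{cof}$\left( \mathcal{N}\right) <$ \textsf{cov}$\left( v_{0}\right)$'' in $V\left[ G\right]$ collapses. Indeed, the thesis states explicitly that the proposition ``does not seem to follow formally from the previous results (it is not clear that \textsf{cov}$\left( v_{0}\right)$ is bigger than $\omega_{1}$ after adding many Silver reals by the countable support product)''; whether \textsf{cov}$\left( v_{0}\right) >\omega_{1}$ holds in the product model is left open there, so a correct proof along your lines would settle a question the authors could not.

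What the paper actually does is bypass the cardinal inequality in the product case: the proof (in the cited joint paper with Chodounsk\'{y}) reruns the doughnut argument directly against a \emph{name} for an ultrafilter, using the specific dense sets of avoiding subdoughnuts produced in the proof that $N\left( \mathcal{U},f,\overline{X}\right)$ is doughnut-null, together with a fusion argument across the product; the point is that for these particular sets the witnessing subdoughnuts are obtained from finitely much information about $\mathcal{U}$ at a time, which a fusion can absorb, whereas genericity over an intermediate model cannot. Your other two ingredients are essentially fine --- $\mathfrak{c}=\kappa$ follows from the capturing property (iii), and \textsf{cof}$\left( \mathcal{N}\right) =\omega_{1}$ from the Sacks property of the product, both of which are standard fusion facts --- but the heart of the argument, producing a real avoiding $\omega_{1}$ many non-definable doughnut-null sets, cannot be done by quoting quasi-genericity for Borel sets; it is precisely the part that requires the hands-on combinatorics.
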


\qquad\ \ \ \ 

The previous proposition does not seem to follow formally from the previous
results (it is not clear that \textsf{cov}$\left(  v_{0}\right)  $ is bigger
than $\omega_{1}$ after adding many Silver reals by the countable support
product). Nevertheless, the proof is almost the same as before. The reader may
consult \cite{NoPpointsintheSilverModel} for more details.

\chapter{Open problems on \textsf{MAD} families}

In this small chapter, we gather some important open problems regarding
\textsf{MAD} families. This is not supposed to be an exhaustive list, it just
reflects the personal interests of the author. Perhaps the most famous problem
is the following:

\begin{problem}
[Roitman]Does $\mathfrak{d}=\omega_{1}$ imply $\mathfrak{a}=\omega_{1}$?
\end{problem}

\qquad\ \ \ 

It is a result of Shelah that the inequality $\mathfrak{d<a}$ is consistent
(see \cite{ShelahTemplates} and \cite{BrendleTemplates}). Furthermore, it is
known that $\mathfrak{a}=\omega_{1}$ follows from the diamond principle
$\Diamond_{\mathfrak{d}}$ (see \cite{Diamantesubd}) which is slightly stronger
than $\mathfrak{d}=\omega_{1}.$ The previous question is essentially
equivalent to the following:

\begin{problem}
Can every \textsf{MAD} family be destroyed with a proper $\omega^{\omega}%
$-bounding forcing?
\end{problem}

\qquad\ \ \ 

As mentioned on the chapter of indestructibility, it is known that every
\textsf{MAD} family can be destroyed with a proper forcing that does not add
dominating reals. The following stronger version of Roitman's question is also open:

\begin{problem}
[Brendle \cite{MobandMad}]Does $\mathfrak{b}=\mathfrak{s=}$ $\omega_{1}$ imply
$\mathfrak{a}=\omega_{1}$?
\end{problem}

\qquad\ \ \ 

It is a well known result of Shelah that $\omega_{1}=\mathfrak{b<a}$ is
consistent (see \cite{ProperandImproper}). A natural attempt to try to solve
the problem of Brendle would be to show under $\mathsf{CH}$ that every
\textsf{MAD} family can be extended to a Hurewicz ideal. Unfortunately, in an
unpublish work, Raghavan showed that this might not be the case:

\begin{proposition}
[Raghavan]If $\Diamond\left(  S\right)  $ holds for every stationary
$S\subseteq\omega_{1}$ then there is a \textsf{MAD} family that can not be
extended to a Hurewicz ideal.
\end{proposition}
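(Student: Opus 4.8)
The goal is to construct, under $\Diamond(S)$ for all stationary $S \subseteq \omega_1$, a \textsf{MAD} family $\mathcal{A}$ that cannot be extended to a Hurewicz ideal. Recall that an ideal $\mathcal{I}$ is Hurewicz if for every $\{X_n \mid n\in\omega\} \subseteq (\mathcal{I}^{<\omega})^+$ there are finite $Y_n \in [X_n]^{<\omega}$ with $\bigcup_{n\in A} Y_n \in (\mathcal{I}^{<\omega})^+$ for every infinite $A$. To defeat \emph{all} Hurewicz extensions simultaneously, the right target is a witness that is \emph{absolute} across extensions: I would produce a sequence $\langle X_n \mid n\in\omega\rangle \subseteq ([\omega]^{<\omega}\setminus\{\emptyset\})$ with $X_n \in (\mathcal{I}(\mathcal{A})^{<\omega})^+$ such that, no matter how one enlarges $\mathcal{I}(\mathcal{A})$ to a (tall) ideal $\mathcal{I}$, any choice of finite $Y_n \subseteq X_n$ admits an infinite $A$ with $\bigcup_{n\in A} Y_n \in \mathcal{I}$. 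The key point is that membership in $(\mathcal{I}^{<\omega})^+$ is a \emph{negative} condition that shrinks as the ideal grows, so a well-designed combinatorial trap set up for $\mathcal{I}(\mathcal{A})$ itself can be made robust. Concretely, I would arrange that the $X_n$ are ``spread out'' so that \emph{any} finite selectors $Y_n$ accumulate into a set the family $\mathcal{A}$ already absorbs into $\mathcal{I}(\mathcal{A})$ along some infinite subset.

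\textbf{The recursion.} Enumerate $[\omega]^\omega = \{Z_\alpha \mid \alpha < \omega_1\}$ and build $\mathcal{A} = \{A_\alpha \mid \alpha < \omega_1\}$ by recursion, maintaining that $\mathcal{A}_\alpha = \{A_\xi \mid \xi < \alpha\}$ is \textsf{AD} and nowhere \textsf{MAD}. I partition $\omega_1$ into the stationary pieces needed to run several diamond guesses at once: one family of stationary sets $\langle S_\beta \mid \beta < \omega_1\rangle$ is used to guess, via $\Diamond(S_\beta)$, the \emph{potential witness data} — that is, a candidate ideal $\mathcal{I}$ (coded as a subset of $\omega$ together with its generating structure) and candidate selector functions $n \mapsto Y_n$. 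At a stage $\delta$ where the diamond on the relevant stationary set correctly guesses a pair $(\mathcal{I}, \langle Y_n\rangle)$ extending the part of $\mathcal{A}$ built so far, I choose the next element $A_\delta \subseteq \omega$ almost disjoint from $\mathcal{A}_\delta$ so as to \emph{swallow} infinitely many of the $\bigcup$'s: I diagonalize so that $A_\delta$ has infinite intersection with $\bigcup_{n \in A} Y_n$ for some infinite $A$, placing that union inside $\mathcal{I}(\mathcal{A}_{\delta+1})$ and hence inside any extension $\mathcal{I}$ — provided the guess was correct. This is where nowhere-\textsf{MADness} of $\mathcal{A}_\delta$ (as in the key feature of Shelah's completely separable construction) lets me find room for $A_\delta$ disjoint from the earlier family but meeting the prescribed union.

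\textbf{The witness $\langle X_n\rangle$.} In parallel I fix in advance (at the very start, in the ground model) the target sequence $\langle X_n \mid n \in\omega\rangle$ of finite sets, designed so that $X_n \in (\mathcal{I}(\mathcal{A})^{<\omega})^+$ for the final $\mathcal{A}$ — this requires that each $A \in \mathcal{I}(\mathcal{A})$ misses some $s \in X_n$, which I ensure by a bookkeeping condition saying every $A_\alpha$ avoids at least one member of each $X_n$ (using that the $X_n$ contain arbitrarily ``high'' finite sets, as in the proof that \textsf{FIN}$\times$\textsf{FIN} is Shelah-Stepr\={a}ns). The design of $\langle X_n\rangle$ must be rigid enough that any selection $Y_n \in [X_n]^{<\omega}$ is forced to cluster. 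I expect the single hardest step to be making the witness \emph{uniformly} defeat every extension $\mathcal{I}$: since there are $2^{\aleph_1}$ potential extensions but only $\omega_1$ recursion stages, the argument cannot enumerate them directly, so the robustness must come structurally from the geometry of the $X_n$ together with the diamond catching the relevant selector data on a stationary set. Establishing that this stationary catching genuinely covers ``all'' dangerous $(\mathcal{I}, \langle Y_n\rangle)$ pairs — i.e. that the reflection argument has no gaps — is the crux, and it is precisely the point where the full strength $\Diamond(S)$ \emph{for every stationary} $S$ (rather than a single $\Diamond$) is needed, since one must guess correctly cofinally often along each potential threat. Once this uniform defeat is arranged, verifying that the resulting $\mathcal{A}$ is \textsf{MAD} (from the $Z_\alpha$-bookkeeping) and that $\langle X_n\rangle$ witnesses non-Hurewicz for every tall extension is routine.
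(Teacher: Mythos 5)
First, a caveat: the paper itself offers no proof of this proposition — it is quoted as an unpublished result of Raghavan — so your proposal can only be judged on its own terms. Judged that way, it has a fatal gap, and it is exactly at the point you flag as "the key point." You claim that because membership in $\left(\mathcal{I}^{<\omega}\right)^{+}$ is a negative condition that shrinks as the ideal grows, a trap set for $\mathcal{I}\left(\mathcal{A}\right)$ is automatically robust under extension. This monotonicity works for you only on the killing side (if some $B\in\mathcal{I}\left(\mathcal{A}\right)$ meets every member of $\bigcup_{n\in A}Y_{n}$, the same $B$ works in any extension), but it works \emph{against} you on the side that makes the witness legal: to show an extension $\mathcal{I}\supseteq\mathcal{I}\left(\mathcal{A}\right)$ is not Hurewicz via your fixed sequence, you need $X_{n}\in\left(\mathcal{I}^{<\omega}\right)^{+}$, i.e. positivity with respect to the \emph{larger} ideal, which is strictly stronger than the condition $X_{n}\in\left(\mathcal{I}\left(\mathcal{A}\right)^{<\omega}\right)^{+}$ that your bookkeeping arranges. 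The adversary can exploit this: unless every $\mathcal{I}\left(\mathcal{A}\right)$-positive set contains a member of $X_{n}$, there is $C\in\mathcal{I}\left(\mathcal{A}\right)^{+}$ containing no member of $X_{n}$; then $B=\omega\setminus C$ meets every member of $X_{n}$, and the ideal generated by $\mathcal{I}\left(\mathcal{A}\right)\cup\left\{B\right\}$ is proper (properness is exactly positivity of $C$) and kills your witness. Against that extension, and against all of its further extensions — among which the Hurewicz ideal you must rule out may well live — your sequence is simply not an admissible input to the Hurewicz property, and your construction has no mechanism for producing a different witness. To avoid this you would have to maintain that each $X_{n}$ is \emph{dense} in $\mathcal{I}\left(\mathcal{A}\right)^{+}$ (every positive set of the final family contains a member of $X_{n}$), a requirement you never formulate, which your recursion does nothing to preserve, and which is in direct tension with your killing requirement, since density hands the Hurewicz player an enormous supply of selectors spread over the entire positive part of the family.

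A second symptom of the same problem is that your defeat mechanism never actually uses the guessed extension: the swallowing places the killing set inside $\mathcal{I}\left(\mathcal{A}_{\delta+1}\right)$, so only the selector sequences $\left\langle Y_{n}\right\rangle$ matter, and these are reals — under $\mathsf{CH}$ they can be handled by plain bookkeeping of length $\omega_{1}$ with no guessing principle at all. If your sketch were correct, it would therefore prove the much stronger statement that under $\mathsf{CH}$ a single pre-fixed witness defeats all $2^{\omega_{1}}$ extensions by killing inside $\mathcal{I}\left(\mathcal{A}\right)$; the first paragraph shows this is unattainable. The strong hypothesis "$\Diamond\left(S\right)$ for every stationary $S$" signals precisely what is missing from your plan: the witness and the killing sets must be tailored to each individual extension $\mathcal{I}$, which is an uncountable object (a subset of $\omega_{1}$ under $\mathsf{CH}$), so the construction must guess $\mathcal{I}\cap\alpha$ at stage $\alpha$ and react to it, and the stages at which such a guess correctly reflects an ideal extending $\mathcal{A}_{\alpha}$ form a set depending on $\mathcal{I}$ — which is the standard reason one needs diamond on every stationary set rather than a single diamond sequence. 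Finally, a smaller but genuine slip: $\bigcup_{n\in A}Y_{n}$ is a \emph{family} of finite sets, and non-positivity demands a $B$ in the ideal meeting every $s\in Y_{n}$ for all $n\in A$; arranging that $A_{\delta}$ "has infinite intersection with $\bigcup_{n\in A}Y_{n}$" (read as a set of integers) does not accomplish this, and a single new almost disjoint set cannot in general traverse every member of infinitely many $Y_{n}$ while remaining almost disjoint from $\mathcal{A}_{\delta}$, e.g. when the members of $Y_{n}$ sit inside distinct old elements of the family.
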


\qquad\ \ \qquad\ \ 

The question of Brendle is essentially equivalent to the following question:
\textit{Assuming }$\mathsf{CH},$ \textit{can every }\textsf{MAD }%
\textit{family be destroyed by a proper forcing that does not add dominating
or unsplit reals? }Since every Shelah-Stepr\={a}ns \textsf{MAD} family has
this property, we conjecture that this must be the case for every \textsf{MAD}
family, however as the result of Raghavan shows, this forcing can not be the
Mathias forcing of an ideal extending the respective \textsf{MAD} family.

\begin{problem}
[Laflamme \cite{ZappingSmallFilters}]Is there a Laflamme \textsf{MAD} family?
\end{problem}

\qquad\ \ \ 

We already saw that the answer is positive under $\mathfrak{d=c}$ (see also
\cite{KatetovandKatetovBlassOrdersFsigmaIdeals}). In an unpublished work,
Raghavan has found more conditions that imply the existence of such families.
Regarding the indestructibility, we have the following:

\begin{problem}
[Hru\v{s}\'{a}k \cite{MADFamiliesandtheRationals}]Is there a Sacks
indestructible \textsf{MAD} family?
\end{problem}

\qquad\ \ \ 

The answer is positive under $\mathfrak{b=c},$ \textsf{cov}$\left(
\mathcal{M}\right)  $ $\mathfrak{=c}$ or $\Diamond\left(  \mathfrak{b}\right)
$ (see \cite{ForcingIndestructibilityofMADFamilies}). The following variant of
the previous question is also open:

\begin{problem}
[Hru\v{s}\'{a}k \cite{MADFamiliesandtheRationals}]Is there a Sacks
indestructible \textsf{MAD} family of size $\mathfrak{c}$ in the Sacks model?
\end{problem}

\qquad\ \ 

The similar problem for Cohen forcing is also open:

\begin{problem}
[Stepr\={a}ns \cite{CombinatorialConsequencesofAddingCohenReals}]Is there a
Cohen indestructible \textsf{MAD} family?
\end{problem}

\qquad\ \ 

The answer is positive under $\mathfrak{b=c}$ or $\Diamond\left(
\mathfrak{b}\right)  $. Since Cohen indestructibility implies Sacks
indestructibility, a positive answer to the previous question will give a
positive answer to the problem of Hru\v{s}\'{a}k. Recall that the existence of
a Cohen indestructible \textsf{MAD} family is equivalent to the existence of a
tight \textsf{MAD} family. The existence of weakly tight \textsf{MAD} families
is also open:

\begin{problem}
[Garcia Ferreira, Hru\v{s}\'{a}k \cite{OrderingMADFamiliesalaKatetov}]Is there
a weakly tight \textsf{MAD} family?
\end{problem}

\qquad\ \ \ 

The answer is positive under $\mathfrak{s\leq b}$ (see
\cite{OnWeaklytightFamilies}). In an unpublished work, Raghavan has found more
conditions that imply the existence of such families. The notion of raving
\textsf{MAD} families is one of the strongest notions considered so far in the
literature (and in this case, we know that they consistently do not exist).
However, we do not know the answer to the following problem:

\begin{problem}
Is it consistent to have a raving \textsf{MAD} family of size bigger than
$\omega_{1}$?
\end{problem}

\qquad\ \ \ 

A \textsf{MAD} family $\mathcal{A}$ is called \emph{Canjar }if $\mathbb{M}%
\left(  \mathcal{I}\left(  \mathcal{A}\right)  \right)  $ does not add a
dominating real (note that any Hurewicz \textsf{MAD} family is Canjar). It is
easy to construct a non Canjar \textsf{MAD} family, however, the following is
still open:

\begin{problem}
Is there a Canjar \textsf{MAD} family?
\end{problem}

\qquad\ \ \ 

It is known that the answer is positive in case $\mathfrak{d=c}$ (see
\cite{MathiasForcingandCombinatorialCoveringPropertiesofFilters}). Regarding
the Kat\v{e}tov order on \textsf{MAD} families, we have the following:

\begin{problem}
[Garcia Ferreira, Hru\v{s}\'{a}k \cite{OrderingMADFamiliesalaKatetov}]Is there
a Kat\v{e}tov-top \textsf{MAD} family? (i.e. a \textsf{MAD} family Kat\v{e}tov
above any other \textsf{MAD} family)
\end{problem}

\qquad\ \ \ \ 

It was shown by Garc\'{\i}a Ferreira and Hru\v{s}\'{a}k that the answer is
negative under $\mathfrak{b=c}$ (see \cite{OrderingMADFamiliesalaKatetov}). In
an unpublished work, the author proved that the answer is also negative under
$\mathfrak{s\leq b}$ and under some strenghtening of the principle
$\Diamond\left(  \mathfrak{b}\right)  .$ Most likely, the previous question
has a negative answer in $\mathsf{ZFC.}$ We can then wonder about
Kat\v{e}tov-maximality instead of Kat\v{e}tov-top:

\begin{problem}
[Garcia Ferreira, Hru\v{s}\'{a}k \cite{OrderingMADFamiliesalaKatetov}]Is there
a Kat\v{e}tov maximal \textsf{MAD} family?
\end{problem}

\qquad\qquad\ \ 

In \cite{InvariancePropertiesofAlmostDisjointFamilies} the authors proved that
the answer is positive under $\mathfrak{p=c}$ (in contrast with the previous
question). In an unpublished work, the author proved the following results:

\begin{proposition}
\qquad\ \ 

\begin{enumerate}
\item There is a Kat\v{e}tov maximal \textsf{MAD} family under
$\mathfrak{b=c.}$

\item $\Diamond\left(  \mathfrak{d}\right)  $ implies that there is a
Kat\v{e}tov maximal \textsf{MAD} family of size $\omega_{1}.$

\item There is no Kat\v{e}tov maximal \textsf{MAD} family of size $\omega_{1}$
in the Cohen model.
\end{enumerate}
\end{proposition}

\qquad\ \ 

The relationship between $\mathfrak{a}$ and some other cardinal invariants is
still unclear, we will mention more examples. The cardinal invariant
$\mathfrak{a}_{T}$ is defined as the least size of a maximal AD family
consisting of finitely branching subtrees of of $\omega^{\omega}$. It is known
that $\mathfrak{d\leq a}_{T}$ and that $\mathfrak{d<a}_{T}$ is consistent (see
\cite{SelectivityofAlmostDisjointFamilies} and \cite{PartitionNumbers}). The
following question is still unknown:

\begin{problem}
[Hru\v{s}\'{a}k]Is the inequality $\mathfrak{a}_{T}<\mathfrak{a}$ consistent?
\end{problem}

\qquad\ \ \ 

Unfortunately, the template framework does not seem to help in this situation.
This is also the case for the following question of Jerry Vaughan:

\begin{problem}
[Vaughan \cite{SmallUncountableCardinalsandTopology}]Is the inequality
$\mathfrak{i}<\mathfrak{a}$ consistent?
\end{problem}

\qquad\ \ \ 

By forcing along a template (with the aid of a measurable cardinal) it can be
shown that $\mathfrak{u<a}$ is consistent (see
\cite{MADFamiliesandUltrafilters}). However, the following question are still open:

\begin{problem}
[Shelah \cite{understand}]\qquad\ \ 

\begin{enumerate}
\item Does $\mathfrak{u=}$ $\omega_{1}$ imply $\mathfrak{a=}$ $\omega_{1}$?

\item Is the measurable cardinal necessary for the consistency of
$\mathfrak{u<a}$?
\end{enumerate}
\end{problem}

\qquad\ \ \ 

The cardinal invariant $\mathfrak{hm}$ is one of the largest Borel invariants
considered so far. However, the following is still unknown:

\begin{problem}
[Weinert]Is the inequality $\mathfrak{hm}<\mathfrak{a}$ consistent?
\end{problem}

\qquad\ \ \ 

In \cite{ShelahPartyForcing} Shelah constructed a model of $\mathfrak{i<u}.$
In an unpublished work, the author showed that $\mathfrak{a=hm<u}$ holds in
that model.

\qquad\ \ \qquad\ \ \ 

The cardinal invariant $\mathfrak{a}_{closed}$ is defined as the smallest
number of closed sets such that its union is a \textsf{MAD} family. Clearly it
is below $\mathfrak{a}$ and it is uncountable by a result of Mathias. Unlike
the almost disjointness number, $\mathfrak{a}_{closed}$ is known to be
incomparable to $\mathfrak{b}$ (see \cite{BrendleKhomskii} and
\cite{BrendleRaghavan}). It is known that $\mathfrak{p\leq a}_{closed}$, but
the following is still unknown:

\begin{problem}
[Raghavan]Is the inequality $\mathfrak{a}_{closed}<\mathfrak{h}$ consistent?
\end{problem}

\qquad\ \ \ \ 

This is really a question about computing $\mathfrak{a}_{closed}$ in the
Mathias model.

\qquad\ \ \ \ 

As mentioned before, by a result of Shelah, it is known that the inequality
$\mathfrak{b<a}$ is consistent. However, the following is still open:

\begin{problem}
[Brendle
\cite{CardinalInvariantsoftheContinuumandCombinatoricsonUncountableCardinals}%
]Does $\mathfrak{b=}$ $\omega_{1}$ and $\clubsuit$ imply $\mathfrak{a}%
=\omega_{1}$?
\end{problem}

\qquad\ \ \ 

At last but not least, we would like to mention the problem of Erd\"{o}s and Shelah:

\begin{problem}
[Erd\"{o}s, Shelah \cite{ErdosShelah}]Is there a completely separable
\textsf{MAD} family?
\end{problem}

\qquad\ \ \ 

The answer is positive (see \cite{SANEplayer}) under $\mathfrak{s\leq a}$ and
under $\mathfrak{a<s}$ plus some $\mathsf{PCF}$ hypothesis (it is unknown if
this hypothesis can even fail).

\bibliographystyle{plain}
\bibliography{Tesis}
\qquad\ \ \qquad\ \ 
\end{document}